\pgfplotsset{compat=1.18}
\newcommand{\N}{\mathbb{N}}
\newcommand{\Einf}{\mathbb{E}_{\infty}}
\newcommand{\Z}{\mathbb{Z}}
\newcommand{\R}{\mathbb{R}}
\newcommand{\OO}{\mathcal{O}}
\newcommand{\X}{\mathcal{X}}
\newcommand{\MCS}{\mathcal{S}}
\newcommand{\MSL}{\mathscr{L}}
\newcommand{\Y}{\mathcal{Y}}
\newcommand{\LL}{\mathbb{L}}
\newcommand{\sset}{{\rm sSet}}
\newcommand{\CC}{\mathcal{C}}
\newcommand{\DD}{\mathcal{D}}
\newcommand{\llog}{\mathbf{Log}}
\newcommand{\rtr}{\sqrt[\infty]{\R\llog_R^{\wedge}}}
\newcommand{\logfr}{\R\llog_R^{\wedge}}
\newcommand{\llogd}{{\mathbf{Log}}^{\Delta}}
\newcommand{\aff}{\mathbf {Aff}}
\newcommand{\ani}{\mathbf {Ani}}
\newcommand{\rr}{\longrightarrow}
\newcommand{\spec}{{\rm Spec}}
\newcommand{\spet}{{\rm Sp\Acute{e}t}}
\newcommand{\spf}{{\rm Spf}}
\newcommand{\mon}{\mathbf{CMon}}
\newcommand{\mond}{\mathbf{CMon}^{\Delta}}
\newcommand{\mone}{\mathbf{CMon}}
\newcommand{\stk}{\mathbf{Stk}}
\newcommand{\cat}{\mathbf{Cat}_{\infty}}
\newcommand{\F}{\mathscr{F}}
\newcommand{\colim}{{\rm colim}}
\newcommand{\qcoh}{\mathbf{QCoh}}
\newcommand{\map}{{\rm Map}}
\newcommand{\plog}{\mathbf{PreLog}}
\newcommand{\plogd}{\mathbf{PreLog}^{\Delta}}
\newcommand{\Fun}{{\rm Fun}}
\newcommand{\Hom}{{\rm Hom}}
\newcommand{\tang}{{\rm T}}
\newcommand{\shv}{\mathbf{Shv}}
\newcommand{\algcn}{\mathbf{CAlg}^{\rm cn}}
\newcommand{\MT}{{\mathfrak M}^T}
\newcommand{\MTD}{{\mathfrak M}^{T^{\DD}}}
\newcommand{\Der}{\mathbf {Der}}
\newcommand{\Dlog}{\mathbf {Der}^{\mathbf{Log}}}
\newcommand{\Mod}{\mathbf {Mod}}
\newcommand{\CMod}{\mathbf {CatMod}}
\newcommand{\Spt}{\mathbf {Spt}}
\newcommand{\Sptdm}{\mathbf {SpDM}}
\newcommand{\ddm}{\mathbf {DM}^{\Delta}}
\newcommand{\alg}{\mathbf{CAlg}}
\newcommand{\tlog}{{\rm T}^{\rm rep}_{\mathbf{Log}} }
\newcommand{\MTL}{\mathfrak{M}^{\rm rep}_{\mathbf{Log}}}
\newcommand{\et}{{\rm \Acute{e}t}}
\newcommand{\set}{{\rm s\Acute{e}t}}
\theoremstyle{plain}
\newtheorem{thm}{Theorem}[section]
\newtheorem{cor}[thm]{Corollary}
\newtheorem{prop}[thm]{Proposition}
\newtheorem{lem}[thm]{Lemma}
\theoremstyle{definition}
\newtheorem{defn}[thm]{Definition}
\newtheorem{rmk}[thm]{Remark}
\newtheorem{cau}[thm]{Caution}
\newtheorem{rmkcaution}[thm]{Remark-Caution}
\newtheorem{exmp}[thm]{Example}
\newtheorem{cov}[thm]{Convention}
\newtheorem{no}[thm]{Notation}
\theoremstyle{remark}
\author{Ruichuan Zhang}
\date{\ }
\let\c@equation\c@thm
\numberwithin{equation}{section}
\title{Derived logarithmic deformation theory and moduli stacks  of  derived logarithmic structures}
\begin{document}
\maketitle
\begin{abstract}
  This paper investigates the derived and spectral analogs of logarithmic geometry. We develop the deformation theory for animated log rings and $\mathbb{E}_\infty$-log rings and examine the corresponding theories of derived and spectral log stacks. Furthermore, we define moduli stacks for derived and spectral log structures and establish their representability. As an application, we will construct infinite root  stacks in the derived and spectral settings and  study the associated geometric properties.
\end{abstract}
\tableofcontents
\section{Introduction}
This article generalizes some concepts of logarithmic geometry from \cite{kato1996log} and \cite{ogus2018lectures}, to the settings of derived and spectral stacks, as in \cite{lurie2004derived} and \cite{lurie2018spectral}. While affine derived logarithmic geometry has been explored by Rognes \cite{rognes2009topological}, Sagave-Sch{\"u}rg-Vezzosi \cite{sagave2016derived}, Binda-Lundemo-Park-{\O}stvaer \cite{binda2023hochschild}, \cite{10.1093/imrn/rnad224}  and Lundemo \cite{lundemo2023deformation},  this work globalizes these constructions. We develop a theory of \textit{spectral} and \textit{derived log Deligne–Mumford stacks} and study their associated moduli stacks of logarithmic structures.\\ 
This article has two parts. The first part is the formulation of the deformation theory of $\Einf$-log rings. The second part is devoted to the study of log stacks: we will prove the representability of the moduli stack of log structures and globalize the deformation theory of $\Einf$-log rings for log stacks. As an application, we study the $\infty
$-root stacks in both derived and spectral settings.
\subsection{Moduli of classical log structures and log invariants}
Logarithmic geometry, originally introduced by Fontaine and Illusie, provides a framework for studying the geometry and arithmetic of open and singular varieties. Its foundations were later systematically established by Kato \cite{Kato1988}. This theory centers on \textit{logarithmic schemes} (or \textit{log schemes} for brevity) rather than classical schemes. A log scheme is a triple $(X, \mathcal{M}, \alpha)$, where $X$ is a scheme, $\mathcal{M}$ is an étale sheaf of monoids on $X$, and $\alpha: \mathcal{M} \to \mathcal{O}_X$ is a morphism of sheaves of monoids inducing an isomorphism $\mathcal{M} \times_{\mathcal{O}_X} \mathcal{O}_X^* \xrightarrow{\sim} \mathcal{O}_X^*$.\\
In practice, many log schemes arise from non-proper or non-smooth classical schemes. For example, let $k$ be a field and $X$ a singular variety over $k$. Suppose there exists a resolution of singularities $\pi: X' \to X$ (for instance, if ${\rm char}(k)=0$ or $\dim X\leq 3$) such that $X'$ is smooth and the exceptional divisor $D$ is a strict normal crossing divisor. Then the pair $(X', D)$ gives rise to a log smooth scheme $(X',\mathcal D)$, where $\mathcal D$ is the log structure on $X'$ induced by $D$ (see \cite[Examples 3.1.20]{ogus2018lectures} for details).\\
Another key example arises in the study of schemes over local fields with \textit{semi-stable reductions}. Let $R$ be a discrete valuation ring with maximal ideal $\mathfrak{m}$, and let $f: X \to \spec R$ be a proper, flat, surjective morphism. Assume the generic fiber $X_{\eta}:=X\times_R \spec({\rm Frac }R)$ is smooth over ${\rm Frac}R$, and the special fiber $X_s:=X\times_R \spec R/\mathfrak m$ is a strict normal crossing divisor on $X$. This pair $(X,X_s)$ also defines a log structure on $X$ (see \cite[Examples 1.2.7]{ogus2018lectures} for details).
\subsubsection{Moduli of log structures}
According to the fundamental insight of Olsson \cite{olsson2003logarithmic}, which builds on the ideas of Deligne and Illusie, the concepts of log geometry can be systematically reformulated using the theory of algebraic stacks. The definition of log schemes involves sheaves of monoids, which are poorly structured objects that are difficult to handle directly. To overcome this difficulty, one may study the moduli stack of log structures, thereby translating the study of mappings between log schemes into the study of relative stacks.\\
Let $(S,\mathcal{L})$ be a \textit{fine log scheme}\footnote{This means that there is an \'etale cover $\pi:U\rightarrow S$, and a prelog ring $(A,P)$, where $P$ is an integral and finitely generated monoid, such that $(U,\pi^*\mathcal 
 L)$ is isomorphic to the log scheme associated with $(A,P)$. see \cite[Definition 2.1.8]{ogus2018lectures} for  details.} we can define the moduli functor of points of log structures:
\begin{align*}
\llog_{(S,\mathcal L)}:&\mathbf{Sch}^{\rm op}_{/S}\rr{\mathbf{Grpd}},\\
& X/S\mapsto\{\text{maps of fine log schemes }(X,\mathcal M)\rightarrow(S,\mathcal L)\}
\end{align*}
Olsson proved the following result.
\begin{thm}\label{OlssonRep}\cite[Theorem 1.1]{olsson2003logarithmic}
    The functor $\llog_{(S,\mathcal L)}$ is an Artin stack that is locally finitely presented over $S$.
\end{thm}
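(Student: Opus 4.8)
The plan is to establish the assertion in two stages: first verify that $\llog_{(S,\mathcal L)}$ is a stack for the étale topology on $\mathbf{Sch}_{/S}$, and then exhibit a smooth atlas to obtain algebraicity and local finite presentation. The stack condition is the formal part. A fine log structure is an étale sheaf of monoids equipped with a structure map, and both the sheaf axiom and the structural isomorphism defining a log structure are conditions that are local for the étale topology; the same holds for the defining condition of fineness (existence of a chart étale-locally). Hence the assignment $X/S\mapsto\{(X,\mathcal M)\to(S,\mathcal L)\}$ glues along étale covers and descends isomorphisms, so $\llog_{(S,\mathcal L)}$ is a stack over $S$.

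For algebraicity the essential input is the local chart theory of fine log structures. Since algebraicity and local finite presentation are étale-local on $S$, I would first reduce to the case where $S=\spec A$ is affine and carries a global chart $\beta\colon P\to\mathcal L$ by a fine monoid $P$; such charts exist étale-locally by fineness, so this reduction is harmless. Over such an $S$ an object of $\llog_{(S,\mathcal L)}$ over $X/S$ is a fine log structure $\mathcal M$ on $X$ together with a map $f^{\ast}\mathcal L\to\mathcal M$, and étale-locally on $X$ this data admits a \emph{neat chart}: a homomorphism of fine monoids $P\to Q$ and a chart $Q\to\mathcal M$ compatible with $\beta$, arranged so that $P^{\mathrm{gp}}\to Q^{\mathrm{gp}}$ is injective. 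The central computation is that the moduli of such charted objects is represented by the explicit quotient stack
\begin{equation*}
\mathcal A_{P\to Q}\;:=\;\bigl[\,\spec\bigl(A\otimes_{\Z[P]}\Z[Q]\bigr)\,\big/\,\spec\Z[Q^{\mathrm{gp}}/P^{\mathrm{gp}}]\,\bigr],
\end{equation*}
where the diagonalizable group $\spec\Z[Q^{\mathrm{gp}}/P^{\mathrm{gp}}]$ acts through the grading on $\Z[Q]$. Being the quotient of an affine scheme by an affine flat group scheme, each $\mathcal A_{P\to Q}$ is manifestly an algebraic stack, and it is of finite presentation over $S$ precisely because $Q$ is finitely generated.

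The next step is to assemble these local models into a smooth atlas. Letting $P\to Q$ range over isomorphism classes of morphisms of fine monoids, the tautological charts assemble into a chart-forgetting morphism $\coprod_{P\to Q}\mathcal A_{P\to Q}\to\llog_{(S,\mathcal L)}$. I would prove that this morphism is smooth and surjective: surjectivity is exactly the étale-local existence of neat charts invoked above, while smoothness amounts to showing that lifting a chart along a square-zero extension of the base is unobstructed, which reduces to the freeness of liftings of monoid homomorphisms out of the finitely generated monoid $Q$ and can be checked directly. Composing a smooth atlas of each $\mathcal A_{P\to Q}$ with this forgetful map then produces a smooth cover of $\llog_{(S,\mathcal L)}$ by a scheme, and local finite presentation over $S$ is inherited from that of the $\mathcal A_{P\to Q}$.

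The main obstacle is the chart theory underlying the atlas, specifically the smoothness of the chart-forgetting morphism and the organization of the passage between different charts. Existence of neat charts extending $\beta$ is delicate: one must refine an arbitrary chart to one compatible with the fixed chart on $S$ and neat at each geometric point, so that the cokernel of $P^{\mathrm{gp}}\to Q^{\mathrm{gp}}$ is exactly controlled; it is this control that forces the quotient presentation above and makes the diagonalizable group act with the freeness needed for smoothness. Once smoothness of change-of-chart is verified and the diagonal of $\llog_{(S,\mathcal L)}$ is seen to be representable, quasi-compact and separated (which follows from the affine diagonals of the $\mathcal A_{P\to Q}$ together with étale descent), the smooth cover constructed above witnesses $\llog_{(S,\mathcal L)}$ as an Artin stack that is locally of finite presentation over $S$, proving the theorem.
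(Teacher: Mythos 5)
First, note that the paper you are reading does not prove this statement at all: it is Olsson's theorem, quoted with a citation, so the only proof to compare against is Olsson's own in \cite{olsson2003logarithmic}. Your outline reconstructs that proof's architecture faithfully: étale descent of fine log structures gives the stack property, one reduces to an affine base with a global chart $P\to\mathcal L$, the local models are exactly Olsson's quotient stacks $\bigl[\spec\bigl(A\otimes_{\Z[P]}\Z[Q]\bigr)/\spec\Z[Q^{\mathrm{gp}}/P^{\mathrm{gp}}]\bigr]$, and algebraicity follows once the chart-forgetting map from their disjoint union is shown to be representable, surjective and smooth (Olsson proves it is in fact étale). So the approach is the same as the cited source; the problem lies in how you justify its crucial step.

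The genuine gap: you assert that smoothness of the chart-forgetting map ``reduces to the freeness of liftings of monoid homomorphisms out of the finitely generated monoid $Q$'' along square-zero extensions, i.e.\ that honest charts lift unobstructed. This fails whenever $Q^{\mathrm{gp}}$ has torsion, which fine monoids may have and which your neatness condition ($P^{\mathrm{gp}}\to Q^{\mathrm{gp}}$ injective) does not exclude. Concretely, take $S=\spec\mathbb F_2$ with trivial log structure, $X_0=\spec\mathbb F_2[y]/(y^2)\hookrightarrow X=\spec\mathbb F_2[y]/(y^3)$, let $\mathcal M$ be the log structure on $X$ associated to $\N\to\mathcal O_X$, $1\mapsto 0$, and take the chart $Q=\N\oplus\Z/2\to\mathcal M|_{X_0}$ sending $(0,\bar 1)$ to the $2$-torsion unit $1+y$. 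Every lift $u\in\mathcal O_X^{*}$ of $1+y$ satisfies $u^2=1+y^2\neq 1$, and this persists after any fppf cover, so this chart admits no lift to a chart of $\mathcal M$; the obstruction is a nonzero class in $\mathrm{Ext}^1_{\Z}\bigl(Q^{\mathrm{gp}}_{\mathrm{tors}},I\bigr)$ with $I=(y^2)$, and it is not killed by localization. What does lift, and lifts uniquely (whence étaleness rather than mere smoothness), is the corresponding point of the quotient stack: such a point is a $\spec\Z[Q^{\mathrm{gp}}/P^{\mathrm{gp}}]$-torsor together with an equivariant map, i.e.\ a chart only up to units, and its unique lifting rests on the fact that $\overline{\mathcal M}=\mathcal M/\mathcal O^{*}$ is invariant under infinitesimal thickenings. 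In the example, the bad chart becomes isomorphic, as a point of the quotient stack, to the liftable chart $(0,\bar 1)\mapsto 1$. So the torsor formulation is not a convenience but the mechanism that makes the lifting argument work; executed literally with charts, your argument breaks, and repairing it is exactly the content of Olsson's chart theory in Section 5 of \cite{olsson2003logarithmic}.
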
 
Using Theorem\autoref{OlssonRep}, one can rephrase  certain fundamental notions of log geometry in terms of the language of algebraic stacks. In \cite{olsson2003logarithmic}, Olsson shows that the notions of log-smoothness and log-\'etaleness for log schemes can be recovered from the classical smoothness and \'etaleness for maps of stacks. 

\subsubsection{Semi-classical log invariants from moduli of log structures}
A key feature of Olsson's stack $\llog_{(S,\mathcal{L})}$ is its utility in studying certain algebraic invariants of fine log schemes from a stack-theoretic perspective. Let $\mathbf H$ be an invariant defined for relative algebraic stacks,  i.e., it is defined for morphisms ${\mathbf H}_{X/Y}$ and is functorial in both the source and target. More precisely, $\mathbf H$ is a functor $\mathbf H:\Fun(\Delta^1,{\mathbf{ArtStk}})\rightarrow\mathcal C$, where $\mathcal C$ is a complete ($\infty$)-category. One can then define a new invariant for log schemes associated with $\mathbf H$ as follows:
$$\mathbf H^O:\Fun(\Delta
^1,\mathbf{LogSch})\rr \mathcal C, ((X,\mathcal M)\rightarrow(S,\mathcal L))\mapsto\mathbf{H}_{X/\llog_{(S,\mathcal L)}}.$$
In \cite{olsson2005logarithmic} and  \cite{olsson2024hochschildhomologylogschemes}, Olsson employed this approach to study the log cotangent complex ( i.e., Andr\'e-Quillen homology) and the log Hochschild homology functor, which we denote by $\LL^O$ and ${\rm HH}^O$, respectively.\\
In practice, a complication arises because such a logarithmically defined invariant is often "semi-classical" and may not yield (in general) the desired logarithmic invariants. For example, take $\mathbf{H}$ to be the cotangent complex functor $\LL$ in the sense of \cite{illusie1971complexe1}, thereby defining Olsson's cotangent complex $\LL^O_{(X,\mathcal M)/(S,\mathcal L)}:=\LL_{X/\llog_{(S,\mathcal L)}}\in {\mathbf D}_{\rm qcoh}(X)$ for a morphism of fine log schemes $f:(X,\mathcal M)\rightarrow(S,\mathcal L)$. The $0$-th homotopy group $\pi_0\LL^O{(X,\mathcal M)/(S,\mathcal L)}$ is naturally equivalent to the sheaf of logarithmic differential forms $\Omega^1_{(X,\mathcal M)/(S,\mathcal L)}$ \cite[Lemma 3.8]{olsson2005logarithmic}. However, the higher homotopy groups (${\rm deg}\geq 3$) of $\LL^O_{(X,\mathcal M)/(S,\mathcal L)}$ lack clear geometric interpretations, as noted in \cite[Theorem 8.32]{olsson2005logarithmic}.\\
On the other hand, in deformation theory, the cotangent complex functor is typically characterized by certain universal properties; namely, it should control the higher deformations of the geometric objects under consideration. In particular, one hopes that the "correct" log cotangent complex should control the higher deformations of log schemes (or log rings) in the sense of \cite[Section 3.3]{binda2023hochschild}.\\
To this end, there is another definition of the log cotangent complex due to Gabber, constructed in \cite[Definition 8.5]{olsson2005logarithmic} (see also \cite[Definition 3.5]{binda2023hochschild}), which we refer to as the \textit{(algebraic) Gabber's cotangent complex} $\LL^G$. This complex is first defined for maps of (animated) log rings $f:(R,L)\rightarrow(A,M)$ as the non-abelian derived functor of the module of log differential forms. It is then generalized to maps of (derived) log schemes via descent arguments, as detailed in \cite[Section 8.29]{olsson2003logarithmic} and \cite[Proposition 7.20]{binda2023hochschild}. The cotangent complex of the morphism $f$ is characterized by the following universal property:
$$\map_{A}(\LL^G_{(A,M)/(R,L)},J)\simeq\map_{(R,L)//(A,M)}((A,M),(A\oplus J,M\oplus J)),J\in\Mod_A^{\rm cn}$$
where $\Mod_A$ denotes the derived $\infty$-category of $A$-modules and $\Mod_A^{\rm cn}$ denotes its subcategory of connective objects. Furthermore, the mapping space $\map_{(R,L)//(A,M)}((A,M),(A\oplus J,M\oplus J))$ is homotopy equivalent to the anima of $(R,L)$-linear \textit{log derivations} \cite[Section 3.3]{binda2023hochschild}. This implies that the log cotangent complex functor corepresents log derivations. If the (animated) log rings $(A,M)=(A,GL_1(A))$ and $(R,L)=(R,GL_1(R))$ are (animated) rings endowed with trivial log structures, then Gabber's cotangent complex $\LL^G_{(A,GL_1(A))/(R,GL_1(R))}$ coincides with the ordinary cotangent complex $\LL_{A/R}$ defined in \cite{illusie1971complexe1} and \cite{lurie2004derived}. Unfortunately, it's not possible to reconstruct the space of log derivation using Olsson's stack of log structures. 
\subsubsection{Olsson's cotangent complex versus Gabber's cotangent complex}
Recall that a map of schemes $f:X\rightarrow S$ is smooth (resp. étale) if it is locally of finite presentation of relative dimension $d$ (resp. $0$), flat, and the sheaf of relative differentials $\Omega^1_{X/S}$ is a vector bundle of constant rank $d$ (resp. $0$). This definition is equivalent to the following derived characterization: $f$ is locally of finite presentation and the relative cotangent complex $\LL_{X/S}$ is a locally free sheaf of finite rank (resp. is contractible) \cite[Proposition 3.4.9]{lurie2004derived}. Consequently, the classical and derived notions of smoothness (resp. étaleness) coincide for schemes.\\
Furthermore, the theory of cotangent complexes satisfies  conormal fiber sequence and  flat base change formula. Specifically, for a sequence of maps of schemes (or algebraic stacks) $X\stackrel{f}\rightarrow Y\stackrel{g}\rightarrow S$, there exists a canonical fiber sequence of quasicoherent sheaves on $X$:
$$f^*\LL_{Y/S}\rr\LL_{X/S}\rr\LL_{X/Y}$$
Unfortunately, this will never happen for classical logarithmic geometry. As argued by Bauer and Illusie, following a proposal by Olsson in \cite[Section 7.1]{olsson2005logarithmic}, there exists no cotangent complex functor $\LL$ for maps of classical log schemes that satisfies the following properties:
\begin{enumerate}
    \item If $(X,\mathcal M)\rightarrow(S,\mathcal L)$ is strict, then $\LL_{(X,\mathcal M)/(S,\mathcal L)}\simeq\LL_{X/S}$;
    \item If  $(X,\mathcal M)\rightarrow(S,\mathcal L)$ is classically log-smooth, then $\LL_{(X,\mathcal M)/(S,\mathcal L)}\simeq\Omega_{(X,\mathcal{M})/(S,\mathcal{L})}^1$;
    \item Given a sequence of map $(X,\mathcal M)\stackrel{f}\rightarrow(Y,\mathcal N)\stackrel{g}\rightarrow(S,\mathcal L)$, then there is a functorial fiber sequence
    $$f^*\LL_{(Y,\mathcal N)/(S,\mathcal L)}\rr\LL_{(X,\mathcal M)/(S,\mathcal L)}\rr\LL_{(X,\mathcal{M})/(Y,\mathcal N)}.$$
\end{enumerate}
Note that Olsson’s cotangent complex $\LL^{\rm O}$ satisfies conditions (1) and (2) but not (3), whereas Gabber’s cotangent complex $\LL^{\rm G}$ satisfies condition (1) and (3) but not (2). The existence of a satisfactory cotangent complex theory for log schemes is precluded for three principal reasons:
\begin{enumerate}
    \item The underlying scheme map of a log smooth map may not have good behavior, in particular, it might not be flat\footnote{For example, let $V$ be a smooth complex algebraic variety with trivial log structure. Choose a closed point $x\in V$. Consider the blow-up ${\rm Bl}_xV$ of $V$ along $x$. The exceptional divisor $E\subset {\rm Bl}_xV$ defines a log structure $\mathcal{ E}$ on ${\rm Bl}_xV$. We have a log-scheme map $({\rm Bl}_xV,\mathcal{ E})\rightarrow (V,\mathcal{ O}_V^*)$.
    This  is a log-\'etale map, but the underlying map of schemes is not  flat.}; 
    \item The notions of classical log-smoothness (characterized by liftability against square-zero extensions) and derived log-smoothness (characterized by Gabber’s cotangent complex or liftability against higher square-zero extensions) do not coincide. Derived log-smoothness is strictly stronger than its classical counterpart;
\item The derived log-smoothness is not preserved by pullbacks in the ordinary category of quasi-coherent log-schemes\footnote{See \cite[Example 4.9]{binda2023hochschild} for an example that classically log-smooth maps are not necessarily derived log-smooth, and the derived log-smoothness is not preserved by classical base change.}.
\end{enumerate}
Let us analyze what the issues listed above mean. For issue $(1)$ and $(3)$, this is a flat pullback problem. The pullback of logarithmic cotangent complexes only involves the underlying scheme maps, and the base change behaves ugly if we work in the category of classical schemes. This issue disappears if we work in the world of derived or spectral log schemes (or stacks) instead of only dealing with classical objects, as 
we can drop the flatness assumption for pullback operations in derived and spectral geometry.
 For issue $(2)$, the point is that  Gabber's cotangent complex $\LL^G$, which characterizes higher deformations, is not equivalent to Olsson's cotangent complex. In particular, Gabber's cotangent complex  of a classical log-\'etale map might fail to be discrete. To fix this issue,  we work in the $\infty$-category of derived or spectral log schemes instead of the ordinary category of classical log schemes, and deal with the notions of derived log-smoothness and \'etaleness. In this setting,  smoothness means that the underlying scheme map  is finitely presented and Gabber's cotangent complex is locally free of finite rank.
\begin{rmk}
 In practice,  both of Olsson's and Gabber's cotangent complexes  have their own advantages and disadvantages. We will work with Gabber's cotangent complex because it is more adapted to the derived log geometry.
\end{rmk}
\subsection{Logarithmic deformation theory}
As explained in \cite{lurie2012derived}, Lurie posits that the geometric objects in spectral (or derived) algebraic geometry should be understood as (higher) deformations of  objects arising from classical algebraic geometry. For instance, any connective $\Einf$-ring $R$ admits a functorial tower—known as the Postnikov tower—of truncated $\Einf$-rings:
$$...\rr\tau_{\leq n}R\rr\tau_{\leq n-1}R\rr...\rr \tau_{\leq 0}R=\pi_0R$$
in which each successive map is a square-zero extension and the natural map $R \xrightarrow{\simeq} \varprojlim_{n} \tau_{\leq n}R$ is an equivalence of $\Einf$-rings. A key aspect of higher deformation theory is that such square-zero extensions are governed by the cotangent complex functor. Lurie established the following foundational results in this context.
\begin{thm}\cite[Theorem 7.4.2.7, simplified version]{lurie2017higher}
    Let $R$ be a connective  $\Einf$-ring, and let $A$ be a connective $R$-algebra. Denote by $\LL_{A/R}$ the corresponding relative cotangent complex. Then
    \begin{enumerate}
        \item $\LL_{A/R}$ is initial in the $\infty $-category of $R$-derivations of $A$, and for any $I\in\Mod_A^{\rm cn},$ there is a canonical equivalence 
        $$\map_{A}(\LL_{A/R},I)\simeq\map_{R//A}(A,A\oplus I).$$
        \item $\LL_{A/R}$ classifies square-zero extensions of $A$, that is , for any $I\in\Mod_A^{\rm cn}$, let $\mathcal C$ be the subcategory of $\algcn_R$ spanned by square-zero extensions of $A$, with kernel $I$. Then there is a canonical equivalence 
        $$\map_{A}(\LL_{A/R},I[1])\simeq\mathcal C^\simeq.$$
    \end{enumerate}
\end{thm}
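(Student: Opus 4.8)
The plan is to realize both parts as instances of the general tangent-bundle formalism for $\algcn$, reducing each assertion to an adjunction and then to the analysis of a single pullback construction. First I would recall the stabilization $\mathrm{Sp}((\algcn)_{/A})\simeq\Mod_A^{\rm cn}$, under which the infinite-loop functor $\Omega^\infty$ is identified with the \emph{trivial square-zero extension} $I\mapsto A\oplus I$, regarded as an augmented $R$-algebra over $A$. Its relative left adjoint is, by definition, the cotangent-complex functor, and $\LL_{A/R}$ is the value of this left adjoint on the initial object $A\in(\algcn_R)_{/A}$. Part (1) is then immediate from the adjunction: the chain of equivalences
$$\map_A(\LL_{A/R}, I)\simeq\map_{(\algcn_R)_{/A}}(A, A\oplus I)\simeq\map_{R//A}(A, A\oplus I)$$
holds for every $I\in\Mod_A^{\rm cn}$, and unwinding the universal property exhibits $\LL_{A/R}$, together with its universal derivation, as the initial object of the $\infty$-category $\Der_R(A)$ of $R$-derivations of $A$, i.e. of pairs $(M, A\to A\oplus M)$ over $A$.

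For part (2) the main construction is the \emph{square-zero extension} functor. Given a class $\eta\in\map_A(\LL_{A/R}, I[1])$, part (1) reinterprets it as a derivation $d_\eta\colon A\to A\oplus I[1]$ over $A$, and I would then form the pullback of $\Einf$-$R$-algebras
$$A^\eta:=A\times_{A\oplus I[1]}A,$$
where one leg is $d_\eta$ and the other the trivial derivation $d_0$. Since fiber and cofiber differ by a shift in the stable category of underlying modules, a fiber computation gives $\mathrm{fib}(A^\eta\to A)\simeq\Omega(I[1])\simeq I$; thus $A^\eta\to A$ is a square-zero extension of $A$ with kernel $I$, and the degree shift $I[1]$ is precisely the mechanism producing nontrivial (rather than split) extensions. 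The goal is to promote this assignment to an equivalence of anima $\map_A(\LL_{A/R}, I[1])\xrightarrow{\sim}\mathcal C^\simeq$.

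The heart of the argument, and the step I expect to be the main obstacle, is the \emph{full faithfulness} of the square-zero extension functor $\Der\to\Fun(\Delta^1,\algcn)$ built from the pullback above. Concretely, I would show that for derivations $\eta,\eta'$ the induced map on mapping spaces between $(A^\eta\to A)$ and $(A^{\eta'}\to A)$ recovers the mapping space of the underlying derivations; this is where one genuinely uses the stability of the fibers $\Mod_A$ of the tangent bundle $T_{\algcn}$ and the compatibility of the construction with base change, rather than merely matching objects levelwise. Granting full faithfulness, essential surjectivity onto $\mathcal C$ follows from the definition of square-zero extension as the essential image of this functor, together with the fiber computation that pins the kernel to $I$; passing to maximal sub-anima then yields the desired equivalence $\map_A(\LL_{A/R}, I[1])\simeq\mathcal C^\simeq$.

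Two bookkeeping points remain. First, the reduction from the relative to the absolute cotangent complex, via the cofiber sequence $A\otimes_R\LL_R\to\LL_A\to\LL_{A/R}$, which transports the absolute statements to the relative ones. Second, the connectivity hypotheses: as $I$ is connective, so is $I[1]$, and the fiber sequence $I\to A^\eta\to A$ has connective outer terms, so $A^\eta\in\algcn$ and no nonconnective phenomena intervene. Both are routine once the adjunction of part (1) and the full faithfulness of the square-zero functor are in place.
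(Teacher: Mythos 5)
A preliminary remark on the comparison itself: the paper never proves this statement. It is imported, in ``simplified'' form, from Lurie's \emph{Higher Algebra}, and the only argument in the paper that interacts with it is the proof of Theorem\autoref{deform}, which is deliberately formulated as a left-fibration statement $\gamma^{+}:\Der^{\llog,+}\rightarrow\mathbf{Def}^{\llog,+}$ and proved by reduction to Lurie's theorem. So your sketch can only be measured against Lurie's argument, which is also the template the paper follows. Your part (1) is indeed that argument: the universal property is essentially the definition of $\LL_{A/R}$ as a relative left adjoint of $\Omega^{\infty}$. (One slip: $\mathrm{Sp}((\algcn)_{/A})\simeq\Mod_A$, not $\Mod_A^{\rm cn}$ --- the latter is not stable; see Remark\autoref{tangentconnective} for why passing to connective algebras does not change the stabilization.) Your pullback construction $A^{\eta}=A\times_{A\oplus I[1]}A$ and the fiber computation in part (2) are also correct, and are Lurie's.

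The genuine gap is in the final step of part (2). Even granting your full-faithfulness claim, what it yields is an equivalence between $\mathcal C^{\simeq}$ and the maximal sub-anima of the category of derivations whose target is merely \emph{abstractly} equivalent to $I[1]$; that anima fibers over $B\aut_A(I)$ with fiber $\map_A(\LL_{A/R},I[1])$ (the homotopy quotient by the postcomposition action of $\aut_A(I)$), and it is not $\map_A(\LL_{A/R},I[1])$ itself. These genuinely differ: take $R=A=k$ a field with $k^{\times}\neq 1$ and $I=k$. Then $\map_A(\LL_{A/R},I[1])=\map_k(0,k[1])\simeq\ast$, whereas $\mathcal C$ contains the trivial square-zero extension $k[x]/x^{2}\rightarrow k$, whose automorphisms $x\mapsto\lambda x$, $\lambda\in k^{\times}$, give at least $k^{\times}$ many components of its automorphism space; hence $\mathcal C^{\simeq}$ is not contractible. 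So ``passing to maximal sub-anima'' of the unmarked categories proves a false statement. The repair --- which is what Lurie's Theorem 7.4.2.7 literally asserts, and what the paper's Theorem\autoref{deform} generalizes --- is to keep the identification of the kernel with $I$ as part of the data: one works with the marked categories $\Der^{+}$ and $\mathbf{Def}^{+}$, whose morphisms of derivations are required to induce equivalences $I\otimes_AB\simeq J$ and whose morphisms of extensions are deformations, and one proves that $\Der^{+}\rightarrow\mathbf{Def}^{+}$ is a \emph{left fibration}; statement (2) then follows by passing to fibers, with $\mathcal C^{\simeq}$ read as the anima of square-zero extensions \emph{together with} an identification of the kernel with $I$.

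A secondary caution about the full-faithfulness step itself: it cannot follow from a purely formal stability argument, because the unrestricted functor $\Der\rightarrow\Fun(\Delta^{1},\algcn)$ is genuinely not fully faithful. The square-zero construction is insensitive to coconnective pieces of the module: the zero derivations $\LL_k\rightarrow k[1]$ and $\LL_k\rightarrow k[1]\oplus k[-5]$ are non-equivalent objects of $\Der$ with equivalent images, which no fully faithful functor can produce. This is Lurie's Warning 7.4.1.10, which the paper itself echoes in the remark following Proposition\autoref{sqzero-logder}. Your hypothesis that $I$ is connective shields you from this particular failure, but it shows that the actual content of the theorem lives in the marked, left-fibration formulation rather than in full faithfulness of the naive square-zero functor.
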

\begin{thm}\cite[Theorem 7.5.1.11, simplified version]{lurie2017higher}
    Let $A$ be a connective $\Einf$-ring. The base change along the canonical map $A\rightarrow\pi_0A$ induces an equivalence of $\infty$-categories (in fact, they are ordinary categories) $A_{\et}\simeq (\pi_0A)_{\et}.$ 
    Here $A_{\et}$ and $(\pi_0 A)_{\et}$ are the subcategories of $\algcn_A$ and $\algcn_{\pi_0A}$ spanned by \'etale objects, respectively.
\end{thm}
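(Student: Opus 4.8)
The plan is to prove topological invariance of the étale site by deformation theory, descending along the Postnikov tower of $A$ and reducing to a single square-zero extension, where the vanishing of the relative cotangent complex makes the lifting problem rigid. First I would record the derived characterization of étaleness that this theorem exploits: a morphism $A\rr B$ in $\algcn_A$ is étale if and only if $B$ is of finite presentation over $A$, is flat, and $\pi_0 B$ is classically étale over $\pi_0 A$; equivalently, $B$ is of finite presentation and $\LL_{B/A}\simeq 0$. The condition $\LL_{B/A}\simeq 0$ is what will do all the work. The functor in question sends an étale $A$-algebra $B$ to its base change $B\otimes_A\pi_0 A\simeq\pi_0 B$, and I want to show it is an equivalence.

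Second, I would set up the reduction. Writing $A\simeq\varprojlim_n\tau_{\leq n}A$ as the limit of its Postnikov tower, in which each map $\tau_{\leq n}A\rr\tau_{\leq n-1}A$ is a square-zero extension with kernel $\pi_n A[n]$, it suffices to establish two facts: that base change $(\tau_{\leq n}A)_{\et}\rr(\tau_{\leq n-1}A)_{\et}$ is an equivalence for every $n$, and that the étale site is compatible with this limit, i.e. $A_{\et}\simeq\varprojlim_n(\tau_{\leq n}A)_{\et}$. The second fact is a finite-presentation argument: an étale $A$-algebra is compact, hence already defined over some finite truncation and recovered from its image in the tower. The crux is therefore the single square-zero step.

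Third, for the square-zero step I would build on the preceding theorem. Let $\widetilde A\rr A$ be a square-zero extension with kernel an $A$-module $M$, classified by a derivation, and let $B$ be étale over $A$. The space of flat lifts $\widetilde B\in\algcn_{\widetilde A}$ equipped with an equivalence $\widetilde B\otimes_{\widetilde A}A\simeq B$ is controlled by $\LL_{B/A}$: when nonempty it is a torsor under the mapping space $\map_B(\LL_{B/A},M\otimes_A B)$, and the obstruction to nonemptiness lives in $\pi_{-1}\map_B(\LL_{B/A},M\otimes_A B)$. Since $\LL_{B/A}\simeq 0$, the obstruction vanishes and the torsor is contractible, so the lift $\widetilde B$ exists and is unique up to a contractible space of choices. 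This contractibility yields essential surjectivity and full faithfulness simultaneously. It remains to check that $\widetilde B$ is again étale: by the base-change formula $\LL_{\widetilde B/\widetilde A}\otimes_{\widetilde B}B\simeq\LL_{B/A}\simeq 0$, and since $\widetilde B\rr B$ has nilpotent fiber while $\LL_{\widetilde B/\widetilde A}$ is connective, a Nakayama-type argument forces $\LL_{\widetilde B/\widetilde A}\simeq 0$; together with finite presentation this gives étaleness.

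Finally, the statement that $A_{\et}$ and $(\pi_0 A)_{\et}$ are ordinary categories follows from the same rigidity: for $B$ étale over $A$ the identification $\map_A(B,C)\simeq\map_{\pi_0 A}(\pi_0 B,\pi_0 C)$ realizes every mapping space as a classical $\Hom$-set of étale $\pi_0 A$-algebras, which is discrete. I expect the main obstacle to be the square-zero lifting step~---~specifically, extracting the torsor/obstruction description of deformations from the cotangent-complex formalism and verifying cleanly that the flat lift stays étale~---~rather than the Postnikov bookkeeping, which is formal once the single step is in hand.
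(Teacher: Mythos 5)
The paper does not actually prove this statement: it is imported verbatim from \cite[Theorem 7.5.1.11]{lurie2017higher}, so there is no in-paper proof to compare against. Your proposal reconstructs what is essentially Lurie's own argument, and it is also precisely the template the paper follows for its logarithmic analogue, Proposition\autoref{strictetalerig}: reduce to a single square-zero extension, get essential surjectivity from the deformation theory (Corollary\autoref{deform=sq0extension}), and get full faithfulness from contractibility of the fibers of the restriction map on mapping spaces, using vanishing of the (log) cotangent complex. Your square-zero step is sound modulo small points: the obstruction lives in $\mathrm{Ext}^2(\LL_{B/A},M\otimes_AB)$ and the deformation space, when nonempty, is a torsor under $\map_B(\LL_{B/A},M\otimes_AB[1])$ (compare Corollary\autoref{obstruction}), not the groups you wrote~---~immaterial here since $\LL_{B/A}\simeq 0$ kills everything; the Nakayama argument showing the lift $\widetilde B$ stays \'etale needs $\LL_{\widetilde B/\widetilde A}$ to be almost perfect, which holds because $\widetilde B$ is almost of finite presentation over $\widetilde A$; and full faithfulness is a separate, parallel application of the same vanishing (lifting morphisms along the square-zero extension of the target), not literally simultaneous with essential surjectivity.

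The one genuine gap is your justification of the limit step $A_{\et}\simeq\varprojlim_n(\tau_{\leq n}A)_{\et}$. Compactness of \'etale algebras (finite presentation) controls maps \emph{out} of them into \emph{filtered colimits}, whereas the Postnikov tower is an inverse limit; and ``already defined over some finite truncation'' is backwards, since the truncations $\tau_{\leq n}A$ are quotients of $A$, not algebras mapping to $A$, so there is nothing over which an \'etale $A$-algebra could be ``defined''. The correct argument uses flatness: for a compatible family $B_n\in(\tau_{\leq n}A)_{\et}$, flatness gives $\pi_kB_n\cong\pi_k(\tau_{\leq n}A)\otimes_{\pi_0A}\pi_0B_0$, so the towers $\{\pi_kB_n\}_n$ are eventually constant, the relevant $\varprojlim^1$-terms vanish, and $B:=\varprojlim_nB_n$ satisfies $\pi_kB\cong\pi_kA\otimes_{\pi_0A}\pi_0B_0$; hence $B$ is flat with \'etale $\pi_0$, i.e.\ \'etale, and recovers each $B_n$ by base change. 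Conversely, for $B\in A_{\et}$, flatness identifies $B\otimes_A\tau_{\leq n}A$ with $\tau_{\leq n}B$, and Postnikov convergence gives $B\simeq\varprojlim_n\tau_{\leq n}B$; the same convergence applied to a target $C$, together with the fact that $\map_A(B,-)$ preserves limits, gives the required equivalence on mapping spaces. With this replacement your outline is complete and agrees with the cited proof.
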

The similar  results of the above two theorems also hold for animated rings, which are
formulated in Lurie's PhD thesis \cite{lurie2004derived}.
\subsubsection{Towards logarithmic geometry}
The primary technical framework for formulating $\Einf$-deformation theory is Lurie's \textit{tangent bundle formalism} \cite[Chapter 7]{lurie2017higher}. This formalism is particularly well-suited for constructing the deformation theory of $\Einf$-rings. It yields presentable fibrations
$$\begin{tikzcd}
\tang_{\alg} \arrow[r, "\Omega^{\infty}"] \arrow[rr, "\pi"', bend right] & {\Fun(\Delta^1,\alg)} \arrow[r, "{\rm ev}_1"] & \alg
\end{tikzcd}$$
where the functor $\Omega^{\infty}$ is the \textit{relative stabilization} of the functor ${\rm ev}_1$, and the fiber of $\pi$ over an $\mathbb{E}_{\infty}$-ring $A \in \alg$ is equivalent to the $\infty$-category of $A$-modules. The relative cotangent complex $\mathbb{L}$ ( i.e., topological André-Quillen homology) is, by definition, the relative left adjoint to $\Omega^{\infty}$. The $\infty$-category $\tang_{\alg}$ is called the tangent bundle of $\alg$.\\
We will consider the logarithmic analogue of Lurie's tangent bundle formalism in this paper.
\begin{defn}
    A prelog $\Einf$-ring is a triple $(A,M,\alpha)$, where $A$ is a connective $\Einf$-ring, $M$ is an $\Einf$-monoid, and $\alpha: M\rightarrow \Omega^{\infty} A$ is a map of $\Einf$-monoids. We say $(A,M,\alpha)$ is an $\Einf$-log ring if the induced map $M\times_{\Omega^{\infty}A}GL_1(A)\rightarrow GL_1(A)$ is an equivalence.
\end{defn}
However, it is not possible to produce the deformation theory of $\Einf$-log rings directly in this way. The reason Lurie's formalism fails is that it does not yield the correct notion of "modules over $\Einf$-log rings." The tangent bundle $\tang_{\llog}$, defined as the stabilization of the functor ${\rm ev}_1:\Fun(\Delta^1,\llog)\rightarrow\llog$, appears to be too complicated; moreover, the fiber $\tang_{\llog}\times_{\llog}\{(A,M)\}$ over an $\Einf$-log ring $(A,M)$ is not well understood. To resolve this issue, Lundemo \cite{lundemo2023deformation} introduces a variation of Lurie's tangent bundle, termed the replete tangent bundle formalism, which gives rise to presentable fibrations.
$$\begin{tikzcd}
\tang^{\rm rep}_{\llog} \arrow[r, "\Omega^{\infty}"] \arrow[rr, "\pi"', bend right] & {\Fun(\Delta^1,\llog)} \arrow[r, "{\rm ev}_1"] & \llog
\end{tikzcd}$$
such that the fiber of $\pi$ at an $\Einf$-log ring $(A,M)$ is equivalent to the $\infty$-category $\Mod_A$ of $A$-modules. The \textit{Gabber's cotangent complex} $\LL^G$ is the left adjoint of the functor $\Omega^{\infty}$. This gives rise to a topological variant of algebraic Gabber's cotangent complex functor given above defined in terms of the non-abelian derived functor of the log differential forms. Unwinding the definition,  the Gabber's cotangent complex is characterized by the following universal property:
$$\map_{A}(\LL^G_{(A,M)/(R,L)},J)\simeq\map_{(R,L)//(A,M)}((A,M),(A\oplus J,M\oplus J)),J\in\Mod_A^{\rm cn},$$
where we denote by $\Mod_A$ the  $\infty$-category of $A$-modules, and denote by $\Mod_A^{\rm cn}$ the  subcategory of $\Mod_A$ spanned by connective objects.\\
Besides the cotangent complex, another object central to deformation theory is that of \textit{derivations}.  A derivation of a ring $R$ is a connective module $J$, together with a map $R\rightarrow R\oplus J$, such that the composition $R\rightarrow R\oplus J\rightarrow R$ is the identity map. Or equivalently, it is an additive map $d:R\rightarrow J$ which satisfies the Leibniz rule.
The classical deformation theory shows that the square-zero extensions of $R$ are one to one corresponding to derivations over $R$.

In spectral algebraic geometry, Lurie employs the tangent correspondence formalism \cite[Section 7]{lurie2017higher} to define the notions of higher derivations and higher deformations of $\Einf$-rings. In \autoref{definiiton of einf-log rings}, we introduce an analogue of Lurie's tangent correspondence, which we term the replete tangent correspondence $\MTL\rightarrow \Delta^1\times\llog$, generalizing Lundemo's ideas. Informally, $\MTL$ can be described as follows: $\MTL\times_{\Delta^1}\{0\}\simeq\llog$ and $\MTL\times_{\Delta^1}\{1\}\simeq\tlog$, a morphism from $(A,M)\in\llog$ to $((B,N),J)\in\tlog$ is given as an $\Einf$-log ring map $(A,M)\rightarrow (B,N)$, together with a 
$B$-module map $\LL^G_{(A,M)}\otimes_AB\rightarrow J$.
Using this formalism, we define the $\infty$-category $\Der^{\llog}\rightarrow\Delta^1\times\llog$ of log derivations of $\Einf$-log rings, thereby generalizing the theory of derivations from the animated context \cite{sagave2016derived, binda2023hochschild} to the spectral context.
   \begin{defn}[Definition\autoref{LDER}]
 The $\infty$-category of log  derivations $\Der^\llog$  is the subcategory of $\Fun(\Delta^1, \MTL)$ spanned by these functors  
  $d:\Delta^1\rightarrow \MTL$, such that the composition 
  $$\Delta^1\rr\MTL\rr \Delta^1\times \llog$$
  is the functor $\Delta^1\rightarrow \Delta^1\times\{(A,M)\}\subset \Delta^1\times \llog$ for some $(A,M)\in\llog.$ For such functors we use the notation $d: (A,M)\rightarrow I$, where $I$ is the image $d(1)\in \tlog\times_{\llog}\{(A,M)\}\simeq \Mod_A.$
\end{defn}
\begin{defn}
\begin{enumerate}
    \item 
    Let $(A,M)$ be an $\Einf$-log ring, and let $I\in\Mod_A^{\rm cn}$ be a connective $A$-module, then a \textit{square-zero extension}  of $A$ with kernel $I$ is a strict  morphism  $i:(A',M')\rightarrow (A,M),$ such that $A'$ is a square-zero \cite[Definition 7.4.1.6]{lurie2017higher} extension of $A$ with kernel $I$.
    \item Let $(R,L)$ be an $\Einf$-log ring, and let $(R',L')$ be a square-zero extension of $(R,L)$ with kernel $I\in\Mod_R^{\rm cn}$. Let $(A,M)$ be an $(R,L)$-algebra,
a deformation of $(A,M)$ to $(R',L')$ is a 
square-zero extension  $(A',M')$ of $(A,M)$ with kernel $J\in\Mod_A^{\rm cn}$, such that there is an equivalence  $J\simeq I_A=I\otimes_RA$ of $A$-modules, and the following diagram 
\[\begin{tikzcd}
	{(R',L')} & {(R,L)} \\
	{(A',M')} & {(A,M)}
	\arrow[from=1-1, to=1-2]
	\arrow[from=1-1, to=2-1]
	\arrow[from=2-1, to=2-2]
	\arrow[from=1-2, to=2-2]
\end{tikzcd}\]
is a pushout square in $\llog$. 
\end{enumerate}
\end{defn}
We have the following analogue of Lurie's theorems of deformations.
\begin{thm}[Corollary\autoref{obstruction}]\label{1111}
     Let $(R',L')\rightarrow(R,L)$ be a square-zero extension of $\Einf$-log rings with kernel $I$, induced by the derivation $d:(R,L)\rightarrow I[1]$. 
\begin{enumerate}
    \item 
   Let $(R,L)\rightarrow(A,M)$ be a morphism of $\Einf$-log rings. The obstruction to the existence of deformations $(A',M')\rightarrow(A,M)$ to $(R',L')$ lies in $\mathrm{Ext}^2(\LL^G_{(A,M)/(R,L)},I_A).$
\item If the map $\LL_{(R,L)}\otimes_RA\rightarrow I_A[1]$ induced from $d$ vanishes (for example, $d$ is a trivial derivation), then there is an equivalence of animae
$$\map_{A}(\LL^G_{(A,M)/(R,L)},I_A[1])\simeq\mathbf{Def}^{\simeq}((A,M)/(R,L),d)$$
where $\mathbf{Def}^{\simeq}((A,M)/(R,L),d)$ is the anima of deformations $(A',M')$ to $(R',L').$
    \end{enumerate}
\end{thm}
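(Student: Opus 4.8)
The plan is to reduce both statements to a single fiber-sequence computation, exactly parallel to Lurie's proof of the spectral analogue \cite[Theorem 7.4.2.7]{lurie2017higher}, with the ordinary cotangent complex replaced by Gabber's cotangent complex $\LL^G$. Two inputs drive the argument. First, the classification of square-zero extensions of $\Einf$-log rings by log derivations: a square-zero extension $(A',M')\to(A,M)$ with kernel $J\in\Mod_A^{\rm cn}$ is classified by a log derivation $(A,M)\to J[1]$, i.e.\ by a point of $\map_A(\LL^G_{(A,M)},J[1])$, through the replete tangent correspondence $\MTL$ and the $\infty$-category $\Dlog$. Second, the transitivity cofiber sequence for Gabber's cotangent complex attached to $(R,L)\to(A,M)$,
$$\LL^G_{(R,L)}\otimes_R A\rr\LL^G_{(A,M)}\rr\LL^G_{(A,M)/(R,L)},$$
which is available because $\LL^G$ is the left adjoint of $\Omega^\infty$ in the replete tangent bundle formalism and hence sends pushout squares of $\Einf$-log rings to cofiber sequences of modules. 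Base-changing the classifying derivation $d\colon\LL^G_{(R,L)}\to I[1]$ of $(R',L')$ along $R\to A$ produces $d_A\colon\LL^G_{(R,L)}\otimes_R A\to I_A[1]$, a point of $\map_A(\LL^G_{(R,L)}\otimes_R A, I_A[1])$; this is precisely the map whose vanishing is hypothesized in part (2).

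The central step is to identify the anima of deformations with a fiber. I would show that a deformation $(A',M')$, i.e.\ a square-zero extension of $(A,M)$ with kernel $J\simeq I_A$ fitting into the prescribed pushout square, is the same datum as a log derivation $\tilde d\colon(A,M)\to I_A[1]$ whose restriction along $\LL^G_{(R,L)}\otimes_R A\to\LL^G_{(A,M)}$ is coherently identified with $d_A$. Concretely, by the classification above $(A',M')$ corresponds to a point $\tilde d\in\map_A(\LL^G_{(A,M)},I_A[1])$, and the requirement that the square
\[\begin{tikzcd}
{(R',L')} & {(R,L)} \\
{(A',M')} & {(A,M)}
\arrow[from=1-1, to=1-2]
\arrow[from=1-1, to=2-1]
\arrow[from=2-1, to=2-2]
\arrow[from=1-2, to=2-2]
\end{tikzcd}\]
be a pushout in $\llog$ translates, under the classification and the functoriality of the replete tangent correspondence, into the identification of the restriction of $\tilde d$ with $d_A$. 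Hence
$$\mathbf{Def}^{\simeq}((A,M)/(R,L),d)\simeq\mathrm{fib}_{d_A}\Big(\map_A(\LL^G_{(A,M)},I_A[1])\rr\map_A(\LL^G_{(R,L)}\otimes_R A,I_A[1])\Big).$$
This translation is the main obstacle: one must check that the pushout condition in the $\infty$-category of $\Einf$-log rings corresponds exactly to the restriction condition on derivations, which requires tracking the repleteness/strictness data together with the naturality of the square-zero classification across the base change $R\to A$. I expect this to follow from the universal property of $\MTL$ and the compatibility of $\LL^G$ with base change, but it is the point where the log structure, rather than the underlying ring, must be handled with care; note that here the deformation is a lift \emph{along} the extension $(R',L')\to(R,L)$, which is genuinely obstructed and not produced by a formal base change.

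Granting this identification, both conclusions are formal. Applying $\map_A(-,I_A[1])$ to the transitivity cofiber sequence yields a fiber sequence of animae
$$\map_A(\LL^G_{(A,M)/(R,L)},I_A[1])\rr\map_A(\LL^G_{(A,M)},I_A[1])\rr\map_A(\LL^G_{(R,L)}\otimes_R A,I_A[1])$$
extended on the right by the connecting map $\partial\colon\map_A(\LL^G_{(R,L)}\otimes_R A,I_A[1])\to\map_A(\LL^G_{(A,M)/(R,L)},I_A[2])$. The fiber over $d_A$ is nonempty precisely when $d_A$ lifts, i.e.\ when $\partial(d_A)=0$; the class $\partial(d_A)\in\pi_0\map_A(\LL^G_{(A,M)/(R,L)},I_A[2])=\mathrm{Ext}^2(\LL^G_{(A,M)/(R,L)},I_A)$ is the desired obstruction, which proves (1). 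For (2), the hypothesis that $d_A$ vanishes means the basepoint of the fiber sequence is $d_A=0$, so the fiber computing $\mathbf{Def}^{\simeq}$ is by definition the fiber over the basepoint, namely $\map_A(\LL^G_{(A,M)/(R,L)},I_A[1])$; this yields the asserted equivalence, with the split (trivial) deformation furnishing the distinguished base point. The remaining verifications—that the transitivity sequence is indeed the cofiber sequence of $\LL^G$ and that the classification of square-zero extensions is natural in the coefficient module—are routine consequences of the replete tangent bundle formalism recalled above.
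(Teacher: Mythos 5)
Your proposal is correct and follows essentially the same route as the paper: the identification of deformations with extensions of $d\otimes_R A$ along $\LL^G_{(R,L)}\otimes_RA\rightarrow\LL^G_{(A,M)}$ is precisely the paper's Theorem\autoref{deform} together with Corollary\autoref{deform=sq0extension} (which the paper obtains by embedding the log tangent correspondence into Lurie's and invoking \cite[Theorem 7.4.2.7]{lurie2017higher}), and the remaining fiber-sequence argument with the conormal sequence, the connecting-map obstruction in $\mathrm{Ext}^2(\LL^G_{(A,M)/(R,L)},I_A)$, and the basepoint identification for part (2) matches the paper's proof of Corollary\autoref{obstruction} exactly.
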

\begin{thm}[Proposition\autoref{strictetalerig}]\cite[Theorem 1.3]{lundemo2023deformation}\label{2222}
      Let $(A,M)$ be an $\Einf$-log ring, and let $(A',M')$ be a square-zero extension of $(A,M)$. Then the base change gives rise to  equivalences
    $$-\otimes_{A'}A:(A',M')_{\set}\stackrel{\simeq}\rr(A,M)_{\set},(A',M')_{{\rm l}\et}\stackrel{\simeq}\rr(A,M)_{{\rm l}\et}.$$
    Here $(A,M)_{\set}$ and $(A,M)_{{\rm l}\et}$ are subcategories of $\llog_{(A,M)/}$ forms of strict \'etale and log \'etale $(A,M)$-algebras respectively.
\end{thm}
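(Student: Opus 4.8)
The plan is to run the classical rigidity argument for étale algebras along a square-zero extension, with the ordinary cotangent complex replaced throughout by Gabber's cotangent complex $\LL^G$, so that the logarithmic deformation theory of Theorem\autoref{1111} supplies the existence and uniqueness of lifts. The single input driving both equivalences is that the relevant relative Gabber complex is contractible: for a strict étale map the strict-invariance property of $\LL^G$ gives $\LL^G_{(B,N)/(A,M)}\simeq\LL_{B/A}\simeq 0$, while for a log étale map $\LL^G_{(B,N)/(A,M)}\simeq 0$ holds by definition. As a preliminary step I would verify that $-\otimes_{A'}A$ actually lands in the prescribed subcategory, using the base-change formula for $\LL^G$ to transport the vanishing of the relative complex, so that the functor is well defined on both $(A',M')_{\set}$ and $(A',M')_{{\rm l}\et}$.

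For the strict étale equivalence I would reduce to the non-logarithmic counterpart of the present statement, namely the equivalence $A'_{\et}\simeq A_{\et}$ for a square-zero extension of $\Einf$-rings, which is part of Lurie's deformation theory \cite{lurie2017higher}. A strict étale $(A,M)$-algebra is precisely an étale $A$-algebra equipped with the induced strict log structure, so forgetting the monoid identifies $(A,M)_{\set}\simeq A_{\et}$ and $(A',M')_{\set}\simeq A'_{\et}$; strictness of the extension $(A',M')\to(A,M)$ guarantees that these identifications are compatible with $-\otimes_{A'}A$, and the equivalence $A'_{\et}\simeq A_{\et}$ transports across.

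The log étale case is the substantive one and cannot be deduced from the non-logarithmic statement, since the underlying ring map of a log étale morphism need not even be flat. Here both essential surjectivity and full faithfulness are instances of unique lifting against the square-zero extension. For objects, given $(B,N)$ log étale over $(A,M)$, the vanishing of $\LL^G_{(B,N)/(A,M)}$ annihilates the obstruction in $\mathrm{Ext}^2(\LL^G_{(B,N)/(A,M)},I_B)$, so a deformation $(B',N')$ over $(A',M')$ exists; concretely it is built by transporting the defining derivation $d$ along the equivalence $\LL^G_{(A,M)}\otimes_A B\xrightarrow{\simeq}\LL^G_{(B,N)}$ coming from the transitivity sequence, and the anima of such deformations is $\map_B(\LL^G_{(B,N)/(A,M)},I_B[1])\simeq\ast$ by Theorem\autoref{1111}, so the lift is essentially unique. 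For morphisms I would establish the logarithmic infinitesimal lifting property: for $(B,N)$ log étale over $(A,M)$ and any $(A',M')$-algebra with reduction $(C,P)$, the restriction map $\map_{(A',M')}((B',N'),(C',P'))\to\map_{(A,M)}((B,N),(C,P))$ is an equivalence, which follows from the corepresentability of log derivations by $\LL^G$ (Theorem\autoref{1111}(1)) together with the vanishing of the relative complex.

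The step I expect to be the main obstacle is reconciling the strictness built into the definition of a square-zero extension of log rings with the deformation produced by $\LL^G$. One must check that the essentially unique algebra-level lift carries a canonical log structure $N'$ making $(B',N')\to(B,N)$ a strict square-zero extension that is again log étale over $(A',M')$, and that this assignment is functorial. The delicate point is that $\LL^G_{(B,N)/(A,M)}\simeq 0$ does not force $\LL_{B/A}\simeq 0$, so the deformation of the underlying algebra $B$ alone may well be obstructed or non-unique; it is only the monoid data, tracked by the replete tangent correspondence $\MTL$, that rigidifies the pair. Confirming that this repletion introduces no spurious deformation classes—so that $(B,N)$ admits a unique log étale lift even when $B$ does not lift uniquely—is the technical heart of the argument.
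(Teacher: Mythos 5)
Your proposal is correct. On the log-étale half---the substantive one---it follows essentially the same route as the paper: essential surjectivity comes from the logarithmic deformation theory (the defining derivation $d:(A,M)\rightarrow I[1]$ extends uniquely along $\LL^G_{(A,M)}\otimes_AB\rightarrow\LL^G_{(B,N)}$, which is an equivalence because $\LL^G_{(B,N)/(A,M)}\simeq 0$, and Corollary\autoref{deform=sq0extension} converts this unique extension into the lift), and full faithfulness is the unique-lifting property against square-zero extensions of the target, which the paper proves by writing $(C',P')$ as the pullback $(C,P)\times_{(C\oplus I[1],P\oplus I[1])}(C,P)$ and invoking corepresentability, so that the relevant fiber is $\map_C(C\otimes_B\LL^G_{(B,N)/(A,M)},I)\simeq *$---exactly the two ingredients you name. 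Where you genuinely diverge is the strict étale half: you reduce it to Lurie's non-logarithmic rigidity $A'_{\et}\simeq A_{\et}$ through the identification $(A,M)_{\set}\simeq A_{\et}$, which is legitimate (the trivial-log-structure functor is left adjoint to the forgetful functor $\llog\rightarrow\alg$, so a strict algebra and its mapping spaces are determined by the underlying ring, compatibly with base change along the strict map $(A',M')\rightarrow(A,M)$), whereas the paper treats both cases by a single argument, using Lemma\autoref{Lstrict}(2) to see $\LL^G_{(B,N)/(A,M)}\simeq\LL_{B/A}\simeq 0$ for strict étale maps. Your route makes the strict case elementary and independent of the replete-tangent machinery; the paper's buys uniformity and avoids the compatibility check.

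Two comments on your closing paragraph. The ``main obstacle'' you flag is not actually open: whether the underlying $\Einf$-ring $B$ lifts uniquely is irrelevant, because the deformation theory you are quoting (Theorem\autoref{1111}, i.e.\ Corollary\autoref{obstruction} and Corollary\autoref{deform=sq0extension}, resting on Theorem\autoref{deform}) classifies strict square-zero extensions of the pair $(B,N)$ by maps out of $\LL^G_{(B,N)}$; strictness is built into the definition of $\mathbf{Def}^{\llog,+}$, so no further rigidification of the monoid needs to be confirmed. What does deserve an explicit line---and is elided both by you and by the paper---is that the lift $(B',N')$ is again log étale (resp.\ strict étale) over $(A',M')$: this follows from base change for $\LL^G$, which gives $\LL^G_{(B',N')/(A',M')}\otimes_{B'}B\simeq\LL^G_{(B,N)/(A,M)}\simeq 0$, together with connectivity of $\LL^G$ and Nakayama along the nilpotent surjection $\pi_0B'\rightarrow\pi_0B$, and the stability of almost finite presentation under square-zero extensions.
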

\subsubsection{Animated context}
As observed in \cite[Section VII.25.3.3]{lurie2018spectral}, Lurie noted that the tangent bundle formalism does not yield the desired deformation theory for animated rings. Proposition 25.1.2.2 of the same work implies that animated rings should be understood as connective $\Einf$-rings equipped with additional structure; that is, the $\infty$-category $\alg^\Delta$ is equivalent to the $\infty$-category of modules over $\alg$ for a specific monad. More precisely, there exists a functor $\Theta:\alg^{\Delta}\rightarrow\alg$, defined as the left Kan extension of the Eilenberg–Mac Lane functor $\alg^\heartsuit\rightarrow\alg$ along the fully faithful embedding $\alg^\heartsuit\subset\alg^\Delta$. The functor $\Theta$ admits both left and right adjoints and is both monadic and comonadic.\\
An analogous issue arises in logarithmic geometry. In \autoref{deformationofanimated}, we will present an animated version of the replete tangent bundle and replete tangent correspondence formalisms, along with animated analogues of Theorem\autoref{1111} and Theorem\autoref{2222}.
\subsection{Moduli of spectral and derived log structures}
We say an $\Einf$-log ring $(A,M)$ is $n$-admissible, if the logification of the induced $\Einf$-prelog ring $(\pi_0 A,M)$ is $n$-truncated for some $n\in\N$, see Definition\autoref{admlogrings}. A spectral log Deligne-Mumford stack $(\X,\mathscr M)$ is $n$-admissible if it is covered by $n$-admissible log-affine objects.
In this paper, we study the following moduli stacks of log structures in the contexts of spectral and derived algebraic geometry given in Definition\autoref{rlogdefn},\autoref{rlogdefn2} and\autoref{rlogdefn4}.
\begin{enumerate}
\item 
The functors $\R\llog^{\mu}_{(\MCS,\mathscr L)},\R\llog^{{\rm Adm},\mu}_
{(\MCS,\mathscr L)}:\Sptdm^{\rm op}_{/\MCS}\rightarrow\ani$, which send a quasi-compact, quasi-separated spectral Deligne-Mumford stack $\X$ to the anima of maps of (admissible) spectral $\mu$-log Deligne-Mumford stacks $(\X,\mathscr M)\rightarrow(\MCS,\MSL)$, where $\mu\in\{\rm qcoh,coh,fin\}$.
\item And these derived analog $\R\llog^{\mu,\Delta}_{(\MCS,\MSL)},\R\llog^{{\rm Adm},\mu,\Delta}_{(\MCS,\MSL)}:{\mathbf{ DM}^{\Delta,\rm op}}\rightarrow\ani$, which are defined over the $\infty$-category of derived Deligne-Mumford stacks over $\MCS$. 
\end{enumerate}
 Here, we refer to  spectral and derived Deligne-Mumford stacks as those defined in \cite[Definition 1.4.4.2]{lurie2012derived} and \cite[Definition 4.5.1]{lurie2004derived}, respectively.\\
We say a functor $X:\algcn\rightarrow\ani$ is a spectral Artin stack if it satisfies \'etale descent, the diagonal $X\rightarrow X\times X$ is representable by a relative algebraic space, and it admits a smooth surjective map from a disjoint union of spectral schemes $\coprod_j U_j\rightarrow X$. Similarly, one can also define $n$-Artin stacks for some $n\in\N$. It is an \'etale sheaf, and the diagonal is representable by a relative  $(n-1)$-Artin stack, which admits a smooth surjective map from a disjoint union of spectral schemes.
See Definition\autoref{artstkdef} for details.\\
We have the following result.
\begin{thm}[Theorem\autoref{algebraicity} and Theorem\autoref{algebraicityani}] {\ } 
      \begin{enumerate}
      \item Let $\mu\in\{\rm qcoh,coh,fin\}$. The stack $\R\llog^{\rm Adm,\mu}_{(\MCS,\MSL)}$ admits a filtration of open substacks 
$$\R\llog^{0-{\rm Adm,\mu}}_{(\MCS,\MSL)}\subset\R\llog^{1-{\rm Adm},\mu}_{(\MCS,\MSL)}\subset...\subset \R\llog^{n-{\rm Adm},\mu}_{(\MCS,\MSL)}\subset...$$
where $\R\llog^{n-{\rm Adm},\mu}_{(\MCS,\MSL)}$ is the substack of $\R\llog^{{\rm Adm},\mu}_{(\MCS,\MSL)}$ consisting of $n$-admissible objects. We have $\R\llog^{{\rm Adm},\mu}_{(\MCS,\MSL)}\simeq\bigcup_{n}\R\llog^{n-{\rm Adm},\mu}_{(\MCS,\MSL)}$.
        \item Let $\mu\in\{\rm qcoh,coh,fin\}$. The functors $\R\llog_{(\MCS,\MSL)}^{\mu,(\Delta)}$  $\R\llog_{(\MCS,\MSL)}^{n-{\rm Adm},\mu,(\Delta)}$ and $\R\llog_{(\MCS,\MSL)}^{{\rm Adm},\mu,(\Delta)}$ are sheaves with respect to the \'etale topology.
        \item Let $\mu\in\{\rm qcoh,coh,fin\}$. Let $(\MCS,\MSL)$ be a $\mu$-spectral (or derived) log Deligne-Mumford stack. Then $\R\llog_{(\MCS,\MSL)}^{\mu,(\Delta)}$ admits a $(-1)$-connective cotangent complex.
        \item Let $\mu\in\{\rm coh,fin\}$. Let $(\MCS,\MSL)$ be a $\mu$-spectral (or derived) log Deligne-Mumford stack. Assume that $\MCS$ admits an \'etale cover by union of  spectra of   $\Einf$-(or animated) $G$-rings\footnote{$R$ is called an $\Einf$- (or animated) $G$-ring, if $R$ is Noetherian, and the classical truncation $\pi_0 R$ is a G-ring.}. Then the functor  $\R\llog_{(\MCS,\MSL)}^{n-{\rm Adm},\mu,(\Delta)}$ is $(1+n)$-Artin, and $\R\llog_{(\MCS,\MSL)}^{{\rm Adm},\mu,(\Delta)}$ is locally Artin.
        \item Let $f:\spet(A,M)\rightarrow\spet(B,N)$ be a map induced by  derived log ring map $(B,N)\rightarrow(A,M)$, then there is a canonical equivalence $\LL_{\spet A/\R\llog^{\mu,(\Delta)}_{\spet(B,N)}}\simeq\LL^{G,(\Delta)}_{(A,M)/(B,N)}.$
    \end{enumerate}
\end{thm}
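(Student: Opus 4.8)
The plan is to prove the equivalence by Yoneda: I will show that both $\LL_{\spet A/\R\llog^{\mu,(\Delta)}_{\spet(B,N)}}$ and $\LL^{G,(\Delta)}_{(A,M)/(B,N)}$ corepresent the same functor $\Mod_A^{\rm cn}\rr\ani$, naturally in $J$, and then invoke the Yoneda lemma in $\Mod_A$. Two universal properties are available. On the one hand, by the definition of the moduli functor (Definition\autoref{rlogdefn}), a map $\spet R\rr\R\llog^{\mu,(\Delta)}_{\spet(B,N)}$ over $\spet B$ is the datum of a $\mu$-log structure on $\spet R$ together with a morphism of log Deligne--Mumford stacks to $\spet(B,N)$; in particular the canonical map $g\colon\spet A\rr\R\llog^{\mu,(\Delta)}_{\spet(B,N)}$ is the one classifying $(A,M)$ over $(B,N)$. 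On the other hand, the replete tangent correspondence $\MTL$ and the resulting $\infty$-category $\Dlog$ of log derivations (Definition\autoref{LDER}) endow $\LL^{G,(\Delta)}_{(A,M)/(B,N)}$ with the universal property $\map_A(\LL^{G,(\Delta)}_{(A,M)/(B,N)},J)\simeq\map_{(B,N)//(A,M)}((A,M),(A\oplus J,M\oplus J))$ recorded in the excerpt.

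First I would compute the left-hand side using the deformation theory of the moduli stack, whose cotangent complex exists by item (3) of the present theorem (the $(-1)$-connective cotangent complex of $\R\llog^{\mu,(\Delta)}_{\spet(B,N)}$). For $J\in\Mod_A^{\rm cn}$ the relative cotangent complex of the morphism $g$ corepresents relative derivations, i.e. the based fiber
$$\map_A\big(\LL_{\spet A/\R\llog^{\mu,(\Delta)}_{\spet(B,N)}},J\big)\simeq{\rm fib}\Big(\map_{\alg}(A,A\oplus J)\rr \map_{\alg}(A,A)\times_{\R\llog^{\mu,(\Delta)}_{\spet(B,N)}(A)}\R\llog^{\mu,(\Delta)}_{\spet(B,N)}(A\oplus J)\Big)$$
taken over the point cut out by the split projection $A\oplus J\rr A$ together with the classifying datum $g$. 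Here I use that $\spet A$ is corepresented by $A$, so the $\map_{\alg}$-factor contributes the ordinary derivation space of $A$ while the moduli factor contributes the variation of the log structure; in the animated case one reads $\alg$ as $\alg^{\Delta}$.

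Next I would unwind this based fiber through the moduli description. Since the underlying $\Einf$- (or animated) ring $A\oplus J$ is held fixed, $\R\llog^{\mu,(\Delta)}_{\spet(B,N)}(A\oplus J)$ only varies the log structure; imposing that the restriction along $A\rr A\oplus J$ recovers $(A,M)$ and basing at the split extension cuts out exactly the square-zero extensions of the log ring $(A,M)$ with kernel $J$ over $(B,N)$, in the sense introduced above. By the very construction of $\Dlog$ and $\MTL$, this space of square-zero log extensions is the space of $(B,N)$-linear log derivations $(A,M)\rr J$, namely $\map_{(B,N)//(A,M)}((A,M),(A\oplus J,M\oplus J))$. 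Comparing with the universal property of $\LL^{G,(\Delta)}$ above and invoking naturality in $J$, Yoneda yields the asserted canonical equivalence. As a consistency check, the transitivity triangle $g^*\LL_{\R\llog^{\mu,(\Delta)}_{\spet(B,N)}/\spet B}\rr\LL_{A/B}\rr\LL_{\spet A/\R\llog^{\mu,(\Delta)}_{\spet(B,N)}}$ matches the log cotangent sequence that exhibits $\LL^{G,(\Delta)}_{(A,M)/(B,N)}$ as the cofiber of the contribution of the log structure; this both certifies canonicity and shows the identification is independent of $\mu\in\{{\rm qcoh},{\rm coh},{\rm fin}\}$ and of the derived versus spectral decoration, since only the functor of points and the deformation theory — identical across these cases — enter the argument.

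The hard part will be the precise identification in the unwinding step: one must check that evaluating the moduli functor on the trivial square-zero extension $A\oplus J$ and passing to the based fiber genuinely computes log square-zero extensions with the prescribed kernel $J$, rather than producing extra or missing data. Concretely, I would verify (i) that base change of log structures along the split inclusion $A\rr A\oplus J$ is compatible with the logification used to define $\R\llog^{\mu,(\Delta)}_{\spet(B,N)}$, so that the fiber is computed inside the correct moduli functor; (ii) that the $\mu$-constraint (quasi-coherent, coherent, or fine) is preserved under these infinitesimal deformations, so that the fiber does not leave $\R\llog^{\mu,(\Delta)}_{\spet(B,N)}$; and (iii) that basing at the split extension extracts precisely the linearized deformations classified by $(A\oplus J,M\oplus J)$. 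Points (i) and (ii) are where the admissibility and descent properties established in items (2)--(3) above do the real work, and they are the locus where the classical obstruction to $\LL^{\rm O}\simeq\LL^{\rm G}$ dissolves once one drops the flatness and discreteness constraints of the classical setting.
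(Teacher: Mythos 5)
Your overall strategy for the identification $\LL_{\spet A/\R\llog^{\mu,(\Delta)}_{\spet(B,N)}}\simeq\LL^{G,(\Delta)}_{(A,M)/(B,N)}$ --- compute the infinitesimal fiber that characterizes the relative cotangent complex, identify it with a space of log derivations, and conclude by corepresentability --- is the same mechanism the paper uses, just reorganized: the paper first computes the cotangent complex of the charted prestack (Theorem\autoref{gabbercotasfunctor}, via Corollary\autoref{obstruction}), transfers it to the sheafified stack (Theorem\autoref{almostperfect}), and deduces item (5) from the transitivity triangle. However, your proposal covers only item (5) of the five-part statement, taking (2)--(4) as inputs, and within item (5) the decisive step is left unproved and is attributed to the wrong tools. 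A point of $\R\llog^{\mu,(\Delta)}_{\spet(B,N)}(A\oplus J)$ lifting $(A,M)$ is a priori only a quasi-coherent log structure $\mathscr M'$ on $\spet(A\oplus J)$ --- an \'etale sheaf of monoids that is only \emph{locally} of the form $M'^a$ --- together with a map to $(B,N)$. To identify the based fiber with $\map_{(B,N)//(A,M)}((A,M),(A\oplus J,M\oplus J))$ one must prove that every such infinitesimal deformation is automatically affine, i.e. arises, uniquely, from an $\Einf$-(or animated) log ring square-zero extension. This is exactly Lemma\autoref{defchartlog} (together with Corollary\autoref{fiberstability} and Lemma\autoref{0987890}, which transport the cotangent complex along the sheafification $\R\llog^{\rm chart}\to\R\llog^{\rm qcoh}$), and its proof is a concrete torsor/fiber-sequence argument on the small \'etale site, exhibiting the chart as the fiber of $M^a\to B\mathcal{I}$ at the presheaf level. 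Neither admissibility (which enters only in the truncatedness and Artin items (1) and (4)) nor the descent statement (2) yields this rigidity, so your points (i)--(iii) name the gap without closing it.

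There is also a degree-shift error in your unwinding. The based fiber of $\map_{\alg}(A,A\oplus J)\to\map_{\alg}(A,A)\times_{\R\llog^{\mu,(\Delta)}_{\spet(B,N)}(A)}\R\llog^{\mu,(\Delta)}_{\spet(B,N)}(A\oplus J)$ consists of ring sections $s:A\to A\oplus J$ together with a trivialization of the pulled-back log structure, $s^*(A,M)\simeq(A\oplus J,M\oplus J)$ over $(B,N)$, compatibly with its restriction to $\spet A$; this is literally the space of log derivations $(A,M)\to J$, i.e. $\map_A(\LL^{G}_{(A,M)/(B,N)},J)$. It is \emph{not} "the space of square-zero extensions of $(A,M)$ with kernel $J$": those are classified by derivations valued in $J[1]$, and only essentially surjectively --- the paper emphasizes (in the Remark following Proposition\autoref{sqzero-logder}) that the functor $\gamma$ from derivations to extensions is not an equivalence. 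Your chain "fiber $=$ square-zero extensions $=$ derivations" thus contains a false middle identification; it happens to land on the correct final formula, but the same reasoning would equally "prove" $\map_A(\LL_{\spet A/\R\llog},J)\simeq\map_A(\LL^{G}_{(A,M)/(B,N)},J[1])$, an answer off by a shift. The correct argument identifies the fiber with derivations directly (sections plus trivialization), never passing through the groupoid of extensions.
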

There is a natural map from Olsson's classical moduli stack of log structures to our derived moduli stack $\R\llog$. We let $(\MCS^\heartsuit,\MSL^\heartsuit)$ be a classical fine log Deligne-Mumford stack, and let $(\MCS,\MSL)$ be its corresponding discrete spectral log Deligne-Mumford stack. We denote by $\R\llog^\heartsuit_{(\MCS,\MSL)}$ the classical truncation of the stack $\R\llog^{\rm fin}_{(\MCS,\MSL)}$. There is a natural map
as follows:
$$\theta:\llog_{(\MCS^\heartsuit,\MSL^\heartsuit)}\rr\R\llog^\heartsuit_{(\MCS,\MSL)}.$$
The following result shows that Olsson's classical moduli stacks of log structures are the classical truncation of our derived moduli stacks:
\begin{thm}[Theorem\autoref{compder-clas}]
    The map $\theta$ above is an open immersion of stacks, whose image is isomorphic to the classical truncation of $\R\llog^{0-{\rm Adm,fin}}_{(\MCS,\MSL)}$. 
\end{thm}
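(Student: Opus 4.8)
The plan is to realise $\theta$ as an equivalence onto the classical truncation of $\R\llog^{0-{\rm Adm,fin}}_{(\MCS,\MSL)}$ followed by the inclusion of an open substack, so that the statement splits into two independent problems: a pointwise identification of the two moduli problems over classical bases, and the openness of the $0$-admissibility condition. Concretely, I would first show that for every classical (i.e. $0$-truncated) Deligne--Mumford stack $T$ the map $\theta$ induces a natural equivalence of anima
\[
\llog_{(\MCS^\heartsuit,\MSL^\heartsuit)}(T)\xrightarrow{\ \simeq\ }\R\llog^{0-{\rm Adm,fin},\heartsuit}_{(\MCS,\MSL)}(T).
\]
Granting this, $\theta$ is a monomorphism whose essential image is exactly $\R\llog^{0-{\rm Adm,fin},\heartsuit}_{(\MCS,\MSL)}$, and the theorem follows once this substack is shown to be open in $\R\llog^\heartsuit_{(\MCS,\MSL)}$.

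For the identification I would unwind both sides over a fixed classical $T$, working étale-locally so that $T=\spec A$ with $A$ discrete. A $T$-point of $\R\llog^{0-{\rm Adm,fin},\heartsuit}_{(\MCS,\MSL)}$ is a ${\rm fin}$ spectral log structure $\mathscr M$ on $T$ together with a map to $\MSL$ whose logification $\mathscr M^a$ is $0$-truncated, by Definition\autoref{admlogrings}. The key observation is that over the discrete ring $A$ the spectral logification --- the pushout $\mathscr M\oplus_{\alpha^{-1}GL_1(\mathcal O_T)}GL_1(\mathcal O_T)$ of $\Einf$-monoid sheaves --- agrees with the classical associated log structure of Kato precisely when its output is discrete. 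Thus $0$-admissibility is exactly the condition that $\mathscr M^a$ be a discrete sheaf of monoids, in which case $(T,\mathscr M^a)$ is an ordinary log structure; the ${\rm fin}$ hypothesis then supplies integral, finitely generated charts, i.e. a fine log structure in the sense of the footnote to Theorem\autoref{OlssonRep}. Conversely, a fine log structure regarded as a discrete $\Einf$-monoid sheaf gives a $0$-admissible ${\rm fin}$ spectral log structure. Since every piece of data involved is discrete, morphisms and automorphisms match on the nose, so this is an equivalence of groupoids, natural in $T$ and compatible with the maps to $\MSL$; this is precisely the assertion that $\theta$ is an equivalence onto $\R\llog^{0-{\rm Adm,fin},\heartsuit}_{(\MCS,\MSL)}$.

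The main obstacle is the openness of $\R\llog^{0-{\rm Adm,fin},\heartsuit}_{(\MCS,\MSL)}$ inside $\R\llog^\heartsuit_{(\MCS,\MSL)}$. By definition of an open immersion of stacks it suffices to test against an arbitrary classical affine $T=\spec A\to\R\llog^\heartsuit_{(\MCS,\MSL)}$, which corresponds to a ${\rm fin}$ spectral log structure $\mathscr M$ on $T$, and to prove that the locus $U\subseteq T$ on which $\mathscr M$ is $0$-admissible is open with the expected universal property. I would characterise $U$ as the locus where all higher homotopy sheaves $\pi_i(\mathscr M^a)$, $i\ge 1$, of the logification vanish: $\mathscr M$ is $0$-admissible at $t$ exactly when these vanish in a neighbourhood of $t$. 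Because $\mathscr M$ is ${\rm fin}$, the logification $\mathscr M^a$ is suitably finite and each $\pi_i(\mathscr M^a)$ is a coherent sheaf, hence has closed support; the complement of the union of these supports is the desired open $U$. To rule out infinitely many contributing homotopy sheaves I would pull back the open filtration of part (1) of Theorem\autoref{algebraicity} along $T\to\R\llog^\heartsuit_{(\MCS,\MSL)}$, which bounds the homotopical complexity of $\mathscr M^a$ locally on $T$ and reduces the vanishing to finitely many closed supports. This is the step I expect to be most delicate, since it requires controlling the finiteness of the logification and knowing that truncatedness of $\mathscr M^a$ is a constructible, indeed open, condition on $T$.

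Finally I would assemble the pieces. The argument of the previous paragraph shows that $0$-admissibility is an open condition, so $\R\llog^{0-{\rm Adm,fin},\heartsuit}_{(\MCS,\MSL)}$ is an open substack of $\R\llog^\heartsuit_{(\MCS,\MSL)}$; this is consistent with, and can be cross-checked against, the open filtration of part (1) of Theorem\autoref{algebraicity} via the stability of open immersions under the base change defining classical truncation. Together with the pointwise equivalence of the second paragraph, this proves that $\theta$ is an open immersion whose image is the classical truncation of $\R\llog^{0-{\rm Adm,fin}}_{(\MCS,\MSL)}$, as claimed. The two points that most warrant care are the precise matching of the spectral ${\rm fin}$ condition with Kato's fineness (integrality in particular) and the coherence and boundedness of the homotopy sheaves of the logification, on which the openness argument rests.
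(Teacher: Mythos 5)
Your first paragraph is sound and is essentially the paper's implicit first step: over a discrete base, $0$-admissibility of a ${\rm fin}$ log structure says precisely that the logification is a discrete sheaf of monoids, i.e.\ a classical fine log structure, so $\theta$ is $(-1)$-truncated with essential image the classical truncation of $\R\llog^{0-{\rm Adm,fin}}_{(\MCS,\MSL)}$. The genuine gap is in your openness argument. The higher homotopy sheaves $\pi_i(\mathscr M^a)$, $i\geq 1$, are sheaves of abelian groups on the \'etale site of $T$; they carry no $\mathcal O_T$-module structure, so calling them coherent and inferring closed support has no content. Concretely, for the ${\rm fin}$ log structure obtained by logifying a map $M\rr 0\in A$ with $M$ a non-discrete $\Einf$-monoid such that $\mathbb S[M]$ is almost finitely presented, the higher homotopy of $\mathscr M^a$ is essentially the constant sheaf with value $\pi_i(M)$ on the non-triviality locus of the log structure --- a locally constant-type sheaf on a stratum, not a coherent module, and its closedness of support is exactly what needs proof. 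Your device for taming infinitely many homotopy sheaves is also circular: the ``open filtration'' of Theorem\autoref{algebraicity}(1) is, in the body of the paper, only established as a filtration by formally \'etale monomorphisms (Lemma\autoref{admcot}), and that formal \'etaleness is proved by the same deformation-theoretic argument that your proposal is trying to avoid; you cannot invoke its openness to bound the complexity of $\mathscr M^a$.

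The missing idea is deformation-theoretic. Since Olsson's stack $\llog_{(\MCS^\heartsuit,\MSL^\heartsuit)}$ is locally finitely presented (Olsson's theorem) and $\R\llog^\heartsuit_{(\MCS,\MSL)}$ is locally almost of finite presentation over $\MCS^\heartsuit$ (Theorem\autoref{algebraicity}, via Proposition\autoref{almostfinite}), it suffices to prove that $\theta$ is \emph{formally} \'etale; an \'etale monomorphism of Artin stacks is an open immersion, which gives both the openness and the image identification at once. Formal \'etaleness follows from one key computation: if $(R\oplus I,M')\rr(R,M)$ is a square-zero extension of $\Einf$-log rings with $R$ a classical ring and $I$ a discrete $R$-module, then $M'$ sits in a fiber sequence $(1+I)\rr M'\rr M$ whose fiber and base are discrete, so $M'$ is discrete; hence every square-zero deformation of a classical log point is automatically classical, and the lift of $x'$ through $\theta$ exists and is unique. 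Your proposal never uses this fact, and without it (or a correct replacement for the coherence claim) the openness step does not go through.
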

\subsection{Log cotangent complexes for log stacks}
We also geometrize and globalize Gabber's cotangent complex in this paper. We can define the following logarithmic cotangent complex.
\begin{defn}[Definition\autoref{DEF OF COTANGENT CPX}]
       Let $f:(\X,\mathfrak M)\rightarrow(\MCS,\MSL)$ be a map of quasi-coherent spectral (resp. derived) Deligne-Mumford stack, the log cotangent complex of $f$ is the relative cotangent complex $\LL^{\rm Log,(\Delta)}_{(\X,\mathfrak M)/(\MCS,\MSL)}:=\LL_{\X/\R\llog^{\rm qcoh,(\Delta)}_{(\MCS,\MSL)}}$.
\end{defn}
We have a tautological equivalence
    $\LL^{G,(\Delta)}_{(A,M)/(R,L)}\simeq\LL^{\rm Log,(\Delta)}_{\spet(A,M)/\spet(R,L)}.$ Thus, the log cotangent complex functor $\LL^{\rm Log}$ should be thought of as a globalization of Gabber's cotangent complex. 
\begin{thm}[Theorem\autoref{global tri seq}]
        Let $(\X,\mathfrak M)\stackrel{f}\rightarrow(\Y,\mathscr N)\stackrel{g}\rightarrow(\MCS,\MSL)$ be a sequence of quasi-coherent spectral (resp. derived) Deligne-Mumford stacks, then there is a fiber sequence of quasi-coherent sheaves over $\X$:
    $$f^*\LL^{\rm Log,(\Delta)}_{(\Y,\mathscr N)/(\MCS,\MSL)}\rr\LL^{\rm Log,(\Delta)}_{(\X,\mathfrak M)/(\MCS,\MSL)}\rr\LL^{\rm Log,(\Delta)}_{(\X,\mathfrak M)/(\Y,\mathscr N)}.$$
\end{thm}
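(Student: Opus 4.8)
The plan is to deduce the logarithmic triangle from the ordinary transitivity fiber sequence for the (non-logarithmic) cotangent complex over the moduli stacks of log structures, which exist and are quasi-coherent by parts (3)--(4) of the representability theorem. Write $\mathcal R:=\R\llog^{\rm qcoh,(\Delta)}_{(\MCS,\MSL)}$ and $\mathcal R':=\R\llog^{\rm qcoh,(\Delta)}_{(\Y,\mathscr N)}$. The log structure $\mathfrak M$ together with the log maps $f$ and $g\circ f$ furnish tautological morphisms $u:\X\rr\mathcal R'$ and $v:\X\rr\mathcal R$, while $g$ induces a morphism $p:\mathcal R'\rr\mathcal R$ sending a $T$-point $(T,\mathscr M_T)\rr(\Y,\mathscr N)$ of $\mathcal R'$ to its composite with $g$. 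Unwinding the universal properties gives $v\simeq p\circ u$. Applying the ordinary transitivity sequence to the tower $\X\stackrel{u}\rr\mathcal R'\stackrel{p}\rr\mathcal R$ yields
$$u^*\LL_{\mathcal R'/\mathcal R}\rr\LL_{\X/\mathcal R}\rr\LL_{\X/\mathcal R'}.$$
By the defining formula $\LL^{\rm Log,(\Delta)}_{(\X,\mathfrak M)/(\MCS,\MSL)}=\LL_{\X/\mathcal R}$ and $\LL^{\rm Log,(\Delta)}_{(\X,\mathfrak M)/(\Y,\mathscr N)}=\LL_{\X/\mathcal R'}$, the middle and right terms already agree with the desired ones, so the whole problem reduces to producing a canonical equivalence $u^*\LL_{\mathcal R'/\mathcal R}\simeq f^*\LL_{\Y/\mathcal R}=f^*\LL^{\rm Log,(\Delta)}_{(\Y,\mathscr N)/(\MCS,\MSL)}$.

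I would establish this last equivalence of quasi-coherent sheaves on $\X$ by étale descent. Since $\R\llog^{\rm qcoh,(\Delta)}$ is an étale sheaf (part (2) of the representability theorem) and the cotangent complex commutes with étale base change, both sides may be computed étale-locally on $\X$, $\Y$ and $\MCS$, reducing to an affine tower of $\Einf$-(or animated) log rings $(C,P)\rr(B,N)\rr(A,M)$ giving $\spet(A,M)\rr\spet(B,N)\rr\spet(C,P)$. There the tautological equivalence $\LL_{\spet A/\R\llog^{\mu,(\Delta)}_{\spet(B,N)}}\simeq\LL^{G,(\Delta)}_{(A,M)/(B,N)}$ identifies $\LL_{\X/\mathcal R'}$ with $\LL^{G,(\Delta)}_{(A,M)/(B,N)}$ and, via base change along $\spet(A,M)\rr\spet(B,N)$, identifies $f^*\LL_{\Y/\mathcal R}$ with $\LL^{G,(\Delta)}_{(B,N)/(C,P)}\otimes_B A$. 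Thus the statement collapses to the transitivity triangle for Gabber's cotangent complex
$$\LL^{G,(\Delta)}_{(B,N)/(C,P)}\otimes_B A\rr\LL^{G,(\Delta)}_{(A,M)/(C,P)}\rr\LL^{G,(\Delta)}_{(A,M)/(B,N)}.$$

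This triangle is where the essential work lies and is the main obstacle. I would prove it directly from the replete tangent correspondence $\MTL\rr\Delta^1\times\llog$, using that Gabber's cotangent complex $\LL^{G}$ is the relative left adjoint of $\Omega^\infty$: a composable pair of log ring maps produces a commuting diagram of the associated relative stabilizations, and the mates of the unit/counit transitions assemble into the asserted cofiber sequence, in exact parallel with Lurie's proof of the transitivity sequence for topological André--Quillen homology. The delicate points are (i) that base change $-\otimes_B A$ is compatible with the relative cotangent complex formed inside the replete tangent bundle, which relies precisely on the repleteness ensuring the fiber over $(A,M)$ is $\Mod_A$, and (ii) that the étale-local identifications glue, i.e. the equivalence $u^*\LL_{\mathcal R'/\mathcal R}\simeq f^*\LL_{\Y/\mathcal R}$ is functorial and independent of the affine charts, which follows from the naturality of the tautological equivalence. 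The animated case is treated identically, with the replete tangent bundle replaced by its animated variant from the section on the animated context.
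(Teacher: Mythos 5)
Your proposal is correct and is essentially the paper's own argument: the paper proves this theorem precisely by "reducing to the local arguments" (étale descent to the affine case) and then invoking the conormal fiber sequence for Gabber's cotangent complex, which is exactly what your factorization $\X\rr\R\llog^{\rm qcoh,(\Delta)}_{(\Y,\mathscr N)}\rr\R\llog^{\rm qcoh,(\Delta)}_{(\MCS,\MSL)}$, the stack-level transitivity sequence, and the local identifications via the tautological equivalence accomplish. The only redundancy is that the Gabber transitivity triangle you isolate as the "main obstacle" is already established in the paper as the conormal fiber sequence of Theorem\autoref{cotan-der}, proved there by the same categorical argument from the replete tangent correspondence that you sketch, so no new work is needed at that step.
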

\subsection{($p$-typical) infinite root  stacks}
As an application of the  stack $\R\llog$, we can  develop a derived version of the theory of infinite root  stacks, as in \cite{Borne_2012} and \cite{Talpo_2018}. For simplicity, we fix a $p$-complete $\Einf$-ring $R$, and work on the $p$-completion $\logfr:=\R\llog^{\rm qcoh}_{\spet R}\times_{\spec R}\spf R$ of log stacks defined over $\spf R$, which is the moduli stack that classifies quasi-coherent  log stacks that are lying over $\spf R$.
\begin{defn}[Definition\autoref{qazwsxedc}]
 The \textit{universal infinite root  stack} $\rtr$ is defined as the inverse limit of the following sequence
$$...\rr\R\llog_R^{\wedge}\stackrel{p}\rr\R\llog_R^{\wedge}\stackrel{p}\rr...\stackrel{p}\rr \R\llog_R^{\wedge}.$$
Let $X\rightarrow\logfr$ be a map of stacks with respect to fpqc-topology, the infinite root  stack $\sqrt[\infty]{X}$ is the fiber product $X\times_{\logfr}\rtr$.
\end{defn}
We prove that the definition of infinite root  stacks is functorial.
\begin{thm}[Theorem\autoref{FUNCROOTSTK}]
   There is  a limit-preserving functor 
    $$\gamma:\llog\stk_{/\spf R}\rr\stk_{/\spf R},$$
    such that for any  quasi-coherent spectral  log stack $X$, the stack $\gamma(X)$ coincides with the infinite root  stack $\sqrt[\infty]{X}$ of $X.$
\end{thm}
The following theorem generalizes the result of Binda-Lundemo-Merici-Park in \cite[Theorem 4.12]{binda2024logarithmictcinfiniteroot} and \cite[Lemma 3.18]{binda2024motivicmonodromypadiccohomology}  to the quasi-coherent case.
\begin{thm}[Theorem\autoref{123256456}]
 Let $R$ be a connective $p$-complete  $\Einf$-ring.   Let $X$ be a topologically $p$-saturated (Definition\autoref{satdefn}) quasi-coherent spectral log Deligne-Mumford stack over $\spec R$, and let $\underline{X}_p^\wedge=\underline{X}\times_{\spec R}\spf R \rightarrow \logfr$ be the associated $p$-completion.
    Then the  map 
    $$\phi:\LL_{X/R}^{\rm Log}\rr\eta_*\LL_{\sqrt[\infty]{X}/\spf R}$$
     induced from the canonical map $\eta:\sqrt[\infty]{X}\rightarrow\underline{X}^\wedge_p$
    becomes an equivalence after $p$-completion. In particular, the following map
    $$\phi:{\rm R}\Gamma(\underline{X},\LL^{{\rm Log}}_{X/R})^\wedge_p\rr{\rm R}\Gamma(\sqrt[\infty]{X},\LL_{\sqrt[\infty]{X}/\spf R})$$
    is an equivalence.
\end{thm}
\subsection{Outline}
We now provide a brief outline of the contents of this paper.\\
In \autoref{Section2}, we present a concise introduction to animated and $\Einf$-log rings from a purely $\infty$-categorical perspective. We define the $\infty$-categories of animated and $\Einf$-log rings and compare them via the Barr-Beck-Lurie theorem.\\
In \autoref{definiiton of einf-log rings}, we develop the higher deformation theory in logarithmic geometry. We adapt Lundemo's replete tangent bundle formalism to define the cotangent complex of $\Einf$-log rings and introduce the notion of the replete tangent correspondence—a variant of Lurie's tangent correspondence—to define log derivations. The main result of this section is the log deformation theory, Theorem\autoref{deform}. We then employ Barr-Beck-Lurie arguments to derive the deformation theory for animated log rings in Theorem\autoref{deformani}.\\
In \autoref{logstacks}, we globalize the notions of animated and $\mathbb{E}_\infty$-log rings. We first examine the theory of charted log stacks, which generalizes the classical notion of charted log schemes. We study the deformation theory of the moduli prestack of charted log stacks, which possesses nearly all desired deformation properties except for the \'etale descent property. We then introduce the concept of log stacks and investigate their global deformation theory, culminating in the construction of the moduli stack of log structures in derived and spectral algebraic geometry. The main results concerning the descent and infinitesimal properties of this moduli stack are presented in \autoref{subsection43}.\\
In \autoref{ALREP}, we review Lurie's deformation theory in spectral algebraic geometry. We introduce the notion of spectral Artin stacks and generalize the Artin-Lurie representability theorem to spectral algebraic geometry in Theorem\autoref{alrep}; this result is necessary for proving the representability of our moduli of log stacks.\\
In \autoref{section6}, we prove the representability theorem for the moduli of log stacks in both spectral and derived settings. The main theorems are Theorem\autoref{algebraicity} and Theorem\autoref{algebraicityani}. We also compare the classical truncation of the moduli of derived log stacks with Olsson's classical moduli of log structures in Proposition\autoref{compder-clas}.\\
In \autoref{section7}, we use the moduli stack of log structures to construct the infinite root  stacks in spectral and derived log geometry. We will prove the functoriality of infinite root  stacks in Theorem\autoref{FUNCROOTSTK}, and generalize a theorem of  Binda-Lundemo-Merici-Park in Theorem\autoref{123256456}, which asserts that the $p$-completion of the log cotangent complex of a log stack could be calculated in terms of that of the associated infinite root  stacks. 
\subsection{Acknowledgements}
This article is part of my PhD thesis at \textit{Universit\`a  di Milano}. I am very grateful to my supervisor, Federico Binda. He introduced the topic to me and carefully checked all details of this paper. This work would not be finished  without the discussion with him in depth. I thank Tommy Lundemo and Doosung Park for many useful discussions on many details of proofs in this paper, and for showing me many inspiring examples. I also thank Bhavna Joshi,  Caixing Cao, Xiaomin Chu, Jens Hornbostel, Alberto Vezzani, Qixiang Wang, Xiangsheng Wei, Ziqian Yin, and Pengcheng Zhang for  their kind help in mathematics.\\
In addition, I think Italy's beautiful scenery and delicious cuisine are also important, which makes my time as a PhD student here truly enjoyable and undoubtedly helps my research work in a psychological sense.
\section{Affine logarithmic geometry}\label{Section2}
This section provides the basic definitions of log rings in both derived and spectral contexts, building upon work presented in \cite{rognes2009topological}, \cite{sagave2016derived}, \cite{binda2023hochschild}, \cite{10.1093/imrn/rnad224}, and \cite{lundemo2023deformation}. While these cited works employ model categorical language to varying degrees, we adopt a purely $\infty$-categorical framework herein for the sake of conceptual clarity.
\subsection{Animated log rings}
We review the fundamental definitions within the context of animated log rings, as established in \cite{binda2023hochschild} and \cite{sagave2016derived}. Instead of employing a model structure on the category of simplicial prelog rings, we work within the framework of \textit{animations} of ordinary categories (see e.g., \cite{vcesnavivcius2024purity}), to construct the $\infty$-category of animated log rings. This is the point of view adopted in \cite{binda2023hochschild}.
\subsubsection{Animations}
Denote by $\ani$ the $\infty$-category of \textit{animae} (i.e., \textit{spaces} or \textit{$\infty$-groupoids}).
For an  ordinary category $\CC$ that admits \textit{sifted colimits}, we let
$\CC^{\rm sfp}$ be the full subcategory generated by \textit{strongly finitely presented} objects, i.e., objects $X\in\CC$ such that the functor $\Hom(X,-):\CC\rightarrow\mathbf{Set}$ commutes with sifted colimits. Assume that $\CC$ is cocomplete and is generated by $\CC^{\rm sfp}$ under 1-sifted colimits, i.e., $\CC \simeq 1{\rm -}\mathrm{sInd}(\CC^{\rm sfp})$. 
\begin{defn}\cite[Section 5.1.4]{vcesnavivcius2024purity}
The animation of $\CC$ is the $\infty$-category $\ani(\CC)$  freely generated by $\CC^{\rm sfp}$ under sifted colimits, that is the $\infty$-category 
of functors $F:\CC^{\rm sfp,op}\rightarrow\ani$, which preserve finite products. The animations can be characterized by the following universal property 
$$\Fun^{\rm sift}(\ani(\CC),\mathcal A)\stackrel{\simeq}\rr \Fun(\mathcal C^{\rm sfp},\mathcal A),$$
for any $\infty$-category $\mathcal A.$ 
\end{defn}
\begin{rmk}
    The animation $\ani(\CC)$ of a small ordinary category $\CC$ is  presentable by \cite[Proposition 5.5.8.10]{lurie2009higher}.
\end{rmk}
\subsubsection{Animated log rings}
\begin{defn}
    Let $\mon^{\heartsuit}$ be the ordinary category of discrete commutative monoids. We let $\mon^{\Delta}$ denote the animation of $\mon^{\heartsuit}$, and refer to it as the $\infty$-category of \textit{animated monoids}.
\end{defn}
As a monoid $M$ is strongly finitely presented  if and only if it is  a retraction of finitely generated free monoids,  one can identify the $\infty$-category $\mond$ with the full subcategory of  $\Fun({\mathbf{FreeMon}}^{\rm op},\ani)$ spanned by those functors that preserve finite products, here $\mathbf{FreeMon}$ is the subcategory of $\mone^\heartsuit$ spanned by
finitely generated free monoids.
\begin{rmk}[Simplicial monoids]\cite[Remark 2.9]{binda2023hochschild}\label{smon}
    Denote by $s\mon^{\heartsuit}:=\Fun(\Delta^{\rm op},\mone^\heartsuit)$ the ordinary category of simplicial monoids. By \cite[Proposition 2.1]{sagave2016derived}, it admits a combinatorial simplicial model structure in which fibrations and weak equivalences are detected on the underlying simplicial sets (equipped with  Quillen's \textit{standard model structure}). We still denote by $s\mon^{\heartsuit}$ the associated $\infty$-category, in the sense of \cite[Definition 1.1.5.5]{lurie2009higher}. Using the universal property of animation, and together with the arguments in \cite[Section 5.58 and 5.59]{lurie2009higher}, we have a natural equivalence of $\infty$-categories 
$$\mon^{\Delta}\stackrel{\simeq}\rr s\mon^{\heartsuit},$$
    induced by the diagonal  embedding  $\mon^{\heartsuit}\subset s\mon^{\heartsuit}.$ We will therefore make no distinctions between $\mon^{\Delta}$ and $s\mon^{\heartsuit}$.
\end{rmk}
\begin{defn}
An animated monoid $M$ is called \textit{group-like} if $\pi_0(M)$ is an abelian group. The \textit{group completion} of $M$ is  the map  $M\rightarrow\Omega BM=:M^{\rm gp}$, which is initial for  maps from $M$ to group-like animated monoids.
\end{defn}
\begin{rmk}
The subcategory of $\mon^{\Delta}$ 
spanned by  group-like animated monoids is naturally equivalent to the $\infty$-category $\textbf {Ab}^{\Delta}$ of \textit{animated 
abelian groups}, see \cite[Section 2.1]{sagave2016derived} for details. If we identify these two $\infty$-categories, then the group-completion is nothing but the left derived functor  of the classical group completion of discrete monoids.

\end{rmk}

\begin{defn}\label{LOGPOLYNOMIAL}  An \textit{ordinary prelog ring} $(A,M,\alpha)$ consists of a discrete ring $A$, a discrete monoid $M$, and a monoid map $\alpha: M\rightarrow (A,*).$ According to the arguments in \cite[Section 5]{bhatt2012padicderivedrhamcohomology} and \cite[Remark 2.11]{binda2023hochschild}, the ordinary category of all ordinary  prelog rings is   generated by strongly finitely presented objects under sifted colimits, that is, retractions of \textit{free prelog algebras}, which have forms  $$(\Z[x_1,x_2,...,x_k,x_{k+1},...,x_n],\langle x_1,...,x_k \rangle,\alpha_k),$$
    in which the monoid $\langle x_1,...,x_k \rangle$ is the free monoid $\bigoplus^k_{i=1}\N x_i$ of rank $k$, and the monoid map $\alpha_k$ is 
    $$\alpha_k:\langle x_1,...,x_k \rangle\rr\Z[x_1,...,x_n],\sum n_ix_i\mapsto \prod x_i^{n_i}.$$
    We let $\plog^{\Delta}:=\ani(\plog^{\rm ord})$ denote the $\infty$-category of \textit{animated prelog rings}. 
    \end{defn}
    
\begin{defn}  
 An animated  prelog ring $(A,M,\alpha)$ is called an \textit{animated log ring} if the upper horizontal map of the following diagram is an equivalence
\[\begin{tikzcd}
	{\alpha^{-1}GL_1(A)} & {GL_1(A)} \\
	M & A
	\arrow[from=1-1, to=1-2]
	\arrow[from=1-1, to=2-1]
	\arrow[from=1-2, to=2-2]
	\arrow["\alpha", from=2-1, to=2-2]
\end{tikzcd}\]
Denote by $\llog^{\Delta}$ the full subcategory of $\plog^{\Delta}$ spanned by animated log rings. 
\end{defn}
Following \cite[Section 5]{bhatt2012padicderivedrhamcohomology}, and \cite[Section 2.10]{binda2023hochschild}. Consider the forgetful functor $F:\plog^{\rm ord}\rightarrow\alg^\heartsuit\times\mon^\heartsuit$ of ordinary categories. This functor is a right adjoint functor, whose left adjunction is given by $\alg^\heartsuit\times\mon^\heartsuit\ni(A,M)\mapsto (A[M],M,M\rightarrow A[M])$, where the map $M\rightarrow A[M]$ is defined in the obvious way. On the other hand, it is also a left adjoint functor, whose right adjunction is given by sending a pair $(A,M)\in\alg^\heartsuit\times\mon^\heartsuit$ to the triple $(\tilde A,\tilde M,\alpha)$ obtained by taking the limit of the following diagrams
$$\begin{tikzcd}
M \arrow[r] & M' \arrow[d, "\alpha'"] \\
A \arrow[r] & A'                    
\end{tikzcd}$$
where the index runs over all $A'$, $ M'$ and $\alpha'$. Passing to animation, we obtain that the forgetful functor $$F:\plog^\Delta\rr \alg^\Delta\times\mon^\Delta$$ is both limit-preserving and colimit-preserving. In particular, the functor $F$ detects limits and colimits of animated prelog rings, and the inclusion $\llog^\Delta\subset\plog^\Delta$ preserves limits, as the limits in $\plog^\Delta$ preserve animated log rings.
\begin{prop}\label{presentabilityoflog}
    The $\infty$-category  $\llog^\Delta$ is presentable.
\end{prop}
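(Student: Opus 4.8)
The plan is to exhibit $\llog^\Delta$ as an accessible localization of a presentable $\infty$-category, or more directly, to realize it as the full subcategory of fixed points of an idempotent-like operation on the presentable $\infty$-category $\plog^\Delta$. The starting point is that $\plog^\Delta = \ani(\plog^{\rm ord})$ is presentable: this follows from the remark in the excerpt that the animation of a small ordinary category is presentable (citing \cite[Proposition 5.5.8.10]{lurie2009higher}), since $\plog^{\rm ord}$ is generated under sifted colimits by its strongly finitely presented objects (the free prelog algebras of Definition\autoref{LOGPOLYNOMIAL}). Thus the problem reduces to showing that the inclusion $\llog^\Delta \subset \plog^\Delta$ exhibits the source as a presentable $\infty$-category, and the cleanest route is to produce a reflective (or at least accessible) localization.

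First I would recall the \emph{logification} functor, the left adjoint to the inclusion $\llog^\Delta \hookrightarrow \plog^\Delta$, which sends an animated prelog ring $(A,M,\alpha)$ to the log ring obtained by forming the pushout that forces the defining square to become cartesian; concretely one replaces $M$ by $M \oplus_{\alpha^{-1}GL_1(A)} GL_1(A)$ (the pushout in animated monoids along the group of units). The existence of this left adjoint, together with the fact already recorded in the excerpt that the inclusion $\llog^\Delta \subset \plog^\Delta$ \emph{preserves limits} (because limits in $\plog^\Delta$ preserve the log condition, as $F$ detects limits and $GL_1$, $\alpha^{-1}$ commute with the relevant limits), shows that $\llog^\Delta$ is a reflective subcategory of the presentable $\infty$-category $\plog^\Delta$. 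By \cite[Proposition 5.5.1.1 and Remark 5.5.1.6]{lurie2009higher}, a full subcategory of a presentable $\infty$-category that is stable under limits and admits a left adjoint to the inclusion is itself presentable, provided the localization is accessible.

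The main obstacle, and the step I would spend the most care on, is verifying \emph{accessibility} of the localization — i.e. that $\llog^\Delta$ is closed under sufficiently filtered colimits in $\plog^\Delta$, or equivalently that the class of local equivalences (the maps inverted by logification) is of small generation. The subtle point is that the log condition is phrased via the pullback square involving $GL_1(A)$, and while limits behave well, one must check that the cartesianness of that square is preserved under $\kappa$-filtered colimits for some regular cardinal $\kappa$. This should follow because $GL_1$ commutes with filtered colimits of connective $\Einf$-rings (units are detected on $\pi_0$, and filtered colimits are computed groupwise on homotopy groups), and finite limits commute with filtered colimits in the $\infty$-topos $\ani$; hence the full subcategory is closed under filtered colimits and the localization is accessible. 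Once accessibility is in hand, the conclusion that $\llog^\Delta$ is presentable is immediate from the general theory of accessible reflective localizations.

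Alternatively, to sidestep delicate accessibility bookkeeping, I would note that one can present $\llog^\Delta$ directly as the animation of the ordinary category $\llog^{\rm ord}$ of discrete log rings, or equivalently check that the log condition is cut out by inverting a small set of maps between strongly finitely presented prelog rings; either description makes presentability a formal consequence of \cite[Proposition 5.5.8.10]{lurie2009higher} and the stability results for localizations. I expect the reflective-localization argument via logification to be the shortest and most conceptual, with the filtered-colimit stability of the log condition being the only nonformal input.
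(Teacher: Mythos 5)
Your argument is correct, but it takes a genuinely different route from the paper's. The paper proves presentability by identifying $\llog^\Delta$ with a pullback of presentable $\infty$-categories: it constructs the functor $\phi:\plog^\Delta\rightarrow\Fun(\Delta^1,\mon^\Delta)$, $(A,M,\alpha)\mapsto(\alpha^{-1}GL_1(A)\rightarrow GL_1(A))$, and the diagonal embedding $\delta:\mon^\Delta\rightarrow\Fun(\Delta^1,\mon^\Delta)$ whose essential image is the full subcategory of equivalences, and then recognizes $\llog^\Delta$ as the limit of $\plog^\Delta\stackrel{\phi}\rightarrow\Fun(\Delta^1,\mon^\Delta)\stackrel{\delta}\leftarrow\mon^\Delta$, so that presentability follows from the stability of presentable $\infty$-categories under such limits (\cite[Theorem 5.5.3.18]{lurie2009higher}). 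You instead exhibit $\llog^\Delta$ as an accessible reflective localization of $\plog^\Delta$: you construct logification explicitly as the pushout $M\oplus_{\alpha^{-1}GL_1(A)}GL_1(A)$, use that the inclusion preserves limits, and verify closure under filtered colimits from the fact that $GL_1$ and $\alpha^{-1}GL_1$ commute with filtered colimits. This is essentially the strategy the paper deploys later, in the $\Einf$-setting, in the second half of the proof of Proposition\autoref{presentability}, where accessibility is checked on the homotopy groups of $GL_1$; so your route is legitimate and even proves slightly more, namely that logification exhibits $\llog^\Delta$ as an accessible localization, a statement the paper only records for $\llog$. What the paper's pullback argument buys in exchange is that it never needs to construct the logification functor: in the paper, logification is \emph{defined after} this proposition as the left adjoint of the inclusion, whose existence then comes from presentability, whereas your argument must take the explicit pushout construction as input and therefore tacitly relies on the nontrivial (though standard, cf. \cite{rognes2009topological}, \cite{sagave2016derived}, \cite{binda2023hochschild}) verification that this pushout really is a log structure and satisfies the universal property of the reflection. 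One further caution: your fallback alternative of realizing $\llog^\Delta$ as the animation of the ordinary category of discrete log rings is not justified and is not obviously true (it is unclear that animated log rings are generated under sifted colimits by strongly finitely presented discrete log rings); the main reflective-localization argument does not need it, so you should drop that remark or flag it as speculative.
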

\begin{proof}
We have a functor $\phi:\plog^\Delta\rightarrow\Fun(\Delta^1,\mon), (A,M,\alpha)\mapsto \alpha^{-1}GL_1(A)\rightarrow GL_1(A)$. More precisely, it's  given by composing the following functors
$$\plog^\Delta\rr\Fun(\Delta^1{\coprod}_{\{1\}}\Delta^1,\mon^\Delta)\rr\Fun(\Delta^1\times\Delta^1,\mon^\Delta)\rr\Fun(\Delta^1,\mon^\Delta)$$
where the first functor is the forgetful functor $(A,M,\alpha)\mapsto(GL_1(A)\rightarrow A\leftarrow M)$, the second functor is the right Kan extension along the inclusion $\Delta^1\coprod_{\{1\}}\Delta^1\subset\Delta^1\times\Delta^1$, and the last functor is the restriction along the inclusion $\Delta^1\subset\Delta^1\times\Delta^1$ to the upper horizontal arrows. The diagonal embedding $\delta:\mon^\Delta\rightarrow\Fun(\Delta^1,\mon^\Delta)$ identifies $\mon^\Delta$ with the full subcategory of $\Fun(\Delta^1,\mon^\Delta)$ spanned by invertible arrows. Then one can identify $\llog^\Delta$ with the limit of the following diagram
$$\begin{tikzcd}   & \mon^\Delta \arrow[d, "\delta"] \\
\plog^\Delta \arrow[r, "\phi"] & {\Fun(\Delta^1,\mon^\Delta)}   
\end{tikzcd}$$
Indeed, we denote by $P$ the limit of the above diagram, and it identifies with the  subcategory of $\plog^\Delta$ consisting of those animated prelog rings $(A,M,\alpha)$ such that  the induced map $\alpha^{-1}GL_1(A)\rightarrow GL_1(A)$ is an equivalence.
\end{proof}

\begin{defn}
The \textit{logification} functor $a:\plog^\Delta\rightarrow\llog^\Delta$ is the left adjoint of the inclusion $\llog^\Delta\subset\plog^\Delta$. 
\end{defn}
The logification of an animated prelog ring $(A,M,\alpha)$ is equivalent to the animated log ring $(A,M^a,\alpha^a)$ given by the following coCartesian square 
\[\begin{tikzcd}
	{\alpha^{-1}GL_1(A)} & {GL_1(A)} \\
	M & {M^a}
	\arrow[from=1-1, to=1-2]
	\arrow[from=1-1, to=2-1]
	\arrow[from=1-2, to=2-2]
	\arrow[from=2-1, to=2-2]
\end{tikzcd}\]
\begin{rmk}[Model category theoretical approach of presentability]\cite[Remark 2.12]{binda2023hochschild}\label{modelcatrep}
     One can use model category technology to obtain the presentability of  $\llog^\Delta$. By \cite[Theorem 3.13]{sagave2016derived}, there is a combinatorial simplicial model structure, which is called \textit{log model structure}, on the ordinary category $s\mathcal P$ of simplicial prelog rings, see \cite{sagave2016derived} for details. The $\infty$-category associated with this model  category is naturally equivalent to $\llog^{\Delta}$. Then using \cite[Proposition A.3.7.6]{lurie2009higher}, one deduces that the $\infty$-category $\llog^{\Delta}$ is  presentable. {See also \cite[Remark 5.2]{lundemo2023deformation}}
\end{rmk}

\subsection{\texorpdfstring{$\Einf$-log rings}{E-infinity-log rings}}
  We recall the fundamental notions of $\Einf$-log rings from \cite[Part II]{rognes2009topological}. These objects serve as topological analogues of animated log rings, constructed from $\Einf$-rings and $\Einf$-monoids. In practice, the categorical behavior of $\Einf$-log rings is more tractable than that of their animated counterparts—a phenomenon with parallels in algebraic geometry, as detailed in \cite[Section 25.3.3]{lurie2018spectral}. This paper will primarily operate within the context of $\Einf$-log rings. The results established for this setting will largely extend to animated log rings, with proofs using the techniques of "$\Einf$-methods."
\begin{rmkcaution}
Throughout this paper, we will only consider a special class of $\Einf$-log rings, which only capture "log structures on $\pi_0$" in the following sense: log structures on $\Einf$-rings $R$ come from their infinite loop space $\Omega^\infty R$, and  the log structures from  higher homotopy classes of $R$ will not be considered. The latter one is also important in  algebraic topology and arithmetic, which we call  $\Einf^\mathcal{ J}$-log rings, made of (non-connective) $\Einf$-rings and $\mathcal{QS}^0$-graded $\Einf$-monoids, developed by Rognes, Sagave, and Schlichtkrull, c.f., \cite{Sagave_2012}, \cite{Sagave_2014}, \cite{Rognes_2015}, and \cite{lundemo2023deformation}.
\end{rmkcaution}
\begin{defn}
Denote by $\ani^\times$ the symmetric monoidal $\infty$-category of animae equipped with the Cartesian monoidal structure.    An \textit{$\Einf$-monoid} is an $\Einf$-algebra in $\ani^\times$. We denote by $\mone$ the $\infty$-category of $\Einf$-monoids. This is a presentable $\infty$-category by \cite[Corollary 3.2.3.5]{lurie2009higher}.
\end{defn}
\begin{defn}
An $\Einf$-monoid $M$ is called \textit{group-like} if $\pi_0(M)$ is an abelian group. The group completion of $M$ is  the map  $M\rightarrow\Omega BM=:M^{\rm gp}$, which is initial for maps from $M$ to group-like $\Einf$-monoids.
\end{defn}
\begin{rmk}\label{mayrecognition}
    By May's recognition principle \cite[Theorem 1.3]{MR420610} or \cite[Remark 5.2.6.26]{lurie2017higher},  the infinity loop space functor 
    $\Omega^\infty:\Spt\rr\ani$ induces an equivalence $\Spt_{\geq 0}\simeq\mone^{\rm gp}$ of the $\infty$-category of connective spectra and the $\infty$-category of group-like $\Einf$-monoids; therefore, the $\infty$-category $\mon^{\rm gp}$ is presentable. 
    \end{rmk}
    The inclusion $\mone^{\rm gp}\subset \mone$ preserves both limits and colimits, hence we have the following diagrams of adjoints
$$\begin{tikzcd}
\mone \arrow[r, "\rm gp", shift left=2] \arrow[r, "GL_1"', shift right=2] & \mone^{\rm gp} \arrow[l]
\end{tikzcd}$$
in which the left adjoint $\mathrm{gp}$ is the group completion functor and the right adjoint $GL_1$ is called the \textit{space of homotopy units} functor. 

\begin{defn}
  Let $\plog\Spt$ be the  $\infty$-category of triples $(E,A,\alpha)$, in which $E$ is a spectrum, $A$ is an anima, and $\alpha:\Sigma_+^{\infty}A\rightarrow E$ is a map of spectra. In other words, it's the  oriented fiber product $\Spt\overleftarrow{\times
  }_{\Spt}\ani,$ induced by the functors $id:\Spt\rightarrow\Spt$ and $\Sigma_+^{\infty}:\ani\rightarrow\Spt.$
\end{defn}
By the  definition of oriented fiber products introduced in \cite[Notation 2.1.4.19]{kerodon}, the $\infty$-category  $\plog\Spt$ is made of  the limit of the following diagram
    $$\begin{tikzcd}
\ani \arrow[rd, "\Sigma^{\infty}_{+}"] &      & {\Fun(\Delta^1,\Spt)} \arrow[ld, "{\rm ev}_0"'] \\  & \Spt &    
\end{tikzcd}$$
Note  that the  $\infty$-category $\Fun(\Delta^1,\Spt)$ can be  endowed with a natural symmetric  monoidal structure $\Fun(\Delta^1,\Spt)^{\otimes}$  with respect to the pointwise monoidal product induced by the smash product monoidal structure on $\Spt$. Then  the evaluation functor  $\mathrm{ev}_0$ is a monoidal coCartesian fibration. On the other hand, the infinite suspension $\Sigma^{\infty}_+$ is also symmetric monoidal if we endow the $\infty$-category $\ani$ with the Cartesian monoidal structure. Then we can endow $\plog\Spt$ with a symmetric  monoidal structure by promoting the above limit as a limit of symmetric  monoidal $\infty$-categories (as argued in \cite[Construction 2.22]{binda2024motivicmonodromypadiccohomology}, together with \cite[Propositions 2.2.4.4 and 2.2.4.9]{lurie2017higher}, this limit could also be formed in the $\infty$-category $\mathbf{Op}_{\infty}$ of operads)$$\plog\Spt^{\otimes}:=\lim(\ani^{\times}\stackrel{\Sigma^\infty_+}\rr\Spt^{\otimes}\stackrel{\mathrm{ev}_0}\longleftarrow\Fun(\Delta^1,\Spt)^{\otimes}).$$
Informally, we have $$(A,M,\alpha)\otimes(B,N,\beta)=(A\otimes B,M\times N,\alpha\otimes\beta),$$
where the morphism $\alpha\otimes\beta$ is given by 
$$\alpha\otimes\beta:\Sigma_+^{\infty}(M\times N)\stackrel{\simeq}\rr\Sigma^{\infty}_+M\otimes\Sigma^\infty_+N\rr A\otimes B.$$
\begin{defn}
An \textit{$\Einf$-prelog ring} is an $\Einf$-algebra in $\plog\Spt^{\otimes}$. We let $\plog$ denote  the $\infty$-category of $\Einf$-prelog rings.
\end{defn}
\begin{rmk}\label{1234543456}
    Informally, to give an $\Einf$-algebra object in the $\infty$-category $\plog\Spt^{\otimes}$, it needs to give a triple $(A,M,\alpha)$, a binary  operation $\mu:(A,M,\alpha)\otimes(A,M,\alpha)\rightarrow(A,M,\alpha),$ and a unit map $e:1=(\mathbb S,0,0)\rightarrow(A,M,\alpha),$ with some extra operation laws. If we unwind the definition of $\Einf$-prelog rings, it's equivalent  to say that $A$ is an $\Einf$-ring, $M$ is an $\Einf$-monoid, and the map $\alpha:\Sigma_+^\infty M=:\mathbb S[M]\rightarrow A$ is an $\Einf$-ring map.\\
    Formally, according to the definition of $\Einf$-algebra objects, the functor $$\alg(-):\cat^\otimes\rr\cat,\mathcal{C}\mapsto\alg(\mathcal{ C})$$ preserves limits. In particular, one has $$\plog\simeq\lim(\mon\stackrel{\Sigma^\infty_{+}}\rr\alg \stackrel{{\rm ev}_0}\longleftarrow\Fun(\Delta^1,\alg)).$$
\end{rmk}
\begin{rmk}
Passing to the adjunction, one can also show that
$$\plog\simeq\lim(\alg\stackrel{\Omega^\infty}\rr\mon \stackrel{{\rm ev}_1}\longleftarrow\Fun(\Delta^1,\mon)).$$
\end{rmk}
\begin{defn}
 An  $\Einf$-prelog ring $(A,M,\alpha)$ is  an \textit{$\Einf$-log ring}, if the upper horizontal map of the following diagram is an equivalence
\[\begin{tikzcd}
	{\alpha^{-1}GL_1(A)} & {GL_1(A)} \\
	M & \Omega^{\infty}A
	\arrow[from=1-1, to=1-2]
	\arrow[from=1-1, to=2-1]
	\arrow[from=1-2, to=2-2]
	\arrow["\alpha", from=2-1, to=2-2]
\end{tikzcd}\]
Denote by $\llog$ the full subcategory of $\plog$ spanned by $\Einf$-log rings.
\end{defn}
\begin{rmk}\label{loopgl}
    The space of homotopy units $GL_1(A)$ of an $\Einf$-ring $A$ is equivalent to the fiber product $\Omega^{\infty}(A)\times_{\pi_0(A)}\pi_0(A)^*$.
\end{rmk}
By \cite[Proposition 5.5.3.12]{lurie2009higher} and  \cite[Corollary 3.2.3.5]{lurie2017higher}, the $\infty$-categories $\plog\Spt$ and $\plog$ are presentable.
Consider the following  monoidal functor $$F_0:\plog\Spt^\otimes\rr\ani^\times\times\Fun(\Delta^1,\Spt)^\otimes\stackrel{1\times{\rm ev}_1}\rr \ani^\times\times\Spt^\otimes.$$
It's easy to see that the functor $F_0$ preserves both limits and colimits, and thus it admits both left and right adjoints, which are automatically symmetric monoidal. Passing to $\Einf$-algebra objects, it turns out that the induced functor
$$F:\plog\rr\alg\times\mon$$
admits both left and right adjoints by \cite[Proposition 5.2.2.8]{lurie2009higher}. In particular, the forget functor $F$ detects limits and colimits of $\Einf$-prelog rings. As a consequence, the limits in the inclusion $\llog\subset\plog$ preserve limits.
\begin{prop}\label{presentability}
The  $\infty$-category $\llog$ is presentable. The inclusion $\llog\subset\plog$ admits a left adjoint $a:\plog\rightarrow\llog$, which provides a presentation of $\llog$. 
\end{prop}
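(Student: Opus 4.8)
The plan is to mimic the argument already used for the animated case in \autoref{presentabilityoflog}, exploiting the fact that the forgetful functor $F:\plog\rightarrow\alg\times\mon$ detects both limits and colimits (as established just before the statement). First I would exhibit $\llog$ as a limit of presentable $\infty$-categories. Consider the functor $\phi:\plog\rightarrow\Fun(\Delta^1,\mone)$ sending $(A,M,\alpha)$ to the comparison map $\alpha^{-1}GL_1(A)\rightarrow GL_1(A)$; concretely this is built by composing the forgetful functor $(A,M,\alpha)\mapsto(GL_1(A)\rightarrow\Omega^\infty A\leftarrow M)$ into $\Fun(\Delta^1\coprod_{\{1\}}\Delta^1,\mone)$, then right Kan extending to $\Fun(\Delta^1\times\Delta^1,\mone)$ to form the pullback $\alpha^{-1}GL_1(A)$, and finally restricting to the top horizontal edge. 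The diagonal $\delta:\mone\rightarrow\Fun(\Delta^1,\mone)$ identifies $\mone$ with the full subcategory of invertible arrows. Then $\llog$ is precisely the fiber product
\[
\begin{tikzcd}
\llog \arrow[r] \arrow[d] & \mone \arrow[d, "\delta"] \\
\plog \arrow[r, "\phi"] & {\Fun(\Delta^1,\mone)}
\end{tikzcd}
\]
because this pullback picks out exactly the $\Einf$-prelog rings for which $\alpha^{-1}GL_1(A)\rightarrow GL_1(A)$ is an equivalence, which is the defining log condition.

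Next I would invoke presentability of the pieces: $\plog$ and $\mone$ are presentable (the former noted just above the statement via \cite[Proposition 5.5.3.12]{lurie2009higher} and \cite[Corollary 3.2.3.5]{lurie2017higher}, the latter by \cite[Corollary 3.2.3.5]{lurie2009higher}), and $\Fun(\Delta^1,\mone)$ is presentable since $\mone$ is. The functor $\delta$ is a right adjoint (it admits ${\rm ev}_1$ or a colimit functor as left adjoint), hence accessible and limit-preserving; and $\phi$ is accessible and limit-preserving because each of its constituent steps (forgetful, right Kan extension, restriction) preserves limits and is accessible. Since a pullback of presentable $\infty$-categories along accessible, limit-preserving functors is again presentable — this is the standard closure property of $\mathbf{Pr}^{\rm L}$ (or $\mathbf{Pr}^{\rm R}$) under limits, as in \cite[Theorem 5.5.3.18]{lurie2009higher} — we conclude $\llog$ is presentable.

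For the second assertion, that the inclusion $\llog\subset\plog$ admits a left adjoint $a$ presenting $\llog$ as a localization, I would verify that $\llog$ is a reflective (accessible) subcategory of $\plog$. The inclusion preserves limits, as was already observed before the statement: $F$ detects limits, and the log condition is a limit condition stable under the limits computed in $\plog$. To produce the left adjoint via the adjoint functor theorem \cite[Corollary 5.5.2.9]{lurie2009higher}, it remains to check that the inclusion is accessible, i.e.\ that $\llog$ is an accessible subcategory; this follows from the pullback description above, since $\llog$ is cut out by the condition that a morphism be an equivalence, an accessible condition. Explicitly, $a$ sends $(A,M,\alpha)$ to the $\Einf$-log ring obtained by the logification pushout
\[
\begin{tikzcd}
{\alpha^{-1}GL_1(A)} & {GL_1(A)} \\
M & {M^a}
\arrow[from=1-1, to=1-2]
\arrow[from=1-1, to=2-1]
\arrow[from=1-2, to=2-2]
\arrow[from=2-1, to=2-2]
\end{tikzcd}
\]
exactly paralleling the animated logification already recorded in the excerpt.

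The main obstacle I anticipate is not the formal pullback bookkeeping but the verification that $\phi$ is genuinely well-defined as a functor and preserves limits, in particular that forming the pullback $\alpha^{-1}GL_1(A)=M\times_{\Omega^\infty A}GL_1(A)$ is functorial and limit-preserving across the whole category $\plog$; this requires care with the right Kan extension step and with the interaction between $\Omega^\infty$ and $GL_1$ (via the identification $GL_1(A)\simeq\Omega^\infty(A)\times_{\pi_0(A)}\pi_0(A)^*$ from \autoref{loopgl}). Once the functoriality and limit-preservation of $\phi$ are pinned down, presentability and the existence of the logification left adjoint follow formally, in complete analogy with the animated proof of \autoref{presentabilityoflog}.
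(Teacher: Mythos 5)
Your strategy matches the paper's: presentability of $\llog$ is obtained from the pullback description $\llog\simeq\plog\times_{\Fun(\Delta^1,\mone)}\mone$ (the same argument as in Proposition\autoref{presentabilityoflog}), and the left adjoint $a$ is produced by exhibiting $\llog$ as a limit-closed, accessible subcategory of $\plog$. However, there is a genuine gap at the decisive step. You assert that accessibility of the inclusion ``follows from the pullback description above, since $\llog$ is cut out by the condition that a morphism be an equivalence, an accessible condition.'' This is circular: the subcategory cut out by requiring a functorially assigned morphism to be an equivalence is accessible only when the functor assigning that morphism --- here $\phi$, whose formation involves $GL_1$ --- is itself accessible, i.e.\ preserves $\kappa$-filtered colimits for some $\kappa$. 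That is exactly the nontrivial verification, and you never carry it out; the same unverified accessibility of $\phi$ is also needed for your appeal to \cite[Theorem 5.5.3.18]{lurie2009higher} in the presentability step. Your closing paragraph even misidentifies what needs checking: you focus on functoriality and \emph{limit}-preservation of $\phi$, but limits are not the issue (both $GL_1$ and the formation of the pullback $\alpha^{-1}GL_1(A)$ preserve limits for formal reasons, $GL_1$ being a right adjoint). What the adjoint functor theorem \cite[Corollary 5.5.2.9]{lurie2009higher} requires, beyond limit-preservation, is that the inclusion preserve $\kappa$-filtered colimits --- and the phrase ``provides a presentation'' in the statement means precisely that the localization is accessible, so this cannot be waved through.

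The paper closes this gap with a concrete computation. Since colimits in $\plog$ are detected componentwise, the inclusion $\llog\subset\plog$ preserves filtered colimits if and only if $GL_1$ does (filtered colimits already commute with the finite limit defining $\alpha^{-1}GL_1(A)$). Using Remark\autoref{loopgl}, one has $\pi_0(GL_1(R))\simeq\pi_0(R)^*$ and $\pi_n(GL_1(R))\simeq\pi_n(R)$ for $n\geq 1$; since homotopy groups commute with filtered colimits and units of discrete rings commute with filtered colimits of rings (a unit of $\varinjlim\pi_0(A_i)$, its inverse, and the relation between them all live at a finite stage), the natural map $\varinjlim_{i}GL_1(A_i)\rightarrow GL_1(\varinjlim_i A_i)$ is an equivalence. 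You cite Remark\autoref{loopgl} as relevant and flag the $GL_1$ interaction as an ``anticipated obstacle,'' so the missing ingredient is within reach, but anticipating an obstacle is not resolving it: as written, neither the accessibility hypothesis of the adjoint functor theorem nor the claim that $a$ exhibits $\llog$ as an accessible localization is actually established.
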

\begin{proof}
The same arguments as in the proof of Proposition \autoref{presentabilityoflog} prove the presentability of $\llog$. Therefore, there is an adjoint pair  
\[\begin{tikzcd}
	\plog & \llog
	\arrow["a", shift left, from=1-1, to=1-2]
	\arrow["i", shift left, from=1-2, to=1-1]
\end{tikzcd}\] 
where $i$ is the inclusion $\llog\subset\plog$. Let us prove that this provides a presentation of $\llog$. We must prove the following facts: $a$ is an accessible localization, i.e., the functor  $i$ preserves $\kappa$-filtered colimits for some regular cardinal $\kappa$.  We have to prove that $GL_1$ preserves $\kappa$-filtered colimits. Let $I\rightarrow \alg$ be a $\kappa$-filtered diagram in $\alg$, we have a natural map of anime $$\varinjlim_{i\in I}GL_1(A_i)\rr GL_1(\varinjlim_{i\in I} A_i).$$
Using Remark \autoref{loopgl}, one can deduce that the homotopy group of $GL_1(R)$ for some $\Einf$-ring $R$ should be 
$$\pi_n(GL_1(R)) = \begin{cases}
\pi_0(R)^* & n=0,\\
\pi_n(R)  & n\geq 1.
\end{cases}$$
Therefore, we have $$\pi_n(\varinjlim_{i\in I}GL_1(A_i))\simeq \pi_n(GL_1(\varinjlim_{i\in I} A_i)).$$ 
\end{proof}
\begin{defn}
    The functor $a:\plog\rightarrow\llog$ is called the \textit{logification} functor.
\end{defn}
\begin{rmk}[Model category theoretical approach of presentability]\cite[Section 4.5.1]{lundemo2022formally}
One can use model category technology to obtain the presentability of the $\infty$-categories $\llog$, as same as Remark\autoref{modelcatrep}. By \cite[Proposition 4.5.1.1]{lundemo2022formally}, there is a combinatorial simplicial model structure, which is called \textit{log model structure}, on the ordinary category $\mathcal P$ of prelog ring spectra. The $\infty$-category associated with this model  category is naturally equivalent to $\llog$. Then using \cite[Proposition A.3.7.6]{lurie2009higher}, one deduces that the $\infty$-category $\llog$ is  presentable. 
\end{rmk}
\begin{rmk}[$\Einf^\mathcal{J}$-log rings]
    In the definition of $\Einf$-log rings, the log structure over an $\Einf$-ring $A$ is recorded by a map  of $\Einf$-monoids $\alpha:M\rightarrow\Omega^\infty A$. {This of course imposes some significant restrictions on the kind of log structure that we are allowing.} The $\Einf$-log rings we define here only capture log structures on $\pi_0$, i.e., 
    only non-invertible elements that we  can handle are in $\pi_0(A)$. On the other hand, in algebraic topology and $K$-theory, one might as well consider log structures over $\Einf$-rings which are determined  by elements in  higher homotopies. For example, consider the ($p$-completed) connective topological $K$-theory spectrum $ku_p$ and its \textit{Adams summand} $l_p$, the natural map $$i:l_p\rr ku_p$$ is possible to view as a  \textit{formally log-\'etale} map (in the sense of Definition\autoref{aaabbbvvvddd}) if we give certain "log structures" induced by the Bott classes $\beta^{p-1}\in\pi_{2p-2}(l_p)=\pi_{2p-2}(ku_p)=\beta^{p-1}\Z_p$ and  $\beta\in\pi_2ku_p=\beta\Z_p$ on both sides respectively, and we denote by 
    $$i:(l_p,\langle\beta^{p-1}\rangle_*)\rr(ku_p,\langle \beta\rangle_*)$$
    the corresponding map of "$\Einf$-log rings", 
    c.f. \cite[Section 6]{Sagave_2014} and \cite[Section 6]{Rognes_2015}. This will make sense if we work in the $\infty$-category of $\Einf^\mathcal{J}$-log rings studied in \cite{Sagave_2012},  \cite{Sagave_2014} and \cite{lundemo2022formally}. 
\end{rmk}
\begin{prop}\label{tensorlog}
  Let $\plog^\otimes\rightarrow\mathbb E^\otimes_{\infty}$\footnote{We use the notation $\Einf^\otimes$ for the $\Einf$-algebra operad, as an  $\infty$-category, it's equivalent to the nerve of the  category of pointed finite sets $\mathrm{N}(\mathbf{Fin}_*)$, see \cite[Example 2.1.1.18]{lurie2017higher}. } be the coCartesian symmetric monoidal structure.   Up to homotopy, there  is a unique symmetric monoidal structure $\otimes$ on $\llog$, such that the logification promotes to a colimit-preserving symmetric monoidal functor 
$$a^\otimes:\plog^\otimes\rr\llog^\otimes.$$
    \begin{proof}
       We let $\llog^\otimes$ be the subcategory of $\plog^\otimes$ spanned by objects having the form
       $$\widetilde {\coprod}_{i\in<n>^\circ}(A_j,M_j):= ((A_1,M_1),(A_2,M_2),...,(A_n,M_n))$$
       such that $(A_i,M_i)\in\llog$. We first prove that $\llog^\otimes\subset\plog^\otimes$ is a Bousfield localization of $\plog^\otimes$. Indeed, let $\widetilde{\coprod}_{j\in<m>^\circ}(B_j,N_j)$ be an object in $\plog^\otimes$, and we have $\widetilde {\coprod}_{j\in<m>^\circ}(B_j,N_j)^a\in\llog^\otimes$. Let $\widetilde {\coprod}_{i\in<n>^\circ}(R_i,L_i)$ be an object in $\llog^\otimes$. We have a natural map 
       $$\theta:\map_{\plog^\otimes}(\widetilde {\coprod}_{j\in<m>^\circ}(B_j,N_j)^a,\widetilde {\coprod}_{i\in<n>^\circ}(R_i,L_i))\rr \map_{\llog^\otimes}(\widetilde {\coprod}_{j\in<m>^\circ}(B_j,N_j),\widetilde {\coprod}_{i\in<n>^\circ}(R_i,L_i)),$$
       induced from the natural map 
       $\widetilde {\coprod}_{j\in<m>^\circ}(B_j,N_j)\rightarrow \widetilde {\coprod}_{j\in<m>^\circ}(B_j,N_j)^a.$
       We will show that $\theta$ is an equivalence. Note that both  sides of the map $\theta$  can be written as the disjoint union over maps $f:<m>\rightarrow <n>$ of $\prod_{i\in<n>}\map_{\plog}(\otimes_{f(j)=i}(A_j,M_j), (R_i,L_i))$ and of $\prod_{i\in<n>}\map_{\plog}(\otimes_{f(j)=i}(A_j,M_j)^a, (R_i,L_i))$ respectively, where the tensor product $\otimes$ is formed in $\plog^\otimes$. We observe that the associated components of the map $\theta$ are equivalences, as the canonical map $\otimes_{f(j)=i}(A_j,M_j)\rightarrow \otimes_{f(j)=i}(A_j,M_j)^a$ becomes a homotopy equivalence after taking logification. This shows that  $\theta$ is an equivalence, and that $\llog^\otimes$ is a Bousfield localization of $\plog^\otimes$, and we denote by $a^\otimes$ a left adjoint of the inclusion. One can see that the underlying functor $a^\otimes_{<1>}:\plog^\otimes_{<1>}\rightarrow\llog^\otimes_{<1>}$ is equivalent to $a$. Using \cite[Lemma 2.2.1.11]{lurie2017higher}, we deduce that  $\llog^\otimes\rightarrow\Einf^\otimes$ is a symmetric monoidal $\infty$-category, and the functor $a^\otimes$ is symmetric monoidal.\\
       The uniqueness of the functor $a^\otimes$ can be deduced from the universal property of Bousfield localizations: for any $\infty$-category $\DD$, and any colimit-preserving functor $F:\plog^\otimes\rightarrow\DD$ such that sending maps having the form $\widetilde{\coprod}_{j\in<m>^\circ}(B_j,N_j)\rightarrow \widetilde{\coprod}_{j\in<m>^\circ}(B_j,N_j)^a$ to equivalences, up to homotopy,  there is a unique factorization
       $$\begin{tikzcd}
\plog^\otimes \arrow[d, "a^\otimes"'] \arrow[r, "F"] & \DD \\
\llog^\otimes \arrow[ru, "F'"', dashed]              &    
\end{tikzcd}$$
through $a^\otimes$. Then the result follows.
    \end{proof}
\end{prop}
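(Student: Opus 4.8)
The plan is to construct the desired symmetric monoidal structure on $\llog$ by exhibiting $\llog^\otimes$ as a \emph{Bousfield localization} of the coCartesian symmetric monoidal structure $\plog^\otimes$, and then invoke the standard machinery of \cite[Section 2.2]{lurie2017higher} that converts compatible localizations into symmetric monoidal localizations. First I would define $\llog^\otimes\subset\plog^\otimes$ to be the full subcategory spanned by tuples $((A_1,M_1),\dots,(A_n,M_n))$ all of whose entries are $\Einf$-log rings; this is the natural candidate because the fiber over $\langle 1\rangle\in\Einf^\otimes$ must recover $\llog$ itself. The key input is that the logification functor $a:\plog\to\llog$ already exists and is a left adjoint (Proposition~\autoref{presentability}), so at the level of a single object I already have a localization; the work is to promote this fiberwise localization to one of $\infty$-operads.

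The central step is to verify that the inclusion $\llog^\otimes\subset\plog^\otimes$ is itself a Bousfield localization, i.e.\ that it admits a left adjoint $a^\otimes$ whose unit maps are precisely the entrywise logifications $\widetilde{\coprod}_j(B_j,N_j)\to\widetilde{\coprod}_j(B_j,N_j)^a$. To see this I would compute both mapping spaces: because $\plog^\otimes$ carries the \emph{coCartesian} structure, a map of tuples $\langle m\rangle\to\langle n\rangle$ decomposes the mapping space as a coproduct over functions $f:\langle m\rangle\to\langle n\rangle$ of products $\prod_i\map_{\plog}(\bigotimes_{f(j)=i}(A_j,M_j),(R_i,L_i))$, where the internal tensor is the coproduct (i.e.\ pushout) in $\plog$. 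The comparison map $\theta$ is then componentwise induced by $\bigotimes_{f(j)=i}(A_j,M_j)\to\big(\bigotimes_{f(j)=i}(A_j,M_j)\big)^a$, and each component is an equivalence precisely because the target $(R_i,L_i)$ is already a log ring, so mapping out of a prelog ring factors uniquely through its logification by the adjunction $a\dashv i$. This establishes that $\theta$ is an equivalence and hence that $\llog^\otimes$ is a reflective suboperad.

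Once the localization is in hand, I would apply \cite[Lemma 2.2.1.11]{lurie2017higher}, whose hypothesis is exactly that the localization functor is compatible with the tensor product in the sense that the class of $a^\otimes$-equivalences is stable under tensoring with arbitrary objects. This compatibility is inherited automatically here: the tensor of a logification-equivalence with any prelog ring remains a logification-equivalence, since logification commutes with the coproduct (being a left adjoint). The lemma then guarantees that $\llog^\otimes\to\Einf^\otimes$ is a symmetric monoidal $\infty$-category and that $a^\otimes$ is a symmetric monoidal functor, and colimit-preservation follows because $a^\otimes$ is a left adjoint. Finally, uniqueness up to homotopy is the universal property of localizations: any colimit-preserving symmetric monoidal functor out of $\plog^\otimes$ inverting the logification maps factors essentially uniquely through $a^\otimes$, giving the dashed $F'$.

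The step I expect to be the main obstacle is the careful bookkeeping in the computation of the mapping spaces over the coCartesian structure, and in particular confirming that the relevant internal tensor product in $\plog$ is computed as the pushout (coproduct) so that the logification adjunction applies termwise. One must be precise that the coproduct in $\plog$ of $\Einf$-prelog rings interacts correctly with $GL_1$ and $\Omega^\infty$; here I would lean on the description of $\plog$ as a limit of $\infty$-operads from Remark~\autoref{1234543456} and the fact established before Proposition~\autoref{presentability} that the forgetful functor $F:\plog\to\alg\times\mone$ detects both limits and colimits, which reduces the verification to the already-understood behavior of coproducts of $\Einf$-rings and $\Einf$-monoids. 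The remaining arguments are then formal consequences of the localization formalism.
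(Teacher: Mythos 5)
Your proposal is correct and follows essentially the same route as the paper's proof: the same definition of $\llog^\otimes$ as the full suboperad of tuples of log rings, the same componentwise computation of mapping spaces over the coCartesian structure to exhibit the Bousfield localization, the same appeal to \cite[Lemma 2.2.1.11]{lurie2017higher}, and the same universal-property argument for uniqueness. Your explicit check that logification-equivalences are stable under tensoring (since $a$ commutes with coproducts) is a slightly more careful verification of the compatibility hypothesis than the paper spells out, but it is the same argument in substance.
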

\subsection{Truncatedness of \texorpdfstring{$\Einf$}{E-infinity}- and animated log rings}
Let $\CC$ be an $\infty$-category. An object $X \in \CC$ is called $n$-truncated if the associated Yoneda functor $h_X: \CC^{\rm op} \rightarrow \ani$ factors through the subcategory $\ani_{\leq n}$ of $\ani$ spanned by $n$-truncated animae. In this subsection, we study the truncatedness of  $\Einf$- and animated log rings, a necessary prerequisite for analyzing the representability properties of the moduli stack of log structures defined in \autoref{logstacks}.
\begin{defn}
\begin{enumerate}
\item An $\Einf$-(or animated) log ring $(A,M)$ is \textit{connective} if the underlying $\Einf$-ring $A$ is connective.
\item An $\Einf$-(or animated) log ring $(A,M)$ is \textit{discrete} (resp. \textit{n-truncated}) if it is discrete (resp. $n$-truncated) in $\llog$.
\end{enumerate}
\end{defn}
\begin{rmk}
    An $\Einf$-log ring $(A,M)$ is $n$-truncated if and only if both $A$ and $M$ are $n$-truncated. Indeed, the sufficiency is obvious. On the other hand, we let $(A,M)$ be $n$-truncated. Then one has the following equivalences 
    $$\map_{\llog}((\mathbb S\{x\},\coprod_{n\geq 0}B\Sigma_n)^a,(A,M))\simeq\map_{\alg}(\coprod_{n\geq 0}B\Sigma_n,M)\times_{\map_{\mon}(\coprod_{n\geq 0}B\Sigma_n,\Omega^\infty A)}\map_{\alg}(\mathbb S\{x\},A)\simeq M$$ and $$\map_{\llog}((\mathbb S\{x\},0)^a,(A,M))\simeq\map_{\mon}(0,M)\times_{\map_{\mon}(0,A)}\map_{\alg}(\mathbb S\{x\}, A)\simeq\Omega^\infty A.$$ A similar result also holds for animated log rings. 
\end{rmk}
\begin{rmk}
 A discrete $\Einf$-(or animated) ring $R$ may admit non-discrete log structures. For example, let $R\in\alg^{\heartsuit}$ be  an ordinary commutative ring, and let $M$ be an arbitrary non-discrete $\Einf$-monoid. We can consider the unique  map $\mathfrak{o}:M=\coprod_{n\geq 0}B\Sigma_n\rightarrow  R$ determined by $0\in R$.  This monoid map gives rise to an  $\Einf$-log ring $(R,M^a,\mathfrak{o}^a)$, where $M^a:=M\oplus R^*$ is not discrete in general. In the $\Einf^{\mathcal{J}}$-setting, c.f. \cite{Sagave_2014},  discrete $\Einf^{\mathcal{J}}$-log rings with  non-discrete monoids  also naturally appear. For example, one has a formally log-\'etale map $(\Z_p,\langle\beta^{p-1}\rangle_*)\rightarrow (\Z_p\otimes_{l_p}ku_p,\langle\beta\rangle_*)$ given by base change of the map $i:(l_p,\langle\beta^{p-1}\rangle_*)\rr(ku_p,\langle \beta\rangle_*)$ along $l_p\rightarrow\pi_0(l_p)=\Z_p$. Here the $\mathcal{J}$-monoid $\langle\beta^{p-1}\rangle_*$ is equivalent to the natural embedding $\mathcal{QS}^0\times_\Z\N\rightarrow\mathcal{QS}^0$, which is non-discrete. 
\end{rmk}
\begin{defn}\label{admlogrings}
    An $\Einf$-(or animated) log ring $(A,M)$ is called $n$-\textit{admissible} if  $(\pi_0A,M)^a$ is $n$-truncated for some $n<+\infty$. We say $(A,M)$ is admissible if it is $n$-admissible for some $n$.
\end{defn}
\begin{rmk}\label{truncadmissible}
If $(A,M)$ is $n$-admissible and $A$ is $m$-truncated, then $(A,M)$ is $(1+\max\{n,m\})$-truncated in $\llog$. Indeed, we have a pushout square 
$$\begin{tikzcd}
GL_1(A) \arrow[r] \arrow[d] & GL_1(\pi_0A) \arrow[d] \\
M \arrow[r]                 & M^a                   
\end{tikzcd}$$
where $GL_1(A)$ is $m$-truncated and $M^a$ is $n$-truncated by assumption.
\end{rmk}
\begin{rmk}[Strict morphisms\footnote{We say that $f:(A,M)\rightarrow (B,N)$ is a strict morphism, if $(B,N)\simeq(B,M)^a$, or equivalently, $(B,N)\simeq (A,M)\otimes_AB$. Here the tensor product is formed in $\llog^\otimes$ given by Proposition\autoref{tensorlog}}]\label{admissiblelocal}
    Let $f:(A,M)\rightarrow(B,N)$ be a strict map. Then if  $(A,M)$ is $n$-admissible, then so is $(B,N)$. Conversely, if $\spec\pi_0 B\rightarrow\spec\pi_0A$ is a flat cover, and $(B,N)$ is $n$-admissible, then so is $(A,M)$, see Lemma\autoref{1884310}. 
\end{rmk}
\subsection{Comparison of animated and \texorpdfstring{$\Einf$}{E-infinity}-contexts}\label{einfvsani}
Similar to derived algebraic geometry, animated and $\Einf$-log rings provide two foundational frameworks for derived logarithmic geometry. The behavior of animated log rings is well-studied in \cite{sagave2016derived} and \cite{binda2023hochschild}, while the $\Einf$-contextual approach is explored in \cite{rognes2009topological}, \cite{Sagave_2014}, and \cite{lundemo2023deformation}.\\
Both types of log rings share many common properties and permit analogous constructions of associated invariants. However, they are not equivalent—even when restricted to log rings over a field of characteristic $0$—which stands in contrast to the relationship between commutative algebras in the animated and $\Einf$-contexts. This discrepancy arises because the definition of an $\Einf$-log ring $(A, M, \alpha)$ involves an $\Einf$-monoid $M$ and a morphism of $\Einf$-monoids $\alpha: M \rightarrow \Omega^\infty A$, which is not always equivalent to a morphism of animated monoids. 
\subsubsection{Animated monoids versus $\Einf$-monoids}
By the universal property of animation, the canonical fully faithful embedding $\mone^\heartsuit\rightarrow\mone$ can be uniquely extended to a functor $\theta:\mone^\Delta\rightarrow\mone
$, which is given by taking left Kan extension. We refer to it as the forgetful functor and refer to the object $\theta(M)$  as the underlying $\Einf$-monoid of $M$.
\begin{lem}\label{ani-sptmon}
    The functor $\theta$ preserves both arbitrary small colimits and arbitrary small limits, and is conservative. The functor $\theta$ is both monadic and comonadic.
\end{lem}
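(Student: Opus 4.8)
The plan is to analyze the forgetful functor $\theta:\mone^\Delta\to\mone$ by exploiting the presentations of both sides as product-preserving functors out of a category of finitely generated free monoids, and then appeal to the Barr–Beck–Lurie theorem for monadicity/comonadicity. First I would observe that $\theta$ is a colimit-preserving functor between presentable $\infty$-categories: by construction it is the left Kan extension of the Eilenberg–Mac Lane embedding $\mone^\heartsuit\hookrightarrow\mone$ along $\mone^\heartsuit\hookrightarrow\mone^\Delta$, and left Kan extensions along the inclusion of generators automatically preserve all small colimits (this is the universal property of animation, $\Fun^{\rm sift}(\ani(\mone^\heartsuit),\mone)\simeq\Fun(\mone^{\heartsuit,\rm sfp},\mone)$). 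Hence $\theta$ admits a right adjoint $G$ by the adjoint functor theorem. For preservation of limits, the key point is that both $\mone^\Delta$ and $\mone$ can be described as finite-product-preserving presheaves of anima on the category $\mathbf{FreeMon}$ of finitely generated free monoids — for $\mone^\Delta$ this is stated explicitly in the excerpt, and for $\mone$ this follows because an $\Einf$-monoid is the same data as a product-preserving functor out of the free commutative monoids (the free objects corepresent the forgetful functor to $\ani$). Under these identifications $\theta$ is simply the identity on underlying product-preserving presheaves, which makes it manifestly limit-preserving as well; in particular it is conservative because equivalences of such presheaves are detected objectwise on the free generators.

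Next I would establish conservativity and the existence of a left adjoint so that both monadicity and comonadicity become accessible. Conservativity follows from the observation above: a map $f$ in $\mone^\Delta$ is an equivalence iff it induces equivalences on the mapping anima out of all finitely generated free monoids, and $\theta$ does not alter this data. Since $\theta$ preserves all small colimits it has a right adjoint $G$; since it preserves all small limits and is accessible (being a functor between presentable $\infty$-categories) it also admits a left adjoint $F$ by the adjoint functor theorem. Thus $\theta$ sits in a chain of adjunctions $F\dashv\theta\dashv G$.

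Finally I would invoke the $\infty$-categorical Barr–Beck–Lurie theorem twice. For monadicity I apply it to the adjunction $\theta\dashv G$: $\theta$ is conservative, and since it preserves all small colimits it certainly preserves geometric realizations of $\theta$-split simplicial objects, so Barr–Beck–Lurie yields that $\theta$ exhibits $\mone^\Delta$ as monadic over $\mone$. For comonadicity I apply the dual statement to the adjunction $F\dashv\theta$: $\theta$ is conservative, and since it preserves all small limits it preserves totalizations of $\theta$-cosplit cosimplicial objects, so the dual Barr–Beck–Lurie theorem shows $\theta$ is comonadic. The main obstacle I anticipate is carefully justifying the identification of $\mone$ itself with product-preserving presheaves on $\mathbf{FreeMon}$ in a way that makes the comparison with $\mone^\Delta$ literally the identity on underlying data; once that is set up, preservation of limits and colimits and conservativity are formal, and the two Barr–Beck–Lurie verifications reduce to the already-established preservation properties rather than any delicate simplicial-object analysis.
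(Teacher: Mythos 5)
There is a genuine gap, and it sits at the core of your argument: the claim that $\mone$ itself is equivalent to finite-product-preserving presheaves on $\mathbf{FreeMon}$, with $\theta$ becoming ``the identity on underlying product-preserving presheaves.'' That Lawvere-theory description characterizes the animation $\mone^\Delta=\ani(\mone^\heartsuit)$, and \emph{only} the animation; it does not describe the $\infty$-category of $\Einf$-monoids. The free $\Einf$-monoid on one generator is $\coprod_{n\geq 0}B\Sigma_n$, whose endomorphism anima in $\mone$ is the non-discrete space $\coprod_{n\geq 0}B\Sigma_n$ itself, whereas the free animated monoid on one generator is the discrete monoid $\N$ with $\map_{\mone^\Delta}(\N,\N)\simeq\N$. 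So the full subcategories of free objects on the two sides are not equivalent, and the two presheaf descriptions do not match up. Worse, if your identification were correct it would force $\theta$ to be an equivalence of $\infty$-categories, which it is not: a group-like animated monoid is an animated abelian group, i.e.\ a connective $H\Z$-module (a product of Eilenberg--Mac Lane spaces), while group-like $\Einf$-monoids are arbitrary infinite loop spaces (e.g.\ $\Omega^\infty\mathbb S$), so $\theta$ is not even essentially surjective. Indeed, the entire point of the comparison section is that $\theta$ carries nontrivial (co)monadic structure; your reading would trivialize it. You flagged this identification as ``the main obstacle,'' and it is in fact insurmountable.

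The repair is the route the paper takes: instead of comparing $\mone^\Delta$ and $\mone$ directly as presheaf categories, compose with the forgetful functor $F:\mone\rightarrow\ani$, which detects limits, sifted colimits, and equivalences of $\Einf$-monoids. Since every animated monoid is a sifted colimit of finitely generated free discrete monoids, $F\circ\theta$ is the left Kan extension of its restriction to $\mone^\heartsuit$, and one identifies it with the evaluation functor $M\mapsto M(\N)$ on product-preserving presheaves; this functor visibly preserves all small limits and sifted colimits and is conservative, and these properties then lift along $F$ to $\theta$. Your final step is fine and agrees with the paper: once $\theta$ is known to be conservative, to preserve limits and colimits, and (by presentability and the adjoint functor theorem) to admit both adjoints, the Barr--Beck--Lurie theorem applied to $\theta\dashv G$ and dually to $F'\dashv\theta$ gives monadicity and comonadicity. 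But as written, the limit-preservation and conservativity on which that step relies are unproved, because the presheaf identification they rest on is false.
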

\begin{proof}
    The colimit-preserving property is clear.   To prove that $\theta$ preserves limits and is conservative, we need to show that the composition $\mone^\Delta\stackrel{\theta}\rightarrow\mone\stackrel{F}\rightarrow\ani$ preserves arbitrary limits, sifted colimits  and is conservative; here the functor $F$ is the forgetful functor. This is because $F$ detects limits, sifted colimits and equivalences of $\Einf$-monoids.  Since any animated monoid $M$ is equivalent to some sifted colimit of a family of finitely generated free monoids, the functor $F\circ\theta$ is equivalent to the left Kan extension of  its restriction $F\circ\theta|_{\mone^\heartsuit}$ along the embedding $\mone^\heartsuit\subset\mone^\Delta$,
    which is equivalent to the functor $M\mapsto M(\mathbb N).$ The monadicity and comonadicity follow from the fact that the functor $\theta$ is conservative and preserves geometric realizations (i.e. $\Delta^{\rm op}$-indexed colimits) and totalizations (i.e. $\Delta$-indexed limits) \cite[Theorem 4.7.0.3]{lurie2017higher}.
\end{proof}
Applying the adjoint functor theorem for presentable $\infty$-categories, the functor $\theta$ admits a left adjoint $\theta^L$ and a right adjoint $\theta^R.$ We have an adjoint triple 
$$\begin{tikzcd}
\mone^\Delta \arrow[r, "\theta" description] & \mone \arrow[l, "\theta^L" description, shift right=3] \arrow[l, "\theta^R" description, shift left=3]
\end{tikzcd}$$
The monadicity of the functor $\theta$ implies that there is a  monad $T$, whose underlying functor is equivalent to $\theta\circ\theta^L$ on $\mone$, exhibits the $\infty$-category $\mone^\Delta$ as the $\infty$-category of $T$-modules ${\mathbf{L}}\Mod_T(\mone)$ in $\mone.$ We denote by $\theta':{\mathbf{L}}\Mod_T(\mone)\rightarrow \mone^\Delta$ the unique equivalence. In other words, to give an animated monoid $M$, it is equivalent to give  a $T$-module $M_{-1}$, whose underlying $\Einf$-monoid is equivalent to $\theta(M)$. Let $M_\bullet:[n]\mapsto M_n=T^{\circ (n+1)}   M_{-1}$ be the corresponding Bar resolution of $M_{-1}$ with respect to $T$. The geometric realization of $\theta'(M_\bullet)$ is canonically equivalent to $M$.\\

\subsubsection{Animated log rings versus $\Einf$-log rings}
The natural embeddings 
$\alg^{\heartsuit}\hookrightarrow\algcn$ and $\mon^{\heartsuit}\hookrightarrow\mone$ define a functor $\plog^{\rm ord,sfp}\rightarrow\plog,$
which is given by sending  a free prelog  algebra $(\Z[x_1,...,x_k,...,x_n],\langle x_1,...,x_k\rangle)$ to the underlying  $\Einf$-prelog ring $(\Z[x_1,...,x_k,...,x_n],\langle x_1,...,x_k\rangle)$, where the prelog structure is given by the following composition
$$\Sigma^{\infty}_+\langle x_1,...,x_k\rangle=\mathbb S[ x_1,...,x_k]\rr\Z[x_1,...,x_k]\rr\Z[x_1,...,x_k,...,x_n].$$
By the universal property of animation, up to homotopy, there exists a unique functor 
$$\Theta: \plogd\rr\plog_\Z,$$
 $$(\Z[x_1,...,x_k,...,x_n],\langle x_1,...,x_k\rangle)\mapsto(\Z[x_1,...,x_k,...,x_n],\langle x_1,...,x_k\rangle).$$
 Here we let $\plog_\Z$ be the slice category of $\plog$ below $(\Z,\{\pm 1\})$.
 The functor preserves both limits and colimits. Indeed, limits and colimits on both sides are calculated pointwise, and the forgetful functors $\theta:\mond\rightarrow\mone$ and $\theta_{\rm alg}:\alg^{\Delta}\rightarrow\algcn$ preserve both arbitrarily small limits and arbitrary small colimits as discussed before in the proof of Lemma\autoref{ani-sptmon}. Using the adjoint functor theorem for presentable $\infty$-categories, we get a left adjoint $\Theta^L$ and a right adjoint $\Theta^R$ of $\Theta$:
\[\begin{tikzcd}
	\plog_\Z && \plogd
	\arrow["{\Theta^L}"{description}, shift left=4, from=1-1, to=1-3]
	\arrow["{\Theta^R}"{description}, shift right=4, from=1-1, to=1-3]
	\arrow["\Theta"{description}, from=1-3, to=1-1]
\end{tikzcd}\] 
Let $(A,M,\alpha)\in\llogd$ be an animated log ring, then the image of $(A,M,\alpha)$ under $\Theta$ is the $\Einf$-pre log ring $(\Theta(A),M,\alpha')$, where $$\alpha':\Sigma^{\infty}_+M=\mathbb S[M]\rr\Z[M]\rr \Theta(A).$$ 
This gives rise to a  morphism of $\Einf$-monoids
$M\rightarrow\Omega^{\infty}\Theta(A)\simeq \theta(A)$.
The  induced map  $M\times_{\theta (A)}GL_1(\Theta(A))\rightarrow GL_1(\Theta(A))\simeq \theta(GL_1(A))$ is an equivalence because we already proved the fact that  the forgetful functor $\theta$  preserves the space of homotopy units.
Therefore, the functor $\Theta$ preserves log objects, and when we restrict it to $\Theta_{\rm Log}:\llogd\rightarrow \llog$, it still preserves both limits and colimits. We therefore get an adjoint triple
\[\begin{tikzcd}
	\llog_\Z && \llogd
	\arrow["{\Theta_{\rm Log}^L}"{description}, shift left=4, from=1-1, to=1-3]
	\arrow["{\Theta_{\rm Log}^R}"{description}, shift right=4, from=1-1, to=1-3]
	\arrow["\Theta_{\rm Log}"{description}, from=1-3, to=1-1]
\end{tikzcd}\]

\begin{prop}\label{ani-spt}
The functors $\Theta$ and $\Theta_{\rm Log}$ are both monadic and comonadic.
\end{prop}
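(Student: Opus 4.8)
The plan is to read off both statements from the $\infty$-categorical Barr--Beck--Lurie theorem \cite[Theorem 4.7.0.3]{lurie2017higher} and its dual. Recall that a functor admitting a left adjoint is monadic as soon as it is conservative and preserves geometric realizations of split simplicial objects, and dually that a functor admitting a right adjoint is comonadic as soon as it is conservative and preserves totalizations of cosplit cosimplicial objects. We have already arranged that $\Theta$ sits in an adjoint triple $\Theta^L\dashv\Theta\dashv\Theta^R$ and that it preserves all small limits and all small colimits; the same holds for $\Theta_{\rm Log}$, and both source categories are presentable, hence complete and cocomplete. Thus $\Theta$, viewed as the right adjoint of $\Theta^L\dashv\Theta$, automatically preserves the geometric realizations appearing in the monadicity criterion, and viewed as the left adjoint of $\Theta\dashv\Theta^R$ it automatically preserves the totalizations appearing in the comonadicity criterion. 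The only clause of either criterion that is not immediate is conservativity, so the entire proof reduces to showing that $\Theta$ and $\Theta_{\rm Log}$ are conservative.

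To prove that $\Theta$ is conservative, I would compare it with the forgetful functors through the commuting square
\[\begin{tikzcd}
\plogd \arrow[r, "\Theta"] \arrow[d, "F^\Delta"'] & \plog_\Z \arrow[d, "F"] \\
\alg^\Delta\times\mond \arrow[r, "\theta_{\rm alg}\times\theta"'] & \alg\times\mone
\end{tikzcd}\]
which commutes by the very construction of $\Theta$: on the ring coordinate it is $\theta_{\rm alg}$ and on the monoid coordinate it is $\theta$. Here $F^\Delta$ and $F$ are the forgetful functors, each of which detects equivalences since each detects limits and colimits. The functor $\theta$ is conservative by Lemma~\autoref{ani-sptmon}, and $\theta_{\rm alg}\colon\alg^\Delta\to\algcn$ is conservative because it is monadic and comonadic by \cite[Proposition 25.1.2.2]{lurie2018spectral}; hence the bottom arrow $\theta_{\rm alg}\times\theta$ is conservative. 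Now if $f$ is a morphism of $\plogd$ with $\Theta(f)$ an equivalence, then $(\theta_{\rm alg}\times\theta)(F^\Delta f)\simeq F(\Theta f)$ is an equivalence, so $F^\Delta f$ is an equivalence by conservativity of the bottom arrow, and therefore $f$ is an equivalence since $F^\Delta$ detects equivalences. This establishes conservativity of $\Theta$, and the two applications of Barr--Beck--Lurie then give that $\Theta$ is both monadic and comonadic.

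For $\Theta_{\rm Log}$ it remains only to transfer conservativity, after which the same two applications of the (co)monadicity criteria apply verbatim (using that $\Theta_{\rm Log}$ again preserves all small limits and colimits and sits in its own adjoint triple). Since the inclusions $\llog\subset\plog$ and $\llogd\subset\plogd$ are fully faithful, they detect equivalences; so a morphism $f$ in $\llogd$ with $\Theta_{\rm Log}(f)$ invertible is, after applying the inclusion, a morphism of $\plogd$ with $\Theta(f)$ invertible, hence invertible by the previous paragraph and therefore invertible already in $\llogd$. This gives conservativity of $\Theta_{\rm Log}$ and completes the argument.

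I expect the only substantive point to be the conservativity step, and within it the ring coordinate: one must invoke the nontrivial fact that $\theta_{\rm alg}\colon\alg^\Delta\to\algcn$ is conservative, which is exactly the (logarithm-free) input isolated by Lurie. Everything else is formal bookkeeping with adjoint triples, together with the observation that $\Theta$ being simultaneously a left and a right adjoint forces the limit- and colimit-preservation clauses of the two criteria to hold for free.
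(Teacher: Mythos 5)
Your proposal is correct and takes essentially the same route as the paper: the paper's own proof is precisely the one-line observation that $\Theta$ and $\Theta_{\rm Log}$ are conservative and preserve all small limits and colimits, so that the Barr--Beck--Lurie criterion (invoked as in Lemma\autoref{ani-sptmon}) yields both monadicity and comonadicity. The only difference is that you explicitly verify the conservativity claim (via the commuting square with the forgetful functors, conservativity of $\theta$ from Lemma\autoref{ani-sptmon}, and conservativity of $\theta_{\rm alg}$ from Lurie), a step the paper asserts without proof.
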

\begin{proof}
  This is because  the functors $\Theta$ and $\Theta_{\rm Log}$ are conservative, preserving arbitrary small  limits and arbitrary small colimits.
\end{proof}

\section{Deformation theory of \texorpdfstring{$\Einf$}{E-infinity}- and animated log rings}\label{definiiton of einf-log rings}
In this section, we study the deformation theory of $\Einf$-log rings. We begin by reviewing the formalism of \textit{replete tangent bundles} and the definition of \textit{Gabber's cotangent complexes} introduced by Lundemo \cite{lundemo2023deformation}, which generalizes the construction of cotangent complexes for $\Einf$-rings from \cite{lurie2017higher}. We then introduce the notion of \textit{replete tangent correspondence} and use it to define the $\infty$-category of \textit{log derivations} for $\Einf$-log rings.\\
Subsequently, we define \textit{deformations} of $\Einf$-log rings and formulate a derived version of logarithmic deformation theory. This framework generalizes classical deformation theory to the context of spectral log geometry. Finally, we consider the animated analogues of the replete tangent bundle and replete tangent correspondence formalisms, thereby formulating a deformation theory for animated log rings.
\begin{no}
From now on, we only  consider connective $\Einf$-log rings, and denote by $\llog$ the $\infty$-category of connective $\Einf$-log rings. We will call a "connective $\Einf$-log ring" an $\Einf$-log ring, unless we specifically say "a non-connective $\Einf$-ring". 
\end{no}

\subsection{Replete tangent bundles and replete tangent correspondences}
In this subsection, we develop the theoretical foundation for studying the deformation theory of $\Einf$-log rings. We first introduce the notion of replete tangent bundles and Gabber's cotangent complex for presentable $\infty$-categories, which generalizes Lundemo's replete bundle formalism and recovers Gabber's cotangent complex for $\llog$ as presented in \cite{lundemo2023deformation}. We then define the concept of replete tangent correspondences and use it to construct the $\infty$-category of replete derivations.
\subsubsection{Tangent bundles}\label{321321}
Denote by $\CC$ a presentable  $\infty$-category. The \textit{tangent bundle} \cite[Definition 7.3.1.9]{lurie2017higher}   of  $\CC$ is the \textit{stable envelope}\footnote{This is a relative version of stabilization of an $\infty$-category, see \cite[Definition 7.3.1.1]{lurie2017higher}.}  of the evaluation map $\mathrm{ev}_1: \Fun(\Delta^1,\CC)\rightarrow\CC$. This is a presentable fibration  $\Omega^\infty:\tang_\mathcal{C}\rightarrow\Fun(\Delta^1,\CC)$. We get  a sequence of presentable fibrations
\[\begin{tikzcd}
	{{\rm T}_{\CC}} & {\Fun(\Delta^1,\CC)} & \CC
	\arrow["{\Omega^{\infty}}", from=1-1, to=1-2]
	\arrow["{ev_1}", from=1-2, to=1-3]
\end{tikzcd}\]
The functor $\Omega^{\infty}$, which we will call the \textit{infinity loop functor}, admits a relative left adjoint $\LL$, see \cite[Section 7.3.2]{lurie2017higher} for the notion of relative adjunctions,  which we will call a \textit{cotangent complex functor} for $\CC$. For an object $B\in\CC$, denote by $\CMod_B$ the fiber of the functor ${\rm T}_{\CC}\rr\CC$ at $B$, i.e., we let
$\CMod_B:={\rm T}_{\CC}\times_{\CC}\{B\}\simeq\Spt(\CC_{/B})$\footnote{The stabilization  $\Spt(\CC)$ of $\CC$ can be realized as the $\infty$-category ${\rm Exc}_*(\ani_*^{\rm fin},\CC)$ of \textit{spectrum objects} in $\CC$ defined in \cite[Definition 1.4.2.8]{lurie2017higher}. The functor $\CC\mapsto\Spt(\CC)$ could be characterized by certain universal property, see \cite[Proposition 1.4.2.22 and Corollary 1.4.2.23]{lurie2017higher}. Roughly speaking, it is universal for left exact (and hence right exact) functors from $\CC$ to  stable $\infty$-categories. Informally, one can view $\Spt(\CC)$ as the $\infty$-category of generalized homology theories of topological spaces taking values in $\CC$.}, which is a presentable stable $\infty$-category. We have the following fact, given by the adjunction between $\LL$ and $\Omega^\infty$,
\begin{prop}\label{cotsus}
    Let $(f:A\rightarrow B)\in \Fun(\Delta^1,\CC)$ be a morphism in $\CC,$ and let $M$ be an object in $\CMod_B$, then there is a functorial equivalence of animae,
    $$\map_{\CC_{/B}}(f,\Omega^{\infty}M)\simeq\map_{\CMod_B}(\LL_f,M).$$
\end{prop}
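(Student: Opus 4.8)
The plan is to recognize the asserted equivalence as nothing more than the fiberwise incarnation of the relative adjunction $\LL \dashv \Omega^\infty$ over $\CC$, restricted to the component lying over $\mathrm{id}_B$. Everything needed has been recorded above: $\LL$ is the relative left adjoint of $\Omega^\infty$, and the fibers of the two presentable fibrations are identified.

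First I would set up the fibers carefully. Both $\mathrm{ev}_1\colon\Fun(\Delta^1,\CC)\to\CC$ and $\pi\colon\tang_\CC\to\CC$ are presentable fibrations, whose fibres over $B$ are $\CC_{/B}$ and $\CMod_B=\Spt(\CC_{/B})$ respectively. The functor $\Omega^\infty$ and its relative left adjoint $\LL$ are functors over $\CC$, i.e. compatible with $\mathrm{ev}_1$ and $\pi$. Hence whenever $\pi(M)=B$ we automatically have $\mathrm{ev}_1(\Omega^\infty M)=B$, so that $\Omega^\infty M\in\CC_{/B}$, and likewise $\mathrm{ev}_1(f)=B$ forces $\pi(\LL_f)=B$, i.e. $\LL_f\in\CMod_B$. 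Thus all four objects $f,\Omega^\infty M\in\CC_{/B}$ and $\LL_f,M\in\CMod_B$ lie in the correct fibres, and the two mapping animae in the statement are well defined.

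Next I would invoke the defining property of the relative adjunction (Section 7.3.2 of \cite{lurie2017higher}): the unit $\eta\colon\mathrm{id}\to\Omega^\infty\LL$, which lies over $\CC$, induces a natural equivalence of total mapping animae
$$\map_{\tang_\CC}(\LL_f,M)\;\simeq\;\map_{\Fun(\Delta^1,\CC)}(f,\Omega^\infty M).$$
Because $\eta$ is a morphism over $\CC$, this equivalence is compatible with the two projections to $\map_\CC(B,B)$ (applying $\pi$, resp. $\mathrm{ev}_1$, to a morphism). Since for any presentable fibration the mapping anima in a fibre $\mathcal D\times_\CC\{B\}$ is computed as the fibre over $\mathrm{id}_B$ of the ambient mapping anima, passing to the fibres over the point $\mathrm{id}_B\in\map_\CC(B,B)$ turns the displayed equivalence into
$$\map_{\CMod_B}(\LL_f,M)\;\simeq\;\map_{\CC_{/B}}(f,\Omega^\infty M),$$
which is exactly the claim; naturality in $f$ and $M$ is inherited from the naturality of $\eta$.

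The only real subtlety, and the step I would be most careful about, is the compatibility of the relative‑adjunction equivalence with the projection to $\map_\CC(B,B)$, i.e. that it genuinely restricts to the fibres over $\mathrm{id}_B$ rather than merely matching the total animae. This is guaranteed by the fact that the unit of a relative adjunction lies over the base, so that $\mathrm{ev}_1$ applied to the comparison map is an identity. Equivalently, one can appeal to the general statement that a relative adjunction over a base restricts to an honest fibrewise adjunction $\CC_{/B}\rightleftarrows\CMod_B$, which in turn rests on $\Omega^\infty$ preserving the relevant coCartesian edges over $\CC$; this preservation is already built into Lurie's construction of the relative left adjoint $\LL$, so no additional verification is required beyond the bookkeeping of fibres.
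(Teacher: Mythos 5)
Your proposal is correct and is essentially the paper's own argument: the paper offers no separate proof, introducing Proposition \ref{cotsus} only with the remark that it is ``given by the adjunction between $\LL$ and $\Omega^{\infty}$,'' and your write-up is precisely the standard unwinding of that remark (the relative adjunction of \cite[Section 7.3.2]{lurie2017higher} restricts to a fibrewise adjunction $\CC_{/B}\rightleftarrows\CMod_B$, whose mapping-space equivalence over $\mathrm{id}_B$ is the stated formula). Your attention to the compatibility of the unit with the projection to $\map_{\CC}(B,B)$ is exactly the bookkeeping the paper leaves implicit.
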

\begin{rmk}[Tangent bundle of $\Einf$-algebras]\label{tangentmodule}
In \cite{lurie2017higher}, the tangent bundle of $\infty$-category of   $\Einf$-rings $\alg$ has been well studied. Let $A$  be an $\Einf$-ring, the stabilization $\CMod_A=\Spt(\alg_{/A})$ 
is naturally equivalent to $\Mod_A$, the $\infty$-category of $A$-module spectra.  The infinity loop functor $\Omega^{\infty}:\Mod_A\rightarrow\alg_{/A}$
is given by sending an $A$-module $M$ to $A\oplus M\rightarrow A$ and the cotangent complex $\LL_{A/(-)}:\alg_{/A}\rightarrow\Mod_A$ coincides with the \textit{topological Andr\'e-Quillen homology} defined in \cite{article}.
\end{rmk}
\begin{rmk}[Passing to connective algebras]\label{tangentconnective}
Let $(\mathcal{C}^\otimes,\tau)$ be a presentable symmetric monoidal stable $\infty$-category equipped with a complete $t$-structure, such that the monoidal product $\otimes$  is right $t$-exact in $\CC$. In this case, the connective cover functor $\tau_{\geq 0}:\alg(\CC)\rightarrow\alg(\CC_{\geq 0})$ is a right adjoint of the inclusion $\alg(\CC_{\geq 0})\subset\alg(\CC).$ We claim that this induces an equivalence ${\tau}':\Spt(\alg(\CC)_{/A})\stackrel{\simeq}\rightarrow\Spt(\alg(\CC_{\geq 0})_{/A})$ for any $A\in\alg(\CC_{\geq 0})$. Indeed, the functor ${\tau}'$ is a right adjoint functor by the definition of stabilizations, whose left adjoint  $\widetilde{(-)}:\Spt(\alg(\CC_{\geq 0})_{/A})\rightarrow\Spt(\alg(\CC)_{/A})$ is given by sending a spectrum object $F:\ani_*^{\rm fin}\rightarrow \CC_{\geq 0}$ to $\widetilde{F}:X\mapsto\varinjlim_n\Omega^n_{\CC}F(\Sigma^nX)$. We show that $\widetilde{(-)}$ is a quasi-inverse of ${\tau}'$. First, we note that  ${\tau}'(\widetilde{F})\stackrel{\simeq}\leftarrow F$. On the other hand, assuming that $G\in\Spt(\alg(\CC)_{/A})$, we have to show that the natural map $\widetilde{{\tau}'(G)}\rightarrow G$ is an equivalence. Since equivalences of algebra objects are detected by the forgetful functor $\alg(\CC)\rightarrow\CC$. In addition, since $\CC$ is $t$-complete, the equivalence of a map in $\CC$ can be detected by the induced maps of homotopies. Since $\ani^{\rm fin}_*$ is  generated by the $0$-sphere $S^0$ under finite colimits, we just need to show that $$\widetilde{{\tau'}(G)}(S^0)=\varinjlim_n\Omega^n_{\CC}\tau_{\geq 0}G(S^n)\rr G(S^0)$$
is an equivalence. This is clear because this map becomes an isomorphism passing to $n$-th homotopy $\pi_n$ for any integer $n$. As a consequence, we have $$\tang_{\alg(\CC_{\geq 0})}\simeq\tang_{\alg(\CC)}\times_{\alg(\CC)}\alg(\CC_{\geq 0}).$$
\end{rmk}
\subsubsection{Tangent correspondences}
Let $\mathcal C$ and $\mathcal D$ be $\infty$-categories. A \textit{correspondence} $\mathfrak M:\mathcal C\rightsquigarrow\mathcal D$ from $\CC$ to $\DD$ is a relative $\infty$-category 
$\mathfrak M\rightarrow\Delta^1$, such that $\mathfrak M_0\simeq\CC$ and $\mathfrak M_1\simeq \DD.$ 
We have a functor $\pi_\mathfrak M:\Fun_{\Delta^1}(\Delta^1,{\mathfrak M})\rightarrow \CC\times \DD$ which is given by the product  of the two evaluation functors at the source ${\rm ev}_0$ and at the target ${\rm ev}_1$. The functor 
$\pi_{\mathfrak M}$ is a \textit{bifibration} in the sense of \cite[Definition 2.4.7.2]{lurie2009higher}, see \cite[Proposition 2.4.7.10]{lurie2009higher}. On the other hand, \cite[Theorem A and Theorem B]{stevenson2018modelstructurescorrespondencesbifibrations} shows that the functor
$\mathfrak M\mapsto \pi_\mathfrak M$ gives rise to an equivalence of the $\infty$-category ${\rm Corr}(\CC,\DD)$ of correspondences from $\CC$ to $\DD$,
and the $\infty$-category ${\rm BFib}_{/\CC\times \DD}$ of bifibrations over $\CC\times \DD.$ A variant of \textit{Grothendieck construction} gives
an equivalence of $\infty$-categories $${\rm BFib}_{/\CC\times\DD}\stackrel{\simeq}\rr\Fun(\CC^{\rm op}\times\DD,\ani), \mathscr E_{/\CC\times\DD}\mapsto ((c,d)\mapsto\mathscr E_{(c,d)}),$$
c.f. \cite[Theorem 1.1.7]{ayala2020fibrationsinftycategories}. We can still define an $(\infty,2)$-category ${\mathbf{ Corr}}$  whose objects are $\infty$-categories, and mapping $\infty$-categories ${\rm Corr}(\CC,\DD)\simeq{\rm BFib}_{/\CC\times\DD}\simeq\Fun(\CC^{\rm op}\times\DD,\ani),$ see \cite[Theorem 1.26 and Definition 1.27]{ayala2020fibrationsinftycategories}.
The composition of  correspondences
is given by the coends:
\begin{align*}
-\circ -:\Fun(\CC^{\rm op}\times\DD,\ani)&\times\Fun(\DD^{\rm op}\times\mathcal{E},\ani)\rr\Fun(\CC^{\rm op}\times\mathcal E,\ani),\\&(F,G)\mapsto\int^{d\in\DD}F(-,d)\times G(d,-).
\end{align*}
\begin{exmp}[Fundamental correspondence]
Let $\mathfrak M_\CC^0:\CC\rightsquigarrow\Fun(\Delta^1,\CC)$ be a correspondence, whose {correspondent} functor is defined by
$$ \mathfrak M_\CC^0:\mathcal{C}^{\rm op}\times\Fun(\Delta^1,\mathcal{C})\rr
\ani,
(A,f:B\rightarrow  C)\mapsto\{A\}\times_{\Fun(\Delta^{\{0\}},\CC)}\Fun(\Delta^2,\CC)\times_{\Fun(\Delta^{\{1,2\}},\CC)}\{f\}.$$
In \cite[Notation 7.3.6.4]{lurie2017higher}, the fundamental correspondence is  characterized  by the following universal property:
Let $A\in \sset_{/\Delta
^1}$ be a simplicial set over $\Delta^1$, let $K\subset\Delta^1\times\Delta^1$ be a sub simplicial set spanned by verticals
$(i,j), i\leq j.$ We let $\overline{A}$
 be the pullback
 $$\begin{tikzcd}
\overline{A} \arrow[d] \arrow[r] & K \arrow[d]            \\
\Delta^1\times A \arrow[r]       & \Delta^1\times\Delta^1
\end{tikzcd}$$
Then there is a functorial isomorphism of sets
$$\Hom_{\sset_{/\Delta^1}}(A,\mathfrak M^0_\CC)\stackrel{\simeq}\rr\Hom_{\sset}(\overline{A},\CC).$$
\end{exmp}
\begin{exmp}[Categorical cylinder]
    Let $F:\CC\rightarrow\DD$ be a functor. The \textit{categorical cylinder} $\mathbf{C}(F):\DD\rightsquigarrow\CC$ is given by:
    $$\mathbf{C}(F):\DD^{\rm op}\times\CC\rr\ani, (d,c)\mapsto\map_{\DD}(d,F(c)).$$
\end{exmp}
\begin{defn}
    Let $\CC$ be a presentable $\infty$-category, the \textit{tangent correspondence} of $\CC$ is the composition of correspondences
    $$\MT_\CC:=\mathbf{C}(\Omega^{\infty})\circ\mathfrak M^0_\CC:\CC\rightsquigarrow\Fun(\Delta^1,\CC)\rightsquigarrow \tang_{\CC}$$
\end{defn}
By definition, we have $\MT_\CC\times_{\Delta^1}\{0\}\simeq\CC$ and $\MT_\CC\times_{\Delta^1}\{1\}\simeq\tang_\CC.$ Let $A\in\CC$ and $(M,B)\in\tang_{\CC}.$  By the definition of tangent correspondence and the adjoint relation asserted in Proposition\autoref{cotsus}, we get that the anima 
$\map_{\MT_{\CC}}(A,(M,B))$  is spanned by morphisms in $\CMod_B$ which have the  form $f_!\LL_A\rightarrow  M,$ in which $f:A\rightarrow B$ is an arrow in $\CC$, and $f_!$ is the corresponding coCartesian arrow in $\mathrm{T}_{\CC}$. 

Hence we have the following definition.
\begin{defn}\cite[Definition 7.4.1.1]{lurie2017higher}
 The $\infty$-category of derivations $\Der(\CC)$ in  $\CC$ is the sub-category of $\Fun(\Delta^1, \MT_\CC)$ spanned  by  functors  
  $d:\Delta^1\rightarrow \MT_\CC$ such that the composition 
  $$\Delta^1\rr \MT_{\CC}\rr \Delta^1\times\Fun(\Delta^1, \CC)\stackrel{{\rm ev}_1}\rr \Delta^1\times \CC$$
  is the inclusion $\Delta^1\rightarrow \Delta^1\times\{A\}\subset \Delta^1\times \mathcal C$ for some $A\in\CC$.  We will use the notation $d:A\rightarrow M$ for a derivation, where $M=d(1)\in\tang_{\CC}\times_{\mathcal{C}}\{A\}.$
\end{defn}
\begin{rmk}
Assume that $\CC$ has an initial object $\emptyset$. The cotangent complex $\LL_{A/\emptyset}$ is initial in the $\infty$-category $\Der(\CC)_A=\Der(\CC)\times_\CC\{A\}$. The relative cotangent complex $\LL_{B/A}$ of a map $f:A\rightarrow B$ is equal to the  cofiber of the natural map
$\LL_{A/\emptyset}\rightarrow\LL_{B/\emptyset}$ in $\MT_\CC$. Therefore we get  a cofiber sequence 
$$f_{!}\LL_{A/\emptyset}\rr\LL_{B/\emptyset}\rr\LL_{B/A}$$
in $\CMod_B.$
\end{rmk}

\subsubsection{Replete tangent correspondences}
Using the tangent correspondence, one can define   cotangent complexes ( i.e., \textit{topological Andr\'e-Quillen homology}) and derivations  for $\Einf$-rings. One can also develop the   deformation theory  in the spectral  context \cite[Chapter IV,V]{lurie2018spectral}, which is almost  parallel to the deformation theory of classical commutative rings (and schemes) developed in \cite{grothendieck1957fondements},\cite{illusie1971complexe1}and \cite{illusie1971complexe2}. Unfortunately, the tangent bundle formalism is not suitable for logarithmic geometry. One reason  is that there is not a good definition of "\textit{log modules}" over log rings. Of course, there is a stable $\infty$-category $\CMod_{(A,M)}$ associated with the $\Einf$-log ring $(A,M)$ constructed in the last section by letting $\CC=\llog$ be the $\infty$-category of $\Einf$-log rings. Sadly, this category is very difficult to handle, and what's worse is that  we cannot do any computation  in it. So we will provide a variant of tangent correspondences, which is called \textit{replete tangent correspondences}. We generalize the constructions of \textit{replete tangent bundles} for $\Einf$-log rings in \cite{lundemo2023deformation}. 
\begin{defn}\label{gabtan}
Let $U:\CC\rightarrow\DD$ be a presentable fibration. A replete tangent bundle with respect to $U$ is an $\infty$-category $\tang^\DD_\CC$, such that there exists an  $\infty$-category $\Fun^{\DD}(\Delta^1,\CC)\subset\Fun(\Delta^1,\CC)$, and a relative  Bousfield localization  $$\begin{tikzcd}
{\Fun(\Delta^1,\CC)} \arrow[rd, "{\rm ev}_1"'] \arrow[rr, shift left] &     & {\Fun^\DD(\Delta^1,\CC)} \arrow[ld] \arrow[ll, hook, shift left] \\
& \CC &             
\end{tikzcd}$$
, and there exist presentable fibrations  $U_*:\Fun^{\DD}(\Delta^1,\CC)\rightarrow\CC$ and 
$U_*':\tang_{\CC}^{\DD}\rightarrow\tang_{\DD}$, 
fitting  into the following commutative diagram 
\[\begin{tikzcd}
	{\tang_{\CC}} && {\tang_{\CC}^{\DD}} & {\tang_{\DD}} \\
	\\
	{\Fun(\Delta^1,\CC)} && {\Fun^{\DD}(\Delta^1,\CC)} & {\Fun(\Delta^1,\DD)} \\
	& \CC && \DD
	\arrow[from=1-1, to=1-3]
	\arrow["{\Omega_{\CC}^{\infty}}"', from=1-1, to=3-1]
	\arrow["{U_*'}", from=1-3, to=1-4]
	\arrow["{\Omega^{\infty}}"', from=1-3, to=3-3]
	\arrow["{\Omega_{\DD}^{\infty}}", from=1-4, to=3-4]
	\arrow[from=3-1, to=3-3]
	\arrow[from=3-1, to=4-2]
	\arrow["{U_*}", from=3-3, to=3-4]
	\arrow[from=3-3, to=4-2]
	\arrow[from=3-4, to=4-4]
	\arrow["U", from=4-2, to=4-4]
\end{tikzcd}\]
where the functor $\Omega^{\infty}:\tang_{\CC}^{\DD}\rightarrow\Fun^{\DD}(\Delta^1,\CC)$ is the  stable envelope  of the functor $U_*:\Fun^{\DD}(\Delta^1,\CC)\rightarrow\CC$. In addition, the following conditions are satisfied:
\begin{enumerate}
\item The functor $U$ is a right adjoint, whose left adjoint $G$ is fully faithful 
\item The functor $U'_*$ is a right adjoint.
 
    \item The functor $\Fun^{\DD}(\Delta^1,\CC)\rightarrow\CC$ is a presentable fibration.
     \item The commutative diagram 
\[\begin{tikzcd}
	{\tang_{\CC}^{\DD}} & {\tang_{\DD}} \\
	\CC & \DD
	\arrow[from=1-1, to=1-2]
	\arrow[from=1-1, to=2-1]
	\arrow[from=2-1, to=2-2]
	\arrow[from=1-2, to=2-2]
\end{tikzcd}\]
is a pullback square.
\end{enumerate}
\end{defn}
 By the definition of replete tangent bundle given above,  the presentable fibration $$\Omega^{\infty}:\tang_{\CC}^{\DD}\rr\Fun^\DD(\Delta^1,\CC)$$
 admits a left  adjoint, which we denote  by $\mathcal L^{\DD}$. We have the following definition:
 \begin{defn}
The \textit{Gabber's cotangent complex} of the presentable fibration  $U:\CC\rightarrow\DD$ is the composition 
$$\LL^{\DD}:\Fun(\Delta^1,\CC)\rr\Fun^{\DD}(\Delta^1,\CC)\stackrel{\mathcal L^{\DD}}\rr\mathrm{T}_{\CC}^{\DD}.$$
\end{defn}
By definition, the Gabber's cotangent complex $\LL^{\DD}:\Fun(\Delta^1,\CC)\rightarrow\mathrm{T}_{\CC}^{\DD}$ admits a right adjoint $\Omega^{\infty,\DD}:\Fun(\Delta^1,\CC)\rightarrow\mathrm T_{\CC}^{\DD}.$

\begin{defn}
    The replete tangent correspondence $\MTD_\CC$ of $U:\CC\rightarrow\DD$ is defined as the compositions of correspondences  in the following diagram 
\[\begin{tikzcd}
	\CC & {\Fun(\Delta^1,\CC)} \\
	& {\Fun^{\DD}(\Delta^1,\CC)} & {\tang^\DD_{\CC}}
	\arrow["{{\mathfrak M^0}}", squiggly, from=1-1, to=1-2]
	\arrow["{\mathbf{C}(i)}", squiggly, from=1-2, to=2-2]
	\arrow["{\mathbf{C}(\Omega^{\infty})}", squiggly, from=2-2, to=2-3]
\end{tikzcd}\]
where the functor  $i:\Fun^\DD(\Delta^1,\DD)\rightarrow\Fun(\Delta^1,\CC)$ is the right adjoint of the Bousfield localization $\Fun(\Delta^1,\CC)\rightarrow\Fun^\DD(\Delta^1,\DD).$
\end{defn}
    \begin{defn}\label{LDER}
 The $\infty$-category of replete  derivations $\Der^{\DD}(\CC)$ in  $\CC$ is the subcategory of $\Fun(\Delta^1, \MTD_{\CC})$ spanned by these functors  
  $d:\Delta^1\rightarrow \MTD_{\CC}$ such that the composition 
  $$\Delta^1\rr \MTD_{\CC}\rr \Delta^1\times \CC$$
  is the functor $\Delta^1\rightarrow \Delta^1\times\{A\}\subset \Delta^1\times C$ for some $A\in\CC.$ For such functors we use the notation $d: A\rightarrow M$, where $M$ is the image of $d$ at $1\in\Delta^1$
  $$d(1)\in \tang^{\DD}_{\CC}\times_\CC\{A\}\simeq \CMod_{U(A)}\simeq\Spt(\DD_{/U(A)}).$$ 
\end{defn}
The $\infty$-category of replete  derivations $\Der^{\DD}(\CC)$ is  equivalent to the fiber product $$\Fun(\Delta^1,\MTD_{\CC})\times_{\Fun(\Delta^1,\Delta^1\times\CC)}\CC.$$
\begin{rmk}
Assume that $\CC$ has an initial object $\emptyset$. The Gabber's cotangent complex $\LL^{\DD}_{A/\emptyset}$ is initial in the $\infty$-category $\Der^\DD(\CC)_A=\Der^\DD(\CC)\times_\CC\{A\}$. The relative Gabber's cotangent complex $\LL^\DD_{B/A}$ of a map $f:A\rightarrow B$ is equal to the  cofiber of the natural map
$\LL^\DD_{A/\emptyset}\rightarrow\LL^\DD_{B/\emptyset}$ in $\MTD$. Therefore we get  a cofiber sequence  
$$f_{!}\LL^\DD_{A/\emptyset}\rr\LL^\DD_{B/\emptyset}\rr\LL^\DD_{B/A}$$
in $\CMod_B.$ Here we let $f_!$ be a functor $\CMod_A\rightarrow \CMod_B$ given by  the coCartesian fibration  $\tang^\DD_\CC\rightarrow\CC$. 
\end{rmk}
\subsection{Log derivations}
We apply the constructions of  replete tangent bundles and replete tangent correspondences to $\Einf$-log rings. For this, we introduce the notion of exactification of $\Einf$-log rings, which generalizes the definition of \textit{repletion of ($\Einf$-)log rings} introduced by Kato \cite[ Section 2]{Kato1988} and Rognes \cite[Section 3]{rognes2009topological} for virtually surjective maps. We will also define the $\infty$-category of log derivations in this subsection.\\
\subsubsection{Exactifications}
We give a categorical description of the conception \textit{exactifications} of log rings  introduced in  \cite{Kato1988}, \cite{rognes2009topological} and \cite{sagave2016derived} in the spectral context.   This construction is essential for building  the deformation theory in logarithmic geometry.
\begin{lem}\label{Cf}
    Let $f:M\rightarrow N$ be a morphism of  $\Einf$-monoids, and let $\CC(f)$ be the subcategory of $\mone_{M//N}$ spanned by objects $M\stackrel{a}\rightarrow M'\stackrel{b}\rightarrow N$, such that the composition $b\circ a$ is equivalent to $f$, and 
    $a^{\rm gp}: M^{\rm gp}\stackrel{\simeq}\rightarrow M'^{\rm gp}$. Then $\CC$ admits a terminal object $M\rightarrow M^{\rm ex}\rightarrow N$. 
\end{lem}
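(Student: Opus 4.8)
The plan is to realise the terminal object explicitly as the \emph{exactification} $M^{\rm ex}:=M^{\rm gp}\times_{N^{\rm gp}}N$, the pullback in $\mone$ of the cospan $M^{\rm gp}\xrightarrow{f^{\rm gp}}N^{\rm gp}\xleftarrow{\eta_N}N$, where $\eta$ denotes the group-completion unit. First I would assemble its structure maps. By naturality of group completion the two composites $M\xrightarrow{\eta_M}M^{\rm gp}\to N^{\rm gp}$ and $M\xrightarrow{f}N\xrightarrow{\eta_N}N^{\rm gp}$ agree, so the pair $(\eta_M,f)$ induces a map $a^{\rm ex}\colon M\to M^{\rm ex}$; setting $b^{\rm ex}:={\rm pr}_2\colon M^{\rm ex}\to N$ one has $b^{\rm ex}\circ a^{\rm ex}\simeq f$. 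This exhibits $M\xrightarrow{a^{\rm ex}}M^{\rm ex}\xrightarrow{b^{\rm ex}}N$ as an object of $\mone_{M//N}$, and by construction ${\rm pr}_1\circ a^{\rm ex}\simeq\eta_M$.

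Next I would prove terminality by a mapping-space computation, using that $\CC(f)$ is the full subcategory of $\mone_{M//N}$ on the indicated objects. Fix $M'\in\CC(f)$, say $M\xrightarrow{a'}M'\xrightarrow{b'}N$. Since the forgetful functor $\mone_{M/}\to\mone$ creates pullbacks (these being connected limits), $M^{\rm ex}$ is the pullback in $\mone_{M/}$ of the corresponding cospan of objects under $M$, whence
\[\map_{\mone_{M/}}(M',M^{\rm ex})\simeq\map_{\mone_{M/}}(M',M^{\rm gp})\times_{\map_{\mone_{M/}}(M',N^{\rm gp})}\map_{\mone_{M/}}(M',N).\]
Imposing compatibility over $N$, i.e. passing to $\mone_{M//N}$, amounts to taking the fibre over the fixed class of $b'$, so $\map_{\mone_{M//N}}(M',M^{\rm ex})$ is identified with the fibre of $\map_{\mone_{M/}}(M',M^{\rm gp})\to\map_{\mone_{M/}}(M',N^{\rm gp})$ over the image of $b'$. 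Because $M^{\rm gp}$ and $N^{\rm gp}$ are group-like, the universal property of group completion identifies these two mapping spaces with $\map_{\mone^{\rm gp}_{M^{\rm gp}/}}(M'^{\rm gp},M^{\rm gp})$ and $\map_{\mone^{\rm gp}_{M^{\rm gp}/}}(M'^{\rm gp},N^{\rm gp})$; since $a'^{\rm gp}$ is an equivalence, $M'^{\rm gp}$ is equivalent to the initial object $M^{\rm gp}$ of $\mone^{\rm gp}_{M^{\rm gp}/}$, so both spaces are contractible. Hence $\map_{\mone_{M//N}}(M',M^{\rm ex})\simeq\ast$, which is precisely terminality of $M^{\rm ex}$ in $\CC(f)$ — provided $M^{\rm ex}$ itself lies in $\CC(f)$.

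The hard part will be exactly this proviso: I must check $M^{\rm ex}\in\CC(f)$, i.e. that $(a^{\rm ex})^{\rm gp}\colon M^{\rm gp}\to(M^{\rm ex})^{\rm gp}$ is an equivalence. From ${\rm pr}_1\circ a^{\rm ex}\simeq\eta_M$ one gets, after group completion, $({\rm pr}_1)^{\rm gp}\circ(a^{\rm ex})^{\rm gp}\simeq\mathrm{id}_{M^{\rm gp}}$, so $(a^{\rm ex})^{\rm gp}$ is a split monomorphism; it therefore suffices to show $({\rm pr}_1)^{\rm gp}\colon(M^{\rm ex})^{\rm gp}\to M^{\rm gp}$ is an equivalence, i.e. that group completion preserves the defining pullback. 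Now ${\rm pr}_1$ is the base change of the group-completion unit $\eta_N\colon N\to N^{\rm gp}$ along the map $f^{\rm gp}\colon M^{\rm gp}\to N^{\rm gp}$ of group-like monoids, so the claim reduces to the statement that the base change of a group-completion equivalence along a map between group-like $\Einf$-monoids is again a group-completion equivalence.

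I expect this base-change statement to be the technical heart. I would establish it using the description of group completion as the localisation inverting $\pi_0$, presented as a filtered colimit of translation maps in the style of Kato \cite{Kato1988}, Rognes \cite{rognes2009topological} and Sagave--Sch{\"u}rg--Vezzosi \cite{sagave2016derived}: since $f^{\rm gp}$ is a map of group-like objects, on which this localisation acts as the identity, a cofinality argument identifies the pulled-back map ${\rm pr}_1$ as again such a localisation, so $({\rm pr}_1)^{\rm gp}$ is an equivalence. Combined with the split-mono observation this forces $(a^{\rm ex})^{\rm gp}$ to be an equivalence, hence $M^{\rm ex}\in\CC(f)$, and together with the mapping-space computation it completes the proof that $M^{\rm ex}$ is terminal.
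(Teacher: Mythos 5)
Your construction fails at exactly the point you flag as the technical heart, and the gap cannot be closed: the repletion $M^{\rm ex}:=M^{\rm gp}\times_{N^{\rm gp}}N$ does not lie in $\CC(f)$ for a general $f$, because your base-change claim --- that pulling back the group-completion unit $\eta_N\colon N\to N^{\rm gp}$ along a map of group-like $\Einf$-monoids again yields a group-completion unit --- is false. A discrete counterexample: let $N=\langle u,v,w\mid u+w=v+w\rangle$, $M=\N^2$, and $f(e_1)=u$, $f(e_2)=v$. Then $N^{\rm gp}\cong\Z u\oplus\Z w$ (with $u=v$ there), and ${\rm im}(f^{\rm gp})=\Z u$. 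Since $\eta_N(n)\in\Z u$ forces $n$ to have $w$-degree zero, the pullback $P=M^{\rm gp}\times_{N^{\rm gp}}N$ consists of pairs $((\alpha',\beta'),\alpha u+\beta v)$ with $\alpha'+\beta'=\alpha+\beta$, so $P\cong\Z\oplus\N^2$ and $P^{\rm gp}\cong\Z^3$, whereas $M^{\rm gp}\cong\Z^2$; explicitly $(a^{\rm ex})^{\rm gp}(a,b)=(a;a,b)$ is a split monomorphism that is not an equivalence. Hence $P\notin\CC(f)$, and your terminality computation (which is fine in itself) is applied to an object outside the category. The localisation/cofinality argument you sketch is precisely what breaks: group completion is a localisation, but localisations are not stable under base change, and here ${\rm pr}_1\colon P\to M^{\rm gp}$ fails to be a group-completion map because the elements of $N$ witnessing relations (those involving $w$) are discarded by the pullback. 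This is exactly why the fiber-product formula is only valid for \emph{virtually surjective} maps, i.e.\ when $\pi_0(M^{\rm gp})\to\pi_0(N^{\rm gp})$ is surjective (Rognes' Lemma 3.8 in \cite{rognes2009topological}); the paper itself records this restriction in the remark following Lemma\autoref{exactificationpresentable}, and in the example above ${\rm im}(f^{\rm gp})=\Z u\subsetneq N^{\rm gp}$.

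For comparison, the paper's proof avoids any explicit formula: it first shows that $\CC(f)$ is filtered (a cone on any diagram $K\to\CC(f)$ is produced by a pushout construction), and then takes $M^{\rm ex}:=\colim_{M'\in\CC(f)}M'$. This object lies in $\CC(f)$ because group completion is a left adjoint and therefore commutes with the colimit, giving $(M^{\rm ex})^{\rm gp}\simeq\colim_{M'}M'^{\rm gp}\simeq M^{\rm gp}$, and it is terminal essentially because it is a colimit over the whole filtered category. If you want to keep your approach, you must either add the hypothesis that $f$ is virtually surjective (in which case your argument essentially reproduces Rognes' repletion lemma), or abandon the fiber-product description and characterize $M^{\rm ex}$ abstractly as the paper does.
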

\begin{proof}
First, the $\infty$-category $\CC(f)$ is filtered. Indeed, any functor $F:K\rightarrow\CC(f),k\mapsto(M\rightarrow M'_k\rightarrow N)$ can be extended to $F^\triangleright:K^\triangleright\rightarrow\CC(f)$, as we can let the value $F^\triangleright(\infty)$ at the cone point $\infty$ be $(M\rightarrow M\coprod_{\colim_KM}\colim_K M'_k\rightarrow N)$.\\
We let $M^{\rm ex}:=\colim_{M'\in\CC(f)}M'.$ Then $(M\rightarrow M'\rightarrow N)\in\CC(f)$ since there is an equivalence $M^{\rm ex,gp}\simeq(\colim_{M'\in\CC(f)}M')^{\rm gp}\simeq\colim_{M'\in\CC(f)}M'^{\rm gp}=M^{\rm gp}. $ 
\end{proof}
Therefore, we have the following definition:
\begin{defn}
    Let $f:M\rightarrow N$ be a morphism of $\Einf$-monoids. We say that $f$ is exact if $f\simeq f^{\rm ex}$. Here $f^{\rm ex}$ is a terminal object in $\CC(f)$.
    \end{defn}
\begin{cor}\cite[Proposition 2.21]{binda2023hochschild}\label{exactificationofmonoids}
 If the morphism  $f:M\rightarrow N$ admits a section $s:N\rightarrow M$, then the sequence $N\stackrel{s}\rightarrow M^{\rm ex}\stackrel{f}\rightarrow N$ splits,
    and $M^{\rm ex}\simeq N\oplus M^{\rm gp}/N^{\rm gp}$. The functor $$Q:\mone^{\rm ex}_{N//N}\rightarrow \mone^{\rm gp}, (N\rightarrow M\rightarrow N)\mapsto M/N$$ is an equivalence.  Here $\mone^{\rm ex}_{N//N}$ is the subcategory of $\mon_{N//N}$ spanned by exact maps. 
\end{cor}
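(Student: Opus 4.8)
The plan is to identify the terminal object $M^{\rm ex}$ of $\CC(f)$ explicitly. Set $P := M^{\rm gp}/N^{\rm gp} = \mathrm{cofib}(s^{\rm gp}\colon N^{\rm gp}\to M^{\rm gp})$, computed in $\mone^{\rm gp}\simeq\Spt_{\geq 0}$, and form the candidate $C := N\oplus P$ together with the morphism $\phi = (f,\pi)\colon M\to C$, where $\pi$ is the composite $M\to M^{\rm gp}\to P$. Throughout I will use that $\mone$ is semiadditive, so that finite coproducts and products agree and $C$ is simultaneously $N\oplus P$ and $N\times P$. Once I show that $(M\xrightarrow{\phi} C\xrightarrow{pr} N)$ is an object of $\CC(f)$ and that it is terminal there, uniqueness of terminal objects together with \autoref{Cf} gives $M^{\rm ex}\simeq C$, which is the asserted splitting. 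The section $s$ followed by $\phi$ recovers the inclusion $\iota\colon N\to N\oplus P$ (since $\pi\circ s$ is nullhomotopic, factoring through $\mathrm{cofib}(s^{\rm gp})$), which realizes the splitting of $pr$ claimed in the first assertion.

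First I would check $C\in\CC(f)$. The only nontrivial condition is that $\phi^{\rm gp}$ be an equivalence. In the stable $\infty$-category $\mone^{\rm gp}$ the section gives $f^{\rm gp}\circ s^{\rm gp}\simeq\mathrm{id}_{N^{\rm gp}}$, so $s^{\rm gp}$ is a split monomorphism; the standard splitting of a split cofiber sequence then yields that $(f^{\rm gp},\mathrm{proj})\colon M^{\rm gp}\to N^{\rm gp}\oplus P$ is an equivalence. This map is exactly $\phi^{\rm gp}$, using $(N\oplus P)^{\rm gp}\simeq N^{\rm gp}\oplus P$ (as $P$ is already group-like). The relation $pr\circ\phi\simeq f$ holds by construction, so $C$ is a legitimate object of $\CC(f)$.

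The heart of the argument is terminality. For an arbitrary $(M\xrightarrow{a} M'\xrightarrow{b} N)\in\CC(f)$ I would compute the mapping anima $\map_{\CC(f)}(M',C)$. Assembling the double-slice mapping space and using semiadditivity to write $C = N\times P$, a morphism under $M$ and over $N$ amounts to a map $h\colon M'\to P$ whose $N$-component is rigidified to $b$ and which satisfies $h\circ a\simeq\pi$; hence
$$\map_{\CC(f)}(M',C)\simeq \mathrm{fib}_{\pi}\bigl(\map_{\mone}(M',P)\xrightarrow{\ -\circ a\ }\map_{\mone}(M,P)\bigr).$$
Since $P$ is group-like, the group-completion adjunction identifies this restriction map with $(-\circ a^{\rm gp})\colon\map_{\mone^{\rm gp}}(M'^{\rm gp},P)\to\map_{\mone^{\rm gp}}(M^{\rm gp},P)$; but $a^{\rm gp}$ is an equivalence by the very definition of $\CC(f)$, so the restriction is an equivalence and every fiber is contractible. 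Thus $\map_{\CC(f)}(M',C)\simeq *$, proving $C$ terminal. I expect the main obstacle to be making this mapping-space identification homotopy-coherent rather than merely on $\pi_0$: one must present $\map_{\mone_{M//N}}(M',C)$ as an iterated fiber and verify that the over-$N$ component is genuinely pinned to $b$, which is precisely where semiadditivity of $\mone$ and the group-completion adjunction enter.

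For the last statement I would exhibit an explicit inverse $R$ to $Q$. Define $R\colon\mone^{\rm gp}\to\mone^{\rm ex}_{N//N}$ by $P\mapsto (N\xrightarrow{\iota} N\oplus P\xrightarrow{pr} N)$; the computation above applied to $pr$ with section $\iota$ shows that $N\oplus P$ is its own exactification, so $R$ indeed lands in the subcategory of exact factorizations. Then $Q\circ R(P)\simeq (N\oplus P)^{\rm gp}/N^{\rm gp}\simeq P$, while for an exact object the splitting $M\simeq N\oplus M^{\rm gp}/N^{\rm gp}$ (where $M/N$ coincides with $M^{\rm gp}/N^{\rm gp}$) just established gives $R\circ Q\simeq\mathrm{id}$. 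Promoting these pointwise equivalences to natural equivalences of functors then finishes the proof.
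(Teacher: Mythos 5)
Your proposal is correct and follows essentially the same route as the paper's proof: both identify $N\oplus M^{\rm gp}/N^{\rm gp}$ as the terminal object of $\CC(f)$ by pinning the $N$-component through the over-$N$ structure and the group-like component through the group-completion adjunction, and both deduce the equivalence $Q$ from the resulting splitting (the paper via fully faithful plus essentially surjective, you via an explicit inverse, which is the same splitting in different packaging). Your mapping-space computation is in fact more careful than the paper's, which only asserts uniqueness of the morphism up to homotopy; the one small slip is calling $\mone^{\rm gp}$ stable, whereas $\mone^{\rm gp}\simeq\Spt_{\geq 0}$ is only prestable, though split cofiber sequences of connective spectra still split, so nothing in your argument breaks.
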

\begin{proof}
    We see that any $M'\in\CC(f)$, up to homotopy, there is a unique morphism $M'\rightarrow N\oplus M^{\rm gp}/N^{\rm gp}$, such that the following diagram commutes,
   $$ \begin{tikzcd}
M' \arrow[d] \arrow[r]         & N\oplus M^{\rm gp}/N^{\rm gp} \arrow[r] \arrow[d] & N \arrow[d] \\
M^{\rm gp} \arrow[r, "\simeq"] & N^{\rm gp}\oplus M^{\rm gp}/N^{\rm gp} \arrow[r]  & N^{\rm gp} 
\end{tikzcd}$$
This implies that $N\oplus M^{\rm gp}/N^{\rm gp}$ is a terminal object in $\CC(f)$. In other words, this gives rise to an equivalence $M^{\rm ex}\simeq N\oplus M^{\rm gp}/N^{\rm gp}$.\\
The functor $Q$ is clearly essentially surjective. We want to show that $Q$ is fully faithful. Indeed, let $M,M'\in\mon^{\rm ex}_{N//N}$ we have equivalences
\begin{align*}
\map_{\mone^{\rm ex}_{N//N}}(M,M')&\simeq \map_{\mone^{\rm ex}_{N//N}}(N\oplus M^{\rm gp}/N^{\rm gp},N\oplus M'^{\rm gp}/N^{\rm gp})\\
&\simeq\map_{\mone^{\rm gp}}(M^{\rm gp}/N^{\rm gp},M'^{\rm gp}/N^{\rm gp})\\
&\simeq\map_{\mone^{\rm gp}}(Q(M),Q(M')).
\end{align*}
\end{proof}

    \begin{lem}\label{exactificationpresentable}
Let $\mone^{\rm ex}_{/N}$ be the  subcategory of $\mone_{/N}$ spanned by exact morphisms. Then the inclusion $\mone^{\rm ex}_{/N}\subset\mone_{/N}$ admits a left adjoint $M\mapsto M^{\rm ex},$ which is called the exactification functor. The exactification functor provides a presentation of $\mone^{\rm ex}_{/N}$, and therefore it is presentable.
   \end{lem}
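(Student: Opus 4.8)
The plan is to exhibit $\mone^{\rm ex}_{/N}$ as an accessible reflective localization of $\mone_{/N}$ whose reflector is precisely the exactification functor $M\mapsto M^{\rm ex}$ produced in \autoref{Cf}; presentability will then be a formal consequence. Since $\mone$ is presentable, the slice $\mone_{/N}$ is again presentable. I would work with the class $W$ of morphisms $a\colon M\to M'$ in $\mone_{/N}$ that are inverted by group completion, i.e.\ those for which $a^{\rm gp}$ is an equivalence (the ``group-completion equivalences over $N$''). The functor $(-)^{\rm gp}\colon \mone_{/N}\to\mone^{\rm gp}_{/N^{\rm gp}}$ is a colimit-preserving functor of presentable $\infty$-categories (using that $\mone^{\rm gp}\simeq\Spt_{\geq 0}$ is presentable, \autoref{mayrecognition}), so $W$ is a strongly saturated class of small generation. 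Hence the localization $L_W\colon\mone_{/N}\to\mone_{/N}$ at $W$ exists, is accessible, and its $W$-local objects form a presentable reflective subcategory by \cite[Proposition 5.5.4.15]{lurie2009higher}. Two consequences of \autoref{Cf} I would record first: the unit $\eta_M\colon M\to M^{\rm ex}$ lies in $W$ (since $M^{\rm gp}\xrightarrow{\simeq}M^{\rm ex,gp}$), and $M^{\rm ex}$ is exact, i.e.\ idempotent under exactification, because a terminal object of $\CC(M\to N)$ remains terminal in $\CC(M^{\rm ex}\to N)$, forcing $\eta_{M^{\rm ex}}$ to be an equivalence.

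The heart of the argument is to show that every exact object $P$ is $W$-local. Fix $a\colon M\to M'$ in $W$ and a point $g\colon M\to P$ of $\map_{\mone_{/N}}(M,P)$. The fiber of $a^{*}\colon\map_{\mone_{/N}}(M',P)\to\map_{\mone_{/N}}(M,P)$ over $g$ is the extension space $\map_{M//N}(M',P)$, which by the pushout--hom adjunction is the retraction space $\map_{P//N}(M'\coprod_M P,\,P)$, where $P$ is regarded as an object under $M$ via $g$. Since group completion preserves pushouts, the canonical map $P\to M'\coprod_M P$ is again a group-completion equivalence over $N$, so $M'\coprod_M P$ belongs to $\CC(P\to N)$. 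As $P$ is exact it is terminal in this full subcategory, whence $\map_{P//N}(M'\coprod_M P,P)\simeq\map_{\CC(P\to N)}(M'\coprod_M P,P)\simeq *$. Thus every fiber of $a^{*}$ is contractible, $a^{*}$ is an equivalence, and exact objects are $W$-local.

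Finally I would assemble these pieces. Applying the universal property to $a=\eta_M$ shows that $\eta_M$ exhibits the exact object $M^{\rm ex}$ as the $W$-localization of $M$, so $L_W\simeq(-)^{\rm ex}$; in particular the reflector of the localization is the exactification, identified as a genuine left adjoint by the criterion of \cite[Proposition 5.2.7.8]{lurie2009higher}. Conversely, any $W$-local $P$ satisfies $P\simeq L_W P\simeq P^{\rm ex}$ and is therefore exact, so the $W$-local objects coincide with $\mone^{\rm ex}_{/N}$. Consequently $\mone^{\rm ex}_{/N}$ is a presentable reflective localization of $\mone_{/N}$ whose associated reflector is $M\mapsto M^{\rm ex}$, as claimed. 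The main obstacle is the contractibility of the retraction space in the second paragraph---that is, transporting the terminality of $P$ in $\CC(P\to N)$ across the pushout---which hinges on group completion commuting with pushouts; once this is in place, both the existence of the left adjoint and the presentability follow formally from the theory of accessible localizations.
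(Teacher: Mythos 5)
Your proposal is correct, and its computational heart is the same as the paper's: the paper, too, forms the pushout $P'=P\coprod_M M'$ along a group-completion equivalence, uses that group completion is a left adjoint (hence preserves pushouts) to see that $P'$ lies in $\CC(P\to N)$, and invokes terminality of the exact object $P$ in $\CC(P\to N)$ to produce the essentially unique extension $M'\to P$ --- precisely your fiber computation. The difference is in the wrapper. The paper concludes by passing to the (co)limit of these equivalences over the filtered category $\CC(M\to N)$ of Lemma \autoref{Cf}, obtaining $\map_{\mone_{/N}}(M,P)\simeq\map_{\mone_{/N}}(M^{\rm ex},P)$ directly and hence the adjunction, while the presentability half of the statement is asserted rather than argued. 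You instead package everything as an accessible Bousfield localization: the class $W$ of group-completion equivalences is strongly saturated and of small generation because it is created by a colimit-preserving functor between presentable $\infty$-categories, exact objects are exactly the $W$-local ones, and Lemma \autoref{Cf} identifies the reflector with $(-)^{\rm ex}$. Your route buys a genuine proof of the accessibility/presentability clause, which the paper's argument never addresses, plus the stronger fact that exact objects are local against all group-completion equivalences rather than only the units $M\to M^{\rm ex}$; the paper's route is more elementary, avoids the saturated-class machinery, and exhibits the explicit colimit of Lemma \autoref{Cf} as the value of the left adjoint. Two small points you should spell out when writing this up: the idempotency claim that $M^{\rm ex}$ is itself exact (your parenthetical argument is right --- mapping spaces under $M^{\rm ex}$ are fibers of maps between contractible mapping spaces under $M$ --- but both proofs need it for the reflector to land in $\mone^{\rm ex}_{/N}$), and the colimit-preservation of the sliced group-completion functor $\mone_{/N}\to\mone^{\rm gp}_{/N^{\rm gp}}$, which holds because colimits in slice $\infty$-categories are computed in the underlying $\infty$-category.
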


\begin{proof}
  Let $P\rightarrow N$ be an exact morphism, and let $M\rightarrow P$ be a morphism in $\mone_{/N}$. We have a commutative diagram
  $$\begin{tikzcd}
M \arrow[d] \arrow[r] & P \arrow[r] \arrow[d] & N \arrow[d] \\
M^{\rm gp} \arrow[r]  & P^{\rm gp} \arrow[r]  & N^{\rm gp} 
\end{tikzcd}$$  
Now let $M'$ be an object in $\CC(M\rightarrow N)$, i.e., 
there is a morphism $M\rightarrow M'$ over $N$, such that $M^{\rm gp}\simeq M'^{\rm gp}.$ Let 
$P'$ be the pushout $P\coprod_M M'$. We have $P\simeq P'$, since $P'^{\rm gp}=P^{\rm gp}\coprod_{M^{\rm gp}}M^{\rm gp}\simeq P^{\rm gp}.$ Then after inverting  the map $P\rightarrow P'$, we get a morphism $M'\rightarrow P$, which is  unique, up to homotopy, such that the following  diagram commutes
$$\begin{tikzcd}
M \arrow[r] \arrow[rd] & M' \arrow[d] \\
                       & P           
\end{tikzcd}$$
Passing to colimit, we get a functorial equivalence of animae
$$\map_{\mone_{/N}}(M,P)\simeq\map_{\mone_{/N}}(M^{\rm ex},P)\simeq\map_{\mone^{\rm ex}_{/N}}(M^{\rm ex},P).$$
\end{proof}
\begin{rmk}
 If the morphism $f:M\rightarrow N$ is \textit{virtually surjective}, i.e., the induced map $\pi_0(M^{\rm gp})\rightarrow \pi_0(N^{\rm gp})$   is surjective, then \cite[Lemma 3.8]{rognes2009topological} shows that the fiber product $N\times_{N^{\rm gp}}M^{\rm gp}$ is an exactification of $f$, and in which, Rognes refers to the construction $(M\rightarrow N)\mapsto N\times_{N^{\rm gp}}M^{\rm gp}$ as the \textit{repletion} of a virtually surjective map, c.f. \cite[Definition 3.6]{rognes2009topological}.
\end{rmk}
\begin{rmk}[The tangent bundle of $\mon$]\label{tangentbunlemonoids}
    Corollary\autoref{exactificationofmonoids} and Lemma\autoref{exactificationpresentable} permit us to study the tangent bundle $\tang_{\mone}$ of $\Einf$-monoids. By definition, the tangent bundle $\tang_{\mone}$ of $\Einf$-monoids is the relative stabilization of the evaluation functor ${\rm ev}_1:\Fun(\Delta^1,\mone)\rightarrow\mone$. Let us calculate the fiber  $\tang_\mone\times_{\mone}\{M\},$ which is equivalent to the stabilization of $\mone_{M//M}$ as there is a canonical equivalence $\Spt(\mathcal{ C})\simeq\Spt(\mathcal{C}_{*/})$ for $\CC$, as explained in \cite[Remark 1.4.2.18]{lurie2017higher}. Let $\mathcal E$ be an arbitrary presentable stable $\infty$-category, and let $F:\mone_{M//M}\rightarrow\mathcal E$ be a colimit-preserving functor.  For instance, we have $F(M)\simeq 0$. One can see that there is a unique factorization of $F$,
    $$\begin{tikzcd}
\mone_{M//M} \arrow[r, "F"] \arrow[d, "\pi"'] & \mathcal E \\
\mone \arrow[ru, "F'"']                       &           
\end{tikzcd}$$
such that the extension $F'$ is colimit-preserving, 
where $\pi$ is the forgetful functor. $F'$ is given  by sending an object $P\in\mone$ to $F(P\oplus M)\in\mathcal E.$  Now we claim that the functor $F'$ uniquely factors through $\mone^{\rm gp}$. Indeed, let $P_\bullet$ be the simplicial diagram 
$$\begin{tikzcd}
... \arrow[r, shift left=3] \arrow[r, shift left] \arrow[r, shift right] \arrow[r, shift right=3] & P^2 \arrow[r, shift left=2] \arrow[r] \arrow[r, shift right=2] \arrow[l, shift left=2] \arrow[l, shift right=2] \arrow[l] & P \arrow[r, shift right] \arrow[r, shift left] \arrow[l, shift left] \arrow[l, shift right] & 0 \arrow[l]
\end{tikzcd}$$
We have the following natural equivalences $$F'(BP)\simeq F'(\colim_{\Delta^{\rm op}}P_\bullet)\simeq\colim_{\Delta^{\rm op}}F'(P_\bullet)\simeq\Sigma F'(P).$$
This implies that we have  equivalences $$F'(P)\simeq\Omega F'(BP)\simeq\Omega F'(BP^{\rm gp})\simeq F'(P^{\rm gp}),$$
the second equivalence holds because $BP\simeq BP^{\rm gp}$.
Thus we have a commutative diagram of $\infty$-categories 
$$\begin{tikzcd}
\mone_{M//M} \arrow[d, "\pi"'] \arrow[r] & \mone^{\rm ex}_{M//M} \arrow[d, "\simeq"] \arrow[rd] &            \\
\mone \arrow[r]                          & \mone^{\rm gp} \arrow[r]                             & \mathcal E,
\end{tikzcd}$$ 
and therefore we get equivalences
$$\Fun^L(\mone,\mathcal E)\simeq\Fun^L(\mone^{\rm gp},\mathcal E)\simeq\Fun^L(\Spt,\mathcal{E}).$$
The second equivalence holds because $\mon^{\rm gp}\simeq\Spt_{\geq 0}$ by the recognition principle, and so its stabilization is $\Spt$.
i.e. we have $\tang_{\mone}\times_{\mone}\{M\}\simeq\Spt,$ for any $M\in\mon$, and the cotangent complex  $\LL_M$ of a monoid is nothing but its group completion $M^{\rm gp}$. 
\end{rmk}
\begin{defn}
    Let $f:(A,M)\rightarrow (B,N)$ be a morphism of connective $\Einf$-prelog rings. We say $f$ is exact if the induced map $M\rightarrow N$ is exact.
\end{defn}
Let $\plog^{\rm ex}_{/(B,N)}$ be the  subcategory of $\plog_{/(B,N)}$  spanned by exact morphisms $(A,M)\rightarrow (B,N)$. Hence we have a pullback square of $\infty$-categories 
$$\begin{tikzcd}
{\plog^{\rm ex}_{/(B,N)}} \arrow[d] \arrow[r] & {\plog_{/(B,N)}} \arrow[d] \\
\mone^{\rm ex}_{/N} \arrow[r]                 & \mone_{/N}                
\end{tikzcd}$$
Then $\plog^{\rm ex}_{/(B,N)}$ turns out to be  presentable by \cite[Theorem 5.5.3.18]{lurie2009higher}.
\begin{prop}\cite[Proposition 9.1]{lundemo2021relationship}\label{prelogexactification}
The fully faithful  inclusion $\plog^{\rm ex}_{/(B,N)}\subset\plog_{/(B,N)}$ admits a left adjoint ${\rm ex}:(A,M)\mapsto (A,M)^{\rm ex},$  which is called the exactification functor. More precisely, we have $(A,M)^{\rm ex}\simeq (A\otimes_{\mathbb S[M]}\mathbb S[M^{\rm ex}],M^{\rm ex}).$ 
\end{prop}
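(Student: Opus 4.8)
The plan is to realize the exactification as a relative (fiberwise) localization sitting over the monoid-level exactification of Lemma~\autoref{exactificationpresentable}. I would organize the argument around the cocartesian fibration structure of the forgetful functor $q\colon \plog_{/(B,N)}\rightarrow\mone_{/N}$, $(A,M)\mapsto (M\rightarrow N)$, together with the pullback square preceding the statement, which identifies $\plog^{\rm ex}_{/(B,N)}$ with $\plog_{/(B,N)}\times_{\mone_{/N}}\mone^{\rm ex}_{/N}$.

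First I would show that the forgetful functor $\plog\rightarrow\mone$ is a cocartesian fibration. By Remark~\autoref{1234543456} we have $\plog\simeq\mone\times_{\alg}\Fun(\Delta^1,\alg)$, formed along $\mathbb S[-]=\Sigma^\infty_+\colon\mone\rightarrow\alg$ and the source evaluation ${\rm ev}_0\colon\Fun(\Delta^1,\alg)\rightarrow\alg$. Since $\alg$ admits pushouts, ${\rm ev}_0$ is a cocartesian fibration whose pushforward along a map $A_0\rightarrow A_0'$ sends $(A_0\rightarrow A_1)$ to $(A_0'\rightarrow A_1\coprod_{A_0}A_0')$; as cocartesian fibrations are stable under base change, $\plog\rightarrow\mone$ is cocartesian, and the pushforward along $f\colon M\rightarrow M'$ is the base change $(A,M)\mapsto (A\otimes_{\mathbb S[M]}\mathbb S[M'],M')$. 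Using the standard fact that cocartesian fibrations are stable under slicing, the induced functor $q\colon\plog_{/(B,N)}\rightarrow\mone_{/N}$ is again cocartesian with the same pushforward formula (the map to $(B,N)$ on the target being induced from $A\rightarrow B$ and $\mathbb S[M']\rightarrow\mathbb S[N]\rightarrow B$).

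The key step is then a general principle: given a cocartesian fibration $q\colon\mathcal E\rightarrow\mathcal B$ and a reflective subcategory $i\colon\mathcal B_0\subset\mathcal B$ with localization $L$ and unit $\eta$, the pullback $\mathcal E_0:=\mathcal E\times_{\mathcal B}\mathcal B_0$ is reflective in $\mathcal E$, and the localization sends $e$ (over $b$) to the cocartesian pushforward $(\eta_b)_! e$ (which lies over $Lb\in\mathcal B_0$, hence in $\mathcal E_0$). To verify the adjunction I would use the mapping-space fibration attached to a cocartesian fibration: for $e'\in\mathcal E_0$ over $b'\in\mathcal B_0$, the space $\map_{\mathcal E}(e,e')$ fibers over $\map_{\mathcal B}(b,b')$ with fiber over $g$ equal to $\map_{\mathcal E_{b'}}(g_!e,e')$. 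Because $b'\in\mathcal B_0$, precomposition with $\eta_b$ gives $\map_{\mathcal B}(b,b')\simeq\map_{\mathcal B}(Lb,b')$, and the factorization $g\simeq\bar g\circ\eta_b$ yields $g_!e\simeq\bar g_!(\eta_b)_! e$; comparing total spaces fiberwise produces the desired equivalence $\map_{\mathcal E}((\eta_b)_!e,e')\simeq\map_{\mathcal E}(e,e')$.

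Finally I would specialize this to $\mathcal E=\plog_{/(B,N)}$, $\mathcal B=\mone_{/N}$, and $\mathcal B_0=\mone^{\rm ex}_{/N}$, whose reflectivity and unit $M\rightarrow M^{\rm ex}$ are exactly Lemma~\autoref{exactificationpresentable}. The pullback $\mathcal E_0$ is $\plog^{\rm ex}_{/(B,N)}$ by the displayed square, and the cocartesian pushforward of $(A,M)$ along $M\rightarrow M^{\rm ex}$ is $(A\otimes_{\mathbb S[M]}\mathbb S[M^{\rm ex}],M^{\rm ex})$, which is exact over $(B,N)$ since $M^{\rm ex}\rightarrow N$ is exact by construction; this gives both the existence of the left adjoint and the asserted formula. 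I expect the main obstacle to be the two pieces of fibrational bookkeeping — checking that the slice $q\colon\plog_{/(B,N)}\rightarrow\mone_{/N}$ remains cocartesian with the expected pushforward, and justifying the mapping-space fibration computation — rather than any genuinely logarithmic input, which is confined to the monoid-level Lemma~\autoref{exactificationpresentable}.
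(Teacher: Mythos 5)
Your proof is correct, but it takes a genuinely different route from the paper's. The paper proceeds in two steps: existence of the left adjoint is obtained abstractly from the adjoint functor theorem (both $\plog^{\rm ex}_{/(B,N)}$ and $\plog_{/(B,N)}$ are presentable, and the inclusion preserves limits because limits are computed pointwise), and the formula is then extracted by a cofinality argument: inside the category $\DD(f)$ of factorizations of $f:(A,M)\rightarrow(B,N)$ through exact maps, the factorizations of the special form $(A,M)\rightarrow(A\otimes_{\mathbb S[M]}\mathbb S[M'],M')\rightarrow(B,N)$ form a cofinal subcategory equivalent to $\mone^{\rm ex}_{M//N}$, whose universal object $M^{\rm ex}$ yields $(A\otimes_{\mathbb S[M]}\mathbb S[M^{\rm ex}],M^{\rm ex})$. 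You instead get existence and the formula in a single stroke: you exhibit $\plog_{/(B,N)}\rightarrow\mone_{/N}$ as a cocartesian fibration (pulled back from ${\rm ev}_0$ on $\Fun(\Delta^1,\alg)$ via Remark\autoref{1234543456}, then sliced), and invoke the general principle that restricting a cocartesian fibration to a reflective subcategory of the base produces a reflective subcategory of the total category, with unit the cocartesian pushforward of the base unit; your verification — the universal property of the cocartesian edge $e\rightarrow(\eta_b)_!e$ combined with the equivalence $\map_{\mathcal B}(Lb,b')\simeq\map_{\mathcal B}(b,b')$ for $b'$ local — is exactly right, and the identification $\plog_{/(B,N)}\times_{\mone_{/N}}\mone^{\rm ex}_{/N}\simeq\plog^{\rm ex}_{/(B,N)}$ is the paper's own displayed pullback square. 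Both arguments rest on the same logarithmic input, namely Lemma\autoref{exactificationpresentable} and the base-change formula along $\mathbb S[M]\rightarrow\mathbb S[M^{\rm ex}]$. What your route buys: the unit maps and their universal property are explicit rather than produced by an existence theorem, which is precisely the structure the paper needs again later (e.g.\ when showing that ${\rm ev}_1:\Fun^{\rm ext}(\Delta^1,\llog)\rightarrow\llog$ is a coCartesian fibration in Proposition\autoref{3245672}), and you avoid any appeal to presentability. What the paper's route buys: it is shorter given that presentability of $\plog^{\rm ex}_{/(B,N)}$ was established in the immediately preceding paragraph, and it sidesteps the two pieces of fibrational bookkeeping you correctly flag — that the slice of a cocartesian fibration remains cocartesian with the expected pushforwards, and that pointwise units assemble into an actual left adjoint — both of which are true and routine, but would need to be spelled out or cited to make your argument complete.
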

\begin{proof}
    Since the limits in $\plog_{/(B,N)}$ and $\plog^{\rm ex}_{/(B,N)}$ are computed point-wise. We obtain that the forgetful functor $\plog^{\rm ex}_{/(B,N)}\subset\plog_{/(B,N)}$ of presentable $\infty$-category preserves arbitrary (small) limits. So there is a left adjoint $${\rm ex}:\plog_{/(B,N)}\rightarrow\plog^{\rm ex}_{/(B,N)} $$
    of the inclusion $\plog^{\rm ex}_{/(B,N)}\subset\plog_{/(B,N)}.$
    Now let us compute the exactification of a morphism $f:(A,M)\rightarrow(B,N).$ Let $\DD(f)$ be the subcategory of $\plog_{(A,M)//(B,N)}$ spanned by $(A,M)\rightarrow(A',M')\rightarrow (B,N)$ such that $M'\rightarrow N$ is exact. Note that the morphism $(A,M)\rightarrow (A',M')$ factors through the morphism $(A,M)\rightarrow (A\otimes_{\mathbb S[M]}\mathbb S[M'],M').$ Hence, the subcategory of $\DD(f)$ spanned by those objects which are equivalent to $(A,M)\rightarrow (A\otimes_{\mathbb S[M]}\mathbb S[M'],M')\rightarrow (B,N)$ such that $M'\rightarrow N$ is exact, is cofinal in $\DD(f)$. We denote by $\DD'(f)$ the resulting $\infty$-category. It's easy to see that the functor $(A\otimes_{\mathbb S[M]}\mathbb S[M'],M')\mapsto M'$ gives rise to an equivalence  between $\DD'(f)$ and the  $\infty$-category $\mone^{\rm ex}_{M//N}$.  As a consequence, we have
    \begin{align*}
         (A,M)^{\rm ex}\simeq\varprojlim_{\DD(f)}(A',M')&\simeq\varprojlim_{M'\in\mone^{\rm ex}_{M//N}}(A\otimes_{\mathbb S[M]}\mathbb S[M'],M')\\
         &\simeq(A\otimes_{\mathbb S[M]}\mathbb S[M^{\rm ex}],M^{\rm ex}).
    \end{align*}
   
    \end{proof}
\begin{defn}\label{logexactification}
Let $(B,N)$ be a connective $\Einf$-log ring. The $\infty$-category of exact connective $\Einf$-log rings over $(B,N)$ is the pullback of presentable $\infty$-categories.
$$\begin{tikzcd}
{\llog^{\rm ex}_{/(B,N)}} \arrow[d] \arrow[r] & {\llog_{/(B,N)}} \arrow[d] \\
{\plog^{\rm ex}_{/(B,N)}} \arrow[r]           & {\plog_{/(B,N)}}          
\end{tikzcd}$$
\end{defn}
\begin{cor}\label{exactificationoflogrings}
    The fully faithful inclusion $\llog^{\rm ex}_{/(B,N)}\subset\llog_{/(B,N)}$ admits a left adjoint, which we still denote by $${\rm ex}:\llog_{/(B,N)}\rr\llog^{\rm ex}_{/(B,N)}$$
    and call it the exactification functor.
\end{cor}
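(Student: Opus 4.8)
The plan is to exhibit $\llog^{\rm ex}_{/(B,N)}$ as an accessible localization of $\llog_{/(B,N)}$, from which the desired left adjoint is the associated reflector. By Definition \autoref{logexactification}, $\llog^{\rm ex}_{/(B,N)}$ is the pullback $\llog_{/(B,N)}\times_{\plog_{/(B,N)}}\plog^{\rm ex}_{/(B,N)}$; unwinding this, it is the full subcategory of $\llog_{/(B,N)}$ spanned by those log rings $(A,M)\rightarrow(B,N)$ whose underlying prelog ring is exact over $(B,N)$. Thus, inside $\plog_{/(B,N)}$, it is the intersection of the two full reflective subcategories $\llog_{/(B,N)}$ and $\plog^{\rm ex}_{/(B,N)}$: the former is reflective because the logification $a$ of Proposition \autoref{presentability} restricts to the slice over the log ring $(B,N)$ (note $a$ fixes $(B,N)$), and the latter is reflective by Proposition \autoref{prelogexactification}. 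In particular, the inclusion $\llog^{\rm ex}_{/(B,N)}\subset\llog_{/(B,N)}$ is fully faithful, being a base change of the fully faithful inclusion $\plog^{\rm ex}_{/(B,N)}\subset\plog_{/(B,N)}$.

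First I would verify presentability and limit-preservation. As in Proposition \autoref{prelogexactification}, the pullback square of Definition \autoref{logexactification} is a pullback of presentable $\infty$-categories along accessible, limit-preserving functors, so $\llog^{\rm ex}_{/(B,N)}$ is presentable by \cite[Theorem 5.5.3.18]{lurie2009higher}. Limits in this pullback are computed componentwise, and both inclusions $\llog_{/(B,N)}\subset\plog_{/(B,N)}$ and $\plog^{\rm ex}_{/(B,N)}\subset\plog_{/(B,N)}$ preserve small limits (the first because $\llog\subset\plog$ does, the second as in the proof of Proposition \autoref{prelogexactification}); hence the inclusion $\llog^{\rm ex}_{/(B,N)}\subset\llog_{/(B,N)}$ preserves small limits. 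Since both localizations $\llog_{/(B,N)}$ and $\plog^{\rm ex}_{/(B,N)}$ of $\plog_{/(B,N)}$ are accessible, their intersection $\llog^{\rm ex}_{/(B,N)}$ is again an accessible localization of $\plog_{/(B,N)}$ (the two strongly saturated classes of local equivalences are each of small generation, hence so is the class they jointly generate). As $\llog^{\rm ex}_{/(B,N)}$ is contained in the reflective subcategory $\llog_{/(B,N)}$, it is therefore already reflective in $\llog_{/(B,N)}$; equivalently, the accessible, limit-preserving inclusion between presentable $\infty$-categories admits a left adjoint by the adjoint functor theorem \cite[Corollary 5.5.2.9]{lurie2009higher}. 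This produces the exactification functor ${\rm ex}:\llog_{/(B,N)}\rr\llog^{\rm ex}_{/(B,N)}$.

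Concretely, I expect this reflector to be computed as $a\circ{\rm ex}$: given a log ring $(A,M)$ over $(B,N)$, one first forms the prelog exactification $(A\otimes_{\mathbb S[M]}\mathbb S[M^{\rm ex}],M^{\rm ex})$ of Proposition \autoref{prelogexactification}, then applies the logification $a$. The main obstacle is precisely to justify that this single composite already lands in $\llog^{\rm ex}_{/(B,N)}$ — i.e. that logification preserves exactness over the log base $(B,N)$ — rather than requiring a transfinite iteration of $a$ and ${\rm ex}$. This should follow from the fact that logification only adjoins homotopy units $GL_1(A')$, which map into $N$ through the log structure of $(B,N)$, and hence leaves invariant the condition on group completions characterizing exactness in Lemma \autoref{Cf}; making this precise, by comparing $((M^{\rm ex})^a)^{\rm gp}$ with $(M^{\rm ex})^{\rm gp}$ and tracking the section of Corollary \autoref{exactificationofmonoids}, is the one genuinely computational point. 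Existence of the adjoint, however, does not depend on this identification and is settled by the localization argument above.
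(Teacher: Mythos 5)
Your proposal is correct and takes essentially the same route as the paper, whose one-line proof likewise combines Proposition\autoref{prelogexactification} with \cite[Theorem 5.5.3.18]{lurie2009higher}: the pullback defining $\llog^{\rm ex}_{/(B,N)}$ is a limit of presentable $\infty$-categories along accessible limit-preserving inclusions, so it is presentable and the projection to $\llog_{/(B,N)}$ is accessible and limit-preserving, whence the left adjoint exists by the adjoint functor theorem. Your two additions --- the intersection-of-accessible-localizations reformulation and the conjectural description of the reflector as $a\circ{\rm ex}$ --- go beyond what the paper establishes here, but you correctly flag that the existence statement does not depend on the latter identification.
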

\begin{proof}
    It follows from Proposition\autoref{prelogexactification} and \cite[Theorem 5.5.3.18]{lurie2009higher}.
\end{proof}

\begin{prop}\cite[Proposition 6.10]{lundemo2023deformation}\label{exacteq}
Let $(B,N)$ be a connective $\Einf$-log ring. Then the forgetful functor $\llog^{\rm ex}_{(B,N)//(B,N)}\rightarrow \alg^{\rm cn}_{B//B}$ is an equivalence.
\end{prop}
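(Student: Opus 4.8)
We want to show that the forgetful functor
$$\llog^{\rm ex}_{(B,N)//(B,N)}\rr \alg^{\rm cn}_{B//B}$$
is an equivalence. The source is the $\infty$-category of exact connective $\Einf$-log rings $(B,N)\to(A,M)\to(B,N)$ lying over and under $(B,N)$, and the target is the $\infty$-category of connective $\Einf$-rings $B\to A\to B$. The plan is to produce an explicit quasi-inverse and verify it is inverse to the forgetful functor on both sides by reducing everything to the monoid level, where Corollary~\autoref{exactificationofmonoids} already does the work.

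First I would analyze what an object of $\llog^{\rm ex}_{(B,N)//(B,N)}$ is. By Definition~\autoref{logexactification} the exactness is detected on the underlying monoid map, so an object is a factorization $(B,N)\stackrel{s}\to(A,M)\stackrel{\pi}\to(B,N)$ of the identity whose monoid part $N\stackrel{s}\to M\stackrel{\pi}\to N$ lies in $\mone^{\rm ex}_{N//N}$. Here the crucial input is Corollary~\autoref{exactificationofmonoids}: for a map $\pi\colon M\to N$ admitting the section $s$, the exactness condition forces $M\simeq N\oplus(M^{\rm gp}/N^{\rm gp})$, and moreover the functor $Q\colon \mone^{\rm ex}_{N//N}\xrightarrow{\simeq}\mone^{\rm gp}$, $(N\to M\to N)\mapsto M/N$, is an equivalence. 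Since $N$ is the monoid of a log ring, the log condition relates $M/N\simeq M^{\rm gp}/N^{\rm gp}$ to the units of $A$; the upshot I expect is that the exactness and log conditions together pin the monoid $M$ down completely in terms of $A$, so that the only genuine data left in the object is the ring map $B\to A\to B$.

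Concretely, I would define the candidate inverse $\Phi\colon \alg^{\rm cn}_{B//B}\to \llog^{\rm ex}_{(B,N)//(B,N)}$ by base change along the log structure: send $B\to A\to B$ to the strict log ring $(A,M)\simeq (B,N)\otimes_B A$ (tensor formed in $\llog^\otimes$ via Proposition~\autoref{tensorlog}), equipped with the evident section and retraction induced from $B\to A\to B$. One checks $(A,M)$ is exact over $(B,N)$ because strictness makes the monoid map $N\to M$ a pushout that is group-completed from $N$, so it lies in $\mone^{\rm ex}_{N//N}$. Then I would verify the two composites are equivalent to the identity. The composite $\alg^{\rm cn}_{B//B}\to\alg^{\rm cn}_{B//B}$ returns the underlying ring of $(B,N)\otimes_B A$, which is $A$ by flatness/base-change of the tensor in $\llog^\otimes$, so this direction is immediate. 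For the other composite, given an exact object $(B,N)\to(A,M)\to(B,N)$ I must show the canonical comparison $(B,N)\otimes_B A\to (A,M)$ is an equivalence; by the forgetful functor $F\colon\plog\to\alg\times\mon$ detecting equivalences, it suffices to check this on $A$ (trivially an equivalence) and on the monoid $M$, where it reduces via $Q$ to the statement that $M/N\simeq M^{\rm gp}/N^{\rm gp}$ is the units contribution determined by $A$ — exactly what exactness plus the log axiom supply.

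The main obstacle I anticipate is the monoid bookkeeping in the last step: making precise, at the level of $\Einf$-monoids rather than discrete ones, that the log condition on $(A,M)$ identifies $M^{\rm gp}/N^{\rm gp}$ with the relative units $GL_1(A)$-data coming from the section, so that the comparison map becomes an equivalence after applying $Q$. This is where Corollary~\autoref{exactificationofmonoids} must be combined with the pushout square defining strict maps (Remark~\autoref{admissiblelocal}) and with the fact, established in the proof of Proposition~\autoref{presentability}, that $GL_1$ commutes with the relevant colimits. Once the monoid comparison is an equivalence, conservativity of $F$ closes the argument, and the two explicitly constructed functors are mutually inverse, giving the claimed equivalence.
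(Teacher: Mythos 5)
Your proposal is correct and follows essentially the same route as the paper: both arguments rest on Corollary\autoref{exactificationofmonoids} to split an exact object as $(A,N\oplus G)$ with $G$ group-like, and on the retraction splitting $GL_1(A)\simeq GL_1(B)\oplus GL_1(A)/GL_1(B)$ to identify $G\simeq GL_1(A)/GL_1(B)$, which is precisely the log structure of your strict base change $(B,N)\otimes_B A$. The paper packages this as an analysis of the logification localization $\plog^{\rm ex}_{(B,N)//(B,N)}\rightarrow\llog^{\rm ex}_{(B,N)//(B,N)}$ rather than as an explicit unit/counit check for a quasi-inverse, but the mathematical content is identical.
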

\begin{proof}
    Consider the commutative diagram of $\infty$-categories
   $$ \begin{tikzcd}
{\llog^{\rm ex}_{(B,N)//(B,N)}} \arrow[r] \arrow[d] & {\llog_{(B,N)//(B,N)}} \arrow[d] \\
{\plog^{\rm ex}_{(B,N)//(B,N)}} \arrow[r] \arrow[d] & {\plog_{(B,N)//(B,N)}} \arrow[d]          \\
\mone^{\rm ex}_{N//N} \arrow[r]                     & \mone_{N//N}                             
\end{tikzcd}$$
Every square is Cartesian. Corollary\autoref{exactificationofmonoids} shows that there is an equivalence $$\mone^{\rm gp}\stackrel{\simeq}\rr \mone^{\rm ex}_{N//N},G\mapsto G\oplus N.$$
So the $\infty$-category $\plog^{\rm ex}_{(B,N)//(B,N)}$ is equivalent to the full subcategory of $\plog_{(B,N)//(B,N)}$ spanned by morphisms having the form $(B,N)\rightarrow(A,N\oplus G)\rightarrow(B,N).$ The monoid morphism $G\rightarrow A$ factors through $$G\rr GL_1(A)\stackrel{\simeq}\rr GL_1(B)\oplus GL_1(A)/GL_1(B)\rr N\oplus GL_1(A)/GL_1(B)$$
It follows that  we have a commutative diagram 
$$\begin{tikzcd}
{(B,N)} \arrow[d, "="'] \arrow[r] & {(A,N\oplus G)} \arrow[r] \arrow[d]      & {(B,N)} \arrow[d, "="] \\
{(B,N)} \arrow[r]                 & {(A,N\oplus GL_1(A)/GL_1(B))} \arrow[r] & {(B,N)}               
\end{tikzcd}$$
It's easy to see that the morphism $(A,N)\rightarrow(A,N\oplus GL_1(A)/GL_1(B))$ exhibits $(A,N\oplus GL_1(A)/GL_1(B))$ as a logification of $(A,N\oplus G)$, and thus we have $(A,N\oplus GL_1(A)/GL_1(B))\in\llog^{\rm ex}_{(B,N)//(B,N)}.$ We must  show that  $(A,N\oplus GL_1(A)/GL_1(B))$ is the image of $(A,N\oplus G)$ under the localization $\plog^{\rm ex}_{(B,N)//(B,N)}\rightarrow \llog^{\rm ex}_{(B,N)//(B,N)}$ of the fully faithful inclusion $\llog^{\rm ex}_{(B,N)//(B,N)}\subset\plog^{\rm ex}_{(B,N)//(B,N)}$. Indeed, for any $(R,P)\in\llog^{\rm ex}_{(B,N)//(B,N)}$, and any morphism $(A,N\oplus G)\rightarrow (R,P)$, since $(R,P)$ is an $\Einf$-log ring, the map uniquely factors through $(A,N\oplus G)\rightarrow (A,N\oplus GL_1(A)/GL_1(B)).$
\end{proof}
\subsubsection{Log derivations}
 Let $U:\llog\rightarrow \algcn$ be the forgetful functor, which is a right adjoint,  and let $\Fun^{\rm ext}(\Delta^1,\textbf{Log}):=\Fun^{\algcn}(\Delta^1,\llog)$ be the subcategory of $\Fun(\Delta^1,\textbf{Log})$ spanned by exact morphisms. 
\begin{prop}\label{3245672}
\begin{enumerate}
    \item 
    The evaluation functor ${\rm ev}_1:\Fun^{\rm ext}(\Delta^1,\llog)\rightarrow\llog$ is a presentable fibration.
    \item The forgetful functor $U_*:\Fun^{\rm ext}(\Delta^1,\llog)\rightarrow\Fun(\Delta^1,\algcn)$ is a right adjoint.
    \end{enumerate}
\end{prop}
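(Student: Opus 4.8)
The plan is to establish the two assertions separately, disposing of (2) first because it is formal, and then reducing (1) to a global form of the exactification functor. For (2), recall from the discussion preceding the statement that $U\colon\llog\to\algcn$ is a right adjoint whose fully faithful left adjoint is the trivial-log-structure functor $G\colon\algcn\to\llog$, $A\mapsto(A,GL_1(A))$. Applying $\Fun(\Delta^1,-)$ to the adjunction $G\dashv U$ yields an adjunction $\Fun(\Delta^1,G)\dashv\Fun(\Delta^1,U)$ between $\Fun(\Delta^1,\algcn)$ and $\Fun(\Delta^1,\llog)$. The key point is that $\Fun(\Delta^1,G)$ factors through the full subcategory $\Fun^{\rm ext}(\Delta^1,\llog)$: it sends $[A\to B]$ to $[(A,GL_1(A))\to(B,GL_1(B))]$, and its monoid component $GL_1(A)\to GL_1(B)$ is a morphism of group-like $\Einf$-monoids, hence exact. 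Indeed, for group-like $M$ the group completion $M\to M^{\rm gp}$ is an equivalence, so the identity factorization is terminal in the category $\CC(GL_1(A)\to GL_1(B))$ of Lemma\autoref{Cf}, using that maps into the group-like monoid $GL_1(B)$ factor through the group completion of the source. Since the image of a left adjoint lies in the full subcategory $\Fun^{\rm ext}(\Delta^1,\llog)$, the adjunction restricts, exhibiting $U_*=\Fun(\Delta^1,U)\circ i$ (with $i$ the inclusion) as a right adjoint whose left adjoint is the corestriction of $\Fun(\Delta^1,G)$.

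For (1), I would first record that ${\rm ev}_1\colon\Fun(\Delta^1,\llog)\to\llog$ is a presentable fibration: $\llog$ is presentable (Proposition\autoref{presentability}); the codomain evaluation is always a coCartesian fibration (pushforward by post-composition) and is Cartesian since $\llog$ admits pullbacks; and its fibers $\llog_{/(B,N)}$ are presentable. The fibers of the restriction ${\rm ev}_1|_{\Fun^{\rm ext}(\Delta^1,\llog)}$ are precisely the categories $\llog^{\rm ex}_{/(B,N)}$, which are presentable reflective localizations of $\llog_{/(B,N)}$ by Corollary\autoref{exactificationoflogrings}. It therefore remains to produce the Cartesian and coCartesian structures on the restriction.

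The technical heart is to upgrade the fiberwise exactification of Corollary\autoref{exactificationoflogrings} to a single reflective localization $\mathrm{Ex}\colon\Fun(\Delta^1,\llog)\to\Fun^{\rm ext}(\Delta^1,\llog)$, $[(A,M)\xrightarrow{f}(B,N)]\mapsto[(A,M)^{\rm ex}\to(B,N)]$, left adjoint to $i$ and satisfying ${\rm ev}_1\circ\mathrm{Ex}={\rm ev}_1$, since exactification leaves the target unchanged. Checking the universal property across varying targets reduces, via the fiberwise statement, to the transitivity of exactification: for $M\to N\to N'$ one has $M^{\rm ex/N'}\simeq(M^{\rm ex/N})^{\rm ex/N'}$, as both are terminal among factorizations of $M\to N'$ inducing an equivalence on group completions. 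Granting this global reflector, the fibration structures follow formally. For Cartesian edges: $i$ preserves limits, so $\Fun^{\rm ext}(\Delta^1,\llog)$ is closed under limits, and the Cartesian lift $g^\ast x'$ of an exact $x'$ along $g\colon(B,N)\to(B',N')$ is the levelwise pullback of the cospan $x'\to c(B',N')\leftarrow c(B,N)$, where $c(-)$ denotes the (exact) identity arrow; hence $g^\ast x'$ is exact and remains Cartesian in the full subcategory. For coCartesian edges, write $g_!$ for post-composition and $L=\mathrm{Ex}$; for $y$ exact the chain
\[\map_{\llog^{\rm ex}_{/(B',N')}}(Lg_!x,y)\simeq\map_{\llog_{/(B',N')}}(g_!x,y)\simeq\map^{g}(x,y),\]
where the first equivalence uses fullness and the localization $L\dashv i$, the second the coCartesian property of $g_!$ in $\Fun(\Delta^1,\llog)$, and $\map^{g}$ denotes morphisms lying over $g$, shows that $Lg_!x$ is the coCartesian pushforward in the restriction.

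The main obstacle is exactly this global assembly: Corollary\autoref{exactificationoflogrings} only produces exactification one fiber at a time, and one must verify that these reflectors are compatible with base change (so that Cartesian lifts remain exact) and with post-composition (so that coCartesian lifts are well defined). Both compatibilities are governed by the transitivity $M^{\rm ex/N'}\simeq(M^{\rm ex/N})^{\rm ex/N'}$ together with the exactness of identity morphisms; carefully establishing this transitivity at the level of $\Einf$-monoids, using the terminal-object description of Lemma\autoref{Cf}, is the step demanding the most care.
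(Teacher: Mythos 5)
Your proposal is correct in substance, and it differs from the paper's proof at two points worth recording. For assertion (2), the paper argues abstractly: it reduces the claim to showing that $U_*$ preserves small limits (whence a right adjoint exists by the adjoint functor theorem), which in turn follows because the inclusion $\Fun^{\rm ext}(\Delta^1,\llog)\subset\Fun(\Delta^1,\llog)$ is a relative right adjoint of the exactification functor over $\llog$, by Proposition\autoref{prelogexactification} and Definition\autoref{logexactification}. You instead exhibit the left adjoint explicitly as the corestriction of $\Fun(\Delta^1,G)$, $[A\to B]\mapsto[(A,GL_1(A))\to(B,GL_1(B))]$; this is legitimate once one knows that a map of group-like $\Einf$-monoids is exact, and your verification via Lemma\autoref{Cf} does go through (when $M,N$ are group-like and $a^{\rm gp}$ is invertible, both $\map_{\mone_{M/}}(M',M)$ and $\map_{\mone_{M/}}(M',N)$ are contractible, so the identity factorization is terminal), and it buys an explicit identification of the adjoint that the paper's argument does not give. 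For assertion (1), your coCartesian edges are literally the paper's ("exactify the composite," i.e.\ $Lg_!$), but the Cartesian structure is obtained differently: the paper shows the transition functors $f_!$ preserve colimits (via the square intertwining exactification with post-composition), so the classified functor lands in $\mathbf{Pr}_{L}$ and the coCartesian fibration is automatically Cartesian, whereas you produce Cartesian edges directly as levelwise pullbacks, using exactness of identities and closure of the reflective subcategory under limits. Both routes rest on exactly the same compatibility — transitivity $(M^{{\rm ex}/N})^{{\rm ex}/N'}\simeq M^{{\rm ex}/N'}$, equivalently compatibility of the fiberwise reflectors with post-composition — which the paper invokes silently (its coCartesian edges and its commuting square both presuppose it) and which you correctly flag as the technical heart. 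One caution on your sketch of that step: $(M^{{\rm ex}/N})^{{\rm ex}/N'}$ is a priori terminal in $\CC(M^{{\rm ex}/N}\to N')$, not in $\CC(M\to N')$, so "both are terminal in the same category" is not yet available; the clean argument is that any map over $N'$ inducing an equivalence on group completions (such as $M\to M^{{\rm ex}/N}$) becomes an equivalence after applying $(-)^{{\rm ex}/N'}$, which follows from the pushout argument already used in the proof of Lemma\autoref{exactificationpresentable}.
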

\begin{proof}
All fibers of the functor ${\rm ev}_1$ are presentable. Corollary\autoref{exactificationoflogrings} implies that it is also a coCartesian fibration. Indeed, let $f:(A,M)\rightarrow (B,N)$ be an arrow in $\llog$,
and let $g:(R,P)\rightarrow (A,M)$  be a connective $\Einf$-log ring over $(A,M)$. Then the exactification $(R,P)^{\rm ex}_{(B,N)}$ of $(R,P)$ with respect to $(B,N)$ is a pushout of $(R,P)$ along $f.$ We denote by $f_!:\llog^{\rm ex}_{/(A,M)}\rightarrow \llog^{\rm ex}_{/(B,N)}$ a pushout functor along $f$.\\
Using Grothendieck's construction, the coCartesian fibration ${\rm ev}_1$ gives rise to a functor $$\chi:\llog\rr\cat, (A,M)\mapsto\llog^{\rm ex}_{/(A,M)}.$$
We have to prove that the functor $\chi$ factors through $\llog\rightarrow \mathbf{Pr}_{L},$ i.e., the functor $f_!$ preserves small colimits.\\
There is a commutative diagram of presentable $\infty$-categories
$$\begin{tikzcd}
{\llog_{/(A,M)}} \arrow[d, "{\rm ex}"'] \arrow[r, "f_!"] & {\llog_{/(B,N)}} \arrow[d, "{\rm ex}"] \\
{\llog^{\rm ex}_{/(A,M)}} \arrow[r, "f_!"]               & {\llog^{\rm ex}_{/(B,N)}}             
\end{tikzcd}$$
Let $K$ be a simplicial set and $F:K\rightarrow \llog^{\rm ex}_{/(A,M)}$ be a functor. Then we have a lifting $\tilde{F}:K\rightarrow \llog_{/(A,M)}.$ Since the exactification functor preserves colimits, we have $\colim{F}\simeq(\colim{\tilde F})^{\rm ex}.$ Thus we have equivalences
\begin{align*}
f_!(\colim{ F})&\simeq f_!((\colim{\tilde F})^{\rm ex})\simeq (f_!\colim\tilde{F})^{\rm ex}\\
&\simeq(\colim f_!\tilde{F})^{\rm ex}\simeq\colim{f_!\tilde{F}}^{\rm ex}\simeq\colim f_!F.
\end{align*}
  As for the second statement, we only need to show that $U_*$ preserves small limits. It follows that we need to show that the inclusion $\Fun^{\rm ext}(\Delta^1,\llog)\subset\Fun(\Delta^1,\llog)$ preserves small limits. It's clear, because the inclusion is a relative  right adjoint of the exactification functor ${\rm ex}:\Fun(\Delta^1,\llog)\rightarrow\Fun^{\rm ext}(\Delta^1,\llog)$ over $\llog$ by Proposition\autoref{prelogexactification} and Definition\autoref{logexactification}.
 \end{proof}
 We let $\tang^{\rm rep}_{\llog}$ be the stable envelope of the presentable fibration ${\rm ev}_1:\Fun^{\rm ext}(\Delta^1,\llog)\rightarrow\llog$. Using Proposition\autoref{3245672}, we can form a commutative diagram of $\infty$-categories
 $$\begin{tikzcd}
\tang^{\rm rep}_{\llog} \arrow[d, "\Omega^{\infty}"'] \arrow[r]           & \tang_{\algcn} \arrow[d, "\Omega^{\infty}"]     \\
{\Fun^{\rm ext}(\Delta^1,\llog)} \arrow[d, "{\rm ev}_1"'] \arrow[r, "U_*"] & {\Fun(\Delta^1,\algcn)} \arrow[d, "{\rm ev}_1"] \\
\llog \arrow[r, "U"]    & \algcn                     
\end{tikzcd}$$
which satisfies condition $1,2$ and $3$ in the Definition\autoref{gabtan}, the following proposition guarantees that this forms a replete tangent bundle formalism.
 \begin{prop}\cite[Corollary 6.22]{lundemo2023deformation}\label{tangentreplog}\label{tlog=logcalg}
    There is a natural equivalence  $\tlog\simeq \llog\times_{\algcn}\mathrm T_{\algcn}$
    of $\infty$-categories over $\llog$.
\end{prop}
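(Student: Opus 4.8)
The goal is to verify condition (4) in Definition\autoref{gabtan} for the data assembled in Proposition\autoref{3245672}: that the square with corners $\tlog$, $\tang_{\algcn}$, $\llog$ and $\algcn$ is Cartesian. Equivalently, I would construct a comparison functor $\Phi\colon \tlog\rightarrow \llog\times_{\algcn}\tang_{\algcn}$ over $\llog$ and prove it is an equivalence. The functor $\Phi$ is obtained from the functoriality of the stable envelope: Proposition\autoref{3245672} exhibits $U_*\colon\Fun^{\rm ext}(\Delta^1,\llog)\rightarrow\Fun(\Delta^1,\algcn)$ as a morphism of presentable fibrations lying over $U\colon\llog\rightarrow\algcn$, and passing to relative stable envelopes yields a map $\tlog\rightarrow\tang_{\algcn}$ covering $U$, hence a map $\Phi$ into the pullback. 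Since the stable envelope of a presentable fibration is again a presentable fibration, both $\tlog\rightarrow\llog$ and $\llog\times_{\algcn}\tang_{\algcn}\rightarrow\llog$ are presentable fibrations and $\Phi$ is a morphism of such over $\llog$.

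Next I would identify $\Phi$ on fibers. Over an $\Einf$-log ring $(A,M)$ the fiber of $\tlog$ is the stabilization $\Spt(\llog^{\rm ex}_{/(A,M)})$ of the fiber of ${\rm ev}_1$, while the fiber of the pullback is $\tang_{\algcn}\times_{\algcn}\{A\}\simeq\Mod_A$. The slice $\llog^{\rm ex}_{/(A,M)}$ has a terminal object $\mathrm{id}_{(A,M)}$, so passing to pointed objects leaves the stabilization unchanged \cite[Remark 1.4.2.18]{lurie2017higher}, giving $\Spt(\llog^{\rm ex}_{/(A,M)})\simeq\Spt(\llog^{\rm ex}_{(A,M)//(A,M)})$; likewise $\Mod_A\simeq\Spt(\algcn_{/A})\simeq\Spt(\algcn_{A//A})$. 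The forgetful equivalence $\llog^{\rm ex}_{(A,M)//(A,M)}\stackrel{\simeq}\rightarrow\algcn_{A//A}$ of Proposition\autoref{exacteq} then induces an equivalence on stabilizations, and unwinding the construction of $\Phi$ shows this is precisely the functor induced by $\Phi$ on the fiber over $(A,M)$. Hence $\Phi$ is a fiberwise equivalence.

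It remains to upgrade this to an equivalence of total $\infty$-categories. Both fibrations are presentable, so it suffices to know that $\Phi$ is compatible with the fibration structure, i.e. that it preserves Cartesian edges: a morphism of Cartesian fibrations over $\llog$ that preserves Cartesian edges and is a fiberwise equivalence is an equivalence. Here I would exploit that $U_*$ is a right adjoint (Proposition\autoref{3245672}(2)). The Cartesian transition functors are, on the algebra side, restriction of scalars $\Mod_B\rightarrow\Mod_A$ along a map $f\colon(A,M)\rightarrow(B,N)$, and on the log side the corresponding restriction of pointed exact objects; since the forgetful functor of Proposition\autoref{exacteq} manifestly commutes with these restrictions (both simply forget structure over the base and involve no colimit), the fiberwise identifications of the previous paragraph are natural in $(A,M)$. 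Equivalently, by straightening \cite[Theorem 3.2.0.1]{lurie2009higher} both fibrations classify functors $\llog\rightarrow\mathbf{Pr}_L$ and $\Phi$ corresponds to a natural transformation which is a pointwise equivalence.

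The main obstacle is exactly this last compatibility. The delicate point is that the coCartesian (pushforward) transition functor on the log side is pushout along $f$ followed by exactification, and exactification alters the underlying $\Einf$-ring by the factor $-\otimes_{\mathbb S[M]}\mathbb S[M^{\rm ex}]$; a naive comparison with ring base change $-\otimes_A B$ therefore fails at the level of the unstabilized slices. The resolution is to work either with the Cartesian (restriction) edges, which the right adjoint $U_*$ preserves and which are insensitive to exactification, or to observe that on pointed augmented objects the structure map to $(B,N)$ admits a section, so by Corollary\autoref{exactificationofmonoids} the relevant monoids already have the split form $N\oplus G$ with $G$ group-like and the discrepancy disappears after stabilization. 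Granting this naturality, $\Phi$ is a fiberwise equivalence of presentable fibrations respecting the Cartesian structure, hence an equivalence over $\llog$, which is precisely the asserted condition (4) and yields $\tlog\simeq\llog\times_{\algcn}\tang_{\algcn}$.
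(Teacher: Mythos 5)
Your proposal is correct and takes essentially the same route as the paper: the paper likewise treats both sides as stable presentable fibrations over $\llog$, reduces to showing they classify the same functor $\llog\rightarrow\mathbf{Pr}_L^{\rm stab}$, and invokes Proposition\autoref{exacteq} (exact augmented $\Einf$-log rings $\simeq$ augmented $\Einf$-rings) as the sole substantive input for the fiberwise identification $\Spt(\llog^{\rm ex}_{/(A,M)})\simeq\Mod_A$. The compatibility-with-transition-functors issue you isolate (exactification appearing in the coCartesian pushforwards) is precisely what the paper compresses into ``deduced immediately from Proposition\autoref{exacteq}'', so your more explicit treatment of the naturality is consistent with, and if anything more complete than, the paper's own argument.
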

\begin{proof}
    We just need to show that the two stable presentable fibrations $\tlog\rightarrow \llog$ and $\llog\times_{\algcn}\mathrm T_{\algcn}$ represent the same prestack $$\chi_{\tlog}\simeq\chi_{\llog\times_{\algcn}\mathrm T_{\algcn}}:\llog\rr \mathbf{Pr}_L^{{\rm stab}}$$
    It's deduced immediately from Proposition\autoref{exacteq}.
\end{proof}
\begin{rmk}
 Thanks to Proposition\autoref{tlog=logcalg},
 we could identify the fiber $\tlog\times_{\llog}\{(A,M)\}$ with $\tlog\times_{\llog}\{(A,GL_1(A))\}\simeq\Mod_A$ via the functor $$i_{!}:\Mod_A\stackrel{\simeq}\rr\tlog\times_{\llog}\{(A,M)\}$$
 induced by the morphism $i:(A,GL_1(A))\rightarrow(A,M).$
\end{rmk}

Therefore, we can apply the replete tangent formalism to logarithmic geometry.
\begin{defn}
We denote the replete tangent correspondence associated with $\tlog$ as $\MTL$. The $\infty$-category of log derivations is the $\infty$-category of replete derivations in $\MTL$. The Gabber's cotangent complex $\LL^G$ is the Gabber's cotangent complex associated with $\tlog$. We also denote by 
$\Dlog_{(A,M)}$ the fiber product $\Dlog\times_{\llog}\{(A,M)\}$, and refer to it as the $\infty$-category of log derivations of $(A,M)$.

\end{defn}

  \begin{rmk}
    Using Proposition\autoref{tangentreplog}, Remark\autoref{tangentmodule} and Remark\autoref{tangentconnective}, we can concretely describe the infinity loop space functor $$\Omega^{\infty}:\tang^{\rm 
rep}_{\textbf{Log},(A,M)}\rr\textbf{Log}_{/(A,M)},$$
    which sends $((A,M),I)$ to $(A\oplus \tau_{\geq 0}I,M\oplus \tau_{\geq 0}I)\rightarrow (A,M)$.
    The proof of Proposition\autoref{exacteq} implies that $(A\oplus \tau_{\geq 0}I,M\oplus \tau_{\geq 0}I)$ is exact over $(A,M)$.
\end{rmk}
We deduce the following properties of Gabber's cotangent complex from its definition. 
\begin{thm}\label{cotan-der}
For Gabber's cotangent complexes, we have the following:
\begin{enumerate}
    \item(Universal characterization) Fix a morphism $(B,N)\rightarrow(A,M)$ of $\Einf$-log rings.
    The Gabber's cotangent complex defined above is characterized by the following property: for any $I\in\Mod^{\rm cn}_A$, we have 
$$\map_{A}(\LL^G_{(A,M)/(B,N)},I)\simeq\map_{{(B,N)//(A,M)}}((A,M),(A\oplus I,M\oplus I)).$$
\item(Conormal fiber sequence) Let $(R,L)\rightarrow(A,M)\rightarrow(B,N)$ be a sequence of maps of $\Einf$-log rings, the following sequence 
$$\LL^G_{(A,M)/(R,L)}\otimes_AB\rr\LL^G_{(B,N)/(R,L)}\rr\LL^G_{(B,N)/(A,M)}$$
is exact in $\Mod_B$.
\end{enumerate}
\end{thm}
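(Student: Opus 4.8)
\section*{Proof proposal}

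The plan is to read off both statements from the replete tangent formalism, specialising the adjunction $\LL^{\DD}\dashv\Omega^{\infty,\DD}$ to the forgetful fibration $U\colon\llog\to\algcn$ and exploiting two inputs: the fibrewise identification $\CMod_{(A,M)}\simeq\Mod_A$ of Proposition \ref{tangentreplog} (via $\tlog\simeq\llog\times_{\algcn}\tang_{\algcn}$), and the explicit formula for the infinity loop functor recorded in the remark following that proposition, namely $\Omega^{\infty}\colon((A,M),I)\mapsto\big((A\oplus\tau_{\geq 0}I,M\oplus\tau_{\geq 0}I)\to(A,M)\big)$, which for connective $I$ simplifies to $(A\oplus I,M\oplus I)\to(A,M)$.

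For (1), I would first record the \emph{absolute} universal property. Taking the source object to be $\mathrm{id}_{(A,M)}$ in the replete analogue of Proposition \ref{cotsus} (i.e. the adjunction $\LL^{G}\dashv\Omega^{\infty}$), one gets $\map_{A}(\LL^{G}_{(A,M)/\emptyset},I)\simeq\map_{\llog_{/(A,M)}}(\mathrm{id}_{(A,M)},\Omega^{\infty}I)$, and by the explicit form of $\Omega^{\infty}$ the right-hand side is the space of sections of $(A\oplus I,M\oplus I)\to(A,M)$, i.e. the derivations of $(A,M)$ valued in $I$. To pass to the relative statement I would invoke the cofiber sequence of the remark following Definition \ref{LDER}, applied to $g\colon(B,N)\to(A,M)$: in $\CMod_{(A,M)}\simeq\Mod_A$ one has $\LL^{G}_{(A,M)/(B,N)}=\mathrm{cofib}\big(g_{!}\LL^{G}_{(B,N)/\emptyset}\to\LL^{G}_{(A,M)/\emptyset}\big)$. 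Applying the exact functor $\map_{A}(-,I)$ turns this into a fiber sequence of animae, exhibiting $\map_{A}(\LL^{G}_{(A,M)/(B,N)},I)$ as the fiber of the restriction map from derivations of $(A,M)$ to derivations of $(B,N)$ valued in $I$. That fiber consists precisely of the derivations of $(A,M)$ whose pullback along $g$ is trivial, which is by definition the relative derivation space $\map_{(B,N)//(A,M)}((A,M),(A\oplus I,M\oplus I))$, giving (1).

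For (2), I would work in the slice $\CC=\llog_{(R,L)/}$, whose initial object is $(R,L)$; the replete tangent formalism of Definition \ref{gabtan} still applies to the sliced fibration $\llog_{(R,L)/}\to\algcn_{R/}$, and the absolute cotangent complexes computed there are exactly the relative Gabber complexes over $(R,L)$. The remark following Definition \ref{LDER}, applied to the map $g\colon(A,M)\to(B,N)$ in $\CC$, then yields a cofiber sequence $g_{!}\LL^{G}_{(A,M)/(R,L)}\to\LL^{G}_{(B,N)/(R,L)}\to\LL^{G}_{(B,N)/(A,M)}$ in $\CMod_{(B,N)}\simeq\Mod_B$; since $\Mod_B$ is stable this cofiber sequence is an exact (equivalently fiber) sequence, which is the asserted statement once I identify the leading term $g_{!}\LL^{G}_{(A,M)/(R,L)}\simeq\LL^{G}_{(A,M)/(R,L)}\otimes_{A}B$.

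The main obstacle is this last identification of the coCartesian pushforward $g_{!}\colon\CMod_{(A,M)}\to\CMod_{(B,N)}$ with the base-change functor $-\otimes_{A}B\colon\Mod_A\to\Mod_B$. I would deduce it from Proposition \ref{tangentreplog}: the equivalence $\tlog\simeq\llog\times_{\algcn}\tang_{\algcn}$ lies over $\llog$, and an equivalence between two coCartesian fibrations over a common base necessarily carries coCartesian edges to coCartesian edges, so it intertwines $g_{!}$ with the pushforward in $\tang_{\algcn}$ along the underlying ring map $U(g)\colon A\to B$. By Remark \ref{tangentmodule} and Lurie's computation of the tangent bundle of $\algcn$, the latter pushforward is exactly $-\otimes_{A}B$ under $\CMod_{(A,M)}\simeq\Mod_A$ and $\CMod_{(B,N)}\simeq\Mod_B$. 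The only delicate point is confirming that the equivalence of Proposition \ref{tangentreplog} is genuinely an equivalence of coCartesian fibrations over $\llog$ rather than merely a fibrewise equivalence; since it is constructed as an equivalence over $\llog$ and both sides are stable coCartesian fibrations, this is automatic, and the identification follows.
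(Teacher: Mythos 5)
Your proposal is correct and takes essentially the same approach as the paper, whose entire proof is the one-line assertion that both claims are ``deduced immediately from the categorical definition of Gabber's cotangent complex.'' What you write is exactly the unwinding of that definition --- the relative adjunction $\mathcal{L}^{\DD}\dashv\Omega^{\infty}$ giving the absolute universal property, the cofiber sequence from the remark following Definition \ref{LDER}, and the identification of the coCartesian pushforward $g_{!}$ with $-\otimes_{A}B$ via Proposition \ref{tangentreplog} --- so it simply supplies the details the paper leaves implicit.
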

\begin{proof}
    This is deduced immediately from the categorical definition of Gabber's cotangent complex.
\end{proof}
\begin{cor}
Fix a morphism  $(B,N)\rightarrow(A,M)$ of $\Einf$-log rings.
    There is an equivalence of $\infty$-categories:
$${\rm ev}_1:\Der^{\llog}_{(A,M)}\times_{\Der^{\llog}_{(B,N)}}\Der^{\llog,{\rm triv}}_{(B,N)}\rr (\Mod^{\rm cn}_{A})_{\LL^G_{(A,M)/(B,N)}/}$$
In which  $\Der^{\llog,{\rm triv}}_{(B,N)}$  is the full subcategory of $\Der^{\llog}_{(B,N)}$ spanned by trivial derivations, i.e., derivations having the form  
$0:\LL^G_{(A,M)/(B,N)}\rightarrow I$. In other words, the left hand term is the $\infty$-category of $(A,M)$-derivations which is trivial restricting to $(B,N).$
\end{cor}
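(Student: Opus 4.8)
The plan is to reduce the statement to the conormal cofiber sequence of Theorem\autoref{cotan-der} after identifying each fiber of $\Der^{\llog}\to\llog$ with an undercategory of modules. First I would combine Definition\autoref{LDER} and the remark following it with the equivalence $\tang^{\rm rep}_{\llog}\times_{\llog}\{(A,M)\}\simeq\Mod_A$ coming from Proposition\autoref{tangentreplog} to identify $\Der^{\llog}_{(A,M)}\simeq(\Mod_A)_{\LL^G_{(A,M)}/}$, where $\LL^G_{(A,M)}:=\LL^G_{(A,M)/\emptyset}$ is the initial object; likewise $\Der^{\llog}_{(B,N)}\simeq(\Mod_B)_{\LL^G_{(B,N)}/}$. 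Under these identifications ${\rm ev}_1$ becomes the forgetful functor to the underlying target module, and $\Der^{\llog,{\rm triv}}_{(B,N)}$ is the essential image of the zero section $\Mod_B\to(\Mod_B)_{\LL^G_{(B,N)}/}$, $I\mapsto(0\colon\LL^G_{(B,N)}\to I)$. Throughout I restrict to connective values, consistently with the appearance of the connective cover $\tau_{\geq 0}$ in $\Omega^{\infty}$ and with the connective universal characterization of Theorem\autoref{cotan-der}(1).

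Next I would make the restriction functor $\Der^{\llog}_{(A,M)}\to\Der^{\llog}_{(B,N)}$ explicit. It is the functor covariantly induced by $f\colon(B,N)\to(A,M)$, sending a derivation $\eta\colon\LL^G_{(A,M)}\to I$ to the composite $\LL^G_{(B,N)}\to\LL^G_{(A,M)}\to I$ of $B$-modules, where the first arrow is the canonical map underlying the coCartesian edge $f_!\LL^G_{(B,N)}\to\LL^G_{(A,M)}$ in $\MTL$ and $I$ is regarded as a $B$-module by restriction of scalars. Applying the base-change/restriction adjunction $(f_!\dashv\text{restriction})$ between $\Mod_B$ and $\Mod_A$, the fiber product $\Der^{\llog}_{(A,M)}\times_{\Der^{\llog}_{(B,N)}}\Der^{\llog,{\rm triv}}_{(B,N)}$ is then identified with the $\infty$-category of pairs $(\eta,h)$, where $\eta\colon\LL^G_{(A,M)}\to I$ is a derivation in $\Mod_A$ and $h$ is a null-homotopy of the composite $f_!\LL^G_{(B,N)}\to\LL^G_{(A,M)}\to I$.

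Finally I would invoke Theorem\autoref{cotan-der}(2) applied to the triple $\emptyset\to(B,N)\to(A,M)$, which exhibits $\LL^G_{(A,M)/(B,N)}$ as the cofiber of $f_!\LL^G_{(B,N)}\to\LL^G_{(A,M)}$ in $\Mod_A$, that is, as the pushout of this map along $f_!\LL^G_{(B,N)}\to 0$. By the universal property of this pushout, the datum of a pair $(\eta,h)$ as above is the same as a single morphism $\LL^G_{(A,M)/(B,N)}\to I$, naturally in $I$. This yields the desired equivalence with $(\Mod^{\rm cn}_A)_{\LL^G_{(A,M)/(B,N)}/}$, under which ${\rm ev}_1$ corresponds to remembering $I$ together with its now-canonical structure map $\LL^G_{(A,M)/(B,N)}\to I$.

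The main obstacle is promoting these object-level identifications to a genuine equivalence of $\infty$-categories: one must check that the restriction functor is the asserted one coherently, and that the abstract fiber product matches the pushout description up to coherent homotopy rather than merely on objects. I expect this is cleanest if one works entirely inside the stable fibers via Proposition\autoref{tangentreplog}, so that the fiber product is computed as a limit of stable presentable $\infty$-categories and the universal property of the cofiber can be applied mapping-space-wise; the coherence then reduces to the naturality in $I$ of the fiber sequence $\map_A(\LL^G_{(A,M)/(B,N)},I)\to\map_A(\LL^G_{(A,M)},I)\to\map_A(f_!\LL^G_{(B,N)},I)$, which is exactly the combination of Theorem\autoref{cotan-der}(1) and (2).
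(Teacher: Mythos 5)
Your proposal is correct and follows essentially the same route as the paper: both arguments rest on the corepresentability of log derivations by the absolute Gabber cotangent complex together with the conormal cofiber sequence of Theorem\autoref{cotan-der}, and your identification of pairs $(\eta,h)$ with maps out of the cofiber is precisely the paper's key equivalence $\map_{\MTL}((A,M),I)\times_{\map_{\MTL}((B,N),I)}\{0\}\simeq\map_{A}(\LL^G_{(A,M)/(B,N)},I)$. The only difference is organizational: where you invoke the formal fact that undercategories carry the pushout $\LL^G_{(A,M)/(B,N)}\simeq\LL^G_{(A,M)}\sqcup_{f_!\LL^G_{(B,N)}}0$ to a pullback of $\infty$-categories, the paper instead checks essential surjectivity and full faithfulness of ${\rm ev}_1$ directly by computing mapping spaces of the fiber product as pullbacks of mapping animae; note also that your reading of triviality as data (the zero section of $\Mod_B$, i.e.\ a specified null-homotopy) rather than as a property is exactly what the paper's proof uses, notwithstanding the ``full subcategory'' phrasing in the statement.
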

\begin{proof}
Denote by $\Der^{\llog}_{(A,M)/(B,N)}$  the left hand term.
Theorem\autoref{cotan-der} guarantees that the evaluation $${\rm ev_1}:\Der^{\llog}_{(A,M)/(B,N)}\rr \Mod_A^{\rm cn}$$
factors through the comma category $(\Mod^{\rm cn}_{A})_{\LL^G_{(A,M)/(B,N)}/}$,
and clearly the induced functor 
$${\rm ev}_1:\Der^{\llog}_{(A,M)/(B,N)}\rr (\Mod^{\rm cn}_{A})_{\LL^G_{(A,M)/(B,N)}/}$$
is essentially surjective.
We must show that the functor ${\rm ev}_1$ is fully faithful. Let $d:(A,M)\rightarrow I$  and $d':(A,M)\rightarrow J$ be two derivations such that the  restrictions to $(B,N)$ are trivial. The mapping space $\map_{\Der^{\llog}_{(A,M)/(B,N)}}(d,d')$ is the anima spanned by commutative diagrams  having the form
$$\begin{tikzcd}
   & {(B,N)} \arrow[d] \arrow[ldd, "0"', bend right] \arrow[rdd, "0", bend left] &   \\
             & {(A,M)} \arrow[ld, "d"'] \arrow[rd, "d'"]        &   \\
I \arrow[rr] &                                                                             & J
\end{tikzcd}$$
in $\MTL$, such that the arrow $I\rightarrow J$ is an $A$-module morphism. Hence it follows that the mapping space is equivalent to the pullback 
of the following animae $$\begin{tikzcd}    & {\map_{\MTL}((A,M),I))\times_{\map_{\MTL}(B,N),I)}\{0\}} \arrow[d] \\
{\map_A(I,J)} \arrow[r] & {\map_{\MTL}((A,M),J))\times_{\map_{\MTL}(B,N),J)}\{0\}}          
\end{tikzcd}$$
Theorem\autoref{cotan-der} shows that there are natural equivalences of animae
$$\map_{\MTL}((A,M),I))\times_{\map_{\MTL}(B,N),I)}\{0\}\simeq\map_{A}(\LL_{(A,M)/(B,N)},I),$$ and
$$\map_{\MTL}((A,M),J))\times_{\map_{\MTL}(B,N),I)}\{0\}\simeq\map_{A}(\LL_{(A,M)/(B,N)},J).$$
Then it turns  out that we have equivalences
\begin{align*}
\map_{\Der^{\llog}_{(A,M)/(B,N)}}(d,d')&\simeq\map_A(\LL^G_{(A,M)/(B,N)},I)\times_{\map_A(\LL^GL_{(A,M)/(B,N)},J)}\map_{A}(I,J)\\
&\simeq\map_{\LL^G_{(A,M)/(B,N)}/}(I,J)
\end{align*}
\end{proof}
We have some elementary calculations of Gabber's cotangent complexes.
\begin{lem}\cite[Section 4]{sagave2016derived}, \cite[Section 3]{binda2023hochschild}\label{Lstrict}
\begin{enumerate}
\item 
Let $f:A\rightarrow  B$ be a morphism of $\Einf$-rings. Regard $f$ as a morphism of $\Einf$-log rings with trivial log structures, then there is an equivalence $\LL_{B/A}\simeq\LL^G_{(B,GL_1(B))/(A,GL_1(A))}$.
\item 
Let $f:(A,M)\rightarrow (B,N)$ be a strict morphism, then there is a natural  equivalence $\LL_{B/A}\simeq\LL^G_{(B,N)/(A,M)}.$
\item The Gabber's cotangent complex $\LL^G_{(A[M],M)/(A[N],N)}$ is equivalent to $A[M]\otimes_{\mathbb S}(M^{\rm gp}/N^{\rm gp}).$
\item $\LL^G_{(A,M)/(R,L)}$ is canonically equivalent to the colimit of the following diagram
$$A\otimes_{\mathbb S
}(M^{\rm gp}/L^{\rm gp})\longleftarrow A\otimes_{\mathbb S[M]}\LL_{\mathbb S[M]/\mathbb S[L]}\rr\LL_{A/R}.$$
    \end{enumerate}
\end{lem}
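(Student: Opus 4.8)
The plan is to deduce all four statements from the universal characterization of Gabber's cotangent complex in Theorem\autoref{cotan-der}(1): for each relative pair I compute, for a connective module $I$, the anima of log derivations into $I$ and read off the corepresenting object by Yoneda. Concretely, a log derivation is a section of the trivial square-zero extension $(A,M)\to(A\oplus I,M\oplus\Omega^\infty I)$ (which is exact over $(A,M)$ by the description of $\Omega^\infty$ following Proposition\autoref{tangentreplog}), and unwinding morphisms in $\plog$ via the oriented fibre product $\Spt\,\overleftarrow{\times}_{\Spt}\ani$ presents such a section as three coherent data: a ring derivation, a monoid derivation, and a homotopy-coherent compatibility between them along the prelog structure map $\alpha$.

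For (1), with trivial log structures the prelog maps are the inclusions of units $GL_1\hookrightarrow\Omega^\infty$. A section into $(B\oplus I,GL_1(B)\oplus\Omega^\infty I)$ over $(A,GL_1(A))$ amounts to an $A$-algebra derivation $d\colon B\to I$ together with a monoid derivation $\delta\colon GL_1(B)\to\Omega^\infty I$ trivial on $GL_1(A)$; the compatibility with $\alpha$ forces $\delta$ to be the logarithmic derivative $u\mapsto u^{-1}d(u)$, so $\delta$ is uniquely determined by $d$ and imposes no further constraint (vanishing on $GL_1(A)$ is automatic from $A$-linearity). Hence the derivation anima is $\map_A(\LL_{B/A},I)$, and Yoneda gives $\LL_{B/A}\simeq\LL^G_{(B,GL_1(B))/(A,GL_1(A))}$. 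Statement (2) is the same argument with $M$ in place of $GL_1(A)$: since $f$ is strict, $N\simeq M^a$ is the pushout of $M$ and $GL_1(B)$ over $\alpha^{-1}GL_1(B)$, so a monoid map out of $N$ under $M$ is determined on the $M$-factor (it is the identity) and on the $GL_1(B)$-factor (determined by $d$ through functoriality of $GL_1$ and the equivalence $GL_1(B\oplus I)\simeq GL_1(B)\times\Omega^\infty I$ of Remark\autoref{loopgl}). Thus the monoid datum is again forced by $d$, the derivation anima is $\map_B(\LL_{B/A},I)$, and $\LL^G_{(B,N)/(A,M)}\simeq\LL_{B/A}$.

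For (4) the two derivation pieces stay genuinely independent. The ring part contributes $\map_A(\LL_{A/R},I)$. The monoid part is a monoid derivation $M\to\Omega^\infty I$ trivial on $L$; by the computation of the tangent bundle of $\mon$ in Remark\autoref{tangentbunlemonoids} (the monoid cotangent complex is the group completion) together with the free–forgetful adjunction $A\otimes_{\mathbb S}(-)\colon\Spt\rightleftarrows\Mod_A$, this contributes $\map_A(A\otimes_{\mathbb S}(M^{\rm gp}/L^{\rm gp}),I)$. The compatibility with $\alpha$ identifies the images of these two data in $\map_A(A\otimes_{\mathbb S[M]}\LL_{\mathbb S[M]/\mathbb S[L]},I)$: the ring side maps in by cotangent functoriality along the square $(\mathbb S[L],L)\to(\mathbb S[M],M)$, $(\mathbb S[L],L)\to(R,L)$, while the monoid side maps in by the canonical $\mathrm{dlog}$ comparison $\LL_{\mathbb S[M]/\mathbb S[L]}\to \mathbb S[M]\otimes_{\mathbb S}(M^{\rm gp}/L^{\rm gp})$. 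Consequently the derivation anima is the pullback $\map_A(\LL_{A/R},I)\times_{\map_A(A\otimes_{\mathbb S[M]}\LL_{\mathbb S[M]/\mathbb S[L]},I)}\map_A(A\otimes_{\mathbb S}(M^{\rm gp}/L^{\rm gp}),I)$, and since $\map_A(-,I)$ carries the pushout of the displayed span to this pullback, Yoneda yields the claimed colimit formula. Statement (3) is then the special case $R=A[N]$, $(A,M)=(A[M],M)$ (read via logifications, which does not affect the terms): here the leg $A[M]\otimes_{\mathbb S[M]}\LL_{\mathbb S[M]/\mathbb S[N]}\to\LL_{A[M]/A[N]}$ is an equivalence by base change of the cotangent complex along $A[M]\simeq A[N]\otimes_{\mathbb S[N]}\mathbb S[M]$, so the pushout collapses onto $A[M]\otimes_{\mathbb S}(M^{\rm gp}/N^{\rm gp})$.

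The \emph{main obstacle} is the compatibility step in (4): turning the homotopy-coherent commutativity of the prelog square into the precise statement that the derivation anima is a pullback over $\map_A(A\otimes_{\mathbb S[M]}\LL_{\mathbb S[M]/\mathbb S[L]},I)$. This requires unwinding morphisms of $\Einf$-prelog rings as the oriented fibre product, tracking the coherence datum relating $d\circ\alpha$ to the $\mathrm{dlog}$ of $\delta$, and verifying that the two comparison maps into the monoid-algebra cotangent complex are exactly the cotangent-functoriality map $v$ and the $\mathrm{dlog}$ map $u$, with the correct orientation (e.g.\ $u$ is multiplication by $\alpha(m)$, not an equivalence, which is precisely what makes the pushout nontrivial). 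By contrast, the identification of the ring- and monoid-derivation pieces (via Remark\autoref{tangentbunlemonoids} and the adjunction) and the base-change collapse used for (3) are routine once the framework is in place.
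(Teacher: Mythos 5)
Your proposal is correct, but it takes a genuinely different route from the paper's own proof. The paper argues categorically: (1) is read off from the equivalence $\tlog\simeq\llog\times_{\algcn}\tang_{\algcn}$ of Proposition\autoref{tangentreplog}; (2) follows by applying the colimit-preserving functor $\LL^G$ to the pushout square that encodes strictness of $f$ and taking cofibers; (3) is proved from the universal property together with $\LL_{M/N}\simeq M^{\rm gp}/N^{\rm gp}$ (Remark\autoref{tangentbunlemonoids}); and (4) is obtained by applying the left-adjoint relative cotangent functor $\LL^G\colon\Fun(\Delta^1,\llog)\rightarrow\tlog$ to a pushout square in the arrow category $\Fun(\Delta^1,\llog)$ and then substituting (3). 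You instead prove all four statements by corepresentability (Theorem\autoref{cotan-der}): using the limit presentation of $\plog$ (Remark\autoref{1234543456}) you decompose the anima of log derivations into a ring datum, a monoid datum and a coherence; in (1)--(2) you show the monoid datum is contractibly determined by the ring derivation, in (4) you exhibit the derivation anima as a pullback over $\map_A(A\otimes_{\mathbb S[M]}\LL_{\mathbb S[M]/\mathbb S[L]},I)$, and you deduce (3) from (4) by a base-change collapse --- reversing the paper's order (3)$\Rightarrow$(4), with no circularity since your (4) never invokes (3). The trade-off: the paper's pushout arguments make (2) and (4) immediate consequences of the replete-tangent-bundle formalism and completely avoid the coherence bookkeeping you rightly flag as the main obstacle, whereas your derivation-level argument must supply it (the ``forced'' monoid data in (1)--(2) should be justified by the fact that $GL_1(B\oplus I)\rightarrow\Omega^\infty(B\oplus I)$ is a monomorphism of $\Einf$-monoids, so the space of lifts is empty or contractible, and in (2) one must additionally check agreement on the overlap $\beta^{-1}GL_1(B)$ of the pushout describing $N$, where $\beta$ is the composite prelog structure, which again follows from $A$-linearity of $d$); in exchange, your route is more self-contained, makes the $\mathrm{dlog}$ mechanism and the pullback structure of the derivation space explicit, and promotes (4) to the primary statement with (3) as a corollary rather than an input.
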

\begin{proof}
  The assertion (1)  are deduced from  Proposition \autoref{tangentreplog}. We now prove the $(2)$. There is a pushout square of $\Einf$-log rings:
  $$\begin{tikzcd}
{(A,GL_1(A))} \arrow[d] \arrow[r] & {(B,GL_1(B))} \arrow[d] \\
{(A,M)} \arrow[r]                 & {(B,N)}                
\end{tikzcd}$$
Since the morphism $(A,M)\rightarrow(B,N)$ is strict. On the other hand, Gabber's cotangent complex $\LL^G:\llog\rightarrow \tlog$ preserves colimits, therefore we have a pushout square in $\MTL$:
$$\begin{tikzcd}
{\LL^G_{(A,GL_1(A))}} \arrow[d] \arrow[r] & {\LL^G_{(B,GL_1(B))}} \arrow[d] \\
{\LL^G_{(A,M)}} \arrow[r]                 & {\LL^G_{(B,N)}}                
\end{tikzcd}$$
Taking cofiber, we get $$\LL_{B/A}\simeq\LL^G_{(B,GL_1(B)/(A,GL_1(A))}\stackrel{\simeq}\rr\LL^G_{(B,N)/(A,M)}.$$
To prove (3), let us consider  the  tangent bundle $\tang_{\mone}$ of 
the $\infty$-category $\mone$.
Let $M$ be an $\Einf$-monoid. Using Remark\autoref{tangentbunlemonoids}, we can form a commutative diagram of $\infty$-categories
$$\begin{tikzcd}
\Spt \arrow[d, shift left] \arrow[rr]                                                              &  & {\Mod_{\mathbb S[M]}} \arrow[d, shift left]                                                                          \\
\mone_{/M} \arrow[rr, "{P\mapsto(\mathbb S[P],P)}", shift left] \arrow[u, "\LL_{M/-}", shift left] &  & {\llog_{/\mathbb S[M]}} \arrow[ll, "{(R,L)\mapsto L}", shift left] \arrow[u, "{\LL^G_{\mathbb S[M]/-}}", shift left]
\end{tikzcd}$$
The relative cotangent complex $\LL_{M/N}$ is equivalent to $M^{\rm gp}/N^{\rm gp}$, for an arbitrary $\Einf$-monoid $N$.  Let $J$ be a connective $\mathbb S[M]$-module, then we have 
\begin{align*}
\map_{\mathbb S[M]}(\LL^G_{(\mathbb S[M],M)/(\mathbb S[N],N)},J)&\simeq\map_{\Spt}(\LL_{M/N},J)\\
&\simeq\map_\Spt(M^{\rm gp}/N^{\rm gp},J)\\
&\simeq\map_{\mathbb S[M]}(\mathbb S[M]\otimes_{\mathbb S}(M^{\rm gp}/N^{\rm gp}),J).
\end{align*}
   As for (4), note that there is a pushout square 
    $$\begin{tikzcd}
{(\mathbb S[L]\rightarrow \mathbb S[M])} \arrow[d] \arrow[r] & {((\mathbb S[L],L)\rightarrow (\mathbb S[M],M))} \arrow[d] \\
(R\rightarrow A) \arrow[r]                                   & {((R,L)\rightarrow(A,M))}                                 
\end{tikzcd}$$
in $\Fun(\Delta^1,\llog)$, and  Gabber's cotangent functor $\LL^G:\Fun(\Delta^1,\llog)\rightarrow\tlog$ is a left adjoint.
\end{proof}
\begin{cor}[Fundamental sequence] \cite[Corollary 3.9]{binda2023hochschild}
    Let $f:(R,L)\rightarrow(A,M)$ be a map of $\Einf$-log rings, then there is an exact sequence 
    $$A\otimes_{\mathbb S}(M^{\rm gp}/N^{\rm gp})\rr\LL^G_{(A,M)/(R,L)}\rr\LL_{A/{R\otimes_{\mathbb S[L]}\mathbb S[M]}}.$$
\end{cor}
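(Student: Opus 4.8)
The plan is to read the desired sequence straight off the pushout presentation of Gabber's cotangent complex furnished by Lemma~\ref{Lstrict}(4), using that $\Mod_A$ is stable. Write $A' := R\otimes_{\mathbb S[L]}\mathbb S[M]$ (so that $\LL_{A/A'}$ is exactly the third term, noting that the $N$ in the statement should read $L$), and abbreviate the three vertices of the span in Lemma~\ref{Lstrict}(4) by
$$P := A\otimes_{\mathbb S}(M^{\rm gp}/L^{\rm gp}),\qquad Q := A\otimes_{\mathbb S[M]}\LL_{\mathbb S[M]/\mathbb S[L]},\qquad S := \LL_{A/R}.$$
Lemma~\ref{Lstrict}(4) identifies $\LL^G_{(A,M)/(R,L)}$ with the pushout $T := P\amalg_Q S$ formed along the two maps $Q\to P$ and $Q\to S$. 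The target map of the fundamental sequence will be the canonical arrow $P\to T$ coming from this pushout square.

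The key formal step is that, since $\Mod_A$ is a stable $\infty$-category, the coCartesian square with vertices $Q,S,P,T$ is also Cartesian; consequently the cofiber of $P\to T$ is canonically equivalent to the cofiber of the parallel horizontal map $Q\to S$. Thus it remains to identify $\operatorname{cofib}(Q\to S)$ with $\LL_{A/A'}$, which I would obtain from the transitivity cofiber sequence for the composite $R\to A'\to A$,
$$A\otimes_{A'}\LL_{A'/R}\rr\LL_{A/R}\rr\LL_{A/A'}.$$
Base change along the pushout square of $\Einf$-rings defining $A' = R\otimes_{\mathbb S[L]}\mathbb S[M]$ gives $\LL_{A'/R}\simeq A'\otimes_{\mathbb S[M]}\LL_{\mathbb S[M]/\mathbb S[L]}$, whence $A\otimes_{A'}\LL_{A'/R}\simeq A\otimes_{\mathbb S[M]}\LL_{\mathbb S[M]/\mathbb S[L]} = Q$. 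Under this identification the first map of the transitivity sequence becomes precisely the natural map $Q\to S$, so that $\operatorname{cofib}(Q\to S)\simeq\LL_{A/A'}$. Combining this with the previous paragraph yields the cofiber sequence $P\to T\to\LL_{A/A'}$, which is the asserted fundamental sequence.

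The only genuine content, and hence the main point to be careful about, is the compatibility of maps: one must check that the arrow $A\otimes_{A'}\LL_{A'/R}\to\LL_{A/R}$ supplied by transitivity coincides, after the base-change identification, with the arrow $Q\to S$ appearing in the span of Lemma~\ref{Lstrict}(4). Both arise from the functoriality of the cotangent complex in the diagram $R\to A'\to A$ (equivalently, from the left-adjoint property of $\LL^G$ applied to the pushout of arrows in $\Fun(\Delta^1,\llog)$ used to prove Lemma~\ref{Lstrict}(4)), so the verification is a naturality check rather than a computation; everything else is formal manipulation of cofiber sequences in the stable category $\Mod_A$ together with standard base change for cotangent complexes.
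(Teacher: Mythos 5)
Your proposal is correct and follows essentially the same route as the paper: both read the sequence off the pushout square of $A$-modules supplied by Lemma~\ref{Lstrict}(4), use stability of $\Mod_A$ to identify the cofiber of $A\otimes_{\mathbb S}(M^{\rm gp}/L^{\rm gp})\rightarrow\LL^G_{(A,M)/(R,L)}$ with that of the parallel arrow, and then identify the latter with $\LL_{A/R\otimes_{\mathbb S[L]}\mathbb S[M]}$ via base change and transitivity (the paper phrases this last identification through the intermediate cofiber ${\rm cofib}(\LL_{R/\mathbb S[L]}\otimes_RA\rightarrow\LL_{A/\mathbb S[M]})$, whereas you apply the transitivity sequence for $R\rightarrow R\otimes_{\mathbb S[L]}\mathbb S[M]\rightarrow A$ directly, which is the same argument). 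You also correctly flag that the $N^{\rm gp}$ in the statement is a typo for $L^{\rm gp}$.
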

\begin{proof}
    By Lemma\autoref{Lstrict} (4), we have a pushout square of $A$-modules
    $$\begin{tikzcd}
{ A\otimes_{\mathbb S[M]}\LL_{\mathbb S[M]/\mathbb S[L]}} \arrow[d] \arrow[r] & \LL_{A/R} \arrow[d] \\
A\otimes_{\mathbb S }(M^{\rm gp}/L^{\rm gp}) \arrow[r]                        & {\LL^G_{(A,M)/(R,L)}}
\end{tikzcd}$$
where the cofiber of the upper horizontal arrow is equivalent to 
$${\rm cofib}(\LL_{R/\mathbb S[L]}\otimes_RA\rr\LL_{A/\mathbb S[M]})\simeq\LL_{A/R\otimes_{\mathbb S[L]}\mathbb S[M]}.$$
\end{proof}
At the end of this section, we give the following  remark, which provides an intuition for our abstract construction of log cotangent complexes and log derivations, arising from the classical observations in log differential forms.
\begin{rmk}[Log Kähler differentials from replete diagonals]
   Recall that the classical module of Kähler differential forms $\Omega^1_{A/R}$ of an ordinary ring map $f:R\rightarrow A$ is isomorphic to the indecomposables of its diagonal $\mu:A\otimes_RA\rightarrow A$, namely, it is the conormal   $I/I^2$, where $I$ is the kernel of $\mu$. In the derived setting, a similar result also holds, which identifies the cotangent complex of  $f$ with its derived indecomposables, c.f. \cite{MR1732625}.\\
   However, in log geometry, the indecomposables are not well defined for a log ring map $g:(R,L)\rightarrow (A,M)$, and the indecomposables of the underlying ring map of the diagonal 
$$\mu:(A,M)\otimes_{(R,L)}(A,M)\rr (A,M)$$
only recover the differentials of the underlying ring map $R\rightarrow A$. An idea to resolve this issue is to pass the map $\mu$ to a strict map in a functorial way, and we consider the indecomposables of the underlying ring map. Consider the exactification of the map $\mu$, and then  we have a strict map whose underlying ring map is  
$$\mu^{\rm ex}:(A\otimes_RA)\otimes_{R[M\oplus_L M]}R[(M\oplus_L M)^{\rm ex}]\rr A.$$
We refer to $\mu^{\rm ex}$ as the replete diagonals of $g$.
As a consequence of \cite[Corollary 4.2.8(ii)]{Kato2004conductor}, the module of indecomposables of $\mu^{\rm ex}$ is canonically equivalent to the modules of log differentials $\Omega^1_{(A,M)/(R,L)}$.
\end{rmk}
\subsection{Deformation theory}
In this subsection, we apply the tangent bundles and tangent correspondences formalisms for studying the deformation of $\Einf$-log rings. We will define the notions of deformations of $\Einf$-log rings, which should be viewed as the logarithmic analog of deformations of $\Einf$-log rings. We also study the deformation theory of animated log rings. The main results of this subsection are Theorem\autoref{deform} and Theorem\autoref{deformani}.
\subsubsection{Square-zero extensions}
\begin{defn}
    Let $(A,M)$ be an $\Einf$-log ring, and let $I\in\Mod_A$ be an $A$-module. A \textit{square-zero extension}  of $A$ with kernel $I$ is a strict  morphism  $i:(A',M')\rr (A,M),$ such that $A'$ is a square-zero \cite[Definition 7.4.1.6]{lurie2017higher} extension of $A$ with kernel $I$. The $\infty$-category of square-zero extensions $\mathbf{Def}^{\llog}$  is the subcategory of $\Fun(\Delta^1,\llog)$ spanned by square-zero extensions.
\end{defn}
\begin{rmk}\label{torsor=sqzero}
If $(A',M')\rightarrow (A,M)$ is a square-zero extension, the associated $\Einf$-monoid map $M'\rightarrow M$ exhibits $M$ as a quotient $M'/I$, 
where the action of $I$ on $M'$ is given by the canonical map $I\simeq (1+I)\subset GL_1(A')\rightarrow M'$.
We can view $M'$ as an $I$-torsor lying over $M$. The $\Einf$-monoid
$M'$ is determined by an $\Einf$-monoid  map $M\rightarrow BI\simeq I[1]$, whose fiber is equivalent to $M'$. In particular, we have a map of  fiber sequences
$$\begin{tikzcd}
M' \arrow[d] \arrow[r] & M \arrow[d] \arrow[r] & BI \arrow[d, "\simeq"] \\
\Omega^\infty A' \arrow[r]           & \Omega^\infty A \arrow[r]           & BI                    
\end{tikzcd}$$
\end{rmk}
Consider a log derivation $d:(A,M)\rightarrow I[1], I\in\Mod_A$. By the definition of log derivations, this gives rise to a   pullback square in $\MTL$
$$\begin{tikzcd}
{(A',M')} \arrow[r] \arrow[d] & {(A,M)} \arrow[d, "d"] \\
0 \arrow[r]                   & {I[1]}                
\end{tikzcd}$$
where the pullback $(A',M')\in\llog$ and $(A',M')\rightarrow(A,M)$ is strict, since there is an equivalence  $$(A',M')\simeq(A,M)\times_{(A\oplus \tau_{\geq 0}(I[1]),M\oplus \tau_{\geq 0}(I[1]))}(A,M)$$ given by two maps $$({\rm id},0),({\rm id},d):(A,M)\rightarrow(A\oplus \tau_{\geq 0}(I[1]),M\oplus \tau_{\geq 0}(I[1])).$$ It turns out that it is a square-zero extension of $(A,M)$ with kernel $I$. After taking projection to the upper horizontal arrows, we get a
 functor
$$\gamma:\Der^{\llog}\rr\mathbf{Def}^\llog$$ 
\begin{prop}\cite[Proposition 7.5]{lundemo2023deformation}\label{sqzero-logder}
    The functor $\gamma$ is essentially surjective. In other words, let $(A',M')\rightarrow (A,M)$ be a square-zero deformation of $(A,M)$, then there exists a log derivation $d:(A,M)\rightarrow I[1]$, such that there is a pullback square:
    $$\begin{tikzcd}
{(A',M')} \arrow[d] \arrow[r] & {(A,M)} \arrow[d, "d"] \\
0 \arrow[r]                   & {I[1]}                
\end{tikzcd}$$
\end{prop}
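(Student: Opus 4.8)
The plan is to reconstruct, from a given square-zero extension $i\colon(A',M')\to(A,M)$ with kernel $I$, a log derivation $d\colon(A,M)\to I[1]$ together with an identification $\gamma(d)\simeq i$. Since the construction of $\gamma$ preceding the statement already shows that $\gamma(d)$ lands in $\llog$, is strict, and is a square-zero extension with the expected kernel, producing such a $d$ for every $i$ gives essential surjectivity. I would split the data of $i$ into a ring part and a monoid part and then glue them into a single morphism in $\MTL$.

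First I would treat the underlying ring. Because $A'$ is a square-zero extension of $A$ with kernel $I$ in the sense of \cite{lurie2017higher}, the classification of square-zero extensions of $\Einf$-rings recalled above produces a ring derivation $\widetilde\eta\colon A\to A\oplus I[1]$, equivalently a map $\eta\colon\LL_A\to I[1]$ in $\Mod_A$, exhibiting $A'$ as the pullback $A\times_{A\oplus I[1]}A$ of $\widetilde\eta$ against the trivial derivation. Second, I would treat the monoid. By \autoref{torsor=sqzero} the map $M'\to M$ is an $I$-torsor classified by a map of $\Einf$-monoids $c\colon M\to BI\simeq I[1]$, with $M'\simeq\mathrm{fib}(c)\simeq M\times_{M\oplus I[1]}M$; moreover that remark supplies the map of fiber sequences expressing that $c$ and $\widetilde\eta$ are compatible after applying $\Omega^\infty$, i.e. the square
\[\begin{tikzcd}
M \arrow[r, "{(\mathrm{id},c)}"] \arrow[d, "\alpha"'] & {M\oplus I[1]} \arrow[d] \\
\Omega^\infty A \arrow[r, "{\Omega^\infty\widetilde\eta}"'] & {\Omega^\infty A\oplus I[1]}
\end{tikzcd}\]
commutes.

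The key step is to assemble $\widetilde\eta$ and $(\mathrm{id},c)$ into a morphism $(A,M)\to(A\oplus I[1],M\oplus I[1])$ of $\Einf$-log rings over $(A,M)$. The commuting square above is exactly the compatibility needed to present the pair as a map of prelog rings, and the target $(A\oplus I[1],M\oplus I[1])$ is the value $\Omega^\infty((A,M),I[1])$ of the replete infinity-loop functor computed via \autoref{tangentreplog}, hence an exact, in particular log, $\Einf$-log ring over $(A,M)$. By the adjunction $\LL^G\dashv\Omega^\infty$ and the universal property of \autoref{cotan-der}, such a section is the same datum as a log derivation $d\colon(A,M)\to I[1]$, that is, a point of $\map_A(\LL^G_{(A,M)},I[1])$. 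Concretely, under the fundamental sequence of \autoref{Lstrict}(4) this point is the pair $(\eta,c)$ glued along their common image in $\map_A(A\otimes_{\mathbb S[M]}\LL_{\mathbb S[M]},I[1])$, the gluing being provided by the commuting square.

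Finally I would compute $\gamma(d)$: by construction it is the pullback in $\MTL$ of $d$ against the zero derivation, namely $(A,M)\times_{(A\oplus I[1],M\oplus I[1])}(A,M)$. Since limits in $\llog$ are detected on the underlying $\Einf$-ring and $\Einf$-monoid, its ring part is $A\times_{A\oplus I[1]}A\simeq A'$ and its monoid part is $M\times_{M\oplus I[1]}M\simeq M'$, so $\gamma(d)\simeq i$, as desired. I expect the main obstacle to be the gluing step: one must verify that the ring-level derivation and the monoid-level torsor class cohere as a single map in $\MTL$, with all higher homotopies, so that they descend to a genuine point of $\map_A(\LL^G_{(A,M)},I[1])$ rather than only agreeing on $\pi_0$. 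This is precisely where the full strength of the map of fiber sequences in \autoref{torsor=sqzero}, combined with the mapping-space identification furnished by the fundamental sequence, is needed.
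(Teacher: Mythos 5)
Your overall skeleton (extract the ring derivation from $A'\to A$ and the torsor class $c\colon M\to I[1]$ from $M'\to M$, glue them into a single log derivation, then verify the pullback pointwise on the ring and monoid factors) is the classically motivated route, and your final verification step agrees with the paper's. The genuine gap is the gluing step, and it is not closed by Remark \ref{torsor=sqzero}. For the pair $(\widetilde\eta,({\rm id},c))$ to define a map of $\Einf$-log rings $(A,M)\to(A\oplus I[1],M\oplus I[1])$ over $(A,M)$, your square must commute against the \emph{structure map} of the target, which sends $(m,x)$ to $\alpha(m)\cdot(1+x)$; projecting to the $I[1]$-coordinate, commutativity is the Leibniz-type coherence $\alpha(m)\cdot c(m)\simeq\delta(\alpha(m))$, where $\delta\colon A\to I[1]$ is the map of spectra with ${\rm fib}(\delta)\simeq A'$. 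What Remark \ref{torsor=sqzero} suggests (a ladder of fiber sequences with the structure maps as verticals and an equivalence on $BI$) is the \emph{untwisted} comparison $c(m)\simeq\delta(\alpha(m))$, which is a different statement: already on units the torsor $GL_1(A')\to GL_1(A)$ is classified by $u\mapsto u^{-1}\delta(u)$ (a ``$d\log$''), not by $\delta$. Worse, a genuine map of fiber sequences of that shape would force the strictness square to be a pullback, $M'\simeq M\times_{\Omega^\infty A}\Omega^\infty A'$, which fails already for the standard log point $(k,k^*\oplus\N)$ over the dual numbers $k[\epsilon]/\epsilon^2$ (there the comparison map collapses each fiber $u+I$ over a non-unit to a point). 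So the remark can at best give square-wise commutativity, never the coherent filling your fundamental-sequence gluing needs over $A\otimes_{\mathbb S[M]}\LL_{\mathbb S[M]}$; constructing that twisted coherence, with all higher homotopies, is essentially the entire content of the proposition, and deferring it defers the proof.

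The paper's argument is engineered precisely to avoid this computation. It transports the ring-level pullback square into $\MTL$ along the functor $F\colon\MT_{\algcn}\to\MTL$, $A\mapsto(A,GL_1(A))$, which has relative adjoints on both sides and hence preserves limits and colimits; it then uses strictness of both $(A',GL_1(A'))\to(A,GL_1(A))$ and $(A',M')\to(A,M)$ to exhibit $(A,M)$ as the pushout of $(A,GL_1(A))\leftarrow(A',GL_1(A'))\rightarrow(A',M')$ in $\llog$, and defines $d$ by the universal property of this pushout: the ring derivation on $(A,GL_1(A))$ is glued to the zero map on $(A',M')$ along the canonical null-homotopy over $(A',GL_1(A'))$. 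Since $\LL^G$ preserves colimits, this yields the point of $\map_A(\LL^G_{(A,M)},I[1])$ with all coherences for free, and the Leibniz compatibility is automatic rather than checked by hand; only afterwards does the paper do the pointwise monoid computation $M'\simeq M\times_{M\oplus\tau_{\geq 0}(I[1])}M$ that you also invoke. If you want to salvage your route, you must prove the coherent twisted compatibility directly (the sketch after Corollary \ref{obstruction} does this only for discrete log rings); the pushout argument is the efficient substitute.
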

\begin{proof}
 We work in the $(\infty,2)$-category $\mathbf{Corr}$ of correspondences. Consider a commutative diagram in $\mathbf{Corr}$
 \[\begin{tikzcd}
	\algcn & {\Fun(\Delta^1,\algcn)} & {\tang_{\algcn}} \\
	\llog & {\Fun(\Delta^1,\llog)} & {\Fun^{\rm ext}(\Delta^1,\llog)} & \tlog
	\arrow["{\mathfrak M^0}", squiggly, from=1-1, to=1-2]
	\arrow["{\mathfrak M^{T}_{\algcn}}", curve={height=-18pt}, squiggly, from=1-1, to=1-3]
	\arrow[squiggly, from=1-1, to=2-1]
	\arrow[squiggly, from=1-2, to=1-3]
	\arrow[squiggly, from=1-2, to=2-2]
	\arrow[squiggly, from=1-3, to=2-4]
	\arrow["{\mathfrak M^0}", squiggly, from=2-1, to=2-2]
	\arrow["\MTL"', curve={height=18pt}, squiggly, from=2-1, to=2-4]
	\arrow[squiggly, from=2-2, to=2-3]
	\arrow[squiggly, from=2-3, to=2-4]
\end{tikzcd}\]
This gives rise to a commutative diagram
    $$\begin{tikzcd}
\MT_{\algcn} \arrow[rd] \arrow[rr, "F"] &          & \MTL \arrow[ld] \\
 & \Delta^1 &       
\end{tikzcd}$$
Restricted to the fibers over $0\in\Delta^1$, one has $F|_0:\algcn\rightarrow\llog$ is the functor that sends an $\Einf$-ring $A$ to $(A,GL_1(A))$, and restricted to the fibers over $1\in\Delta^1$, one has $F|_1:\tang_{\algcn}\simeq\tang_{\algcn}\times_{\algcn}\algcn\rightarrow\tlog\simeq\tang_{\algcn}\times_{\algcn}\llog$ is just the obvious functor.
  Note that the functor $F$ is fiber-wise left and right adjointable \cite[Section 7]{rognes2009topological}, and obviously preserves Cartesian and  coCartesian edges. Then by \cite[Proposition 7.3.2.6]{lurie2017higher}, $F$ admits a relative right adjoint $G^R:\MTL\rightarrow\MT_{\algcn}$, which should be thought of as the forgetful functor, and a relative left adjoint functor $G^L:\MTL\rightarrow\MT_{\algcn}$, which should be thought of as the trivial locus functor. Informally, the functor $G^L$ sends an $\Einf$-log ring $(A,M)\in\MTL\times_{\Delta^1}\{0\}\simeq\llog$ to $A[M^{-1}]=A\otimes_{\mathbb S[M]}\mathbb S[M^{\rm gp}]$, and sends an object $((A,M),I)\in\MTL\times_{\Delta^1}\{1\}\simeq\tlog$ to $(A[M^{-1}],I\otimes_AA[M^{-1}])$.\\
  Thus the functor $F$ preserves both arbitrary small limits and colimits. Now consider a square-zero extension $(A',M')\rightarrow (A,M)$ of $\Einf$-log rings. The underlying square-zero extension $A'\rightarrow A$ gives rise to a pullback square in $\MTL$:
  $$\begin{tikzcd}
{(A',GL_1(A'))} \arrow[d] \arrow[r] & {(A,GL_1(A))} \arrow[d] \\
0 \arrow[r]                         & {I[1]}                 
\end{tikzcd}$$
    On the other hand, we have a pushout square of $\Einf$-log rings
    $$\begin{tikzcd}
{(A',GL_1(A'))} \arrow[d] \arrow[r] & {(A,GL_1(A))} \arrow[d] \\
{(A',M')} \arrow[r]                 & {(A,M)}                
\end{tikzcd}$$
because of the strictness of the morphisms $(A',GL_1(A'))\rightarrow(A,GL_1(A))$ and $(A',M')\rightarrow(A,M)$. Thus we can form a commutative diagram in $\MTL$
$$\begin{tikzcd}
{(A',GL_1(A'))} \arrow[d] \arrow[r] & {(A,GL_1(A))} \arrow[d] \\
{(A',M')} \arrow[r] \arrow[d]       & {(A,M)} \arrow[d]       \\
0 \arrow[r]                         & {I[1]}                 
\end{tikzcd}$$
We must show that the lower square is a pullback square. Unwinding the definition of replete  correspondence $\MTL$, we have to prove the following commutative diagram
$$\begin{tikzcd}
{(A',M')} \arrow[r] \arrow[d] & {(A,M)} \arrow[d]             \\
{(A,M)} \arrow[r]             & {(A\oplus \tau_{\geq 0}(I[1]),M\oplus \tau_{\geq 0}(I[1]))}
\end{tikzcd}$$
Since the limit in the $\infty$-category $\llog$ is calculated pointwise, we have to check the diagram
$$
\begin{tikzcd}
M' \arrow[d] \arrow[r] & M \arrow[d]    \\
M \arrow[r]            & {M\oplus \tau_{\geq 0}(I[1])}
\end{tikzcd}$$
is a pullback square in $\llog$. Indeed, we have $M\simeq M'/\tau_{\geq 0}I$, and the upper horizontal arrow and the lower horizontal arrow have the same fibers.
\end{proof}
\begin{rmk}
    As mentioned in \cite[Warning 7.4.1.10]{lurie2017higher}, the functor $\gamma$ is not an equivalence. A derivation $d:(A,M)\rightarrow I[1]$ defines a square-zero extension $(A',M')\rightarrow(A,M)$ via the functor $\gamma$, and every square-zero extension $(A',M')\rightarrow(A,M)$ arises in this way. However, the derivation $d$ is not uniquely determined by $(A',M')$. For example, it is not sensitive to the coconnective part $\tau_{\leq 0}I$ of $I$.
\end{rmk}
\begin{defn}
Let $(R,L)$ be an $\Einf$-log ring, and let $(R',L')$ be a square-zero extension of $(R,L)$ with kernel $I\in\Mod_R$. Let $(A,M)$ be an $(R,L)$-algebra.
A deformation of $(A,M)$ to $(R',L')$ is a 
square-zero extension  $(A',M')$ of $(A,M)$ with kernel $J\in\Mod_A$, such that there is an equivalence  $J\simeq I_A=I\otimes_RA$ of $A$-modules, and the following diagram 
\[\begin{tikzcd}
	{(R',L')} & {(R,L)} \\
	{(A',M')} & {(A,M)}
	\arrow[from=1-1, to=1-2]
	\arrow[from=1-1, to=2-1]
	\arrow[from=2-1, to=2-2]
	\arrow[from=1-2, to=2-2]
\end{tikzcd}\]
is a pushout square in $\llog$. 
\end{defn}
\begin{defn}
    Let $\mathbf{Def}^{\llog,+}$ be the subcategory of $\mathbf{Def}^{\llog}$ spanned by the following data:
    \begin{enumerate}
        \item Objects are same with those in $\mathbf{Def}^\llog$;
        \item  Morphisms  are those morphisms  $((R',L')\rightarrow (R,L))\rightarrow ((A',M')\rightarrow(A,M))$ in $\mathbf{Def}^\llog$ such that $(A',M')$ is a  deformation of $(A,M)$ to $(R',L').$
    \end{enumerate}
        \end{defn}
    \begin{defn}
            Let $\Der^{\llog,+}$ be the subcategory  of $\Der^{\llog}$ spanned by the following data:
\begin{enumerate}
    \item Objects are derivations $d
    :(A,M)\rightarrow I[1]$ such that $I$ is connective;
    \item Morphisms are those morphisms $f:(d:(A,M)\rightarrow I)\rightarrow(d':(B,N)\rightarrow J)$ in $\Der^\llog$ such that the induced map $I\otimes_AB\rightarrow J$ is an equivalence.
\end{enumerate}
\end{defn}
\begin{rmk}
    The existence of the $\infty$-categories $\mathbf{Def}^{\llog,+}$ and $\Der^{\llog,+}$ is provided by \cite[Proposition 4.1.2.10]{kerodon}.
\end{rmk}

The following is the main theorem of this section:
\begin{thm}[Deformation theory]\label{deform}
    The functor $\gamma^+:\Der^{\llog,+}\rightarrow\mathbf{Def}^{\llog,+}$ induced from $\gamma$ is a left fibration.
\end{thm}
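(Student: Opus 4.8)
The plan is to invoke the standard characterization that a functor is a left fibration if and only if it is a coCartesian fibration all of whose fibres are $\infty$-groupoids, equivalently a coCartesian fibration every edge of which is coCartesian (see \cite[\S2.4.2]{lurie2009higher}). So I would reduce Theorem\autoref{deform} to two assertions: that the fibres of $\gamma^+$ are $\infty$-groupoids, and that coCartesian lifts exist. The guiding principle is that the decoration "$+$" is engineered precisely so that the morphisms of $\Der^{\llog,+}$ coincide with the $\gamma^+$-coCartesian edges (the module part being a base change) and the morphisms of $\mathbf{Def}^{\llog,+}$ coincide with the admissible projections (the deformation, or pushout, condition); the real content is then to produce the lifts and to match these two base-change operations.

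First I would check the fibres are $\infty$-groupoids by showing every edge over an identity is an equivalence. Let $f\colon (d\colon(A,M)\to I[1])\to (d'\colon(B,N)\to J[1])$ be a morphism of $\Der^{\llog,+}$ with $\gamma^+(f)$ an identity. Then the induced map of underlying log rings is $\mathrm{id}_{(A,M)}$, so $f$ lies over $\mathrm{id}_{(A,M)}\in\llog$, while the defining condition of $\Der^{\llog,+}$ forces the canonical map $I\simeq I\otimes_A A\to J$ to be an equivalence. By the universal property of Gabber's cotangent complex (Theorem\autoref{cotan-der}) a derivation with fixed base $(A,M)$ is a point of $\map_A(\LL^G_{(A,M)},I[1])$ and a morphism of such derivations is exactly the datum of the module map; hence an equivalence on modules over the identity base is an equivalence of derivations. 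Thus every morphism in any fibre of $\gamma^+$ is invertible.

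Next I would construct the coCartesian lifts. Recall $\tlog\to\llog$ is a coCartesian fibration whose fibre over $(A,M)$ is $\Mod_A$ and whose pushforward along $g\colon(A,M)\to(B,N)$ is base change $g_!\colon\Mod_A\to\Mod_B$, $I\mapsto I\otimes_A B$; this is the coCartesian structure recorded by the replete tangent correspondence $\MTL$ (Definition\autoref{LDER}). Given $d\colon(A,M)\to I[1]$ and a morphism $\bar f$ of $\mathbf{Def}^{\llog,+}$ out of $\gamma(d)$ — which encodes a map $g\colon(A,M)\to(B,N)$ together with a deformation datum — I would take the lift to be the base-changed derivation $g_!d\colon(B,N)\to(I\otimes_A B)[1]$, equipped with the tautological morphism $d\to g_!d$ in $\Der^{\llog}$. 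Its module component is the canonical equivalence, so $d\to g_!d$ lies in $\Der^{\llog,+}$, and since $g_!$ is the coCartesian pushforward in $\MTL$, this edge is $\gamma^+$-coCartesian. Together with the previous paragraph this exhibits $\gamma^+$ as a coCartesian fibration with $\infty$-groupoid fibres, hence a left fibration.

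The main obstacle — which simultaneously makes $\gamma^+$ \emph{well defined} and identifies the projection of the lift with $\bar f$ — is the compatibility $\gamma(g_!d)\simeq{}$(deformation of $\gamma(d)$ along $g$): applying $\gamma$, a fibre product in $\MTL$, to the base-changed derivation must yield exactly the pushout square-zero extension appearing in the definition of a deformation. I would prove this by unwinding $\gamma$ as in Proposition\autoref{sqzero-logder}: $\gamma(d)$ is the pullback of $(A,M)\xrightarrow{(\mathrm{id},d)}(A\oplus\tau_{\geq0}(I[1]),M\oplus\tau_{\geq0}(I[1]))\xleftarrow{(\mathrm{id},0)}(A,M)$, whose monoid part is the $I$-torsor over $M$ classified by $M\to BI\simeq I[1]$ (Remark\autoref{torsor=sqzero}). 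Base change along $g$ commutes with all of these formations: on underlying $\Einf$-rings this is the standard identification of the square-zero extension classified by $g_!d$ with the pushout $A'\otimes_A B$ (\cite[\S7.4.1]{lurie2017higher}), and on monoids it is the fact that the classifying map of the torsor base-changes to $N\to B(I\otimes_A B)$. Since limits in $\llog$ are computed on rings and monoids separately, both sides agree, giving the desired equivalence of square-zero extensions. This pushout identification is precisely the deformation condition, so $\gamma^+$ indeed lands in $\mathbf{Def}^{\llog,+}$ and the constructed lift projects to $\bar f$; the essential surjectivity of $\gamma$ (Proposition\autoref{sqzero-logder}) guarantees that every morphism of $\mathbf{Def}^{\llog,+}$ out of $\gamma(d)$ arises in this way, completing the verification that coCartesian lifts exist.
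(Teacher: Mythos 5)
Your reduction of the left-fibration property to ``coCartesian fibration with $\infty$-groupoid fibres'' is a legitimate strategy, and your fibre argument is essentially fine. The gap is in the construction of the coCartesian lifts: the object you call the ``base-changed derivation'' $g_!d\colon(B,N)\to(I\otimes_AB)[1]$ does not exist as a canonical construction. The coCartesian structure of $\tlog\rightarrow\llog$ (equivalently, of $\MTL$) base-changes \emph{modules}, $I\mapsto I\otimes_AB$; it does not act on \emph{derivations}. A log derivation of $(B,N)$ valued in $(I\otimes_AB)[1]$ is a $B$-module map $\LL^G_{(B,N)}\rightarrow(I\otimes_AB)[1]$, and to obtain one from $d$ you must extend $d\otimes_AB\colon\LL^G_{(A,M)}\otimes_AB\rightarrow(I\otimes_AB)[1]$ along the conormal map $\LL^G_{(A,M)}\otimes_AB\rightarrow\LL^G_{(B,N)}$ of Theorem\autoref{cotan-der}. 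Such an extension need not exist: its obstruction lies in $\mathrm{Ext}^2(\LL^G_{(B,N)/(A,M)},I\otimes_AB)$, which is exactly the obstruction to the existence of deformations in Corollary\autoref{obstruction}. So there is no ``tautological morphism $d\to g_!d$''; producing a derivation of $(B,N)$ compatible with $d$, inducing the given square-zero extension $(B',N')\rightarrow(B,N)$, functorially and with a contractible space of choices, is precisely the content of the theorem, not an input to it. Your final paragraph, which verifies $\gamma(g_!d)\simeq$ (the given deformation), is therefore conditioned on an object that was never constructed, and the appeal to essential surjectivity of $\gamma$ (Proposition\autoref{sqzero-logder}) only yields \emph{some} derivation inducing $(B',N')\rightarrow(B,N)$, with no compatibility with $d$. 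Moreover, even granting existence, a left fibration requires the space of lifts with prescribed source to be contractible; your argument never addresses this uniqueness, and the claim that the edge is coCartesian ``since $g_!$ is the coCartesian pushforward'' conflates coCartesianness in $\tlog\rightarrow\llog$ with coCartesianness in $\Der^{\llog,+}\rightarrow\mathbf{Def}^{\llog,+}$.

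The paper avoids both problems by never constructing lifts directly. Using the equivalence $\MTL\simeq\MT_{\algcn}\times_{\algcn}\llog$ (Lemma\autoref{tangentlogalg}), it identifies $\Der^{\llog,+}$ and $\mathbf{Def}^{\llog,+}$ as pullbacks of Lurie's categories $\Der^{+}$ and $\mathbf{Def}^{+}$ for $\Einf$-rings, proves that the square relating the four is itself a pullback (Lemma\autoref{12345436942}), and then quotes Lurie's theorem that $\Der^{+}\rightarrow\mathbf{Def}^{+}$ is a left fibration \cite[Theorem 7.4.2.7]{lurie2017higher}; left fibrations are stable under pullback. In other words, the hard lifting-and-uniqueness statement is delegated to the $\Einf$-ring case, and the log-theoretic work consists in showing that the log data (strictness of square-zero extensions, exactness of monoid maps) is pulled back from the ring data. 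If you wanted a direct proof along your lines, you would in effect have to reprove Lurie's theorem in the log setting, at which point the pullback description of $\MTL$ would reappear as the mechanism transporting his lifts to log lifts --- i.e., you would reconstruct the paper's argument.
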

The proof of Theorem\autoref{deform} will be given in the next subsection. Let us consider various consequences of  it. Unwinding Theorem\autoref{deform}, let $(d:(R,L)\rightarrow I[1])\in\Der^{\llog,+}$ be a derivation, we get an equivalence $\Der^{\llog,+}_{d/}\stackrel{\simeq}\rightarrow\mathbf{Def}^{\llog,+}_{\gamma^+(d)/}$. The left hand term is the $\infty$-category of morphisms of derivations 
$$\begin{tikzcd}
{(R,L)} \arrow[d] \arrow[r, "d"] & {I[1]} \arrow[d] \\
{(A,M)} \arrow[r, "d'"]          & {J[1]}          
\end{tikzcd}$$ 
such that $J[1]\simeq I[1]\otimes_RA,$ and the right hand term is the $\infty$-category of deformations $(A',M')\rightarrow(A,M)$ to $(R',L')$. In particular, we have 
\begin{cor}\label{deform=sq0extension}
    The functors $\Der^{\llog,+}_{d/}\stackrel{\gamma^+}\rightarrow\mathbf{Def}^{\llog,+}_{\gamma^+(d)/}\stackrel{F}\rightarrow\llog_{(R',L')/}$
    are equivalences. 
\end{cor}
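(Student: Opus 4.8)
The plan is to factor the claim into the two displayed arrows and handle them separately. For the first arrow I would appeal directly to Theorem \autoref{deform}, which says that $\gamma^+\colon\Der^{\llog,+}\to\mathbf{Def}^{\llog,+}$ is a left fibration. I then invoke the standard fact about left fibrations (see \cite{lurie2009higher}): if $p\colon X\to S$ is a left fibration and $x\in X$, then the induced functor on coslices $X_{x/}\to S_{p(x)/}$ is a trivial Kan fibration, hence an equivalence. Taking $p=\gamma^+$ and $x=d$ gives that $\gamma^+\colon\Der^{\llog,+}_{d/}\to\mathbf{Def}^{\llog,+}_{\gamma^+(d)/}$ is an equivalence. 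This step is essentially formal once Theorem \autoref{deform} is available.

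For the second arrow I would first unwind $\mathbf{Def}^{\llog,+}_{\gamma^+(d)/}$. By the definition of $\mathbf{Def}^{\llog,+}$, an object of this coslice is a morphism out of $\gamma^+(d)$, that is, a square in $\llog$
\[\begin{tikzcd}
{(R',L')} \arrow[r]\arrow[d] & {(A',M')}\arrow[d] \\
{(R,L)} \arrow[r] & {(A,M)}
\end{tikzcd}\]
whose top edge is the fixed extension $\gamma^+(d)$, which is a pushout square, and whose bottom edge is a square-zero extension with kernel $I_A=I\otimes_R A$. The functor $F$ records the left edge $(R',L')\to(A',M')$. Since the square is cocartesian it is determined by its span $(R,L)\leftarrow(R',L')\to(A',M')$, and the left leg $(R',L')\to(R,L)$ is already fixed; so the formal content of $F$ being an equivalence is that the full subcategory of cocartesian squares with fixed top edge, carved out by the deformation condition, is equivalent via restriction to the remaining leg to $\llog_{(R',L')/}$. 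Full faithfulness of $F$ will come from the universal property of the pushout (which makes the bottom-right corner and the two right/bottom edges a left Kan extension of the span).

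The main obstacle is essential surjectivity, i.e. showing that pushing out the extension $\gamma^+(d)$ along an arbitrary algebra $(R',L')\to(A',M')$ yields a genuine deformation. Concretely, forming $(A,M):=(A',M')\otimes_{(R',L')}(R,L)$ gives a cocartesian square, and the map $(A',M')\to(A,M)$ has kernel $I_A$ because the defining cofiber sequence $I\to (R',L')\to(R,L)$ is preserved under the colimit-preserving base change; but one must still check that this map is a square-zero extension classified by a derivation $d'\colon(A,M)\to I_A[1]$ restricting to $d$. I would verify this inside the replete tangent correspondence $\MTL$: by Proposition \autoref{sqzero-logder} square-zero extensions correspond to cartesian squares in $\MTL$, and the comparison functor from the proof there, $\MT_{\algcn}\to\MTL$, preserves both limits and colimits (being simultaneously left and right adjointable and preserving cartesian and cocartesian edges). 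Transporting the cocartesian square in $\llog$ through this formalism, and using the conormal fiber sequence of Theorem \autoref{cotan-der}(2) to manufacture the classifying derivation $d'$, should exhibit the pushed-out extension as a deformation. Reconciling the pushout taken in $\llog$ with the cartesian-square description of square-zero extensions in $\MTL$ is the delicate point, and is where the limit-and-colimit-preservation of the correspondence functor does the essential work.
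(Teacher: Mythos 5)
Your decomposition and your handling of the first arrow match the paper exactly (Theorem\autoref{deform} plus the coslice property of left fibrations), and your full-faithfulness argument for $F$ is the paper's contractible-fiber argument via uniqueness of pushouts (the paper cites \cite[Proposition 4.3.2.15]{lurie2009higher}). The genuine gap is in essential surjectivity of $F$, precisely at the point you yourself flag as delicate: you never actually establish that the pushed-out map $(A',M')\to(A,M):=(A',M')\otimes_{(R',L')}(R,L)$ is a square-zero extension, and the mechanism you propose cannot establish it. Invoking Proposition\autoref{sqzero-logder} presupposes square-zero-ness (that proposition takes a square-zero extension as input and produces a classifying derivation), so that reading is circular. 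And ``manufacturing $d'$ from the conormal fiber sequence'' is exactly the obstructed extension problem: producing $d'\colon\LL^G_{(A,M)}\to I_A[1]$ restricting to $d$ means extending $d\otimes_RA$ along $\LL^G_{(R,L)}\otimes_RA\to\LL^G_{(A,M)}$, whose obstruction lies in $\mathrm{Ext}^2(\LL^G_{(A,M)/(R,L)},I_A)$ --- this is Corollary\autoref{obstruction}(1), which is proved \emph{from} the present corollary, and the conormal sequence alone does not show that this class vanishes. Likewise, limit-and-colimit preservation of the functor $\MT_{\algcn}\to\MTL$ cannot reconcile a cocartesian square with the cartesian-square description of square-zero extensions: pushouts and pullbacks do not commute, so no amount of transporting squares through that functor closes the argument.

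What actually fills the gap is a ring-level input that neither you nor, admittedly, the paper's terse proof spells out: base change of a square-zero extension of connective $\Einf$-rings along an \emph{arbitrary} map is again a square-zero extension, with the base-changed kernel. This is exactly what \cite[Theorem 7.4.2.7]{lurie2017higher} delivers, because there the left fibration has target $\Fun^+(\Delta^1,\algcn)$ (\cite[Notation 7.4.2.6]{lurie2017higher}), whose objects are \emph{arbitrary} maps with connective fiber, not just square-zero extensions; the base-change square is automatically a morphism in that category (tensoring preserves the fiber sequence, which also gives your kernel identification $I\otimes_{R'}A'\simeq I_A$), so lifting it against the left fibration exhibits the pushed-out ring map as square-zero. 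Note that the paper's own Theorem\autoref{deform} does \emph{not} suffice here, since its target $\mathbf{Def}^{\llog,+}$ contains only square-zero extensions, so one cannot even formulate the lifting problem before knowing the answer. Once the ring-level statement is in hand, the log-level statement is immediate from the paper's definition of log square-zero extensions: strictness means $(B,N)\simeq(A,M)\otimes_AB$ and is therefore stable under cobase change, and the forgetful functor to $\Einf$-rings preserves pushouts, so $(A',M')\to(A,M)$ is strict with square-zero underlying ring map and kernel $I_A$. No direct construction of $d'$ is needed; Proposition\autoref{sqzero-logder} then supplies the classifying derivation after the fact.
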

\begin{proof}
    We only need to prove that the second functor $F$ is an equivalence of $\infty$-categories. Let $(A',M')$ be an $(R',L')$-algebra. The fiber of $F$ at $(A',M')$ is a contractible anima by \cite[Proposition 4.3.2.15]{lurie2009higher}. On the other hand, the functor $F$ is essentially surjective. It follows that $F$ is a trivial Kan fibration.
\end{proof}
\begin{cor}\label{obstruction}
 Let $(R',L')\rightarrow(R,L)$ be a square-zero extension with kernel $I\in\Mod_R^{\rm cn}$, induced by the derivation $d:(R,L)\rightarrow I[1]$.
\begin{enumerate}
    \item 
   Let $(R,L)\rightarrow(A,M)$ be a morphism of $\Einf$-log rings. The obstruction to the existence of deformations of $(A,M)$ to $(R',L')$ lies in $\mathrm{Ext}^2(\LL_{(A,M)/(R,L)},I_A).$
\item If the map $\LL_{(R,L)}\otimes_RA\rightarrow I_A[1]$  induced from $d$ vanishes (for example, $d$ is a trivial derivation), then there is an equivalence of animae
$$\map_{A}(\LL^G_{(A,M)/(R,L)},I_A[1])\simeq\mathbf{Def}^{\simeq}((A,M)/(R,L),d)$$
where $\mathbf{Def}^{\simeq}((A,M)/(R,L),d)$ is the anima of deformations of $(A,M)$ to $(R',L').$
    \end{enumerate}
\end{cor}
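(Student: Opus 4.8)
The plan is to deduce both statements from a single fiber sequence of mapping animae, obtained by combining the description of deformations following \autoref{deform} and \autoref{deform=sq0extension} with the conormal fiber sequence of \autoref{cotan-der}. First I would fix $(A,M)$ as an $(R,L)$-algebra together with the square-zero datum $d\colon(R,L)\to I[1]$, and unwind what a deformation is. By the discussion after \autoref{deform}, a deformation of $(A,M)$ to $(R',L')$ is precisely a derivation $d'\colon(A,M)\to I_A[1]$ equipped with a filling of the square
$$\begin{tikzcd}
{(R,L)} \arrow[d] \arrow[r, "d"] & {I[1]} \arrow[d] \\
{(A,M)} \arrow[r, "d'"]          & {I_A[1]}
\end{tikzcd}$$
in $\MTL$, the right vertical being the base change $I[1]\to I_A[1]$. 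Translating $d'$ through the universal characterization \autoref{cotan-der}(1) into a map $\phi\colon\LL^G_{(A,M)}\to I_A[1]$ of $A$-modules, the commutativity of the square says exactly that the composite of $\phi$ with the canonical map $\LL^G_{(R,L)}\otimes_R A\to\LL^G_{(A,M)}$ is homotopic to the base-changed derivation $d_A\colon\LL^G_{(R,L)}\otimes_R A\to I_A[1]$. This yields the identification
$$\mathbf{Def}^{\simeq}((A,M)/(R,L),d)\simeq\map_A(\LL^G_{(A,M)},I_A[1])\times_{\map_A(\LL^G_{(R,L)}\otimes_R A,\,I_A[1])}\{d_A\},$$
that is, the deformation anima is the homotopy fiber of the restriction map over the point $d_A$.

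Next I would apply the functor $\map_A(-,I_A[1])$ to the conormal cofiber sequence
$$\LL^G_{(R,L)}\otimes_R A\rr\LL^G_{(A,M)}\rr\LL^G_{(A,M)/(R,L)}$$
of \autoref{cotan-der}(2) (the case $\emptyset\to(R,L)\to(A,M)$), which produces the fiber sequence of animae
$$\map_A(\LL^G_{(A,M)/(R,L)},I_A[1])\rr\map_A(\LL^G_{(A,M)},I_A[1])\rr\map_A(\LL^G_{(R,L)}\otimes_R A,I_A[1]).$$
Since all three terms are underlying animae of mapping spectra in $\Mod_A$, this extends to a Puppe sequence with connecting map
$$\partial\colon\map_A(\LL^G_{(R,L)}\otimes_R A,I_A[1])\rr\map_A(\LL^G_{(A,M)/(R,L)},I_A[2]).$$

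For part (1), the deformation anima is non-empty if and only if the point $d_A$ lifts to $\pi_0\map_A(\LL^G_{(A,M)},I_A[1])$, which by exactness holds exactly when $\partial(d_A)$ is null-homotopic; the resulting class $[\partial(d_A)]\in\pi_0\map_A(\LL^G_{(A,M)/(R,L)},I_A[2])=\mathrm{Ext}^2(\LL^G_{(A,M)/(R,L)},I_A)$ is the asserted obstruction. For part (2), the hypothesis that the induced map $\LL^G_{(R,L)}\otimes_R A\to I_A[1]$ vanishes means precisely that $d_A$ is the zero point; as the fiber sequence is pointed at $0$ with fiber $\map_A(\LL^G_{(A,M)/(R,L)},I_A[1])$, the fiber over $d_A$ is canonically equivalent to the fiber over the basepoint, giving the equivalence $\map_A(\LL^G_{(A,M)/(R,L)},I_A[1])\simeq\mathbf{Def}^{\simeq}((A,M)/(R,L),d)$.

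The hard part will be the first step: making precise, from the left-fibration statement \autoref{deform} and the equivalences of \autoref{deform=sq0extension}, that fixing the $(R,L)$-algebra $(A,M)$ and varying only the lift $d'$ isolates exactly the homotopy fiber over $d_A$, rather than a larger space in which the underlying algebra is also allowed to move. Concretely, I would track the identification $\Der^{\llog,+}_{d/}\simeq\llog_{(R',L')/}$ through the forgetful functors down to $\llog_{(R,L)/}$, verify that the relevant fiber over $(A,M)$ is computed by the square above, and check that $d_A$ is indeed the correct basepoint and that triviality of $d$ forces $d_A\simeq 0$; the cotangent-complex manipulations that follow are then entirely formal.
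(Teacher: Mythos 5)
Your proposal is correct and follows essentially the same route as the paper: the paper likewise uses Corollary \autoref{deform=sq0extension} to identify deformations with extensions of $d\otimes_R A$ along $\LL^G_{(R,L)}\otimes_R A\to\LL^G_{(A,M)}$, and then reads off the obstruction class and the equivalence in (2) from the (rotated) conormal fiber sequence, your connecting map $\partial(d_A)$ being exactly the paper's composite $\LL^G_{(A,M)/(R,L)}[-1]\to\LL^G_{(R,L)}\otimes_RA\to I_A[1]$. The only difference is that you spell out the identification of $\mathbf{Def}^{\simeq}$ as a homotopy fiber and the basepoint bookkeeping, which the paper leaves implicit ("assertion (2) is directly obtained from Theorem \autoref{deform}").
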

\begin{proof}
The existence of deformations of $(A,M)$ to $(R',L')$ is equivalent to the existence of extensions of $d\otimes_RA$ to the map $\LL^G_{(A,M)}\rightarrow I_A[1]$ by the above Corollary\autoref{deform=sq0extension}.
As we have the conormal exact sequence of Gabber's cotangent complexes
$$\LL^G_{(A,M)/(R,L)}[-1]\rr\LL^G_{(R,L)}\otimes_RA\rr\LL^G_{(A,M)},$$
the extension exists if and only if the composition $\LL^G_{(A,M)/(R,L)}[-1]\rightarrow \LL^G_{(R,L)}\otimes_RA\rightarrow I_A[1]$ vanishes, i.e., it's vanishing in the mapping class group $\pi_0\map(\LL^G_{(A,M)/(R,L)}[-1],I_A[1])\simeq{\rm Ext}^2(\LL^G_{(A,M)/(R,L)}[-1],I_A[1]).$ 
\end{proof}
The assertion (2) is directly obtained from Theorem \autoref{deform}.

\begin{rmk}
For classical log rings, the equivalences mentioned in Corollary \autoref{obstruction} can be constructed directly; see \cite{kato1996log} and \cite{olsson2005logarithmic} for details. We provide a brief sketch. Let $(R,L)$ be a discrete log ring and let $I$ be a discrete $R$-module. Let $\partial\in \pi_0{\rm\map}(\LL^{G}_{(R,L)},I[1])={\rm Ext}^1(\LL^G_{(R,L)},I)$ be a log derivation, the corresponding square-zero extension $(R^{\partial},L^{\partial})$ is given by the following pullback square 
\[\begin{tikzcd}
	{(R^{\partial},L^{\partial})} & {(R,L)} \\
	{(R,L)} & {(R\oplus I[1], L\oplus I[1])}
	\arrow["\partial", from=2-1, to=2-2]
	\arrow["{d_{\rm triv}}", from=1-2, to=2-2]
	\arrow[from=1-1, to=2-1]
	\arrow[from=1-1, to=1-2]
\end{tikzcd}\]
   Conversely, assume  $(R',L')$ is a square-zero extension of $(R,L)$. The underlying  ring  $R'$ is a square-zero extension of $R$, it is determined by a derivation $$d: R\rr R\oplus I[1].$$ The derivation $d_L:L\rr L\oplus I[1]$ on the log part is defined using the following diagram
\[\begin{tikzcd}
	{1+I} & {1+I} \\
	{R'^*} & {L'} \\
	{R^*} & L
	\arrow[from=2-1, to=3-1]
	\arrow[from=1-1, to=2-1]
	\arrow["\simeq", from=1-1, to=1-2]
	\arrow[from=1-2, to=2-2]
	\arrow[from=2-2, to=3-2]
	\arrow[from=2-1, to=2-2]
	\arrow[from=3-1, to=3-2]
\end{tikzcd}\]
    by letting $d_L$ be the natural map $L\rr\Sigma(1+I)\simeq I[1].$ 
\end{rmk}
\begin{rmk}
    As mentioned in the proof of Proposition\autoref{sqzero-logder}, there is a fully faithful embedding $\mathfrak{M}^T_{\algcn}\subset\MTL$ over $\Delta^1$, and it follows that there is a fully faithful embeddings $\Der(\algcn)\subset\Der^\llog$ and $\mathbf{Def}(\algcn)\subset\mathbf{Def}$.  Restricted to $\Einf$-log rings which have trivial log structures, the deformation theory of $\Einf$-log ring Theorem\autoref{deform} coincides with the deformation theory of $\Einf$-log rings established in \cite[Theorem 7.4.2.7]{lurie2017higher}
\end{rmk}
\subsection{Proof of Theorem\autoref{deform}}
In this subsection, we will prove Theorem\autoref{deform}. Our approach depends on certain constructions and results of deformation theory of $\Einf$-rings formulated by Lurie in \cite{lurie2017higher}.\\
 Let us consider the functors $\Omega^{\infty}:\tlog\rightarrow\llog$, and $\Omega^\infty:\tang_{\algcn}\rightarrow\algcn$. Applying Grothendieck construction, we get morphisms of fibered $\infty$-categories $\omega_{\mathrm{Log}}:\MTL\rightarrow \llog\times\Delta^1$ over $\Delta^1$, and $\omega:\MT_{\algcn}\rightarrow\algcn\times\Delta^1$ over $\Delta^1.$\\
Consider the functor $G^R:\MTL\rightarrow\MT_{\algcn}$ defined in the proof of Proposition\autoref{sqzero-logder}, which preserves Cartesian edges. Form a commutative diagram 
$$\begin{tikzcd}
\MTL \arrow[d, "\omega_{\mathrm{Log}}"'] \arrow[r, "G^R"] & \MT_{\algcn} \arrow[d, "\omega"]        \\
\llog\times\Delta^1 \arrow[r] \arrow[d, "\pi_1"']         & \algcn\times\Delta^1 \arrow[d, "\pi_1"] \\
\llog \arrow[r]                                           & \algcn                                 
\end{tikzcd}$$
Where the middle and  lower horizontal arrows are  forgetful functors. Therefore we get a relative functor: 
$$\begin{tikzcd}
\MTL \arrow[rd] \arrow[rr, "\Psi"] &          & \MT_{\algcn}\times_{\algcn}\llog \arrow[ld] \\  & \Delta^1 &     
\end{tikzcd}$$
One can see that the functor $\Psi$ preserves Cartesian  edges.
Moreover, we will show that it is an equivalence.
\begin{lem}\label{tangentlogalg}
    The functor $\Psi$ is an equivalence of $\infty$-categories.
\end{lem}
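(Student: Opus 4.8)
The plan is to prove that $\Psi$ is an equivalence by reducing everything to a fiberwise statement. Since $\Psi$ is a functor over $\Delta^1$ that preserves Cartesian edges, I would first record that both $\MTL\to\Delta^1$ and $\MT_{\algcn}\times_{\algcn}\llog\to\Delta^1$ are Cartesian fibrations over $\Delta^1$. For $\MTL$ this is built into the replete tangent correspondence formalism. For the target, the key point is that the forgetful functor $U:\llog\to\algcn$ is itself a biCartesian presentable fibration: the Cartesian lift of a map $f:A\to B$ with target $(B,N)$ is the inverse-image (strict) log structure $(A,f^{*}N)\to(B,N)$, while the coCartesian lift is the strict base change $(A,M)\to(B,M)^{a}$. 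Consequently $\MT_{\algcn}\times_{\algcn}\llog\to\MT_{\algcn}\to\Delta^1$ is a composite of Cartesian fibrations, hence a Cartesian fibration, whose Cartesian edges over $\Delta^1$ are precisely those whose image in $\MT_{\algcn}$ is Cartesian and whose $\llog$-shadow is $U$-Cartesian, i.e. strict.

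Next I would invoke the standard detection principle: a functor between Cartesian fibrations over a common base that preserves Cartesian edges is an equivalence if and only if it induces an equivalence on each fiber \cite[Proposition 3.3.1.5]{lurie2009higher}. It therefore suffices to analyze $\Psi$ on the two fibers over $\Delta^1$. Over $0$, using the construction of $G^{R}$ and of the projection $\pi_{1}\omega_{\mathrm{Log}}$, the functor $\Psi_{0}:\llog\to\algcn\times_{\algcn}\llog\simeq\llog$ is the identity. Over $1$, the functor $\Psi_{1}:\tlog\to\tang_{\algcn}\times_{\algcn}\llog$ sends $((A,M),I)$ to $((A,I),(A,M))$, which is exactly the equivalence of Proposition\autoref{tangentreplog}. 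Both fibers being equivalences, the detection principle gives that $\Psi$ is an equivalence of $\infty$-categories.

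The main obstacle I anticipate is verifying that $\Psi$ genuinely preserves Cartesian edges, equivalently that the Cartesian pullback functors on the two sides agree under $\Psi$. Concretely, the Cartesian lift in $\MTL$ of an object $\xi=((B,N),I)$ along $0\to 1$ is the trivial square-zero extension $\Omega^{\infty}\xi\colon(B\oplus I,N\oplus I)\to(B,N)$, whereas on the target it is obtained by pulling $(B,N)$ back along the augmentation $B\oplus I\to B$ to the inverse-image log structure. The content is that these two recipes coincide, which is precisely the strictness of square-zero extensions recorded in Remark\autoref{torsor=sqzero}: the trivial square-zero log extension agrees with the inverse-image log structure along the augmentation. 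Once this compatibility is in place, the cross mapping spaces match automatically, since by the universal property of Cartesian (inverse-image) edges one obtains $\map_{\MT_{\algcn}\times_{\algcn}\llog}(\Psi(A,M),\Psi\xi)\simeq\map_{\llog}((A,M),(B\oplus I,N\oplus I))\simeq\map_{\MTL}((A,M),\xi)$, so that no separate manipulation of Gabber's cotangent complex is required; the log part of a derivation is recovered from the log map into the strict extension rather than from the underlying algebra derivation.
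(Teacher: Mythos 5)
Your proof has the same skeleton as the paper's: the paper also asserts (just before the statement of the lemma) that $\Psi$ preserves Cartesian edges and then reduces to exactly the two fiberwise identifications you list, the identity over $0$ and Proposition\autoref{tangentreplog} over $1$. The difference is that you attempt to justify the edge preservation, and that justification is false; since the fiberwise statements are already known, this compatibility is precisely where the content of the lemma lies.

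Concretely: for the fibration $U\colon\llog\rightarrow\algcn$, the $U$-Cartesian lift of the augmentation $B\oplus I\rightarrow B$ terminating at $(B,N)$ is the \emph{direct image} log structure $N\times_{\Omega^{\infty}B}\Omega^{\infty}(B\oplus I)$ (strict morphisms are the \emph{co}Cartesian edges of $U$, so "inverse-image (strict)" is already a mislabel). The trivial square-zero extension, by the proof of Proposition\autoref{exacteq}, has monoid $N\oplus I\simeq N\times(1+I)$ with structure map $(n,1+x)\mapsto\alpha(n)(1+x)$. The canonical comparison $N\oplus I\rightarrow N\times_{\Omega^{\infty}B}\Omega^{\infty}(B\oplus I)$ is, on the fiber over $n\in N$, multiplication by $\alpha(n)$ on $\Omega^{\infty}I$, hence an equivalence only where $\alpha(n)$ is invertible; it fails for every nontrivial log structure. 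For the standard log point $B=k$, $N=k^{\times}\oplus\N$, $\alpha(u,n)=u\cdot 0^{n}$, $I=k$, all fibers over $n\geq 1$ are collapsed to a point. Remark\autoref{torsor=sqzero} does not give what you need: strictness of a square-zero extension says that pushing $N\oplus I$ forward (logifying) along the augmentation returns $N$ --- a statement about coCartesian edges --- and does not identify the trivial extension with the Cartesian lift.

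Moreover the gap is not reparable by a better verification, because the conclusion you draw from the claimed coincidence is false. If the cross mapping spaces matched as in your last display, then for every log map $f\colon(A,M)\rightarrow(B,N)$ the space of log derivations lifting $f$, namely $\map_{B}(f_{!}\LL^{G}_{(A,M)},I)$ (the defining adjunction of $\MTL$, cf.\ Theorem\autoref{cotan-der}), would agree with the space of ring derivations $\map_{B}(f_{!}\LL_{A},I)$ lifting $U(f)$ for all connective $I$; by Lemma\autoref{Lstrict}(1) and the conormal sequence this forces
$$\LL^{G}_{(B,N)/(B,GL_{1}(B))}\simeq 0$$
for every $\Einf$-log ring $(B,N)$, which is exactly what Gabber's cotangent complex is designed to avoid: for the standard log point one has $\pi_{0}\LL^{G}_{(k,k^{\times}\oplus\N)/(k,k^{\times})}\cong k\cdot d\log t\neq 0$. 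So, with the fiber product of the target read via the structural projection $\tang_{\algcn}\rightarrow\algcn$ (the reading you, and the paper's identification of the fiber over $1$, rely on so that Proposition\autoref{tangentreplog} applies), $\Psi$ does not carry Cartesian edges of $\MTL\rightarrow\Delta^{1}$ to Cartesian edges of the target, the hypothesis of \cite[Proposition 3.3.1.5]{lurie2009higher} is unavailable, and the fiberwise equivalences alone do not yield the lemma. Be aware that this subtlety is not of your making: the paper's "one can see that the functor $\Psi$ preserves Cartesian edges" hides the same issue, and whichever way one reads the functor $\MT_{\algcn}\rightarrow\algcn$ entering the fiber product (structural projection versus the functor induced by $\Omega^{\infty}$), either the cross mapping spaces or the identification of the fiber over $1$ with $\tlog$ requires an argument that is absent both here and in the paper.
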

\begin{proof}
    We only need to check the equivalence at the level of  fibers. We have that $$\Psi_0:\MTL\times_{\Delta^1}\{0\}\simeq\llog\rr\algcn\times_{\algcn}\llog\simeq\llog$$  is actually equivalent to the identity functor. Consider the fibers over $1\in\Delta^1$. We have that the functor
$$\MTL\times_{\Delta^1}\{1\}\simeq\tlog,$$
    and $$(\MT_{\algcn}\times_{\algcn}\llog)\times_{\Delta^1}\{1\}\simeq\tang_{\algcn}\times_{\algcn}\llog.
    $$
    The functor $\Psi_1$ turns out to be an equivalence from Proposition\autoref{tlog=logcalg}.
\end{proof}
Now we consider  the $\infty$-category $\Der^\llog.$ The fully faithful  embedding $$\Der^\llog\hookrightarrow\Fun(\Delta^1,\MTL)$$
exhibits $\Der^\llog$ as a full subcategory of $$\Fun(\Delta^1,\MT_{\algcn})\times_{\Fun(\Delta^1,\algcn)}\Fun(\Delta^1,\llog).$$
Then it's easy  to see that we have a factorization
$$\begin{tikzcd}
\Der^{\llog} \arrow[d, "\Phi"]                                                  \\
{\Der\times_{\Fun(\Delta^1,\algcn)}\Fun(\Delta^1,\llog)} \arrow[d]        \\
{\Fun(\Delta^1,\MT_{\algcn})\times_{\Fun(\Delta^1,\algcn)}\Fun(\Delta^1,\llog)}
\end{tikzcd}$$
The functor $\Phi$ is fully faithful.
    Unwinding the definition of log derivations, the essential image of $\Phi$ is identified with the subcategory of 
$${\Der\times_{\Fun(\Delta^1,\algcn)}\Fun(\Delta^1,\llog)}$$
spanned by verticals $\partial$ satisfying the  following condition:
\begin{itemize}
    \item 
    The image of $\partial$ under the projection to the second factor$$\pi_2:\Der\times_{\Fun(\Delta^1,\algcn)}\Fun(\Delta^1,\llog)\rr \Fun(\Delta^1,\llog)$$ has the form of $(A,M)\rightarrow(A\oplus \tau_{\geq 0}(I[1]),M\oplus \tau_{\geq 0}(I[1]))$ for some $(A,M)\in\llog$ and $I[1]\in\Mod_A$. 
\end{itemize}
We denote by $\Der^{+}$ the $\infty$-category constructed in \cite[Notation 7.4.2.4]{lurie2017higher}. Informally, it's the subcategory of $\Der(\algcn)$ which has the objects $d:A\rightarrow I[1]$ with $I\in\Mod_A^{\rm cn}$, and  morphisms $f:(d:A\rightarrow I)\rightarrow(d':B\rightarrow J)$ that satisfy the condition that  the induced map $B\otimes_AI\rightarrow J$ is an equivalence of $A$-modules. Then we have a pullback square 
$$\begin{tikzcd}
{\Der^{\llog,+}} \arrow[d] \arrow[r] & \Der^+ \arrow[d] \\
\Der^{\llog} \arrow[r]               & \Der            
\end{tikzcd}$$
We also denote by $\mathbf{Def}^{+}$ the subcategory of $\Fun^+(\Delta^1,\algcn)$ constructed in \cite[Notation 7.4.2.6]{lurie2017higher}, spanned by square-zero extensions. Informally, objects in $\mathbf{Def}^+$ are square-zero extensions and morphisms in $\mathbf{Def}^+$ are deformations. There is a forgetful functor $\eta:\mathbf{Def}^{\llog,+}\rr\mathbf{Def}^+$
, and a pullback square
$$\begin{tikzcd}
{\mathbf{Def}^{\llog,+}} \arrow[d] \arrow[r] & \mathbf{Def}^+ \arrow[d] \\
\mathbf{Def}^{\llog} \arrow[r]               & \mathbf{Def}            
\end{tikzcd}$$
The above two pullback squares give rise to  fully faithful embeddings 
$$\Der^{\llog,+}\subset\Der^+\times_{\Fun(\Delta^1,\algcn)}\Fun(\Delta^1,\llog)$$
$$\mathbf{Def}^{\llog,+}\subset\mathbf{Def}^+\times_{\Fun(\Delta^1,\algcn)}\Fun(\Delta^1,\llog).$$
Then we can form a commutative diagram: 
$$\begin{tikzcd}
{\Der^{\llog,+}} \arrow[d, "\gamma^+"'] \arrow[r] & \Der^+ \arrow[d] \\
{\mathbf{Def}^{\llog,+}} \arrow[r, "\eta"]        & \mathbf{Def}^+  
\end{tikzcd}$$
\begin{lem}\label{12345436942}
The commutative diagram given as above is a pullback square.
\end{lem}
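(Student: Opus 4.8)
The plan is to exhibit the canonical comparison functor
$$F\colon \Der^{\llog,+}\rr \mathbf{Def}^{\llog,+}\times_{\mathbf{Def}^+}\Der^+$$
determined by the commuting square and prove that it is an equivalence. First I would reduce the assertion formally, using the two pullback squares already produced just above, namely $\Der^{\llog,+}\simeq\Der^{\llog}\times_{\Der}\Der^+$ and $\mathbf{Def}^{\llog,+}\simeq\mathbf{Def}^{\llog}\times_{\mathbf{Def}}\mathbf{Def}^+$. Combining the second with the identity $\mathbf{Def}^+\times_{\mathbf{Def}^+}\Der^+\simeq\Der^+$ and reassociating, the target of $F$ rewrites as $\mathbf{Def}^{\llog}\times_{\mathbf{Def}}\Der^+$, while the source rewrites as $\Der^{\llog}\times_{\Der}\Der^+$. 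The map $\Der^+\to\mathbf{Def}$ occurring here is $\Der^+\xrightarrow{\gamma^+}\mathbf{Def}^+\hookrightarrow\mathbf{Def}$, which agrees with $\Der^+\hookrightarrow\Der\xrightarrow{\gamma}\mathbf{Def}$; this compatibility is precisely the commutativity of the underlying-object square relating $\gamma$ and $\gamma^{\llog}$, which comes from the relative functor $\MT_{\algcn}\to\MTL$ constructed in the proof of Proposition\autoref{sqzero-logder}. Thus it suffices to show that $F$ induces an equivalence $\Der^{\llog}\times_{\Der}\Der^+\xrightarrow{\simeq}\mathbf{Def}^{\llog}\times_{\mathbf{Def}}\Der^+$, and I would prove this by checking that $F$ is fully faithful and essentially surjective.

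Both of these I would verify over a fixed underlying ring derivation $d\colon A\to I[1]$ of $\Der^+$, where $I$ is connective. The essential point is that connectivity of $I$ forces $I[1]$ to be $1$-connective, so $\tau_{\leq 0}(I[1])\simeq 0$. Over such a $d$, the source consists of the log enhancements of $d$: by the identification $\MTL\simeq\MT_{\algcn}\times_{\algcn}\llog$ of Lemma\autoref{tangentlogalg} and the description of $\Der^{\llog}$ through the fully faithful functor $\Phi$, such an enhancement is a log ring $(A,M)$ with the prescribed underlying ring $A$ together with a compatible monoid derivation $d_M\colon M\to I[1]$ refining $d$. The target consists of those log square-zero extensions $(A',M')\to(A,M)$ whose underlying ring extension is the one classified by $d$. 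By Remark\autoref{torsor=sqzero} such an extension amounts to the fixed ring extension $A'\to A$ together with an $I$-torsor $M'\to M$, i.e. a monoid map $M\to BI\simeq I[1]$ that is compatible with $d$ on units, with $M'$ its fiber.

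The two descriptions are matched by $d_M\mapsto \bigl(M'=\mathrm{fib}(d_M)\bigr)$ and, conversely, by recovering the classifying map of the torsor $M'\to M$; this assignment is an equivalence precisely because $I[1]$ is $1$-connective, so that a torsor determines its classifying map with no residual coconnective ambiguity. That the same module $I$ controls both the ring and the monoid parts, and that the compatibility of $d$ with $d_M$ is the correct one, is guaranteed by the identification $\tlog\times_{\llog}\{(A,M)\}\simeq\Mod_A$ of Proposition\autoref{tangentreplog}; hence the fiberwise equivalences are natural in $d$ and assemble into the required equivalence of fiber products, giving full faithfulness and essential surjectivity of $F$. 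The main obstacle is exactly this fiberwise identification: one must show that, once the underlying connective-kernel ring derivation is fixed, the monoid (log) datum of a log derivation and the monoid datum of a log square-zero extension carry identical information. This is the feature that fails at the non-plus level (compare \cite[Warning 7.4.1.10]{lurie2017higher} and the remark following Proposition\autoref{sqzero-logder}), and it is here that the $1$-connectivity hypothesis built into the $\mathbf{(+)}$-structures, the essential surjectivity of $\gamma^{\llog}$ from Proposition\autoref{sqzero-logder}, and Lurie's ring-level deformation theory \cite[Theorem 7.4.2.7]{lurie2017higher} all come into play.
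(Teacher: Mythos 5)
Your reduction of the statement to the assertion that $\Der^{\llog}\times_{\Der}\Der^+\rr\mathbf{Def}^{\llog}\times_{\mathbf{Def}}\Der^+$ is an equivalence is formally valid, and the ingredients you invoke (Proposition\autoref{sqzero-logder}, Remark\autoref{torsor=sqzero}, Proposition\autoref{tangentreplog}, connectivity of $I$, and Lurie's ring-level deformation theorem) are exactly the ones the paper relies on. But your route then diverges from the paper's: the paper proves essential surjectivity of the comparison functor directly (a point of the fiber product is a triple $(d,\phi,\sigma)$, and the equivalence $\sigma$ of underlying ring extensions forces the kernel of the log derivation producing $\phi$, supplied by Proposition\autoref{sqzero-logder}, to agree with $I$, so that derivation already lies in $\Der^{\llog,+}$ and hits the given point), and it gets full faithfulness essentially for free by exhibiting the source and the target of the comparison functor as full subcategories of one ambient category, $\Der^+\times_{\Fun(\Delta^1,\algcn)}\Fun(\Delta^1,\llog)$, compatibly with that functor. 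You instead compute fibers over each $d\in\Der^+$ and declare that the fiberwise equivalences ``assemble.''

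That assembly step is a genuine gap. For a functor between $\infty$-categories over a base, inducing an equivalence on every fiber does not imply being an equivalence: the inclusion $\partial\Delta^1\subset\Delta^1$, viewed over $\Delta^1$, is a fiberwise equivalence but not an equivalence. To argue fiberwise one needs both projections to $\Der^+$ to be fibrations of a suitable kind (left fibrations, or (co)Cartesian fibrations with the comparison functor preserving the distinguished edges). Nothing of this sort is established in your argument, or anywhere in the paper, for $\Der^{\llog}\rr\Der$ and $\mathbf{Def}^{\llog}\rr\mathbf{Def}$; verifying it (say, that both are coCartesian fibrations via pushforward of log structures and that $\gamma$ preserves coCartesian edges) is a substantive task comparable to the lemma itself, not a routine naturality check. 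A second, related defect: your fiberwise identification matches only objects (``log enhancements'' of $d$ versus $I$-torsors $M'\rightarrow M$). The fibers here are $\infty$-categories, not spaces --- a morphism in the fiber is a map of log structures on the fixed ring $A$ compatible with the derivations, respectively with the extensions --- so mapping spaces must be matched as well, and your torsor/classifying-map dictionary is silent about them. Both issues are precisely what the paper's common-embedding trick is designed to circumvent.
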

\begin{proof}
  We denote by $$\alpha:\Der^{\llog,+}\rr\Der^+\times_{\mathbf{Def}^+}\mathbf{Def}^{\llog,+}$$ the functor induced by the above commutative diagram. The functor $\alpha$ is essentially surjective. Indeed, an object $X$ in the right hand term is represented by a triple  $(d,\phi,\sigma),$ where $d: A\rightarrow I[1]$ is a derivation such that $I$ is connective, and $\phi:(A',M')\rightarrow(A,M)$ is a square-zero extension which fits into a pullback square in $\MTL$ as follows:
  $$\begin{tikzcd}
{(A',M')} \arrow[r, "\phi"] \arrow[d] & {(A,M)} \arrow[d, "d'"] \\
0 \arrow[r]                           & {J[1]}                 
\end{tikzcd}$$
and $\sigma$ is an equivalence of square-zero extensions $\sigma:(A''\rightarrow A)\stackrel{\simeq}\rightarrow (A'\rightarrow A)$, where $(A'\rightarrow A)$ is the  underlying square-zero extension of $\Einf$-rings of $\phi$ and $A'\rightarrow A$ is the square-zero extension of $\Einf$-rings induced by $d$.   Note that this implies that there is an $A$-module  equivalence $I[1]\simeq J[1]$. Therefore, the derivation $d'$ belongs to $\Der^{\llog,+},$ and the image of $d'$ under the functor $\alpha$ is equivalent to $X$.\\
Then we have to prove that the functor $\alpha$ is fully faithful. Using the embeddings
$$\Der^{\llog,+}\subset\Der^+\times_{\Fun(\Delta^1,\algcn)}\Fun(\Delta^1,\llog)$$ and
$$\mathbf{Def}^{\llog,+}\subset\mathbf{Def}^+\times_{\Fun(\Delta^1,\algcn)}\Fun(\Delta^1,\llog)$$ and then  we form a commutative diagram
$$\begin{tikzcd}
{\Der^{\llog,+}} \arrow[r, hook] \arrow[d, "\alpha"'] & {\Der^+\times_{\Fun(\Delta^1,\algcn)}\Fun(\Delta^1,\llog)} \arrow[d]                                                \\
{\mathbf{Def}^{\llog,+}} \arrow[r, hook]              & {\Der^+\times_{\mathbf{Def}^+}\mathbf{Def}^+\times_{\Fun(\Delta^1,\algcn)}\Fun(\Delta^1,\llog)} \arrow[d, "\simeq"] \\  & {\Der^+\times_{\Fun(\Delta^1,\algcn)}\Fun(\Delta^1,\llog)}     
\end{tikzcd}$$
Clearly, the functor $\alpha$ is fully faithful.
\end{proof}
Finally,  using Lemma\autoref{12345436942} and the deformation theory of $\Einf$-rings in \cite[Theorem 7.4.2.7]{lurie2017higher}, we get the desired result.
\subsection{Deformation theory of animated log rings}\label{deformationofanimated}
 The theory of animated log rings provided another building block of derived logarithmic geometry. The difference between $\Einf$-log rings and animated log rings is that the  animated log rings are required to have extra structures via the theorem of Barr-Beck-Lurie. As we have argued in \autoref{einfvsani}, the $\infty$-category of animated log rings can be viewed as the $\infty$-category of modules over $\llog$ with respect to a certain monad $T$. We can associate to every animated log ring $(A,M)$ an underlying $\Einf$-log ring structure $(A^\circ,M^\circ)$ via the forgetful functor $\llog^{\Delta}\stackrel{\Theta_{\rm Log}}\rightarrow\llog
_{\Z}\rightarrow\llog$. This functor admits a left adjoint $\Theta^L_{\rm Log}$, which is monadic, because the forgetful functor $\llog
_{\Z}\rightarrow\llog$ is monadic (because it is conservative, preserves arbitrary small limits, and geometric realizations). 
\begin{rmk}
    Even restricted to the characteristic zero case, the forgetful functor $\llog^\Delta_{\mathbb Q}\rightarrow\llog_{\mathbb Q}$ is not an equivalence.
\end{rmk}
\subsubsection{Algebraic log cotangent complexes and algebraic log derivations}
\begin{defn}
    The \textit{replete tangent bundle of animated log rings} is the fiber product $\ani\tang^{\rm rep}_{\llogd}:=\tlog\times_{\llog}\llogd$.
\end{defn}
\begin{rmk}
    The replete tangent bundle of animated log rings $\ani\tang^{\rm rep}_{\llogd}$ is not equivalent to the replete tangent bundle given by the forgetful functor $U:\llogd\rightarrow\alg^\Delta.$
\end{rmk}
Consider the following commutative diagram 
$$\begin{tikzcd}
       & \ani\tang^{\rm rep}_{\llogd} \arrow[d, "\widetilde{\Omega}^{\infty}"'] \arrow[r, "{\rm pr}_1"] & \tlog \arrow[d, "\Omega^\infty"]          \\
{\Fun(\Delta^1,\llog^{\Delta})} \arrow[r] \arrow[rd] & {\Fun^{\rm ext}(\Delta^1,\llog)\times_{\llog}\llog^{\Delta}} \arrow[d] \arrow[r]                & {\Fun^{\rm ext}(\Delta^1,\llog)} \arrow[d] \\
    & \llog^{\Delta} \arrow[r]   & \llog    
\end{tikzcd}$$
Here the functor $\Fun(\Delta^1,\llog^{\Delta})\rightarrow\Fun^{\rm ext}(\Delta^1,\llog)\times_{\llog}\llog^{\Delta}$ is given by the product $\lambda\times{\rm ev}_1$, where $\lambda$ is defined as the composition
$$\lambda:\Fun(\Delta^1,\llog^{\Delta})\stackrel{(-)^\circ}\rr\Fun(\Delta^1,\llog)\stackrel{\rm ex}\rr\Fun^{\rm }(\Delta^1,\llog)$$ 
\begin{lem}
The functor $\widetilde{\Omega}^{\infty}:\ani\tang^{\rm rep}_{\llogd}\rightarrow \Fun^{\rm ext}(\Delta^1,\llog)\times_{\llog}\llog^{\Delta}$ uniquely factors through a limit-preserving functor 
$\Omega^{\infty,\Delta}: \ani\tang^{\rm rep}_{\llogd}\rightarrow \Fun(\Delta^1,\llog^\Delta)$, such that the following diagram is commutative.
$$\begin{tikzcd}
\ani\tang^{\rm rep}_{\llogd} \arrow[d, "{\Omega^{\infty,\Delta}}"'] \arrow[r, "{\rm pr}_1"] & \tlog \arrow[d, "\Omega_{\rm Log}^\infty"] \\
{\Fun(\Delta^1,\llog^\Delta)} \arrow[r, "(-)^\circ"]                                        & {\Fun(\Delta^1,\llog)}                    
\end{tikzcd}$$
\end{lem}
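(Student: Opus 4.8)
The plan is to identify $\Omega^{\infty,\Delta}$ with the functor sending a pair $((A,M),I)\in\ani\tang^{\rm rep}_{\llogd}=\tlog\times_{\llog}\llogd$ to the \emph{animated split square-zero extension}
$$(A\oplus\tau_{\geq 0}I,\, M\oplus\tau_{\geq 0}I)\rr (A,M)$$
formed inside $\llogd$, and then to verify that this is the unique limit-preserving lift of $\widetilde{\Omega}^\infty$ along $\lambda\times{\rm ev}_1$. Here $I$ lives in the fiber $\tlog\times_{\llog}\{(A^\circ,M^\circ)\}\simeq\Mod_{A^\circ}$, and the extension makes sense because the connective module $\tau_{\geq 0}I$ may be regarded as an animated $A$-module under the identification $\Mod_A^{\rm cn}\simeq\Mod_{A^\circ}^{\rm cn}$ of connective module categories. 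First I would assemble this assignment into a functor: since $\widetilde{\Omega}^\infty$ is $\Omega^{\infty}\circ{\rm pr}_1$ paired with the structure projection to $\llogd$, it suffices to lift, functorially in $((A,M),I)$, the exact $\Einf$-log extension $\Omega_{\rm Log}^\infty((A^\circ,M^\circ),I)$ to a $\Delta^1$-diagram in $\llogd$ whose target is the prescribed $(A,M)$.

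The key compatibility to establish is that the forgetful functor $\Theta_{\rm Log}$ carries the animated split extension to the $\Einf$-log split extension, that is $(-)^\circ\circ\Omega^{\infty,\Delta}\simeq\Omega_{\rm Log}^\infty\circ{\rm pr}_1$, which is precisely the commuting square in the statement. I would deduce this from the fact, proved earlier, that $\Theta_{\rm Log}$ (and likewise $\Theta_{\rm alg}:\alg^\Delta\rightarrow\algcn$) preserves arbitrary limits and is conservative: the split square-zero extension is the value of the right adjoint $\Omega^{\infty}$ and is therefore built out of limits, so a limit-preserving forgetful functor commutes with its formation, while the coincidence $\Mod_A^{\rm cn}\simeq\Mod_{A^\circ}^{\rm cn}$ guarantees that the module datum is transported unchanged. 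Because the $\Einf$-log split extension is already exact, as recorded in the remark following Proposition\autoref{tangentreplog}, applying ${\rm ex}$ does nothing, so $\lambda\circ\Omega^{\infty,\Delta}=\Omega_{\rm Log}^\infty\circ{\rm pr}_1$; together with the observation that ${\rm ev}_1\circ\Omega^{\infty,\Delta}$ is the structure map to $\llogd$, this yields the factorization $\widetilde{\Omega}^\infty=(\lambda\times{\rm ev}_1)\circ\Omega^{\infty,\Delta}$.

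For uniqueness and limit-preservation I would argue as follows. Uniqueness of the lift I would obtain from the monadic presentation $\llogd\simeq{\mathbf L}\Mod_T(\llog)$ established in \autoref{einfvsani}: a $\Delta^1$-diagram in $\llogd$ with prescribed target and prescribed underlying $\Einf$-log diagram amounts to a pointwise $T$-module structure on that diagram compatible with the one carried by the target, and for the split extension this compatible structure is canonical and essentially unique, since $\Theta_{\rm Log}$ is conservative. Limit-preservation is then formal: limits in $\Fun(\Delta^1,\llogd)$ are computed pointwise and hence detected by ${\rm ev}_0$ and ${\rm ev}_1$; the composite ${\rm ev}_1\circ\Omega^{\infty,\Delta}$ is the limit-preserving projection $\ani\tang^{\rm rep}_{\llogd}\rightarrow\llogd$, whereas $\Theta_{\rm Log}\circ{\rm ev}_0\circ\Omega^{\infty,\Delta}\simeq{\rm ev}_0\circ\Omega_{\rm Log}^\infty\circ{\rm pr}_1$ preserves limits because $\Omega_{\rm Log}^\infty$ is a right adjoint and ${\rm pr}_1$ preserves limits, and a conservative limit-preserving functor such as $\Theta_{\rm Log}$ detects limits.

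The main obstacle will be the compatibility of $\Theta_{\rm Log}$ with the split square-zero extension. One must be careful precisely because, as Lurie observes, the tangent bundles and cotangent complexes of $\alg^\Delta$ and $\algcn$ genuinely differ, so there is no general reason for two square-zero-extension constructions to agree. The point is that this is the \emph{trivial} extension in the \emph{right-adjoint} ($\Omega^{\infty}$) direction: the discrepancy between the animated and $\Einf$ deformation theories is concentrated in the cotangent-complex (left-adjoint) direction, while the split extension is a limit that is preserved by the limit-preserving, conservative forgetful functors, and the connective module categories on the two sides coincide. Making the fiberwise assignment into a genuine functor with all higher coherences, and pinning down the uniqueness of the $T$-module lift, is the technical heart of the argument.
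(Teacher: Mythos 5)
Your route is genuinely different from the paper's: you pin down $\Omega^{\infty,\Delta}$ pointwise as the animated split square-zero extension $((A,M),I)\mapsto\bigl((A\oplus\tau_{\geq 0}I,\,M\oplus\tau_{\geq 0}I)\rightarrow(A,M)\bigr)$, whereas the paper never describes the functor on objects at all: it constructs the \emph{left} adjoint $\mathcal{L}$ as the left Kan extension of $\LL^G$ along $(-)^\Delta$, defines $\Omega^{\infty,\Delta}$ as the right adjoint of $\mathcal{L}$, and then both the commutativity of the square and the terminality needed for uniqueness come from the universal property of that construction, with monadicity entering only to check that the resulting comparison transformation is an equivalence. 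Your identification of the value of the functor is correct (it is forced by the universal property of $\LL^{G,\Delta}$), and your limit-preservation argument is sound: since $(-)^\circ$ is conservative and limit-preserving it detects limits, so it suffices that $(-)^\circ\circ\Omega^{\infty,\Delta}\simeq\Omega^\infty_{\rm Log}\circ{\rm pr}_1$ preserves limits. But the two central steps are not established by the arguments you give.

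First, the compatibility $(-)^\circ\circ\Omega^{\infty,\Delta}\simeq\Omega^{\infty}_{\rm Log}\circ{\rm pr}_1$: your justification --- that the split square-zero extension ``is the value of the right adjoint $\Omega^{\infty}$ and is therefore built out of limits, so a limit-preserving forgetful functor commutes with its formation'' --- is not a valid inference. The value of a right adjoint on an object is not a limit of any diagram one has independent access to, and a limit-preserving functor does not in general commute with a right adjoint; that commutation is exactly a Beck--Chevalley condition, equivalent (by passing to mates) to the assertion that the animated Gabber cotangent complex, restricted along the free functor $(-)^\Delta$, is computed by $\LL^G$ --- which is precisely what the paper arranges \emph{by construction} when it defines $\mathcal{L}$ as a left Kan extension. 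The statement you want is true and can be proved directly (for instance by observing that both composites out of the category of pairs of an animated log ring and a connective module preserve sifted colimits and agree on discrete polynomial generators), but some such argument must be supplied; your heuristic that the discrepancy between the two deformation theories is concentrated in the left adjoints is a restatement of the claim, not a proof of it. Second, uniqueness: conservativity of $\Theta_{\rm Log}$ does not make a $T$-module structure lifting a given $\Einf$-log diagram ``canonical and essentially unique.'' The fibers of a monadic functor are spaces of algebra structures, which need not be contractible; conservativity only guarantees that a \emph{given} morphism of lifts is an equivalence when its image under $(-)^\circ$ is one. To argue as the paper does, one first needs a natural transformation $\xi:\omega\rightarrow\Omega^{\infty,\Delta}$ from an arbitrary functor $\omega$ making the square commute; the paper manufactures $\xi$ from the terminality that its $\Omega^{\infty,\Delta}$ enjoys by virtue of being a right adjoint, and only then uses the identification $\Fun(\Delta^1,\llogd)\simeq{\mathbf{L}}\Mod_T(\Fun(\Delta^1,\llog))$ to see that $\xi$ is an equivalence. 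Your explicit model carries no established universal property, so you have no mechanism for producing $\xi$, and the uniqueness claim is left unsupported as written.
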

\begin{proof}
    We first prove the  existence of the functor $\Omega^{\infty,\Delta}.$  
    Note that all functors except $\Omega^{\infty,\Delta}$ in the commutative diagram are left adjointable. We have to prove that there is a colimit-preserving functor $\mathcal L:\Fun(\Delta^1,\llog^\Delta)\rightarrow\ani\tang^{\rm rep}_{\llogd},$ such that the following diagram commutes
    $$\begin{tikzcd}
{\Fun(\Delta^1,\llog)} \arrow[d, "\LL^G"] \arrow[r, "(-)^\Delta"] & {\Fun(\Delta^1,\llogd)} \arrow[d, "\mathcal L"] \\
\tang^{\rm rep}_{\llog} \arrow[r]                                  & \ani\tang^{\rm rep}_{\llogd}                    
\end{tikzcd}$$
Here the functor $(-)^\Delta$ is the left adjoint of the functor $(-)^\circ.$ We can choose the functor $\mathcal L$ as the left Kan extension of $\LL^G$ 
along the functor $(-)^\Delta.$  The right adjoint $\hat{\Omega}^{\infty,\Delta}$ of $\mathcal L$ satisfies the desired property, which is terminal in the $\infty$-category of all functors $\omega:\ani\tang^{\rm rep}_{\llogd}\rightarrow\Fun(\Delta^1,\llogd)$ such that the diagram appearing in this proposition commutes.\\
We prove the uniqueness of the functor $\Omega^{\infty,\Delta}$. Let $\omega:\ani\tang^{\rm rep}_{\llogd}\rightarrow\Fun(\Delta^1,\llogd)$ be any functor which makes the diagram in this proposition commute. There is a unique natural transformation $\xi:\omega\rightarrow\hat{\Omega}^{\infty,\Delta}$ according to the construction of the functor $\hat{\Omega}^{\infty,\Delta}$.
The forgetful functor $(-)^\circ$ is monadic. Let $T$ be the monad associated with $(-)^\circ.$ Let us identify the $\infty$-category $\Fun(\Delta^1,\llogd)$ with ${\mathbf{L}}\Mod_T(\Fun(\Delta^1,\llog))$ the $\infty$-category of $T$-modules in $\Fun(\Delta^1,\llog)$. It follows that  the equivalence of $\xi$ is detected by its underlying  natural transformation $(-)^\circ\circ \xi$. It's clear that the natural transformation  $(-)^\circ\circ \xi$ is an equivalence.
\end{proof}
\begin{defn}
    The algebraic Gabber's cotangent complex functor $$\LL^{G,\Delta}:\Fun(\Delta^1,\llog^{\Delta})\rightarrow\ani\tang^{\rm rep}_{\llog^\Delta}$$ is the left adjoint of the functor $\Omega^{\infty,\Delta}$.
\end{defn}
\begin{rmk}[Gabber's cotangent complex as derived functors]
    The algebraic Gabber's cotangent complex of a map of animated log rings $(A,M)\rightarrow(B,N)$ is characterized by the following universal properties 
    $$\map_{(A,M)//(B,N)}((B,N),(B\oplus J,N\oplus J))\simeq\map_B(\LL^{G,\Delta}_{(B,N)/(A,M)},J),J\in\Mod_B^{\rm cn}.$$
     The Gabber's cotangent complex of animated log rings is  originally defined as the non-abelian derived functor of the functor of modules of log differential forms, c.f. \cite[Definition 8.5]{olsson2005logarithmic}, \cite[Definition 4.7]{sagave2016derived} and \cite[Definition 3.5]{binda2023hochschild}. By definition, it's characterized by the same universal property with $\LL^{G,\Delta}$. This implies that the two definitions of Gabber's cotangent complex coincide.
\end{rmk}
\begin{defn}
    The \textit{replete tangent correspondence of animated log rings} $\ani\mathfrak M^{\rm rep}_{\llog^{\Delta}}$ is the fiber product 
    $\MTL\times_{\llog}\llog^\Delta.$
\end{defn}
\begin{defn}
    The $\infty$-category of \textit{algebraic log  derivations} $\Der^{\llog^\Delta}$  is the subcategory of $\Fun(\Delta^1, \ani\mathfrak M^{\rm rep}_{\llog^{\Delta}})$ spanned by those functors  
  $d:\Delta^1\rightarrow\ani\mathfrak M^{\rm rep}_{\llog^{\Delta}}$ such that the composition 
  $$\Delta^1\rr \ani\mathfrak M^{\rm rep}_{\llog^{\Delta}}\rr \Delta^1\times \llog^\Delta$$
  is the functor $\Delta^1\rightarrow \Delta^1\times\{(A,M)\}\subset \Delta^1\times \llog^\Delta$ for some animated log ring $(A,M)$.
\end{defn}
Unwinding the definition, we can form a commutative diagram of $\infty$-categories 
$$\begin{tikzcd}
\Der^{\llog^\Delta} \arrow[d, hook] \arrow[r, "\simeq"]                         & {\Der^{\llog}\times_{\Fun(\Delta^1,\llog)}\Fun(\Delta^1,\llog^\Delta)} \arrow[d, hook] \\
{\Fun(\Delta^1,\ani\mathfrak M^{\rm rep}_{\llog^{\Delta}})} \arrow[r, "\simeq"] & {\Fun(\Delta^1,\MTL)\times_{\Fun(\Delta^1,\llog)}\Fun(\Delta^1,\llog^\Delta)}         
\end{tikzcd}$$
\subsubsection{Algebraic deformation theory}
\begin{defn}
    Let $(A,M)$ be an animated log ring, and let $I\in\Mod_A$ be an $A$-module. A \textit{square-zero extension}  of $A$ with kernel $I$ is a strict  morphism of animated log rings  $i:(A',M')\rightarrow (A,M)$, such that $A'$ is a square-zero  extension of $A$ with kernel $I$.
\end{defn}

\begin{rmk}
    Obviously there is a pullback square of $\infty$-categories 
    $$\begin{tikzcd}
\mathbf{Def}^{\llog^\Delta} \arrow[r] \arrow[d] & \mathbf{Def}^{\llog} \arrow[d] \\
{\Fun(\Delta^1,\llog^\Delta)} \arrow[r]         & {\Fun(\Delta^1,\llog)}        
\end{tikzcd}$$
\end{rmk}
\begin{defn}
Let $(R,L)$ be an animated log ring, and let $(R',L')$ be a square-zero extension of $(R,L)$ with kernel $I\in\Mod_R$. Let $(A,M)$ be an $(R,L)$-algebra,
a deformation of $(A,M)$ to $(R',L')$ is a 
square-zero extension  $(A',M')$ of $(A,M)$ with kernel $J\in\Mod_A$, such that there is an equivalence  $J\simeq I_A=I\otimes_RA$ of $A$-modules, and the following diagram 
\[\begin{tikzcd}
	{(R',L')} & {(R,L)} \\
	{(A',M')} & {(A,M)}
	\arrow[from=1-1, to=1-2]
	\arrow[from=1-1, to=2-1]
	\arrow[from=2-1, to=2-2]
	\arrow[from=1-2, to=2-2]
\end{tikzcd}\]
is a pushout square in $\llog^\Delta$. 
\end{defn}

\begin{defn}
    Let $\mathbf{Def}^{\llog^\Delta,+}$ be the subcategory of $\mathbf{Def}^{\llog^\Delta}$ spanned by the following data:
    \begin{enumerate}
        \item Objects are same with those in $\mathbf{Def}^{\llog^\Delta}$;
        \item An morphism   $((R',L')\rightarrow (R,L))\rightarrow ((A',M')\rightarrow(A,M))$ in $\mathbf{Def}^{\llog^\Delta}$ belongs to $\mathbf{Def}^{\llog^\Delta,+}$ if and only if $(A',M')$ is a  deformation of $(A,M)$ to $(R',L').$
    \end{enumerate}
        \end{defn}

    \begin{defn}
            Let $\Der^{\llog^\Delta,+}$ be the subcategory  of $\Der^{\llog\Delta}$ as follows:
\begin{enumerate}
    \item Objects are those derivations $d:(A,M)\rightarrow I[1]$ such that $I$ is connective.
    \item Morphisms are those mosprhisms having the form $f:(d:(A,M)\rightarrow I)\rightarrow(d':(B,N)\rightarrow J)$ such that the induced map $I\otimes_AB\rightarrow J$ is an equivalence.
\end{enumerate}
\end{defn}
\begin{rmk}
    There are pullback squares
    $$\begin{tikzcd}
{\Der^{\llog^\Delta,+}} \arrow[d] \arrow[r] & {\Der^{\llog,+}} \arrow[d] &  & {\mathbf{Def}^{\llog^\Delta,+}} \arrow[r] \arrow[d] & {\mathbf{Def}^{\llog,+}} \arrow[d] \\
{\Fun(\Delta^1,\llog^\Delta)} \arrow[r]     & {\Fun(\Delta^1,\llog)}     &  & {\Fun(\Delta^1,\llog^\Delta)} \arrow[r]             & {\Fun(\Delta^1,\llog)}            
\end{tikzcd}$$
\end{rmk}

    \begin{rmk}\label{anifromeinf}
    We have a functor $\gamma^+:\Der^{\llog^\Delta,+}\rightarrow\mathbf{Def}^{\llog^\Delta,+}$ same as that defined in Theorem\autoref{deform}
\end{rmk}
We have the analogous of Theorem\autoref{deform} in the animated context.
\begin{thm}[Algebraic deformation theory]\label{deformani}
    The functor $\gamma^+:\Der^{\llog^\Delta,+}\rightarrow\mathbf{Def}^{\llog^\Delta,+}$ induced from $\gamma$ is a left fibration.
\end{thm}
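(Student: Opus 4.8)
The plan is to identify the animated functor $\gamma^+\colon\Der^{\llog^\Delta,+}\rightarrow\mathbf{Def}^{\llog^\Delta,+}$ as a base change of its $\Einf$-theoretic counterpart from Theorem\autoref{deform}, and then to conclude by invoking the stability of left fibrations under pullback. Concretely, I would first assemble the commutative square
\[\begin{tikzcd}
{\Der^{\llog^\Delta,+}} \arrow[d, "\gamma^+"'] \arrow[r, "p"] & {\Der^{\llog,+}} \arrow[d, "\gamma^+"] \\
{\mathbf{Def}^{\llog^\Delta,+}} \arrow[r, "q"]               & {\mathbf{Def}^{\llog,+}}
\end{tikzcd}\]
in which $p$ and $q$ are the horizontal comparison functors supplied by the two pullback squares recorded in the Remark preceding the statement, and the vertical maps are the two incarnations of $\gamma^+$ (the animated one being, by Remark\autoref{anifromeinf}, defined by exactly the same construction as the $\Einf$ one). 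The heart of the argument is to show that this square is Cartesian.

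To prove the square is Cartesian I would use the pasting lemma for pullback squares. Stacking the square above on top of the given $\mathbf{Def}$-pullback square produces
\[\begin{tikzcd}
{\Der^{\llog^\Delta,+}} \arrow[r, "p"] \arrow[d, "\gamma^+"'] & {\Der^{\llog,+}} \arrow[d, "\gamma^+"] \\
{\mathbf{Def}^{\llog^\Delta,+}} \arrow[r, "q"] \arrow[d]       & {\mathbf{Def}^{\llog,+}} \arrow[d] \\
{\Fun(\Delta^1,\llog^\Delta)} \arrow[r, "(-)^\circ"]          & {\Fun(\Delta^1,\llog)}
\end{tikzcd}\]
whose lower square is Cartesian by hypothesis. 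By the pasting lemma \cite[Lemma 4.4.2.1]{lurie2009higher}, the upper square is Cartesian as soon as the outer rectangle is. The outer rectangle has right-hand vertical leg the composite $\Der^{\llog,+}\xrightarrow{\gamma^+}\mathbf{Def}^{\llog,+}\rightarrow\Fun(\Delta^1,\llog)$, and I would check that this composite agrees with the native projection $\Der^{\llog,+}\rightarrow\Fun(\Delta^1,\llog)$ appearing in the given $\Der$-pullback square: both record the base object $(A,M)$ of a derivation $d\colon(A,M)\rightarrow I[1]$, since $\gamma$ sends $d$ to a square-zero extension $(A',M')\rightarrow(A,M)$ with the same base and the projection on the deformation side evaluates at this base. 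Granting this, the outer rectangle coincides with the given $\Der$-pullback square, hence is Cartesian, and the upper square is Cartesian.

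Finally, Theorem\autoref{deform} asserts that the right-hand vertical $\gamma^+\colon\Der^{\llog,+}\rightarrow\mathbf{Def}^{\llog,+}$ is a left fibration. Since the class of left fibrations is characterized by a right lifting property and is therefore stable under base change \cite[Section 2.1.2]{lurie2009higher}, the Cartesian square exhibits the animated $\gamma^+\colon\Der^{\llog^\Delta,+}\rightarrow\mathbf{Def}^{\llog^\Delta,+}$ as a pullback of a left fibration, whence it is itself a left fibration. I expect the main obstacle to be the compatibility verification in the middle step~---~namely that the projections of $\Der^{\llog,+}$ and $\mathbf{Def}^{\llog,+}$ to $\Fun(\Delta^1,\llog)$ are reconciled by $\gamma^+$~---~because the derivation and deformation data project to $\Fun(\Delta^1,\llog)$ through a priori different shapes, and one must trace carefully through the definitions of the replete tangent correspondence $\MTL$ and of $\gamma$ to see that $\gamma^+$ respects the common base $(A,M)$.
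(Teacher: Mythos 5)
Your high-level strategy is exactly the paper's: the paper's (very terse) proof also exhibits the animated $\gamma^+$ as a base change of the $\Einf$-theoretic $\gamma^+$ of Theorem~\autoref{deform} along the forgetful functors, and then concludes from Theorem~\autoref{deform} together with stability of left fibrations under pullback. The difference is that the paper simply \emph{asserts} the Cartesian square, citing Remark~\autoref{anifromeinf}, whereas you try to derive it by pasting against the two pullback squares of the preceding Remark --- and it is precisely in that derivation that your argument breaks, at the step you yourself flagged as the main obstacle.

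The compatibility you need is false. In the $\Der$-pullback square of the Remark, the vertical map $\Der^{\llog,+}\rightarrow\Fun(\Delta^1,\llog)$ is the one inherited from the unwinding $\Der^{\llog^\Delta}\simeq\Der^{\llog}\times_{\Fun(\Delta^1,\llog)}\Fun(\Delta^1,\llog^\Delta)$, i.e.\ it is induced by the projection $\MTL\rightarrow\Delta^1\times\llog\rightarrow\llog$; but by the very definition of a log derivation the composite $\Delta^1\rightarrow\MTL\rightarrow\llog$ is constant, so this map sends a derivation $d:(A,M)\rightarrow I[1]$ to the \emph{identity} arrow ${\rm id}_{(A,M)}$. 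The composite $\Der^{\llog,+}\stackrel{\gamma^+}\rightarrow\mathbf{Def}^{\llog,+}\hookrightarrow\Fun(\Delta^1,\llog)$ instead sends $d$ to the square-zero extension $(A',M')\rightarrow(A,M)$, which is not invertible in general. Since these are genuinely different functors to $\Fun(\Delta^1,\llog)$, your outer rectangle is not the Remark's $\Der$-pullback square, and the pasting lemma yields nothing. The discrepancy is not cosmetic: the fiber product along the constant map consists of pairs (an $\Einf$-derivation $d$, an animated enhancement of the \emph{base} $(A,M)$), while the fiber product along the other map consists of pairs ($d$, an animated enhancement of the \emph{entire arrow} $(A',M')\rightarrow(A,M)$). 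Identifying these two fiber products amounts to showing that an animated structure on the target determines, compatibly with $\gamma$, an essentially unique animated enhancement of the whole square-zero extension --- which is equivalent to the Cartesian-ness of the comparison square you set out to prove, so the route is circular. The way to fill the gap is to prove the Cartesian square directly from the definitions, as the paper implicitly does: identify both $\Der^{\llog^\Delta,+}$ and $\Der^{\llog,+}\times_{\mathbf{Def}^{\llog,+}}\mathbf{Def}^{\llog^\Delta,+}$ with ``$\Einf$-datum plus animated enhancement,'' using that the forgetful functor $\llog^\Delta\rightarrow\llog$ is conservative and limit-preserving (so that the animated $\gamma^+$ of Remark~\autoref{anifromeinf} lifts the $\Einf$-one, and the animated pullback construction supplies the required enhancement of the extension from that of the base); only then invoke Theorem~\autoref{deform} and pullback stability of left fibrations.
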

\begin{proof}
  Using Remark\autoref{anifromeinf},  we have a pullback square of $\infty$-categories 
 $$ \begin{tikzcd}
{\Der^{\llog^\Delta,+}} \arrow[d, "\gamma^+"'] \arrow[r] & {\Der^{\llog,+}} \arrow[d, "\gamma^+"] \\
{\mathbf{Def}^{\llog^\Delta,+}} \arrow[r]                & {\mathbf{Def}^{\llog,+}}              
\end{tikzcd}$$
Then applying Theorem\autoref{deform}, we get the desired result.
\end{proof}
\section{Log stacks}\label{logstacks}
The development  of logarithmic geometry is due to the study of the geometry and  cohomology of singular varieties and schemes. The foundation of this theory was given by Kato \cite{Kato1988}, following the ideas of Fontaine-Illusie. A \textit{log scheme} is a triple  $(X,\mathcal M,\alpha)$, where $X$ is a scheme and $\alpha:\mathcal M\rightarrow\mathcal O_X$ is a map of sheaves of monoids, with an  extra condition $\mathcal O_X^*\times_{\mathcal O_X}\mathcal M\simeq\mathcal O^*_X$. Earlier than that, in Deligne's papers \cite{Deligne1}, \cite{Deligne2} and \cite{Deligne3} on the theory of mixed Hodge structures, he studied an original version of logarithmic geometry, in which  the geometric objects he considered are complex projective manifolds equipped  with  normal crossing divisors $(X,D)$, which should be thought  as varieties with boundary. We note that this approach is  naturally related to the singular varieties, as for any resolution of singularities (if it exists) $\pi:\widetilde{X}\rightarrow X$ of a variety $X$ over a field $k$, it gives rise to a pair $(\widetilde{X},E)$, where $E$ is the exceptional divisor.  One feature of logarithmic geometry is that it enlarges the notion of smoothness to log-smoothness, which allows us to treat a large class of singular schemes, for example, varieties with resolvable singularities and semi-stable schemes over DVRs, similarly to smooth ones.\\
On the other hand, the derived algebraic geometry studies higher structures of geometric objects in algebraic geometry. Many algebraic and topological invariants associated to schemes came from derived algebraic  geometry, such as cotangent complexes, topological Andr\'e-Quillen homology, topological Hochschild homology and so on. So the logarithmic geometry and derived algebraic geometry meet in the study of the logarithmic version of certain higher invariants.\\
In this section, we continue working in the $\Einf$-context. We will study the notion of \textit{charted log stacks}, which has been studied in \cite{sagave2016derived} in the context of simplicial log rings, and \textit{log structures} associated with spectral Deligne-Mumford stacks. Then we will study the moduli stack of all log stacks and will prove the descent property.
\subsection{Charted log stacks}
In classical logarithmic geometry,  a chart on a log scheme $(X,\mathcal M)$ is a map of monoids $\tau:M\rightarrow \Gamma(X,\mathcal M),$ which yields an isomorphism of sheaves of monoids $\tau':\underline{M}^a\stackrel{\simeq}\rr\mathcal M,$ 
where $\underline{M}$ is the constant sheaf on $X$ associated with $M$.
A charted log scheme is a quadruple $(X,\mathcal M,M,\tau),$ where $(X,\mathcal M)$
is a log scheme, and $\tau:M\rightarrow\Gamma(X,\mathcal M)$ is a chart of $(X,\mathcal M)$.
We will generalize this notion to spectral stacks in this subsection.
\subsubsection{Topologies on $\Einf$-log rings}
Let $f:(A,M)\rightarrow(B,N)$ be a morphism of $\Einf$-log rings, recall that we have constructed Gabber's cotangent complex $\LL^G_{(B,N)/(A,M)}$. Similar to derived algebraic geometry, we have the following definition:
\begin{defn}\label{aaabbbvvvddd}
We say that a morphism of $\Einf$-log rings $f:(A,M)\rightarrow(B,N)$ is
\begin{enumerate}
    \item 
    \textit{Formally log-smooth} if the cotangent complex $\LL^G_{(B,N)/(A,M)}$ is projective in $\Mod_B$, i.e., it is a direct summand of a free $B$-module. 
\item \textit{Formally log-\'etale} if the cotangent complex $\LL^G_{(B,N)/(A,M)}$ is contractible. 
\item \textit{Log-smooth} if it is formally smooth and the underlying ring map $f:A\rightarrow B$ is almost  finitely presented, c.f. \cite[Definition 7.2.4.26]{lurie2017higher}.
\item \textit{Log-\'etale} if it is formally \'etale and the underlying ring map $f:A\rightarrow B$ is almost  finitely presented.
    \end{enumerate}
\end{defn}
\begin{defn}
    The collection of strict \'etale (resp. log-\'etale) morphisms forms a Grothendieck topology on the $\infty$-category $\llog^{\rm op}$. We denote by $\llog^{\rm op}_{\set}$ (resp. $\llog^{\rm op}_{{\rm l}\et}$) the corresponding site.\\
    Let $(A,M)$ be an $\Einf$-log ring,
    denote by $(A,M)_{\set}$ (resp. $(A,M)_{{\rm l}\et}$) the subcategory of $\llog_{(A,M)/}$ spanned by strict \'etale (resp. log-\'etale) objects over $(A,M)$.
    \end{defn}
The following proposition gives an analogue in the context of higher algebra of a result due to Vidal about the nil-invariance of the classical log étale site, c.f. \cite{vidal2001}.
\begin{prop}\cite[Theorem 1.3]{lundemo2023deformation}\label{strictetalerig}
    Let $(A,M)$ be an $\Einf$-log ring, and let $(A',M')$ be a square-zero extension of $(A,M)$. Then the base change gives rise to  equivalences
    \begin{align*}-\otimes_{A'}A:(A',M')_{\set}\stackrel{\simeq}\rr(A,M)_{\set},\\(A',M')_{{\rm l}\et}\stackrel{\simeq}\rr(A,M)_{{\rm l}\et}.\end{align*}
\end{prop}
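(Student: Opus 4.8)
The plan is to prove the two equivalences separately: the strict étale case reduces to the topological invariance of the étale site of $\Einf$-rings, while the log-étale case is extracted from the deformation theory of Theorem\autoref{deform}. For the \textbf{strict étale case}, I would first record that a strict morphism $(A,M)\to(B,N)$ is log-étale precisely when its underlying ring map $A\to B$ is étale: by Lemma\autoref{Lstrict}(2) we have $\LL^G_{(B,N)/(A,M)}\simeq\LL_{B/A}$, so formal log-étaleness coincides with formal étaleness of $A\to B$, and the finite-presentation conditions agree. Thus the forgetful functor exhibits a natural equivalence $(A,M)_{\set}\simeq A_{\et}$, the monoid being reconstructed from $M$ by strictness. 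Any square-zero extension $(A',M')\to(A,M)$ induces an isomorphism $\pi_0A'\xrightarrow{\simeq}\pi_0A$ (the shifted kernel is $1$-connective, cf. \cite[Definition 7.4.1.6]{lurie2017higher}), and $M'$ is transported along base change by Remark\autoref{torsor=sqzero}; hence the claimed equivalence follows from $A'_{\et}\simeq(\pi_0A')_{\et}=(\pi_0A)_{\et}\simeq A_{\et}$ via \cite[Theorem 7.5.1.11]{lurie2017higher}.

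For the log-étale case the central input is an \textbf{infinitesimal lifting lemma}, which I would deduce from Theorem\autoref{deform} and Corollary\autoref{obstruction}: if $(A',M')\to(C',D')$ is formally log-étale and $(E',F')\to(E,F)$ is a square-zero extension of $(A',M')$-algebras with kernel $J$, then restriction
$$\map_{(A',M')}((C',D'),(E',F'))\rr\map_{(A',M')}((C',D'),(E,F))$$
is an equivalence. Indeed, the fibre over a fixed map $(C',D')\to(E,F)$ is the space of lifts, which by the deformation theory is controlled by $\map_{C'}(\LL^G_{(C',D')/(A',M')},J)$; since $\LL^G_{(C',D')/(A',M')}\simeq 0$ this space is contractible and the associated obstruction vanishes, so the restriction is an equivalence.

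Next I would verify \textbf{well-definedness and essential surjectivity}. Base change $-\otimes_{(A',M')}(A,M)$, formed in $\llog^{\otimes}$ via Proposition\autoref{tensorlog}, preserves log-étaleness: as $\LL^G$ is a left adjoint it satisfies $\LL^G_{(B_0,N_0)/(A,M)}\simeq\LL^G_{(B',N')/(A',M')}\otimes_{B'}B_0\simeq 0$, and étaleness of the underlying ring map is stable under pushout. For essential surjectivity, given a log-étale $(A,M)\to(B,N)$ I would deform it along $(A',M')\to(A,M)$: by Corollary\autoref{obstruction} the obstruction lies in $\mathrm{Ext}^2(\LL^G_{(B,N)/(A,M)},I_B)=0$, so a deformation $(B',N')$ exists and is unique up to contractible choice. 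It is again log-étale, since $\LL^G_{(B',N')/(A',M')}$ is connective and vanishes after $-\otimes_{B'}B$ (base change to $B$ recovers $\LL^G_{(B,N)/(A,M)}=0$), whence $\LL^G_{(B',N')/(A',M')}\simeq 0$ by a derived Nakayama argument over the nilpotent extension $B'\to B$; its base change recovers $(B,N)$.

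Finally, \textbf{full faithfulness} follows from the lifting lemma applied to $(C',D')=(B'_1,N'_1)$ and the square-zero extension $(B'_2,N'_2)\to(B_2,N_2)$ (the base change of $(A',M')\to(A,M)$ along the log-étale map $(A',M')\to(B'_2,N'_2)$): combined with the base-change adjunction this gives
$$\map_{(A',M')}((B'_1,N'_1),(B'_2,N'_2))\simeq\map_{(A,M)}((B_1,N_1),(B_2,N_2)).$$
I expect the main obstacle to be the essential-surjectivity step for the log-étale site: because log-étale maps need not be strict, the monoid part $N'$ genuinely varies along the deformation, so one must propagate the vanishing of the Gabber cotangent complex from $(B,N)$ to its deformation through a connectivity (Nakayama) argument that simultaneously controls the ring and the monoid, using that $I$ and the relevant cofibres are connective and that $\pi_0$ is preserved.
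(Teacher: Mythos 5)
Your treatment of the log-\'etale case is, in substance, the paper's own proof. The paper establishes full faithfulness by exactly your factorization: it reduces the comparison map to $\map_{(A',M')}((B',N'),(C',P'))\rr\map_{(A',M')}((B',N'),(C,P))$, writes the square-zero extension $(C',P')$ as a limit of trivial ones (so that it suffices to map into $(C\oplus J,P\oplus J)$ with $J$ $1$-connective), and identifies the fibres with $\map_{C}(C\otimes_B\LL^G_{(B,N)/(A,M)},J)\simeq 0$; essential surjectivity is quoted from Corollary\autoref{deform=sq0extension}, which is the same deformation-theoretic input as your appeal to Corollary\autoref{obstruction}. Your derived Nakayama argument (using that the kernel of a square-zero extension is a module over the quotient, so $P\simeq P\otimes_{B'}I\simeq(P\otimes_{B'}B)\otimes_BI\simeq 0$) correctly fills in a point the paper leaves implicit, namely that the deformation is again formally log-\'etale. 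Where you genuinely diverge is the strict \'etale case: the paper treats it in parallel with the log-\'etale case, using that $\LL^G_{(B,N)/(A,M)}\simeq\LL_{B/A}\simeq 0$ for strict \'etale maps (Lemma\autoref{Lstrict}), whereas you identify $(A,M)_{\set}\simeq A_{\et}$ and outsource the content to Lurie's \'etale rigidity. Both work; your route makes the strict case independent of the log deformation theory.

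However, your justification in the strict case contains an error. A square-zero extension here has \emph{connective} kernel $I$, so the fibre of $A'\rr A$ is $I$, not $I[1]$, and $\pi_0A'\rr\pi_0A$ is only a surjection with square-zero kernel, not an isomorphism: the trivial extension $A\oplus I$ with $I$ discrete and nonzero is a counterexample. The conclusion you need still holds, since $(\pi_0A')_{\et}\simeq(\pi_0A)_{\et}$ by invariance of the \'etale site under nilpotent surjections; alternatively apply \cite[Theorem 7.5.0.6]{lurie2017higher} directly to $A'\rr A$. Two smaller slips: the underlying ring map of a log-\'etale map is almost finitely presented, not \'etale, so in your base-change step the phrase should be that almost finite presentation is stable under pushout; and in the essential-surjectivity step you should also record that the deformed underlying map $A'\rr B'$ is again almost finitely presented (standard deformation theory for finiteness properties along square-zero extensions), since membership in the log-\'etale site requires this finiteness and not merely the vanishing of $\LL^G_{(B',N')/(A',M')}$.
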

\begin{proof}
   The two functors are essentially surjective by Corollary\autoref{deform=sq0extension}.  Now we need to prove that it's fully faithful. The two cases are similar, so we only give a proof for log-\'etale topology. Let $(B',N')$ and $(C',P')$ be objects in $(A',M')_{{\rm l}\et}$, and let $(B,N),(C,P)$ be the corresponding base changes to $(A,M)$ respectively. We have to prove the following equivalence 
   $$\map_{(A',M')}((B',N'),(C',P'))\stackrel{\simeq}\rr\map_{(A,M)}((B,N),(C,P)).$$
   Note that we have a factorization 
$$\begin{tikzcd}
{\map_{(A',M')}((B',N'),(C',P'))} \arrow[r] \arrow[d] & {\map_{(A,M)}((B,N),(C,P))} \\
{\map_{(A',M')}((B',N'),(C,P))} \arrow[ru, "\simeq"'] &                            
\end{tikzcd}$$
   We need to show the map
   $$\map_{(A',M')}((B',N'),(C',P'))\rr\map_{(A',M')}((B',N'),(C,P))$$ is an equivalence. Since the mapping space functor preserves limits with respect to the second variable, we just need to show the equivalence for letting $(C',P')$ be $(C\oplus I,P\oplus I)$ for some $1$-connective $C$-module $I.$ In this case, the fiber of this map at an arbitrary point $g\in\map_{(A',M')}((B',N'),(C,P))$ is equivalent to the mapping space 
   $$\map_{C}(C\otimes_B\LL^G_{(B,N)/(A,M)},I),$$
   which is contractible because $\LL^G_{(B,N)/(A,M)}\simeq 0$ by assumption.
\end{proof}
\subsubsection{Precharted log stacks}
    \begin{defn}
  A \textit{precharted log stack} is a sheaf over the site $\llog_{\set}.$ Denote by $\llog\stk^{\rm prechart}:=\shv(\llog^{\rm op}_{\set},\ani)$  the $\infty$-category of precharted log stacks.
  \end{defn}
  \begin{defn}
    Let $\spec(A,M):\llog_{\set}\rightarrow\ani$ be the functor $\map((A,M),-).$ We call it the \textit{charted log-affine scheme} associated with $(A,M)$.
\end{defn}

\begin{prop}\cite[Proposition 7.13]{sagave2016derived}\label{strictdescent}
    The functor $\spec(A,M)$
    satisfies hyperdescent with respect to strict \'etale topology.
\end{prop}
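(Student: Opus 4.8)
The plan is to verify the hyperdescent condition directly, reducing it to the descent of the underlying $\Einf$-ring and of the $\Einf$-monoid that records the log structure. Fix $(A,M)\in\llog$, an $\Einf$-log ring $(B,N)$, and a strict \'etale hypercover $(B,N)\to(B^\bullet,N^\bullet)$ (an augmented cosimplicial object in $\llog$). The goal is to show that the canonical map
$$\map_{\llog}((A,M),(B,N))\rr\varprojlim_{[n]\in\Delta}\map_{\llog}((A,M),(B^n,N^n))$$
is an equivalence. First I would use that $\llog\subset\plog$ is full and stable under limits, together with the limit presentation of $\plog$ from \autoref{1234543456} in its adjoint form $\plog\simeq\lim(\alg\xrightarrow{\Omega^\infty}\mon\xleftarrow{{\rm ev}_1}\Fun(\Delta^1,\mon))$, to split the mapping anima as a pullback
$$\map_{\llog}((A,M),(B,N))\simeq\map_{\alg}(A,B)\times_{\map_{\mon}(M,\Omega^\infty B)}\map_{\mon}(M,N).$$
Since finite limits commute with the totalization $\varprojlim_{\Delta}$, it then suffices to establish the descent equivalence separately for the three cosimplicial anima obtained by applying $\map_{\alg}(A,-)$, $\map_{\mon}(M,\Omega^\infty(-))$, and $\map_{\mon}(M,-)$ to $B^\bullet$, $B^\bullet$, and $N^\bullet$.

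The ring-level statements are the easy ones. The underlying maps $B\to B^\bullet$ form an \'etale hypercover of connective $\Einf$-rings, so $B\simeq\varprojlim_\Delta B^\bullet$ in $\alg$ by \'etale (indeed flat) hyperdescent for the structure sheaf \cite{lurie2018spectral}. Because $\map_{\alg}(A,-)$ preserves limits, this immediately gives $\map_{\alg}(A,B)\simeq\varprojlim_\Delta\map_{\alg}(A,B^\bullet)$. For the second factor, $\Omega^\infty\colon\alg\to\mon$ preserves limits, so $\Omega^\infty B\simeq\varprojlim_\Delta\Omega^\infty B^\bullet$, and applying the limit-preserving functor $\map_{\mon}(M,-)$ settles that term as well. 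Thus the entire problem reduces to the single remaining assertion $N\simeq\varprojlim_\Delta N^\bullet$ in $\mon$, after which $\map_{\mon}(M,-)$ again disposes of the third term.

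This last step is where strictness is indispensable, and I would argue it through the characteristic monoid. Write $\overline N:=N\sqcup_{GL_1(B)}0$ for the cofiber (quotient) of the unit map $GL_1(B)\to N$, and similarly $\overline{N^n}$. Each $(B^n,N^n)$ is, by strictness, the logification of the pulled-back prelog structure on $B^n$; since logification does not alter the part away from units, the induced map $\overline N\to\overline{N^n}$ is an equivalence, so the cosimplicial object $\overline{N^\bullet}$ is constant with value $\overline N$ and hence $\varprojlim_\Delta\overline{N^\bullet}\simeq\overline N$. On the unit part, $GL_1$ is a right adjoint and, using \autoref{loopgl} together with \'etale descent for $\mathbb G_m$ on $\pi_0$, it carries $B\simeq\varprojlim_\Delta B^\bullet$ to $GL_1(B)\simeq\varprojlim_\Delta GL_1(B^\bullet)$. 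Feeding these two computations into the defining pushout/(co)fiber sequences $GL_1(B^n)\to N^n\to\overline{N^n}$ and passing to the totalization (a limit, which preserves such sequences) yields $\varprojlim_\Delta N^\bullet\simeq N$. Reassembling the three terms through the pullback decomposition of the first paragraph then proves the proposition, and since each ingredient was handled for an arbitrary hypercover, the conclusion is genuine hyperdescent.

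The main obstacle is precisely this fourth step: one must check that forming the log structure on the cover (a \emph{colimit}, namely logification) is compatible with the totalization (a \emph{limit}) along the hypercover, and this compatibility fails for general morphisms. It is exactly the strictness hypothesis—equivalently, the rigidity of the characteristic monoid $\overline N$ under strict \'etale base change—that forces logification to commute with descent here. I expect the only real work to be the careful bookkeeping of the (co)fiber sequences $GL_1\to N\to\overline N$ in $\mon$ and the verification that $GL_1$ preserves the relevant cosimplicial limits, both of which I would reduce to the $\mathbb G_m$-descent already available at the ring level and the constancy of $\overline N$.
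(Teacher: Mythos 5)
Your reduction is sound up to the last step, and it is essentially the same reduction the paper makes: splitting the mapping anima as a pullback, disposing of the ring factor by flat hyperdescent, and isolating the single claim $N\simeq\varprojlim_{\Delta}N^\bullet$ in $\mone$ is the right shape of argument, and your two auxiliary inputs are genuine facts of the paper (units are detected flat-locally, which is Lemma\autoref{1884310}, and the quotient $\overline{N^\bullet}$ is constant along strict flat maps, which follows from the coCartesian square of Lemma\autoref{18843}; note that even this second point needs faithful flatness, since logification quotients $N$ by the \emph{preimage} of the new units, and only Lemma\autoref{1884310} identifies that preimage with $GL_1(B)$). The genuine gap is the sentence ``passing to the totalization (a limit, which preserves such sequences) yields $\varprojlim_\Delta N^\bullet\simeq N$''. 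The sequence $GL_1(B^n)\rr N^n\rr\overline{N^n}$ is a \emph{cofiber} sequence, i.e.\ a pushout in $\mone$, and totalizations do not preserve pushouts; indeed the entire difficulty of the proposition is exactly the failure of the pushout formula $N^n\simeq N\sqcup_{GL_1(B)}GL_1(B^n)$ to commute with $\varprojlim_\Delta$, so this step assumes what is to be proved. Reading the sequence instead as a \emph{fiber} sequence does not rescue it: even granting that each square is Cartesian, taking limits only produces a map of Cartesian squares $N\rr\varprojlim_\Delta N^\bullet$ over $\overline{N}$ which is an equivalence on the base and on the fiber over the \emph{unit} component. Since $\overline{N}$ is not group-like, this says nothing about the fibers over its other components (for the log structure generated by $p$ on $\Z_p$ one has $\overline{N}\simeq\N$, and the fibers over $n\geq 1$ are the nontrivial $\Z_p^{\times}$-torsors $p^n\Z_p^{\times}$), so it does not identify the middle terms.

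The paper closes exactly this hole with the exactification formalism rather than with the sequence $GL_1\rr N\rr\overline{N}$: by Lemma\autoref{18843} every strict faithfully flat map is \emph{exact}; exact arrows are closed under limits (the inclusion $\Fun^{\rm ext}(\Delta^1,\llog)\subset\Fun(\Delta^1,\llog)$ is a relative right adjoint by Proposition\autoref{3245672}), so the augmentation $N\rr\varprojlim_\Delta N^\bullet$ is again exact with equivalent group completions, and the universal property of exactification (Lemma\autoref{Cf}) then forces it to be an equivalence. If you want to keep your fiberwise picture, you would have to upgrade it to an actual torsor-descent argument: exhibit each $N^n\rr\overline{N}$ fiberwise as a $GL_1(B^n)$-torsor, invoke flat descent for torsors, and check the comparison map on \emph{every} fiber using conservativity of the forgetful functor $\mone\rr\ani$. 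That can be made to work, but it is a different and longer proof than the bookkeeping you describe.
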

\begin{lem}\label{1884310}
    Let $f:A\rightarrow B$ be a faithfully flat map of connective $\Einf$-rings. Then one has $GL_1(A)\simeq GL_1(B)\times_{\Omega^\infty B}\Omega^{\infty}A$.
\end{lem}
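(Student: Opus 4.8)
The plan is to reduce the claimed equivalence to a statement about path components of infinite loop spaces, and ultimately to the classical fact that invertibility of an element descends along a faithfully flat map of ordinary commutative rings. First I would invoke \autoref{loopgl}, which identifies $GL_1(R)\simeq\Omega^{\infty}R\times_{\pi_0 R}\pi_0(R)^*$ for any connective $\Einf$-ring $R$. Since $\pi_0(R)^*\hookrightarrow\pi_0 R$ is the inclusion of a subset of the discrete set $\pi_0 R$, this exhibits $GL_1(R)$ as the union of those path components of $\Omega^{\infty}R$ lying over the units of $\pi_0 R$; in particular $GL_1(R)\to\Omega^{\infty}R$ is a monomorphism of anima, with fibers empty or contractible.

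Next I would compute the fiber product. The canonical inclusion $GL_1(B)\to\Omega^{\infty}B$ is a monomorphism onto a union of path components, so its base change along $\Omega^{\infty}A\to\Omega^{\infty}B$ is again a monomorphism onto a union of path components of $\Omega^{\infty}A$; concretely, $GL_1(B)\times_{\Omega^{\infty}B}\Omega^{\infty}A$ is the subspace of $\Omega^{\infty}A$ spanned by those components whose image in $\pi_0 B$ (under the map $\pi_0 A\to\pi_0 B$) is a unit. Comparing with the previous step, the natural comparison map $GL_1(A)\to GL_1(B)\times_{\Omega^{\infty}B}\Omega^{\infty}A$ induced by the evident square is a map over $\Omega^{\infty}A$ between two unions of path components of $\Omega^{\infty}A$. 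Hence it is an equivalence if and only if the two corresponding subsets of $\pi_0 A$ agree, i.e. if and only if for every $x\in\pi_0 A$ one has $x\in\pi_0(A)^*$ precisely when its image $f(x)\in\pi_0 B$ lies in $\pi_0(B)^*$.

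The last step carries the only genuine content. Flatness of $f$ gives that $\pi_0 A\to\pi_0 B$ is a flat map of ordinary commutative rings, and faithful flatness of $f$ means exactly that $\spec\pi_0 B\to\spec\pi_0 A$ is surjective; together these say $\pi_0 A\to\pi_0 B$ is faithfully flat. The implication that a unit maps to a unit is formal. For the converse, given $x$ with $f(x)$ invertible, I would consider the multiplication-by-$x$ endomorphism $\mu_x$ of $\pi_0 A$; after base change along the faithfully flat map $\pi_0 A\to\pi_0 B$ it becomes multiplication by the unit $f(x)$, hence an isomorphism, and faithfully flat descent of isomorphisms of modules forces $\mu_x$ itself to be an isomorphism, so $x$ is a unit in $\pi_0 A$.

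The main obstacle is bookkeeping rather than deep mathematics: the care lies in the second step, where one must verify that the iterated homotopy fiber product $GL_1(B)\times_{\Omega^{\infty}B}\Omega^{\infty}A$ really is the union of path components of $\Omega^{\infty}A$ indexed by $\{x\in\pi_0 A : f(x)\in\pi_0(B)^*\}$, so that the equivalence of anima is cleanly reduced to the elementary $\pi_0$-level statement. Once this reduction is in place, the faithfully flat descent of invertibility is standard.
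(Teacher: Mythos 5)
Your proposal is correct and follows essentially the same route as the paper: both arguments reduce the equivalence to the statement that invertibility in $\pi_0$ descends along the faithfully flat map $\pi_0 A\to\pi_0 B$, and both prove that statement by noting that multiplication by $x$ becomes an isomorphism after faithfully flat base change, hence is one. The only difference is in the bookkeeping step, where the paper computes the homotopy groups of the fiber product via a fiber sequence and its long exact sequence, while you instead use the (arguably cleaner) observation that $GL_1\hookrightarrow\Omega^{\infty}$ is an inclusion of path components and that such inclusions are stable under pullback.
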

\begin{proof}
    Consider  the fiber sequence
\[\Omega^{\infty+1} B\rr \Omega^{\infty}A\times_{\Omega^{\infty}B}GL_1(B)\rr \Omega^{\infty}A\times GL_1(B).\] 
It induces a long exact sequence of homotopies
$$\rr\pi_{*+1}(B)\rr\pi_*(\Omega^{\infty}A\times_{\Omega^{\infty}B}GL_1(B))\rr\pi_*(A)\times\pi_*(GL_1(B))\rr\pi_*(B)\rr$$
For $*\neq 0$, we have $\pi_*(B)\simeq\pi_*(GL_1(B))$. Hence, the exact sequence splits when $*\geq 1$, so we have an isomorphism of abelian groups $\pi_*(\Omega^{\infty}A\times_{\Omega^{\infty}B}GL_1(B))\simeq\pi_*(A),*\neq 0,$ and 
$\pi_0(A\times_BGL_1(B))\simeq \pi_0(A)\times_{\pi_0(B)}\pi_0(GL_1(B)).$
Since $A\rightarrow B$ is faithfully flat, $\pi_0(A)\rightarrow\pi_0(B)$ is faithfully flat too.
It is therefore enough to show the lemma for discrete rings. This is clear. Indeed, 
let $f:A\rightarrow B$ be a faithfully flat map of discrete  rings. Clearly, we have $GL_1(A)\subseteq f^{-1}(GL_1(B)).$  On the other hand, let $x$ be an element in $f^{-1}(GL_1(B))$, we will construct an inverse of $x$. The $A$-module map $\phi_x: A\rightarrow A, a\mapsto ax$ is an isomorphism because $\phi_x\otimes_A B$ is an isomorphism. Therefore, there exists $y\in A$ such that $\phi_x(y)=xy=1.$
\end{proof}
\begin{lem}\label{18843}
    Let $f:(A,M)\rightarrow (B,N)$ be a strict morphism such that the underlying map $A\rightarrow B$ of connective $\Einf$-rings is faithfully flat, then 
$N\simeq M\coprod_{GL_1(A)}GL_1(B)$ 
and $f$ is exact.
\end{lem}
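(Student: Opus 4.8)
The plan is to treat the two assertions separately, first establishing the explicit formula for $N$ and then deducing exactness from it. For the first claim I would unwind strictness: by definition $f$ strict means $(B,N)\simeq(A,M)\otimes_A B$, so $N$ is the logification of the $\Einf$-prelog ring $(B,M)$ whose structure map is $M\to\Omega^\infty A\to\Omega^\infty B$. By the defining coCartesian square of the logification, $N$ is the pushout of $GL_1(B)\longleftarrow M\times_{\Omega^\infty B}GL_1(B)\longrightarrow M$. The core computation is to identify $M\times_{\Omega^\infty B}GL_1(B)$ with $GL_1(A)$. Since $M\to\Omega^\infty B$ factors through $\Omega^\infty A$, associativity of pullbacks gives $M\times_{\Omega^\infty B}GL_1(B)\simeq M\times_{\Omega^\infty A}\bigl(\Omega^\infty A\times_{\Omega^\infty B}GL_1(B)\bigr)$. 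I then apply Lemma~\autoref{1884310} to the faithfully flat map $A\to B$ to get $\Omega^\infty A\times_{\Omega^\infty B}GL_1(B)\simeq GL_1(A)$, and finally the log condition for $(A,M)$, namely $M\times_{\Omega^\infty A}GL_1(A)\simeq GL_1(A)$. Substituting back yields $N\simeq M\coprod_{GL_1(A)}GL_1(B)$, with $GL_1(A)\hookrightarrow M$ the unit inclusion and $GL_1(A)\to GL_1(B)$ induced by $A\to B$.

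For exactness I would reduce to a cartesian square. Recall from the definition that $f\colon M\to N$ is exact iff the identity factorization exhibits $M$ as terminal in $\CC(f)$. I claim it suffices to show that the square with vertices $M,N,M^{\rm gp},N^{\rm gp}$, with horizontal maps $f,f^{\rm gp}$ and vertical maps the group completions, is cartesian. Indeed, given any $M\xrightarrow{a}M'\xrightarrow{b}N$ in $\CC(f)$ with $a^{\rm gp}$ an equivalence, the two maps $b\colon M'\to N$ and $M'\to M'^{\rm gp}\xrightarrow{(a^{\rm gp})^{-1}}M^{\rm gp}$ agree after composing to $N^{\rm gp}$ (by naturality of group completion and $f=b\circ a$), so cartesianness of the square produces a canonical map $M'\to M^{\rm gp}\times_{N^{\rm gp}}N\simeq M$ over $N$ and under $M$, exhibiting $M$ as terminal. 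Hence exactness follows once the square is shown cartesian.

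To prove cartesianness I would first group-complete the defining pushout: group completion preserves colimits and $GL_1(A),GL_1(B)$ are already group-like, so $N^{\rm gp}\simeq M^{\rm gp}\coprod_{GL_1(A)}GL_1(B)$. This presents the two horizontal maps $f$ and $f^{\rm gp}$ as pushouts of the common map $GL_1(A)\to GL_1(B)$ along $GL_1(A)\to M$ and $GL_1(A)\to M^{\rm gp}$ respectively. The main obstacle is the homotopy-coherent interchange of this pushout with the pullback defining $M^{\rm gp}\times_{N^{\rm gp}}N$, which is not formal since $\mone$ is not stable. I expect the cleanest route is to test the map $M\to M^{\rm gp}\times_{N^{\rm gp}}N$ on homotopy groups, using faithful flatness of $A\to B$ to control $\pi_\ast$ exactly as in Lemma~\autoref{1884310}, thereby reducing to the discrete statement. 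There the verification is transparent: writing $N=(M\times GL_1(B))/GL_1(A)$, an element $x\in M^{\rm gp}$ with class $[x,1]\in N$ must satisfy $x=m\cdot u$ with $m\in M$ and $u\in GL_1(A)\subseteq M$, so $x\in M$, identifying the pullback with $M$. Packaging the homotopical bookkeeping that descends this discrete computation from $B$ down to $A$ is the step demanding the most care.
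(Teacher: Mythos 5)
Your first assertion and its proof coincide with the paper's: unwind strictness as logification of $(B,M)$, identify $M\times_{\Omega^\infty B}GL_1(B)\simeq M\times_{\Omega^\infty A}\bigl(\Omega^\infty A\times_{\Omega^\infty B}GL_1(B)\bigr)\simeq M\times_{\Omega^\infty A}GL_1(A)\simeq GL_1(A)$ via Lemma\autoref{1884310} and the log condition on $(A,M)$, and read off $N\simeq M\coprod_{GL_1(A)}GL_1(B)$ from the logification pushout. Your reduction of exactness to Cartesianness of the square on $M$, $N$, $M^{\rm gp}$, $N^{\rm gp}$ is also acceptable: strictly speaking, producing a map $M'\rightarrow M$ in $\CC(f)$ for every $M'$ does not exhibit $M$ as terminal (one needs contractible mapping spaces), but applying your construction to the terminal object $M^{\rm ex}$ and checking both composites with the canonical map $M\rightarrow M^{\rm ex}$ are identities gives $f\simeq f^{\rm ex}$, which is all the paper's definition of exactness requires. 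Note also that your target statement is equivalent to the paper's key claim that $M\rightarrow N\rightarrow GL_1(B)/GL_1(A)$ is a fiber sequence, since $M^{\rm gp}\simeq{\rm fib}(N^{\rm gp}\rightarrow GL_1(B)/GL_1(A))$ by stability of group-like objects.

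The genuine gap is your proof that this square is Cartesian. You propose to test $M\rightarrow M^{\rm gp}\times_{N^{\rm gp}}N$ on homotopy groups, "using faithful flatness to control $\pi_*$ exactly as in Lemma\autoref{1884310}, thereby reducing to the discrete statement." This cannot work as stated: group completion of $\Einf$-monoids is not controlled on homotopy groups. For example, the free $\Einf$-monoid $\coprod_{n\geq 0}B\Sigma_n$ has group completion $\Omega^\infty\Sigma^\infty S^0$, whose homotopy groups are the stable stems; thus $\pi_*(M^{\rm gp})$ and $\pi_*(N^{\rm gp})$, which enter your pullback, are not functions of $\pi_*(M)$ and $\pi_*(N)$ at all. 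Your discrete verification only controls $\pi_0$ (where $\pi_0$ does commute with pushouts and group completion), and faithful flatness only controls the map $GL_1(A)\rightarrow GL_1(B)$, not the group completions. Some additional mechanism is needed to kill the group-completion error terms. The paper's mechanism is to prove that the square $GL_1(A),GL_1(B),M,N$ is \emph{also} Cartesian, which is a one-line computation using the log condition on the \emph{target}: $M\times_NGL_1(B)\simeq M\times_N\bigl(N\times_{\Omega^\infty B}GL_1(B)\bigr)\simeq M\times_{\Omega^\infty B}GL_1(B)\simeq GL_1(A)$, and then to run your factorization argument through the resulting fiber sequence $M\rightarrow N\rightarrow GL_1(B)/GL_1(A)$ -- an input (the log condition on $(B,N)$) that never appears in your sketch. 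An alternative patch along your lines would compare fibers over $N$: by stability the gp-square is a pullback of connective spectra, so the fibers of $M^{\rm gp}\times_{N^{\rm gp}}N\rightarrow N$ are empty or torsors under ${\rm fib}(GL_1(A)\rightarrow GL_1(B))$, and one matches them with the fibers of $M\rightarrow N$ using the $GL_1(A)$-torsor $M\times GL_1(B)\rightarrow N$; only after that does the problem collapse to your $\pi_0$ computation. Either way, the step you yourself flag as "demanding the most care" is not bookkeeping -- it is where the missing idea lives, and the method you propose for it fails.
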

\begin{proof}
Here $N$ is equivalent to the logification of the $\Einf$-prelog ring given by $M\rightarrow \Omega^{\infty}A\rightarrow\Omega^\infty B$. We note that 
\begin{align*}
M\times_{\Omega^\infty B}GL_1(B)&\simeq M\times_{\Omega^\infty A}\Omega^{\infty}A\times_{\Omega^\infty B}GL_1(B)\\&\simeq M\times_{\Omega^\infty A}GL_1(A)\\&\simeq GL_1(A)
\end{align*}
by Lemma\autoref{1884310}. We also have
\begin{align*}
M\times_NGL_1(B)&\simeq M\times_N(GL_1(B)\times_{\Omega^\infty B}N)\\
&\simeq M\times_{\Omega^\infty B}GL_1(B)\\
&\simeq M\times_{\Omega^\infty A}\Omega^\infty A\times_{\Omega^\infty B}GL_1(B)\\
&\simeq M\times_{\Omega^\infty A}GL_1(A)\\&\simeq GL_1(A)
\end{align*}
This implies that the commutative diagram 
$$\begin{tikzcd}
GL_1(A) \arrow[d] \arrow[r] & GL_1(B) \arrow[d] \\
M \arrow[r]                 & N                
\end{tikzcd}$$
is both Cartesian and coCartesian. Now, we will show that the map $M\rightarrow N$ is exact. Assume that there is a factorization $M\rightarrow M'\rightarrow N$ such that $M'$ is an exactification of $M$ over $N$, we have to show that $M\simeq M'$. We have a commutative diagram of $\Einf$-monoids with exact rows
$$\begin{tikzcd}
M' \arrow[rd] \arrow[dd, "{\rm gp}"', bend right] &                                   &                                     \\
M \arrow[d, "{\rm gp}"] \arrow[r] \arrow[u]       & N \arrow[d, "{\rm gp}"] \arrow[r] & GL_1(B)/GL_1(A) \arrow[d, "\simeq"] \\
M^{\rm gp} \arrow[r]                              & N^{\rm gp} \arrow[r]              & GL_1(B)/GL_1(A)                    
\end{tikzcd}$$
By the assumption $M'^{\rm gp}\simeq M^{\rm gp}$, we have that the composition $M'\rightarrow N\rightarrow GL_1(B)/GL_1(A)$ is vanishing, and therefore it uniquely factors through $M$. It shows that $M\simeq M'$ because $M$ also has the universal property of exactification.
\end{proof}
\begin{proof}[Proof of Proposition\autoref{strictdescent}]
    Let $f^\bullet:(B,N)\rightarrow (C^\bullet,P^\bullet)$ be a hypercover of $(B,N)$ with respect to  strict \'etale topology. We have to show that the induced map $\lim f^\bullet:(B,N)\rightarrow \lim_{\Delta}(C^\bullet,P^\bullet)=:(C^{-1},P^{-1})$ is an equivalence. By Lemma\autoref{18843}, it is also a hypercover consisting of exact maps. This means that the map $(B,N)\rightarrow(C^{-1},P^{-1}) $ could be calculated in the $\infty$-category $\Fun^{\rm ext}(\Delta^1,\llog)$. In particular, we have $N\rightarrow P^{-1}$ is exact and $N^{\rm gp}\simeq (P^{-1})^{\rm gp}$. By the universal property of exactness, we have $N\simeq
 P^{-1}$.
\end{proof}
\begin{rmk}
    According to the proof of Proposition\autoref{strictdescent}, we see that the functor $\spec(A,M)$ also satisfies the \textbf{strict flat hyperdescent}.
\end{rmk}
Besides strict \'etale topology, the log-\'etale covers also define a topology on $\llog,$ which is strictly  stronger than the strict \'etale topology.   But there is an issue that this topology is not subcanonical. We have the following example:
\begin{exmp}
Let $(A,M)$ be an $\Einf$-log ring. We can construct two different log structures on the $\Einf$-ring $A\times A$. Let $M^a$ be the logification of the diagonal  map $$M\rr \Omega^{\infty}A\stackrel{\Delta}\rr\Omega^\infty(A\times A),$$
    and let 
    $$M\times M\rr\Omega^\infty(A\times A) $$
    be the product of the  log structure $M\rightarrow \Omega^\infty A.$
    There is a natural morphism $(A\times A,M^a)\rightarrow(A\times A,M\times M),$ which is a log-\'etale cover. However, the induced {\v C}ech nerve of this map is the constant  diagram $\{(A\times A,M\times M)\}^{\infty}_{n=0}.$ This implies that the log-\'etale topology  is not subcanonical.
\end{exmp}
\subsubsection{Moduli functor of precharted log structures}
Applying Lemma\autoref{Lstrict},  one has that there is an adjoint pair  $$\begin{tikzcd}
\algcn_{\et} \arrow[r, "i", shift left] & \llog_{\set} \arrow[l, "U", shift left]
\end{tikzcd}$$
of sites. Here $U$ is the forgetful functor to the underlying $\Einf$-rings, and $i$ is the natural embedding from the $\infty$-category of connective $\Einf$-rings to the $\infty$-category of $\Einf$-log rings.
The induced functor 
$$U^*:\stk\rr\llog\stk^{\rm prechart}$$
preserves both small limits and colimits, and it follows that it admits a right adjoint $U_*$ and a left adjoint $U_!$. In particular, one can see that the functor $U_!$ is equivalent to the left Kan extension of the composition $\llog^{\rm op}\stackrel{U}\rightarrow\alg^{\rm cn
,op}\stackrel{\spec}\rightarrow\stk$ along the inclusion  $\llog^{\rm op}\subset\llog\stk^{\rm prechart}$. We have the following commutative diagram
$$\begin{tikzcd}
\llog^{\rm op} \arrow[r] \arrow[rr, "{(A,M)\mapsto\spec A}", bend left] \arrow[rd] & {\alg^{\rm cn,op}} \arrow[r]   & \stk\\
    & \llog\stk^{\rm prechart} \arrow[ru, "U_!"'] &           
\end{tikzcd}$$
We also denote by $\underline X:=U_!(X)$ a precharted log stack $X\in\llog\stk^{\rm prechart}$, and refer to it as the \textit{underlying  stack} of $X$. 
Let $\underline f:\underline X\rightarrow \underline Y$ be a morphism of stacks, and let $Y\in\llog\stk^{\rm prechart}\times_{\stk}\{\underline Y\}$, i.e., $Y$ be a precharted log stack whose underlying stack is equivalent to $\underline Y$. One can directly check that the  projection map $Y\times_{U^*\underline Y}U^*\underline X\rightarrow Y$ is a Cartesian edge  with respect to $\underline f.$ We will denote  $X$  by $\underline f^*Y$ in this case. We deduce that the functor $U_!$ is a Cartesian fibration.\\
\begin{defn}
    We say that a morphism $f:X\rightarrow Y$ is strict, if it is a Cartesian edge with respect to the underlying  map of stacks $\underline f:\underline X\rightarrow\underline Y.$
\end{defn}
\begin{rmk}
    By definition, the strictness is stable under base change and composition. 
\end{rmk}
\begin{lem}\label{pullbackaff=aff}
    Let $\alpha:\spec A\rightarrow\spec B$ be a map of affine spectral schemes, and let $(B,N)$ be an $\Einf$-log ring, then we have $\alpha^*\spec(B,N)\simeq\spec(A,N)^a$. So the strictness of maps of $\Einf$-log rings and that of maps of charted log-affine schemes coincide.
\end{lem}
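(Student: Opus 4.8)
The plan is to reduce the pullback $\alpha^{*}$ --- which is defined via the Cartesian edges of the fibration $U_{!}\colon\llog\stk^{\rm prechart}\to\stk$ --- to a pushout of $\Einf$-log rings, and then to recognise that pushout through the strict-morphism characterization. First I would record the value of $U^{*}$ on affines: for a connective $\Einf$-ring $R$ one has $U^{*}\spec R\simeq\spec(R,GL_1(R))$, i.e.\ $U^{*}$ equips a scheme with its trivial log structure. This uses the adjunction $U_{!}\dashv U^{*}$ together with the identity $U_{!}\spec(A,M)\simeq\spec A$: testing against an arbitrary representable gives natural equivalences $\map(\spec(A,M),U^{*}\spec R)\simeq\map_{\algcn}(R,A)\simeq\map_{\llog}\big((R,GL_1(R)),(A,M)\big)\simeq\map(\spec(A,M),\spec(R,GL_1(R)))$, where the middle step is the adjunction between the forgetful functor and the trivial-log-structure functor $i$. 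As representables generate $\llog\stk^{\rm prechart}$ under colimits and $\map(-,Y)$ sends colimits to limits, the two objects agree.

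Next, by the definition of $\alpha^{*}\spec(B,N)$ as the Cartesian edge of $U_{!}$ lying over $\alpha\colon\spec A\to\spec B$, I would write
\[
\alpha^{*}\spec(B,N)\;\simeq\;\spec(B,N)\times_{U^{*}\spec B}U^{*}\spec A\;\simeq\;\spec(B,N)\times_{\spec(B,GL_1(B))}\spec(A,GL_1(A)),
\]
the two structure morphisms corresponding to the log-ring maps $(B,GL_1(B))\to(B,N)$ (the structure map of $N$) and $(B,GL_1(B))\to(A,GL_1(A))$ (induced by the underlying ring map $\phi\colon B\to A$ encoded by $\alpha$). Since the $\spec(-)$ are sheaves by Proposition\autoref{strictdescent}, fiber products of them in $\llog\stk^{\rm prechart}$ are computed pointwise as in presheaves; and because $\spec(A,M)=\map_{\llog}((A,M),-)$ is corepresentable, this fiber product is the representable sheaf on the corresponding pushout in $\llog$:
\[
\spec(B,N)\times_{\spec(B,GL_1(B))}\spec(A,GL_1(A))\;\simeq\;\spec\!\big((B,N)\amalg_{(B,GL_1(B))}(A,GL_1(A))\big).
\]

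It then remains to identify this pushout. Observing that $(A,GL_1(A))\simeq(B,GL_1(B))\otimes_{B}A$ is the base change of the trivial log structure along $\phi$, the pushout is precisely the base change $(B,N)\otimes_{B}A$ of $(B,N)$ along $\phi$. The strict-morphism characterization (Remark\autoref{admissiblelocal}) --- that a base change $(-)\otimes_{B}A$ returns the logification of the induced prelog structure --- identifies $(B,N)\otimes_{B}A$ with the logification of the prelog ring on $A$ whose structure map is $N\to\Omega^{\infty}B\xrightarrow{\Omega^{\infty}\phi}\Omega^{\infty}A$, i.e.\ with $(A,N)^{a}$. Hence $\alpha^{*}\spec(B,N)\simeq\spec(A,N)^{a}$.

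For the final assertion, a morphism $g\colon\spec(A,M')\to\spec(B,N)$ corresponds to a log-ring map $(B,N)\to(A,M')$ and has underlying stack map $\alpha$; by definition $g$ is strict iff it is Cartesian for $U_{!}$, i.e.\ iff $\spec(A,M')\simeq\alpha^{*}\spec(B,N)\simeq\spec(A,N)^{a}$, equivalently $(A,M')\simeq(A,N)^{a}$, which is exactly the condition that $(B,N)\to(A,M')$ be strict as a map of $\Einf$-log rings. I expect the only real difficulty to be bookkeeping: keeping the variances consistent (the contravariance of $\spec$, the direction $\phi\colon B\to A$ of the underlying ring map, and the chain $U_{!}\dashv U^{*}\dashv U_{*}$), and confirming that the fiber product of sheaves genuinely computes the $\llog$-pushout, for which Proposition\autoref{strictdescent} is the essential input. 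No estimate or new construction is needed; all the content is absorbed into the identification $(B,N)\otimes_{B}A\simeq(A,N)^{a}$.
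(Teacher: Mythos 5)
Your proof is correct and takes essentially the same route as the paper's: both compute $\alpha^*\spec(B,N)$ as the fiber product $\spec(B,N)\times_{U^*\spec B}U^*\spec A$, identify this with $\spec$ of the pushout $(B,N)\otimes_{(B,GL_1(B))}(A,GL_1(A))$ formed in $\llog$, and recognize that pushout as the logification $(A,N)^a$. The additional steps you supply --- the verification that $U^*\spec R\simeq\spec(R,GL_1(R))$, the appeal to Proposition \ref{strictdescent} so that the fiber product of sheaves is computed pointwise, and the explicit check of the final strictness assertion --- are details the paper's proof leaves implicit, not a different argument.
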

\begin{proof}
The logification $(A,N)^a$ is identified with the pushout $(B,N)\otimes_{(B,GL_1(B))}(A,GL_1(A))$. Here, the tensor product is formed in the $\infty$-category $\llog$.   We  have the following equivalences of sheaves over the strict \'etale site\begin{align*}
\alpha^*\spec(B,N)&\simeq\spec(B,N)\times_{U^*\spec B}U^*\spec A\\ 
 &\simeq \spec((B,N)\otimes_{(B,GL_1(B))}(A,GL_1(A))\\
 &\simeq \spec(A,N)^a.
\end{align*}
\end{proof}
Let $X$ be a precharted log stack, and let $\underline{X}$ be its underlying stack $U_!(X)$, We have a Cartesian fibration $$U^X_!:\llog\stk^{\rm prechart}_{/X}\rr\stk_{/\underline{X}}.$$ 
Let $\underline Y\rightarrow\underline  X$ be a map of stacks. We denote by $\widehat{\R\llog}_{X}^{\rm prechart}(\underline Y)$  the fiber of $U^X_!$ at $\underline Y,$ and denote by $\R\llog^{\rm prechart}_{X}$ its core.  This defines  prestacks 
$$\widehat{\R\llog}^{\rm prechart}_{X}:\stk^{\rm op}_{/\underline{X}}\rr \cat, \underline Y\mapsto\widehat{\R\llog}_{X}^{\rm prechart}(\underline Y)$$
and
$$\R\llog^{\rm prechart}_{X}:\stk^{\rm op}_{/\underline{X}}\rr \ani, \underline Y\mapsto\R\llog_{X}^{\rm prechart}(\underline Y).$$
The functor $\R\llog_{X}^{\rm prechart}$ should be thought of as the moduli functor of precharted log stacks which are lying  over $X.$
\subsubsection{Deformation theory of stacks} 
In this subsection, we recall the basic theory of deformations of stacks, developed in \cite{lurie2004derived} and \cite{lurie2018spectral}.
Recall that for a prestack $X\in\Fun(\algcn,\ani)$, there is a notion of $\infty$-category of quasi-coherent complexes  defined in \cite[Section II.6.2]{lurie2018spectral}. We have seen  that the tangent bundle formalism  $\tang_{\algcn}\rightarrow\algcn$ is a presentable fibration. In particular, it's a coCartesian fibration,  and hence it determines a functor $$\Mod:\algcn\rr\cat,$$
which sends a connective $\Einf$-ring $A$ to the $\infty$-category of modules over $A$. Passing to right Kan extension, we get the  functor of the $\infty$-category of quasi-coherent complexes  
$$\qcoh:\Fun(\algcn,\ani)^{\rm op}\rr\cat.$$
Informally, one can refer to the $\infty$-category of quasi-coherent complexes  $\qcoh(X)$ as the limit $$\qcoh(X):={\varprojlim}_{(R,f)}\Mod_R,$$ 
where $(R,f)$ runs over all connective $\Einf$-rings lying over $X$, i.e., all maps $f:\spec R\rightarrow X$. What's more, there is  a spectral geometric variant \cite[Proposition 6.2.3.1]{lurie2018spectral} of Grothendieck's descent theorem for classical quasi-coherent sheaves  in algebraic geometry,
which asserts that the $\infty$-category of quasi-coherent complexes  is stable under flat sheafification and satisfies flat descent. 
\begin{defn}\cite[Definition 17.2.4.2]{lurie2018spectral}
Let $f:X\rightarrow S$ be a map of prestacks. We say that $f$ admits a cotangent complex if there exists an almost connective quasi-coherent complex $\LL_{X/S}\in\qcoh(X)$, satisfying the following property:\\
 For any connective $\Einf$-ring $R$, and any connective $R$-module $I,$ and any $x\in X(R)$, the fiber of the following natural map
    $$X(R\oplus I)\rr X(R)\times_{S(R)}S(R\oplus I)$$
    at the point $(x,\widetilde{f(x)}),$ where $\widetilde{f(x)}$ is the following map
    $$\spec(R\oplus I)\rightarrow\spec R\rr X\rr S,$$ which is denoted by $\mathfrak{D}_{X/S}(x,I)$, is functorially equivalent to the mapping space $\map_R(x^*\LL_{X/S},I).$
\end{defn}
\begin{rmk}
Informally, the anima $\mathfrak{D}_{X/S}(x,I)$ is equivalent to the anima consisting of the following liftings
$$\begin{tikzcd}
\spec R \arrow[d] \arrow[r]                   & X \arrow[d] \\
\spec(R\oplus I) \arrow[r] \arrow[ru, dashed] & S          
\end{tikzcd}$$
in which the left vertical  arrow is induced by the trivial square-zero extension, and the lower horizontal map is the map
$$\spec(R\oplus I)\rr\spec R\rr X\rr S.$$ From this point of view, one can see that the cotangent complex controls  the relative square-zero thickenings of relative prestacks. 
\end{rmk}
\begin{defn}\cite[Definition 17.3.1.5, 17.3.2.1 and 17.3.4.1]{lurie2018spectral}
Let $X$ be a prestack, we say that $X$ is:
\begin{enumerate}
    \item 
     \textit{Infinitesimally cohesive}, if for any pullback square  of connective $\Einf$-rings 
    \[\begin{tikzcd}
	{A'} & A \\
	{B'} & B
	\arrow[from=1-1, to=1-2]
	\arrow[from=1-1, to=2-1]
	\arrow[from=1-2, to=2-2]
	\arrow[from=2-1, to=2-2]
\end{tikzcd}\]
such that the maps $A\rightarrow B$ and $B'\rightarrow B$ are surjective on $\pi_0$ with  nilpotent kernels. the following diagram 
\[\begin{tikzcd}
	{X(A')} & {X(A)} \\
	{X(B')} & {X(B)}
	\arrow[from=1-1, to=1-2]
	\arrow[from=1-1, to=2-1]
	\arrow[from=1-2, to=2-2]
	\arrow[from=2-1, to=2-2]
\end{tikzcd}\]
is a pullback square.
\item  \textit{Nilcomplete}, if for any connective $\Einf$-ring $A$, we have an equivalence $X(A)\simeq\varprojlim_n X(\tau_{\leq n}A).$
\item 
\textit{Integrable}, if for any 
 complete connective $\Einf$-ring $A$, with respect to a finitely generated ideal $\mathfrak a\subseteq\pi_0(A)$, we have an equivalence $X(A)\simeq\varprojlim_n X(A/\mathfrak a^n)=\map(\spf A,X).$
\end{enumerate}
Let $f:X\rightarrow Y$ be a map of prestacks. We say that  $f$ is infinitesimally cohesive (resp. nilcomplete or integrable) if for any connective $\Einf$-ring $R$, and any map $\spec R\rightarrow Y,$ the pullback $X_R:=X\times_Y\spec R$ is infinitesimally cohesive (resp. nilcomplete or integral) as prestacks over $\algcn_R$.
\end{defn}

\begin{defn}\cite[Definition 3.4.3]{lurie2004derived}\cite[Definition 4.6]{Antieau_2014}
Let $f:X\rightarrow Y$ be a map of prestacks. We say that $f$ is
\begin{enumerate}
    \item 
    \textit{Formally smooth} if it is  {nilcomplete},  {infinitesimally cohesive}, and admits a cotangent complex $\LL_{X/Y}$, which is dual to a connective  perfect quasi-coherent complex. 
     \item 
     \textit{Smooth}, if it is  formally smooth and  locally of finite presentation.
     \item \textit{Formally \'etale}, if it is formally smooth and $\LL_{X/Y}\simeq 0.$
     \item \textit{\'Etale}, if it is smooth and formally \'etale.
     \end{enumerate}
\end{defn}

\subsubsection{Deformation theory of charted log stacks}
In this subsection, we will define the notion of charted log stacks and study the deformation property of the moduli functor $\R\llog_X^{\rm chart}$ of charted log stacks. We will prove that the functor $\R\llog_X^{\rm chart}$  admits a cotangent complex, and when restricted to charted log-affine schemes, it coincides with Gabber's cotangent complex.\\
In classical logarithmic geometry, a charted log scheme is a quadruple $(X,\mathcal{M},P,u)$, consisting of a log scheme $(X,\mathcal{M})$ (we assume that $X$ is quasi-compact and quasi-separated)   and a chart, i.e., a map of sheaves of monoids $u:\underline P\rightarrow\mathcal{M}$, where $\underline P$ is a constant sheaf induced from a monoid $P$.  We refer to \cite[Section 1.1]{beilinson2011crystalline} for more details on classical charted log schemes. We will generalize this to the $\Einf$-context.
\begin{defn}
    A precharted log stack $X$ is called a \textit{charted log stack} if, for any $\Einf$-ring $R$ and any map $f:\spec R\rightarrow\underline X$ from $\spec R$ to its underlying stacks, the pullback $f^*X$ is charted log-affine.
\end{defn}
In particular, if $X$ is a charted log stack whose underlying stack $\underline X\simeq\spec R$ is affine, then $X$ is charted log-affine, i.e., there is a unique $\Einf$-log ring $(R,L)$, such that $X\simeq\spec(R,L)$.
\begin{defn}
    Let $X$ be a charted log stack. We define the $\infty$-category $\llog\stk^{\rm chart}$ of charted log stacks  as the subcategory of $\llog\stk^{\rm prechart}$ spanned by charted log stacks, and define
    the
    moduli  $\R\llog_{X}^{\rm chart}$ of charted log stacks lying over $X$ as the subfunctor of $\R\llog^{\rm prechart}_X$ spanned by charted objects.
\end{defn}
\begin{rmk}
    One can see that the natural projection $\llog\stk^{\rm chart}\rightarrow\stk$ is equivalent to the Grothendieck construction associated with the right Kan extension of the functor $\algcn\rightarrow\cat$ which sends an $\Einf$-ring $A$ to the $\infty$-category of $\Einf$-log rings with underlying $\Einf$-rings equivalent to $A$.
\end{rmk}
\begin{lem}\label{chartaffcover}
    Let $X$ be a charted log stack, such that the underlying stack $\underline X$ is representable by a quasi-compact and quasi-separated   spectral Deligne-Mumford stack $(\mathcal X,\mathcal O_{\mathcal X})$. Then there is an $\Einf$-log ring $(R^0,L^0),$ and a strict \'etale  cover of charted log stacks $$\pi:\spec(R^0,L^0)\rr X.$$ As a consequence,  we can write $X$ as the geometric realization  of  a strict \'etale  of $X$ forms of charted log-affine schemes
$$\colim_{\Delta}\spec(R^\bullet,L^{\bullet})\stackrel{\simeq}\rr X.$$
\end{lem}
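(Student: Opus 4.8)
The plan is to build the required atlas by descending an affine étale atlas of the underlying stack $\underline X=\mathcal X$ along the Cartesian fibration $U^X_!\colon\llog\stk^{\rm prechart}_{/X}\to\stk_{/\underline X}$, and then to upgrade the single cover to a simplicial presentation via strict étale hyperdescent.

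First I would produce the cover. Since $\mathcal X$ is quasi-compact it admits a surjective étale map $u_0\colon\spec R^0\to\mathcal X$ from a single affine spectral scheme, a finite disjoint union of affine étale opens being again affine. Regarding $u_0$ as an object of $\stk_{/\underline X}$ and choosing a $U^X_!$-Cartesian edge over it yields a strict morphism $\pi\colon u_0^*X\to X$ with underlying map $u_0$. Because $X$ is a charted log stack and $\spec R^0$ is affine, the pullback $u_0^*X$ is charted log-affine, so $u_0^*X\simeq\spec(R^0,L^0)$ for a unique $\Einf$-log ring $(R^0,L^0)$ with underlying ring $R^0$. As a Cartesian edge $\pi$ is strict, and its underlying map $u_0$ is étale and surjective; hence $\pi$ is a strict étale cover, which gives the first assertion.

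For the simplicial presentation I would replace $u_0$ by an affine étale hypercover. Using that $\mathcal X$ is quasi-compact and quasi-separated, I would build inductively a simplicial affine spectral scheme $u_\bullet\colon\spec R^\bullet\to\mathcal X$: at each stage the matching object is a fibre product of affines over $\mathcal X$, hence a qcqs spectral Deligne--Mumford stack, which admits an affine étale cover by quasi-compactness. This realizes $\mathcal X\simeq\colim_{\Delta^{\rm op}}\spec R^\bullet$ in $\stk$. Choosing Cartesian edges level-wise produces a simplicial object $\spec(R^\bullet,L^\bullet)$ in $\llog\stk^{\rm prechart}_{/X}$ with strict maps to $X$, each term charted log-affine by the charted hypothesis applied to the affine $\spec R^n$. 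The key point is that $U^X_!$ preserves the finite limits that assemble the matching objects: written through its Grothendieck construction, the pullback of a cospan of Cartesian edges has underlying object the pullback of the underlying cospan, so $U^X_!$ sends the matching objects of $\spec(R^\bullet,L^\bullet)$ to those of $\spec R^\bullet$. Each matching map $\spec(R^n,L^n)\to M_n$ is therefore strict over an étale cover, hence a strict étale cover by Lemma\autoref{pullbackaff=aff}. Thus $\spec(R^\bullet,L^\bullet)\to X$ is a hypercover for the strict étale topology; since the corepresentables $\spec(R^n,L^n)$ --- and hence $X$, which is strict-étale locally corepresentable --- satisfy hyperdescent by Proposition\autoref{strictdescent}, the hypercover is effective and $X\simeq\colim_{\Delta^{\rm op}}\spec(R^\bullet,L^\bullet)$.

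The hardest part will be the second half, namely ensuring that every term of the presentation is genuinely affine. For a general quasi-separated $\mathcal X$ the \v Cech nerve of the single atlas $u_0$ need not have affine terms: its terms are the iterated fibre products $\spec R^0\times_{\mathcal X}\cdots\times_{\mathcal X}\spec R^0$, which are affine only when the diagonal of $\mathcal X$ is affine. This is exactly why I pass to an affine étale hypercover, and the delicate verifications are the existence of such a hypercover for qcqs spectral Deligne--Mumford stacks and the fact that its strict lift remains a hypercover on the logarithmic side, the latter resting on the stability of strictness under base change and composition together with the identification of strict étale maps in Lemma\autoref{pullbackaff=aff}. When $\mathcal X$ is separated, so that $u_0$ is affine, the \v Cech nerve of $\pi$ already has charted log-affine terms and no hypercover is needed.
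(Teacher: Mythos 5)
Your construction of the cover itself is exactly the paper's proof: the paper also chooses an \'etale cover $\spec A^0\rightarrow\underline{X}$ (affine by quasi-compactness) and observes that the Cartesian pullback $\pi^*X\rightarrow X$ is a strict \'etale cover by a charted log-affine scheme, which is where the paper's argument stops --- the simplicial presentation is dismissed with ``as a consequence.'' Where you genuinely add something is the second half: you correctly point out that the implicit \v{C}ech-nerve reading of the paper's conclusion is not justified for a general quasi-compact, quasi-separated $\mathcal X$, since the iterated fibre products $\spec R^0\times_{\mathcal X}\cdots\times_{\mathcal X}\spec R^0$ are affine only when the diagonal of $\mathcal X$ is affine (e.g.\ the plane with doubled origin is qcqs but not semi-separated), and you repair this by building an affine \'etale hypercover level by level and lifting it through the Cartesian fibration, using Lemma\autoref{pullbackaff=aff} and stability of strictness under base change to see the lift is a strict \'etale hypercover. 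This is a real improvement on the paper's proof. The one caveat is your final step ``the hypercover is effective'': in a sheaf (as opposed to hypersheaf) $\infty$-topos, hypercovers are only $\infty$-connective and need not be effective, so strictly the identification $\colim_{\Delta^{\rm op}}\spec(R^\bullet,L^\bullet)\simeq X$ holds after hypercompletion, or equivalently when mapping into objects satisfying hyperdescent --- which is in fact how the lemma is used later (Theorem\autoref{gabbercotasfunctor}) and is consistent with Proposition\autoref{strictdescent} and the paper's stated convention of allowing hypersheaves; you should just make explicit that the colimit is asserted in that setting.
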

\begin{proof}
  Choose an \'etale cover $\pi:\spec A^0\rightarrow\underline{X}$. Then the pullback map $\pi^*X\rightarrow X$ forms a strict \'etale cover of $X$ from a charted log-affine scheme $\pi^*X$.
\end{proof}
\begin{thm}\label{gabbercotasfunctor}
Let $S$ be a charted log stack, whose underlying stack is representable by a spectral Deligne-Mumford stack $\MCS$, then
\begin{enumerate}
    \item The restricted functor $\R\llog_S^{\rm chart}:\Sptdm_{/\MCS}^{\rm op}\rightarrow\ani$  admits a cotangent complex $\LL_{\R\llog_S^{\mathrm{chart}}/\underline S}$, which is $(-1)$-connective.
\item If $\spec A\rightarrow \R\llog_{\spec(R,L)}^{\rm chart}$ classifies an  $\Einf$-log ring map $(R,L)\rightarrow(A,M)$ , then $\LL_{\spec A/\R\llog_{\spec(R,L)}^{\rm chart}}\simeq\LL^G_{(A,M)/(R,L)}.$
\end{enumerate}
\end{thm}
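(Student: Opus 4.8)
The plan is to reduce everything to the affine base and to the universal property of Gabber's cotangent complex, proving (2) first and then deducing (1) by descent together with a short connectivity estimate.

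First I would prove (2). Fix the point $x:\spec A\to\R\llog^{\rm chart}_{\spec(R,L)}$ classifying $(R,L)\to(A,M)$ and an $I\in\Mod_A^{\rm cn}$. By \cite[Definition 17.2.4.2]{lurie2018spectral} it suffices to identify the deformation fiber of
$$\spec A(A\oplus I)\rr\spec A(A)\times_{\R\llog(A)}\R\llog(A\oplus I)$$
over the basepoint $(\mathrm{id}_A,\widetilde x)$, where $\widetilde x$ is the image of the trivial square-zero extension. Unwinding the functor of points, such a lift is an $R$-algebra map $\psi:A\to A\oplus I$ covering $\mathrm{id}_A$ together with a compatible identification, over $(R,L)$, of the transported log structure $\psi_*(A,M)$ with the trivial square-zero extension $(A\oplus I,M\oplus I)$ (here $M\oplus I$ is the exact log structure appearing in the description of $\Omega^\infty$ after Corollary\autoref{exactificationoflogrings}). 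This is exactly the datum of an $(R,L)$-linear log derivation of $(A,M)$ valued in $I$, i.e. a section of the trivial square-zero extension of $\Einf$-log rings, so the fiber is $\map_{(R,L)//(A,M)}((A,M),(A\oplus I,M\oplus I))$. By the universal characterization of Gabber's cotangent complex, Theorem\autoref{cotan-der}(1), this space is $\map_A(\LL^G_{(A,M)/(R,L)},I)$, naturally in $I$. Hence $\spec A\to\R\llog^{\rm chart}_{\spec(R,L)}$ admits a relative cotangent complex and $\LL_{\spec A/\R\llog^{\rm chart}_{\spec(R,L)}}\simeq\LL^G_{(A,M)/(R,L)}$, which is (2).

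For (1) I would globalize and then estimate connectivity. The underlying map $\spec A\to\underline S=\spec R$ has the connective cotangent complex $\LL_{A/R}$, and by (2) the tautological map $\spec A\to\R\llog^{\rm chart}_{\spec(R,L)}$ from an $\Einf$-log ring over $S$ has relative cotangent complex $\LL^G_{(A,M)/(R,L)}$. Since $\R\llog^{\rm chart}_S$ admits a strict étale atlas by charts (Lemma\autoref{chartaffcover}) and $\qcoh$ satisfies flat, hence strict étale, descent \cite[Proposition 6.2.3.1]{lurie2018spectral}, the standard criterion for the existence of a relative cotangent complex in terms of the deformation functors on affines (cf. \cite[Section 17.2]{lurie2018spectral}) shows that $\R\llog^{\rm chart}_S\to\underline S$ admits a cotangent complex $\LL_{\R\llog^{\rm chart}_S/\underline S}$; its pullback along $x$ sits in the conormal fiber sequence
$$x^*\LL_{\R\llog^{\rm chart}_S/\underline S}\rr\LL_{A/R}\rr\LL^G_{(A,M)/(R,L)}.$$
For general $S$ the same local data glue, via Lemma\autoref{chartaffcover}, by the same descent, so it remains to check $(-1)$-connectivity, which may be verified on the atlas.

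The connectivity is the only point carrying content. From the fiber sequence above, $x^*\LL_{\R\llog^{\rm chart}_S/\underline S}$ is the fiber of $\LL_{A/R}\to\LL^G_{(A,M)/(R,L)}$; since the fiber of a map between connective complexes is $(-1)$-connective, it suffices to show that $\LL^G_{(A,M)/(R,L)}$ is connective. For this I would invoke the fundamental sequence (the corollary following Lemma\autoref{Lstrict}),
$$A\otimes_{\mathbb S}(M^{\rm gp}/L^{\rm gp})\rr\LL^G_{(A,M)/(R,L)}\rr\LL_{A/R\otimes_{\mathbb S[L]}\mathbb S[M]},$$
whose outer terms are connective: the relative cotangent complex of a map of connective $\Einf$-rings is connective, while $M^{\rm gp}/L^{\rm gp}=\operatorname{cofib}(L^{\rm gp}\to M^{\rm gp})$ is connective, being the cofiber of a map of connective spectra (group completions of connective $\Einf$-monoids). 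As the middle term of a cofiber sequence with connective ends, $\LL^G_{(A,M)/(R,L)}$ is connective, so $x^*\LL_{\R\llog^{\rm chart}_S/\underline S}$ is $(-1)$-connective at every point of the atlas, and therefore $\LL_{\R\llog^{\rm chart}_S/\underline S}$ is $(-1)$-connective. The main obstacle is the careful unwinding in (2): matching the moduli deformation fiber with the space of log derivations with no degree shift. Once that identification is secured, the globalization by descent and the connectivity estimate are both formal.
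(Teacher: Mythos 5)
Your treatment of part (2) is essentially sound: the deformation fiber of the map $\spec A\to\R\llog^{\rm chart}_{\spec(R,L)}$ is indeed the space of $(R,L)$-linear log derivations of $(A,M)$ valued in $I$, and Theorem\autoref{cotan-der}(1) then gives $\map_A(\LL^G_{(A,M)/(R,L)},I)$. One point you gloss over: an identification $\psi^*(A,M)\simeq(A\oplus I,M\oplus I)$ of charted log structures corresponds, via the universal property of logification, to a prelog splitting $M\to M\oplus I$, and one must still check that every such splitting induces an equivalence on logifications (both sides being $I$-torsors over $M$ makes this work); that is a short argument, not a tautology. Your connectivity estimate at the end is also fine, \emph{conditionally} on existence.

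The genuine gap is in part (1). You deduce the existence of $\LL_{\R\llog^{\rm chart}_S/\underline S}$ from (2) together with "the standard criterion" and the conormal fiber sequence, but the conormal sequence presupposes that the cotangent complex of $\R\llog^{\rm chart}_S\to\underline S$ exists; it cannot be used to construct it. Existence requires showing that, for every point $y\colon\spec B\to\R\llog^{\rm chart}_{\spec(R,L)}$ classifying $(B,N)$, the functor $I\mapsto\mathfrak{D}_{\R\llog^{\rm chart}/R}(y,I)$ is corepresentable compatibly in $B$; its value is the space of charted log structures on $\spec(B\oplus I)$ over $(R,L)$ restricting to $(B,N)$, i.e.\ strict square-zero \emph{extensions} of $(B,N)$ with prescribed underlying ring extension. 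This is a different deformation problem from the one solved in (2), which classifies \emph{sections} of the trivial extension. The fiber sequence of deformation functors
$$\mathfrak{D}_{\spec B/\R\llog}(\mathrm{id},I)\rr\mathfrak{D}_{\spec B/\spec R}(\mathrm{id},I)\rr\mathfrak{D}_{\R\llog/\spec R}(y,I)$$
only identifies the fiber of the second map; it does not determine the third functor, because the second map is in general not surjective on $\pi_0$ (not every deformation of the log structure is transported from a ring derivation). The paper addresses exactly this point: it uses \cite[Lemma 17.1.3.7]{lurie2018spectral} to see that any lift is charted log-affine, invokes the square-zero extension theory (Proposition\autoref{sqzero-logder}, Corollary\autoref{obstruction}) to identify the fiber with $\map_A(\LL^G_{(A,M)/(A,L^a)},I[1])$ --- note the shift and the base $(A,L^a)$ rather than $(R,L)$, so the answer $A\otimes_{\mathbb S[M]}\mathscr D_{M/L}\simeq\LL^G_{(A,M)/(A,L^a)}[-1]$ is not an object produced by (2) --- and then verifies by hand that these corepresenting objects are compatible with base change. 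A secondary problem with your gluing step: you appeal to \'etale descent for $\R\llog^{\rm chart}_S$, but Example\autoref{784512547845123659} shows this functor does \emph{not} satisfy \'etale descent; what is actually needed, and what the paper checks, is base-change compatibility of the pointwise corepresenting modules (quasi-coherence), not sheaf-theoretic gluing.
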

\begin{rmk}
    Here we identify  the functor $\R\llog_S^{\rm chart}$ as  a relative prestack lying  over $\underline S$ via the canonical  equivalence $\Fun(\Sptdm_{/\MCS
},\ani)\simeq\Fun(\Sptdm,\ani)_{/\underline S}$.
\end{rmk}
\begin{proof}[Proof of Theorem\autoref{gabbercotasfunctor}]\label{gabbercotasfunctor_proof}
    We first reduce to the case that $\underline S$ is equivalent to $\spec(R,L)$ for some $\Einf$-log ring $(R,L).$ Indeed, the Lemma\autoref{chartaffcover} shows that we have a strict \'etale cover $\spec(R^\bullet,L^\bullet)\rightarrow S$. We also have $\R\llog^{\rm chart}_{S}\times_S\spec R^\bullet\simeq \R\llog^{\rm chart}_{\spec(R^\bullet,L^\bullet)}$.
Thus we can assume that  the charted log stack $S$ is equivalent to a charted log-affine scheme $\spec(R,L)$.
Let $f:R\rightarrow A$ be an $R$-algebra, and let $I$ be a connective $A$-module. Let $x\in\R\llog^{\rm chart}_{\spec(R,L)}(A)$
 be a point which represents a charted log structure over $\spec A.$ The point $x$ represents a charted log-affine scheme $\spec(A,M)$. We will determine  the specific form of   the fiber $\mathfrak D_{\R\llog^{\rm chart}_{\spec(R,L)}/R}(x,I)$ of the following map 
 $$\R\llog^{\rm chart}_{\spec(R,L)}(A\oplus I)\rr\R\llog^{\rm chart}_{\spec(R,L)}(A)\times_{\spec R(A)}\spec R(A\oplus I)$$
 at $(x,\widetilde{f})$ lying over $f:R\rightarrow A$. 
Let $X$ be an object in $\mathfrak D_{\R\llog^{\rm chart
}_{\spec(R,L)}/R}(x,I).$ It gives rise to a strict morphism  of charted log stacks $$\spec(A,M)\rr X.$$
Using \cite[Lemma 17.1.3.7]{lurie2018spectral}, $X$ is charted log-affine.
Hence the anima $\mathfrak D_{\R\llog^{\rm chart}_{\spec(R,L)}/R}(x,I)$ is identified with the anima of deformations of $(A,M)$ to $A\oplus I$ over $(R,L)$. Applying Corollary\autoref{obstruction}, this space is equivalent to 
\begin{align*}
&{\rm fib}(\map(\LL^G_{(A,M)/(R,L)},I[1])\rr\map(\LL_{A/G},I[1]))\\
&\simeq\map(\LL^G_{(A,M)/(A,L^a)},I[1]).
\end{align*}
Let us write the $\Einf$-log rings $(A,M)$ and $(A,L^a)$ as colimits
$$\begin{tikzcd}
{\mathbb S[M]} \arrow[d] \arrow[r] & A \arrow[d] & {\mathbb S[L]} \arrow[d] \arrow[r] & A \arrow[d] \\
{(\mathbb S[M],M)} \arrow[r]       & {(A,M)}     & {(\mathbb S[L],L)} \arrow[r]       & {(A,L^a)}  
\end{tikzcd}$$

Using the left adjointness of Gabber's cotangent complexes, we obtain that  $\LL^G_{(A,M)/(A,L^a)}$ is naturally  equivalent to the  colimit of the following diagram
$$\begin{tikzcd}
{\LL^G_{(\mathbb S[L],L)}} \arrow[d] & {\LL_{\mathbb S[L]}} \arrow[d] \arrow[r] \arrow[l] & \LL_A \arrow[d] \\
{\LL^G_{(\mathbb S[M],M)}}           & {\LL_{\mathbb S[M]}} \arrow[r] \arrow[l]           & \LL_A          
\end{tikzcd}$$
where the colimit is formed in the $\infty$-category $\MTL$, which is equivalent to $$A\otimes_{\mathbb S[M]}{\rm cofib}(\LL_{\mathbb S[M]/\mathbb S[L]}\rr \LL^G_{(\mathbb S[M],M)/(\mathbb S[L],L)}).$$
Denote by $\mathscr D_{M/L}[1]$ the cofiber of the morphism $\LL_{\mathbb S[M]/\mathbb S[L]}\rr \LL^G_{(\mathbb S[M],M)/(\mathbb S[L],L)}$. we have an equivalence 
$$\mathfrak D_{\R\llog^{\rm chart}_{\spec(R,L)}/R}(x,I)\simeq\map_A(A\otimes_{\mathbb S[M]}\mathscr D_{M/L},I).$$
This implies that the object $A\otimes_{\mathbb S[M]}\mathscr D_{M/L}$ corepresents the functor 
$$I\mapsto {\rm fib}_x(\R\llog^{\rm chart}_{\spec(R,L)}(A\oplus I)\rr\R\llog^{\rm chart}_{\spec(R,L)}(A)).$$
Note that the collection of all maps $x:\spec A\rightarrow \R\llog^{\rm chart}_{\spec(R,L)}$  forms  a  cover  $$\coprod_{(A,x)} \spec A\rr \R\llog^{\rm chart}_{\spec(R,L)}$$ of $\R\llog^{\rm chart}_{\spec(R,L)}$. In particular, we have an equivalence of presentable stable $\infty$-categories 
$$\qcoh(\R\llog^{\rm chart}_{\spec(R,L)})\simeq\varprojlim_{(A,x)}\Mod_A.$$
Thus, we  need to show that those objects $\{A\otimes_{\mathbb S[M]}\mathscr D_{M/L}\}$ form a projective system, i.e., for any ring map $A\rightarrow B$ lying over $\R\llog^{\rm chart}_{\spec(R,L)}$, the following canonical map $$A\otimes_{\mathbb S[M]}\mathscr D_{M/L}\otimes_AB\stackrel{\simeq}\rr B\otimes_{\mathbb S[M]}\mathscr D_{M^a/L}$$
is an equivalence.
This holds because the left hand term is equivalent to $B\otimes_{\mathbb S[M]}\mathscr D_{M/L}$, which could be described as the $(-1)$-shift of the  colimit of the following  diagram
$$\begin{tikzcd}
{\LL^G_{(\mathbb S[L],L)}} \arrow[d] & {\LL_{\mathbb S[L]}} \arrow[d] \arrow[r] \arrow[l] & \LL_B \arrow[d] \\
{\LL^G_{(\mathbb S[M],M)}}           & {\LL_{\mathbb S[M]}} \arrow[r] \arrow[l]           & \LL_B          
\end{tikzcd}$$
which is equivalent to the $(-1)$-shift of the cofiber of the  map
$\LL^G_{(B,L^a)}\rr\LL^G_{(B,M^a)}$, 
which is equivalent to $B\otimes_{\mathbb S[M]}\mathscr D_{M^a/L}$ by the construction of $\mathscr D_{M^a/L}.$ The $(-1)$-connectivity of the cotangent complex of ${\R\llog_{\spec(R,L)}}$ is immediately deduced from the conormal exact sequence of stacks.
\end{proof}

\subsection{Spectral log stacks}\label{spectral log stacks}
We have defined the charted log stacks in the previous section. However, there is an issue that the charted log structures do not satisfy any descent property, which can be viewed as  a similar phenomenon in classical logarithmic geometry: one can only define  charts on a log scheme locally, but there is no way to glue them together to obtain a global chart in general. We have the following example, which shows that the functor $\R\llog^{\rm chart}_{S}$ does not satisfy \'etale descent.
\begin{exmp}\label{784512547845123659}
    Let $(A,M)$ be an $\Einf$-log ring. We denote by $(A\times A,M^a)$ the log ring associated to the diagonal map $M\rightarrow A\times A.$ Then we have a map of $\Einf$-log rings
    $(A\times A,M^a)\rightarrow (A\times A, M\times M).$ This defines a map of charted log-affine schemes $\pi:\spec(A\times A,M\times M)\rr \spec(A\times A,M^a),$ which is not an equivalence by the definition of charted log stacks. 
    On the other hand, let us consider  the natural embeddings $i_1,i_2:\spec A\rightarrow \spec (A\times A)$. The pullback of $\pi$ along  $i_\alpha,\alpha=1,2$ gives rise to a homotopy equivalence  $i_\alpha^*\pi\simeq id:\spec(A,M)\rightarrow\spec(A,M).$ But $\pi$ is not an equivalence. This shows that the functor $\R\llog_X^{\rm chart}$ doesn't preserve finite products.
\end{exmp}
The reason that the descent of the moduli of charted log stacks fails is that the "chart" is not allowed to glue in the following sense: consider a classical charted log scheme $(X,\mathscr M,P)$, which is equivalent to giving a map $X\rightarrow\spec\Z[P]$, such that $\mathscr M$ is isomorphic to the pullback of the canonical log structure on  $\spec \Z[P]$. However, if we have a class of strict \'etale maps of  log schemes 
$\{(U_i,\mathscr N_i)\rightarrow(X,\mathscr M)\}^n_{i=1}$ which we equip each $(U_i,\mathscr N_i)$ with a chart $P_i$, then we do not always obtain a charted map $(U_i,\mathscr N_i,P_i)\rightarrow(X,\mathscr M,P)$, because the following commutative diagram 
$$\begin{tikzcd}
U_i \arrow[r] \arrow[d]  & X \arrow[d]  \\
{\spec \Z[P_i]} \arrow[r] & {\spec\Z[P]}
\end{tikzcd}$$
will not exist.
\begin{defn}\label{logaffineschemedef}
    Let $(A,M)$ be an $\Einf$-prelog ring. The \textit{spectral log-affine scheme}  $\spet(A,M)$ associated with $(A,M)$ is a pair $(\spet A,\mathscr M),$ in which:
    \begin{enumerate}
        \item $\spet A$ is the \'etale spectrum of $A$ defined in \cite[Definition 1.4.2.5]{lurie2018spectral};
        \item $\mathscr M: \algcn_A\rightarrow\mon$ is the \'etale sheafification of the  functor
        $$\mathscr M_0: \alg^{\rm cn}_A\ni A'\mapsto M^a,$$
        where $M^a$ is the $\Einf
        $-monoid of the logification of the $\Einf$-prelog ring given by $\mathbb S[M]\rightarrow A\rightarrow A'$. 
    \end{enumerate}
\end{defn}
\begin{rmk}\label{afflog=logificationofconastantsheaf}
   By  the definition of the sheaf $\mathscr M$, it is the sheafification of the functor  which sends $A'\in A_\et$ to $\colim(M\leftarrow M\times_{\Omega^\infty A}GL_1(A)\rightarrow GL_1(A))$. Equivalently, $\mathscr M$ is equivalent to the colimit of the following diagram $$\underline M\longleftarrow \underline M\times_{\Omega^\infty\OO_A}GL_1(\OO_A)\rr GL_1(\OO_A),$$
   where the colimit is formed in $\shv_{\et}(\alg^{\rm cn,op},\mon)$. In other words, the sheaf $\mathscr M$ is equivalent to the sheafification of the point-wise logification of the constant prelog structure $\underline M\rightarrow\OO_A$.
\end{rmk}
\begin{cau}
    We have to point out that the functor $\algcn_A\rightarrow\mon,A'\mapsto M^a$ is not a sheaf, as it does not preserve finite products. Although it has descent along an fpqc hypercover $A\rightarrow A^\bullet$ by Proposition\autoref{strictdescent}.
\end{cau}
\begin{defn}
    A \textit{spectral prelog Deligne-Mumford stack} is a triple $(\mathcal X,\mathscr M,\alpha),$ where $\mathcal X$ is a quasi-compact and  quasi-separated  spectral Deligne-Mumford stack\footnote{See \cite[Definition 1.4.4.2]{lurie2018spectral}}, $\mathscr M$ is an \'etale sheaf of $\Einf$-monoids over the underlying $\infty$-topos of $\mathcal X$, and $\alpha:\mathscr M\rightarrow\Omega^\infty\mathcal O_\mathcal X$ is a morphism of $\Einf$-monoids.\\
     We denote $\plog\Sptdm$ as the $\infty$-category of spectral log Deligne-Mumford stacks.
\end{defn}
\begin{defn}
    A \textit{spectral log Deligne-Mumford stack} $(\mathcal X,\mathscr M,\alpha)$ is a spectral prelog Deligne-Mumford stack, such that the induced morphism $\mathscr M\times_{\Omega^\infty\OO_\X}GL_1(\OO_\X)\rightarrow GL_1(\OO_\X)$ is an equivalence.\\
    We denote $\llog\Sptdm$ as the $\infty$-category of spectral log Deligne-Mumford stacks.
\end{defn}
\begin{cau}
    The functor $\llog^{\rm op}\rightarrow\llog\Sptdm,(A,M)\mapsto\spet(A,M)$ is not fully faithful.
\end{cau}
Let us give a concrete definition of the $\infty$-categories $\plog\Sptdm$ and $\llog\Sptdm$ as follows. We denote by $\mone(\X):=\shv(\X,\mone)$ the $\infty$-category of  sheaves of $\Einf$-monoids over the underlying $\infty$-topos of $\X.$ The assignment $\X\mapsto\shv(\X,\mone)$ defines  a functor $\mon(-):\Sptdm^{\rm op}\rightarrow\cat$. We can define a functor $\widehat{\R\plog}:\Sptdm^{\rm op}\rightarrow\cat$ as the limit of the following diagram of functors
$$\begin{tikzcd}
* \arrow[rd, "\Omega^\infty\mathcal O_{(-)}"'] &                 & {\Fun(\Delta^1,\mone)(-)} \arrow[ld, "{\rm ev}_1"] \\  & \mone(-) &     
\end{tikzcd}$$
Using the Grothendieck construction, we obtain a Cartesian fibration $$\pi:\plog\Sptdm\rightarrow\Sptdm.$$
We refer to $\plog\Sptdm$ as the $\infty$-category of spectral prelog Deligne-Mumford stacks, and 
we refer to $\llog\Sptdm$ as the subcategory of $\plog\Sptdm$ spanned by log objects, which is called the $\infty$-category of spectral log Deligne-Mumford stacks.
\begin{rmk}\label{44556644}
    The $\infty$-category $\llog\Sptdm$ has finite limits, and it's easy to see that $$\spet(A,M)\times_{\spet(R,L)}\spet(B,N)\simeq\spet((A,M)\otimes_{(R,L)}(B,N))$$ by  definition.
\end{rmk}
\begin{rmk}[Derived log stacks]
Similar to the $\Einf$-context, one can also construct a Cartesian fibration $\llog\ddm\rightarrow\ddm$ over the $\infty$-category of derived Deligne-Mumford stacks in the same way, where $\llog\ddm$ is the $\infty$-category of  derived log  Deligne-Mumford stacks. A  derived log Deligne-Mumford stack is a triple $(\X,\mathscr M,\alpha)$, where $\X$ is a derived Deligne-Mumford stack,  $\mathscr M$ is a sheaf of animated monoids on the underlying $\infty$-topos of $\X$, and $\alpha:\mathscr M\rightarrow \Omega^\infty\OO_\X$ is a map of sheaves of  animated monoids, such that the induced map $\mathscr M\times_{\Omega^\infty\OO_X}GL_1(\OO_\X)\rightarrow GL_1(\OO_\X)$ is an equivalence.\\
We should point out that all constructions, definitions, and results in this paper for spectral log Deligne-Mumfords also hold for derived log Deligne-Mumford stacks. Especially, the deformations and  representability properties hold for  the moduli stack of derived log structures. For this reason, we will mainly work on the $\Einf$-context, and only collect the main results for derived log Deligne-Mumford stacks in \autoref{dersetting}.
\end{rmk}
\subsubsection{Functoriality}
We recall some basic functoriality of (pre)-log structures developed in \cite[Section 3.4]{sagave2016derived} \cite[Section 2]{binda2023hochschild} and \cite[Section II.1.2]{ogus2018lectures}. We have the following operations on prelog structures:
\begin{enumerate}
\item(Pullback) Since the functor $\pi:\plog\Sptdm\rightarrow\Sptdm$ is a Cartesian fibration, we have pullback functors for prelog structures. We can describe it as follows: let $(\Y, \mathscr M, \alpha)$ be a spectral prelog Deligne-Mumford stack, and let $f: \X \rightarrow \Y$ be a morphism of spectral  Deligne-Mumford stacks. 
The spectral  prelog Deligne-Mumford stack with a structure map 
$f^{-1}\mathscr M \xrightarrow{\alpha}  \Omega^\infty \mathcal O_\X$
is a Cartesian edge lying over $f$.
\item(Pushout) The functor $\pi:\plog\Sptdm\rightarrow\Sptdm$ is also a coCartesian fibration. We construct coCartesian edges as follows: let $(\X, \mathscr N, \beta)$ be a spectral prelog Deligne-Mumford stack, and let $f \colon \X \to \Y$ be a morphism of Deligne-Mumford stacks. 
The following fiber product of sheaves of  $\Einf$-monoids 
\[\begin{tikzcd}
f_*\mathscr N \arrow[r]\arrow[d] & \Omega^\infty\mathcal O_\Y \arrow[d, "f"]\\
f^{\shv}_*\mathscr N \arrow[r, "\beta"] &f^{\shv}_*\Omega^\infty\mathcal O_\X.
\end{tikzcd}
\]
gives a coCartesian edge lying over $f$,
where $f_*^{\shv}$ is the sheaf-theoretic direct image functor.
\end{enumerate}

\begin{rmk}\label{asdfg}
    It's easy to see that the pushout functor $f_*$ preserves log objects.
\end{rmk}
Thus we have the following fact:
\begin{prop}\cite[Proposition 3.26]{sagave2016derived}
  The restricted functor $\pi':\llog\Sptdm\rightarrow\Sptdm$ of $\pi$ is a presentable fibration.
\end{prop}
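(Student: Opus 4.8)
The plan is to verify the two structural properties packaged in the definition of a presentable fibration \cite[Definition 5.5.3.2]{lurie2009higher}: that $\pi'$ is simultaneously a Cartesian and a coCartesian fibration, and that each of its fibers is presentable. The prelog projection $\pi\colon\plog\Sptdm\to\Sptdm$ is already known to be both Cartesian and coCartesian, with Cartesian edges given by the inverse-image pullbacks $f^{-1}\mathscr M\to\Omega^\infty\OO_\X$ and coCartesian edges by the pushforwards $f_*$ of the Functoriality discussion; its fiber over $\X$ is the slice $\mone(\X)_{/\Omega^\infty\OO_\X}$, which is presentable. The fiber of $\pi'$ over $\X$ is the full subcategory spanned by those $\alpha\colon\mathscr M\to\Omega^\infty\OO_\X$ for which $\mathscr M\times_{\Omega^\infty\OO_\X}GL_1(\OO_\X)\to GL_1(\OO_\X)$ is an equivalence. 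I would first record that this fiber is presentable: it is an accessible reflective localization of $\mone(\X)_{/\Omega^\infty\OO_\X}$, the reflector being the sheafwise logification $a_\X$ built exactly as in the proof of Proposition \autoref{presentability} but internally to the $\infty$-topos of $\X$. The only point to check is accessibility of the inclusion, and this reduces to the pointwise computation of the homotopy groups of $GL_1$ carried out there, which sheafifies without change.

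The coCartesian structure is then immediate. By Remark \autoref{asdfg} the pushforward $f_*$ preserves log objects, so every coCartesian edge of $\pi$ whose source is a log object already lies in $\llog\Sptdm$ and remains coCartesian for the restriction. Hence $\pi'$ is a coCartesian fibration whose pushforward functors are the restrictions of those of $\pi$, and in particular $\llog\Sptdm$ has the finite limits recorded in Remark \autoref{44556644}.

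The Cartesian structure is the main step, and the main obstacle. Here the inverse-image pullback $f^{-1}\mathscr M\to\Omega^\infty\OO_\X$ of a log structure is only a prelog structure, so the Cartesian edges of $\pi$ do not restrict; the correct log pullback is the logification $a_\X\circ f^*$ of the prelog pullback. I would obtain the Cartesian fibration structure from the principle that a fiberwise reflective localization of a Cartesian fibration is again a Cartesian fibration, provided the reflectors $a_\X$ assemble into a functor $a\colon\plog\Sptdm\to\llog\Sptdm$ that is left adjoint to the inclusion $\iota$ relative to $\Sptdm$. Equivalently, one must show that forming the inverse image commutes with logification, i.e. that there is a canonical equivalence $a_\X\circ f^*\simeq f^*_{\log}\circ a_\Y$, coherently in $f$. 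The content of this compatibility is that $f^{-1}$, being the inverse image of a geometric morphism of $\infty$-topoi, is left exact: it preserves the fiber product $\mathscr M\times_{\Omega^\infty\OO_\Y}GL_1(\OO_\Y)$ that tests the log condition and, together with the structure map $f^{-1}\OO_\Y\to\OO_\X$ and the sheafwise form of Lemma \autoref{1884310}, intertwines the two logifications. Producing this intertwining as an equivalence of functors and checking that it is compatible with composition of morphisms in $\Sptdm$ — so that the transition functors $a_\X\circ f^*\circ\iota$ genuinely organize into a Cartesian fibration — is the step requiring the most care.

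Granting this, $\pi'$ is both a Cartesian and a coCartesian fibration with presentable fibers, hence a presentable fibration. As throughout the section, the affine statements of \autoref{Section2} (presentability of $\llog$ and the behaviour of $GL_1$ under flat maps) are transported to the sheafy setting, so no new input beyond the local theory is needed.
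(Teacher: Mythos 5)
Two of your three ingredients coincide with the paper's proof: the coCartesian half is obtained, exactly as there, from Remark\autoref{asdfg}, and presentability of the fibers from running the affine argument (Proposition\autoref{presentabilityoflog}, resp.\ Proposition\autoref{presentability}) internally to the underlying $\infty$-topos. For the Cartesian half, however, the paper takes a route you do not consider, and it is much shorter: it never constructs Cartesian edges at all. Since the direct image functor preserves limits of prelog structures and log objects are stable under limits, the coCartesian transition functors of $\pi'$ are accessible, limit-preserving functors between presentable $\infty$-categories; the adjoint functor theorem then supplies the missing adjoints (which are automatically the logified pullbacks), hence the presentable fibration structure, with no compatibility between logification and inverse image ever being verified.

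Your own argument for the Cartesian half has a genuine gap: the principle you invoke has the wrong variance. A fiberwise \emph{reflective} (left-adjoint) localization descends \emph{coCartesian} structures formally; it is fiberwise \emph{coreflective} subcategories that formally inherit Cartesian structures. The convention under which $\pi$ actually has the Cartesian and coCartesian edges described in the paper---and under which the proposition is true---is the log-geometric one: an edge over $f\colon\X\rightarrow\Y$ from $(\X,\mathscr M)$ to $(\Y,\mathscr N)$ is a map $f^{-1}\mathscr N\rightarrow\mathscr M$ over $\Omega^\infty\OO_\X$, so the fiber of $\pi$ over $\X$ is the \emph{opposite} of $\mone(\X)_{/\Omega^\infty\OO_\X}$ and logification acts on it as a coreflection. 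Granting this, no Beck--Chevalley input is needed: the strict edge $(\X,(f^{-1}\mathscr N)^a)\rightarrow(\Y,\mathscr N)$ is a Cartesian lift of $f$, simply because maps of log structures out of $(f^{-1}\mathscr N)^a$ agree, by the universal property of logification, with maps of prelog structures out of $f^{-1}\mathscr N$, i.e.\ with edges over $f$ with target $(\Y,\mathscr N)$. Read literally in your variance (edges over $f$ are maps $\mathscr M\rightarrow f^{-1}\mathscr N$, log structures a reflective subcategory of the fibers), the statement fails: a Cartesian lift at $(\Y,\mathscr N)$ would be a \emph{coreflection} of $f^{-1}\mathscr N$ into log structures, which does not exist because the inclusion of log structures into prelog structures does not preserve colimits; for the same reason the fiberwise logifications do not assemble into a left adjoint relative to $\Sptdm$, the relative universal property failing precisely because $f^{-1}\mathscr N$ is not log. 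Finally, the coherence you flag as ``the step requiring the most care'' is a non-issue: a functor is a Cartesian fibration as soon as every edge admits a Cartesian lift, checked edge by edge, and the coherent compatibility of the resulting pullback functors with composition is exactly what the fibration formalism provides for free. The equivalence $a_\X\circ f^{-1}\simeq f^{-1}_{\log}\circ a_\Y$ you propose to prove is an output of this (uniqueness of Cartesian lifts), not a prerequisite.
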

\begin{proof}
    Remark\autoref{asdfg} implies that $\pi'$ is a coCartesian fibration. Observe that the pushout functor preserves  limits of prelog structures, and log structures are stable under limits. Therefore, we just need to show that $\pi'$ has presentable fibers. The same arguments as Proposition\autoref{presentabilityoflog} give the desired result.
\end{proof}

\begin{defn}\label{qcohdefinition}
Let $(\X,\mathscr M,\alpha)$ be a spectral log Deligne-Mumford stack, we say that $(\X,\mathscr M,\alpha)$ is:
\begin{enumerate}
    \item \textit{Quasi-coherent}, if there is an \'etale cover  $U\rightarrow \X,$ such that $U\simeq\coprod_j\spet A_j$ is a finite disjoint union of spectral affine schemes, and the restriction $(U,\mathscr M|_U)\simeq\coprod_j\spet(A_j,M_j)$ is a finite disjoint union of spectral log-affine schemes;
    \item \textit{Coherent}, if it is quasi-coherent, and there is a choice of $\{(A_j,M_j)\}$, which appears in assertion (1), such that for any $j$, $(A_j,M_j)$ is obtained from an $\Einf$-prelog ring $(A_j,M_j')$ satisfying the property that $\mathbb S[M_j']$ is almost finitely presented over $\mathbb S$.
    \item \textit{Fine}, if it's coherent, and there is a choice of $\{(A_j,M_j)\}$, which appears in assertion (2), such that for any $j$, the commutative ring $\pi_0\mathbb S[M_j]=\Z[\pi_0M_j]$ is a finitely generated  integral $\Z$-algebra.
\end{enumerate}
\end{defn}
\begin{defn}
    Let $f:(\X,\mathscr M)\rightarrow(\Y,\mathscr{N})$ be a morphism of spectral log Deligne-Mumford stacks. We say that $f$ is strict if it is a Cartesian edge with respect to $\underline{f}:\X\rightarrow\Y$. Or equivalently, the log structure $\mathscr M$ is equivalent to the pullback of $\mathscr N$ along $\underline f$.
\end{defn}
\begin{rmk}
    The log structure on $\spet (A,M)$ is equivalent to the pullback of the log structure on $\spet(\mathbb S[M],M)$ along $\spet A\rightarrow\spet\mathbb S[M]$. 
\end{rmk}
\begin{rmk}[Local charts for strict maps]\label{676756}
Let $f:\spet(A,M)\rightarrow\spet(R,L)$ be a strict map. We denote by $\mathscr L$ and $\mathscr M\simeq f^*\mathscr L$ the log structures of $\spet(R,L)$ and $\spet(A,M)$ respectively. Then  there is an $\Einf$-log ring $(A,M')$ such that $\spet(A,M)\simeq\spet(A,M')$, and the map $f$ is induced from an $\Einf$-log ring map $(R,L)\rightarrow (A,M')$. Indeed, we can form the following commutative diagram 
$$\begin{tikzcd}
{\spet(A,M)} \arrow[rd] \arrow[rr, "f"] &                         & {\spet(R,L)} \arrow[ld] \\& {\spet(\mathbb S[L],L)} &  
\end{tikzcd}   
$$
The composition $\spet(A,M)\rightarrow \spet(\mathbb S[L],L)$ is strict, as the strictness is stable under compositions. It turns out that $\mathscr M$ is equivalent to the pullback of the log structure of $\spet(\mathbb S[L],L)$.  It  is easy to see that $\spet(A,M)\simeq\spet(A,L)^a$
$f$ is induced from $(R,L)\rightarrow(A,L)^a$. We have to point out that there is no reason wonder that there is a map $(R,L)\rightarrow (A,M)$ induces $f$ in general. 
\end{rmk}
\begin{cau}[Non-existence of local charts for general maps]
    Let us work in classical logarithmic geometry. Recall that a map of quasi-coherent log schemes $f:(X,\mathscr M)\rightarrow (S,\mathscr L)$ \textit{locally admits a chart}, if after an \'etale localization on both source and target, there is a commutative diagram 
    $$\begin{tikzcd}
{(X,\mathscr M)} \arrow[r] \arrow[d] & {(Y,\mathscr N)} \arrow[d] \\
{\spet(\Z[M],M)} \arrow[r]           & {\spet(\Z[L],L)}          
\end{tikzcd}$$
such that vertical maps are strict. The existence of local charts will not hold in general. But under some finiteness assumption on the target, one can show that such maps locally admit charts, see \cite[Section 1.1]{beilinson2011crystalline}.
\end{cau}
At the end of this subsection, we consider the topology used to define log structures. Recall that a log structure on a spectral Deligne-Mumford stack $\X$ is a sheaf of $\Einf$-monoids  defined on the underlying $\infty$-topos of $\X$, namely, the big \'etale $\infty$-topos $\X_{\rm \Acute{E}t}$. In the vast majority of cases in geometry and arithmetic, when logarithmic geometry appears, this definition is  perfectly applicable. For example, if we consider log structures  coming from  strict normal crossing divisors on algebraic varieties over fields or semi-stable proper schemes defined over DVRs. One can show that this kind of log structure also has fpqc (hyper)descent in the following sense: to give such a kind of log structure $\mathscr M$ on a scheme $X$, it is equivalent to give a map 
$X\rightarrow \mathbb A^1/\mathbb G_m\simeq{\rm Div}_\Z$.  More generally, it seems that log structures on (spectral) schemes given by \textit{Deligne-Faltings structures} are also suitable for work on the fpqc topology instead of the \'etale topology, see \cite[Section 2]{Talpo_2018} for details. We have the following variants of the definition of log structures:
\begin{enumerate}
    \item In the definition of $\spet(A,M)$, we define  the log structure  $\mathscr M$ as the (hyper)sheafification  with respect to the fpqc topology instead of the \'etale topology.
    \item Let $\Sptdm_{\rm fpqc}$ be the subcategory of $\Sptdm$ spanned by objects $\X$ having fpqc (hyper)descent\footnote{This means the associated Yoneda functor $h_\X:\algcn\rightarrow\ani$ has fpqc (hyper)descent.}. The $\infty$-category $\plog\Sptdm_{\rm fpqc}$ is the Grothendieck construction associated with  the  functor 
    $\Sptdm^{\rm op}_{\rm fpqc}\rightarrow\cat$ given as 
$$\lim(*\stackrel{\Omega^\infty\OO_{(-)}}\rr\mon^{(\wedge)}(-)\longleftarrow\Fun(\Delta^1,\mon)^{(\wedge)}(-)).$$
The $\infty$-category $\llog\Sptdm_{\rm fpqc}$ is the subcategory of $\plog\Sptdm_{\rm fpqc}$ spanned by log objects.
\item In the definitions of quasi-coherent, coherent, and fine log structures, we also use fpqc covers instead of \'etale covers.
\end{enumerate}
\begin{rmk}
   By the same procedure, one can also construct the $\infty$-category of spectral log Delinge-Mumford stacks using other topologies:  Zariski, Nisnevich, fppf, pro-\'etale...... Each topology has its own advantages. In particular, as we will work on infinite root  stacks in \autoref{section7}, which is defined  as an inverse limit of a sequence of certain functors. It seems that using fpqc topology or pro-\'etale topology is more convenient than others, as a sequential  limit  of fpqc (resp. pro-\'etale) covers  is also an fpqc (resp. pro-\'etale) cover.
\end{rmk}
\begin{cov}
    For the sake of simplicity, we omit set-theoretic issues. Under mild assumptions on set theory, we always assume that any presheaf has an fpqc (resp. pro-\'etale) sheafification, and that the $\infty$-category $\stk_{\rm fpqc}$ (resp. $\stk_{\rm pro\et}$) of fpqc  (resp. pro-\'etale) stacks is presentable.
\end{cov}
\subsection{The moduli stack of spectral log stacks}\label{subsection43}
In this subsection, we provide a construction of the moduli stack of log structures. Then we study certain properties of this functor, including the descent property and deformation properties in the sense of \cite[Part VIII]{lurie2018spectral}.
Let $(\MCS,\MSL)$ be a spectral log Deligne-Mumford stack. The Cartesian fibration $\pi':\llog\Sptdm\rightarrow\Sptdm$ induces a  Cartesian fibration $$\pi'_{(\MCS,\MSL)}:\llog\Sptdm_{/(\MCS,\MSL)}\rr\Sptdm_{/\MCS}.$$ This gives rise to a moduli functor of spectral log structures:
\begin{defn}\label{rlogdefn}
    Let $(\MCS,\MSL)$ be a spectral log Deligne-Mumford stack. We define the following functor 
    $$\R\llog_{(\MCS,\MSL)}:\Sptdm^{\rm op}_{/\MCS}\rr\ani,$$
    which sends a quasi-compact and quasi-separated spectral Deligne-Mumford stack $\X$ to the anima consisting  of maps of spectral log Deligne-Mumford stacks $(\X,\mathscr M)\rightarrow(\MCS,\MSL).$  More precisely, $\R\llog_{(\MCS,\MSL)}$ is the functor associated with $\pi'_{(\MCS,\MSL)}$ via the Grothendieck construction.\\
    We denote by $\R\llog_{(\MCS,\MSL)}^{\rm qcoh}$, $\R\llog_{(\MCS,\MSL)}^{\rm coh}$ and $\R\llog_{(\MCS,\MSL)}^{\rm fin}$ the subfunctors of $\R\llog_{(\MCS,\MSL)}$ spanned  by quasi-coherent, coherent, or fine log structures, respectively.
\end{defn}
\subsubsection{Descent property of log structures}
In this subsection, we will prove the descendability of the moduli of  log structures.
\begin{cov}
    In this section, we should fix a topology for defining our spectral log Deligne-Mumford stacks.  By the proof of Theorem\autoref{des}, any choice of one of the following topologies:
\begin{center}
    Zariski, \'etale, fppf, fpqc, pro-\'etale......
\end{center}
and any choice of one of the following descendability:
\begin{center}
    descent, hyperdescent
\end{center}
for using to define spectral log Deligne-Mumford stacks will give the parallel result of the descent property. More precisely, if we use the topology  $\tau$, and (hyper-)descendability to define our log Deligne-Mumford stacks, the resulting moduli functor has (hyper-)descent with respect to $\tau$.  For simplicity, in this section,  we will employ \textbf{\'etale topology} and \textbf{sheaves} to define our log stacks.
\end{cov}

\begin{thm}\label{des}
 Let $\mu\in\{\emptyset,{\rm qcoh, coh, fin }\}$, and let $(\MCS,\MSL)$ be a $\mu$-spectral Deligne-Mumford stack. Then
    the following functors $$\R\llog_{(\MCS,\MSL)},\R\llog^{\rm qcoh
}_{(\MCS,\MSL)},{\ }\R\llog^{\rm coh
}_{(\MCS,\MSL)},{\ }\R\llog^{\rm fin
}_{(\MCS,\MSL)}$$ are \'etale sheaves.
\end{thm}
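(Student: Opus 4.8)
The plan is to deduce the descent of $\R\llog_{(\MCS,\MSL)}$ from the fact that the functor classifying log structures, $\X\mapsto\llog\Sptdm_\X$, is itself a sheaf of $\infty$-categories, together with formal properties of coslices and cores. By construction $\R\llog_{(\MCS,\MSL)}$ is the $\ani$-valued functor attached to the Cartesian fibration $\pi'_{(\MCS,\MSL)}$, so for a fixed structure map $g\colon\X\rightarrow\MCS$ the anima $\R\llog_{(\MCS,\MSL)}(\X)$ is the space of pairs $(\mathscr M,\,g^*\MSL\rightarrow\mathscr M)$ consisting of a log structure $\mathscr M$ on $\X$ and a morphism from the pulled-back log structure $g^*\MSL$; that is, the core of the coslice $(\llog\Sptdm_\X)_{g^*\MSL/}$. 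Given an \'etale (hyper)cover $\X^\bullet\rightarrow\X$ in $\Sptdm_{/\MCS}$, the structure maps of the $\X^n$ are the composites of $g$ with the covering maps, and pullback of log structures is functorial, so $g^*\MSL$ restricts compatibly to $(g^n)^*\MSL$ along the cover. It therefore suffices to prove that $\X\mapsto\llog\Sptdm_\X$ satisfies \'etale (hyper)descent and that coslicing by the compatible object $g^*\MSL$ and passing to cores commute with the resulting limit.

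The core of the argument is the descent of the sheaf of categories $\X\mapsto\llog\Sptdm_\X$. I would unwind the definition of $\pi\colon\plog\Sptdm\rightarrow\Sptdm$ as the Grothendieck construction of the limit diagram $\lim(\ast\xrightarrow{\Omega^\infty\OO_{(-)}}\mone(-)\xleftarrow{\mathrm{ev}_1}\Fun(\Delta^1,\mone)(-))$, so that $\plog\Sptdm_\X$ is assembled from the $\infty$-category $\shv(\X,\mone)$ of sheaves of $\Einf$-monoids and the structure sheaf. The functor $\mone(-)=\shv(-,\mone)\colon\Sptdm^{\mathrm{op}}\rightarrow\cat$ satisfies \'etale (hyper)descent — this is the standard descent for $\cat$-valued sheaves on the small \'etale $\infty$-topos, combined with the functoriality of the \'etale topos under \'etale morphisms — and the structure sheaf, hence $\Omega^\infty\OO$ and $GL_1(\OO)$, descends because it is an \'etale sheaf. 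As a limit of sheaves of categories, $\X\mapsto\plog\Sptdm_\X$ is a sheaf of categories. The full subcategory $\llog\Sptdm_\X\subset\plog\Sptdm_\X$ is carved out by the condition that $\mathscr M\times_{\Omega^\infty\OO_\X}GL_1(\OO_\X)\rightarrow GL_1(\OO_\X)$ be an equivalence; since equivalences of sheaves are detected on any \'etale cover, this condition is \'etale-local, so $\X\mapsto\llog\Sptdm_\X$ is a sub-sheaf of categories. At the affine level this is compatible with the strict \'etale hyperdescent of $\spet(A,M)$ recorded in \autoref{strictdescent}.

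Granting the sheaf property of $\X\mapsto\llog\Sptdm_\X$, the descent of $\R\llog_{(\MCS,\MSL)}$ is formal. The section $g^*\MSL$ of the limit diagram $\llog\Sptdm_\X\simeq\lim_\Delta\llog\Sptdm_{\X^\bullet}$ is compatible, and the coslice construction $(-)_{c/}$, being a pullback involving $\Fun(\Delta^1,-)$, commutes with limits of $\infty$-categories when $c$ is given compatibly; hence $(\llog\Sptdm_\X)_{g^*\MSL/}\simeq\lim_\Delta(\llog\Sptdm_{\X^\bullet})_{(g^\bullet)^*\MSL/}$. Passing to cores commutes with this limit because the core functor $(-)^\simeq\colon\cat\rightarrow\ani$ is right adjoint to the inclusion $\ani\hookrightarrow\cat$ and so preserves limits. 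Chaining these equivalences gives $\R\llog_{(\MCS,\MSL)}(\X)\simeq\lim_\Delta\R\llog_{(\MCS,\MSL)}(\X^\bullet)$, i.e.\ the sheaf property. Because the only inputs are descent for $\shv(-,\mone)$ and locality of the log condition, the same argument works verbatim for either descent or hyperdescent and for any of the topologies (Zariski, \'etale, fppf, fpqc) permitted by the convention preceding the theorem.

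For the decorated variants $\mu\in\{\mathrm{qcoh},\mathrm{coh},\mathrm{fin}\}$, each $\R\llog^{\mu}_{(\MCS,\MSL)}$ is the subfunctor of $\R\llog_{(\MCS,\MSL)}$ spanned by the log structures satisfying $\mu$, and each of these properties is \'etale-local: quasi-coherence requires the existence of an \'etale cover by spectral log-affine schemes, while coherence and fineness impose the additional conditions on the local charts $(A_j,M_j)$ — almost finite presentation of $\mathbb S[M_j']$ over $\mathbb S$, and finite generation and integrality of $\pi_0\mathbb S[M_j]$ — which are properties of the affine charts, stable under \'etale restriction and detectable on an \'etale cover. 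A subfunctor of a sheaf cut out by a local condition is again a sheaf, so the decorated functors are \'etale sheaves as well. The main obstacle is the categorical descent input of the second paragraph: establishing that $\shv(-,\mone)$ satisfies \'etale descent and that the log condition is \'etale-local; once this is in place, the remainder is a formal manipulation of limits, coslices, and cores.
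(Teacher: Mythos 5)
Your proof is correct and follows essentially the same route as the paper: both rest on étale descent for the $\cat$-valued functor $\shv(-,\mone)$ (which the paper obtains by citation where you call it standard), the observation that the log condition and the properties $\mu$ are étale-local, and passage to cores, which preserve limits. The only difference is presentational: you encode the structure map from $g^*\MSL$ by coslicing after taking the limit, whereas the paper builds that same coslice directly into its defining limit diagram via $\Fun(\Delta^2,\mone)(-)$, so no genuinely new decomposition or lemma is involved.
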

As a corollary, we obtain the following result that will contribute to the local study of log stacks.
\begin{cor}\label{afflogstk}
   Let $\mu\in\{\rm qcoh, coh, fin \}$. Let $(R,L)$ be an $\Einf$-log ring, and let 
   $$\R\llog_{\spet(R,L)}^{\rm aff,\mu}:\algcn_R\rr\ani$$ be the subfunctor of $\R\llog^\mu_{\spet(R,L)}$ consisting of all log-affine structures. The inclusion  $\R\llog_{\spet(R,L)}^{\rm aff,\mu}\subset \R\llog^{\mu}_{\spet(R,L)}$ exhibits the latter  as the sheafification of $\R\llog_{\spet(R,L)}^{\rm aff,\mu}$.
\end{cor}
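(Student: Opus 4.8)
The plan is to exhibit the inclusion $i\colon\R\llog_{\spet(R,L)}^{\rm aff,\mu}\hookrightarrow\R\llog^{\mu}_{\spet(R,L)}$, viewed as a morphism of presheaves on the affine \'etale site $\algcn_R$, as a local equivalence for the \'etale topology. Since being log-affine is a property stable under equivalence, $\R\llog_{\spet(R,L)}^{\rm aff,\mu}(A)$ is a union of connected components of $\R\llog^{\mu}_{\spet(R,L)}(A)$ for every $A\in\algcn_R$; hence $i$ is objectwise $(-1)$-truncated, i.e. a monomorphism of presheaves. By Theorem\autoref{des} the target $\R\llog^{\mu}_{\spet(R,L)}$ is already an \'etale sheaf on $\algcn_R$, so the universal property of sheafification yields a canonical factorization $L\bigl(\R\llog_{\spet(R,L)}^{\rm aff,\mu}\bigr)\to\R\llog^{\mu}_{\spet(R,L)}$ through the sheafification $L(-)$, and it suffices to prove that this map is an equivalence.

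First I would use that sheafification is a left-exact localization, so that it preserves monomorphisms and the induced map $L(i)$ remains $(-1)$-truncated. In the $\infty$-topos of \'etale sheaves a morphism is an equivalence as soon as it is simultaneously a monomorphism and an effective epimorphism; thus the entire problem reduces to showing that $i$ is a local effective epimorphism, i.e. that every section of $\R\llog^{\mu}_{\spet(R,L)}$ lifts, after passage to an \'etale cover, into the affine subfunctor $\R\llog_{\spet(R,L)}^{\rm aff,\mu}$.

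This last step is exactly the local-chart axiom built into $\mu$-quasi-coherence. Unwinding Definition\autoref{qcohdefinition}, a $\mu$-log structure $\mathscr M$ on $\spet A$ admits an \'etale cover $U=\coprod_j\spet A_j\to\spet A$ whose restriction is a finite disjoint union $\coprod_j\spet(A_j,M_j)$ of spectral log-affine schemes; since a finite coproduct of affines is affine, $U\simeq\spet B$ with $B=\prod_jA_j$, and $\mathscr M|_U$ is globally log-affine, hence determines a point of $\R\llog_{\spet(R,L)}^{\rm aff,\mu}(B)$. The restriction of a $\mu$-structure along an \'etale cover is again of type $\mu$ and retains its $(R,L)$-algebra structure, so this lift genuinely lands in the affine subfunctor; this shows that $i$ is a local effective epimorphism and, combined with the previous paragraph, completes the argument. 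For $\mu\in\{\mathrm{coh},\mathrm{fin}\}$ the same covering works verbatim, since the coherence and fineness conditions are imposed on the chosen local charts and are inherited by each $\spet(A_j,M_j)$.

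The main obstacle will be bookkeeping in the reduction rather than any deep new input: one must be careful that the sheafification is computed on the affine \'etale site $\algcn_R$, so that Theorem\autoref{des} genuinely produces a sheaf there, and that ``local effective epimorphism'' is read in the correct $\infty$-categorical sense. In particular it is precisely the interplay between the objectwise $(-1)$-truncatedness of $i$ and local surjectivity on $\pi_0$ that forces the conclusion, which is why the left-exactness of $L(-)$ is essential; no geometric content beyond Definition\autoref{qcohdefinition} and Theorem\autoref{des} is needed.
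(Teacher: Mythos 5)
Your proof is correct, and it takes a genuinely different formal route from the paper's. The paper verifies the universal property of sheafification directly: for every \'etale sheaf $\mathcal F$ on $\algcn_R$ it shows that the restriction map $i^*:\map_{\shv}(\R\llog_{\spet(R,L)}^{\mu},\mathcal F)\rr\map(\R\llog_{\spet(R,L)}^{\rm aff,\mu},\mathcal F)$ is $(-1)$-truncated and essentially surjective, constructing the required extension of a given map $\R\llog_{\spet(R,L)}^{\rm aff,\mu}\rightarrow\mathcal F$ as a left Kan extension along the inclusion, via the \v{C}ech colimit decomposition supplied by Theorem\autoref{des}. You instead argue inside the $\infty$-topos: left-exactness of sheafification keeps the induced map $L(\R\llog_{\spet(R,L)}^{\rm aff,\mu})\rightarrow\R\llog_{\spet(R,L)}^{\mu}$ a monomorphism, the local-chart clause of Definition\autoref{qcohdefinition} makes it an effective epimorphism, and a monomorphism that is an effective epimorphism is an equivalence. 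Both arguments rest on the same two inputs (Theorem\autoref{des} for the target being a sheaf, and \'etale-local affineness of $\mu$-log structures for the lifting), but your packaging buys real precision: the paper's assertion that $i^*$ is $(-1)$-truncated ``because $i$ is pointwise $(-1)$-truncated'' is not a valid implication on its own (for a general pointwise monomorphism $F\subset G$ and sheaf $\mathcal F$, the restriction $\map(G,\mathcal F)\rightarrow\map(F,\mathcal F)$ need not be $(-1)$-truncated; local surjectivity is also needed), whereas your argument makes explicit exactly where that local surjectivity enters, and it avoids the coherence checks hidden in the paper's Kan-extension construction. The one step you assert without justification --- which the paper's proof leaves equally implicit --- is that a finite disjoint union $\coprod_j\spet(A_j,M_j)$ of spectral log-affine schemes is again log-affine; this is true (by semiadditivity of $\mone$ one has $\prod_jM_j\simeq\bigoplus_jM_j$, and the logification of the product prelog structure on $\prod_jA_j$ restricts on each component $\spet A_k$ to $M_k^a$), but you could bypass it entirely by testing the local effective epimorphism condition against the covering family $\{\spec A_j\rightarrow\spec A\}_j$ rather than against the single affine cover $\spec\bigl(\prod_jA_j\bigr)$.
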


\begin{proof}
    Let $\mathcal F:\algcn_R\rightarrow\ani$  be a sheaf. We aim to prove that the following natural  map
    $$i^*:\map_{\shv}(\R\llog_{\spet(R,L)}^{\mu},\mathcal F)\rr\map_{\rm pre\shv}(\R\llog_{\spet(R,L)}^{\rm aff,\mu},\mathcal F)$$
    induced by the inclusion $i:\R\llog_{\spet(R,L)}^{\rm aff,\mu}\subset \R\llog^{\mu}_{\spet(R,L)}$ is an equivalence.
    We prove that the map $i^*$ is essentially surjective and $(-1)$ -truncated,  i.e., it has contractible or empty homotopy fibers.\\
   Obviously, the map $i^*$ is  $(-1)$-truncated, because the functor $i$ is point-wise $(-1)$-truncated. We now prove that $i^*$ is essentially surjective. 
    Let $f:\R\llog_{\spet(R,L)}^{\rm aff,\mu}\rightarrow\mathcal F$ be a morphism. We have to construct an extension  $\tilde{f}:\R\llog_{\spet(R,L)}^{\mu}\rightarrow\mathcal F$, making the following diagram  
\[\begin{tikzcd}
	{\R\llog_{\spet(R,L)}^{\rm aff,\mu}} & {\mathcal F} \\
	{\R\llog_{\spet(R,L)}^{\mu}}
	\arrow["f", from=1-1, to=1-2]
	\arrow[from=1-1, to=2-1]
	\arrow["{\tilde{f}}"', dashed, from=2-1, to=1-2]
\end{tikzcd}\]
commutative. Let $A\in\algcn_R$ be a connective $R$-algebra, and  let $\mathscr M$ be a $\mu$-log structure on $X=\spet A$. Then  we know that there is a faithfully flat \'etale $A$-algebra $A'$, such that the log structure $\mathscr M':=\mathscr M|_{A'}$ 
on $\spet A'$ obtained from pulling back the log structure $\mathscr M$ along the covering map $\spet A'\rightarrow\spet A$  is log-affine. By Theorem\autoref{des}, one can write $(\spet A,\mathscr M)$ as a colimit $$(\spet A,\mathscr M)\stackrel{\simeq}\longleftarrow{\colim}_{\Delta^{\rm op}}(\spet A'^{\otimes_R(\bullet+1)},\mathscr M|_{A'^{\otimes_R(\bullet+1)}}).$$
Then we define the functor $\tilde{f}$ as  the  left Kan extension of $f$ along the inclusion $\R\llog_{\spet(R,L)}^{\rm aff,\mu}\subset \R\llog^{\mu}_{\spet(R,L)}.$ 
Therefore, we conclude that the map $i^*$ is essentially surjective. 
\end{proof}
\begin{rmk}\label{covtoricscheme}
    The Corollary\autoref{afflogstk} implies that $\R\llog_{\spet(R,L)}^{\mu}$ admits a surjective map from the union of all spectral affine schemes having the form of $\spec R[M]$, where $M$ runs over all $\Einf$-monoids with the desired property.
\end{rmk}
Now, we are ready to prove the descent theorem for moduli of log structures. In fact, it is enough to show the statement of Theorem \autoref{des} for the stack of all  log structures, in light of the following Lemma.
\begin{lem}\label{redqcoh}
If $\R\llog_{(\MCS,\MSL)}$ is an \'etale sheaf, then so are $\R\llog^{\rm qcoh}_{(\MCS,\MSL)}, \R\llog^{\rm coh}_{(\MCS,\MSL)}$ and $\R\llog^{\rm fin}_{(\MCS,\MSL)}$.
\end{lem}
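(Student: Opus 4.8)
The plan is to exhibit each subfunctor $\R\llog^\mu_{(\MCS,\MSL)}$, for $\mu\in\{\mathrm{qcoh},\mathrm{coh},\mathrm{fin}\}$, as a monomorphism into the \'etale sheaf $\R\llog_{(\MCS,\MSL)}$ that is cut out by an \'etale-local condition, and then to deduce the sheaf property formally from this. First I would observe that, since being a $\mu$-type log stack (Definition\autoref{qcohdefinition}) is a property of the pair $(\X,\mathscr M)$ that is invariant under equivalence, the inclusion $\R\llog^\mu_{(\MCS,\MSL)}(\X)\hookrightarrow\R\llog_{(\MCS,\MSL)}(\X)$ is the inclusion of a union of connected components, hence a $(-1)$-truncated (monic) map of anima, natural in $\X$. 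Thus $\R\llog^\mu_{(\MCS,\MSL)}$ is a subpresheaf of the sheaf $\R\llog_{(\MCS,\MSL)}$, and it remains only to check descent.

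The key step is to establish that the property $\mu$ is \'etale-local on the source, in two complementary directions. In the direction of \emph{stability under \'etale pullback}: if $(\X,\mathscr M)$ is $\mu$-type and $\X'\to\X$ is \'etale, then the strict pullback $(\X',\mathscr M|_{\X'})$ is again $\mu$-type. This reduces to the log-affine computation of Lemma\autoref{pullbackaff=aff}, which shows that the chart monoid is unchanged under strict pullback along an affine map; hence the finiteness hypotheses on the monoid defining coherent and fine structures are preserved, exactly in the spirit of Remark\autoref{admissiblelocal}. In the direction of \emph{descent of the property}: if $\{U_i\to\X\}$ is an \'etale cover and each $(U_i,\mathscr M|_{U_i})$ is $\mu$-type, then $(\X,\mathscr M)$ is $\mu$-type, since refining the charted covers of the $U_i$ produces an \'etale cover of $\X$ by log-affines of the required finiteness type.

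Granting this, I would conclude as follows. Write $G:=\R\llog_{(\MCS,\MSL)}$ and $F:=\R\llog^\mu_{(\MCS,\MSL)}$, and fix an \'etale cover $U:=\coprod_i U_i\to\X$ with \v Cech nerve $U^\bullet$. By the hypothesis of the lemma, $G$ is a sheaf, so $G(\X)\simeq\lim_{\Delta}G(U^\bullet)$; and because $F\hookrightarrow G$ is a pointwise monomorphism and monomorphisms are stable under limits, $\lim_{\Delta}F(U^\bullet)$ is identified with the subobject of $G(\X)$ consisting of those sections $s$ with $s|_{U^n}\in F(U^n)$ for all $n$. Applying stability under \'etale pullback to the projections $U^n\to U^0$, this condition is equivalent to the single requirement $s|_{U^0}\in F(U^0)$; and by descent of the property, since $U^0=U\to\X$ is a cover, this last requirement cuts out exactly $F(\X)\subseteq G(\X)$. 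Therefore the canonical map $F(\X)\to\lim_{\Delta}F(U^\bullet)$ is an equivalence, which is the sheaf condition for $F$.

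The main obstacle I anticipate is pinning down the stability direction cleanly: one must verify that restricting a $\mu$-type log structure along an \'etale (not merely affine) map preserves quasi-coherence, coherence, and fineness, i.e.\ that ``\'etale over a log-affine with the pullback log structure'' is again of the chart type in question. The content is that an \'etale $\Einf$-ring map $A\to B$ sends a chart $(A,M)$ to $(B,M)^a$ with the \emph{same} monoid $M$, so the argument reduces to Lemma\autoref{pullbackaff=aff} together with the fact that \'etale covers of spectral affines are refined by affines; the remaining finiteness bookkeeping for the coherent and fine cases is then routine.
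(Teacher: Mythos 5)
Your proposal is correct and takes essentially the same approach as the paper: both arguments rest on the observation that the property $\mu$ is \'etale-local (stable under \'etale pullback and detected after restriction to an \'etale cover), and then deduce descent of the subfunctor formally from the assumed descent of $\R\llog_{(\MCS,\MSL)}$. The paper packages this as the pullback square $\R\llog^{\mu}_{(\MCS,\MSL)}(\Y)\simeq\R\llog^{\mu}_{(\MCS,\MSL)}(\X)\times_{\R\llog_{(\MCS,\MSL)}(\X)}\R\llog_{(\MCS,\MSL)}(\Y)$ for an \'etale cover $\X\rightarrow\Y$ and then passes to the limit over the {\v C}ech nerve, while you express the identical content through monomorphisms and subobjects of the limit; the difference is purely presentational.
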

\begin{proof}
Assume that  $\R\llog_{(\MCS,\MSL)}$ satisfies \'etale descent. We will show that the   functors $\R\llog^{\rm qcoh}_{(\MCS,\MSL)},\R\llog^{\rm coh}_{(\MCS,\MSL)}$ and $\R\llog^{\rm fin}_{(\MCS,\MSL)}$ also satisfy \'etale descent.
Let $f:\X\rightarrow\Y$ be an \'etale cover of  spectral Deligne-Mumford stacks. Let $\mu\in\{\rm qcoh, coh,fin\}$. Consider the following commutative diagrams
  $$\begin{tikzcd}
{\R\llog_{(\MCS,\MSL)}^{\mu}(\Y)} \arrow[d] \arrow[r, "f^*"] & {\R\llog_{(\MCS,\MSL)}^\mu(\X)} \arrow[d] &  &  &    \\
{\R\llog_{(\MCS,\MSL)}(\Y)} \arrow[r, "f^*"]                 & {\R\llog_{(\MCS,\MSL)}(\X)}               &  &  & {}
\end{tikzcd}$$
The fiber product
$\R\llog_{(\MCS,\MSL)}^{\mu}(\X)\times_{\R\llog_{(\MCS,\MSL)}(\X)}\R\llog_{(\MCS,\MSL)}(\Y)$
is the anima consisting of triples $(\mathscr M,\mathscr N,\rho)$, where $\mathscr M\in\R\llog_{(\MCS,\MSL)}^{\mu}(\X)$ is a $\mu$-log structure over $\X$, $\mathscr N\in\R\llog_{(\MCS,\MSL)}(\Y)$ is a  log structure over $Y$, and $\rho: f^*\mathscr N\stackrel{\simeq}\rightarrow\mathscr M$ is an equivalence of log structures. This implies that $\mathscr N\in\R\llog_{(\MCS,\MSL)}(\Y)$ is actually  a $\mu$-log structure. since a log structure has the  property $\mu$, it is, by definition, a local question. We have that  the canonical  map 
$$\R\llog_{(\MCS,\MSL)}^\mu(\Y)\rr \R\llog_{(\MCS,\MSL)}^{\mu}(\X)\times_{\R\llog_{(\MCS,\MSL)}(\X)}\R\llog_{(\MCS,\MSL)}(\Y)$$
has a cosection $\pi$, given by the projection onto the second variable. It's obvious that $\pi$ has contractible fibers.  We deduce that the projection $\pi$ is an equivalence.  Let $\Y_\bullet$ be an arbitrary {\v C}ech cover  of $\Y$; we obtain an equivalence of cosimplicial animae $$\R\llog_{(\MCS,\MSL)}^\mu(\Y)\simeq\R\llog_{(\MCS,\MSL)}^\mu(\Y_\bullet)\times_{\R\llog_{(\MCS,\MSL)}(\Y_\bullet)}\R\llog_{(\MCS,\MSL)}(\Y),$$
Passing to the limit, we acquire the following equivalences
\begin{align*}
\R\llog_{(\MCS,\MSL)}^\mu(\Y)&\stackrel{\simeq}\rr{\lim}_{\Delta}\R\llog_{(\MCS,\MSL)}^\mu(\Y_\bullet)\times_{{\lim}_{\Delta}\R\llog_{(\MCS,\MSL)}(\Y_\bullet)}\R\llog_{(\MCS,\MSL)}(\Y)\\
&\stackrel{\simeq}\rr {\lim}_{\Delta}\R\llog_{(\MCS,\MSL)}^\mu(\Y_\bullet)\times_{\R\llog_{(\MCS,\MSL)}(\Y)}\R\llog_{(\MCS,\MSL)}(\Y)\\
&\stackrel{\simeq}\rr{\lim}_{\Delta} \R\llog_{(\MCS,\MSL)}^{\mu}(\Y_\bullet).
 \end{align*}
\end{proof}
For this reason, we will focus on the moduli functor of arbitrary  log structures.
Let $(\MCS,\MSL)$ be a  spectral log Deligne-Mumford stack, we want to define two new functors
$$\widehat{\R\llog_{(\MCS,\MSL)}},\widehat{\R\plog_{(\MCS,\MSL)}}:\Sptdm^{\rm op}_{/\MCS}\rr\cat,$$
which sends a relative spectral Deligne-Mumford stack $\X/\MCS$ to the $\infty$-category of maps to spectral log (or prelog) Deligne-Mumford stacks $(\X,\mathscr M)\rightarrow(\MCS,\MSL).$ We thus have enough to show the descendability of $\widehat{\R\llog_{(\MCS,\MSL)}}$ and $\widehat{\R\plog_{(\MCS,\MSL)}}$. \cite[Corollary 2.3.8]{Ayoub_2022} shows that the functor $$\mone(-):\Sptdm_{/\mathcal S}^{\rm op}\rr \cat$$ 
satisfies \'etale descent.
We let the functor $\widehat{\R\plog_{(\MCS,\MSL)}}$ be the limit of the diagram of functors
$$\begin{tikzcd}
         & \Fun(\Delta^2,\mone)(-) \arrow[ld, "{\rm ev}_2"'] \arrow[rd, "{\rm ev}_0"] &          \\
\mone(-) & * \arrow[l, "\Omega^\infty\mathcal O(-)"'] \arrow[r, "f\mapsto f^*\mathscr L"]      & \mone(-)
\end{tikzcd}$$
and let $\widehat{\R\llog{(\MCS,\MSL)}}$ be the subfunctor of $\widehat{\R\plog_{(\MCS,\MSL)}}$ spanned by log objects. We immediately obtain the following fact:
\begin{lem}
    The functor $\widehat{\R\plog_{(\MCS,\MSL)}}$ satisfies \'etale descent.
\end{lem}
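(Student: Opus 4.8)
The plan is to deduce the statement formally from two ingredients: the fact that the $\infty$-category of $\cat$-valued \'etale sheaves on $\Sptdm_{/\MCS}$ is closed under small limits inside the functor $\infty$-category $\Fun(\Sptdm^{\rm op}_{/\MCS},\cat)$, and the descent of $\mone(-)$ that is already cited. Recall that the \'etale descent condition for a functor $F\colon\Sptdm^{\rm op}_{/\MCS}\rightarrow\cat$ is itself the requirement that $F(\Y)\xrightarrow{\simeq}\lim_{\Delta}F(\Y_\bullet)$ for every \'etale hypercover (or cover) $\Y_\bullet\rightarrow\Y$; since limits commute with limits, a pointwise limit of functors each satisfying this condition again satisfies it. As $\widehat{\R\plog_{(\MCS,\MSL)}}$ is by construction the limit in $\Fun(\Sptdm^{\rm op}_{/\MCS},\cat)$ of the cospan-type diagram whose vertices are $\mone(-)$ (twice), the constant functor $*$, and $\Fun(\Delta^2,\mone)(-)$, it therefore suffices to verify that each of these four functors satisfies \'etale descent and that the connecting maps are genuine natural transformations.

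For the vertices: the functor $\mone(-)\colon\Sptdm^{\rm op}_{/\MCS}\rightarrow\cat$ satisfies \'etale descent by \cite[Corollary 2.3.8]{Ayoub_2022}, which is the only nonformal input. The constant functor $*$ trivially satisfies descent, since $\lim_{\Delta}*\simeq *$. For $\Fun(\Delta^2,\mone)(-)$, I would use that the cotensor functor $\Fun(\Delta^2,-)\colon\cat\rightarrow\cat$ preserves all limits (being a right adjoint), so that for any cover $\Y_\bullet\rightarrow\Y$ one has
\[
\Fun(\Delta^2,\mone)(\Y)\simeq\Fun(\Delta^2,\mone(\Y))\simeq\Fun(\Delta^2,{\lim}_{\Delta}\mone(\Y_\bullet))\simeq{\lim}_{\Delta}\Fun(\Delta^2,\mone)(\Y_\bullet),
\]
which transports the descent of $\mone(-)$ to $\Fun(\Delta^2,\mone)(-)$. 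For the edges, $\ev_0$ and $\ev_2$ are evaluation natural transformations and are manifestly natural; the map $\Omega^\infty\OO(-)\colon *\rightarrow\mone(-)$ is natural because every morphism $g$ in $\Sptdm_{/\MCS}$ pulls the structure sheaf back to the structure sheaf, i.e.\ $g^{*}\OO\simeq\OO$, and $\Omega^\infty$ commutes with pullback; and the map $f\mapsto f^{*}\MSL$ is natural by the functoriality of the pullback of sheaves of $\Einf$-monoids together with $g^{*}f_{\Y}^{*}\MSL\simeq f_{\X}^{*}\MSL$ for $g\colon\X\rightarrow\Y$ over $\MCS$.

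Having checked these, the conclusion is immediate from closure of sheaves under limits. I expect the only genuine subtlety~---~rather than a real obstacle~---~to lie in the bookkeeping around the edge $f\mapsto f^{*}\MSL$, namely keeping track of the structure maps $f_{\X}\colon\X\rightarrow\MCS$ as part of the data of an object of the slice site and verifying the required coherences of $g^{*}f_{\Y}^{*}\MSL\simeq f_{\X}^{*}\MSL$; and, more pedantically, in the size issues attached to the large target $\cat$, which by the standing Convention on set theory we suppress. Everything else is the formal assertion that a pointwise limit of \'etale sheaves is an \'etale sheaf, so no computation beyond the cited descent of $\mone(-)$ is needed. The same argument will then yield, via the defining pullback square, descent for the subfunctor $\widehat{\R\llog_{(\MCS,\MSL)}}$ cut out by the log condition, since that condition is stable under the pointwise limits involved.
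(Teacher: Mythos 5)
Your proof is correct and takes essentially the same route as the paper: the paper states this lemma as an immediate consequence of the definition of $\widehat{\R\plog_{(\MCS,\MSL)}}$ as a limit of the cospan diagram, with the only nonformal input being the cited descent of $\mone(-)$ from \cite[Corollary 2.3.8]{Ayoub_2022}, together with the formal fact that a pointwise limit of \'etale sheaves valued in $\cat$ is again an \'etale sheaf. Your explicit verifications~---~that $\Fun(\Delta^2,\mone)(-)$ inherits descent because $\Fun(\Delta^2,-)$ preserves limits, and that the edges of the diagram are genuine natural transformations~---~simply fill in the details the paper leaves implicit.
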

Now we can prove Theorem\autoref{des}. 
\begin{proof}[Proof of Theorem\autoref{des}]
 Let $f:\X\rightarrow \Y$ be an \'etale cover of a spectral Deligne-Mumford stack.    We will show that the following diagram 
 $$\begin{tikzcd}
{\widehat{\R\llog_{(\MCS,\MSL)}}(\Y)} \arrow[r, "f^*"] \arrow[d] & {\widehat{\R\llog_{(\MCS,\MSL)}}(\X)} \arrow[d] \\
{\widehat{\R\plog_{(\MCS,\MSL)}}(\Y)} \arrow[r, "f^*"]           & {\widehat{\R\plog_{(\MCS,\MSL)}}(\X)}          
\end{tikzcd}$$
is a pullback square. This is clear because a prelog structure $\mathscr M$ over $\Y$ is a log structure if and only if its restriction to $\X$ is a log structure. Then applying the same argument in Lemma\autoref{redqcoh}, we obtain that the functor 
$\widehat{\R\llog_{(\MCS,\MSL)}}$ satisfies \'etale descent, then so does $\R\llog_{(\MCS,\MSL)}$, as $\R\llog_{(\MCS,\MSL)}$ is the core of $\widehat{\R\llog_{(\MCS,\MSL)}}$. Applying Lemma\autoref{redqcoh}, we know that $\R\llog^{\mu}_{(\MCS,\MSL)},\mu\in\{\rm qcoh,coh,fin\}$  also satisfies \'etale hyperdescent.
\end{proof}
\begin{rmk}[Log structures in fpqc topology]\label{fpqcrlog}
 We can define another variation of the functor $\R\llog_{(\MCS,\MSL)}$ using fpqc topology. In this case, we can regard as a functor 
 $\R\llog^{\mu}_{(\MCS,\MSL)}:\Sptdm_{\rm fpqc}\rightarrow\ani$. The same method of the proof Theorem\autoref{des} shows this is an fpqc sheaf.
 \end{rmk}
\subsubsection{Comparison with the charted log structures and logarithmic cotangent complexes}
Let $S$ be a charted log stack whose underlying stack $\underline S$ is represented by a spectral Deligne-Mumford stack $\MCS.$ Denote by $\llog\Sptdm^{\rm chart}$ the subcategory of $\llog\stk^{\rm chart}$ that consists of charted log stacks whose underlying stacks are representable by  spectral Deligne-Mumford stacks. We define a functor $$|-|:\llog\Sptdm^{\rm chart}\rr\llog\Sptdm$$ given by the left Kan extension of the functor $\spet(-):\llog^{\rm op}\rightarrow \llog\Sptdm.$ We have the following commutative diagram $$
\begin{tikzcd}
\llog^{\rm op} \arrow[d, "\spec(-)"'] \arrow[r, "\spet(-)"] & \llog\Sptdm \\
\llog\Sptdm^{\rm chart} \arrow[ru, "|-|"', dashed]          &            
\end{tikzcd}$$
Thus, we get a morphism of presheaves 
$$|-|:\R\llog^{\rm chart}_{S}\rr\R\llog^{\rm qcoh}_{|S|}.$$
\begin{prop}\label{descent=forgetchart}
The morphism $|-|$ exhibits $\R\llog^{\rm qcoh}_{|S|}$  as a sheafification of $\R\llog^{\rm chart}_{S}$.
\end{prop}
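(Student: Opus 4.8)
The plan is to use the universal property of sheafification. By Theorem\autoref{des} the target $\R\llog^{\rm qcoh}_{|S|}$ is already an \'etale sheaf, so it suffices to prove that $|-|$ is a \emph{local equivalence}, i.e.\ that it induces an equivalence after \'etale sheafification. Concretely, as in the proof of Corollary\autoref{afflogstk}, this amounts to checking that for every \'etale sheaf $\mathcal F$ on $\Sptdm_{/\MCS}$ the map
$$|-|^*:\map_{\shv}(\R\llog^{\rm qcoh}_{|S|},\mathcal F)\rr\map_{\rm pre\shv}(\R\llog^{\rm chart}_{S},\mathcal F)$$
is an equivalence, which I would verify by showing that $|-|$ is a local epimorphism and that its diagonal is a local epimorphism.

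First I would reduce to the case $S=\spet(R,L)$ log-affine. By Lemma\autoref{chartaffcover} there is a strict \'etale cover exhibiting $S$ as a geometric realization $\colim_\Delta\spet(R^\bullet,L^\bullet)$; the sheaf target is compatible with this cover by descent, while the presheaf source satisfies the base-change identity $\R\llog^{\rm chart}_S\times_S\spec R^\bullet\simeq\R\llog^{\rm chart}_{\spec(R^\bullet,L^\bullet)}$ exploited in the proof of Theorem\autoref{gabbercotasfunctor}, so that, sheafification being colimit-preserving, the affine case propagates to $S$. In the affine case, the uniqueness clause following the definition of charted log stacks identifies a charted log stack over $\spec(R,L)$ with affine underlying stack $\spec A$ with an $\Einf$-log ring $(A,M)$ over $(R,L)$; hence the restriction of $\R\llog^{\rm chart}_{\spec(R,L)}$ to affine test objects coincides with the functor $\R\llog^{\rm aff,qcoh}_{\spet(R,L)}$ of Corollary\autoref{afflogstk}, and $|-|$ restricts to the comparison map $(A,M)\mapsto\spet(A,M)$ there. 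Corollary\autoref{afflogstk} then yields the statement on affines, and since \'etale sheaves on $\Sptdm_{/\MCS}$ are determined by their affine restrictions and sheafification is computed locally, it propagates to all test objects. The local epimorphism property is essentially the content of Definition\autoref{qcohdefinition}: every quasi-coherent log structure is \'etale-locally log-affine, hence charted, with the map to $S$ lifting to a charted map after refining the cover, since $S$ is itself charted.

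The hard part will be the local monomorphism property, i.e.\ controlling the fibers of $|-|$, which is genuinely nontrivial because $\spet$ is not fully faithful. If two log rings $(A,M)$ and $(A,M')$ over $(R,L)$ have $\spet(A,M)\simeq\spet(A,M')$, this is an equivalence of the \emph{sheaves} $\mathscr M\simeq\mathscr M'$; by the description of $\mathscr M$ as the \'etale sheafification of the pointwise logification $A'\mapsto M^a$ in Remark\autoref{afflog=logificationofconastantsheaf}, such an equivalence refines, on a further \'etale cover $\spec A'\to\spec A$, to an equivalence of the \emph{presheaf} values, hence to an equivalence of the associated charted log stacks over $\spec A'$. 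I expect this to be the crux: it exhibits the non-injectivity of $|-|$ as a purely local phenomenon, governed by the same failure of strict and charted descent witnessed in Example\autoref{784512547845123659} that forces the passage to the sheafification in the first place. Assembling the local epimorphism and this local monomorphism statement shows that $|-|$ is a local equivalence into a sheaf, and therefore exhibits $\R\llog^{\rm qcoh}_{|S|}$ as the sheafification of $\R\llog^{\rm chart}_{S}$.
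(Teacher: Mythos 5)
Your skeleton (the target is a sheaf by Theorem\autoref{des}, so it suffices to show $|-|$ becomes an equivalence after sheafification; reduce to $S=\spec(R,L)$ log-affine) is sound and agrees with the paper's reduction, but the central step is a genuine gap: the asserted identification of $\R\llog^{\rm chart}_{\spec(R,L)}$ on affine test objects with the functor $\R\llog^{\rm aff,qcoh}_{\spet(R,L)}$ of Corollary\autoref{afflogstk}. These are different functors, and the difference is exactly what the proposition is about. A point of $\R\llog^{\rm chart}_{\spec(R,L)}(A)$ is a map of $\Einf$-log rings $(R,L)\rr(A,M)$; a point of $\R\llog^{\rm aff,qcoh}_{\spet(R,L)}(A)$ is a sheaf-theoretic log structure $\mathscr M$ on $\spet A$ which happens to be log-affine, together with an \emph{arbitrary} map of log Deligne--Mumford stacks $(\spet A,\mathscr M)\rr\spet(R,L)$. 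The uniqueness clause you cite identifies the underlying object of a charted log stack with a log ring, but says nothing about the mapping data: by Remark\autoref{676756} and the Caution following it, a map of log Deligne--Mumford stacks into $\spet(R,L)$ need not be induced by any map of $\Einf$-log rings, and by the paper's earlier Caution the functor $\spet(-)$ is not fully faithful, so even the object comparison requires an argument. Invoking Corollary\autoref{afflogstk} through this identification therefore assumes (a pointwise-strengthened form of) the affine case of the proposition rather than proving it; only an equivalence \emph{after} sheafification is true or claimed.

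The same difficulty undermines both of your verifications. For the local epimorphism, the clause ``with the map to $S$ lifting to a charted map after refining the cover, since $S$ is itself charted'' is precisely the existence of local charts for \emph{morphisms}, which the paper's Caution flags as failing for general quasi-coherent log structures (classically it needs coherence/fineness of the source). For the local monomorphism, the claim that an equivalence of \'etale sheafifications $\mathscr M\simeq\mathscr M'$ refines, over a single \'etale cover, to an equivalence of the presheaves of pointwise logifications is unjustified: sections of a sheafification lift locally one at a time, but a map out of the (possibly non-compact) $\Einf$-monoid $M$ need not lift over any one cover, and the local lifts must moreover be assembled multiplicatively and coherently; in the quasi-coherent setting no compactness is available. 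The paper's own proof sidesteps a pointwise comparison altogether: it considers the family $\CC$ of toric charts $\spec R[M]\rr\R\llog^{\rm chart}_{\spec(R,L)}$ (every charted point $(R,L)\rr(A,M)$ tautologically factors through its toric chart), uses Remark\autoref{covtoricscheme} to see that these cover $\R\llog^{\rm qcoh}_{\spet(R,L)}$, and then identifies the sheafified colimit of $\R\llog^{\rm chart}_{\spec(R,L)}$ with $\colim_{\CC}\spec R[M]\simeq\R\llog^{\rm qcoh}_{\spet(R,L)}$ by a cofinality argument. If you want to repair your argument, you should either adopt that colimit comparison or supply a proof of local chart existence for morphisms and of local lifting of sheaf-level equivalences, neither of which follows from the results you cite.
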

\begin{proof}
Without loss of generality, we can assume that $S=\spec (R,L)$ is charted log affine.
Let $\CC$ be the subcategory of $\aff_{/\R\llog^{\rm chart}_{\spet(R,L)}}$ forms of maps $\spec R[M]\rightarrow\R\llog^{\rm chart}_{\spet(R,L)}$ classifying log structures $M\rightarrow\Omega^\infty R[M]$.
Using Remark\autoref{covtoricscheme},  the disjoint union of objects in $\CC$ forms a cover of $\R\llog^{\rm qcoh}_{\spet(R,L)}$.  This proves that $|-|$ is an effective epimorphism after sheafification. Note that $\CC$ in $\aff_{/\R\llog^{\rm chart}_{\spet(R,L)}}$ and its essential image $\CC'$ in $\aff_{/\R\llog^{\rm qcoh}_{\spet(R,L)}}$ are both cofinal. It follows  that the sheafification of $\R\llog^{\rm qcoh}_{\spet(R,L)}$ is equivalent to $\colim_\CC\spec R[M]\simeq\R\llog^{\rm qcoh}_{\spet(R,L)}.$
\end{proof}
\begin{rmk}
    If we work in the fpqc topology, then the same method shows that the fpqc sheafifiaction of $\R\llog_{S}^{\rm chart}$ also gives the moduli stack of quasi-coherent log structures on $|S|$ defined by fpqc topology.  
\end{rmk}
\begin{lem}\label{defchartlog}
    Let $(\X,\mathscr M)$ be a quasi-coherent spectral log Deligne-Mumford stack. Assume that there is a strict morphism $f:\spet(A,M)\rightarrow (\X,\mathscr M)$, such that the underlying map of stacks $\underline{f}:\spet A\rightarrow\X$ is a square-zero extension. Then  there is a unique (up to homotopy) square-zero extension $i:(A',M')\rightarrow (A,M)$ of $\Einf$-log rings, such that $\spet (A',M')\simeq(\X,\mathscr{M})$, and $f$ is induced from $i$.
\end{lem}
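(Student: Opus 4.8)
The plan is to settle the underlying spectral Deligne--Mumford stack first and then upgrade to log structures by feeding the geometry into the affine deformation theory developed above. Since $\underline f\colon\spet A\to\X$ is a square-zero extension and $\spet A$ is affine, \cite[Lemma 17.1.3.7]{lurie2018spectral} shows that $\X$ is itself affine; I would write $\X\simeq\spet A'$, so that $\underline f$ is the closed immersion associated to a square-zero extension $A'\to A$ of connective $\Einf$-rings with kernel $I\in\Mod_A^{\rm cn}$. This already produces the underlying ring map of the desired $i$ and exhibits it as a square-zero extension of $\Einf$-rings. Moreover, as $\underline f$ is a nilpotent thickening it induces an equivalence of the underlying étale $\infty$-topoi of $\spet A$ and $\X$, under which $A'\to A$ governs the comparison of $GL_1(\OO)$ and $\Omega^\infty\OO$ on the two sides; strictness of $f$ records the identification $\underline f^{*}\mathscr M\simeq\mathscr M_{(A,M)}$ that will force $i$ to be strict.

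Next I would reduce the construction of $M'$ to the affine deformation theory. By quasi-coherence (Definition~\autoref{qcohdefinition}) the sheaf $\mathscr M$ is étale-locally log-affine, and by the nil-invariance of the strict étale site (Proposition~\autoref{strictetalerig}) an étale cover trivialising $\mathscr M$ may be chosen as the lift of a strict étale cover of $\spet A$ on which $\mathscr M_{(A,M)}$ is log-affine with chart $M$. Thus $\mathscr M$ is a log structure on the thickening $\spet A\hookrightarrow\X$ whose restriction is the log-affine $\spet(A,M)$, and by Remark~\autoref{torsor=sqzero} such a restriction datum is exactly a sheaf-theoretic $I$-torsor over $\mathscr M_{(A,M)}$. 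On the affine side, Proposition~\autoref{sqzero-logder} and Corollary~\autoref{obstruction} classify the square-zero extensions $(A',M')\to(A,M)$ of $\Einf$-log rings whose underlying extension is the fixed $A'\to A$: keeping the structure-sheaf deformation trivial (here the trivial log structure is $(A,GL_1(A))$, cf. Lemma~\autoref{Lstrict}), they are corepresented by $\map_A(\LL^{G}_{(A,M)/(A,GL_1(A))},I[1])$, and each such $(A',M')$ is automatically log-affine with $\spet(A',M')\to\spet(A,M)$ strict. The comparison $(A',M')\mapsto(\spet(A',M'),\text{restriction})$ then lands in the space of sheaf-level lifts of $\mathscr M_{(A,M)}$ over $\X$, and it suffices to identify these two spaces.

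The main obstacle is precisely to show that this comparison map is an equivalence of anima, for then the globally given $\mathscr M$ is forced to be log-affine, its preimage supplies the required $(A',M')$, strictness transfers from $f$, and contractibility of the fibres yields uniqueness up to homotopy. This is the step that fails for general quasi-coherent log structures—global charts need not exist—and it is rescued here by the square-zero hypothesis together with the affineness of $\X$: the sheaf-level $I$-torsors over $\mathscr M_{(A,M)}$ are classified by the hypercohomology of a quasi-coherent complex built from $I$ and the log cotangent data, whereas the affine side is classified by $\mathrm{Ext}^{*}(\LL^{G}_{(A,M)/(A,GL_1(A))},I)$, and since $\X\simeq\spet A'$ is affine the sheafification map identifies these two classifying spaces, exactly as in the identification of $\R\llog^{\rm chart}$ with its sheafification $\R\llog^{\rm qcoh}$ (Proposition~\autoref{descent=forgetchart}). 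I would execute this final equivalence by viewing $\mathscr M$ as a lift of the classifying map $\spet A\to\R\llog^{\rm qcoh}$ along the square-zero extension $\spet A\hookrightarrow\X$; since $\R\llog^{\rm qcoh}$ carries the $(-1)$-connective cotangent complex computed in Theorem~\autoref{gabbercotasfunctor}, the space of such lifts is $\map_A(\LL^{G}_{(A,M)},I[1])$, which is the same space classifying the affine square-zero extensions, giving simultaneously the existence and the uniqueness of $i$.
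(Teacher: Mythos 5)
Your opening moves coincide with the paper's: reduce to $\X\simeq\spet A'$ via \cite[Lemma 17.1.3.7]{lurie2018spectral}, identify the \'etale topoi of $\spet A$ and $\spet A'$, and view $\mathscr M$ as an $\mathcal I$-torsor datum over the log structure of $\spet(A,M)$ in the spirit of Remark\autoref{torsor=sqzero}. The genuine gap is your final step, and it is a circularity. You classify lifts of the classifying map $\spet A\rightarrow\R\llog^{\rm qcoh}$ along $\spet A\hookrightarrow\spet A'$ by $\map_A(\LL^{G}_{(A,M)},I[1])$, "since $\R\llog^{\rm qcoh}$ carries the $(-1)$-connective cotangent complex computed in Theorem\autoref{gabbercotasfunctor}". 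But Theorem\autoref{gabbercotasfunctor} concerns the presheaf $\R\llog^{\rm chart}$, not its sheafification $\R\llog^{\rm qcoh}$; the cotangent complex of $\R\llog^{\rm qcoh}$ is Theorem\autoref{almostperfect}, whose proof goes through Corollary\autoref{fiberstability} and Lemma\autoref{0987890}, and Corollary\autoref{fiberstability} is itself an immediate consequence of Lemma\autoref{defchartlog} --- the statement you are proving. Indeed, the hypothesis of Lemma\autoref{0987890} (that the fibers of $X'(A\oplus I)\rightarrow X'(A)$ over points coming from the presheaf agree with the presheaf fibers) is exactly an equivalent form of the present lemma. Nor can Proposition\autoref{descent=forgetchart} substitute for it: sheafification says nothing about sections over a fixed affine, so a point of $\R\llog^{\rm qcoh}(A')$ is a priori only \'etale-locally charted, and producing a \emph{global} chart on the thickening is precisely the content at stake. (A minor mis-citation in the same direction: nil-invariance of the \'etale site of $A'$ is Lurie's rigidity theorem \cite[Theorem 7.5.4.2]{lurie2017higher}, not Proposition\autoref{strictetalerig}, which concerns strict \'etale sites of \emph{log} rings and presupposes the square-zero extension $(A',M')$ you have yet to construct.)

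The non-circular repair is the direct construction, which uses affineness exactly where you gesture at "hypercohomology" but never execute. Restrict $\mathscr M$ to the small \'etale site of $\spet A$ and form the fiber sequence $\mathscr M\rightarrow\mathscr M_0\rightarrow B\mathcal I$ of sheaves of $\Einf$-monoids, where $\mathscr M_0$ is the log structure of $\spet(A,M)$ and $\mathcal I$ is the quasi-coherent sheaf attached to $I$. Since $\spet A$ is affine, higher quasi-coherent cohomology vanishes and $\Gamma(\spet A,B\mathcal I)\simeq BI$; hence the chart $M\rightarrow\Gamma(\spet A,\mathscr M_0)$ composes to a map of $\Einf$-monoids $\phi:M\rightarrow BI$, and one sets $M'={\rm fib}(\phi)$. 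Sheafifying the pointwise logification of $M'$ then yields a map to $\mathscr M$ which the fiber sequence forces to be an equivalence, so $(\X,\mathscr M)\simeq\spet(A',M')$ with $i:(A',M')\rightarrow(A,M)$ a strict square-zero extension inducing $f$; uniqueness holds because any other chart $M''$ with these properties is also identified with ${\rm fib}(\phi)$. This argument lives entirely at the level of sheaves of monoids on $A_{\et}$ and never invokes the deformation theory of the sheafified moduli stack, which is what makes it admissible at this point of the paper.
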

\begin{proof}
     \cite[Lemma 17.1.3.7]{lurie2018spectral} shows that the underlying stack $\X$ is affine, and we denote it by $\spet A'$, where $A'$ is a square-zero extension of $A$ with kernel $I\in\Mod_A$. Let us identify the underlying $\infty$-topoi of $\spet A$ and $\spet A'$, and restrict $\mathscr M$ to the small \'etale site of $\spet A$, we therefore regard $\mathscr M$ as a sheaf of $\Einf$-monoids on $A_\et$. Denote by $\mathscr M_0$ the pullback of $\mathscr M$ to $\spet A'$, which is equivalent to the log structure of $\spet(A,M)$ by assumption. As sheaves on $\spet A$, we have a  fiber sequence
     $$\mathscr M\rr\mathscr M_0\rr B\mathcal I\simeq\mathcal{I}[1]$$
     , where $\mathcal{I}$ is the quasi-coherent sheaf associated with $I$.\\
     As $\mathscr M_0$ is given by the sheafification of the functor $A_\et\rightarrow\mon, A'\mapsto M^a$, we can form a commutative diagram of presheaves on $A_\et$ as follows:
$$\begin{tikzcd}
{M'}^a \arrow[r] \arrow[d] & M^a \arrow[r] \arrow[d] & B\mathcal I \arrow[d, "\simeq"] \\
\mathscr M \arrow[r]       & \mathscr M_0 \arrow[r]  & B\mathcal I                    
\end{tikzcd}$$
where  $(-)^a$ is the point-wise logification,  $M^a\rightarrow B\mathcal{I}$ is induced from the composition 
$$\phi:M\rr \Gamma(\spet A,\mathscr M_0)\rr\Gamma(\spet A,B\mathcal{I})\simeq BI,$$
and $M'$ is its fiber. After taking the  sheafification, the middle arrow becomes an equivalence, and so does the left vertical map. It follows that $(\X,\mathscr M)$ is equivalent to $\spet(A',M')$, where $(A',M')$ is clearly a square-zero extension of $(A,M)$.\\
We now consider the uniqueness. If there is another square-zero extension $j:(A',M'')\rightarrow(A,M)$, such that $(\X,\mathscr M)$ is equivalent to $\spet(A',M'')$, and $f$ is induced from $j$. Then, by virtue of the  same argument as above, we can show that the induced map ${M''}\rightarrow M$ can be given as the fiber of the map $\phi$, and it turns out that $M'$ is canonically equivalent to $ M''$. We conclude that $(A',M')$ is unique up to homotopy.
\end{proof}
\begin{rmk}
The proof of Lemma\autoref{defchartlog} also holds when we work in the fpqc topology,
 as we observe that the sequence $\mathscr M\rightarrow\mathscr M_0\rightarrow B\mathcal I$  is also a   fiber sequence (as fpqc sheaves restricted to the small \'etale site of $\spet 
 A$). 
\end{rmk}
\begin{rmk}\label{nil-defchartlog}
    One can check that the proof of Lemma\autoref{defchartlog} also holds for the following case: $A$ is $n$-truncated, and $\spet A\rightarrow \X\simeq\spet A'$ is the embedding induced from the $n$-truncation map.
\end{rmk}
Using Lemma\autoref{defchartlog}, one can easily deduce the following result:
\begin{cor}\label{fiberstability}
    Let $S$ be a charted spectral log Deligne-Mumford stack. Let $A'\rightarrow A$ be a square-zero extension with kernel $I\in\Mod_A^{\rm cn}$. Then the following commutative diagram
    $$\begin{tikzcd}
\R\llog_S^{\rm chart}(A') \arrow[r] \arrow[d] & \R\llog_S^{\rm qcoh}(A') \arrow[d] \\
\R\llog_S^{\rm chart}(A) \arrow[r]            & \R\llog_S^{\rm qcoh}(A)           
\end{tikzcd}$$
is a pullback square.
\end{cor}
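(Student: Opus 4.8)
The plan is to verify that the square is Cartesian by comparing fibers, thereby reducing the whole statement to Lemma\autoref{defchartlog}. Recall that a commutative square of anima is a pullback precisely when, for every point $\xi$ of the lower-left corner, the induced map from the fiber of the left vertical arrow over $\xi$ to the fiber of the right vertical arrow over the image $\iota_A(\xi)$ (under the bottom map $\iota_A:\R\llog_S^{\rm chart}(A)\to\R\llog_S^{\rm qcoh}(A)$) is an equivalence. So I would fix a point $\xi\in\R\llog_S^{\rm chart}(A)$, that is, a charted (hence log-affine) structure $\spet(A,M)$ on $\spet A$ together with its structure map to $S$, and write $j\colon\spet A\to\spet A'$ for the closed immersion underlying the square-zero extension $A'\to A$.

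First I would identify the fiber of the left vertical map $\R\llog_S^{\rm chart}(A')\to\R\llog_S^{\rm chart}(A)$ over $\xi$. A point of $\R\llog_S^{\rm chart}(A')$ is an $\Einf$-log ring $(A',M')$ over $S$ with underlying ring $A'$, and the restriction functor is strict pullback along $j$, namely $(A',M')\mapsto(A',M')\otimes_{A'}A$. A point of the fiber over $\xi$ is therefore exactly a strict map $(A',M')\to(A,M)$ over $S$ whose underlying ring map is the square-zero extension $A'\to A$, i.e.\ a square-zero extension of $\Einf$-log rings deforming $(A,M)$ over $S$.

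Next I would identify the fiber of the right vertical map $\R\llog_S^{\rm qcoh}(A')\to\R\llog_S^{\rm qcoh}(A)$ over $\iota_A(\xi)=\spet(A,M)$. A point here is a quasi-coherent log structure $\mathscr N$ on $\spet A'$ over $S$ together with a strict equivalence $j^*\mathscr N\simeq\spet(A,M)$. This equivalence exhibits a strict morphism $f\colon\spet(A,M)\to(\spet A',\mathscr N)$ over $S$ whose underlying map $j$ is a square-zero extension, so Lemma\autoref{defchartlog} applies directly: it produces a square-zero extension $(A',M')\to(A,M)$ of $\Einf$-log rings, unique up to homotopy, with $(\spet A',\mathscr N)\simeq\spet(A',M')$ and $f$ induced from it. In particular $\mathscr N$ is automatically log-affine, the structure map to $S$ is transported across this equivalence, and the existence and uniqueness clauses of the lemma say precisely that the assignment $\mathscr N\mapsto(A',M')$ is inverse to the comparison $(A',M')\mapsto\spet(A',M')$. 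Hence this right fiber is again canonically the anima of square-zero extensions of $(A,M)$ over $S$.

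Finally, the comparison map induced by the square sends a square-zero extension $(A',M')\to(A,M)$ to its associated quasi-coherent log structure $\spet(A',M')$, the identification of restrictions being supplied by strictness; under the two identifications above this is the identity on the anima of square-zero extensions, so the fiberwise comparison is an equivalence for every $\xi$ and the square is Cartesian. The only genuine content is Lemma\autoref{defchartlog}; the point to watch is that the morphism $f$ built from the right-hand datum is strict and that its structure map to $S$ is carried along the equivalence of that lemma, so the two fiber identifications are compatible with the relative structure. If one prefers to make the bookkeeping of the map to $S$ completely explicit, one may first reduce to the case $S\simeq\spec(R,L)$ log-affine using the strict \'etale cover of Lemma\autoref{chartaffcover} together with \'etale descent (Theorem\autoref{des}), after which the structure map becomes an honest $\Einf$-log ring map $(R,L)\to(A',M')$.
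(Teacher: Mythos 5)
Your proposal is correct and takes essentially the same approach as the paper: the paper offers no separate argument for this corollary beyond the remark that it follows from Lemma\autoref{defchartlog}, and your fiberwise comparison---identifying the fibers of both vertical maps with the anima of square-zero extensions of $(A,M)$ via that lemma, using strictness to transport the data over $S$---is precisely the intended deduction. Your closing reduction to a log-affine base via Lemma\autoref{chartaffcover} and Theorem\autoref{des} merely makes explicit the bookkeeping of structure maps that the paper leaves implicit.
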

Combining  Theorem\autoref{gabbercotasfunctor} with Corollary\autoref{fiberstability}, we can conclude the existence of the cotangent complex on the moduli of log structures.
\begin{thm}\label{almostperfect}
 Let $\mu\in\{\rm qcoh,coh,fin\},$ and let $(\MCS,\MSL)$ be a $\mu$-spectral log Deligne-Mumford stack.  The functor $\R\llog^{\mu}_{(\MCS,\MSL)}$ admits a $(-1)$-connective cotangent complex $\LL_{\R\llog^{\mu}_{(\MCS,\MSL)}/\MCS}$. If $\mu=\rm coh$ or $\rm fin$, then the cotangent complex $\LL_{\R\llog^{\mu}_{(\MCS,\MSL)}/\MCS}$ is almost perfect.
\end{thm}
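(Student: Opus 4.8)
The plan is to reduce the statement to the local, charted situation already analyzed in Theorem~\autoref{gabbercotasfunctor}, and then upgrade the conclusions (existence of a $(-1)$-connective cotangent complex, and almost perfection) using the comparison with charted log stacks from Proposition~\autoref{descent=forgetchart} together with the square-zero compatibility of Corollary~\autoref{fiberstability}. First I would fix an \'etale cover $\{\spet A_j \to \MCS\}$ trivializing $\MSL$ into log-affine pieces $\spet(R_j,L_j)$, which exists by quasi-coherence (Definition~\autoref{qcohdefinition}). Since $\R\llog^\mu_{(\MCS,\MSL)}$ is an \'etale sheaf by Theorem~\autoref{des} and the cotangent complex can be tested \'etale-locally (it is characterized by the functor $\mathfrak D_{-/\MCS}(x,I)$, which is local on the base), it suffices to establish existence and connectivity for each $\R\llog^\mu_{\spet(R_j,L_j)}$ over $\algcn_{R_j}$, and then descend the local complexes to a global quasi-coherent sheaf on $\R\llog^\mu_{(\MCS,\MSL)}$ via flat descent for $\qcoh$.

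Next I would pin down the local cotangent complex. By Corollary~\autoref{fiberstability}, for a square-zero extension $A'\to A$ with connective kernel $I$ the square relating $\R\llog^{\rm chart}_S$ and $\R\llog^{\rm qcoh}_S$ is Cartesian, so the relative deformation functor $\mathfrak D$ for $\R\llog^{\rm qcoh}_{\spet(R,L)}$ agrees with that for the charted functor $\R\llog^{\rm chart}_{\spet(R,L)}$ on trivial square-zero extensions. Hence Theorem~\autoref{gabbercotasfunctor} applies verbatim: the functor admits a cotangent complex which, at a point $x:\spec A \to \R\llog^{\rm qcoh}_{\spet(R,L)}$ classifying $(R,L)\to(A,M)$, is given by $\LL^G_{(A,M)/(R,L)}$ (equivalently the object $A\otimes_{\mathbb S[M]}\mathscr D_{M/L}$ constructed in the proof of Theorem~\autoref{gabbercotasfunctor}). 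The $(-1)$-connectivity then comes for free from the conormal exact sequence of Gabber's cotangent complexes established in Theorem~\autoref{cotan-der}(2), exactly as in the final line of the proof of Theorem~\autoref{gabbercotasfunctor}.

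For the almost-perfection claim in the coherent and fine cases, I would use the Fundamental sequence of Lemma~\autoref{Lstrict}(4): Gabber's cotangent complex sits in an exact triangle with the monoid contribution $A\otimes_{\mathbb S}(M^{\rm gp}/L^{\rm gp})$ and the algebra contribution $\LL_{A/R\otimes_{\mathbb S[L]}\mathbb S[M]}$. When $(\MCS,\MSL)$ is coherent, the charts can be chosen so that $\mathbb S[M]$ is almost finitely presented over $\mathbb S$ (Definition~\autoref{qcohdefinition}(2)), which controls $\LL_{\mathbb S[M]/\mathbb S[L]}$ and hence the algebra piece, while the fine condition (Definition~\autoref{qcohdefinition}(3)) forces $\pi_0\mathbb S[M]=\Z[\pi_0 M]$ to be finitely generated over $\Z$, making the group-completion term $M^{\rm gp}/L^{\rm gp}$ finitely generated and so the monoid piece almost perfect. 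Since almost perfect objects are closed under extensions (fibers/cofibers) and the class is \'etale-local, these inputs assemble to show $\LL_{\R\llog^\mu_{(\MCS,\MSL)}/\MCS}$ is almost perfect.

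The main obstacle I expect is the descent/gluing step: the local cotangent complexes $\LL^G_{(A,M)/(R,L)}$ must be shown to form a projective (compatible) system under base change $A\to B$ so that they assemble into a single object of $\qcoh(\R\llog^\mu_{(\MCS,\MSL)})$. This is precisely the base-change compatibility $A\otimes_{\mathbb S[M]}\mathscr D_{M/L}\otimes_A B \xrightarrow{\simeq} B\otimes_{\mathbb S[M]}\mathscr D_{M^a/L}$ verified inside the proof of Theorem~\autoref{gabbercotasfunctor}; the delicate point is checking that passing from charts to the sheafified (quasi-coherent) moduli via Proposition~\autoref{descent=forgetchart} does not disturb this compatibility, i.e.\ that the charted cover by $\spec R[M]$ from Remark~\autoref{covtoricscheme} is compatible enough that the limit description $\qcoh(\R\llog^\mu)\simeq\varprojlim_{(A,x)}\Mod_A$ sees the same complex. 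Once that projective system is in hand, the finiteness (almost perfection) is a routine consequence of the chart-level hypotheses through the Fundamental sequence.
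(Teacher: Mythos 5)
Your proposal is correct and follows essentially the same route as the paper: localize on the target to a log-affine cover, identify the square-zero deformation theory of $\R\llog^{\rm qcoh}$ with that of the charted functor (Corollary\autoref{fiberstability}, Proposition\autoref{descent=forgetchart}), invoke Theorem\autoref{gabbercotasfunctor}, and transport the cotangent complex through sheafification; the gluing issue you single out as the main obstacle is exactly what the paper's Lemma\autoref{0987890} packages, using the equivalence of quasi-coherent categories under sheafification. The only differences are minor: the paper reduces all $\mu$ to $\rm qcoh$ at the outset by observing that square-zero extensions preserve the property $\mu$ (Lemma\autoref{defchartlog}) — a point you implicitly need for $\mu\in\{\rm coh,fin\}$ since Corollary\autoref{fiberstability} is stated only for $\rm qcoh$ — while your fundamental-sequence argument for almost perfection supplies detail that the paper's own proof leaves implicit.
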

\begin{proof}
    As the existence of the cotangent complex of a relative stack $F/G$ is local on the target, we replace  $(\MCS,\MSL)$ with a strict \'etale log-affine cover $\spet(R,L)$. On the other hand, it's easy to see that if a log structure $\mathscr M$ over $\X$ has property $\mu$, then so does $\mathscr{M}'$ of a square-zero extension by Lemma\autoref{defchartlog}.
 From this point of view, we will let $\mu={\rm qcoh}$. Using the following Lemma\autoref{0987890}, together with Lemma\autoref{defchartlog}, we obtain the desired result.
\end{proof}
\begin{lem}\label{0987890}
    Let $X:\algcn\rightarrow\ani$ be a presheaf, assume that $X$ admits a cotangent complex $\LL_{X}$. If  its \'etale (or fpqc) (hyper)sheafification $a:X\rightarrow X'$
    satisfies the following condition: for any $s\in X'(A)$ who belongs to the essential image of $a$, the fiber of the natural map 
    $\eta:X'(A\oplus I)\rightarrow X'(A)$ at $s$ is equivalent to the fiber of $X(A\oplus I)\rightarrow X(A)$ at $s$ via the functor $a$.\\
    Then $X'$ also admits a cotangent complex.
    In particular, one has $a^*\LL_{X'/R}\simeq\LL_{X}$. 
\end{lem}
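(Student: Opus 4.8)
Let me restate what needs to be proven: given $X : \algcn \to \ani$ admitting a cotangent complex $\LL_X$, with \'etale (or fpqc) sheafification $a : X \to X'$ satisfying the compatibility condition on square-zero fibers, I must show $X'$ admits a cotangent complex and $a^*\LL_{X'/R} \simeq \LL_X$.

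The plan is to verify the defining universal property of the cotangent complex directly for $X'$, using the hypothesis to transport the known representability from $X$ to $X'$. First I would fix a connective $\Einf$-ring $A$, a connective $A$-module $I$, and a point $s \in X'(A)$. By definition of the cotangent complex of $X'/R$, I must exhibit an almost connective quasi-coherent complex $\LL_{X'/R}$ such that the fiber $\mathfrak{D}_{X'/R}(s,I)$ of the map
$$X'(A\oplus I)\rr X'(A)\times_{R(A)}R(A\oplus I)$$
at $(s,\widetilde{s})$ is functorially equivalent to $\map_A(s^*\LL_{X'/R},I)$. The key reduction is that representability of the cotangent complex is local for the \'etale (resp. fpqc) topology on the target, so it suffices to check the universal property after pulling back along a cover; concretely, I may assume $s$ lies in the essential image of $a$, since such points generate $X'$ under the topology.

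The central step is the comparison of fibers. Under the hypothesis, for any $s$ in the essential image of $a$, the fiber of $\eta : X'(A\oplus I)\to X'(A)$ at $s$ agrees with the fiber of $X(A\oplus I)\to X(A)$ at a lift of $s$ along $a$. Since $a$ commutes with the structure maps to $R$ (the sheafification is a map over $\spec R$), this identification upgrades to an identification of the relative square-zero fibers $\mathfrak{D}_{X'/R}(s,I)\simeq \mathfrak{D}_{X/R}(a^{-1}(s),I)$. Because $X$ already admits a cotangent complex, the right-hand side is equivalent to $\map_A((a^{-1}s)^*\LL_X, I)$, functorially in $I$. I would then \emph{define} $\LL_{X'/R}$ to be the quasi-coherent complex on $X'$ whose pullback along $a$ is $\LL_X$; this is legitimate because $\qcoh$ satisfies flat (hence \'etale and fpqc) descent, as recalled after Definition~17.2.4.2 in the excerpt, so a quasi-coherent complex on $X'$ is the same datum as a descent-compatible family over a cover, and $\LL_X$ together with the canonical descent data coming from the cover supplies exactly such an object. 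With this definition, the functorial equivalence $\mathfrak{D}_{X'/R}(s,I)\simeq \map_A(s^*\LL_{X'/R},I)$ follows immediately, and the relation $a^*\LL_{X'/R}\simeq \LL_X$ holds by construction.

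The main obstacle I anticipate is checking that the family $\{\LL_X\}$ assembles into a genuine quasi-coherent complex on $X'$ rather than merely a pointwise assignment: one must verify that the equivalences $\mathfrak{D}_{X'/R}(s,I)\simeq \map_A(s^*\LL_{X'/R},I)$ are compatible with base change along maps $A\to B$ over $X'$, i.e.\ that the corepresenting objects form a genuine descent datum, so that by flat descent for $\qcoh$ they glue. This is precisely the content that makes $\LL_{X'/R}$ well-defined, and it is where the precise formulation of the hypothesis (that the fiber is computed \emph{via the functor $a$}, hence naturally in the input data) does the work. The almost-connectivity and $(-1)$-connectivity of $\LL_{X'/R}$ are inherited directly from those of $\LL_X$ via the equivalence $a^*\LL_{X'/R}\simeq \LL_X$, since $a$ is a cover and connectivity bounds can be checked locally.
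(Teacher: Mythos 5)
Your overall strategy matches the paper's (define $\LL_{X'}$ so that it pulls back to $\LL_X$ along $a$, then verify the universal property using the hypothesis at points coming from $X$), but there is a genuine gap at the decisive step, and you have misplaced where the difficulty lies. You reduce to sections $s\in X'(A)$ in the essential image of $a$ by appealing to a principle that "representability of the cotangent complex is local for the topology on the target." No off-the-shelf principle of this form applies here: the sections in the image of $a$ do not constitute an \'etale or flat cover of $X'$ by representables; all one knows, because $a$ is a sheafification, is that an \emph{arbitrary} section $t\in X'(A)$ lies in the image of $a$ only after restriction along some \'etale (resp.\ flat) hypercover $A^\bullet$ of $A$. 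Converting that local statement into the universal property at $t$ itself is exactly the content of the lemma, and it is what the paper's proof supplies: since $X'$ is a (hyper)sheaf and $A^\bullet\oplus(I\otimes_AA^\bullet)$ is again a hypercover of $A\oplus I$, the fiber of $\eta:X'(A\oplus I)\to X'(A)$ at $t$ is the limit over $\Delta$ of the fibers at $t^\bullet$; the hypothesis identifies these levelwise with $\map_{A^n}((t^n)^*\mathcal{L},I\otimes_AA^n)\simeq\map_A(t^*\mathcal{L},I\otimes_AA^n)$; and flat hyperdescent for modules, $I\simeq\lim_\Delta (I\otimes_AA^\bullet)$, collapses the limit to $\map_A(t^*\mathcal{L},I)$. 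Without this computation, your "follows immediately" covers only sections in the image of $a$, which is strictly weaker than the assertion of the lemma; asserting the locality principle instead of proving it is essentially circular, since its proof in this setting \emph{is} the above argument.

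Secondarily, the obstacle you single out — assembling the pointwise corepresenting objects into a descent datum on $X'$ — is not actually an issue and is resolved in one line in the paper: by \cite[Proposition 6.2.3.1]{lurie2018spectral}, sheafification induces an equivalence $a^*:\qcoh(X')\to\qcoh(X)$, so one simply sets $\mathcal{L}:=(a^*)^{-1}\LL_X$; there is no gluing left to perform. The entire burden of the proof sits in the universal-property verification at general sections described above, not in the construction of the object.
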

\begin{proof}
First, the pullback along $a$ gives rise to an equivalence $a^*:\qcoh(X')\rightarrow\qcoh(X)$ by \cite[Proposition 6.2.3.1]{lurie2018spectral}.    Let $\mathcal{L}$ be the quasi-coherent complex over $X'$ given by $(a^*)^{-1}\LL_X$, here $(a^*)^{-1}$ is a quasi-inverse of $a^*$.
    We have to show that $\mathcal{L}$ is indeed a cotangent complex of $X'$. Let $t\in X'(A)$ be an arbitrary section. Since $a$ is a (hyper)sheafification morphism, there is an \'etale (resp. flat) hypercover $A^\bullet$ of $A$, such that the restriction $t^\bullet\in X'(A^\bullet)$ belongs to the essential image of $a$. It follows that the fiber of $\eta$ at $t$ is equivalent to the limit of the following mapping spaces $$\lim_\Delta\map_{A^\bullet}((t^\bullet)^*\mathcal{L},I\otimes_AA^\bullet)\simeq\lim_\Delta\map_{A}(t^*\mathcal{L},I\otimes_AA^\bullet)\simeq\map_A(t^*\mathcal{L},I),$$
    where the equivalences above are deduced from the quasi-coherence of $\mathcal{L}$ and the flat hyperdescent of quasi-coherent complexes. This means that $\mathcal{L}$ is a cotangent complex of $X'$.
\end{proof}
\begin{defn}\label{DEF OF COTANGENT CPX}
    Let $f:(\X,\mathfrak M)\rightarrow(\MCS,\MSL)$ be a map of quasi-coherent spectral log Deligne-Mumford stacks, the log cotangent complex of $f$ is the relative cotangent complex $\LL^{\rm Log}_{(\X,\mathfrak M)/(\MCS,\MSL)}:=\LL_{\X/\R\llog^{\rm qcoh}_{(\MCS,\MSL)}}.$
\end{defn}
\begin{rmk}
    By the proof of Theorem\autoref{gabbercotasfunctor}. If the map $f:\spet(A,M)\rightarrow\spet(R,L)$ is induced from an $\Einf$-log ring map $(R,L)\rightarrow(A,M)$. Then we have a canonical equivalence $\LL^{\rm Log}_{\spet(A,M)/\spet(R,L)}\simeq\LL^G_{(A,M)/(R,L)}.$
\end{rmk}
Using the conormal sequence for Gabber's cotangent complex and reducing to the local arguments, we prove the following results:
\begin{thm}\label{global tri seq}
    Let $(\X,\mathfrak M)\stackrel{f}\rightarrow(\Y,\mathscr N)\stackrel{g}\rightarrow(\MCS,\MSL)$ be a sequence of quasi-coherent spectral log Deligne-Mumford stacks, then there is a fiber sequence of quasi-coherent sheaves over $\X$:
    $$f^*\LL^{\rm Log}_{(\Y,\mathscr N)/(\MCS,\MSL)}\rr\LL^{\rm Log}_{(\X,\mathfrak M)/(\MCS,\MSL)}\rr\LL^{\rm Log}_{(\X,\mathfrak M)/(\Y,\mathscr N)}.$$
\end{thm}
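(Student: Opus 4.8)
The plan is to reduce the global statement to the local, affine case, where the result follows from the conormal fiber sequence for Gabber's cotangent complex already established in Theorem~\autoref{cotan-der}(2). The key observation is that by Definition~\autoref{DEF OF COTANGENT CPX} the log cotangent complex $\LL^{\rm Log}_{(\X,\mathfrak M)/(\MCS,\MSL)}$ is \emph{by definition} the ordinary relative cotangent complex $\LL_{\X/\R\llog^{\rm qcoh}_{(\MCS,\MSL)}}$ of a map of (pre)stacks. Therefore the statement we want is an instance of the classical conormal (transitivity) fiber sequence for cotangent complexes of maps of prestacks, provided we can exhibit the three log cotangent complexes as the cotangent complexes of a composable pair of maps $\X \to \Y' \to \R\llog^{\rm qcoh}_{(\MCS,\MSL)}$ living over a common base.

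First I would assemble the relevant maps of moduli stacks. The morphism $g:(\Y,\mathscr N)\to(\MCS,\MSL)$ classifies a map $\underline\Y \to \R\llog^{\rm qcoh}_{(\MCS,\MSL)}$, and functoriality of the moduli construction yields a comparison map $\R\llog^{\rm qcoh}_{(\Y,\mathscr N)} \to \R\llog^{\rm qcoh}_{(\MCS,\MSL)} \times_{\MCS} \Y$ over $\underline\Y$; similarly $f$ classifies $\underline\X \to \R\llog^{\rm qcoh}_{(\Y,\mathscr N)}$. Composing, one obtains a factorization
\[
\underline\X \rr \R\llog^{\rm qcoh}_{(\Y,\mathscr N)} \rr \R\llog^{\rm qcoh}_{(\MCS,\MSL)}
\]
of prestacks over $\MCS$. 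The ordinary transitivity fiber sequence for cotangent complexes of prestacks (the relative version recorded after Definition~17.2.4.2, i.e. the same formalism used throughout \autoref{logstacks}) then gives
\[
\underline f^*\LL_{\R\llog^{\rm qcoh}_{(\Y,\mathscr N)}/\R\llog^{\rm qcoh}_{(\MCS,\MSL)}} \rr \LL_{\underline\X/\R\llog^{\rm qcoh}_{(\MCS,\MSL)}} \rr \LL_{\underline\X/\R\llog^{\rm qcoh}_{(\Y,\mathscr N)}},
\]
so the whole problem reduces to identifying the outer two terms with $f^*\LL^{\rm Log}_{(\Y,\mathscr N)/(\MCS,\MSL)}$ and $\LL^{\rm Log}_{(\X,\mathfrak M)/(\Y,\mathscr N)}$ respectively. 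The rightmost term is $\LL^{\rm Log}_{(\X,\mathfrak M)/(\Y,\mathscr N)}$ by definition, so the content is in matching the leftmost term, i.e. in checking that $\LL_{\underline\X/\R\llog^{\rm qcoh}_{(\Y,\mathscr N)}}$ agrees with the pullback of $\LL_{\underline\Y/\R\llog^{\rm qcoh}_{(\MCS,\MSL)}}$ along the structural map.

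I expect this identification to be the main obstacle, and the cleanest route is to verify it \'etale-locally. Since the existence and formation of cotangent complexes of these moduli functors is local on the target (as exploited in the proof of Theorem~\autoref{almostperfect}), and since by Lemma~\autoref{chartaffcover} every quasi-coherent log stack admits a strict \'etale cover by log-affine schemes $\spet(A,M)$, I would reduce to the situation where $(\MCS,\MSL)=\spet(R,L)$, $(\Y,\mathscr N)=\spet(B,N)$ and $(\X,\mathfrak M)=\spet(A,M)$ are all log-affine, with $f,g$ induced by $\Einf$-log ring maps $(R,L)\to(B,N)\to(A,M)$. In this situation the remark following Definition~\autoref{DEF OF COTANGENT CPX} and Theorem~\autoref{gabbercotasfunctor}(2) identify each log cotangent complex with the corresponding Gabber cotangent complex:
\[
\LL^{\rm Log}_{\spet(A,M)/\spet(R,L)}\simeq\LL^G_{(A,M)/(R,L)},
\]
and likewise for the other two. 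The desired fiber sequence then becomes exactly the conormal sequence
\[
\LL^G_{(B,N)/(R,L)}\otimes_B A \rr \LL^G_{(A,M)/(R,L)} \rr \LL^G_{(A,M)/(B,N)}
\]
of Theorem~\autoref{cotan-der}(2), where the term $\LL^G_{(B,N)/(R,L)}\otimes_B A$ is precisely $f^*$ applied to $\LL^{\rm Log}_{\spet(B,N)/\spet(R,L)}$. Finally I would observe that these local identifications are compatible with the strict \'etale descent data, using the \'etale descent of quasi-coherent complexes (\cite[Proposition 6.2.3.1]{lurie2018spectral}) together with Theorem~\autoref{des}, so that the local fiber sequences glue to the asserted global fiber sequence of quasi-coherent sheaves on $\X$. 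The derived analogue follows verbatim by the remarks on derived log Deligne--Mumford stacks and the animated Gabber cotangent complex.
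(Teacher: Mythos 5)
Your proposal takes essentially the same route the paper intends: the paper's entire ``proof'' is the sentence preceding the statement (reduce to the local case and apply the conormal sequence for Gabber's cotangent complex), and your write-up fills in that sketch. Moreover, your global scaffolding is necessary rather than optional: the square comparing the classifying maps $\underline{\X}\rr\R\llog^{\rm qcoh}_{(\MCS,\MSL)}$ and $\underline{\Y}\rr\R\llog^{\rm qcoh}_{(\MCS,\MSL)}$ in general commutes only when $f$ is strict (the two composites classify $(\X,\mathfrak M)\rightarrow(\MCS,\MSL)$ and $(\X,\underline{f}^*\mathscr N)\rightarrow(\MCS,\MSL)$ respectively), so the map $f^*\LL^{\rm Log}_{(\Y,\mathscr N)/(\MCS,\MSL)}\rr\LL^{\rm Log}_{(\X,\mathfrak M)/(\MCS,\MSL)}$ cannot be produced by naive functoriality, and the factorization $\underline{\X}\rr\R\llog^{\rm qcoh}_{(\Y,\mathscr N)}\rr\R\llog^{\rm qcoh}_{(\MCS,\MSL)}$ is the correct source for it. You also correctly isolate the crux, namely identifying the leftmost term of that transitivity sequence; note only a slip of the pen: where you spell this out you wrote $\LL_{\underline\X/\R\llog^{\rm qcoh}_{(\Y,\mathscr N)}}$ (the rightmost term) when you meant the pullback to $\underline{\X}$ of $\LL_{\R\llog^{\rm qcoh}_{(\Y,\mathscr N)}/\R\llog^{\rm qcoh}_{(\MCS,\MSL)}}$.

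There is, however, one genuine gap: the reduction step ``with $f,g$ induced by $\Einf$-log ring maps $(R,L)\rightarrow(B,N)\rightarrow(A,M)$.'' Lemma\autoref{chartaffcover} gives strict \'etale covers by log-affine objects, but log-affineness of source and target does not make a \emph{morphism} chart-induced: the paper explicitly cautions that the functor $(A,M)\mapsto\spet(A,M)$ is not fully faithful, and that morphisms of quasi-coherent (as opposed to coherent or fine) log structures need not admit charts even locally. This is precisely why Theorem\autoref{gabbercotasfunctor}(2) and the Remark after Definition\autoref{DEF OF COTANGENT CPX} are stated conditionally on the map being induced by a log ring map; since the present theorem is asserted in the quasi-coherent generality, your local identification is unjustified as written. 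The gap is repairable with the paper's own results: by Proposition\autoref{descent=forgetchart}, $\R\llog^{\rm qcoh}_{\spet(R,L)}$ is the \'etale sheafification of $\R\llog^{\rm chart}_{\spec(R,L)}$, so every point $\spec A\rr\R\llog^{\rm qcoh}_{\spet(R,L)}$ lifts, after passing to an \'etale cover of $\spec A$, to chart-level data, i.e.\ to an honest $\Einf$-log ring map. Refining your covers accordingly (first so that $\underline f$ and $\underline g$ carry the chosen affine charts of $\underline{\X}$ and $\underline{\Y}$ into charts of $\underline{\Y}$ and $\underline{\MCS}$, then so that the induced maps of log-affine objects lift through $\R\llog^{\rm chart}$) reduces all three terms to Gabber cotangent complexes, where Theorem\autoref{cotan-der}(2) applies and the identifications are functorial enough (by the projective-system compatibility established in the proof of Theorem\autoref{gabbercotasfunctor}) to glue via \'etale descent of $\qcoh$. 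With that citation added, your argument is complete and agrees with the paper's intended proof.
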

\begin{rmk}
  The proof of  Theorem\autoref{almostperfect} shows that the moduli stack of quasi-coherent log structures defined using fpqc also has a cotangent complex, which coincides with the cotangent complex on $\R\llog_{(\MCS,\MSL)}^{\rm qcoh}$.
\end{rmk}
\subsubsection{Infinitesimal properties and  finiteness}\label{zxzxzxz}
\begin{prop}[Infinitesimally Cohesive]\label{infcohesive}
  Let $\mu\in\{\rm qcoh,coh,fin\}$, and let $(\MCS,\MSL)$ be a $\mu$-spectral log Deligne-Mumford stack. Then, the relative functor $\R\llog^{\mu}_{(\MCS,\MSL)}\rightarrow\MCS$ is relatively infinitesimally cohesive. 
\end{prop}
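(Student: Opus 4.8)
The plan is to verify the defining pullback condition directly, after reducing to an affine computation in which everything is controlled by $\Omega^\infty\OO$ and $GL_1(\OO)$. Since relative infinitesimal cohesiveness is local on the target (one tests the pullbacks $\R\llog^\mu_{(\MCS,\MSL)}\times_\MCS\spec R$), I would first replace $(\MCS,\MSL)$ by a strict \'etale log-affine cover $\spet(R,L)$ and work with the functor $\R\llog^\mu_{\spet(R,L)}\colon\algcn_R\to\ani$. As in the proof of Lemma\autoref{redqcoh}, the property $\mu\in\{\mathrm{qcoh},\mathrm{coh},\mathrm{fin}\}$ is \'etale-local on the source and (by Lemma\autoref{defchartlog}) is preserved under the nilpotent thickenings occurring in the squares under consideration; so it suffices to treat the full functor $\R\llog_{\spet(R,L)}$, equivalently its upgrade $\widehat{\R\llog_{\spet(R,L)}}$, whose value on $A$ is the $\infty$-category of log structures on $\spet A$ equipped with a map to $(R,L)$.

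Next I would fix a pullback square of connective $R$-algebras $A'\simeq A\times_B B'$ in which $A\to B$ and $B'\to B$ are surjective on $\pi_0$ with nilpotent kernels. By the topological invariance of the small \'etale site under such nilpotent surjections, the underlying $\infty$-topoi of $\spet A'$, $\spet A$, $\spet B'$ and $\spet B$ are canonically identified, so a sheaf of $\Einf$-monoids on any one of them transports to the others, and comparing log structures reduces to comparing the structure maps $\mathscr M\to\Omega^\infty\OO$. Because the forgetful functor to spectra preserves limits, $\Omega^\infty\OO_{A'}\simeq\Omega^\infty\OO_A\times_{\Omega^\infty\OO_B}\Omega^\infty\OO_{B'}$, so a prelog structure on $\spet A'$ is precisely a compatible pair of prelog structures on $\spet A$ and $\spet B'$ over $\spet B$. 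Phrased through the presentation of $\widehat{\R\plog_{\spet(R,L)}}$ as an iterated limit of the functors $\mone(-)$ and $\Omega^\infty\OO_{(-)}$ — each of which carries the square to a pullback — this already shows that $\widehat{\R\plog_{\spet(R,L)}}$ sends the square to a pullback of $\infty$-categories.

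It then remains to check that the full subcategory of log objects is stable under this patching, i.e. that the glued monoid $M'=M_A\times_{M_B}M_{B'}$, together with its map to $\Omega^\infty\OO_{A'}$, is a log structure whenever the three pieces are. Commuting the defining limits, this reduces to the identity $GL_1(\OO_{A'})\simeq GL_1(\OO_A)\times_{GL_1(\OO_B)}GL_1(\OO_{B'})$, from which $M'\times_{\Omega^\infty\OO_{A'}}GL_1(\OO_{A'})\simeq GL_1(\OO_{A'})$ follows formally. Using the description $GL_1(S)\simeq\Omega^\infty S\times_{\pi_0 S}(\pi_0 S)^{*}$ from Remark\autoref{loopgl} and the limit-compatibility of $\Omega^\infty$, this identity reduces in turn to the two statements $\pi_0 A'\cong\pi_0 A\times_{\pi_0 B}\pi_0 B'$ and $(\pi_0 A')^{*}\cong(\pi_0 A)^{*}\times_{(\pi_0 B)^{*}}(\pi_0 B')^{*}$. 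I expect the analysis of the $\pi_0$-square to be the main obstacle: the unit statement holds for any pullback ring (a compatible pair of units has the compatible pair of inverses), but the identification of $\pi_0 A'$ with the pullback of the $\pi_0$'s genuinely uses the surjectivity-with-nilpotent-kernel hypothesis, via the Milnor--Mayer--Vietoris sequence (this is the same mechanism analyzed in Lemma\autoref{1884310}). Granting this, the log subcategories patch, so $\widehat{\R\llog_{\spet(R,L)}}$, and hence its core $\R\llog_{\spet(R,L)}$, carry the square to a pullback; passing back through Lemma\autoref{redqcoh} to reinstate the $\mu$-conditions and unwinding the localization over $\MCS$ then yields the claim.
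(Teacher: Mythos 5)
Your strategy is genuinely different from the paper's: the paper reduces to the square-zero extension squares via Lurie's differential criterion \cite[Proposition 17.3.6.1]{lurie2018spectral}, passes to charted log structures by Corollary\autoref{fiberstability}, and then writes down an explicit inverse $((A,M),(A,M'),\tau)\mapsto(A,M)\times_{d^*(A,M)}(A,M')$; you instead attack arbitrary admissible squares directly through the invariance of the small \'etale topos and the limit-compatibility of $\Omega^\infty\OO_{(-)}$. Unfortunately, your argument breaks at exactly the step you flag as "the main obstacle". The claim $\pi_0A'\cong\pi_0A\times_{\pi_0B}\pi_0B'$ is false for the squares under consideration: the Mayer--Vietoris sequence gives
$$\pi_1(B)\stackrel{\partial}{\rr}\pi_0(A')\rr\pi_0(A)\times_{\pi_0(B)}\pi_0(B')\rr 0,$$
so the comparison map is surjective but has kernel ${\rm im}(\partial)$, which is typically nonzero. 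The decisive counterexample is the one case that matters most: take $B=A\oplus I[1]$ with $I$ a nonzero discrete $A$-module, $B'=A$, and both maps to $B$ the trivial derivation; these are surjective on $\pi_0$ with zero kernel, yet $A'\simeq A\oplus I$, so $\pi_0(A')=\pi_0(A)\oplus\pi_0(I)$ while $\pi_0(A)\times_{\pi_0(B)}\pi_0(B')=\pi_0(A)$. This is precisely the class of squares to which the paper reduces everything, so the failure is not peripheral. Your second claimed identity also fails there: $(\pi_0A')^*\rr(\pi_0A)^*\times_{(\pi_0B)^*}(\pi_0B')^*$ is surjective but has kernel $1+\pi_0(I)$. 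The space-level statement you actually need, $GL_1(\OO_{A'})\simeq GL_1(\OO_A)\times_{GL_1(\OO_B)}GL_1(\OO_{B'})$, \emph{is} true, but it cannot be proved by discretizing to $\pi_0$: one must identify both sides as unions of connected components of $\Omega^\infty A'\simeq\Omega^\infty A\times_{\Omega^\infty B}\Omega^\infty B'$ and compare the indexing sets of components, using that $\pi_0(A')\rr\pi_0(A)$ is surjective with nil kernel (the image of $\partial$ squares to zero), so that invertibility in $\pi_0(A')$ is detected in $\pi_0(A)$; the discrepancy with your $\pi_0$-level statements is exactly the $\pi_1(B)$-contribution to $\pi_0$ of the fiber product of spaces.

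There is a second, softer gap: the closing assertion that "the log subcategories patch" is not formal, because the restriction functors between categories of \emph{log} structures are not given by composing the structure map with $\Omega^\infty\OO_{A'}\rr\Omega^\infty\OO_A$ --- the prelog restriction of a log structure is generally not a log structure, so restriction must be followed by logification. Hence the pullback of the three categories consists of pairs of log structures together with an equivalence of their \emph{logified} restrictions over $B$, and identifying this with log structures on $A'$ requires exhibiting mutually inverse functors: one must glue $(\mathscr M_A,\mathscr M_{B'},\tau)$ along a fiber product over the logified restriction and verify that the result is a log structure whose logified restrictions recover the inputs. That verification is the real content of the paper's proof (its explicit inverse $u$, resting on the torsor description of square-zero extensions in Remark\autoref{torsor=sqzero}); your write-up treats it as automatic. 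I would add that if you repair both points, your route buys something the paper's does not: it proves infinitesimal cohesiveness for all admissible squares directly, without needing nilcompleteness and the existence of the cotangent complex as inputs to Lurie's criterion, whereas the paper's argument leverages those results to check only the derivation squares.
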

\begin{proof}
Without loss of generality, we assume that $(\MCS,\MSL)\simeq\spet(R,L)$ is log affine. Thanks to \cite[Proposition 17.3.6.1]{lurie2018spectral}, we only need to prove that for any $R$-algebra $A$, any connective $A$-module I, and  any $R$-linear derivation $d:A\rightarrow A\oplus I[1]$, the following commutative diagram
$$\begin{tikzcd}
{\R\llog_{\spet(R,L)}^{\mu}(A')} \arrow[r] \arrow[d] & {\R\llog_{\spet(R,L)}^{\mu}(A)} \arrow[d]  \\
{\R\llog_{\spet(R,L)}^{\mu}(A)} \arrow[r]             & {\R\llog_{\spet(R,L)}^{\mu}(A\oplus I[1])}
\end{tikzcd}$$
is a pullback square, where $A'$ is the square-zero extension associated with $d$. Note that we can assume that $\mu=$qcoh.
We then use Corollary\autoref{fiberstability}. Therefore, we only need to show the following commutative diagram 
$$\begin{tikzcd}
{\R\llog_{\spec(R,L)}^{\rm chart}(A')} \arrow[r] \arrow[d] & {\R\llog_{\spec(R,L)}^{\rm chart}(A)} \arrow[d]  \\
{\R\llog_{\spec(R,L)}^{\rm chart}(A)} \arrow[r]             & {\R\llog_{\spec(R,L)}^{\rm chart}(A\oplus I[1])}
\end{tikzcd}$$
is a pullback square. In other words, we should show that the following natural map 
$$v:{\R\llog_{\spec(R,L)}^{\rm chart}(A')}\rr {\R\llog_{\spec(R,L)}^{\rm chart}(A)}\times_{{\R\llog_{\spec(R,L)}^{\rm chart}(A\oplus I[1])}} {\R\llog_{\spec(R,L)}^{\rm chart}(A')}$$
is an equivalence. The right hand term is the anima consisting of triples $((A,M),(A,M'),\tau)$, where $\tau:d^*(A,M)\stackrel{\simeq}\rightarrow d^*_{\rm triv}(A,M')$, and where we let $d_{\rm triv}$ be the trivial derivation $A\rightarrow A\oplus I[1]$.
We can construct an inverse $u$ of $v$, which sends a triple $((A,M),(A,M'),\tau)$ to the fiber product $(A,M)\times_{d^*(A,M)} (A,M')$. One can check that $u\circ v\simeq {\rm id}$ and $v\circ u\simeq{\rm id}$.
\end{proof}
\begin{rmk}[Fpqc quasi-coherent log structures]
    The moduli stack of quasi-coherent  log structures defined using  fpqc topology also has the infinitesimally cohesive property.
\end{rmk}
\begin{prop}[Nil-completeness and integrability]\label{nilcompint}
    Let $\mu\in\{\rm qcoh,coh,fin\}$, and let $(\MCS,\MSL)$ be a $\mu$-spectral log Deligne-Mumford stack, which is equivalent to $|S|$ for some charted spectral log stack $S$. Fix a sequence of $\Einf$-rings $$...\rr A_n\rr A_{n-1}\rr...\rr A_0$$
    lying over $\MCS$, with limit $A$. 
Assume that one of the following conditions holds:
    \begin{enumerate}
        \item $A_n\simeq\tau_{\leq n} A$, and the transition maps $A_n\rightarrow A_{n-1}$ are truncation maps;
        \item  $A$ is a discrete complete Noetherian local ring, and the transition maps $A_n\simeq A/\mathfrak m^n\rightarrow A_{n-1}\simeq A/\mathfrak m^{n-1}$ are the   maps given in the obvious way, where $\mathfrak m\subset A$ is the maximal ideal of $A$.
    \end{enumerate}
    
    Then the following  canonical map of animae
    $$\varphi:\R\llog^\mu_{(\MCS,\MSL)}(\spet A)\rr{\varprojlim}_n \R\llog^\mu_{(\MCS,\MSL)}(\spet A_n)$$
    is an equivalence.
\end{prop}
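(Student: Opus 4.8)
The plan is to establish both statements after a chain of reductions that brings us to the charted, log-affine case, where the moduli functor is literally the anima of $\Einf$-log ring structures and can be analysed by separating the unit part from the characteristic monoid. First I would reduce to the case $(\MCS,\MSL)\simeq\spet(R,L)$ log-affine: since $\R\llog^\mu_{(\MCS,\MSL)}$ is an \'etale sheaf (Theorem\autoref{des}), both $\R\llog^\mu_{(\MCS,\MSL)}(\spet A)$ and $\varprojlim_n\R\llog^\mu_{(\MCS,\MSL)}(\spet A_n)$ are computed by descent from the affine pieces of an \'etale cover of $\MCS$, which pulls back to compatible \'etale covers of all the $\spet A_n$ and of $\spet A$ (compatibility across the tower being the invariance of the \'etale site under the thickenings $\spet A_n\hookrightarrow\spet A$, the log version of which is Proposition\autoref{strictetalerig}). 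As the descent limit commutes with $\varprojlim_n$, it suffices to treat the affine pieces, which are again of the required form. Next I would reduce to $\mu=\rm qcoh$ by the argument of Lemma\autoref{redqcoh}: the conditions $\rm coh$ and $\rm fin$ are local and stable under the relevant base changes by Lemma\autoref{defchartlog}, so the $\mu$-functors sit in pullback squares over the $\rm qcoh$ one and the equivalence is inherited. Finally I would pass between $\R\llog^{\rm qcoh}$ and the charted functor $\R\llog^{\rm chart}$ via Corollary\autoref{fiberstability}.

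For condition (1), each transition map $A_n\simeq\tau_{\leq n}A\to A_{n-1}$ is a Postnikov stage, hence a square-zero extension, so Corollary\autoref{fiberstability} supplies for every $n$ a pullback square comparing $\R\llog^{\rm chart}$ and $\R\llog^{\rm qcoh}$; iterating these, and using Remark\autoref{nil-defchartlog} for the total truncation map $\spet A\to\spet\pi_0A$, one obtains $\R\llog^{\rm chart}_{\spec(R,L)}(A_n)\simeq\R\llog^{\rm chart}_{\spec(R,L)}(A_0)\times_{\R\llog^{\rm qcoh}_{\spet(R,L)}(A_0)}\R\llog^{\rm qcoh}_{\spet(R,L)}(A_n)$ and the analogue for $A$. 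Taking $\varprojlim_n$ then reduces the claim for $\R\llog^{\rm qcoh}$ to the same statement for $\R\llog^{\rm chart}$, whose value $\R\llog^{\rm chart}_{\spec(R,L)}(A)$ is the anima of $\Einf$-log rings $(A,M)$ under $(R,L)$. Here I would decompose a charted log structure into its characteristic $\overline M=M/GL_1(A)$, which by the logification formula (Remark\autoref{afflog=logificationofconastantsheaf}) equals $M/(M\times_{\pi_0A}(\pi_0A)^*)$ and hence depends only on $\pi_0A$, and its unit part $GL_1(A)\simeq\Omega^\infty A\times_{\pi_0A}(\pi_0A)^*$ (Remark\autoref{loopgl}). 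Since $\pi_0A$ is constant along the truncation tower while $GL_1$ preserves these limits, the extension datum presenting $M$ as a pushout of $GL_1(A)$ along $\overline M$ assembles, giving $M\simeq\varprojlim_nM_n$ and therefore the equivalence $\varphi$.

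For condition (2) the transition maps $A/\mathfrak m^n\to A/\mathfrak m^{n-1}$ are again square-zero (their kernels $\mathfrak m^{n-1}/\mathfrak m^n$ square to zero), so Corollary\autoref{fiberstability} still reduces the $\varprojlim_n$ side to the charted tower. The essential new point, and the main obstacle, is that $\spet A\to\spet A/\mathfrak m^n$ is \emph{not} a nilpotent thickening, so the left-hand term $\R\llog^{\rm qcoh}_{\spet(R,L)}(\spet A)$ cannot be reduced to the charted functor by Corollary\autoref{fiberstability}; instead one must use the completeness of $A$ directly. The characteristic monoid over $A/\mathfrak m^n$ is controlled by $\pi_0(A/\mathfrak m^n)=A/\mathfrak m^n$, and for $\mu\in\{\rm coh,fin\}$ the chart monoid is finitely generated, so that the Noetherian completeness $A\simeq\varprojlim_nA/\mathfrak m^n$ together with $GL_1(A)\simeq\varprojlim_nGL_1(A/\mathfrak m^n)$ (units of a complete local ring lift compatibly) forces the charts and their structure maps to assemble; for $\mu=\rm qcoh$ I would instead argue that a quasi-coherent log structure on the complete local $\spet A$ is determined by its characteristic sheaf on $\spec A$ and its unit torsor, both of which satisfy formal-GAGA type completeness, thereby reducing integrability of the log moduli to integrability of $GL_1$ and of the underlying characteristic data. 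The delicate step throughout is to promote the termwise pullback squares of Corollary\autoref{fiberstability} and the termwise unit/characteristic decompositions into a genuine limit statement while tracking the chart, and it is precisely here that the Noetherian and finiteness hypotheses of condition (2) are needed.
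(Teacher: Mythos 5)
Your reduction from $\R\llog^{\rm qcoh}$ to $\R\llog^{\rm chart}$ has a genuine gap, and it affects both cases. Taking $\varprojlim_n$ of the pullback squares from Corollary\autoref{fiberstability} (together with Remark\autoref{nil-defchartlog}) only shows that $\varphi$ becomes an equivalence \emph{after base change along} the comparison map $\R\llog^{\rm chart}_{\spec(R,L)}(A_0)\rightarrow\R\llog^{\rm qcoh}_{\spet(R,L)}(A_0)$. But that map is not an effective epimorphism: quasi-coherent log structures are only \'etale-locally charted, and the sheafification of Proposition\autoref{descent=forgetchart} genuinely adds points (Example\autoref{784512547845123659} exhibits a quasi-coherent log structure on an affine that is not charted). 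Hence a compatible family $\{\mathscr M_n\}\in\varprojlim_n\R\llog^{\rm qcoh}_{\spet(R,L)}(A_n)$ need not lie over any point of the charted functor at level $0$, and your fiber-product identities say nothing about it. So the claim "taking $\varprojlim_n$ reduces the claim for $\R\llog^{\rm qcoh}$ to the same statement for $\R\llog^{\rm chart}$" does not follow; for case (2) you acknowledge the problem but the proposed fix ("formal-GAGA type completeness" of the characteristic sheaf and unit torsor) is not an argument.

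The paper bridges this gap differently. It writes $\R\llog^{\rm qcoh}_{|S|}(A)\simeq\varinjlim_{C^0_A}\lim_{B\in C^0_A}\F(B)$ as a colimit over covering sieves of limits of the charted functor $\F=\R\llog^{\rm chart}_S$, and then proves the two halves of the equivalence separately. For $(-1)$-truncatedness it computes the fibers of $\F(B)\rightarrow\F(B_n)$ via the cotangent complex of $\F$ (Theorem\autoref{gabbercotasfunctor}), organizes them into towers whose layers are mapping spaces $\map_{B_m}(L|_{B_m},J_m)$, and checks the relevant inverse limits vanish. For essential surjectivity it takes a family $\{\mathscr M_n\}$, chooses an \'etale cover $A_0\rightarrow A_0'$ charting $\mathscr M_0$, lifts it to a compatible tower of \'etale covers of all the $A_n$ and of $A$ by Lurie's rigidity theorem, uses Lemma\autoref{defchartlog} (plus the chart-level limit statement, Lemma\autoref{chartinfdef}) to assemble a charted log structure upstairs, and then descends via Theorem\autoref{des}. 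Your chart-level analysis (characteristic monoid controlled by $\pi_0A$, units assembling along the tower, and $GL_1(A)\simeq\varprojlim_nGL_1(A/\mathfrak m^n)$ in the complete local case) is essentially the content of the paper's Lemma\autoref{chartinfdef} and is correct in spirit; what is missing is precisely the passage from the charted statement to the sheafified one, including the matching of \'etale covers across the tower (trivial in case (1) since $A_{\et}\simeq(A_n)_{\et}$, and requiring Hensel's lemma and cover-lifting in case (2)), which is where the real work of the proposition lies.
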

\begin{lem} \label{chartinfdef}
Fix a charted log stack $S$ with an underlying spectral Deligne-Mumford stack. We have the following facts:
    \begin{enumerate}
        \item The functor $\R\llog^{\rm chart}_{S}:\alg^{\rm cn}_{/\underline S}\rightarrow\ani$ is nil-complete and infinitesimally cohesive.
        \item Let $A$ be a discrete local $\Einf$-ring that is complete with respect to a finitely generated ideal $\mathfrak a\subset A$, then one has $\R\llog^{\rm chart}_S(A)\simeq\varprojlim\R\llog^{\rm chart}_S(A/\mathfrak a^n)$.
    \end{enumerate}
\end{lem}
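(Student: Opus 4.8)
The plan is to reduce everything to an affine base and then exploit the fact that a charted log structure on $A$ is completely determined by its characteristic monoid together with a $GL_1$-extension: the former is rigid under nilpotent thickenings, while the latter assembles through the limit-preserving functor $GL_1$. First I would reduce to the case $\underline S\simeq\spec(R,L)$. As in the proof of \autoref{gabbercotasfunctor}, \autoref{chartaffcover} provides a strict \'etale cover $\spec(R^\bullet,L^\bullet)\to S$ with $\R\llog^{\rm chart}_S\times_S\spec R^\bullet\simeq\R\llog^{\rm chart}_{\spec(R^\bullet,L^\bullet)}$; since nil-completeness, infinitesimal cohesiveness and integrability are all local on the target, it suffices to treat $S=\spec(R,L)$. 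In this case $\R\llog^{\rm chart}_{\spec(R,L)}(A)$ is the anima of $\Einf$-log rings $(A,M)$ under $(R,L)$ with underlying $\Einf$-ring $A$, and the transition map along $A\to A'$ is base change of log structures, i.e. the logification of $M\to\Omega^\infty A\to\Omega^\infty A'$.

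The key device is the following decomposition. Writing $\overline M:=\mathrm{cofib}(GL_1(A)\to M)$ for the characteristic monoid, a log structure on $A$ is recorded by $\overline M$ together with an extension of $\overline M$ by $GL_1(A)$; logification only alters the $GL_1$-part, so for any map $A\to A'$ that is surjective on $\pi_0$ with nilpotent kernel the map $GL_1(A)\to GL_1(A')$ is $\pi_0$-surjective with connected fibre, whence $\overline M$ is preserved by base change. This is precisely the rigidity underlying \autoref{strictetalerig}. Consequently, along any such thickening the characteristic monoid is constant and the entire variation of the log structure is carried by the $GL_1$-valued extension datum. Since $\Omega^\infty$ preserves limits and $GL_1(A)\simeq\Omega^\infty A\times_{\pi_0 A}\pi_0(A)^{*}$ by \autoref{loopgl} preserves the limits of towers of $\pi_0$-surjections and the relevant pullbacks, this reduces every claim to commuting limits.

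From here infinitesimal cohesiveness is immediate: given a pullback square of connective $\Einf$-rings with $A\to B$ and $B'\to B$ surjective on $\pi_0$ with nilpotent kernels, all four characteristic monoids agree, while $GL_1(A')\simeq GL_1(A)\times_{GL_1(B)}GL_1(B')$, so the extension data form a pullback. Concretely, the inverse functor sends a compatible triple to the fibre product $M\times_{(M|_B)}M'$, exactly as in the argument of \autoref{infcohesive}, and one verifies the log condition $M''\times_{\Omega^\infty A'}GL_1(A')\simeq GL_1(A')$ by commuting the defining limits. Nil-completeness is the same argument applied to the Postnikov tower: the maps $\tau_{\leq n}A\to\tau_{\leq n-1}A$ are square-zero, so $\overline M$ is constant, and $\varprojlim_n M_n$ is a log structure on $A\simeq\varprojlim_n\tau_{\leq n}A$ whose restrictions recover the $M_n$, giving $\R\llog^{\rm chart}_S(A)\simeq\varprojlim_n\R\llog^{\rm chart}_S(\tau_{\leq n}A)$. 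For integrability, with $A$ discrete, Noetherian, local and $\mathfrak a$-complete, the tower $A/\mathfrak a^n$ has surjective transition maps with nilpotent kernels $\mathfrak a^{n-1}/\mathfrak a^n$ and limit $A$; the Noetherian and finite-generation hypotheses ensure this computes $A$ as a limit in $\algcn$, and $GL_1(A)\simeq\varprojlim_n(A/\mathfrak a^n)^{*}$, so the same gluing yields $\R\llog^{\rm chart}_S(A)\simeq\varprojlim_n\R\llog^{\rm chart}_S(A/\mathfrak a^n)$.

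The main obstacle I anticipate is making the characteristic-monoid/extension decomposition precise at the $\infty$-categorical level, i.e. exhibiting the anima of log structures on $A$ as fibred over the anima of sharp monoids with fibre controlled by a $GL_1$-valued extension, and verifying that logification (a pushout) genuinely commutes with the tower limits. I would dispatch this interchange by showing that restriction along each nilpotent thickening is an equivalence on characteristic monoids, so that the limit is taken over a system of extensions with fixed base; there $\varprojlim$ and the pushout defining logification commute because $GL_1$ is limit-preserving and the connecting maps are $\pi_0$-surjective with connected fibres.
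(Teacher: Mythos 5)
Your proposal follows the same basic mechanism as the paper's proof: rigidity of the characteristic monoid along nilpotent thickenings, the $GL_1$-torsor/extension structure of Remark \autoref{torsor=sqzero}, and commutation of this extension datum with the relevant limits. Your direct fibre-product inverse for infinitesimal cohesiveness is the construction the paper itself uses in Proposition \autoref{infcohesive}, whereas the paper's own proof of the lemma reduces to square-zero extensions via Lurie's differential criterion (available because the cotangent complex exists by Theorem \autoref{gabbercotasfunctor}); this difference is cosmetic. So the architecture is right.

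There is, however, one incorrect justification, and it sits exactly at the step that carries part (2). You assert that for any $A\rightarrow A'$ surjective on $\pi_0$ with nilpotent kernel, the map $GL_1(A)\rightarrow GL_1(A')$ has connected fibre. This is false: already for discrete rings (e.g. $k[x]/x^2\rightarrow k$) the fibre is the discrete set $1+\ker(A\rightarrow A')$. The claim does hold for the Postnikov tower (the kernels $\pi_nA[n]$, $n\geq 1$, are connected), and the rigidity of $\overline M$ that you deduce from it is still true, but for a different reason: under a nilpotent surjection the preimage of units consists of units, so $\Omega^\infty A\times_{\Omega^\infty A'}GL_1(A')\simeq GL_1(A)$, and hence the pulled-back log structure is the pushout $M\sqcup_{GL_1(A)}GL_1(A')$, which has the same characteristic monoid. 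Where the error actually bites is part (2): the transition maps $GL_1(A/\mathfrak a^{n+1})\rightarrow GL_1(A/\mathfrak a^n)$ have discrete, highly disconnected fibres $1+\mathfrak a^n/\mathfrak a^{n+1}$, so "connected fibres" cannot be invoked to commute the logification pushout with the tower limit. The ingredients you need instead, and which the paper supplies, are: locality of $A$ (so that for all $0\leq k\leq\infty$ the square comparing $GL_1(A/\mathfrak a^{n+k})\rightarrow GL_1(A/\mathfrak a^n)$ with $A/\mathfrak a^{n+k}\rightarrow A/\mathfrak a^n$ is a pullback, the case $k=\infty$ being exactly where locality enters), together with the fact that all groups and monoids in sight are discrete and the transition maps surjective, so that the torsor/classifying-space construction commutes with the inverse limit (no $\lim^1$ obstruction). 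With that substitution your argument for (2) goes through; the rest of the proposal is fine.
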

\begin{proof}
For assertion $(1)$, we first consider the natural map $\phi:\R\llog^{\rm chart}_{S}(A)\rightarrow\varprojlim \R\llog^{\rm chart}_{S}(\tau_{\leq n}A)$. This is an equivalence, as we can construct an inverse $\phi'$ of $\phi$, which sends a projective system $\{(\tau_{\leq n A},M_n)\}^\infty_{n=0}$ to $\varprojlim (\tau_{\leq n}A,M_n)$. Now, we let $\{(\tau_{\leq n A},M_n)\}^\infty_{n=0}$ be the image of $(A,M)$ under $\phi$. As pointed out in Remark\autoref{torsor=sqzero}, $M_{n+1}\rightarrow M_n$ is a $\pi_{n+1}A[n+1]$-torsor, and we have a fiber sequence 
$$M_{n+1}\rr M_n\rr\pi_{n+1}A[n+2].$$
By induction, we can regard $M\rightarrow \lim_n M_n$ 
as a map of  torsors over $\varprojlim_m (\tau_{\leq m}\tau_{\geq n+1}A)[1]\simeq(\tau_{\geq n+1} A)[1]$, which is obviously an equivalence. This shows that $\phi'\circ\phi\simeq{\rm id}$. The same argument also gives $\phi\circ\phi'\simeq{\rm id}$. This shows that $\R\llog^{\rm chart}_{S}$ is nil-complete. As $\R\llog^{\rm chart}_S$ is nil-complete and admits a cotangent complex. The differential criterion of infinitesimal cohesivity \cite[Proposition 17.3.6.1]{lurie2018spectral} implies that we only need to check the following diagram
$$\begin{tikzcd}
\R\llog_S(A') \arrow[d] \arrow[r] & \R\llog_S(A) \arrow[d]    \\
\R\llog_S(A) \arrow[r]            & {\R\llog_S(A\oplus I[1])}
\end{tikzcd}$$
is a pullback square, where $A'\rightarrow A$ is the square-zero extension of $A$ induced by a derivation $A\rightarrow A\oplus I[1]$. This can be deduced from a similar argument as stated above.\\
For the assertion $(2)$. We also want to show that the natural map 
$\nu:\R\llog^{\rm chart}_S(A)\rightarrow\varprojlim\R\llog^{\rm chart}_S(A/\mathfrak a^n)$ admits an inverse $\mu:\{(A/\mathfrak a^n,M_n)\}^\infty_{n=0}\mapsto \varprojlim (A/\mathfrak a^n,M_n)$.
As $A$ is a local ring,  then $A/\mathfrak a^n$ is also a local ring for all $n\geq 0$. An element $x$ in $A$ is invertible if and only if it is not contained in its maximal ideal. This is equivalent  to saying that its image $x_n\in A/\mathfrak a^n$. So the following commutative diagram 
$$\begin{tikzcd}
GL_1(A/\mathfrak a^{n+k}) \arrow[d] \arrow[r] & GL_1(A/\mathfrak a^{n}) \arrow[d] \\
A/\mathfrak a^{n+k} \arrow[r]                 & A/\mathfrak a^n                  
\end{tikzcd}$$
is a pullback square for any $0\leq k\leq\infty$. It follows that the map $M_{n+k}\rightarrow M_n$ exhibits $M_{n+k}$ as a ${\rm fib}(GL_1(A/\mathfrak a^{n+k})\rightarrow GL_1(A/\mathfrak a^{n}))$-torsor. Passing to the limit, we see that $M:=\varprojlim_k M_{n+k}\rightarrow M_n$ is a ${\rm fib (GL_1(A)\rightarrow GL_1(A/\mathfrak a^n))}$-torsor, and the natural map $M\rightarrow\varprojlim M_{n}$ is an equivalence of torsors over $M_n$. This holds because the filtered limits commute with the classifying space functor when restricted to discrete groups. The similar argument as the proof of the nil-completeness part of assertion (1) demonstrates that $\mu$ is an inverse of $\nu$.
\end{proof}
\begin{proof}[Proof of Proposition\autoref{nilcompint}]
The natural map $\R\llog^{\rm chart}_S\rightarrow \R\llog^{\rm qcoh}_{|S|}$ is a sheafification map. We will abbreviate the functor $\R\llog^{\rm chart}_S$ to $\F$ for short.
We have a canonical equivalence
$$\R\llog^{\rm qcoh}_{|S|}(A)\simeq
\varinjlim_{C^0_A\in\mathcal{U}_A}\lim_{B\in C^0_A}\F(B),$$
where $B$ runs over  the  covering sieve $C^0_A$, and the colimit runs over the filtered $\infty$-category $\mathcal{U}_A$ of  all covering sieves of $A$. The map $\varphi
$ can be reformulated into the following  form
$$\gamma: \varinjlim_{C^0_A\in \mathcal U_A}\lim_{B\in C^0_A}\F(B)\rr 
\varprojlim_n\varinjlim_{C^0_{A_n}\in\mathcal{U}_{A_n}}\lim_{B_n\in C^0_{A_n}}\F(B_n)
.$$
For case $(1)$, we have $A_\et\simeq
(A_n)_\et$. We can first replace $C^0_{A_n}$ by its subcategory $C_A^{\et}$ spanned by \'etale objects, and then we identify $C^\et_A$ with $C^\et_{A_n}$, and identify $\mathcal{ U}_A$ with $\mathcal{U}_{A_n}$. We can rewrite $\gamma$ as follows:
$$\gamma^{(1)}: \varinjlim_{C^0_A\in \mathcal U_A}\lim_{B\in C^\et_A}\F(B)\rr 
\varprojlim_n\varinjlim_{C^0_{A}\in\mathcal{U}_{A}}\lim_{B\in C^\et_{A}}\F(\tau_{\leq n}B)
.$$
 For case $(2)$, we observe that any covering  $C^0_A$ has a refinement which is generated by a single map $\{A\rightarrow B\}$ such that $B$ is connected by Hensel's lemma. In particular, in this case, $B$ is a finite local algebra lying over $A$. Note that the  {\v C}ech nerve of $A\rightarrow B$ is cofinal in this covering. 
Then we can simplify the map $\gamma$ as the following map
$$\gamma^{(2)}:\varinjlim_{B}\lim_{\Delta}\F(B^\bullet)\rr\varprojlim_n\varinjlim_B\lim_{\Delta}\F(B^\bullet\otimes_AA/\mathfrak m^n),$$
where $B$ runs over the category of finite \'etale local $A$-algebras.\\
Now, we prove the equivalenceness of $\gamma^{(1)}$ and $\gamma^{(2)}$. Denote by $B_n$ either $\tau_{\leq n}B$ or $B\otimes_AA/\mathfrak m^n$ 
We  first compute the fiber of the map $\F(B)\rightarrow\F(B_n)$. We denote by $L$  a cotangent complex of $\F$.
Let $(B_n,M_n)\in\F(B_n)$ be a point that lies in the essential image of $\F(B)\rightarrow\F(B_n)$.
The fiber of $\F(B_{n+1})\rightarrow\F(B_n)$ at a point $(B_n,M_n)\in\F(B_n)$ is equivalent to $\map_{B_n}(L|_{B_n},J_n)$. Here, we let 
$$J_n=
\begin{cases}
    \pi_{n+1}B[n+1], &\quad B_n=\tau_{\leq n}B;\\
    \mathfrak m^nB/\mathfrak m^{n+1}B,  &\quad B_n=B\otimes_AA/\mathfrak m^n.
\end{cases}$$
Applying induction, if we denote by $F_{m,n}(B)$ be the fiber of the map $\F(B_m)\rightarrow\F(B_n)$, then there is a tower 
$$...\rr F_{m,n}(B)\rr F_{m-1,n}(B)\rr...\rr F_{n+1,n}(B)\simeq\map_{B_n}(L|_{B_n},J_n),$$
whose layers are given by ${\rm fib}(F_{m,n}(B)\rightarrow F_{m-1,n}(B))\simeq F_{m+1,m}(B)\simeq\map_{B_m}(L|_{B_m},J_m)$.
Using the fact $\F(B)\simeq\varprojlim\F(B_n)$ proved in Lemma\autoref{chartinfdef}, the fiber of the map $\F(B)\rightarrow \F(B_n)$ at $(B_n,M_n)$ is equivalent to the inverse limit $F_{\infty,n}(B):=\varprojlim_{k}F_{n+k,n}(B).$ 
Note that the  layer $B\mapsto \map_{B_n}(L|_{B_n},J_n)$ of the tower $\{F_{m,n}(-)\}_m$ defines an \'etale sheaf on $A$. Applying the induction, we know that $F_{m,n}, n\leq m\leq\infty$ are sheaves on the \'etale site of $A$. It follows that the fiber of $\gamma^{(i)},i=1,2$ at a point $\{(A_n,M_n)\}_{n=0}^\infty$ is equivalent to $\varprojlim_nF_{\infty,n}(A)\simeq\varprojlim_n\varprojlim_m F_{m,n}(A)\simeq\varprojlim_nF_{n,n}(A)=0$. This shows that $\varphi$ is $(-1)$-truncated.\\
We still need to show that $\varphi$ is essentially surjective.  Choose an \'etale cover $A_0\rightarrow A_0'$ of $A_0$ such that the pullback $\mathscr M'_0=\mathscr M_0|_{\spet A_0'}$ is log-affine.  
Then by the rigidity theorem of Lurie \cite[Theorem 7.5.4.2]{lurie2017higher}, there is essentially a unique commutative diagram $\Einf$-rings 
$$\begin{tikzcd}
... \arrow[r] & A_n \arrow[r] \arrow[d] & A_{n-1} \arrow[r] \arrow[d] & ... \arrow[r] \arrow[d] & A_0 \arrow[d] \\
... \arrow[r] & A'_n \arrow[r]          & A'_{n-1} \arrow[r]          & ... \arrow[r]           & A_0'         
\end{tikzcd}$$
such that all vertical maps are \'etale covers. The limit map $A\rightarrow\varprojlim_nA'_n$ is also an \'etale cover by \cite[Theorem 7.5.4.2]{lurie2017higher} again. Let $\mathscr M_n'$ be the pullback of $\mathscr M_n$ to $\spet A'_n$ for $n\geq 0$. By Lemma\autoref{defchartlog}, the square-zero extension of a charted log-affine object $\spec(A,M)$ also has a chart, which is functorially dependent on the $\Einf$-log ring $M$. More precisely,  one can construct a sequence of $\Einf$-log rings $(A'_n,M'_n)$, such that for any $n\geq 0,$ the pullback of $M'_{n+1}$ along the map  $A'_{n+1}\rightarrow A'_n$ is equivalent to $ M'_n$, and the map $\spet(A'_n,M'_n)\rightarrow\spet(A'_{n+1}, M_{n+1}')$is induced from $\Einf$-ring map $(A'_{n+1},M'_{n+1})\rightarrow(A'_n,M'_n)$.  Passing to the limit, we get a log-affine stack $\spet(A',M')=|\spec(A',M')|,$ where $(A',M')=\varprojlim(A'_n,M'_n),$ which is a pre-image of $\{\spet(A_n',M_n')\}_{n=0}^\infty\in\varprojlim\R\llog^{\rm qcoh}_{|S|}(A_n')$.  Finally, applying the Descendability Theorem\autoref{des} of  $\R\llog^{\rm qcoh}_{|S|},$ one gets a log structure $\mathscr M$ over $\spet A,$ which is as desired.
\end{proof}
\begin{rmk}[Pro-\'etale quasi-coherent log structures]\label{proetalequasicoherent}
The proof of Proposition\autoref{nilcompint} depends on the fact that the \'etale topology are stable under infinitesimal thickening, so the same argument is  also suitable for the moduli stack of quasi-coherent log structures  defined using pro-\'etale topology.  
\end{rmk}

Let $f:X\rightarrow Y$ be a morphism of functors defined over the $\infty$-category $\algcn.$ We say that $f$ is \textit{locally almost of finite presentation} if it satisfies the following condition:\\
 Let $m\geq 0$, and let $\{A_j\}_{j\in J}$ be a filtered diagram of $m$-truncated connective $\Einf$-rings
with colimit $A$. Then the canonical map
$$\theta:\varinjlim X(A_j)\rr X(A)\times_{Y(A)}\varinjlim Y(A_j)$$
is an equivalence.
\begin{prop}\label{almostfinite}
    Let $\mu\in\{\rm coh,fin\}$, and let $\MSL$ be a $\mu$-log structure over $\MCS$.The morphism $\R\llog^{\mu}_{(\MCS,\MSL)}\rightarrow\MCS$ is locally almost of finite presentation.
\end{prop}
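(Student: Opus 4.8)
The strategy is to isolate the ``higher'' part of the finiteness condition, which is controlled by the cotangent complex, from the ``order zero'' part, which is essentially classical and combinatorial. Since being locally almost of finite presentation can be tested étale-locally on the target and all of the functors in play satisfy étale descent (Theorem~\ref{des}), I would first reduce to the case $(\MCS,\MSL)\simeq\spet(R,L)$ and regard $X:=\R\llog^{\mu}_{\spet(R,L)}$ as a relative functor over $\spet R$.

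I would then invoke Lurie's deformation-theoretic characterization of local finite presentation: a relative functor that is nilcomplete, infinitesimally cohesive, and admits a cotangent complex is locally almost of finite presentation if and only if that cotangent complex is almost perfect and the functor is locally of finite presentation to order $0$ \cite[Theorem 17.4.2.3]{lurie2018spectral}. Each standing hypothesis has already been verified: infinitesimal cohesiveness is Proposition~\ref{infcohesive}, nil-completeness is Proposition~\ref{nilcompint}(1), and Theorem~\ref{almostperfect} supplies a cotangent complex that is almost perfect precisely under the coherence/fineness hypothesis $\mu\in\{\mathrm{coh},\mathrm{fin}\}$. Thus the statement collapses to the order-$0$ assertion: for every filtered diagram $\{A_j\}$ of \emph{discrete} $R$-algebras with colimit $A$, the canonical map $\varinjlim_j X(A_j)\rightarrow X(A)$ is an equivalence.

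For the order-$0$ check I would pass to the charted/affine model. By Corollary~\ref{afflogstk} the functor $X$ is the étale sheafification of the functor $\R\llog^{\mathrm{aff},\mu}_{\spet(R,L)}$ of log-affine structures, and by Remark~\ref{covtoricscheme} the latter is covered by the affines $\spec R[M]$, where $M$ ranges over monoids subject to the relevant finiteness constraint ($\mathbb{S}[M]$ almost finitely presented when $\mu=\mathrm{coh}$; $\Z[\pi_0 M]$ a finitely generated integral $\Z$-algebra when $\mu=\mathrm{fin}$). The essential input is that almost finite presentation of $\mathbb{S}[M]$ is exactly the condition guaranteeing that the representable functor $A\mapsto\map_{\alg}(\mathbb{S}[M],A)$ commutes with filtered colimits of truncated rings; hence the affine, and therefore the charted, moduli commutes with such colimits directly. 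As a supplementary cross-check, on discrete rings the coherence/fineness forces the relevant charts to be finitely presented, so that the order-$0$ behaviour is compatible with Olsson's classically locally finitely presented stack (Theorem~\ref{OlssonRep}) under the comparison of Proposition~\ref{compder-clas}.

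The main obstacle I anticipate is that $X$ is only the \emph{sheafification} of the affine functor, and filtered colimits do not commute with the descent (totalization) limits implicit in sheafification. The resolution, and precisely the reason for restricting to $\mu\in\{\mathrm{coh},\mathrm{fin}\}$, is that coherent and fine log structures admit finitely presented charts and finitely presented étale covers: any such chart and cover over $\spec A=\spec(\varinjlim_j A_j)$ is pulled back from some finite stage $A_{j_0}$, and the associated descent data likewise spread out across the filtered system by a standard approximation argument. This permits the interchange of $\varinjlim_j$ with the sheafification at order $0$ and completes the proof. For the purely quasi-coherent case the interchange fails, in agreement with Theorem~\ref{almostperfect}, where the cotangent complex is asserted to be almost perfect only when $\mu$ is coherent or fine.
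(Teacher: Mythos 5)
Your proposal follows the paper's proof essentially step for step: reduce to a log-affine base $\spet(R,L)$, invoke Lurie's criterion from \cite[Section 17.4.2]{lurie2018spectral} together with \autoref{almostperfect}, \autoref{infcohesive} and \autoref{nilcompint} to reduce to filtered colimits of \emph{discrete} rings, and then spread out the \'etale cover, the finitely presented chart $\mathbb S[P]\to A'$, and the attendant descent data to a finite stage of the filtered system. The paper merely makes your ``standard approximation argument'' concrete, presenting the cover by standard \'etale algebras via \cite[Lemma 29.36.14]{stacks-project}, factoring $\mathbb S[P]\to A'$ through some $A'_j$ by almost finite presentation, and gluing at stage $j$ using the descent result \autoref{des}.
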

\begin{proof}
    Without loss of generality, we may replace  $\MCS$ by some \'etale cover of it and, therefore, assume that $(\MCS,\MSL)\simeq\spet(R,L)$ is log-affine.  We have to show that the functor 
    $$\R\llog^{\mu}_{\spet(R,L)}:\algcn_R\rr\ani$$
    is locally almost of finite presentation, i.e., for any  filtered diagram of $m$-truncated $R$-algebras $\{A_j\}_{j\in J}$ with colimit $A$, the following natural map $$\theta:\varinjlim\R\llog^{\mu}_{\spet(R,L)}(A_j)\rr\R\llog^{\mu}_{\spet(R,L)}(A)$$
    is an equivalence. Applying \cite[Corollary 17.4.2.2]{lurie2018spectral}, Theorem\autoref{almostperfect} and Proposition\autoref{infcohesive}, we will assume that $A_j$ is discrete for any $j\in J.$
    We first prove that the map $\theta$ is effectively epimorphic.  Let $\mathscr M$ be a $\mu$-log structure over $\spet A.$ Choose an \'etale cover $A\rightarrow A'$, such that the restriction  $\mathscr M|_{\spet A'}$ is log-affine. Using \cite[Lemma 29.36.14]{stacks-project}, we might assume that $A'$ is a finite product of standard \'etale  $A$-algebras, i.e., there exist $a_1,a_2,...,a_n\in A$ and $f
    _1,f_2,...,f_n\in A[x],$ such that $A'=\prod_iA[a_i^{-1}][x]/(f_i).$ Then we might also assume that $a_i,i=1,2,...,n,$ and all coefficients of $f_i,i=1,2,...,n,$ belong to $A_j,j\in J.$ Let $A'_j=\prod_iA_j[a_j^{-1}][x]/(f_j)$, then we may assume that  the natural map $A_j\rightarrow A_j'$ is an \'etale cover for any $j\in J.$ We have an isomorphism $\varinjlim A_j'\simeq A'$  of discrete rings. Form a commutative diagram of animae
    $$\begin{tikzcd}
{\varinjlim\R\llog^{\mu}_{\spet(R,L)}(A_j)} \arrow[d] \arrow[r] & {\R\llog^{\mu}_{\spet(R,L)}(A)} \arrow[d] \\
{\varinjlim\R\llog^{\mu}_{\spet(R,L)}(A'_j)} \arrow[r]          & {\R\llog^{\mu}_{\spet(R,L)}(A')}         
\end{tikzcd}$$
      Since $(\spet A',\mathscr M)$ is log-affine, there is a $\mu$-$\Einf$-log ring $(A',M'),$ such that $(\spet A',\mathscr M)\simeq\spet(A',M').$ In particular, there is an $\Einf$-monoid $P$, such that $\mathbb S[P]$ is almost of finite presentation, and there is an equivalence $M'=P^a.$ The finiteness of $P$ implies that the $\Einf$-ring map $\mathbb S[P]\rightarrow A'$ factors through some $A'_j$. Denote by $M'_j$  the monoid of logification of the $\Einf$-prelog ring $(A'_j,P),$ and let $\mathscr M'_j$ be the corresponding log structure over $\spet A_j'.$ Applying the descendability Theorem\autoref{des} of  $\R\llog^{\mu}_{\spet(R,L)},$ one obtains a unique log structure $\mathscr M_j$ over $\spet A_j$, and the pullback of $\mathscr M_j$ along the map $\spet A\rightarrow\spet A_j$ is equivalent to $\mathscr M.$\\
     We prove that the map $\theta$ has contractible fibers. Let $F$ be the fiber of $\theta$ at $\mathscr M$, and let $\mathscr M_i\in\R\llog^{\mu}_{\spet(R,L)}(A_i),\mathscr M_j\in\R\llog^{\mu}_{\spet(R,L)}(A_j)$, such that the  pullback of them to $\spet A$ are equivalent to $\mathscr M$ . By virtue of the argument above, there is an index $k\geq  i,j$ and a log structure $\mathscr M_k$ over $\spet A_k$, such that $\mathscr M_k$ is equivalent to the pullback of $\mathscr M_i$ and $\mathscr M_j.$ It turns out that we can identify $\mathscr M_i$ and $\mathscr M_j$ via the canonical equivalence $f_{ik}^*\mathscr M_i\simeq f_{jk}^*\mathscr M_j$, in which we denote $f_{ik}$ and $f_{jk}$ as the transition maps $A_i\rightarrow A_k$ and $A_j\rightarrow A_k$. This shows that $F$ is contractible. 
\end{proof}
\subsubsection{Truncatedness}
\begin{defn}
    Let $(\X,\mathscr M)$ be a spectral log Deligne-Mumford stack. We say that $(\X,\mathscr M)$ is $n$-\textit{admissible}, if there is an \'etale cover $\coprod^N_{j=1}\spet R_j\rightarrow\X$ of spectral Deligne-Mumford stacks, such that  the pullback 
    $\mathscr M|_{\spet R_j}$ are given by 
    $n$-admissible $\Einf$-log rings. We say that it is admissible if it is $n$-admissible for some $n<+\infty$.
\end{defn}
\begin{rmk}
   By Remark\autoref{admissiblelocal}, the $n$-admissibility of spectral log Deligne-Mumford stacks is local on the underlying Deligne-Mumford stacks with respect to the fpqc topology.
\end{rmk}
\begin{rmk}
    $\spet(A,M)$ is $n$-admissible if and only if $(A,M)$ is $n$-admissible.
\end{rmk}
\begin{prop}\label{1-trunc-2}
    Let $(\mathcal X,\mathscr M),(\Y,\mathscr N)$ be  spectral log Deligne-Mumford stacks. Assume that
    the following conditions are satisfied:
    \begin{enumerate}
        \item $\X$ and $\Y$ are affine;
        \item $\X$ is $m$-truncated;
        \item $(\X,\mathscr M)$ is $n$-admissible 
    \end{enumerate}
Then the mapping space $\map((\X,\mathscr M),(\Y,\mathscr N))$ is $(1+\max\{n,m\})$-truncated.
\end{prop}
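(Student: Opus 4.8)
The plan is to realize the mapping space as the total space of a fibration over the space of underlying morphisms and to bound the base and the fibers separately. Since $\X$ and $\Y$ are affine, write $\X\simeq\spet A$ and $\Y\simeq\spet B$, let $(A,M)$ be the $\Einf$-log ring whose associated log structure is $\mathscr M$, and recall from the hypotheses that $A$ is $m$-truncated and $(A,M)$ is $n$-admissible. Post-composition with the presentable fibration $\pi'\colon\llog\Sptdm\rightarrow\Sptdm$ produces a map of anima
$$\map_{\llog\Sptdm}((\X,\mathscr M),(\Y,\mathscr N))\rr\map_{\Sptdm}(\X,\Y),$$
whose fiber over a morphism $f\colon\X\rightarrow\Y$ is the anima $\map_{\text{Log}(\X)}(f^*\mathscr N,\mathscr M)$ of morphisms of log structures on $\X$ lying over $f$, where $f^*\mathscr N$ is the (strict) pullback log structure and $\mathscr M$ is the \emph{target} of the log-structure map. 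I will show that the base is $m$-truncated and each fiber is $(1+\max\{n,m\})$-truncated; since the total space of a fibration with $b$-truncated base and $c$-truncated fibers is $\max\{b,c\}$-truncated, and here $c=1+\max\{n,m\}\geq m=b$, this gives the claim.

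For the base, $\map_{\Sptdm}(\spet A,\spet B)\simeq\map_{\algcn}(B,A)$. Writing $B$ as a colimit of free $\Einf$-algebras turns this mapping space into a limit of copies of $\Omega^\infty A$; as $A$ is $m$-truncated the space $\Omega^\infty A$ is $m$-truncated, and $m$-truncated anima are closed under limits, so $\map_{\algcn}(B,A)$ is $m$-truncated.

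For the fibers I work in the underlying $\infty$-topos $\shv_{\et}(\spet A,\mon)$ of sheaves of $\Einf$-monoids on the small \'etale site of $\spet A$; prelog structures form the slice $\shv_{\et}(\spet A,\mon)_{/\Omega^\infty\OO_{\spet A}}$ and log structures a full subcategory, so the fiber is the fiber of
$$\map_{\shv(\mon)}(f^*\mathscr N,\mathscr M)\rr\map_{\shv(\mon)}(f^*\mathscr N,\Omega^\infty\OO_{\spet A})$$
over the structure map. The key point is that $\mathscr M$ is a $(1+\max\{n,m\})$-truncated object of $\shv_{\et}(\spet A,\mon)$. Indeed, the presheaf whose sheafification is $\mathscr M$ sends an \'etale $A$-algebra $A'$ to the monoid of the strict base change $(A,M)\otimes_AA'$, which is $n$-admissible by Remark\autoref{admissiblelocal} and has $m$-truncated underlying ring $A'$ (\'etale over the $m$-truncated $A$); hence it is $(1+\max\{n,m\})$-truncated in $\llog$ by Remark\autoref{truncadmissible}, so its monoid is $(1+\max\{n,m\})$-truncated. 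Thus the presheaf is objectwise $(1+\max\{n,m\})$-truncated, and since \'etale sheafification is a left exact localization it preserves truncatedness, so $\mathscr M$ is a $(1+\max\{n,m\})$-truncated object. Being truncated as an object means $\map_{\shv(\mon)}(f^*\mathscr N,\mathscr M)$ is $(1+\max\{n,m\})$-truncated for every source. Likewise $\Omega^\infty\OO_{\spet A}$ is objectwise $\Omega^\infty A'$ with $A'$ $m$-truncated, hence $m$-truncated, so the second mapping space is $m$-truncated. The fiber of a map with $(1+\max\{n,m\})$-truncated source and $m$-truncated target is $\max\{\,1+\max\{n,m\},\,m-1\,\}=1+\max\{n,m\}$-truncated, which is the desired fiber bound.

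The main obstacle is twofold. First, one must get the variance of the fiber right: the source log structure $\mathscr M$ must appear as the \emph{target} of the log-structure map $f^*\mathscr N\rightarrow\mathscr M$ (the standard description of morphisms of log stacks over a fixed underlying map), so that the truncatedness of $\mathscr M$ controls the fiber; with the opposite variance no bound would follow, since we impose no finiteness on $(\Y,\mathscr N)$. Second, and more technically, one must verify that $\mathscr M$ is a genuinely truncated object of the topos of sheaves of monoids, which requires controlling the effect of \'etale sheafification on the non-sheaf presheaf $A'\mapsto\bigl(M\to\Omega^\infty A'\bigr)^a$ and transferring the truncatedness of the log ring $(A,M)$ through stability of admissibility under strict base change (Remark\autoref{admissiblelocal}) together with Remark\autoref{truncadmissible}.
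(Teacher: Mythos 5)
Your proof is correct and follows essentially the same route as the paper's: both fiber the mapping space over $\map(\X,\Y)$, bound the base using the $m$-truncatedness of $A$, and bound the fibers $\map_{/\OO_\X}(f^*\mathscr N,\mathscr M)$ by observing that admissibility passes to strict (\'etale) base changes, so that $\mathscr M$ is $(1+\max\{n,m\})$-truncated sectionwise. Your additional details --- the explicit verification that the base $\map_{\algcn}(B,A)$ is $m$-truncated, and the use of left-exactness of sheafification to promote objectwise truncatedness of the presheaf $A'\mapsto M^a$ to truncatedness of the sheaf $\mathscr M$ --- merely make explicit steps the paper's proof leaves implicit.
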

\begin{proof}
Let $(\Y,\mathscr N)$ be an arbitrary  spectral log Deligne-Mumford stack.  Then by the definition of the $\infty$-category $\llog\Sptdm$ introduced in \autoref{spectral log stacks}, there is a canonical map 
$$\theta:\map((\X,\mathscr M),(\Y,\mathscr N))\rr\map(\X,\Y).$$
The anima $\map(\X,\Y)$ is $m$-truncated. We must consider the fibers of the map $\theta$. Let $f\in\pi_0\map(\X,\Y)$. The fiber of $\theta$ at $f$ is equivalent to the anima $\map_{/\OO_\X}(f^*\mathscr N,\mathscr M)\simeq\lim_{R\in\X_{\et}}\map_{/R}(f^*\mathscr N(R),\mathscr M(R)).$ Note that each term $\map_{/R}(f^*\mathscr N(R),\mathscr M(R))$ is $(1+\max\{n,m\})$-truncated, then so is $\map_{/\OO_\X}(f^*\mathscr N,\mathscr M).$ It follows that $\map((\X,\mathscr M),(\Y,\mathscr N))$ is $(1+\max\{n,m\})$-truncated.
\end{proof}

\begin{defn}\label{rlogdefn2}
Let $\mu\in\{\rm qcoh,coh,fin\}$, and let $(\MCS,\MSL)$ be a  $\mu$-spectral log stack.
Note that $n$-admissible objects are stable under pullbacks, so there are  subfunctors $\R\llog^{n-\rm Adm,\mu}_{(\MCS,\MSL)}$ and $\R\llog^{\rm Adm,\mu}_{(\MCS,\MSL)}$ of $\R\llog^{\mu}_{(\MCS,\MSL)}$ consisting of $n$-admissible and admissible objects respectively. 
\end{defn}
\begin{rmk}\label{admdeformation}
    It's easy to see that $\R\llog^{n-\rm Adm,\mu}_{(\MCS,\MSL)}$ and $\R\llog^{\rm Adm,\mu}_{(\MCS,\MSL)}$ satisfy \'etale hyperdescent and all infinitesimal properties investigated in \autoref{zxzxzxz}. 
\end{rmk}
The stack $\R\llog^{\rm Adm,\mu}_{(\MCS,\MSL)}$ admits a filtration of substacks 
$$\R\llog^{0-{\rm Adm,\mu}}_{(\MCS,\MSL)}\subset\R\llog^{1-{\rm Adm},\mu}_{(\MCS,\MSL)}\subset...\subset \R\llog^{n-{\rm Adm},\mu}_{(\MCS,\MSL)}\subset...$$
where $\R\llog^{n-{\rm Adm},\mu}_{(\MCS,\MSL)}$ is the substack of $\R\llog^{{\rm Adm},\mu}_{(\MCS,\MSL)}$ consisting of $n$-admissible objects. We have $\R\llog^{{\rm Adm},\mu}_{(\MCS,\MSL)}\simeq\bigcup_{n}\R\llog^{n-{\rm Adm},\mu}_{(\MCS,\MSL)}$.
\begin{lem}\label{admcot}
    The inclusions $\R\llog^{n-{\rm Adm},\mu}_{(\MCS,\MSL)}\subset \R\llog^{m-{\rm Adm},\mu}_{(\MCS,\MSL)},n\leq m$ and $\R\llog^{{\rm Adm},\mu}_{(\MCS,\MSL)}\subset\R\llog^{\mu}_{(\MCS,\MSL)}$ are formally \'etale. As a consequence, the stack $\R\llog^{n-{\rm Adm},\mu}_{(\MCS,\MSL)}$ admits a cotangent complex for any $n\leq+\infty$.
\end{lem}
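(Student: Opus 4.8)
The plan is to prove each inclusion is formally étale by showing that its relative cotangent complex vanishes, and then to deduce the existence of $\LL_{\R\llog^{n-{\rm Adm},\mu}_{(\MCS,\MSL)}/\MCS}$ from the transitivity sequence. All the functors involved are subfunctors of $\R\llog^{\mu}_{(\MCS,\MSL)}$, which by Remark~\autoref{admdeformation} together with Propositions~\autoref{infcohesive} and~\autoref{nilcompint} is nilcomplete and infinitesimally cohesive relative to $\MCS$; hence the inclusions are automatically relatively nilcomplete and infinitesimally cohesive, and formal étaleness will follow once the relative cotangent complex is shown to vanish (a zero complex being trivially the dual of a connective perfect complex). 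I work affine-locally on $\MCS$ and replace it by $\spet(R,L)$.

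Let $i\colon X\hookrightarrow S$ denote any of the inclusions in question, viewed as monomorphisms of functors $\algcn_R\to\ani$. For $A\in\algcn_R$, $x\in X(A)$ and a connective $A$-module $I$, the fiber $\mathfrak D_{X/S}(x,I)$ controlling $\LL_{X/S}$ is the fiber of
$$X(A\oplus I)\rr X(A)\times_{S(A)}S(A\oplus I).$$
Since both $X(A\oplus I)$ and $X(A)\times_{S(A)}S(A\oplus I)$ map monomorphically to $S(A\oplus I)$ compatibly (the second because $X(A)\to S(A)$ is a monomorphism), this comparison map is a monomorphism; its fibers are contractible or empty, and $\LL_{X/S}\simeq 0$ precisely when they are always contractible. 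Unwinding the definitions, this amounts to the assertion that a $\mu$-log structure $\mathscr N'$ on $\spet(A\oplus I)$ whose pullback $\mathscr N''$ to $\spet A$ is $n$-admissible (respectively admissible) is itself $n$-admissible (respectively admissible). Thus everything reduces to the following stability statement.

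\textbf{Key Lemma.} If $(A',M')\to(A,M)$ is a strict square-zero extension of $\Einf$-log rings with connective kernel and $(A,M)$ is $n$-admissible, then $(A',M')$ is $n$-admissible. To prove it I would pass to $\pi_0$, where $n$-admissibility of $(A',M')$ is by definition the $n$-truncatedness of the monoid of $(\pi_0 A',M')^a$. Applying $\pi_0$ to the trivial extension $A'=A\oplus I\to A$ yields a square-zero extension of discrete rings $\pi_0 A'=\pi_0 A\oplus\pi_0 I\to\pi_0 A$ with discrete kernel $\pi_0 I$, and I claim that logification turns this into a strict square-zero extension of log rings $(\pi_0 A',M'^a)\to(\pi_0 A,M^a)$ — this is where one must argue carefully, using the explicit logification formula together with the fact that $GL_1(\pi_0 A')\to GL_1(\pi_0 A)$ is surjective with kernel $1+\pi_0 I\cong\pi_0 I$. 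Granting this, Remark~\autoref{torsor=sqzero} applied to $(\pi_0 A',M'^a)\to(\pi_0 A,M^a)$ supplies a fiber sequence of $\Einf$-monoids $M'^a\to M^a\to(\pi_0 I)[1]$; since $(\pi_0 I)[1]$ is $1$-truncated and $M^a$ is $n$-truncated by hypothesis, the long exact sequence forces $M'^a$ to be $n$-truncated. Reduction of the general (non-affine) case to this one is routine, using Lemma~\autoref{defchartlog} to produce the square-zero extension of log rings from the strict square-zero thickening of stacks, together with the fpqc-locality of admissibility (Remark~\autoref{admissiblelocal}).

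With $\LL_{X/S}\simeq 0$ established for $X=\R\llog^{n-{\rm Adm},\mu}_{(\MCS,\MSL)}$ and $S=\R\llog^{\mu}_{(\MCS,\MSL)}$ (and likewise for the intermediate inclusions), formal étaleness follows. For the final assertion, apply the transitivity fiber sequence to $X\to\R\llog^{\mu}_{(\MCS,\MSL)}\to\MCS$: since the middle relative cotangent complex vanishes, one gets $\LL_{X/\MCS}\simeq i^{*}\LL_{\R\llog^{\mu}_{(\MCS,\MSL)}/\MCS}$, and the latter exists by Theorem~\autoref{almostperfect}; taking $n=+\infty$ (so $X=\R\llog^{\rm Adm,\mu}_{(\MCS,\MSL)}$) covers the remaining case. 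The main obstacle is the Key Lemma — specifically verifying that logification sends the $\pi_0$-level square-zero thickening to a strict square-zero extension of log rings, so that Remark~\autoref{torsor=sqzero} can be invoked to produce the controlling fiber sequence and the truncatedness bound.
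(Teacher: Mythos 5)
Your proof is correct, and it follows essentially the same route as the paper: reduce formal \'etaleness to the unique-lifting property against trivial square-zero extensions $\spec A\hookrightarrow\spec(A\oplus I)$, observe that uniqueness is automatic because the inclusions are monomorphisms of functors, and reduce existence of the lift to the statement that $n$-admissibility passes up a square-zero extension of $\Einf$-log rings; the final deduction of $\LL_{\R\llog^{n-{\rm Adm},\mu}/\MCS}\simeq i^*\LL_{\R\llog^{\mu}/\MCS}$ via transitivity is also how the paper concludes.

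The one genuine difference is that you actually prove the key stability step, where the paper only cites Remark\autoref{admissiblelocal}. That citation is loose: the forward direction of that remark (strict maps push admissibility forward) goes from the extension $(A',M')$ to the quotient $(A,M)$, which is the opposite of what is needed, and its converse requires $\spec\pi_0 A\rightarrow\spec\pi_0 A'$ to be a flat cover, which fails for a square-zero thickening. Your argument — pass to $\pi_0$, check that logification turns the thickening into a strict square-zero extension of discrete-based log rings (which follows from transitivity of logification), and then use the torsor fiber sequence $M'^a\rightarrow M^a\rightarrow(\pi_0 I)[1]$ of Remark\autoref{torsor=sqzero} together with the long exact sequence to bound the truncation of $M'^a$ — is exactly the justification the paper's proof implicitly relies on (it is in the same spirit as Remark\autoref{truncadmissible}). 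So your write-up is not just correct but repairs the weakest link in the paper's own argument.
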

\begin{proof}
    Let $A$ be a connective $\Einf$-ring and let $I\in\Mod_A^{\rm cn}$. Let us form a commutative diagram  of stacks
    $$\begin{tikzcd}
\spec(A) \arrow[d] \arrow[r, "x"]    & {\R\llog^{n-{\rm Adm},\mu}_{(\MCS,\MSL)}} \arrow[d] \\
{\spec(A\oplus I)} \arrow[r, "x'"] & {\R\llog^{m-{\rm Adm},\mu}_{(\MCS,\MSL)}}          
\end{tikzcd}$$
Without loss of generality, we assume that both $x$ and $x'$ classify affine log structures $\spet(A,M)$ and $\spet(A\oplus I,M')$ respectively, such that $(A\oplus I,M')$ is a square-zero extension of $(A,M)$.
Using Remark\autoref{admissiblelocal}, the $n$-admissibility is preserved under  square-zero extensions, and therefore the space of  liftings $\spec(A\oplus I)\rightarrow \R\llog^{n-{\rm Adm},\mu}$ is contractible. 
\end{proof}

\section{Spectral algebraic stacks}\label{ALREP}
In order to study the moduli functor of  spectral log structures, we have to consider algebraicity in some sense. In other words, we want a conception of \textit{Artin stacks} in spectral algebraic geometry. At first, we need a concept of smoothness for defining the notion of atlas $U$ over  stacks $X$. However, in spectral algebraic geometry, following \cite[Section III.11.2.2]{lurie2018spectral}, there are at least two kinds of smoothness  for maps of $\Einf$-rings (and  spectral Deligne-Mumford stacks) $f:A\rightarrow B$:
\begin{enumerate}
    \item \textit{Fiber smooth}, the map of $\Einf$-rings $f:A\rightarrow B$ is flat, and the induced map $\pi_0(f):\pi_0(A)\rightarrow \pi_0(B)$ is smooth as ordinary rings;
    \item \textit{Differentially smooth} (in the following we refer to “smooth” for simplicity),
    the map of $\Einf$-rings $f:A\rightarrow B$ is almost of finite presentation, and 
    the cotangent complex $\LL_{B/A}$ is a  projective object in the $\infty$-category $\Mod_B$ of $B$-modules.
\end{enumerate}
It seems that if we adopt the  fiber-smoothness to define our algebraic stack, the associated deformation theory will be pathological, because the fiber-smoothness does not determine the infinitesimal lifting property in spectral algebraic geometry. Hence, in this section, we define {Artin stacks} using differential smoothness. Informally, an Artin stack is an \'etale  sheaf over $\algcn$ with representable diagonal and admits a differentially smooth surjective map from a spectral algebraic space.
\subsection{Artin stacks}
\begin{defn}
    Let $f:X\rightarrow Y$ be a morphism in $\stk$. We say that $f$ is $0$-representable, or a \textit{spectral relative algebraic space}, if for any connective $\Einf$-ring $R$, and any morphism $\spec R\rightarrow Y$, the fiber product $X\times_Y\spec R$ is a  spectral algebraic space over $R$. Here a spectral algebraic space means a functor which is represented by some spectral Deligne-Mumford $0$-stack\footnote{Here a spectral Deligne-Mumford $0$-stack $\X$ is a spectral Deligne-Mumford stack such that $\X(R)$ is discrete for any discrete $R$.}.\\
    Let $n$ be a natural number. The morphism $f$ is called $n$-representable, if the diagonal $\Delta_{X/Y}:X\rightarrow X\times_YX$ is $(n-1)$-representable. 
\end{defn}
\begin{defn}
    Let $\mathbb P$ be one of the following properties of morphisms of spectral relative algebraic spaces: 
$$ \text{Smoothness, Fiber-smoothness, Flatness, \'Etaleness}$$
We say that a morphism $f:X\rightarrow Y$ of sheaves has the property $n$-$\mathbb P$, if it's $n$-presentable, and for any $\Einf$-ring $R$, and any map $\spec R\rightarrow Y$, there is an  $(n-1)$-$\mathbb P$ surjective map $U\rightarrow X\times_Y\spec R$, such that the composition  $U\rightarrow X\times_Y\spec R$ is a spectral relative algebraic space,  and has the property $\mathbb P.$ We will say  that the morphism  $f$ has the property $\mathbb P$, if it has the property $n$-$\mathbb P$ for some $n\in\Z_{\geq 0}.$
\end{defn}

\begin{defn}
Assume that we have a commutative diagram 
$$\begin{tikzcd}
U \arrow[rr] \arrow[rd] &   & X \arrow[ld] \\
                        & Y &             
\end{tikzcd}$$
such that $U/Y$ is a spectral relative algebraic space, and the map $U\rightarrow X$ is surjective and has the property $\mathbb P$. We will say that $U$ is a $\mathbb P$-\textit{atlas} of $X$. 
\end{defn}
\begin{defn}\label{artstkdef}
    Let $f:X\rightarrow Y$ be a morphism in $\stk$. We say that $f$ is 
    \begin{enumerate}
        \item 
    A \textit{relative} $n$-\textit{Artin stack}, if it is $n$-representable, and admits a smooth atlas locally, i.e., for any map $\spec A\rightarrow Y$, the fiber product $X\times_Y\spec A$ admits a smooth atlas;
    \item A \textit{relative Artin stack} is a relative $n$-Artin stack for some $n\geq 0$;
    \item 
     A \textit{relative locally Artin stack} is such that for any map $\spec R\rightarrow Y$, the fiber product $X\times_Y\spec R$ is a union of open substacks that are Artin. 
    \end{enumerate}
\end{defn}
\begin{rmk}
It's easy to see that:
\begin{enumerate}
    \item 
     If $f:X\rightarrow Y$ is a relative $n$-Artin stack, then it's a relative $m$-Artin stack, for any $m\geq n$;
     \item  The pullback of a relative $n$-Artin stack is also a relative $n$-Artin stack;
    \item The finite limit of relative $n$-Artin stacks is also an $n$-Artin stack.
    \end{enumerate}
\end{rmk}
We present some basic properties of spectral Artin stacks, which are  proved for derived Artin stacks by Lurie in \cite{lurie2004derived}. However, the proofs also translate word by word  for spectral Artin stacks by  definition. We only summarize the main results below and refer to the proofs in Lurie's thesis.
\begin{prop}\cite[Proposition 5.1.5]{lurie2004derived}\label{cotartinstk}
    A relative $n$-Artin stack $f:X\rightarrow Y$ admits a $(-n)$-connective cotangent complex. 
\end{prop}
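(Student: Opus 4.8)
The plan is to argue by induction on $n$, transcribing Lurie's proof of \cite[Proposition 5.1.5]{lurie2004derived} essentially verbatim, the only change being that derived affines are replaced by connective $\Einf$-rings. I would first dispose of the base case $n=0$. A relative $0$-Artin stack $f\colon X\rr Y$ is a spectral relative algebraic space, so after pulling back along an arbitrary $\spec R\rr Y$ one reduces to a spectral algebraic space over $R$; such an object admits a connective relative cotangent complex, constructed by \'etale descent from the affine case, where $\LL_{\spec A/\spec R}\simeq\LL_{A/R}$ is connective. This is precisely the bound $(-n)$-connective for $n=0$.

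For the inductive step I would assume the assertion for relative $(n-1)$-Artin stacks and morphisms, and let $f\colon X\rr Y$ be relative $n$-Artin. Working locally on $Y$, I may assume $X$ admits a smooth atlas $p\colon U\rr X$ with $U\rr Y$ a relative algebraic space. Since $p$ is a smooth surjection it is an effective epimorphism of \'etale sheaves, so $X\simeq|U_\bullet|$, where $U_\bullet$ is the \v{C}ech nerve of $p$. Because $p$ is $(n-1)$-representable, each term $U_m=U^{\times_X(m+1)}$ is a relative $(n-1)$-Artin stack over $Y$ (with $U_0=U$ even $0$-Artin), so the inductive hypothesis provides $(-(n-1))$-connective cotangent complexes $\LL_{U_m/Y}$; moreover $p$ is itself a relative $(n-1)$-Artin morphism, so by induction $\LL_{U/X}$ exists and is $(-(n-1))$-connective.

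The cotangent complex of $X$ is then produced by smooth descent. Using the equivalence $\qcoh(X)\simeq\varprojlim_{\Delta}\qcoh(U_\bullet)$ of \cite[Proposition 6.2.3.1]{lurie2018spectral}, one assembles the relative cotangent complexes attached to the face maps into a descent datum, whose image is an object $\LL_{X/Y}\in\qcoh(X)$ that, after pullback along $p$, fits into the transitivity fiber sequence
$$p^{*}\LL_{X/Y}\rr\LL_{U/Y}\rr\LL_{U/X}.$$
Here $\LL_{U/Y}$ is connective and $\LL_{U/X}$ is $(-(n-1))$-connective, so the long exact sequence on homotopy groups shows that for $i<-n$ both $\pi_i\LL_{U/Y}$ and $\pi_{i+1}\LL_{U/X}$ vanish, whence $p^{*}\LL_{X/Y}$ is $(-n)$-connective. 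Since $p$ is a smooth, in particular flat, surjection, connectivity is detected after pullback along $p$, so $\LL_{X/Y}$ is $(-n)$-connective, as desired.

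The main obstacle is not the connectivity bookkeeping, which is the elementary homotopy-group computation above, but the construction of $\LL_{X/Y}$ itself, i.e. verifying that the relative deformation functor $I\mapsto\mathfrak{D}_{X/Y}(\eta,I)$ is corepresentable for every point $\eta\colon\spec R\rr X$. The crux is to exhibit this functor as the totalization over $\Delta$ of the (already corepresentable) deformation functors of the $U_m$, which requires that the presentation $X\simeq|U_\bullet|$ interact correctly with square-zero extensions; concretely one invokes flat descent for quasi-coherent sheaves together with the infinitesimal cohesiveness and nilcompleteness of the $U_m$ so that the totalization converges and stays corepresentable with the stated connectivity bound. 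Everything else is formal and identical to the derived case treated in \cite{lurie2004derived}.
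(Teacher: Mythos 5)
Your overall strategy coincides with the paper's: the paper offers no independent argument for Proposition~\ref{cotartinstk}, it simply cites \cite[Proposition 5.1.5]{lurie2004derived} and asserts that Lurie's proof ``translates word by word'' to spectral Artin stacks. Your induction on $n$ via a smooth atlas, the {\v C}ech nerve, descent for quasi-coherent complexes, and the transitivity fiber sequence is exactly that translation, and your connectivity bookkeeping (fiber sequence $p^*\LL_{X/Y}\rr\LL_{U/Y}\rr\LL_{U/X}$ with $\LL_{U/Y}$ connective and $\LL_{U/X}$ $(1-n)$-connective) is correct.

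There is, however, one step where the word-by-word transcription is unsound in this paper's setting, and it is precisely where the derived and spectral worlds diverge. You write ``since $p$ is a smooth, in particular flat, surjection,'' and you also invoke \emph{flat} descent along $p$ to assemble $\LL_{X/Y}$. In Lurie's thesis, smooth morphisms of simplicial commutative rings are indeed flat, but this paper defines Artin stacks using \emph{differential} smoothness (almost finite presentation plus projective cotangent complex), and in the $\Einf$-setting such maps need not be flat: the free $\Einf$-ring $\mathbb S\{x\}$ is differentially smooth over $\mathbb S$, with $\LL_{\mathbb S\{x\}/\mathbb S}$ free of rank one, yet it is not flat (already $\pi_1$ fails the flatness criterion). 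The paper makes this dichotomy explicit at the start of Section~\ref{ALREP}, so the inference ``smooth $\Rightarrow$ flat'' is exactly the step one is not allowed to copy verbatim. Both of your uses of flatness can be repaired, but the repair must be stated: (i) descent of $\qcoh$ along $p$ does not require flatness of $p$, only that $p$ be an effective epimorphism of \'etale sheaves, since $\qcoh$ carries colimits of prestacks to limits and is insensitive to sheafification, so $X\simeq|U_\bullet|$ already yields $\qcoh(X)\simeq\varprojlim_{\Delta}\qcoh(U_\bullet)$; and (ii) to detect $(-n)$-connectivity of $\LL_{X/Y}$ from $p^*\LL_{X/Y}$, use that any point $\spec R\rr X$ lifts to $U$ after an \'etale cover $R\rr R'$ (again because $p$ is an \'etale effective epimorphism), and \'etale covers \emph{are} faithfully flat, so connectivity may be tested there. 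With these substitutions your argument goes through; as written, the quoted step is false in the spectral context that the proposition is actually about.
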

\begin{prop}\cite[Proposition 5.3.7]{lurie2004derived} and \cite[Theorem 6.4.1]{lurie2004derived}
     A relative $n$-Artin stack $f:X\rightarrow Y$ is  infinitesimally cohesive, nil-complete, and integrable. 
\end{prop}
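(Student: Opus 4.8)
The plan is to argue by induction on $n$, exploiting the inductive shape of Definition \autoref{artstkdef} together with the observation that each of the three properties is the assertion that $X$ carries a particular limit of connective $\Einf$-rings to a limit of animae: a specific pullback square for infinitesimal cohesiveness, the Postnikov tower $\varprojlim_m \tau_{\leq m}A$ for nilcompleteness, and the adic tower $\varprojlim_m A/\mathfrak{a}^m$ for integrability. In the base case $n=0$ the map $f$ is a relative spectral algebraic space, and all three properties hold for spectral Deligne--Mumford stacks: infinitesimal cohesiveness and nilcompleteness may be checked affine-locally on the source and reduce to the corresponding facts for the structure sheaf, while integrability is the spectral Grothendieck existence theorem, all recorded in \cite{lurie2018spectral}.

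For the inductive step, assume the statement for relative $(n-1)$-Artin stacks and choose a smooth atlas $p\colon U\rightarrow X$ with $U/Y$ a relative algebraic space. Since $p$ is surjective it is an effective epimorphism of \'etale sheaves, so $X\simeq\colim_{\Delta^{\rm op}}U_\bullet$ is the geometric realization of its \v{C}ech nerve, with $U_m=U^{\times_X(m+1)}$. Each $U_m$ lies over the relative algebraic space $U^{\times_Y(m+1)}$ through iterated pullbacks of the diagonal $X\rightarrow X\times_YX$, which is $(n-1)$-representable because $X$ is $n$-representable; hence every $U_m$ is a relative $(n-1)$-Artin stack over $Y$, and by the inductive hypothesis it is infinitesimally cohesive, nilcomplete, and integrable.

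It then remains to transfer each property along the realization $X\simeq|U_\bullet|$, and this is the main difficulty, since the content is precisely the commutation of the colimit $|{-}|$ with the limits of $\Einf$-rings defining the three properties, which do not commute in general. The resolution uses the smoothness of $p$ in an essential way: smoothness provides formal lifting along square-zero, and more generally along pro-nilpotent, thickenings, so that the effective epimorphism $U\rightarrow X$ remains an effective epimorphism after base change to each $\spec$ appearing in the relevant tower, and the associated descent (bar) spectral sequence is compatible with the limit in question. This is exactly where differential smoothness of the atlas, rather than mere fiber-smoothness, is indispensable, consistent with the discussion opening this section. As every ingredient is the spectral counterpart of the argument in \cite[Proposition 5.3.7 and Theorem 6.4.1]{lurie2004derived}, the proof is completed by transcribing that argument verbatim into the spectral setting.
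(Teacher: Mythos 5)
Your proposal ultimately lands exactly where the paper does: the paper offers no independent proof of this proposition, but states in the preamble to this section that Lurie's arguments for \cite[Proposition 5.3.7]{lurie2004derived} and \cite[Theorem 6.4.1]{lurie2004derived} ``translate word by word'' to the spectral setting, and your closing sentence defers to the same source. In that sense the route is the same, and your inductive skeleton --- base case for relative algebraic spaces, smooth atlas, \v{C}ech nerve whose terms are relative $(n-1)$-Artin stacks via the $(n-1)$-representable diagonal, with differential (not fiber-) smoothness as the essential input --- is indeed the skeleton of Lurie's proofs and is consistent with the paper's motivation for choosing differential smoothness.

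Two points in your sketch, however, misstate the mechanism at the step you yourself identify as the main difficulty. First, the transfer across $X\simeq|U_\bullet|$ is not effected by a ``descent (bar) spectral sequence compatible with the limit,'' and the remark that $U\rightarrow X$ ``remains an effective epimorphism after base change'' is automatic (effective epimorphisms are stable under arbitrary pullback) and is not where the content lies; as stated, this part of your argument asserts the conclusion rather than proving it. In Lurie's argument the equivalence $X(A)\rightarrow\varprojlim X(A_m)$ is split into two halves: the fibers (full faithfulness) are controlled by the diagonal --- for points $x,y$ the sheaf of equivalences is a pullback of $X\rightarrow X\times_YX$, hence $(n-1)$-representable, and the inductive hypothesis applies to it, not to the \v{C}ech nerve --- while essential surjectivity lifts a compatible family \'etale-locally on $\spec\pi_0A$ to the atlas $U$, propagates the lift up the tower using formal smoothness against the square-zero transition maps, and then invokes the property for the relative algebraic space $U$ (the base case). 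Second, folding integrability into the same pattern as nilcompleteness understates it: in Lurie's thesis it is a separate and substantially harder theorem (Theorem 6.4.1, not Proposition 5.3.7), whose proof additionally requires algebraizing the \'etale covers along the adic tower (using completeness and topological invariance of the \'etale site) and Grothendieck existence for algebraic spaces; the formal-lifting argument alone does not suffice there. Neither point is fatal, since like the paper you finally invoke Lurie verbatim, but a reader reconstructing the proof from your sketch alone would be misled at precisely the step that carries the weight.
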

\begin{lem}\cite[Proposition 5.1.4]{lurie2004derived}\label{n-1ton}
Let $f
:X\rightarrow Y$ be a map of sheaves.    The following conditions are equivalent. 
    \begin{enumerate}
    \item  The map $f$ is a relative $n$-Artin stack.
     \item For every spectral algebraic space $U/Y$, and every map $\pi:U\rightarrow X$,  then the map $\pi$ is a relative $(n-1)$-Artin stack.
     \item There is a spectral algebraic space $V/Y,$ and a smooth cover $V\rightarrow X$ which is a relative $(n-1)$-Artin stack.
     \item There is an $(n-1)$-Artin stack $W/Y$, and a smooth cover $W\rightarrow X,$ which is a relative $(n-1)$-Artin stack.
    \end{enumerate} 
\end{lem}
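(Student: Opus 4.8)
The plan is to prove the four conditions equivalent by induction on $n$, since the notions of $n$-representability and of relative $n$-Artin stack in Definition~\autoref{artstkdef} are themselves defined by descending recursion through $(n-1)$. At the bottom of the induction ($n=0$) every condition unwinds to a statement about relative algebraic spaces and their smooth (equivalently \'etale) atlases, supplied directly by the theory of spectral algebraic spaces; here a $(-1)$-Artin morphism is read as a relative algebraic space and the identity atlas makes the equivalence tautological. Throughout I will freely use the stability of $m$-representability, smoothness, and the relative $m$-Artin property under base change and composition, as recorded in the remark following Definition~\autoref{artstkdef}.

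The crucial geometric input is a \emph{diagonal trick}: if $\Delta_{X/Y}\colon X\to X\times_Y X$ is $(n-1)$-representable, then any morphism $\pi\colon U\to X$ over $Y$ whose source is a relative algebraic space (indeed any relative $(n-1)$-Artin stack) is itself $(n-1)$-representable. This follows from the canonical Cartesian identification
$$U\times_X\spec R\;\simeq\;(U\times_Y\spec R)\times_{X\times_Y X}X,$$
valid for every $\spec R\to X$, together with base-change stability of $(n-1)$-representability: the right-hand side is the pullback of the $(n-1)$-representable diagonal along a map out of the relative algebraic space $U\times_Y\spec R$.

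With this in hand the implications proceed as follows. For $(1)\Rightarrow(2)$ I combine the diagonal trick, which gives $(n-1)$-representability of $\pi$, with the construction of a smooth atlas for $\pi$ obtained by base-changing a smooth atlas of $f\colon X\to Y$ along $\pi$; smoothness and surjectivity are preserved by base change, and the inductive hypothesis identifies the resulting $(n-1)$-Artin cover with an algebraic-space cover. The implication $(3)\Rightarrow(4)$ is immediate, since a relative algebraic space is in particular relative $(n-1)$-Artin (as $0\le n-1$), and $(4)\Rightarrow(3)$ is obtained by refining the atlas: if $W\to X$ is a smooth surjective relative $(n-1)$-Artin cover with $W$ relative $(n-1)$-Artin, then by the inductive form of $(3)$ applied to $W/Y$ one finds a relative algebraic space $V$ and a smooth surjection $V\to W$, so that the composite $V\to W\to X$ is a smooth surjection from a relative algebraic space, again $(n-1)$-Artin by composition. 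Finally $(2)\Rightarrow(1)$ and $(4)\Rightarrow(1)$ reconstruct the $n$-Artin structure on $X$: condition $(2)$ forces $\Delta_{X/Y}$ to be $(n-1)$-representable by testing against maps $\spec R\to X\times_Y X$, while the atlas of $(4)$ supplies the required local smooth atlas of $f$.

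The main obstacle I expect is the implication $(4)\Rightarrow(1)$, specifically verifying that $\Delta_{X/Y}$ is $(n-1)$-representable given only a smooth surjective $(n-1)$-Artin atlas $W\to X$. The argument requires \emph{smooth descent} for the property of being relative $(n-1)$-Artin: one covers $X\times_Y X$ by the smooth surjection $W\times_Y W\to X\times_Y X$, computes the pullback of the diagonal as $W\times_X W$, which is relative $(n-1)$-Artin because $W\to X$ is $(n-1)$-Artin and $W$ is $(n-1)$-Artin, and then descends this property back along the cover. Establishing that ``relative $m$-Artin'' satisfies smooth descent---rather than being merely stable under base change and composition---is the delicate point, and it is exactly here that the quasi-compactness and quasi-separatedness hypotheses together with the theory of spectral algebraic spaces from \cite{lurie2004derived} enter; the remaining bookkeeping then transposes verbatim from the proof of \cite[Proposition 5.1.4]{lurie2004derived}.
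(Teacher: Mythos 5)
First, a point of comparison: the paper never actually proves this lemma. Immediately above it the author states that the proofs ``translate word by word'' from Lurie's thesis and only records the statements, so the benchmark for your proposal is Lurie's own argument for \cite[Proposition 5.1.4]{lurie2004derived}, transcribed into Definition~\autoref{artstkdef}. Your skeleton (induction on $n$, the diagonal trick, the easy refinement $(3)\Leftrightarrow(4)$, identification of a descent statement as the crux) is the right one, but the proposal has a genuine gap: the cycle of implications never closes, and the arrow you rely on to close it, $(2)\Rightarrow(1)$, cannot be proved the way you describe --- for the literal reading of $(2)$ it is false. Condition $(2)$ contains no existence assertion, so it can never produce the smooth atlas demanded by Definition~\autoref{artstkdef}. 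Concretely, let $Y=\spec\mathbb{S}$ and let $X\subset\mathbb{A}^1=\spec(\mathbb{S}\{x\})$ be the formal completion of the affine line at the origin (the subfunctor of points whose underlying element of $\pi_0$ is nilpotent). Since $X\rightarrow\mathbb{A}^1$ is a monomorphism of sheaves, for any algebraic space $U\rightarrow X$ and any $\spec R\rightarrow X$ one has $U\times_X\spec R\simeq U\times_{\mathbb{A}^1}\spec R$, an algebraic space; hence every map from an algebraic space to $X$ is $0$-representable, a fortiori relative $m$-Artin for all $m$, and $(2)$ holds for every $n$. Yet any smooth map $V\rightarrow X$ from an algebraic space is empty (an affine \'etale chart $\spec S\rightarrow V$ has coordinate $s\in\pi_0S$ with $s^N=0$, and flatness of its pullback over the point $\spec\Z[t]/(t^{N+1})\rightarrow X$ forces $\pi_0S=s\cdot\pi_0S$, hence $S\simeq0$), so $X$ has no smooth atlas and $(1)$, $(3)$, $(4)$ all fail. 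In Lurie's original proposition the analogue of $(2)$ explicitly includes the existence of a smooth surjection from an algebraic space; that clause (dropped in the paper's transcription, and in your argument) is exactly where the atlas in $(1)$ has to come from. For the same reason you would also need, and do not give, an implication from $(1)$ into $(3)$/$(4)$: Definition~\autoref{artstkdef} only provides atlases after base change to affines over $Y$, so one must assemble a global relative algebraic space $V/Y$ covering $X$ and then prove that $V\rightarrow X$ is relative $(n-1)$-Artin, which requires the diagonal to be relative $(n-1)$-Artin \emph{with atlases}, not merely $(n-1)$-representable.

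Second, the two places where you do sketch the substance need repair. In $(1)\Rightarrow(2)$, base-changing an atlas of $f$ along $\pi$ produces a smooth cover of $U$, not of the fiber products $U\times_X\spec R$ whose atlases the definition of ``$\pi$ is relative $(n-1)$-Artin'' requires; those covers come from the bootstrap-type identification $(U\times_X\spec R)\times_{\spec R}(V_1\times_{\spec R}V_2)\simeq(V_1\times_{\spec R}V_2)\times_{V\times_YV}(V\times_XV)$, which is absent from your write-up (your diagonal trick itself is correct and does give the representability half). And while your instinct that $(4)\Rightarrow(1)$ hinges on smooth descent of Artin-ness is exactly right, the hypotheses you invoke to resolve it (quasi-compactness, quasi-separatedness) are irrelevant: the descent statement is proved by descending induction on $n$, with base case the bootstrap theorem that a sheaf admitting a $0$-representable smooth surjection from a spectral algebraic space is itself a spectral algebraic space. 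Deferring that step, together with ``the remaining bookkeeping,'' verbatim to Lurie leaves unproved precisely the non-formal content of the lemma --- at which point your proposal amounts to the same citation the paper itself gives, plus the structural gaps above.
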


\subsection{Artin-Lurie's representability theorem}
We generalize  the proof of Artin-Lurie representability for derived Artin stacks \cite[Theorem 7.1.6]{lurie2004derived} and spectral Deligne-Mumford stacks \cite[Theorem 18.3.0.1]{lurie2018spectral} to spectral Artin stacks.\\
Let us first recall the existence of approximate charts by Lurie.
\begin{prop}\cite[Proposition 18.3.1.1]{lurie2018spectral}\label{appchart}
    Let $R$ be an $\Einf$-G-ring, and  let $X\in\stk$. Suppose  there is a morphism $q:X\rightarrow\spec R, $ that satisfies the following conditions:
    \begin{enumerate}
        \item The functor $X$ is infinitesimally cohesive, nilcomplete, and integrable.
        \item The morphism  $q$ is locally almost of finite presentation.
        \item The map $q$ admits a cotangent complex $\LL_{X/R}$.
    \end{enumerate}
Suppose  we are given a field $k$ and a map $f : \spec k\rightarrow X$ that exhibits $k$ as a finitely
generated field extension of some residue field of $R.$ Then the map $f$ factors as a composition
$\spec k \rightarrow  \spec B \rightarrow X$, where $B$ is almost of finite presentation over R and the vector space
$\pi_1(\LL_{B/X}\otimes k)$ vanishes.
\end{prop}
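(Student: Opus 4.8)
The plan is to follow the strategy of Artin approximation in the form given by Lurie, constructing the chart $\spec B$ by successively killing the numerical invariant $\dim_k\pi_1(\LL_{B/X}\otimes_B k)$ through cell attachments. First I would produce a preliminary factorization. Since $k$ is a finitely generated field extension of a residue field of $R$, I can write $k$ as the filtered colimit of its finitely generated $R$-subalgebras $\{A_j\}$; because $R$ is a $G$-ring, hence Noetherian, each $A_j$ is in fact of finite presentation over $R$. Condition (2), that $q$ is locally almost of finite presentation, then gives an equivalence $\colim_j X(A_j)\xrightarrow{\simeq} X(k)\times_{(\spec R)(k)}\colim_j(\spec R)(A_j)$, so the point $f\colon\spec k\to X$ descends to a map $g_0\colon\spec B_0\to X$ for some $B_0$ almost of finite presentation over $R$, together with a factorization $\spec k\to\spec B_0$ over $X$. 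This already supplies the shape of the desired factorization $\spec k\to\spec B\to X$; the entire remaining content is to arrange that the relative cotangent complex has vanishing $\pi_1$ at the point.

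Next I would establish finiteness of the invariant and set up the induction. Applying the conormal cofiber sequence $g_0^*\LL_{X/R}\to\LL_{B_0/R}\to\LL_{B_0/X}$ in $\Mod_{B_0}$, together with the almost perfectness of $\LL_{B_0/R}$ (from $B_0$ being almost of finite presentation over $R$) and of $\LL_{X/R}$ (which follows from conditions (2) and (3)), I conclude that $\LL_{B_0/X}$ is almost perfect. Consequently $V:=\pi_1(\LL_{B_0/X}\otimes_{B_0}k)$ is a finite-dimensional $k$-vector space, and the problem reduces to the following inductive claim: starting from any $B$ almost of finite presentation over $R$ equipped with $\spec k\to\spec B\to X$ and with $\dim_k\pi_1(\LL_{B/X}\otimes_B k)>0$, one can produce $B'$ carrying the same data and satisfying $\dim_k\pi_1(\LL_{B'/X}\otimes_{B'}k)<\dim_k\pi_1(\LL_{B/X}\otimes_B k)$.

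The heart of the argument, and the step I expect to be the main obstacle, is this inductive reduction. A nonzero class $\eta\in\pi_1(\LL_{B/X}\otimes_B k)$ is, by the universal property of the cotangent complex and Lurie's square-zero deformation theory, a nontrivial first-order datum that can be absorbed by enlarging $B$: one attaches a free polynomial generator and imposes a single relation representing $\eta$, an operation that modifies $\LL_{B/X}$ by a shifted free cell so as to remove the chosen $\pi_1$-class while contributing only to $\pi_0$. The delicate points are (a) that $B'$ remains almost of finite presentation over $R$, (b) that $g$ extends over $\spec B'$ while $\spec k$ still factors through $\spec B'$, and (c) that the abstract homotopy-level modification can be realized by an \emph{honest} algebra together with an honest map into $X$. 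This last point is exactly where the infinitesimal cohesiveness, nilcompleteness, and integrability of condition (1) enter: they let one descend the cell attachment through the Postnikov tower and solve the attendant lifting problems against $X$.

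Finally, since $\dim_k\pi_1(\LL_{B/X}\otimes_B k)$ is finite and strictly decreasing under the reduction, the induction terminates after finitely many steps, yielding an almost-finitely-presented $R$-algebra $B$ with the factorization $\spec k\to\spec B\to X$ and $\pi_1(\LL_{B/X}\otimes_B k)=0$, as required. I would remark that this is precisely the spectral analogue of \cite[Proposition 18.3.1.1]{lurie2018spectral}, and that the only adaptation needed for the present setting is to verify that $X=\R\llog^{\mu}_{(\MCS,\MSL)}$ meets hypotheses (1)--(3), which is supplied by Theorem~\autoref{almostperfect}, Proposition~\autoref{infcohesive}, and Proposition~\autoref{almostfinite}.
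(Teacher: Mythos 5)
The paper never proves this statement: it is imported verbatim from Lurie (SAG, Proposition 18.3.1.1), so your attempt has to be measured against Lurie's argument. Your Steps 1 and 2 are fine. Writing $k=\varinjlim_j A_j$ as a filtered union of finitely generated $\pi_0(R)$-subalgebras (discrete, hence $0$-truncated) and invoking condition (2) does descend $f$ to some $g_0\colon\spec B_0\to X$ with $B_0$ almost of finite presentation over $R$; and almost perfectness of $\LL_{X/R}$ (from (1)--(3)) together with that of $\LL_{B_0/R}$ does make $\dim_k\pi_1(\LL_{B_0/X}\otimes_{B_0}k)$ finite.

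The genuine gap is the inductive step, which you yourself call ``the heart'' but never carry out, and whose proposed mechanism cannot work as stated. The tell-tale symptom is that your argument never uses the hypothesis that $R$ is a $G$-ring -- you only use Noetherianness in Step 1 -- whereas that hypothesis is the essential input of this proposition. Concretely: given $0\neq\eta\in\pi_1(\LL_{B/X}\otimes_Bk)$, conditions (1) (infinitesimal cohesiveness, nilcompleteness, integrability) only control the values of $X$ on square-zero extensions, Postnikov towers, and complete local rings. Following your recipe of ``descending the cell attachment through the Postnikov tower and solving the lifting problems against $X$,'' what you can actually produce is a point of $X$ over a complete local ring such as $B[[t]]$ or $\widehat{B}_{\mathfrak q}$, i.e.\ a \emph{formal} solution; such rings are not almost of finite presentation over $R$, and nothing in (1)--(3) lets you pass back. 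That passage -- from formal solutions to almost-finitely-presented charts -- is Artin approximation, and it is exactly where Lurie uses the $G$-ring hypothesis: since $\pi_0R$ is a Grothendieck ring, the completion of the local ring of a finite-type $\pi_0R$-algebra is geometrically regular over it, hence by Popescu's smoothing theorem is a \emph{filtered colimit of smooth algebras}; only then can condition (2) (which concerns filtered colimits, not limits) be applied to formal data to descend the point to a finite-presentation algebra, with (a form of) versality of the chosen formal deformation guaranteeing the vanishing of $\pi_1(\LL_{B/X}\otimes_Bk)$ for the resulting $B$. An induction on $\dim_k\pi_1(\LL_{B/X}\otimes_Bk)$ driven purely by (1)--(3), as you propose, cannot close this circle, because without Popescu the deformation-theoretic hypotheses give no access to the values of $X$ on any new non-complete test ring.

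A secondary slip: ``imposes a single relation representing $\eta$'' points the wrong way. Imposing a relation on $B$ creates a conormal class, i.e.\ tends to enlarge $\pi_1(\LL_{B'/X}\otimes k)$ rather than shrink it; to kill such a class one must instead add tangent directions or ``un-impose'' relations, and arranging for the new generator or cell to map compatibly to $X$ is precisely the unsolvable-without-approximation lifting problem described above. Your closing remark about verifying (1)--(3) for $\R\llog^{\mu}_{(\MCS,\MSL)}$ concerns how the proposition is \emph{applied} later in the paper, not its proof.
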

    Let $X\in \stk$ be a sheaf that satisfies conditions $(1)$-$(3)$ above. Let $f:\spec B\rightarrow X$ be a map such that the homotopy $\pi_1(\LL_{B/X}\otimes k)=0$, where $x:\spec k\rightarrow \spec B$ is a $k$ point. Let $B_0$ be the discrete $\Einf$-ring $\pi_0B$. Using the exact sequence $$\LL_{B/X}\otimes_BB_0\rr\LL_{B_0/X}\rr\LL_{B_0/B}$$
    and \cite[Corollary 7.4.3.2]{lurie2017higher}, the cotangent complex  $\pi_1\LL_{B_0/B}=0$, we deduce that $\pi_0(\LL_{B_0/X}\otimes_{B_0}k)=0.$
Thus we might as well assume that $B=B_0$ is $0$-truncated.\\
We can reformulate Lurie's theorem on the refinement of approximate charts.
\begin{prop}\label{refinement}
 Let $\spec k\rightarrow \spec B\stackrel{f}\rightarrow X\rightarrow \spec R$ be as above. Then, after a suitable localization of $B$, there is a finitely presented $\Einf$-algebra $B'$ over $R$, such that there is a factorization $$\spec B\rr\spec B'\rr X,$$ and the map $B'\rightarrow X$ is formally smooth.
\end{prop}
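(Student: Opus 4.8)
The plan is to follow Lurie's strategy for turning an approximate chart into a formally smooth one, the only point requiring care being to locate where the $\Einf$-$G$-ring hypothesis on $R$ is used. By the reduction preceding the statement I may assume that $B=B_0$ is discrete and almost of finite presentation over $R$, that $f\colon\spec B\to X$ carries the chosen point $x\colon\spec k\to\spec B$, and that the relevant homotopy of $\LL_{B/X}$ vanishes after tensoring with $k$. Since $q\colon X\to\spec R$ is locally almost of finite presentation, admits a cotangent complex, and $R$ is Noetherian, the relative cotangent complex $\LL_{B/X}$ is almost perfect; the cofiber sequence $f^{*}\LL_{X/R}\to\LL_{B/R}\to\LL_{B/X}$ together with the $(-1)$-connectivity of $\LL_{X/R}$ forces $\LL_{B/X}$ to be connective. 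The goal is to produce a finitely presented $\Einf$-algebra $B'$ over $R$, a factorization $\spec B\to\spec B'\to X$, and to certify that $\spec B'\to X$ is formally smooth, i.e.\ that $\LL_{B'/X}$ is projective of finite rank in $\Mod_{B'}$.

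First I would spread out the pointwise vanishing. Because $\LL_{B/X}$ is almost perfect and connective over the Noetherian ring $B$, the function $y\mapsto\dim_{\kappa(y)}\pi_i(\LL_{B/X}\otimes\kappa(y))$ is upper semicontinuous, so the locus where the low-degree homotopy behaves as at $x$ is open. After replacing $B$ by a localization $B_g$ (this is the ``suitable localization'' of the statement) I may therefore assume that $\pi_0\LL_{B/X}$ admits a finite free presentation and that the relevant higher homotopy vanishes throughout an affine neighborhood of $x$; this converts the fiberwise data at $x$ into genuine data over an honest affine.

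The construction of $B'$ and of the map $\spec B'\to X$ is the heart of the matter, and I expect it to be the main obstacle. Using infinitesimal cohesiveness of $X$ over $\spec R$ together with the identification of the space of liftings across a square-zero extension with a mapping space $\map(\LL_{X/R},-)$, I would first build the chart over the completion $\widehat{B}_x$ as a \emph{formally smooth} formal deformation: adjoin variables indexed by a basis for the smooth directions and solve the successive lifting problems order by order along the Postnikov/adic tower, assembling the limit by nilcompleteness and integrability of $X$. At this stage the chart exists only formally, and this is precisely where the $\Einf$-$G$-ring hypothesis enters: reducing the algebraization problem to $\pi_0$, one applies classical Néron--Popescu desingularization (so that $\widehat{B}_x$ is a filtered colimit of smooth $\pi_0 B$-algebras), then lifts the approximating smooth algebra back to the spectral setting by obstruction theory governed by the cotangent complex, obtaining a finitely presented $B'$ over $R$ with a map $\spec B'\to X$ through which a localization of $f$ factors. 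The delicate part is maintaining enough control during this algebraization that the resulting $\LL_{B'/X}$ is genuinely projective rather than merely almost perfect; the point that makes this possible is that the approximating algebras supplied by Popescu's theorem are smooth by construction, so higher homotopy of the relative cotangent complex vanishes automatically rather than having to be killed by hand.

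Finally I would verify formal smoothness directly on the object just produced. The conormal cofiber sequences relating $\LL_{B'/R}$, $f^{*}\LL_{X/R}$ and $\LL_{B'/X}$, combined with the neighborhood vanishing arranged in the spreading-out step and the smoothness of the Popescu approximants, show that $\pi_i(\LL_{B'/X}\otimes\kappa(y))=0$ for all $i>0$ and all $y$ near the image of $x$, and that $\pi_0$ has locally constant rank; for a connective almost perfect module over a Noetherian ring this is exactly the criterion for $\LL_{B'/X}$ to be finitely generated projective. Since $\spec B'$ is affine it is automatically nilcomplete, infinitesimally cohesive and integrable, and these properties pass to the relative situation over $X$ because $X$ enjoys them; hence $\spec B'\to X$ is formally smooth, which completes the refinement.
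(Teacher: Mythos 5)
There is a genuine gap, and it is structural rather than cosmetic.

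First, your connectivity claim is unjustified and false in the relevant generality. You assert that $\LL_{X/R}$ is $(-1)$-connective and deduce that $\LL_{B/X}$ is connective, but the standing hypothesis (inherited from Proposition\autoref{appchart} and condition (2) of Theorem\autoref{alrep}) is only that $\LL_{X/R}$ is \emph{almost} connective; in the inductive proof of the representability theorem, $X$ is an $n$-Artin stack whose cotangent complex is merely $(-n)$-connective, so $\LL_{B/X}$ genuinely has homotopy in negative degrees. This defect propagates through every step that depends on it: the semicontinuity/spreading-out argument, the statement of your goal (for higher Artin $X$ the correct target is not ``$\LL_{B'/X}$ is finite projective'' but ``$\LL_{B'/X}$ is dual to a connective perfect complex''), and your final criterion (``$\pi_i(\LL_{B'/X}\otimes\kappa(y))=0$ for $i>0$ plus locally constant rank of $\pi_0$ implies projective'' is a statement about \emph{connective} almost perfect modules). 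The paper's proof is organized precisely around this issue: it splits $\LL_{B/X}\otimes_B k$ into a bounded-below coconnective piece $N$ and a highly connective piece $M$, builds (after finitely many localizations of $B$) a filtration of $\LL_{B/X}$ by shifted free modules resolving the coconnective part, and then kills the connective part by an infinite tower of square-zero extensions $B\to B^1\to B^2\to\cdots$ over $X$ with $\tau_{\leq n}B^{n+1}\simeq\tau_{\leq n}B^n$, finally setting $B'=\varprojlim_n B^n$ so that $\LL_{B'/X}\simeq\varprojlim_n N_n$ is dual to a connective perfect $B'$-module.

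Second, your core construction — a formally smooth formal deformation over $\widehat{B}_x$, algebraized via N\'eron--Popescu using the $\Einf$-$G$-ring hypothesis — is both unsubstantiated and misplaced. Producing the formal object (``adjoin variables for the smooth directions and solve lifting problems order by order'') is exactly the hard part you have skipped: the obstructions to those liftings live in ${\rm Ext}$-groups of $\LL_{B/X}$, and killing them in a controlled way \emph{is} the tower-of-square-zero-extensions argument; nothing in your sketch guarantees that the order-by-order procedure converges to something formally smooth over $X$. Moreover, in this development the Popescu/$G$-ring input has already been consumed one step earlier, in Proposition\autoref{appchart} (quoted from Lurie), which produces the approximate chart; the refinement step is deliberately a purely deformation-theoretic argument requiring no approximation theorem. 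Re-routing through Popescu also leaves you comparing smoothness of discrete approximants over $\pi_0 B$ with the actual goal, a property of $\LL_{B'/X}$, and bridging that gap needs control of $\LL_{X/R}$ that the hypotheses do not provide.
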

\begin{proof}
Assume that $\pi_j
(\LL_{B/X}\otimes_Bk)=0,j=1,2,...,r$.  Recall that every  complex $C$ over a field $k
 $ is quasi-isomorphic  to a split chain complex $C\simeq \oplus_{j}(\pi_jC)[j]$, the complex $\LL_{B/X}\otimes_Bk$ has a decomposition 
 $$N\rr\LL_{B/X}\otimes_Bk\rr M$$
   where $M$ is $(r+1)$-connective and $N$ is coconnective, and bounded below. Assume that $\pi_{-m}N\neq 0$ and $\tau_{\leq -m-1}N=0.$
   Since $B$ and $X$ are locally almost of finite presentation over $R$, the cotangent complex $\LL_{B/X}$ is almost perfect. Then $\pi_{-m}N\simeq k^{n_{-m}}$ for some $n_{-m}\in \Z_{\geq 0}.$ This defines a map $k^{n_{-m}}[-m]\rightarrow N\rightarrow \LL_{B/X}\otimes_Bk$. After a suitable shrinking of $B$, we can lift it to a map $N_0^{-m}:=B^{n_{-m}}[-m]\rightarrow \LL_{B/X}$. The cofiber $M_0^{-m}$ is ${(-m)}$-connective, and almost  finitely  presented as $B$-modules. Note that we  have that $\pi_{-m+1}(M^{-m}_0\otimes_B\eta)\simeq(\pi_{-m+1}M^{-m}_0)\otimes^{\rm cl}_B\eta$\footnote{We let $\otimes^{\rm cl}$ denote the classical tensor product of modules}, for any $\eta\in{\rm MaxSpec} B$, which is a finite dimensional $\eta$-vector space. Then  shrinking $B$ again, we can construct a map $B^{n_{-m+1}}[-m+1]\rightarrow M_0^{-m},$ lifting the isomorphism $k^{n_{-m+1}}\simeq\pi_{-m+1}(M_0^{-m}\otimes_Bk).$ Let $N_0^{-m+1}$ be the cofiber of the composition 
   $$B^{n_{-m+1}}[-m]\rr M_0^{-m}[-1]\rr N_0^{-m}.$$
Let $M_0^{-m+1}$ be the cofiber of the map $N_0^{-m+1}\rightarrow \LL_{B/X}$. Then by induction, we construct a filtration on $\LL_{B/X},$
$$N_0^{-m}\rr N_0^{-m+1}\rr...\rr N_0^0=N_0\rr\LL_{B/X}$$
such that the cofiber $M_0$ of the map $N_0\rightarrow \LL_{B/X}$ is $(r+1)$-connective.
Because $N_0^{-j}$ is obtained by taking the cofiber  $B^{n_{-j-1}}[-j]\rightarrow N_0^{-j-1}$, we deduce that $N_0$ is the dual of a connective perfect $R$-module.\\
The map $\LL_{B/X}\rightarrow M_0$ exhibits $M_0$ as a $B$-derivation $\LL_{B/R}\rightarrow\LL_{B/X}\rightarrow M_0$. This defines a square-zero extension $B^1\rightarrow B$, lying over $X$, with kernel $M_0[-1]$. We have a factorization
   $$\spec B\rr\spec B^1\rr X.$$
   Using \cite[Theorem 7.4.3.1 and Corollary 7.4.3.2]{lurie2017higher}, the cotangent complex $\LL_{B/B^1}$ is $(r+1)$-connective, and  there is a $(2r+2)$-connective map
   $M_0\otimes_{B^1}B\rightarrow\LL_{B/B^1}.$ Then  the exact sequence 
   $$\LL_{B^1/X}\otimes_{B^1}B\rr\LL_{B/X}\rr\LL_{B/B^1
   }$$
   implies that $\pi_j(\LL_{B^1/X}\otimes_{B^1}k)=0,1\leq j\leq r+1.$ 
  Then replace $B$ by $B^1$, and passing to induction, there is a large enough $n$, and a sequence of maps 
  $$\spec B\rr\spec B^1\rr...\rr \spec B^n\rr X,$$
  such that $\pi_j(\LL_{B^n/X}\otimes_{B^n}k)=0,1\leq j\leq n+r,$ and there is an exact sequence 
  $$N_i\rr\LL_{B_i/X}\rr M_i,$$ such that 
  $N_i$ is dual to a connective perfect $B_i$-module and $M_i$ is $(i+r)$-connective, and there is an equivalence  $N_{i}\otimes_{B^i}B^{i-1}\simeq N_{i-1}.$ Because the construction of $M_i$ and $N_i
  $ requires shrinking $B$ finitely many times for each $i$, we can only construct such a sequence of sufficient large length $n$. We give another construction, which is suitable for the case $n$ arbitrary large. Let $B^{n+1}$ be the square-zero extension of $B^n$ with kernel $M_m[-1]$. By construction, the $B^n$-module  $N_n$ admits a filtration 
  $$N_n^{-m}\rr N_n^{-m+1}\rr...\rr N_n^{0}=N_n$$
where  $N_n^{-m+j}$ is obtained by taking the cofiber of the map $$(B^n)^{\oplus n_{-m+j-1}}[-m]\rr N_{n}^{-m+j-1}$$ where the number $ n_{-m+j-1}$ is  independent of $n$. We want to lift this filtration as a filtered $B^{n+1}$-module. For this, we proceed by induction. We first lift $N_{n}^{-m}\simeq (B^{n})^{n_{-m}}[-m]\rr\LL_{B^n/X}$ as the map $B^{n+1}$-module  $ (B^{n+1})^{n_{-m}}[-m]\rr\LL_{B^{n+1}/X}$. This is possible because the cofiber of the map $\LL_{B^{n+1}/X}\otimes_{B^{n+1}}B^n\rightarrow \LL_{B^{n}/X}$ is $(n+r+1)$-connective. Assume that we already have constructed an lifting 
$$N_{n+1}^{-m}\rr N_{n+1}^{-m+1}\rr...\rr N_{n+1}^{j}\rr\LL_{B^{n+1}/X}$$
We have  the following diagram 
$$\begin{tikzcd}
{(B^{n+1})^{\oplus n_{j}}[j]} \arrow[d] & N_{n+1}^j \arrow[d] &           \\
{(B^n)^{\oplus n_{j}}[j]} \arrow[r]     & N_n^j \arrow[r]     & N_n^{j+1}
\end{tikzcd}
$$
To get a lifting  $(B^{n+1})^{\oplus n_{j}}[j]\rightarrow N_{n+1}^{j+1}$, we have to prove that the map $(B^n)^{\oplus n_{j}}\rightarrow N_n^j$ could be lifted to a map of $B^{n+1}$-modules. It's equivalent to prove that the corresponding homotopy classes in $\pi_jN_{n}^j$ could be lifted to  $\pi_jN_{n+1}^j$. This holds whenever the induced map $\pi_jN_{n+1}^j\rightarrow \pi_jN_{n}^j$ is surjective. This could be deduced from the following exact sequence 
$$N_{n+1}^j\otimes_{B^{n+1}}J\rr N_{n+1}^j\rr N_{n}^j$$
Where $J$ is the fiber $B^{n+1}\rightarrow B^n$, and is $(n+r)$-connective. Therefore, we have that $\pi_{j-1}(N_{n+1}^j\otimes_{B^{n+1}}J)=0$, as long as $n$ is large enough.\\
We constructed a projective system of almost of finitely presented  $\Einf$-algebras over $R$
$$\rr B^n\rr B^{n-1}\rr...\rr B^1\rr B$$
, which are lying over $X$, such that $\tau_{\leq n} B^{n+1}\simeq \tau_{\leq n}B^n.$ Let $B'$ be the inverse limit of the above sequence. Clearly it's almost  finitely presented over $R$. The induced map $\spec B'\rightarrow X$ is formally smooth, because it's easy to see that $\LL_{B'/X}$ is equivalent to the
inverse limit of the system $\{N_{n}\},$ which is dual to a connective perfect $B'$-module by construction.
\end{proof}
Let us consider the following adjoint pair
$$\begin{tikzcd}
\algcn \arrow[r, "\Theta^L", shift left] & \alg^{\Delta} \arrow[l, "\Theta", shift left]
\end{tikzcd}$$
in which the functor $\Theta$ is the forgetful functor that carries an animated ring $A$ to its underlying $\Einf$-ring $\Theta(A)$. Passing to Yoneda embeddings, we can define a functor 
$\Theta_!:\stk^\Delta:=\shv_{\et}(\alg^\Delta,\ani)\rightarrow\stk$ given by the left Kan extension of the composition of functors $$\alg^{\Delta,\rm op}\rr\alg^{\rm cn,op}\stackrel{\spec}\rr\shv_{\et}$$ along the embedding $$\spec:\alg^{\Delta,\rm op}\rr\Fun(\alg^{\Delta},\ani).$$ The functor $\Theta_!$ is left adjoint to the restriction functor $\Theta^*.$  In addition, the pushout $\Theta_*$ is right adjoint to $\Theta^*$.
\begin{rmk}
    The functor $\Theta^*$ preserves affine objects. Indeed, if $A$ is a connective $\Einf$-ring, we have $\Theta^*\spec A=\map_{\algcn}(A,\Theta(-))\simeq\map_{\alg^\Delta}(\Theta^L(A),-)$.
\end{rmk}
\begin{rmk}
    Let $R$ be an animated ring, and let $R^\circ$ be the underlying $\Einf$-ring of $R$. Consider the $\Einf$-ring map $R\rightarrow\pi_0(R)$, this map has a unique factorization $$R\rr \Theta^L(R)^\circ\rr \pi_0(R).$$ Passing to the 
    $0$-th homotopy, we know that $\pi_0(R)$ is a retraction of $\pi_0(\Theta^L(R^\circ)).$
\end{rmk}
\begin{lem}\label{spectoder}
    Consider the adjoint pair
$$\begin{tikzcd}
{\stk^\Delta} \arrow[r, "\Theta_!", shift left] & {\stk} \arrow[l, "\Theta^*", shift left]
\end{tikzcd}$$
induced by the forgetful functor $\alg^\Delta\rightarrow\algcn$, then we have
\begin{enumerate}
\item  $\Theta^*$ preserves locally almost  finitely presented, nilcomplete and infinitesimally cohesive maps;
  \item $\Theta^*$ preserves surjective map;
   \item $\Theta^*$ preserves formally \'etale maps and \'etale maps;
    \item $\Theta^*$ preserves $n$-representable maps;
   
    \item  If $f:X\rightarrow Y$ is $n$-differentially smooth  then $\Theta^*
    f$ is a smooth map of derived stacks;
  
    \item If $f:X\rightarrow Y$ is relative $n$-Artin, then so is $\Theta^*f$.
\end{enumerate}
\end{lem}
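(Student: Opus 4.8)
The plan is to exploit the identity $(\Theta^*X)(A)\simeq X(\Theta(A))$ for every animated ring $A$, so that each property of $\Theta^*f$ is tested on $\Einf$-rings of the form $\Theta(A)$, and to reduce to the corresponding property of $f$ using the good behavior of the forgetful functor $\Theta:\alg^\Delta\to\algcn$. The key structural inputs are: (i) $\Theta$ preserves all small limits and colimits and detects homotopy groups, so that $\pi_0\Theta(A)\cong\pi_0(A)$, $\Theta(\tau_{\leq n}A)\simeq\tau_{\leq n}\Theta(A)$, and $\Theta$ preserves $m$-truncatedness and filtered colimits, by Proposition~\autoref{ani-spt} and the discussion preceding it; (ii) the module categories agree, $\Mod_A\simeq\Mod_{\Theta(A)}$, and trivial square-zero extensions match, $\Theta(A\oplus M)\simeq\Theta(A)\oplus M$; and (iii) the \'etale site of $A$ agrees with that of $\pi_0(A)=\pi_0\Theta(A)$, so that $\Theta$ preserves and reflects \'etale maps and carries \'etale covers, together with their \v{C}ech nerves (which $\Theta$ computes, being colimit-preserving), to \'etale covers. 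Input (iii) in particular shows that $\Theta^*$ carries \'etale sheaves to \'etale sheaves and commutes with the relevant descent.

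For assertion (1), given a filtered diagram $\{A_j\}$ of $m$-truncated animated rings with colimit $A$, input (i) makes $\{\Theta(A_j)\}$ a filtered diagram of $m$-truncated $\Einf$-rings with colimit $\Theta(A)$; the local-almost-finite-presentation equivalence for $\Theta^*f$ is then literally that for $f$. Nilcompleteness follows from $\Theta(\tau_{\leq n}A)\simeq\tau_{\leq n}\Theta(A)$, and infinitesimal cohesiveness follows because $\Theta$ sends a pullback square of animated rings satisfying the surjectivity/nilpotence hypotheses to such a square of $\Einf$-rings. The same inputs give assertion (2): a point of $\Theta^*Y$ over $A$ is a point of $Y$ over $\Theta(A)$; surjectivity of $f$ provides an \'etale cover $\Theta(A)\to B$ through which it lifts, and by (iii) we may write $B\simeq\Theta(A')$ for an \'etale animated $A$-algebra $A'$, producing the required local lift of $\Theta^*f$.

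For the cotangent complex, input (ii) shows that for animated $A$, a point $x$, and a connective module $M$, the fiber $\mathfrak D_{\Theta^*X/\Theta^*Y}(x,M)$ computed over $A\oplus M$ coincides with $\mathfrak D_{X/Y}(-,M)$ computed over $\Theta(A)\oplus M\simeq\Theta(A\oplus M)$; hence $\Theta^*f$ admits a cotangent complex with $\LL_{\Theta^*X/\Theta^*Y}\simeq\Theta^*\LL_{X/Y}$ under the equivalence $\qcoh(\Theta^*X)\simeq\qcoh(X)$. Since $\Theta^*$ preserves the classes of connective perfect and of contractible quasi-coherent complexes, assertion (3) (formal \'etaleness, and \'etaleness after combining with (1)) follows, and the perfectness and connectivity of the dual in assertion (5) are likewise preserved, giving formal smoothness; combined with (1) this yields smoothness at the affine level.

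Finally, assertions (4) and (6) are handled by induction using the preservation results above. The base case is the claim that $\Theta^*$ sends spectral algebraic spaces to derived algebraic spaces: such a space is \'etale-locally $\spec R$, with $\Theta^*\spec R\simeq\spec^{\Delta}\Theta^L(R)$ affine, and by (iii) together with the fact that $\Theta^*$ preserves fiber products and \'etale descent, its \'etale atlas is carried to a derived \'etale atlas. Since $\Theta^*$ preserves fiber products it preserves diagonals, so $n$-representability passes to $\Theta^*f$ by induction on $n$; here one uses the comparison $\Theta^*X\times_{\Theta^*Y}\spec^{\Delta}A\simeq\Theta^*(X\times_Y\spec\Theta(A))\times_{\spec^{\Delta}\Theta^L\Theta(A)}\spec^{\Delta}A$, coming from the counit $\Theta^L\Theta(A)\to A$, which reduces each slice to $\Theta^*$ of a spectral slice up to base change. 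The relative $n$-Artin case then follows from the inductive criterion of Lemma~\autoref{n-1ton}, feeding in preservation of smooth covers (5) and of surjectivity (2). The main obstacle is precisely this base case: verifying that $\Theta^*$ respects the \'etale-local gluing so that a spectral algebraic space, and then a spectral $n$-Artin stack, becomes a derived one, which rests on the equivalence of \'etale sites in (iii) and the compatibility of $\Theta$ with \v{C}ech nerves and sheafification.
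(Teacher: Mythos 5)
Your proposal is correct in substance and follows the same global strategy as the paper --- exploit $(\Theta^*X)(A)\simeq X(\Theta(A))$, the limit/colimit-preservation and truncation-compatibility of $\Theta$, and the \'etale rigidity theorem to transfer each property, then induct on $n$ for representability and Artin-ness --- and your treatment of (1), (2), (4) and (6) matches the paper's proof essentially step for step (the paper likewise lifts \'etale covers through \cite[Theorem 3.4.13]{lurie2004derived} and \cite[Theorem 7.5.4.2]{lurie2017higher} for (2), and uses preservation of limits/diagonals plus $\Theta^*\spec A\simeq\spec\Theta^L(A)$ for (4)). Where you genuinely diverge is in (3) and (5): the paper proves (3) by a direct unique-lifting argument against trivial square-zero extensions, and proves (5) by invoking the local structure theorem for differentially smooth maps (\'etale over a free algebra $A\{x_1,\dots,x_m\}$, via \cite[Proposition 11.2.2.1]{lurie2018spectral}) and pushing that presentation through $\Theta^L$, which directly lands in the ring-theoretic notion of smoothness for animated rings that the derived Artin atlas definition requires. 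You instead handle both (3) and (5) uniformly through the identification $\LL_{\Theta^*X/\Theta^*Y}\simeq\Theta^*\LL_{X/Y}$, derived from $\Theta(A\oplus M)\simeq\Theta(A)\oplus M$; this is a valid and arguably cleaner mechanism (and yields the cotangent-complex comparison as a useful by-product), but it costs you two supplementary facts you leave implicit: that an $n$-differentially smooth map has cotangent complex dual to a connective perfect complex (an inductive d\'evissage along the atlas), and the derived characterization of smoothness (locally of finite presentation plus finite projective cotangent complex implies smooth, \cite[Proposition 3.4.9]{lurie2004derived}) to convert back to the atlas-theoretic notion fed into Lemma~\autoref{n-1ton}. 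One genuine misstatement: $\qcoh(\Theta^*X)\simeq\qcoh(X)$ is false in general (already for $X=\spec\mathbb S$ one gets $\Spt$ versus $\mathbf{D}(\Z)$, since $\Theta\Theta^L(\mathbb S)\not\simeq\mathbb S$); fortunately your argument never needs an equivalence, only the pullback functor $\Theta^*:\qcoh(X)\rightarrow\qcoh(\Theta^*X)$ and the pointwise fiber computation, both of which are sound.
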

\begin{proof}
$(1).$ This is clear because the forgetful functor $\Theta:\alg^\Delta\rightarrow\algcn$  preserves  arbitrary limits and filtered colimits.\\
$(2)$. Let $f:X\rightarrow Y$ be a surjective map. Let $R$ be an animated ring, and let $u:\spec R\rightarrow \Theta^*Y$. By adjointness, this is equivalent to giving a map $\spec R^\circ\rightarrow Y,$ where $R^\circ=\Theta(R)$ is the underlying $\Einf$-ring of $R$. After an \'etale localization  $R^{\circ'}$ of $R^\circ$, there is a lifting $\spec R^{\circ'}\rightarrow X.$ \cite[Theorem 3.4.13]{lurie2004derived} and \cite[Theorem 7.5.4.2]{lurie2017higher}   implies that $R^{\circ'}$ has a unique animated $R$-algebra structure $R'$, such that  $R'$ is \'etale over $R$. This gives rise to a lifting $\spec R'\rightarrow \Theta^*X.$\\
$(3)$. Because of $(1),$ we just only need to consider the formally \'etale maps. Let $X\rightarrow Y$ be a formally \'etale map. Let us consider the following lifting property of derived stacks
$$\begin{tikzcd}
\spec R \arrow[d] \arrow[r]                    & \Theta^*X \arrow[d] \\
\spec(R\oplus I ) \arrow[r] \arrow[ru, dashed] & \Theta^*Y          
\end{tikzcd}$$
This is equivalent to studying the following lifting property of spectral stacks
$$\begin{tikzcd}
\spec R^{\circ} \arrow[d] \arrow[r]                 & X \arrow[d] \\
\spec(R\oplus I)^\circ \arrow[r] \arrow[ru, dashed] & Y          
\end{tikzcd}$$
By the assumption that $X\rightarrow Y$ is formally \'etale, the lifting $\spec(R\oplus I)^\circ\rightarrow X$ is unique. Therefore, the lifting $\spec (R\oplus I)\rightarrow  \Theta^*X$ is unique.\\
$(4)$. If $f:X\rightarrow Y$ is $0$-representable, 
as $\Theta^*$ preserves small limits, and affine objects, we can assume that $Y\simeq \spec R$ is an affine scheme, then $X$ is an algebraic space over $R.$ There is an \'etale cover of spectral stacks $\coprod_j\spec A_j\rightarrow X.$ Then applying $(1)$ and $(2)$,  the pullback $\coprod_j\spec \Theta^L(A_j)\rightarrow \Theta^*X$ is an \'etale cover of $\Theta^*X.$
Assume this holds for $n$-representable maps, then let $g$ be an $(n+1)$-representable map, it's equal to say that the diagonal $\Delta_g$ of $g$ is $n$-representable. Since the functor $\Theta^*$ preserves limits, we know that the functor $\Theta^*g$ is $(n+1)$-representable.\\
$(5)$. Without loss of generality, assume that $Y\simeq \spec A$ is affine. If $f$ is $0$-smooth, then it's a relative algebraic space. Since $X$ is locally equivalent to some $\spec B$, where $A\rightarrow B$ is a smooth map, by \cite[Proposition 11.2.2.1]{lurie2018spectral}, we can localize $B$ suitably, such that there is an \'etale map $B\rightarrow A\{x_1,x_2,...,x_m\}$. Applying the functor $i^*$, we get an \'etale map 
$\Theta^L(B)\rightarrow\Theta^L(A)[x_1,x_2,...,x_m],$
which is equivalent to the map $\Theta^L(A)\rightarrow\Theta^L(B)$ being smooth as an animated ring map. Assume that this holds for $n$-differentially smooth maps. Now let $X$ be $(n+1)$-smooth over $Y$, there is an atlas
$U\rightarrow X$, which is $n$-differentially smooth. By assumption, $\Theta^*U\rightarrow \Theta^*X$ and $\Theta^*U\rightarrow \Theta^*\spec A$ are smooth. Thus $\Theta^*X\rightarrow \Theta^*\spec A$ is smooth.\\
$(6)$ is deduced from $(1)-(5)$.
\end{proof}
\begin{prop}\label{smoothcover}
    If $q:X\rightarrow \spec R$ satisfies those conditions in Proposition\autoref{appchart}, then there is an effective epimorphism $U\rightarrow X,$ such that $U$ is equivalent to a disjoint union of spectral schemes, and formally smooth over $X$.
\end{prop}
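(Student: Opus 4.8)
The plan is to build $U$ by gluing together the formally smooth charts produced by Proposition\autoref{appchart} and Proposition\autoref{refinement}, one passing through each field-valued point of $X$, and then to verify that the resulting map is an effective epimorphism. Since formal smoothness is the only local property required of $U\to X$, and that property will be automatic from the construction, the real substance of the argument lies in the surjectivity.

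First I would fix a small set of representatives: let $I$ index all pairs $(k,x)$, where $k$ is a finitely generated field extension of a residue field of $R$ (taken up to isomorphism) and $x\colon\spec k\to X$ is a point; under the standing set-theoretic conventions this is an essentially small collection. For each $(k,x)\in I$, Proposition\autoref{appchart} applies, since by hypothesis $q\colon X\to\spec R$ is infinitesimally cohesive, nilcomplete, integrable, locally almost of finite presentation, and admits a cotangent complex; it yields a factorization $\spec k\to\spec B_{(k,x)}\to X$ with $B_{(k,x)}$ almost of finite presentation over $R$ and $\pi_1(\LL_{B_{(k,x)}/X}\otimes k)=0$. Applying Proposition\autoref{refinement} to this datum, after a localization of $B_{(k,x)}$ around the image of $\spec k$ (which preserves the factorization of $x$), produces a finitely presented $\Einf$-algebra $B'_{(k,x)}$ over $R$ together with $\spec k\to\spec B_{(k,x)}\to\spec B'_{(k,x)}\to X$ such that $\spec B'_{(k,x)}\to X$ is formally smooth, its cotangent complex being dual to a connective perfect $B'_{(k,x)}$-module.

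Next I set $U:=\coprod_{(k,x)\in I}\spec B'_{(k,x)}$, a disjoint union of affine spectral schemes. The induced map $U\to X$ is formally smooth, because formal smoothness is stable under disjoint unions (the infinitesimal lifting criterion is tested against local Artinian-type data with connected spectrum, hence can be checked on each component); moreover each $B'_{(k,x)}$ is finitely presented over $R$, so $U\to X$ is locally of finite presentation, and therefore smooth in the sense of the preceding definitions. It then remains to prove that $U\to X$ is an effective epimorphism. Because $X$ is nilcomplete and infinitesimally cohesive, surjectivity onto $X$ may be tested on field-valued points: given any $\spec A\to X$, its restrictions to residue fields $\spec\kappa\to X$ factor, after enlarging $\kappa$ to a finitely generated extension of a residue field of $R$, through some chart $\spec B'_{(k,x)}$ by construction, and smoothness of $U\to X$ lets one propagate such a pointwise lift to a lift over a smooth (hence \'etale-locally split) neighborhood. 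Thus every point of $X$ lifts to $U$ \'etale-locally, which is precisely the statement that $U\to X$ is an effective epimorphism for the \'etale topology.

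The main obstacle is this last reduction of surjectivity to field-valued points together with the passage from a lift at a point to an \'etale-local lift. Here the deformation-theoretic hypotheses are essential: nilcompleteness reduces arbitrary test rings to their truncations, infinitesimal cohesiveness together with the cotangent complex controls the successive square-zero steps, and formal smoothness of each chart guarantees that the obstruction to lifting along these steps vanishes, so that a single field point lying in the image of a chart forces an \'etale neighborhood of it to lift. Once this propagation is established the remainder is formal, modulo the set-theoretic bookkeeping needed to make $I$ small. I expect the argument for lifting from a point to an \'etale neighborhood to be the delicate part, and it is exactly the place where the \emph{refined} charts of Proposition\autoref{appchart} and Proposition\autoref{refinement}, rather than the mere existence of charts, are indispensable.
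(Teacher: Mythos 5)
Your construction of $U$ and the overall reduction to a surjectivity statement follow the paper's route (the paper takes the union of \emph{all} charts $\spec B$ produced by Proposition\autoref{refinement} rather than one chart per field point, but that difference is immaterial). The genuine gap is in the final step: the claim that effective epimorphy "may be tested on field-valued points" because $X$ is nilcomplete and infinitesimally cohesive, and that formal smoothness then "propagates" a lift at a field point to a lift over an \'etale neighborhood. Neither assertion is a formal consequence of the hypotheses you invoke, and this propagation is precisely the hard content of the proposition. Concretely: given a discrete ring $A$, a map $\spec A\rightarrow X$, and a point $\mathfrak p\in\spec A$ whose residue-field point lifts to a chart $\spec B'\subset U$, deformation theory (formal smoothness of $\spec B'\rightarrow X$ plus infinitesimal cohesiveness) only lets you lift successively along the square-zero extensions $A_{\mathfrak p}/\mathfrak p^{n+1}\rightarrow A_{\mathfrak p}/\mathfrak p^{n}$; it is \emph{integrability} --- which your sketch lists but never uses --- that assembles these into a section over the completion $\widehat{A_{\mathfrak p}}$. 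Passing from a section over $\widehat{A_{\mathfrak p}}$ to a section over an \'etale neighborhood of $\mathfrak p$ is an Artin-approximation statement, and it is exactly here that the G-ring hypothesis on $R$ from Proposition\autoref{appchart} is needed (together with a preliminary reduction, via local almost finite presentation, to $A$ almost of finite presentation over $R$, so that one is working over an excellent-type Noetherian base). Without such hypotheses, a formally smooth map hitting every field-valued point need not admit \'etale-local sections, so your parenthetical "hence \'etale-locally split" begs the question. A smaller slip in the same step: a residue field $\kappa$ of $A$ need not be finitely generated over a residue field of $R$, and one cannot "enlarge" it to become so; instead one uses local almost finite presentation of $X$ to factor $\spec\kappa\rightarrow X$ through a finitely generated sub-extension.

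For comparison, the paper does not redo this approximation argument in the spectral setting. After the same Postnikov-tower reduction to discrete $A$ (which you also sketch, correctly), it observes that any map from a discrete ring factors through $\Theta_!\Theta^*X$, hence yields a map of derived stacks $\spec A\rightarrow\Theta^*X$; it then invokes the already-established derived analogue (the proof of \cite[Lemma 7.2.2]{lurie2004derived}, where the completion-plus-approximation argument over G-rings is carried out), using Lemma\autoref{spectoder} to know that $\Theta^*$ preserves the relevant properties, to conclude that $\Theta^*U\rightarrow\Theta^*X$ is an effective epimorphism, and finally pulls the resulting \'etale-local lift back to the spectral side via the square relating $\Theta_!\Theta^*U$ and $U$. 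So your outline locates the difficulty correctly, but the step you yourself flag as "delicate" is left as an assertion; to complete the proof you must either run the integrability-plus-Artin-approximation argument directly or, as the paper does, transfer the statement to the derived setting where it is already proved.
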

\begin{proof}
Let $U$ be the union of all possible $\spec B$, which 
are constructed in Proposition\autoref{refinement}.
    Let $\spec A\rightarrow X$ be an arbitrary map, we will prove that, after an \'etale localization $A'$, there is a lifting $\spec A'\rightarrow U.$
    Note that we just need to prove that there is a  lifting  $\spec \pi_0A'\rightarrow U$ of $\spec \pi_0A'\rightarrow X$. Indeed, suppose that there is a lifting $\spec\pi_0A'\rightarrow U$.  Consider the following commutative diagram
    $$\begin{tikzcd}
   &        &    &       &     & U \arrow[d] \\
\spec \pi_0A' \arrow[r] \arrow[rrrrru] & \spec \tau_{\leq 1}A' \arrow[r] & ... \arrow[r] & \spec \tau_{\leq n}A' \arrow[r] & ... \arrow[r] & X    
\end{tikzcd}
$$
Since $U\rightarrow X$ is formally smooth, applying 
 induction, one can construct  a lifting $\spec\tau_{\leq n }A'\rightarrow U$ for any $n\in\N$, such that the above diagram commutes. Because $X$ is nil-complete, these maps $\{\spec \tau_{\leq n}A'\rightarrow U\}$ determine a unique map $\spec A'\rightarrow U$. For this reason, we assume that $A$ is discrete.\\
Consider the following commutative  diagram
    $$\begin{tikzcd}
\Theta_!\Theta^*U \arrow[r] \arrow[d] & U \arrow[d] \\
\Theta_!\Theta^*X \arrow[r]           & X          
\end{tikzcd}$$
Note that any map $\spec A\rightarrow X$ from a discrete ring $A$ factors through $\Theta_!\Theta^*X$, this induces a map of derived stacks $\spec A\rightarrow \Theta^*X$. The proof of \cite[Lemma 7.2.2]{lurie2004derived} shows that $\Theta^*U\rightarrow \Theta^*X$  is an effective epimorphism in the $\infty$-topos $\stk^\Delta$. We then replace  $A$ with some \'etale algebra  $A'$, such that there is  a lifting $\spec A'\rightarrow \Theta^*U.$ Then we have a lifting $\spec A'\rightarrow U$ using the following commutative diagram:
$$\begin{tikzcd}  & \Theta_!\Theta^*U \arrow[r] \arrow[d] & U \arrow[d] \\
\spec A' \arrow[r] \arrow[ru] & \Theta_!\Theta^*X \arrow[r]           & X          
\end{tikzcd}$$
\end{proof}
Let $\alg_{\leq j}$ be the subcategory of $\algcn$ spanned by $j$-truncated $\Einf$-rings; the 
fully faithful inclusion 
$i:\alg_{\leq j}\rightarrow \algcn$ admits a left adjoint $\tau_{\leq j}$. Therefore, the inclusion functor $i$ preserves limits, and it turns out that the left Kan-extension $i_!:\Fun(\alg_{\leq j},\ani)\rightarrow\Fun(\algcn,\ani)$ of $i$ preserves colimits. In addition, the left adjoint $i^*$ of $i_!$, which is, in fact, the restriction functor, preserves these objects that satisfy \'etale (hyper)-descent. Hence, we obtain an adjoint pair 
\[\begin{tikzcd}
	{\shv_{\et}(\alg_{\leq j},\ani)} & {\stk}
	\arrow["{i_!}", shift left=2, from=1-1, to=1-2]
	\arrow["{i^*}", shift left=2, from=1-2, to=1-1]
\end{tikzcd}\]
We denote by  $\tau_{\leq j}$  the composition $i_!\circ i^*$, and refer to it as the \textit{$j$-th truncation of a sheaf $X$}.\\
Note that the functor $i_!$ is a fully faithful embedding of $\infty$-categories, and  we denote by ${\shv_{\et}(\algcn,\ani)_{\leq j}}$ its essential image, which we refer to as the \textit{$\infty$-category of $j$-truncated sheaves.} We have the following fact:
\begin{lem}\cite[Theorem 18.1.0.2.]{lurie2018spectral}\label{defoDM}
Let  $q:X\rightarrow\spec R$ be a map of sheaves that is infinitesimally cohesive and nilcomplete.  Assume that it has a cotangent complex $\LL_{X/R}$. If the truncation  $\tau_{\leq 0}X$ is equivalent to a $0$-truncated Deligne-Mumford stack over $\pi_0R
$, then $X$ is equivalent to a spectral Deligne-Mumford stack over $R$. In particular, if $\tau_{\leq 0}X$ is equivalent to $\spec A$ for some discrete algebra $A$ over $\pi_0R$, then $X$ is equivalent to $\spec A'$ for some $\Einf$-algebra $A'$ over $R$, such that $\pi_0(A')=A.$  
\end{lem}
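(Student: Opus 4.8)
The plan is to reconstruct $X$ from the Postnikov tower of its truncations, using the cotangent complex to identify the successive layers as square-zero extensions and the infinitesimal cohesiveness to recognize these extensions at the level of functors. First I would reduce to the affine ``in particular'' assertion. Since being a spectral Deligne--Mumford stack is local for the \'etale topology and $\tau_{\leq 0}X$ is assumed to be a $0$-truncated Deligne--Mumford stack over $\pi_0 R$, it admits an \'etale atlas by affines $\spec A\to\tau_{\leq 0}X$. Lurie's topological invariance of the \'etale site \cite[Theorem 7.5.4.2]{lurie2017higher} lets me lift such \'etale maps uniquely through the entire tower, so after base change it suffices to treat the case $\tau_{\leq 0}X\simeq\spec A$ with $A$ discrete over $\pi_0 R$; the three hypotheses (nilcompleteness, infinitesimal cohesiveness, and existence of $\LL_{X/R}$) are all stable under this base change.

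Next I would build a compatible system of truncated affine representations. By nilcompleteness there is an equivalence $X\simeq\varprojlim_n\tau_{\leq n}X$, so it is enough to show by induction on $n$ that $\tau_{\leq n}X\simeq\spec A_n$ for an $n$-truncated $R$-algebra $A_n$ with $\pi_0 A_n\simeq A$, compatibly with the transition maps. The base case is the reduction above. For the inductive step I would show that the map $\tau_{\leq n+1}X\to\tau_{\leq n}X$ exhibits its source as a square-zero extension of its target by the shifted Postnikov piece. Concretely, infinitesimal cohesiveness together with the cotangent complex produces a pullback square expressing $\tau_{\leq n+1}X$ as the fibre product of $\tau_{\leq n}X$ with a trivial square-zero extension along a canonical derivation $\LL_{\tau_{\leq n}X/R}\to\pi_{n+1}X[n+2]$, exactly in the spirit of the obstruction theory recalled in \autoref{obstruction}, where a derivation into $I[1]$ classifies a square-zero extension with kernel $I$. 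Applying the deformation theory of $\Einf$-rings \cite[Theorem 7.4.2.7]{lurie2017higher}, which realizes such square-zero extensions of $\spec A_n$ as affine schemes, this realizes $\tau_{\leq n+1}X\simeq\spec A_{n+1}$ with $A_{n+1}$ a square-zero extension of $A_n$ by $\pi_{n+1}X[n+1]$, hence $(n+1)$-truncated with unchanged $\pi_0$.

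Setting $A':=\varprojlim_n A_n$ then gives an $\Einf$-algebra over $R$ with $\pi_0 A'\simeq A$, and by nilcompleteness of both $X$ and $\spec A'$ a canonical equivalence $X\simeq\spec A'$, which proves the affine statement. To globalize I would glue these affine reconstructions along the \'etale atlas of $\tau_{\leq 0}X$: the maps lifted through the tower assemble into an \'etale atlas of $X$, and the descent data transport compatibly because each Postnikov layer is \emph{functorial} in the derivation classifying it. The local criterion then exhibits $X$ as a spectral Deligne--Mumford stack over $R$.

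The main obstacle is precisely the inductive identification of the Postnikov transition map $\tau_{\leq n+1}X\to\tau_{\leq n}X$ as a square-zero extension, together with the verification that attaching it preserves representability. This is where all three hypotheses are used in concert: the cotangent complex supplies the module and the classifying derivation, infinitesimal cohesiveness guarantees that $\tau_{\leq n+1}X$ \emph{is} the expected fibre product rather than merely mapping to it, and nilcompleteness is what permits the passage to the limit $A'=\varprojlim_n A_n$. Making this rigorous amounts to reproducing the deformation-theoretic machinery of \cite[Section 17.3 and Theorem 18.1.0.2]{lurie2018spectral}; as the statement is a direct citation, in the body of the paper we simply invoke that result.
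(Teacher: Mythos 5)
The paper states this lemma without proof, as a direct citation of \cite[Theorem 18.1.0.2]{lurie2018spectral}, and your proposal ultimately does exactly the same thing by invoking that result, so your approach agrees with the paper's. Your preliminary sketch (reduction to the affine case, Postnikov induction with each layer a square-zero extension classified by a derivation out of the cotangent complex, infinitesimal cohesiveness identifying $\tau_{\leq n+1}X$ with the expected fibre product, and nilcompleteness giving $X\simeq\varprojlim_n\tau_{\leq n}X$ so that $A'=\varprojlim_n A_n$ represents $X$) is moreover an accurate outline of Lurie's own argument, so there is no gap.
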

\begin{thm}[Artin-Lurie representability]\label{alrep}
    Let $X:\algcn\rightarrow \ani$ be a sheaf, and let $R$ be an $\Einf$-G-ring. Suppose that there is a morphism $q:X\rightarrow \spec R$. Let $n$ be a non-negative integer. Then $X$ is represented by a spectral $n$-Artin stack locally almost of finite presentation if and only if $X$ satisfies the following conditions:
\begin{enumerate}
\item The functor $X$ is nilcomplete, infinitesimally cohesive, and integrable.
\item The functor $X$ admits an almost connective cotangent complex $\LL_{X/R}.$
\item The map $q$ is locally almost of finite presentation, that is, let $A_{i}$ be a filtered system of $m$-truncated $\Einf$-rings over $R$, let $X_R$ be the fiber of the map $q$, then $X_R(\varinjlim A_i)\simeq \varinjlim X_R(A_i).$

  \item  For every discrete  $R$-algebra  $A$, the space $X_R(A)$ is n-truncated.
\end{enumerate}
\end{thm}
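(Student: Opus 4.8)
For necessity everything needed is already in place, so I would dispatch it quickly. If $X$ is a spectral $n$-Artin stack locally almost of finite presentation over $R$, then Proposition\autoref{cotartinstk} supplies a $(-n)$-connective, hence almost connective, cotangent complex $\LL_{X/R}$, giving (2); the cited infinitesimal cohesiveness, nilcompleteness and integrability statement for $n$-Artin stacks gives (1); (3) is part of the hypothesis; and (4) follows by induction along a smooth atlas, since the value of an $n$-Artin stack on a discrete ring is computed as a descent limit over an atlas whose fibres lower the truncation level by one. Thus the real content is the converse, which I would prove by induction on $n$, with the chart-production chain Proposition\autoref{appchart} $\Rightarrow$ Proposition\autoref{refinement} $\Rightarrow$ Proposition\autoref{smoothcover} as the engine: under (1)--(3) this produces a formally smooth effective epimorphism $U\to X$ from a disjoint union $U=\coprod_j\spec B_j$ of spectral affines with each $B_j$ almost of finite presentation over $R$, and by (3) this map is in fact smooth.

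The base case $n=0$ I would settle through Lemma\autoref{defoDM}. Condition (4) forces the values of $\tau_{\leq 0}X$ on discrete rings to be $0$-truncated, i.e. set-valued, and classical Artin representability then identifies $\tau_{\leq 0}X$ with a $0$-truncated Deligne--Mumford stack over $\pi_0R$; this is exactly where the $\Einf$-$G$-ring hypothesis enters, via excellence. Lemma\autoref{defoDM}, fed the already-verified nilcompleteness, infinitesimal cohesiveness and cotangent complex, then upgrades $X$ to a spectral algebraic space over $R$.

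For the inductive step, assume the theorem for $n-1$. First I would show that the diagonal $\Delta\colon X\to X\times_{\spec R}X$ is a relative $(n-1)$-Artin stack. Given $\spec A\to X\times_R X$ classifying two maps $f,g\colon\spec A\to X$, set $Z=\spec A\times_{X\times_R X}X$; since being $(n-1)$-Artin is local and $X$ is locally almost of finite presentation, it suffices to treat $A$ of finite presentation over $R$, which is again a $G$-ring, so the inductive hypothesis is applicable to $Z\to\spec A$. Three verifications are needed: (i) $Z$ inherits nilcompleteness, infinitesimal cohesiveness, integrability and local almost finite presentation from $X$, as these are stable under the limits defining $Z$; (ii) over a discrete $A'$ the fibre $Z(A')$ is a space of paths between $f$ and $g$ in the $n$-truncated space $X(A')$, hence $(n-1)$-truncated, which is condition (4) at level $n-1$; (iii) the transitivity and base-change sequences for $X\xrightarrow{\Delta}X\times_R X\xrightarrow{\mathrm{pr}_2}X$ compute $\LL_{Z/A}$ as the pullback of $\LL_{X/X\times_R X}\simeq\LL_{X/R}[1]$, which is $(-(n-1))$-connective because $\LL_{X/R}$ is $(-n)$-connective. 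By the inductive hypothesis $Z$ is $(n-1)$-Artin, so $\Delta$ is relative $(n-1)$-Artin and $X$ is $n$-representable.

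Finally I would assemble the atlas. With $\Delta$ now relative $(n-1)$-Artin, for any $\spec A\to X$ the fibre of the smooth surjection $U\to X$ is $U\times_X\spec A\simeq (U\times_R\spec A)\times_{X\times_R X,\Delta}X$, a base change of $\Delta$, hence an $(n-1)$-Artin stack; so $U\to X$ is a smooth, surjective, relative $(n-1)$-Artin map, and Lemma\autoref{n-1ton} concludes that $X$ is a relative $n$-Artin stack, locally almost of finite presentation by the finite-presentation bookkeeping throughout. The main obstacle will be step (iii) together with (ii): pinning down $\LL_{X/X\times_R X}\simeq\LL_{X/R}[1]$ so the connectivity bound genuinely improves from $-n$ to $-(n-1)$, and matching this with the drop in truncation level of the path spaces, so that the inductive hypothesis applies cleanly at each stage; the base case's reliance on classical Artin representability over a $G$-ring is the heaviest external input, but it is cited rather than reproved.
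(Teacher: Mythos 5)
Your overall architecture is the same as the paper's: necessity is routine, and sufficiency runs by induction on $n$ with the chart chain Proposition\autoref{appchart} $\Rightarrow$ Proposition\autoref{refinement} $\Rightarrow$ Proposition\autoref{smoothcover} producing a formally smooth effective epimorphism $U=\coprod_j\spec B_j\to X$ with each $B_j$ almost of finite presentation over $R$, the drop in truncation level of diagonal/path spaces feeding the inductive hypothesis, and Lemma\autoref{n-1ton} to conclude. But your inductive step has a genuine gap exactly where it departs from the paper. You propose to prove that $\Delta\colon X\to X\times_{\spec R}X$ is relative $(n-1)$-Artin by applying the inductive hypothesis to $Z=\spec A\times_{X\times_RX}X$ for an \emph{arbitrary} map $\spec A\to X\times_RX$. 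The inductive hypothesis is only available when the base is an $\Einf$-G-ring, and your reduction --- ``since being $(n-1)$-Artin is local and $X$ is locally almost of finite presentation, it suffices to treat $A$ of finite presentation over $R$'' --- does not work as stated: condition (3) controls filtered colimits only of $m$-truncated rings, so for a general connective $A$ there is no factorization of $\spec A\to X\times_RX$ through a finitely presented affine, and nilcompleteness does not by itself assemble truncation-wise factorizations into one. This is precisely the difficulty the paper's diagram chase is designed to avoid: the paper only ever applies the inductive hypothesis to $U\times_XU\to U\times_RU$ and to $U\times_XU\times_XU\times_XU\to U\times_RU\times_RU\times_RU$, whose bases are affine and almost of finite presentation over $R$ (hence G-rings), and then handles an arbitrary $V=\spec A\to X$ by shrinking $V$ so that it lifts to $U$ and exhibiting $(U\times_XU)\times_XV$ as a finite limit of the already-established $(n-1)$-Artin stacks. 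Your step is repairable by the same trick --- after \'etale localization, factor $\spec A\to X\times_RX$ through $U\times_RU$, so that $Z$ becomes a base change of $U\times_XU\to U\times_RU$ --- but as written this is the missing idea, not a routine remark.

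Two smaller points. Your connectivity bookkeeping in step (iii) rests on a premise you do not have: in the sufficiency direction, condition (2) gives only an \emph{almost connective} $\LL_{X/R}$, not a $(-n)$-connective one (that connectivity is the conclusion of Proposition\autoref{cotartinstk}, i.e. of the necessity direction). Fortunately nothing beyond almost connectivity is needed, since the shift $\LL_{X/X\times_RX}\simeq\LL_{X/R}[1]$ preserves it, so this worry is moot rather than harmful. Finally, your base case at $n=0$ invokes classical Artin representability over an excellent base together with Lemma\autoref{defoDM}; the paper instead starts the induction at the case where $X\to\spec R$ is an equivalence on discrete rings, for which Lemma\autoref{defoDM} alone suffices, so the classical representability theorem is an avoidable external input (and, if you do use it, you still owe the translation of conditions (1)--(3) into Artin's classical axioms).
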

\begin{proof}  
    We proceed by induction.  For $n=-2$, i.e., the natural transformation $X\rightarrow \spec R$ is an equivalence after restricting to the subcategory $\alg_{\leq 0}$ of discrete $\Einf$-rings in $\algcn$. By lemma \autoref{defoDM}, we have that the functor $X$ is represented by an affine scheme $\spec A$ for some $A$. Now assume that the theorem  holds for $(n-1)$.  Let $X$ be $n$-truncated restricted to discrete rings. According to lemma \autoref{n-1ton}, we just need to prove there exists a smooth  $(n-1)$-cover $U\rightarrow X$ from an $(n-1)$-Artin stack. Thanks to proposition \autoref{appchart} and Proposition \autoref{smoothcover}, there is a spectral algebraic space $U$, which is locally almost  finitely presented over $R$, and a formally smooth effective epimorphism  $p:U\rightarrow X$. Then the fiber of the natural  map 
    $U\times_XU\rightarrow U\times_RU$ is $(n-1)$-truncated after restricting to $\alg_{\leq 0}$, because it's the base change of the diagonal $\Delta_{X/R}$ along the map $U\times_RU\rightarrow X\times_RX$. By induction assumption, $U\times_X U$ is  an $(n-1)$-Artin stack. We have to  show that the induced map $U\times_XU\rightarrow X$ is a relative $(n-1)$-Artin stack. Let $V=\spec A$ be an affine scheme and consider a map $x:V\rightarrow X$. Shrink $V$ such that $x$ has a lifting $x':V\rightarrow U$. Denote by $\delta$ the induced diagonal map $\delta:V\rightarrow U\times_XU$. We have a commutative diagram 
    \[\begin{tikzcd}
	{U\times_XU\times_XV} & V \\
	{U{\times_XU\times_XU\times_XU}} & {U\times_XU} \\
	{U\times_XU} & X
	\arrow[from=1-1, to=1-2]
	\arrow[from=1-1, to=2-1]
	\arrow["\delta", from=1-2, to=2-2]
	\arrow[from=2-1, to=2-2]
	\arrow[from=2-1, to=3-1]
	\arrow[from=2-2, to=3-2]
	\arrow[from=3-1, to=3-2]
\end{tikzcd}\]
Every square is Cartesian. The sheaf
$U{\times_XU\times_XU\times_XU}$ is an $(n-1)$-Artin stack. But $U\times_XU$ is $(n-1)$-Artin, one has that the fiber product $U\times_XU\times_XV\simeq U{\times_XU\times_XU\times_XU}\times_{(U\times_XU)} V$ is an $(n-1)$-Artin stack. This shows that the map $U\times_XU\rightarrow X$ is  $(n-1)$-Artin.
\end{proof}

\section{Algebraicity of Moduli of log stacks}\label{section6}
As we have established the theory of deformations of $\Einf$-log rings,  constructed the moduli stack of log structures over spectral log stacks, and studied the relevant deformation properties, now we are able to prove the representability theorem of the moduli of log structures. The main result is Theorem\autoref{algebraicity}. We also study the moduli stack of animated log structures over derived stacks, and we have Theorem\autoref{algebraicityani}.
\subsection{Algebraicity}
We prove the algebraicity of the moduli stack of log structures.
\begin{thm}\label{algebraicity}
    Let $\MCS$ be a quasi-compact and quasi-separated spectral Deligne-Mumford stack. Assume that there is an \'etale cover $\spet R\rightarrow\MCS$ from a spectral affine scheme such that $R$ is an $\Einf$-Grothendieck ring. Let $\mu\in\{\rm coh,fin\}$, and let $\MSL$  be a $\mu$-log structure over $\MCS$. Fix a natural number $n\geq 0.$
    Then the stack $\R\llog^{n-{\rm Adm},\mu}_{(\MCS,\MSL)}$ is a $(1+n)$-Artin and locally almost finitely presented over $\MCS$.
\end{thm}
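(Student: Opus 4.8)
The plan is to deduce the statement directly from the Artin--Lurie representability theorem, Theorem\autoref{alrep}, by checking its four hypotheses for the functor $X:=\R\llog^{n-{\rm Adm},\mu}_{(\MCS,\MSL)}$ relative to $\MCS$. Since being a relative $(1+n)$-Artin stack and being locally almost of finite presentation are both local on the target for the \'etale topology, and since $\R\llog^{n-{\rm Adm},\mu}_{(\MCS,\MSL)}$ satisfies \'etale (hyper)descent by Remark\autoref{admdeformation}, I would first pull back along the given \'etale cover $\spet R\rightarrow\MCS$ and reduce to the case $(\MCS,\MSL)\simeq\spet(R,L)$ log-affine with $R$ an $\Einf$-$G$-ring. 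It then suffices to verify that $X$ over $\spec R$ meets conditions $(1)$--$(4)$ of Theorem\autoref{alrep}, with the truncation index $n$ appearing there replaced by $1+n$.

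For condition $(1)$, infinitesimal cohesiveness is Proposition\autoref{infcohesive}, while nilcompleteness and integrability follow from Proposition\autoref{nilcompint} together with Remark\autoref{admdeformation}, which transports the infinitesimal properties of $\R\llog^{\mu}_{(\MCS,\MSL)}$ to the $n$-admissible substack. For condition $(2)$, by Lemma\autoref{admcot} the inclusion $\R\llog^{n-{\rm Adm},\mu}_{(\MCS,\MSL)}\subset\R\llog^{\mu}_{(\MCS,\MSL)}$ is formally \'etale, so $X$ admits a cotangent complex equivalent to the restriction of $\LL_{\R\llog^{\mu}_{(\MCS,\MSL)}/\MCS}$, which for $\mu\in\{\rm coh,fin\}$ is almost perfect, hence almost connective, by Theorem\autoref{almostperfect}. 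For condition $(3)$, Proposition\autoref{almostfinite} shows $\R\llog^{\mu}_{(\MCS,\MSL)}\rightarrow\MCS$ is locally almost of finite presentation, and the same property passes to the open, formally \'etale admissible substack. Finally, for condition $(4)$, I would invoke Proposition\autoref{1-trunc-2}: for a discrete $R$-algebra $A$ the scheme $\spet A$ is $0$-truncated, and every point of $X_R(A)$ is an $n$-admissible log structure, so the mapping space computing $X_R(A)$ is $(1+\max\{n,0\})=(1+n)$-truncated.

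Applying Theorem\autoref{alrep} with $1+n$ in place of $n$ then yields that $X$ is a relative $(1+n)$-Artin stack, locally almost of finite presentation over $\MCS$. The main obstacle is not a single hard estimate, since the substantive analytic input (descent, the existence and almost-perfectness of the cotangent complex, infinitesimal cohesion, local finite presentation) has already been established; rather it is the careful bookkeeping required to transfer every one of these properties from $\R\llog^{\mu}_{(\MCS,\MSL)}$ to its $n$-admissible substack, for which the formal \'etaleness of the inclusion (Lemma\autoref{admcot}) is the crucial lever, and the matching of truncatedness bounds so that the admissibility hypothesis forces precisely the $(1+n)$-Artin conclusion via Proposition\autoref{1-trunc-2}. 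A secondary point deserving attention is integrability: Proposition\autoref{nilcompint} verifies it only on complete Noetherian local rings, so one must check that the $\Einf$-$G$-ring hypothesis on $R$ is exactly what allows the proof of Theorem\autoref{alrep}, through the approximate charts of Proposition\autoref{appchart}, to proceed with this restricted form of integrability.
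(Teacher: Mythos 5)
Your proposal follows essentially the same route as the paper's own proof: reduce by \'etale descent to the log-affine case $(\MCS,\MSL)\simeq\spet(R,L)$ and then apply the Artin--Lurie representability theorem (Theorem\autoref{alrep}), verifying its hypotheses via Theorem\autoref{des}, Theorem\autoref{almostperfect}, Proposition\autoref{infcohesive}, Proposition\autoref{nilcompint}, Proposition\autoref{almostfinite}, Proposition\autoref{1-trunc-2}, Remark\autoref{admdeformation} and Lemma\autoref{admcot}. In fact your write-up is more explicit than the paper's (which merely lists these results), since you match each cited result to the specific hypothesis of Theorem\autoref{alrep} it verifies and flag the truncation bookkeeping giving the $(1+n)$-Artin bound.
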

\begin{proof}
    Without loss of generality, we might as well assume that there is an $\Einf$-$\mu$ log ring $(R,L)$, in which $R$ is a Grothendieck $\Einf$-log ring, and there is an equivalence $(\MCS,\MSL)\simeq\spet(R,L)$. We apply Artin-Lurie's representability Theorem\autoref{alrep}.
    We have to check the descent and deformation properties of the map $\R\llog^{n-{\rm Adm},\mu}_{\spet(R,L)}\rightarrow \spec R$.
    They are guaranteed by Theorem\autoref{des}, Theorem\autoref{almostperfect}, Proposition\autoref{infcohesive}, Proposition\autoref{nilcompint}, Proposition\autoref{almostfinite}  Proposition\autoref{1-trunc-2}, Remark\autoref{admdeformation} and Lemma\autoref{admcot}.
 \end{proof}
 We also have the following result.
\begin{cor}
    Under the same assumption as Theorem\autoref{algebraicity}. The Stack $\R\llog^{\rm Adm,\mu}_{(\MCS,\MSL)}$ is locally Artin and locally almost finitely presented over $\MCS$. 
\end{cor}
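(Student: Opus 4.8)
The plan is to deduce the statement from \autoref{algebraicity} together with the filtration of $\R\llog^{\rm Adm,\mu}_{(\MCS,\MSL)}$ by its $n$-admissible substacks. Recall that $\R\llog^{\rm Adm,\mu}_{(\MCS,\MSL)}\simeq\bigcup_n\R\llog^{n-{\rm Adm},\mu}_{(\MCS,\MSL)}$ and that, by \autoref{algebraicity}, each $\R\llog^{n-{\rm Adm},\mu}_{(\MCS,\MSL)}$ is $(1+n)$-Artin and locally almost of finite presentation over $\MCS$. By Definition\autoref{artstkdef}(3) it therefore suffices to prove, after base change along an arbitrary map $\spec R\to\MCS$, that each $\R\llog^{n-{\rm Adm},\mu}_{(\MCS,\MSL)}$ is an \emph{open} substack of $\R\llog^{\rm Adm,\mu}_{(\MCS,\MSL)}$: a union of open Artin substacks is then locally Artin by definition, and local almost finite presentation, being local on the source, is inherited from the open pieces via \autoref{algebraicity}.

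First I would reduce the assertion that $\R\llog^{n-{\rm Adm},\mu}_{(\MCS,\MSL)}\subset\R\llog^{\rm Adm,\mu}_{(\MCS,\MSL)}$ is an open immersion to a statement about a single log structure. This inclusion is a monomorphism of subfunctors and is formally \'etale by Lemma\autoref{admcot}, so it is enough to show it is representable by open immersions. Concretely, fix a point $x\colon\spec A\to\R\llog^{\rm Adm,\mu}_{(\MCS,\MSL)}$ classifying an admissible $\mu$-log structure $\mathscr M$ on $\spec A$, say $m$-admissible. Then the fiber product $\R\llog^{n-{\rm Adm},\mu}_{(\MCS,\MSL)}\times_{\R\llog^{\rm Adm,\mu}_{(\MCS,\MSL)}}\spec A$ is the subfunctor of $\spec A$ spanned by those $\spec B\to\spec A$ over which the pullback $\mathscr M|_{B}$ is $n$-admissible, that is, the \emph{$n$-admissible locus} of $\mathscr M$. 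The claim becomes: this locus is an open subscheme of $\spec A$.

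The heart of the argument is the openness of the $n$-admissible locus, and this is exactly where the hypothesis $\mu\in\{\mathrm{coh},\mathrm{fin}\}$ enters. Since $n$-admissibility depends only on $\pi_0A$ and on the logification $\mathscr M^{a}$ over $\pi_0A$ (Definition\autoref{admlogrings}, Remark\autoref{admissiblelocal}), I would work on $\spec\pi_0A$ and describe the locus as
$$\spec\pi_0A\ \setminus\ \bigcup_{n<k\le m}{\rm supp}\,(\pi_k\mathscr M^{a}),$$
using that $\mathscr M^{a}$ is $n$-truncated at a point precisely when the higher homotopy sheaves $\pi_k\mathscr M^{a}$ (sheaves of abelian groups for $k\ge 1$) vanish there, and that only finitely many $k$ occur because $\mathscr M$ is globally $m$-admissible. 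In the coherent and fine cases the almost finite presentation of the underlying $\mathbb S[M']$ forces these homotopy sheaves to be coherent, so each support is closed; the finite union is then closed and the complementary locus is open. Combined with the formal \'etaleness supplied by Lemma\autoref{admcot}, the inclusion is an \'etale monomorphism representable by open immersions, hence an open immersion.

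The main obstacle is precisely this last input: verifying that the homotopy sheaves $\pi_k\mathscr M^{a}$ are coherent, so that their supports are closed. This requires translating the almost-finite-presentation hypothesis on $\mathbb S[M']$ into finiteness of the homotopy of the logified monoid, via the relation between $M$, its group completion, and $\mathbb S[M]$ recorded in Lemma\autoref{Lstrict} and Remark\autoref{tangentbunlemonoids}; for $\mu=\emptyset$ or $\mathrm{qcoh}$ this finiteness fails, which is the reason the corollary is stated only for $\mu\in\{\mathrm{coh},\mathrm{fin}\}$. Granting it, the conclusion is immediate: $\R\llog^{\rm Adm,\mu}_{(\MCS,\MSL)}$ is the union of the open Artin substacks $\R\llog^{n-{\rm Adm},\mu}_{(\MCS,\MSL)}$, hence locally Artin, and locally almost of finite presentation over $\MCS$.
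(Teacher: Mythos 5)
Your skeleton is the same as the paper's implicit argument: filter $\R\llog^{{\rm Adm},\mu}_{(\MCS,\MSL)}$ by the substacks $\R\llog^{n-{\rm Adm},\mu}_{(\MCS,\MSL)}$, invoke Theorem\autoref{algebraicity} for each piece, and conclude via Definition\autoref{artstkdef}(3) once the inclusions are known to be open immersions. The gap is in how you certify openness. You reduce everything to the claim that the homotopy sheaves $\pi_k\mathscr M^{a}$ (for $n<k\le m$) are \emph{coherent}, so that their supports are closed. This claim is not well-posed as stated: for $k\ge 1$ these are \'etale sheaves of abelian groups produced by the logification pushout $M\coprod_{\alpha^{-1}GL_1(A)}GL_1(A)$ of $\Einf$-monoids (a Bar-type construction), and they carry no natural $\OO$-module structure, so ``coherent'' has no meaning for them, and no substitute finiteness (e.g. constructibility with closed supports) is derived from the almost finite presentation of $\mathbb S[M']$. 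You explicitly write ``granting it, the conclusion is immediate'', i.e. the heart of the openness argument is assumed rather than proven. There is also a secondary unaddressed point: identifying the fiber product $\R\llog^{n-{\rm Adm},\mu}_{(\MCS,\MSL)}\times_{\R\llog^{{\rm Adm},\mu}_{(\MCS,\MSL)}}\spec A$ with the stalk-wise vanishing locus of these sheaves requires knowing how logification interacts with base change on homotopy sheaves, which you do not address.

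The gap is avoidable, because openness is already available by combining results you cite but do not put together, and this is the paper's (implicit) route. By Lemma\autoref{admcot} the inclusion $\R\llog^{n-{\rm Adm},\mu}_{(\MCS,\MSL)}\subset\R\llog^{{\rm Adm},\mu}_{(\MCS,\MSL)}$ is a formally \'etale monomorphism of subfunctors; by Theorem\autoref{algebraicity}, Proposition\autoref{almostfinite} and Remark\autoref{admdeformation}, both source and target are locally almost of finite presentation over $\MCS$ when $\mu\in\{\rm coh,fin\}$, hence so is the inclusion; a formally \'etale map locally almost of finite presentation is \'etale, and an \'etale monomorphism is an open immersion. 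Note that this also corrects your diagnosis of why the corollary requires $\mu\in\{\rm coh,fin\}$: the restriction enters through local almost finite presentation (and thus through the Artin-ness of the pieces in Theorem\autoref{algebraicity}), not through any coherence of homotopy sheaves. With openness established this way, the rest of your argument goes through: the union of open Artin substacks is locally Artin by Definition\autoref{artstkdef}(3), and local almost finite presentation of the union follows from that of the pieces since every map from an affine $\spec A$ factors through some $\R\llog^{n-{\rm Adm},\mu}_{(\MCS,\MSL)}$ by quasi-compactness.
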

\subsection{Classical truncations and comparisons}
Let $(\MCS^{\heartsuit},\mathscr L^{\heartsuit})$ be a classical log Deligne-Mumford stack. Denote by $\MCS^\heartsuit_{\et}\simeq \shv_{\et}(\MCS^{\heartsuit},{\rm Set})$ the underlying $1$-topos of $\MCS^\heartsuit$.
The log structure $\MSL^{\heartsuit}\rightarrow\mathcal{O}_{\MCS^\heartsuit}$ should be thought of as a map in the $1$-category $\MCS^\heartsuit_{\et}\otimes_{\rm Set}\mone^{\heartsuit}.$
Let $\MCS$ be the associated spectral Deligne-Mumford stack of $\MCS^\heartsuit$, and let $\MSL\rightarrow\mathcal O_{\MCS}$ be the image of $\mathscr L^{\heartsuit}\rightarrow\mathcal O_{\mathcal S^\heartsuit}$ in $\shv_{\et}(\MCS,\ani)$ under the canonical functor $$\MCS^{\heartsuit}_{\et}\otimes_{\rm Set}\mone^\heartsuit\rr\MCS_{\et}\otimes_{\ani}\mone$$ 
It's not difficult to see that the map $\MSL\rightarrow\mathcal O_{\MCS}$ is a log structure over $\MCS$. This defines a fully faithful embedding 
$$\llog\Sptdm^{\rm cl}\rr\llog\Sptdm.$$
Now assume that $\MCS^{\heartsuit}$ admits an \'etale cover of Grothendieck rings, and $\mathscr L^{\heartsuit}$ is a fine log structure. Define the moduli functor of classical log structures lying over $(\MCS^{\heartsuit},\mathscr L^{\heartsuit})$ as the following:
$$\llog_{(\MCS^{\heartsuit},\mathscr L^{\heartsuit})}:{\mathbf{DM}^{\rm op
}_{/\mathcal S^\heartsuit}}\rr\mathbf{Grpd}$$
which sends a relative classical Deligne-Mumford stack $\X/\MCS^\heartsuit$ to the $1$-groupoid consisting of maps of 
 log Deligne-Mumford stacks $(\X,\mathscr M)\rightarrow(\MCS^{\heartsuit},\mathscr L^{\heartsuit})$. In \cite{olsson2003logarithmic}, Olsson proved that this functor is a classical Artin stack and is locally  finitely presented over $\MCS^{\heartsuit}$ in the classical sense. We denote by $\R\llog_{(\MCS,\MSL)}^\heartsuit$ the classical truncation of the stack $\R\llog^{\rm fin}_{(\MCS,\MSL)}$. By the classical Artin's representability theorem, the functor $\R\llog_{(\MCS,\MSL)}^\heartsuit$ is a classical Artin stack and is locally finitely presented over $\MCS^\heartsuit$.
 We have a canonical map of classical Artin stacks
 $$\theta:\llog_{(\MCS^{\heartsuit},\mathscr L^{\heartsuit})}\rr\R\llog_{(\MCS,\MSL)}^\heartsuit.$$
 \begin{thm}\label{compder-clas}
     The map $\theta$ is an open immersion, whose image is isomorphic to the classical truncation of $\R\llog^{0-{\rm Adm},\rm fin}_{(\MCS,\MSL)}$.
 \end{thm}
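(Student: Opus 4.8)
The plan is to realize $\theta$ as the classical truncation of the open immersion $\R\llog^{0-{\rm Adm,fin}}_{(\MCS,\MSL)}\hookrightarrow\R\llog^{\rm fin}_{(\MCS,\MSL)}$ and to identify its source, Olsson's stack, with the $0$-admissible locus on discrete test objects. First I would record that both functors in sight are classical Artin stacks locally of finite presentation over $\MCS^\heartsuit$: for $\R\llog^\heartsuit_{(\MCS,\MSL)}$ this is \autoref{algebraicity} together with classical Artin representability, and for $\llog_{(\MCS^\heartsuit,\MSL^\heartsuit)}$ this is Olsson's theorem. Consequently it suffices, after replacing $\MCS$ by an \'etale chart $\spet R$ with $R$ an $\Einf$-$G$-ring, to work on discrete test rings $A$ over $\pi_0 R$ and to check that $\theta$ is a monomorphism of groupoid-valued functors whose image is open and equal to the $0$-admissible part.

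Openness I would obtain formally. By \autoref{admcot} the inclusion $\R\llog^{0-{\rm Adm,fin}}_{(\MCS,\MSL)}\subset\R\llog^{\rm fin}_{(\MCS,\MSL)}$, being the composite of the formally \'etale inclusions $\R\llog^{0-{\rm Adm,fin}}\subset\R\llog^{{\rm Adm,fin}}\subset\R\llog^{\rm fin}$, is formally \'etale; it is a monomorphism since it is the inclusion of a full subfunctor cut out by a condition, and it is locally almost of finite presentation by \autoref{almostfinite}. An \'etale monomorphism (formally \'etale plus locally almost of finite presentation, plus $(-1)$-truncated) is an open immersion, and open immersions are stable under passage to classical truncations, so $t_0\R\llog^{0-{\rm Adm,fin}}_{(\MCS,\MSL)}\hookrightarrow\R\llog^\heartsuit_{(\MCS,\MSL)}$ is an open immersion. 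It then remains only to produce an equivalence $\llog_{(\MCS^\heartsuit,\MSL^\heartsuit)}\simeq t_0\R\llog^{0-{\rm Adm,fin}}_{(\MCS,\MSL)}$ compatible with $\theta$.

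For the identification I would argue on a discrete $A$. Over such $A$ one has $\Omega^\infty A\simeq A$ and, by the homotopy-group computation of $GL_1$ in the proof of \autoref{presentability}, $GL_1(A)$ is discrete, equal to $A^\ast$; hence for a discrete prelog structure $\alpha\colon M\to A$ every term of the logification pushout $M^a\simeq M\sqcup_{\alpha^{-1}GL_1(A)}GL_1(A)$ is discrete. For a fine (integral, finitely generated) chart this relative coproduct, computed as a geometric realization of a bar construction in $\mone$, is again discrete; this is the key input, and it is the place where integrality is used to prevent the bar construction from producing higher homotopy, so that the classical and spectral logifications coincide. It follows that the spectral log structure attached to a classical fine log structure is discrete, hence $0$-admissible and fine. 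Conversely, a $0$-admissible fine spectral log structure over the discrete $A$ is discrete by \autoref{admlogrings}, hence is the image of a unique classical fine log structure under the fully faithful embedding $\llog\Sptdm^{\rm cl}\hookrightarrow\llog\Sptdm$, and the same full faithfulness matches the map to $(\MCS,\MSL)$ with a map to $(\MCS^\heartsuit,\MSL^\heartsuit)$. By \autoref{1-trunc-2} (or \autoref{truncadmissible}) both sides are $1$-truncated over discrete $A$, so these two assignments are mutually inverse equivalences of groupoids, natural in $A$; this exhibits $\theta$ as the open immersion above and identifies its image with $t_0\R\llog^{0-{\rm Adm,fin}}_{(\MCS,\MSL)}$.

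The main obstacle is precisely the discreteness of the spectral logification of a fine classical log structure: the logification is a pushout in $\mone$, which for general discrete input acquires higher homotopy, and I expect to have to exploit integrality and finite generation of the chart monoid — via a $\mathrm{Tor}$-vanishing or exactness argument for integral monoids — to show that the realization collapses to a discrete monoid. Once this compatibility of classical and spectral logification is in hand, all remaining steps (openness, truncatedness, representability, and the compatibility with $\theta$) are formal consequences of the results already established.
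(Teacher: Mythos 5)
Your architecture is sound, but it takes a genuinely different route from the paper's. The paper proves that $\theta$ is formally \'etale \emph{directly}, by a deformation argument internal to log structures: given a square-zero extension $(R\oplus I,M')\to(R,M)$ of a classical log ring by a discrete module, the fiber sequence $(1+I)\to M'\to M$ of $\Einf$-monoids forces $M'$ to be discrete, so the extension is again classical and the lift against $\theta$ exists uniquely; combined with $(-1)$-truncatedness, essential surjectivity onto the $0$-admissible locus, and local finite presentation of both sides, this yields the open immersion. You instead import formal \'etaleness from the admissibility stratification (Lemma\autoref{admcot}), conclude openness in the spectral world and truncate, and shift all remaining work into the identification $\llog_{(\MCS^\heartsuit,\MSL^\heartsuit)}\simeq \tau_{\leq 0}\R\llog^{0-{\rm Adm,fin}}_{(\MCS,\MSL)}$ on discrete test rings. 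Both organizations work (your openness step needs the small supplement that the formally \'etale, locally almost finitely presented monomorphism pulls back over affines to a representable map, e.g.\ via Theorem\autoref{alrep}, before ``\'etale mono $=$ open immersion'' can be invoked). What your route buys is an explicit treatment of the image identification, which the paper compresses into the unproved assertion that $\theta$ is ``essentially surjective''; what it costs is that you must confront the comparison of classical and spectral logification, which the paper's \'etaleness argument sidesteps entirely because it never leaves the world of (already logified) log structures.

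On that crux: your worry is well founded and your proposed mechanism is the right one, so this is a fillable step rather than a fatal gap. For a discrete prelog ring $\alpha:P\to A$ over a discrete ring, the spectral logification is the two-sided bar construction $|B_\bullet(P,G,A^*)|$, $G=\alpha^{-1}(A^*)$, since pushouts of $\Einf$-monoids are relative tensor products for the cocartesian structure. Factor it as $(P\sqcup^{\mathbb L}_G G^{\rm gp})\sqcup^{\mathbb L}_{G^{\rm gp}}A^*$: the first pushout is always discrete and equals $P[G^{-1}]$, because $G^{\rm gp}$ is the filtered colimit of translation copies of $G$ as a $G$-set and $B(P,G,G)\simeq P$ by an extra degeneracy; the second is the homotopy quotient $(P[G^{-1}]\times A^*)_{hG^{\rm gp}}$, which is discrete exactly when the diagonal $G^{\rm gp}$-action is free. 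Integrality of $P$ makes $P[G^{-1}]$ cancellative, hence the action free -- this is precisely where fineness enters, as you predicted. Without integrality the statement is false: for $P=\langle a,b\mid a+b=b\rangle$ with $\alpha(a)$ a nontrivial root of unity and $\alpha(b)=0$, the components over the $b$-part acquire nonvanishing $\pi_1$. Note finally that this comparison is not an artifact of your route: the paper implicitly needs it as well, both for $\theta$ to take values in the fine spectral moduli and for its essential surjectivity claim, so proving it as you propose completes a point the paper leaves unaddressed.
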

 \begin{proof}
     The map $\theta$ is $(-1)$-truncated, and essentially surjective. We must show that it's \'etale. Since we already know that both of the two stacks are locally  finitely presented over $\MCS^\heartsuit$, we just need to show that $\theta$ is formally \'etale. Let $R$ be a classical ring, and let $I\in\Mod_R^{\heartsuit}$. We form a commutative diagram
     $$\begin{tikzcd}
\spec R \arrow[d] \arrow[r, "x"]  & {\llog_{(\MCS^{\heartsuit},\mathscr L^{\heartsuit})}} \arrow[d, "\theta"] \\
\spec (R\oplus I) \arrow[r, "x'"] & {\R\llog^{\heartsuit}_{(\MCS,\mathscr L)}}                               
\end{tikzcd}$$
Without loss of generality, we assume that both $x$ and $x'$ classify affine log structures $\spet(R,M)$ and $\spet(R\oplus I,M')$ respectively, such that $(R\oplus I,M')$ is a square-zero extension of $(R,M)$. Then $M'$ is discrete because we have an exact sequence of $\Einf$-monoids
$$(1+I)\rr M'\rr M$$
This implies that we have a unique lifting $\spec (R\oplus I)\rightarrow\llog_{(\MCS^{\heartsuit},\mathscr L^{\heartsuit})}$ of $x'$, making the commutative diagram as above commute. It turns out that the map $\theta$ is formally \'etale.
 \end{proof}
 \subsection{Moduli of derived log structures}\label{dersetting}
 In this subsection, we consider the moduli stack of derived log stacks. 
\subsubsection{Derived algebraic geometry} Let $\X$ be a derived Deligne-Mumford stack in the sense of \cite{lurie2004derived}, we denote by $\X^\circ$ the underlying spectral Deligne-Mumford stack defined in \cite[2.18]{chough2020brauer}. This gives rise to a functor $(-)^\circ:\ddm\rightarrow\Sptdm$, which carries a \textit{derived Deligne-Mumford stack} to its underlying spectral Deligne-Mumford stack. Here we denote by $\ddm$
 the $\infty$-category of derived Deligne-Mumford stacks.  We have a commutative diagram of $\infty$-categories 
 $$\begin{tikzcd}
\ddm \arrow[d] \arrow[r, "(-)^\circ"]                  & \Sptdm \arrow[d]          \\
{\stk^\Delta} \arrow[r, "\Theta_!"] & {\stk}
\end{tikzcd}$$. 
Both vertical arrows are fully faithful embeddings \cite[Proposition 4.6.2]{lurie2004derived} \cite[Proposition 1.6.4.2]{lurie2018spectral}. Applying Lemma\autoref{spectoder}, the stack $\Theta^*h_{\X}$ of a functor $h_\X$ which is represented by a spectral Deligne-Mumford stack $\X$ is also representable by some derived Deligne-Mumford stacks. In other words, the essential image of  functor $$\Sptdm\rr\stk\stackrel{\Theta^*}\rr \stk^\Delta$$ is  landed in $\ddm$, thus it has a unique factorization $(-)^\Delta:\Sptdm\rightarrow\ddm$, which is  a right adjoint of $(-)^\circ$.
\subsubsection{Derived log stacks}
  \begin{defn}
  The strict \'etale map of animated log rings defines a sub-canonical topology on $\llog^\Delta$. We denote by $\llog^{\Delta}_{\set}$ the corresponding site.
  A \textit{derived charted log stack} is a sheaf over the site $\llog^\Delta_{\set}.$ Denote by $\shv^\Delta_{\set}:=\shv(\llog^{\Delta,\rm op}_{\set},\ani)$  the $\infty$-category of derived charted log stacks.
  \end{defn}
  \begin{defn}
    Let $\spec(A,M):\llog^\Delta_{\set}\rightarrow\ani$ be the functor $\map((A,M),-).$ We call it the derived charted log-affine scheme associated with $(A,M)$.
\end{defn}
\begin{defn}
    Let $(A,M)$ be an animated log ring. The \textit{derived log-affine scheme}  $\spet(A,M)$ associated with $(A,M)$ is a pair $(\spet A,\mathscr M),$ in which:
    \begin{enumerate}
        \item $\spet A$ is the \'etale spectrum of $A$ defined in \cite[Section 4.3]{lurie2004derived};
        \item $\mathscr M$ is the \'etale sheafification  of the following presheaf
        $$\alg^\Delta_A\ni A'\mapsto M^a,$$ where $M^a$ is the animated monoid of the logification of the pre-animated log ring  $(A',M)$. 
    \end{enumerate}
\end{defn}
\begin{defn}
\begin{enumerate}
    \item 
    A \textit{derived prelog Deligne-Mumford stack} is a triple $(\mathcal X,\mathscr M,\alpha),$ where $\mathcal X$ is a quasi-compact and  quasi-separated  derived  Deligne-Mumford stack, $\mathscr M$ is an \'etale sheaf of animated monoids over the underlying $\infty$-topos of $\mathcal X,$ and $\alpha:\mathscr M\rightarrow\mathcal O_\mathcal X$ is a morphism of animated monoids.\\
     We denote $\plog\ddm$ as the $\infty$-category of derived log Deligne-Mumford stacks.
\item 
    A \textit{derived log Deligne-Mumford stack} $(\mathcal X,\mathscr M,\alpha)$ is a derived prelog Deligne-Mumford stack, such that the induced morphism $\mathscr M\times_{\OO_\X}GL_1(\OO_\X)\rightarrow GL_1(\OO_\X)$ is an equivalence.\\
    We denote $\llog\ddm$ as the $\infty$-category of derived log Deligne-Mumford stacks.
    \end{enumerate}
\end{defn}
\begin{defn}
Let $(\X,\mathscr M,\alpha)$ be a derived log Deligne-Mumford stack, we say that $(\X,\mathscr M,\alpha)$ is:
\begin{enumerate}
    \item \textit{Quasi-coherent}, if there is a cover $U\rightarrow \X,$ such that $U\simeq\coprod_j\spet A_j$ is a finite disjoint union of derived affine schemes, and the restriction $(U,\mathscr M|_U)\simeq\coprod_j\spet(A_j,M_j)$ is a finite disjoint union of derived log-affine schemes;
    \item \textit{Coherent}, if it's quasi-coherent, and there is a choice of $\{(A_j,M_j)\}$, which appears in the assertion (1), such that for any $j$, $(A_j,M_j)$ is obtained from an animated prelog ring $(A_j,M_j')$ satisfying the property that $\mathbb Z[M_j']$ is almost finitely presented over $\mathbb Z$.
    \item \textit{Fine}, if it's coherent and there is a choice of $\{(A_j,M_j)\}$, which appears in assertion (2), such that for any $j$, the commutative ring $\pi_0\mathbb Z[M_j]=\Z[\pi_0M_j]$ is a finitely generated  integral $\Z$-algebra.
\end{enumerate}   
\end{defn}
\begin{defn}
   Let $(\X,\mathscr M)$ be a derived log Deligne-Mumford stack. We say that $(\X,\mathscr M)$ is $n$-\textit{admissible}, if there is an \'etale cover $\coprod^N_{j=1}\spet R_j\rightarrow\X$ of derived Deligne-Mumford stacks, such that  the pullback 
    $\mathscr M|_{\spet R_j}$ are given by 
    $n$-admissible $\Einf$-log rings. We say that it is admissible if it is $n$-admissible for some $n<+\infty$.
\end{defn}
\begin{defn}\label{rlogdefn4}
    Let $(\MCS,\MSL)$ be a derived log Deligne-Mumford stack. We define the following functor 
    $$\R\llog^\Delta_{(\MCS,\MSL)}:\llog\mathbf{DM}^{\rm op}_{/\MCS}\rr\ani,$$
    which sends a quasi-compact and quasi-separated derived Deligne-Mumford stack $\X$ to the anima consisting  of maps of derived log Deligne-Mumford stacks $(\X,\mathscr M)\rightarrow(\MCS,\MSL).$ \\
    Let $\mu\in\{\rm qcoh,coh,fin\}$, and let $(\MCS,\MSL)$ be a $\mu$-derived log Deligne-Mumford stack.
    We denote by $$\R\llog_{(\MCS,\MSL)}^{\mu,\Delta}, \R\llog_{(\MCS,\MSL)}^{n-{\rm Adm},\mu,\Delta}, {\rm{and } {\ } } \R\llog_{(\MCS,\MSL)}^{{\rm Adm},\mu,\Delta}$$ the subfunctors of $\R\llog^\Delta_{(\MCS,\MSL)}$ spanned  by $\mu$-log structures, $n$-admissible $\mu$-log structures, and admissible $\mu$-log structures, respectively.
\end{defn}
The same arguments in \autoref{logstacks} guaranty that the following facts hold.
\begin{thm}\label{algebraicityani}
    \begin{enumerate}
        \item Let $\mu\in\{\rm qcoh,coh,fin\}$. The functors $\R\llog_{(\MCS,\MSL)}^{\mu,\Delta}$, $\R\llog_{(\MCS,\MSL)}^{n-{\rm Adm},\mu,\Delta}$, and $\R\llog_{(\MCS,\MSL)}^{{\rm Adm},\mu,\Delta}$ satisfy descent with respect to the \'etale topology.
        \item Let $\mu\in\{\rm qcoh,coh,fin\}$. Let $(\MCS,\MSL)$ be a $\mu$-derived log Deligne-Mumford stack. Then $\R\llog_{(\MCS,\MSL)}^{\mu,\Delta}$ admits a $(-1)$-connective cotangent complex.
        \item Let $\mu\in\{\rm coh,fin\}$. Let $(\MCS,\MSL)$ be a $\mu$-derived log Deligne-Mumford stack. Assume that $\MCS$ is covered by derived Grothendieck rings. Then the functor  $\R\llog_{(\MCS,\MSL)}^{n-{\rm Adm},\mu,\Delta}$ is derived $(1+n)$-Artin, and $\R\llog_{(\MCS,\MSL)}^{{\rm Adm},\mu,\Delta}$ is locally Artin.
        \item Let $f:\spet(A,M)\rightarrow\spet(B,N)$ be a map induced from the  derived log ring map $(B,N)\rightarrow(A,M)$, then there is a canonical equivalence $\LL_{\spet A/\R\llog^{\mu,\Delta}_{\spet(B,N)}}\simeq\LL^{G,\Delta}_{(A,M)/(B,N)}.$
    \end{enumerate}
\end{thm}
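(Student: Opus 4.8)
The plan is to mirror, step by step, the constructions and proofs of \autoref{logstacks}, systematically replacing $\Einf$-log rings by animated log rings, the $\Einf$-Gabber cotangent complex by its animated counterpart $\LL^{G,\Delta}$, the $\Einf$ deformation theory of Theorem\autoref{deform} by the animated deformation theory of Theorem\autoref{deformani}, and the spectral Artin--Lurie representability of Theorem\autoref{alrep} by Lurie's derived version \cite[Theorem 7.1.6]{lurie2004derived}. Concretely, I would first set up the animated analogues of the auxiliary objects: the $\infty$-category of derived charted log stacks, the charted moduli functor $\R\llog^{\rm chart,\Delta}_S$, and the comparison morphism $|-|\colon\R\llog^{\rm chart,\Delta}_S\to\R\llog^{\rm qcoh,\Delta}_{|S|}$ exhibiting the latter as an \'etale sheafification (the animated analogue of Proposition\autoref{descent=forgetchart}). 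Once these are in place, parts (1)--(4) follow the same logical skeleton as the spectral statements.

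For part (1), descent is formal: the assignment $\X\mapsto\shv_{\et}(\X,\mond)$ of sheaves of animated monoids satisfies \'etale descent (the animated analogue of \cite[Corollary 2.3.8]{Ayoub_2022}, or directly because $\mond$ is presentable), so the limit presentation of $\widehat{\R\plog}$ and $\widehat{\R\llog}$ used in the proof of Theorem\autoref{des} goes through verbatim, and Lemma\autoref{redqcoh} reduces the $\mu$-decorated and admissible variants to the plain one. Part (4) is essentially tautological: the animated analogue of Theorem\autoref{gabbercotasfunctor} computes the cotangent complex of $\R\llog^{\rm chart,\Delta}_{\spec(B,N)}$ at a point classifying $(B,N)\to(A,M)$ to be $\LL^{G,\Delta}_{(A,M)/(B,N)}$, and the animated analogue of Lemma\autoref{0987890} transports this through the sheafification $\R\llog^{\rm chart,\Delta}\to\R\llog^{\mu,\Delta}$, yielding $\LL_{\spet A/\R\llog^{\mu,\Delta}_{\spet(B,N)}}\simeq\LL^{G,\Delta}_{(A,M)/(B,N)}$.

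For part (2) I would reduce, exactly as in Theorem\autoref{almostperfect}, to the charted functor via the sheafification comparison and the animated Lemma\autoref{0987890}, and then invoke the animated Theorem\autoref{gabbercotasfunctor} together with the animated fiber-stability statement (Corollary\autoref{fiberstability}). For part (3) I would verify the hypotheses of derived Artin--Lurie representability for the map $\R\llog^{n-{\rm Adm},\mu,\Delta}_{\spet(R,L)}\to\spec R$ over a derived Grothendieck ring $(R,L)$: \'etale descent (part (1)), the almost connective cotangent complex (part (2) together with the animated Lemma\autoref{admcot}), infinitesimal cohesiveness, nilcompleteness and integrability (animated analogues of Proposition\autoref{infcohesive} and Proposition\autoref{nilcompint}), local almost finite presentation (animated Proposition\autoref{almostfinite}), and $(1+n)$-truncatedness on discrete rings (animated Proposition\autoref{1-trunc-2}); the admissibility filtration then yields the ``locally Artin'' conclusion for $\R\llog^{{\rm Adm},\mu,\Delta}$.

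The hard part will be the animated replacement of Lemma\autoref{defchartlog} and its supporting torsor description (Remark\autoref{torsor=sqzero}), on which the fiber-stability Corollary\autoref{fiberstability}, the infinitesimal cohesiveness, and the essential surjectivity in the nilcompleteness/integrability argument all rest. Here I cannot simply copy the spectral proof, since a square-zero extension of an \emph{animated} log-affine object must be shown to carry a functorially determined \emph{animated} log structure, and the delooping $M\to B\mathcal I\simeq\mathcal I[1]$ producing the classifying $\mathcal I$-torsor must be re-established for sheaves of animated monoids. My strategy is to leverage the comparison functor $\Theta\colon\mond\to\mone$, which by Lemma\autoref{ani-sptmon} is conservative and preserves all small limits and colimits (in particular deloopings and the space of homotopy units), together with the monadicity of $\Theta_{\rm Log}$ from Proposition\autoref{ani-spt}: the fiber sequence $\mathscr M\to\mathscr M_0\to\mathcal I[1]$ and its torsor structure can be checked after applying $\Theta$ to the underlying $\Einf$-monoid sheaves, where the spectral Lemma\autoref{defchartlog} applies, and conservativity of $\Theta$ then propagates the equivalence back. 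The affineness of the underlying square-zero extension is supplied by the derived analogue of \cite[Lemma 17.1.3.7]{lurie2018spectral}, and the derived rigidity of \'etale algebras \cite[Theorem 3.4.13]{lurie2004derived} (used as in the essential surjectivity step of Proposition\autoref{nilcompint}) provides the compatible towers of \'etale covers needed in the integrability argument.
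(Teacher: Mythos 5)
Your proposal is correct and takes essentially the same route as the paper, whose entire proof of this theorem is the single assertion that ``the same arguments in \autoref{logstacks}'' carry over to the animated setting; your systematic replacement of the spectral inputs (Theorem\autoref{deform}, Theorem\autoref{gabbercotasfunctor}, Theorem\autoref{alrep} replaced by Lurie's derived representability) is precisely that replay. Your extra care with the animated analogue of Lemma\autoref{defchartlog}, reducing the torsor/delooping argument to the spectral case via the conservative, limit- and colimit-preserving functor $\Theta$ of Lemma\autoref{ani-sptmon}, is a genuine refinement of a point the paper glosses over, and it is in the same spirit as the paper's own deduction of Theorem\autoref{deformani} from Theorem\autoref{deform} by a pullback along the forgetful functor.
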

\subsection{Moduli of general log stacks}
The functor $\mathcal{F}:\Sptdm^{\rm op}\rightarrow \cat$ associated with the 
Cartesian projection $\pi:\llog\Sptdm\rightarrow\Sptdm$
 extends to a functor 
 $\mathcal{ F}':\stk^{\rm op}\rightarrow \cat$ via right Kan extension along the inclusion $\Sptdm\subset\stk$. Then, passing to the Grothendieck construction, we  obtain a Cartesian fibration $\pi:\llog\stk\rightarrow\stk$. We refer to the $\infty$-category $\llog\stk$ as the $\infty$-category of log stacks. We say a spectral log stack $X$ is quasi-coherent if, for any map $f:\spec A\rightarrow X$, the pullback $f^*X$ is  quasi-coherent.
\begin{rmk}
A quasi-coherent spectral log  stack is given by a map $X\rightarrow\R\llog^{\rm qcoh}$, where $X$ is an stack.
\end{rmk}
\begin{rmk}\label{genlogaff}
    The subcategory $\llog\stk^{\rm qcoh}$ of $\llog\stk$ consisting of quasi-coherent log stacks is generated by log-affine objects under colimits of strict morphisms.
\end{rmk}
\begin{exmp} We have the following examples of log stacks:
\begin{enumerate}
    \item 
 The identity map on $\R\llog^{\rm qcoh}$ determines a log structure $\mathscr M^{\rm univ}$ on $\R\llog^{\rm qcoh}$. We call this log structure the \textit{universal log structure} on $\R\llog^{\rm qcoh}$.
\item Fix an $\Einf$-ring $R$. The  map $\spec R[M]\rightarrow \R\llog^{\rm qcoh}_R$ classifying $\spet(R[M],M)$, which factors through the quotient stack $$\spec R[M]/\spec R[M^{\rm gp}]\rr \R\llog^{\rm qcoh}_R.$$
Indeed, the composition $\spec R[M^{\rm gp}]\rightarrow\spec R[M]\rightarrow\R\llog_R^{\rm qcoh}$ classifies a trivial log structure, as the  logification of the $\Einf$-prelog ring $(R[M^{\rm gp}],M)$ agrees with $(R[M^{\rm gp}],GL_1(R[M^{\rm gp}])$, and we have a commutative diagram 
$$\begin{tikzcd}
... \arrow[r, shift left=2] \arrow[r, shift right=2] \arrow[r] & {\spec R[M]\times \spec R[M^{\rm gp}]} \arrow[r, shift left] \arrow[r, shift right] \arrow[l, shift left] \arrow[l, shift right] & {\spec R[M]} \arrow[l] \arrow[r] & \R\llog^{\rm qcoh}_R
\end{tikzcd}$$
Passing to the colimit, we get the desired map $\spec R[M]/\spec R[M^{\rm gp}]\rightarrow \R\llog^{\rm qcoh}_R$.
This also defines a log structure on $\spec R[M]/\spec R[M^{\rm gp}]$. 
\item  Let $\spet (A,M)$ be a log-affine spectral log scheme, where $A$ is 
$p$-complete. Then the natural inclusion $\spf A\rightarrow\spec A$ defines a log structure on $\spf A$. This  coincides with the classical definition of log formal schemes in \cite[Appendix A]{koshikawa2022logarithmic}. 
 \end{enumerate}
\end{exmp}
Let $S$ be a quasi-coherent spectral  log stack. one can define the functor $\R\llog^{\rm qcoh}_S$ as the following colimit $$\R\llog^{\rm qcoh}_S:=\colim_{(\mathcal{U},f)}\R\llog^{\rm qcoh}_{f^*X},$$ where the index $(\mathcal{ U},f)$ runs over the $\infty$-category of spectral  Deligne-Mumford stacks lying over $\underline S$, and the colimit is formed in the $\infty$-category $\stk$. Then we obtain a natural map $\R\llog^{\rm qcoh}_S\rightarrow\underline S$ that classifies quasi-coherent log stacks lying over $S$. 
\begin{rmk}[Representability for general bases]
    Let $S\in\stk$ be an arbitrary stack. We define $\R\llog_S$ as the colimit $\colim_{\mathcal{ U}}\R\llog^\mu_{\mathcal U}$.  We  have natural equivalences $\R\llog^\mu_\mathcal{U}\simeq\R\llog^\mu\times\mathcal{U}$ for all $\mathcal{ U}\in\Sptdm$ by  definition,  and therefore we have $\R\llog^\mu_S\simeq \R\llog^\mu\times S$. In particular, the stack $\R\llog^{\rm Adm}_{S}$ is  locally Artin  over $S$, and relatively of finitely almost presented.
\end{rmk}
\section{($p$-typical) Infinite root  stacks}\label{section7}
The aim of this section is to develop a derived version of the theory of infinite root  stacks, as in \cite{Borne_2012} and \cite{Talpo_2018}. Our construction generalizes the original definition of infinite root  stacks to all quasi-coherent stacks and coincides with the definition of the classical ones in  certain cases, as provided by Remark\autoref{258789} and  Proposition\autoref{localroot}.\\
In this section, we will first give the definition of infinite root  stacks, and then we will study the associated geometric  and functorial properties. Our main result is Theorem\autoref{123256456}, which states that the $p$-completed cotangent complexes of certain  quasi-coherent spectral log Deligne-Mumford stacks could be recovered from those of associated infinite root  stacks. This generalizes the result of Binda-Lundemo-Merici-Park in \cite[Theorem 4.12]{binda2024logarithmicprismaticcohomologymotivic} and \cite[Lemma 3.18]{binda2024logarithmictcinfiniteroot} to the quasi-coherent case.\\
All constructions and results in this section are suitable for both $\Einf$- and derived contexts.
\begin{cov}
Throughout this section, we will work with the \textbf{fpqc topology}. All stacks are assumed to satisfy fpqc descent, and all operations are performed in the $\infty$-category $\stk_{\rm fpqc}$ of fpqc stacks. In particular, we denote by $\R\llog_{S}^{\rm qcoh}$ the moduli stack of quasi-coherent log structures defined using the  fpqc topology, which
is an fpqc sheaf by Remark\autoref{fpqcrlog}. 
\end{cov}
\begin{cau}
   The reason why we use the fpqc topology to define the infinite root stacks is that our infinite root stack $\sqrt[\infty]{(X,\mathscr M)}$ of a log stack $(X,\mathscr M)$ is defined as a inverse limit of a tower of  \'etale surjective maps to the underlying stack $X$. As the fpqc topoi are \textit{replete topoi}, the surjective maps are stable under sequential inverse limits, its more convenient to work in the fpqc topology than that in the \'etale topology for studying the infinite root stacks. However, we have to point out that, in the fpqc topology, the infinitesimal  properties of the moduli stack of quasi-coherent structures are  probably  not as good as those in the \'etale topology, as \textbf{the nilcompleteness and integrability are missing}.
\end{cau}
\subsection{Definitions and basic geometric properties}\label{rootstkdefprop}
For simplicity, throughout this section, we fix a prime number $p$, and a $p$-complete $\Einf$-ring $R$. We work with the stack $\R\llog_R^{\rm qcoh}$, and its $p$-adic completion $\logfr=\R\llog^{\rm qcoh}_R\times_{\spec R}\spf R$. 
\begin{defn}\label{qazwsxedc}
Denote by $\R\llog_R^{\wedge}$ the fiber product $\R\llog_R^{\rm qcoh}\times_{\spec R}\spf R$. The \textit{universal ({$p$-typical}) infinite root  stack} $\rtr$ is defined as the inverse limit of the following sequence
$$...\rr\R\llog_R^{\wedge}\stackrel{p}\rr\R\llog_R^{\wedge}\stackrel{p}\rr...\stackrel{p}\rr \R\llog_R^{\wedge}.$$
The  \textit{($p$-typical) infinite root  stack} $\sqrt[\infty]{(X,\mathscr M)}$ of a quasi-coherent spectral log stack $(X,\mathscr M)$ is the fiber product $X\times_{\logfr}\rtr$. Similarly, one can define the \textit{$n$-th root stack} $\sqrt[n]{(X,\mathscr M)}$ as the following fiber product
$$\begin{tikzcd}
{\sqrt[n]{(X,\mathscr M)}} \arrow[d] \arrow[r] & \logfr \arrow[d, "p^n"] \\
X \arrow[r]                                    & \logfr                 
\end{tikzcd}$$
We have $\sqrt[\infty]{(X,\mathscr M)}\simeq\varprojlim_n\sqrt[n]{(X,\mathscr M)}$. 
\end{defn}
\begin{exmp}
    If $X$ is a spectral log stack with trivial log structure, then $\sqrt[\infty]{X}\simeq X$.
    \end{exmp}
    \begin{rmk}\label{ptwist}
        Let $(X,\mathscr M)$ be a quasi-coherent spectral log stack. We denote by $(X,p^n\mathscr M)$ the spectral log stack which is classified by the map 
        $X\stackrel{\mathscr M}\rightarrow\logfr\stackrel{p^n}\rightarrow\logfr$. This gives rise to a concrete description of the infinite root  stack $\sqrt[\infty]{(X,\mathscr M)}$. The  mapping space $\map(U,\sqrt[\infty]{(X,\mathscr M)})$ from a stack $U$
        is equivalent to the anima consisting of maps in $\llog\stk$ as follows
        $$\begin{tikzcd}
{(U,\mathscr N)} \arrow[d] \arrow[r] & {(U,p\mathscr N)} \arrow[d] \arrow[r] & {(U,p^2\mathscr N)} \arrow[d] \arrow[r] & ... \arrow[r] & {(U,p^n\mathscr N_2)} \arrow[d] \arrow[r] & ... \\
{(X,\mathscr M)} \arrow[r]             & {(X,\mathscr M)} \arrow[r]              & {(X,\mathscr M)} \arrow[r]                & ... \arrow[r] & {(X,\mathscr M)} \arrow[r]                & ...
\end{tikzcd}$$
such that the left vertical map $(U,\mathscr N)\rightarrow(X,\mathscr M)$ is strict.
    \end{rmk}
\subsubsection{Basic properties}
    By the definition of $\rtr$, it admits a surjective map from the  union of all $${\spf R[M[1/p]]^\wedge_p}$$
    where $M$ runs over all $\Einf$-monoids. Indeed, if we have a map $\spec A\rightarrow \rtr$ that is lying over $\spf R[M]^\wedge_p\rightarrow\logfr$, this is equivalent to providing a map $\spec A\rightarrow\spf R[M]^\wedge_p\times_{\logfr}\rtr\simeq\sqrt[\infty]{\spf R[M]^\wedge_p}$. Unwinding the definition of  infinite root  stacks, there is a sequence of fpqc maps of $\Einf$-rings $$A\rr A_1\rr A_2\rr...\rr A_n\rr...,$$together with a map of the projective system of stacks
    $$\begin{tikzcd}
... \arrow[r] & \spec A_{n+1} \arrow[d] \arrow[r] & \spec A_n \arrow[r] \arrow[d] & \spec A_{n-1} \arrow[d] \arrow[r] & ... \\
... \arrow[r] & \logfr \arrow[r, "p"]             & \logfr \arrow[r, "p"]         & \logfr \arrow[r]                  & ...
\end{tikzcd}$$
such that each vertical map classifies a log-affine log structure $\spec A_n\rightarrow \spf R[1/p^nM]^\wedge_p$, and is compatible with each other. Passing to the limit, we have a map $\spec A_\infty:=\spec(\varinjlim_n A_n)\rightarrow\spf R[M[1/p]]^\wedge_p$. As faithfully flat algebras are stable under sequential colimits, $\spec A_\infty\rightarrow\spec A$ is a flat cover, and we obtain the desired result.
\begin{prop}
    If $R$ is discrete with bounded $p^\infty$-torsions, then $\rtr$ is classical\footnote{We say a stack $X$ is classical if it belongs to the essential image of the fully faithful embedding $$i_!:\shv_{\rm fpqc}(\alg^\heartsuit,\ani)\rightarrow\stk=\shv_{\rm fpqc}(\algcn,\ani).$$}, and the natural map $$\iota:\sqrt[\infty]{\R\llog_R^{\wedge,\heartsuit}}\rr\rtr$$
    is an equivalence.
\end{prop}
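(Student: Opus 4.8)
The plan is to reduce both assertions to the toric charts $\spf R[M[1/p]]^\wedge_p$ and to isolate a single discreteness input, which is where the bounded $p^\infty$-torsion hypothesis enters. The decisive lemma I would establish first is: for every $\Einf$-monoid $M$, the ring $R[M[1/p]]^\wedge_p$ is discrete. To see this, note that $M[1/p]=\colim(M\xrightarrow{p}M\xrightarrow{p}\cdots)$ has uniquely $p$-divisible higher homotopy groups $\pi_n(M)[1/p]$ for $n\geq 1$, since inverting $p$ turns multiplication by $p$ into an isomorphism. Each connected component of the space $M[1/p]$ is therefore a nilpotent space with uniquely $p$-divisible reduced integral homology, so after applying $R\otimes_{\mathbb S}\Sigma^\infty_+(-)$ and derived $p$-completion these contributions vanish by a Serre-class/mod-$p$-homology argument. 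This collapses $R[M[1/p]]^\wedge_p$ to $(R[\pi_0(M)[1/p]])^\wedge_p$, the $p$-completion of the monoid algebra of the discrete monoid $\pi_0(M)[1/p]$ over the discrete ring $R$. The latter is discrete and free as an $R$-module, so freeness propagates the bounded $p^\infty$-torsion of $R$, whence its derived $p$-completion coincides with the classical one and is discrete.

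Granting the lemma, I would prove classicality of $\rtr$ by descent along the faithfully flat cover $U=\bigsqcup_M \spf R[M[1/p]]^\wedge_p\to\rtr$ constructed just above the statement. By the lemma each chart is the formal spectrum of a discrete ring, hence classical. To see the Čech nerve consists of classical stacks, I would use that $\rtr=\sqrt[\infty]{\logfr}$ and that $\sqrt[\infty]{-}$ is limit-preserving (Theorem\autoref{FUNCROOTSTK}): fiber products of the charts over $\rtr$ are root stacks of fiber products of the $\spf R[M]^\wedge_p$ over $\logfr$, and by Remark\autoref{44556644} these are again toric charts $\spf R[K]^\wedge_p$ for a monoid pushout $K$. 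Thus every Čech term is $\spf R[K[1/p]]^\wedge_p$, again classical by the lemma. Since $U\to\rtr$ is an effective epimorphism, $\rtr$ is the geometric realization of this simplicial diagram of classical stacks; as the embedding $i_!$ preserves colimits and is fully faithful onto the classical stacks, $\rtr$ is classical.

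For the equivalence $\iota$ I would observe that its source $\sqrt[\infty]{\R\llog_R^{\wedge,\heartsuit}}$ is classical by exactly the same toric argument applied to the classical truncation (whose charts produce the same discrete rings $\spf R[K[1/p]]^\wedge_p$), and that $\rtr$ is classical by the previous step. Because $i_!$ is fully faithful, a morphism of classical stacks is an equivalence as soon as its restriction to discrete test rings is one. For discrete $B$ the truncation map ${\R\llog}_R^{\wedge,\heartsuit}(B)\to\logfr(B)$ is an equivalence, so that $\sqrt[\infty]{\R\llog_R^{\wedge,\heartsuit}}(B)={\R\llog}_R^{\wedge,\heartsuit}(B)\times_{\logfr(B)}\rtr(B)\simeq\rtr(B)$ and $\iota$ becomes the identity; hence $\iota$ is an equivalence.

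The main obstacle is the discreteness lemma, and specifically the case of non-discrete $\Einf$-monoids $M$: one must argue carefully that the uniquely $p$-divisible higher homotopy of $M[1/p]$ is annihilated after tensoring with $R$ and $p$-completing, and that bounded $p^\infty$-torsion then forces the surviving discrete monoid algebra to have discrete completion. A secondary point requiring care is verifying that $\spf S$ for such a discrete $S$ is genuinely classical in the sense of the definition, and that the identification of the Čech nerve via limit-preservation of $\sqrt[\infty]{-}$ is compatible with the flat cover of $\rtr$.
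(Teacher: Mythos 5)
Your two outer steps agree with the paper's proof: the key discreteness statement, namely that $R[M[1/p]]^\wedge_p\simeq R[\pi_0M[1/p]]^\wedge_p$ is discrete (the paper obtains this by citing Antieau and Bhatt rather than re-proving it, but your Serre-class sketch is exactly the content of those citations), and the final reduction of the equivalence of $\iota$ to discrete test rings, on which the truncation map $\R\llog^{\wedge,\heartsuit}_R(B)\rightarrow\logfr(B)$ is an equivalence so that the fiber product collapses to $\rtr(B)$.

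The middle step, classicality of $\rtr$, contains a genuine gap. You identify the terms of the {\v C}ech nerve of $\coprod_M\spf R[M[1/p]]^\wedge_p\rightarrow\rtr$ as toric charts $\spf R[K[1/p]]^\wedge_p$ by appealing to Remark\autoref{44556644} together with limit-preservation of $\sqrt[\infty]{-}$. This is a misapplication: Remark\autoref{44556644} computes fiber products inside $\llog\Sptdm$, i.e.\ along maps of log stacks, whereas the {\v C}ech nerve involves fiber products of the \emph{underlying} stacks over the moduli stack ($\logfr$, resp.\ $\rtr$). Those are Isom-type objects: a point of $\spf R[M]^\wedge_p\times_{\logfr}\spf R[N]^\wedge_p$ is a pair of points together with an equivalence of the two induced log structures, and already for $M=N=\N$ this contains $\mathbb{G}_m\times\mathbb{G}_m$ (the locus where both log structures are trivial) as an open substack, so it is not of the form $\spf R[K]^\wedge_p$ for any pushout monoid $K$. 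Note also that the chart maps themselves are not of the form $\sqrt[\infty]{f}$ for maps $f$ of log stacks---by Proposition\autoref{localroot} they are $\mu^M_{p^\infty}$-torsors over root stacks---so Theorem\autoref{FUNCROOTSTK} does not even apply to them. Finally, even granting that the cover is classical, spectral fiber products of classical stacks acquire derived structure in general, so your descent strategy presupposes exactly the classicality of the nerve that is to be proven. The paper sidesteps all fiber products: it forms the category $\mathbf{FMon}_R$ of \emph{all} charts $\spf R[\pi_0M[1/p]]^\wedge_p\rightarrow\rtr$ with their morphisms over $\rtr$, observes that each chart factors canonically through $\iota$, and shows that the colimit over $\mathbf{FMon}_R$ maps by an equivalence to $\rtr$; since $i_!$ preserves colimits, $\rtr$ is classical, and the same factorization gives surjectivity of $\iota$ and classicality of its source, after which your discrete-ring argument finishes the proof. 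To repair your proposal, replace the {\v C}ech-descent step by this colimit-over-charts argument (or supply an independent computation of the nerve).
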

\begin{proof}
By  the argument above, we can form a cover of stacks 
$$\pi:\coprod_{M}\spf R[M[1/p]]^\wedge_p\rr \rtr.$$
The \cite[Corollary 2.10]{antieau2024sphericalwittvectorsintegral} guaranties that the $\Einf$-ring $R[M[1/p]]^\wedge_p\simeq R[\pi_0M[1/p]]^\wedge_p$ is discrete. This holds because $R$ has bounded $p^\infty$-torsion and then follows from \cite[Lemma 4.7]{bhatt2019topological}. Consider the subcategory $\mathbf{FMon}_R$ of $\stk_{/\rtr}$ spanned by such canonical maps $\chi_M:\spf R[\pi_0M[1/p]]_p^\wedge\rightarrow\rtr$.  The map $\chi_M$ has a canonical factorization 
$$\spf R[\pi_0M[1/p]]_p^\wedge\rr\sqrt[\infty]{\R\llog_R^{\wedge,\heartsuit}}\stackrel{\iota}\rr\rtr.$$
Passing to the colimit, we have the  
$$\begin{tikzcd}
{\colim_{\mathbf{FMon}_R}\spf R[\pi_0M[1/p]]_p^\wedge} \arrow[r] \arrow[rr, "\simeq"', bend right] & {\sqrt[\infty]{\R\llog_R^{\wedge,\heartsuit}}} \arrow[r, "\iota"] & \rtr
\end{tikzcd}$$
This follows that $\rtr$ is classical, and the map $\iota$ is surjective. Since both sides of $\iota$ are classical stacks, the equivalence of $\iota$ is tested by discrete rings. Then it follows.
\end{proof}
\begin{rmk}[Comparing with classical infinite root  stacks]\label{258789}
If $(A,M)^a$ is a classical fine and saturated log ring, where $M$ is a fine and saturated monoid. Assuming that $A$ is a $p$-complete algebra over $R$, and $R[M]\rightarrow A$ is $p$-completely faithfully flat (this is equivalent to the induced map $\spf A\rightarrow \R\llog_R^{\wedge,\heartsuit}$ being $p$-completely flat, see \cite[Corollary 5.31]{olsson2003logarithmic}). Then our definition of infinite root  stack $\sqrt[\infty]{\spf(A,M)}$ coincides with the classical definition, using the local description of classical infinite root  stacks established in \cite[Section 3]{Talpo_2018} or Proposition\autoref{localroot}. See also \cite[Section 13]{diao2024logarithmicarminfcohomology} and \cite[Section 3.10]{binda2024logarithmictcinfiniteroot}.
\end{rmk}
\subsubsection{Infinite root  stacks of toric algebras}
Let $M$ be an $\Einf$-monoid. The toric log structure on $\spf R[M]^\wedge_p$ is the log structure induced from the unit map $\alpha:M\rightarrow \Omega^\infty R[M]$. We denote by $\mathbb A^{\rm Log}_M$ the resulting spectral log formal scheme and refer to $\mathbb A_M$ as the underlying spectral formal scheme of it. We also consider the perfected spectral toric formal scheme
$$\mathbb A_M^\infty:=\varprojlim_n\spf R[1/p^nM]^\wedge_p\simeq\spf R[M[1/p]]^\wedge_p\simeq\spf R[\pi_0M[1/p]]^\wedge_p$$
of $\mathbb A_M$. The map $\alpha$ also defines a log structure on $\mathbb A_M^{\infty}$, and we let $\mathbb A_M^{\infty,{\rm Log}}$ be the resulting spectral log formal scheme.
\begin{rmk}
    We observe that if $\pi_0M\simeq\pi_0N$, then $\mathbb A_M^\infty\simeq\mathbb A^\infty_N$. In particular, we have 
    $\mathbb A^{1,\infty}_{\spf R}\simeq \mathbb A^{1,\flat,\infty}_{\spf R}$. Here $\mathbb A_{\spf R}^{1,\flat}$ is the flat affine line $\mathbb A^{1,\flat}\times\spf R$ over $\spf 
    R$.
\end{rmk}
The functor $\map_{\mon}(M,GL_1(-)):\algcn_R\rightarrow\mon^{\rm gp}, A\mapsto\map(M,GL_1(A))$ is an fpqc hypersheaf of $\Einf$-groups over $R$. Let $n$ be a natural number, and let $\mu^M_{p^n}$ be the fiber of the $p^n$-th power self-map of $\map_{\mon}(M,GL_1(-))$. Thus, it's an $\Einf$-group stack over $R$. Let $\mu^M_{p^\infty}:=\varprojlim_n\mu^M_{p^n}$.
\begin{rmk}
 We observe that   $\mu^{\coprod_{n\geq 0}B\Sigma_n}_{p^n}\simeq\mu_{p^n}=\spf R\{x\}^\wedge_p/(x^{p^n}-1)$ is a spectral  group formal scheme. As the functor $\mon^{\rm op}\rightarrow\stk_{/\spf R}, M\mapsto \mu^M_{p^n}$ is limit-preserving, and $\mon$ is generated by $\coprod_{n\geq 0}B\Sigma_n$ under colimits, $\mu^M_{p^n}$ is an affine $\Einf$-group formal scheme.\\
 One can show that $\mu^M_{p^n}\simeq\spf R[(1/p^nM)^{\rm gp}/M^{\rm gp}]^\wedge_p$ and $\mu^M_{p^\infty}\simeq\spf R[M[1/p]^{\rm gp}/M^{\rm gp}]^\wedge_p$.
\end{rmk}

\begin{prop}\label{localroot}
Let $V=\spf R[M]^\wedge_p$ be a toric spectral formal scheme equipped with the toric log structure, and let $V^\infty$ be $\spf R[M[1/p]]^\wedge_p$. Then the map $\pi:V^\infty\rightarrow V$ admits a functorial lifting $\pi^\infty:V^\infty\rightarrow\sqrt[\infty]{V}$. If we equip $V^\infty$ with the natural $\mu_{p^\infty}$-action and equip $\sqrt[\infty]{V}$ with the trivial $\mu^M_{p^\infty}$-action,
the map $\pi^\infty$ is $\mu^M_{p^{\infty}}$-equivariant, and the induced map $V^\infty/{\mu^M_{p^\infty}}\rightarrow \sqrt[\infty]{V}$ is an equivalence.
\end{prop}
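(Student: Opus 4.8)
The plan is to produce the lifting $\pi^\infty$ from the toric charts and then to identify $V^\infty\to\sqrt[\infty]{V}$ as a $\mu^M_{p^\infty}$-torsor, from which the quotient description follows by fpqc descent. Write $V_n:=\spf R[\tfrac{1}{p^n}M]^\wedge_p$, so that $V=V_0$, $V^\infty\simeq\varprojlim_n V_n$, and each $V_n$ carries its toric log structure $\mathscr N_n$ attached to $\tfrac{1}{p^n}M\to\Omega^\infty R[\tfrac{1}{p^n}M]$. First I would construct $\pi^\infty$ via Remark\autoref{ptwist}: a map $V^\infty\to\sqrt[\infty]{V}$ lying over $\pi$ is the same datum as a compatible tower of log structures on $V^\infty$ whose $n$-th term is a $p^n$-th root of the pullback of the toric structure $\mathscr M$ of $V$, with the first vertical arrow strict. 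The pullbacks of the $\mathscr N_n$ to $V^\infty$ supply exactly such a tower, since the inclusion $M\hookrightarrow\tfrac{1}{p^n}M$ (dual to the $p^n$-power map $x\mapsto p^nx$) exhibits $p^n\cdot\mathscr N_n$ as the toric structure whose chart is the restriction of the chart of $\mathscr N_n$ to $M$, and this restriction is precisely the pullback of $\mathscr M$ along $V^\infty\to V$. As everything is built from the functorial assignment $M\mapsto(\tfrac{1}{p^n}M)_n$, the lifting $\pi^\infty$ is functorial in $M$.

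Next I would treat equivariance. The group $\mu^M_{p^\infty}\simeq\spf R[M[1/p]^{\rm gp}/M^{\rm gp}]^\wedge_p$ is the Cartier dual of $M[1/p]^{\rm gp}/M^{\rm gp}$ and acts on $V^\infty=\spf R[M[1/p]]^\wedge_p$ through the $M[1/p]^{\rm gp}/M^{\rm gp}$-grading of $R[M[1/p]]$. On a $p$-complete test object $U=\spec A$ this action sends a chart $M[1/p]\to\Omega^\infty A$ to its twist by a character $\chi\colon M[1/p]^{\rm gp}/M^{\rm gp}\to GL_1(A)$. Such a twist is trivial on $M$, hence does not alter the underlying map $V^\infty\to V$, and it multiplies each prelog chart $\tfrac{1}{p^n}M\to\Omega^\infty A$ by a $GL_1(A)$-valued factor, which leaves the associated log structure unchanged after logification, because log structures depend on prelog charts only up to $GL_1$. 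Therefore $\pi^\infty$ is invariant for the given action on the source and the trivial action on the target, and it descends to a map $\bar\pi^\infty\colon V^\infty/\mu^M_{p^\infty}\to\sqrt[\infty]{V}$.

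Finally I would prove $\bar\pi^\infty$ is an equivalence. Effective epimorphy of $V^\infty\to\sqrt[\infty]{V}$ in the fpqc topology is the covering property recorded in the construction of $\coprod_M\spf R[M[1/p]]^\wedge_p\to\rtr$ above, applied to the single monoid $M$. For the groupoid computation I would analyze a $U$-point of $V^\infty\times_{\sqrt[\infty]{V}}V^\infty$: it is a pair of charts $g_1,g_2\colon M[1/p]\to\Omega^\infty A$ with the same image in $\sqrt[\infty]{V}$. Agreement over $V$ forces $g_1|_M=g_2|_M$, and agreement of the entire root tower forces the log structures induced from each $\tfrac{1}{p^n}M$ to be isomorphic; by the log condition their discrepancy is exactly a homomorphism $M[1/p]^{\rm gp}/M^{\rm gp}\to GL_1(A)$, i.e. a $U$-point of $\mu^M_{p^\infty}$. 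This gives a natural equivalence $V^\infty\times_{\sqrt[\infty]{V}}V^\infty\simeq V^\infty\times\mu^M_{p^\infty}$ compatible with the projections and the action, so the \v{C}ech nerve of $V^\infty\to\sqrt[\infty]{V}$ agrees with the bar construction of the $\mu^M_{p^\infty}$-action; passing to geometric realizations and using effective epimorphy yields $\sqrt[\infty]{V}\simeq V^\infty/\mu^M_{p^\infty}$. The main obstacle is precisely this fiber-product computation: one must verify, in the $p$-complete spectral setting, that two $p$-power-compatible root towers of the toric log structure agreeing in $\sqrt[\infty]{V}$ differ by exactly a $GL_1$-valued character of $M[1/p]^{\rm gp}/M^{\rm gp}$, which is where the equivalence $\mathscr M\times_{\Omega^\infty\mathcal O}GL_1(\mathcal O)\simeq GL_1(\mathcal O)$ and the compatibility of logification with $p$-completion are genuinely used.
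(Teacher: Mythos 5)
Your proposal is correct, and on the decisive step it takes a genuinely different---and in fact sounder---route than the paper. The construction of $\pi^\infty$ and the equivariance statement agree in substance: the paper produces the lifting by passing to the limit $f_\infty=\lim_n f_n$ of a chart tower and declares equivariance easy, while you produce it from the universal tower of toric structures via Remark\autoref{ptwist} and justify invariance by noting that twisting a prelog chart by a $GL_1$-valued character does not change its logification. The divergence is in the quotient property. The paper argues pointwise, asserting that the fiber of $\pi^\infty$ over $t\in\sqrt[\infty]{V}(B)$ is
$$\varprojlim_n{\rm fib}\bigl(\map_R(R[1/p^nM]^\wedge_p,B)\rr\map_R(R[M]^\wedge_p,B)\bigr)\simeq{\rm fib}\bigl(\map_{\mon}(M[1/p],\Omega^\infty B)\rr\map_{\mon}(M,\Omega^\infty B)\bigr),$$
and that this space is $\mu^M_{p^\infty}(B)$. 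That identification discards exactly the data your argument keeps: a point of the fiber is a chart tower \emph{together with} a compatible system of equivalences between the induced log structures and those of $t$, not merely a tower restricting to $g_0$, and the displayed space agrees with $\mu^M_{p^\infty}(B)$ only when the chart is $GL_1$-valued. At a boundary point (take $M=\N$, $g_0=0$, $B=\OO_{\C_p}/p$) the displayed space is the set of towers of $p$-power roots of $0$, which contains points such as $(p^{1/p^n})_n$ that do not lie in the fiber at all, whereas the genuine fiber is the zero chart together with its $\mu^M_{p^\infty}(B)$ worth of self-identifications. Your \v{C}ech computation of $V^\infty\times_{\sqrt[\infty]{V}}V^\infty$, in which the character $\chi\colon M[1/p]^{\rm gp}/M^{\rm gp}\rr GL_1(A)$ arises as the equivalence datum of log structures (and is not determined by the pair of charts alone---for the zero chart every $\chi$ occurs), is the correct bookkeeping; combined with effective epimorphy and the bar construction it establishes the torsor property over all of $\sqrt[\infty]{V}$, in line with the classical argument of \cite{Talpo_2018}. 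In short, the paper's fiberwise computation is shorter but, as written, is only valid over the locus where the log structure is trivial; your groupoid argument closes that gap, at the cost of the single verification you flag---that equivalences of logified toric towers in the $p$-complete spectral setting are exactly $GL_1$-valued characters trivial on $M$, which is precisely where the log condition enters.
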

\begin{proof}
Let $A$ be an algebra lying over $\spf R$. Unwinding the definition of infinite root  stacks, after a suitable flat localization of $A$, a section $s=(f_0,\{f_n\})\in\sqrt[\infty]{V}(A)\simeq V(A)\times_{\logfr(A)}\rtr(A)$  is   given by   a collection of $R$-linear $p$-complete  maps $f_n:R[1/p^nM]^\wedge_p\rightarrow A$, $n=1,2,...,\infty$, which are compatible with transition maps
    $$R[M]^\wedge_p\hookrightarrow R[1/pM]]^\wedge_p\hookrightarrow...\hookrightarrow R[1/p^nM]^\wedge_p\hookrightarrow...$$ Taking the limit, we get a map $f_\infty:=\lim_mf_n:R[M[1/p]]^\wedge_p\rightarrow A$. In this case, we refer to $f_\infty$ as an infinite root  of $f_0$.
We summarize it as the following commutative diagram 
$$\begin{tikzcd}
\spec A \arrow[r, "s", dashed] \arrow[rd, "f_0"'] \arrow[rr, "\{f_n\}", bend left] & {\sqrt[\infty]{V}} \arrow[r] \arrow[d] & \rtr \arrow[d] \\ & V \arrow[r]                            & \logfr        
\end{tikzcd}$$
The limit map $f_\infty$ determines a factorization  $$s:\spec A\stackrel{f_\infty}\rr V^\infty\stackrel{\pi^\infty}\rr\sqrt[\infty]{V}$$ 
of $s$, lying over $V$. Note that the definition of $\pi^\infty$ is independent of the  choice of the $\Einf$-ring $A$ and the map $s$. The map $\pi^\infty$ is a surjective map of  fpqc stacks. It's easy to see that  $\pi^\infty$ is $\mu^M_{p^\infty}$-equivariant. To show that it's a quotient map, we consider its fiber over an $\Einf$-ring $B$ lying over $\spf R$. Let $t\in\sqrt[\infty]{V}(B)$, which corresponds to a pair $(g_0:R[M]^\wedge_p\rightarrow B,\{g_n:R[1/p^nM]^\wedge_p\rightarrow B\})$. Denote by $g_\infty:=\lim_n g_n$ the associated infinite root  of $g_0$. We observe that the fiber of $\pi^\infty$ lying over $t$ is  equivalent to the following animae
\begin{align*}
&\varprojlim_n{\rm fib}(\map_R(R[1/p^nM]^\wedge_p,B)\rr \map_R(R[M]^\wedge_p,B))\\
\simeq &{\rm fib}(\varprojlim_n \map_R(R[1/p^nM]^\wedge_p,B)\rr \map_R(R[M]^\wedge_p,B))\\
\simeq&{\rm fib}(\map_R(R[M[1/p]]^\wedge_p,B)\rr \map_R(R[M]^\wedge_p,B))\\
\simeq&{\rm fib}(\map_{\mon}(M[1/p],\Omega^\infty B)\rr\map_{\mon}(M,\Omega^\infty B))
. \end{align*}
In other words, it's equivalent to $\mu^M_{p^\infty}(B)$.
\end{proof}

\subsection{Functoriality of infinite root  stacks}
We will prove that the construction of infinite root  stacks is functorial in this subsection.
\begin{thm}\label{FUNCROOTSTK}
   There is  a limit-preserving functor 
    $$\gamma:\llog\stk_{/\spf R}\rr\stk_{/\spf R},$$
    such that for any  quasi-coherent spectral log stack $X$, the stack $\gamma(X)$ coincides with the infinite root  stack $\sqrt[\infty]{X}$ of $X.$
   
\end{thm}
We will give a proof of Theorem\autoref{FUNCROOTSTK} later.\\
Let $A$ be an  $\Einf$-algebra lying over $\spf R$. For any $n\geq 0$, the $p$-th power map $p^n\Omega^\infty A\rightarrow \Omega^\infty A$ gives rise to another $p$-complete $\Einf$-prelog ring $(A,p^n\Omega^\infty A)$. Then we  can form the following sequence 
$$\begin{tikzcd}
... \arrow[r] & {(A,p^n\Omega^\infty A)} \arrow[r] & {(A,p^{n-1}\Omega^\infty A)} \arrow[r] & ... \arrow[r] & {(A,p\Omega^\infty A)} \arrow[r] & {(A,\Omega^\infty A)}
\end{tikzcd}$$
Passing to the corresponding spectral log schemes, we define:
$$(\spet A)^\sharp:=\colim_{n\geq 0}\spet(A,p^n\Omega^\infty A).$$
This defines a functor 
$$(-)_0^\sharp:\alg^{{\rm cn, op}}_{/\spf R}\rr\llog\stk_{/\spf R}.$$
\begin{rmk}
We define $p^\infty \Omega^\infty A$ as the limit $\lim_np^n \Omega^\infty A$. We have a natural map $(\spet A)^\sharp\rightarrow\spet(A,p^\infty \Omega^\infty A)$. This map is not an isomorphism in general. For example, let $A=\Z_p$. The log structure of the log stack  $\spet(\Z_p,p^\infty \N )$ is trivial, while $(\spet\Z_p)^\sharp$ is not quasi-coherent.
\end{rmk}
Taking the left Kan extension of  $(-)^\sharp$ along the Yoneda inclusion $\spet(-):\alg^{{\rm cn, op}}_{/\spf R}\rightarrow\stk_{/\spf R}$, we have the following  functor 
$$(-)^\sharp:\stk_{/\spf R}\rr\llog\stk_{/\spf R}.$$
It's easy to see that we have the following result.
\begin{lem}
    The functor $(-)^\sharp$ preserves colimits.
\end{lem}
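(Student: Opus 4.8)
The plan is to prove colimit-preservation by recognizing $(-)^\sharp$ as the descent to fpqc sheaves of a functor that is visibly colimit-preserving on presheaves, the only real content being an fpqc descent statement for the log stacks $\spet(A,p^n\Omega^\infty A)$. Write $\mathcal C=\alg^{\mathrm{cn,op}}_{/R}$ for the affines, let $j\colon\mathcal C\to\mathrm{PSh}(\mathcal C)$ be the presheaf Yoneda embedding, let $L\colon\mathrm{PSh}(\mathcal C)\to\stk_{/\spf R}$ be fpqc-sheafification, so that $y:=L\circ j$ is the sheaf-valued embedding $\spet(-)$ used to define $(-)^\sharp=y_!F$ with $F=(-)^\sharp_0$. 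Since $\llog\stk_{/\spf R}$ is presentable (the fibres of $\pi\colon\llog\stk\to\stk$ are presentable and $\stk$ is presentable), it is cocomplete, so by \cite[Theorem 5.1.5.6]{lurie2009higher} the presheaf left Kan extension $j_!F\colon\mathrm{PSh}(\mathcal C)\to\llog\stk_{/\spf R}$ preserves all small colimits and satisfies $j_!F\circ j\simeq F$.

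First I would invoke the universal property of the localization. The $\infty$-topos $\stk_{/\spf R}$ is an accessible left-exact localization of $\mathrm{PSh}(\mathcal C)$, so by \cite[Proposition 5.5.4.20]{lurie2009higher} composition with $L$ identifies colimit-preserving functors $\stk_{/\spf R}\to\llog\stk_{/\spf R}$ with those colimit-preserving functors $\mathrm{PSh}(\mathcal C)\to\llog\stk_{/\spf R}$ that invert the fpqc-local equivalences. Thus it suffices to check that $j_!F$ inverts local equivalences: granting this, $j_!F$ descends to a colimit-preserving $\bar F\colon\stk_{/\spf R}\to\llog\stk_{/\spf R}$ with $\bar F\circ L\simeq j_!F$, whence $\bar F\circ y\simeq j_!F\circ j\simeq F$. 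Because every $X\in\stk_{/\spf R}$ is the canonical colimit $\colim_{(\spec A\to X)}\spec A$ of representables lying over it, the pointwise formula for $y_!F$ gives $y_!F(X)\simeq\colim F(A)\simeq\bar F(X)$, so $(-)^\sharp=y_!F\simeq\bar F$ is colimit-preserving, as claimed.

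It remains to verify that $j_!F$ inverts local equivalences, equivalently that $(-)^\sharp$ carries fpqc hypercovers to colimit diagrams. Local equivalences are generated by the maps $\colim_{\Delta}\spec A^\bullet\to\spec A$ attached to fpqc hypercovers $A\to A^\bullet$; since $j_!F$ preserves colimits and $j_!F(\spec A)\simeq(\spet A)^\sharp$, inverting these amounts to the equivalence
$$\colim_{[\bullet]\in\Delta}(\spet A^\bullet)^\sharp\;\xrightarrow{\ \simeq\ }\;(\spet A)^\sharp .$$
As colimits commute and $(\spet A)^\sharp=\colim_n\spet(A,p^n\Omega^\infty A)$, it is enough to prove, for each fixed $n$, that $A\mapsto\spet(A,p^n\Omega^\infty A)$ sends fpqc hypercovers to colimit diagrams in $\llog\stk$. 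Under $\pi\colon\llog\stk\to\stk$ the diagram maps to $\colim_\Delta\spec A^\bullet\simeq\spec A$ by fpqc descent for representable sheaves; fibrewise, the log structures $p^n\Omega^\infty\OO_{A^\bullet}$ are restrictions of the fpqc sheaf of $\Einf$-monoids $p^n\Omega^\infty\OO$ (the structure sheaf composed with multiplication by $p^n$), which has fpqc descent by \autoref{des} and \autoref{fpqcrlog}, and therefore glue to $p^n\Omega^\infty\OO_A$.

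The main obstacle is precisely this last descent step: the transition maps in the hypercover diagram are \emph{not} strict, since the log structure is the full $p^n\Omega^\infty\OO$ rather than a pullback (cf. the twisting of \autoref{ptwist}), so one must show that the colimit in $\llog\stk$ is nonetheless computed by combining the fpqc descent of the underlying affines with the descent of the log-structure sheaf, using that $\pi$ is a Cartesian fibration whose pushforward of log structures along $\spec A^\bullet\to\spec A$ is controlled by \autoref{des} and the generation of quasi-coherent log stacks by log-affines under colimits (\autoref{genlogaff}). Once the interaction of these colimits with $\pi$ is pinned down, $j_!F$ inverts local equivalences and the argument of the first two paragraphs delivers colimit-preservation of $(-)^\sharp$.
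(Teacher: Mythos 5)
You follow the same route as the paper: form the presheaf left Kan extension of $(-)^\sharp_0$, which preserves colimits for formal reasons, identify $(-)^\sharp$ with its descent to sheaves, and reduce everything to the assertion that $(-)^\sharp_0$ satisfies fpqc codescent, i.e.\ that $\colim_{\Delta}(\spet A^\bullet)^\sharp\to(\spet A)^\sharp$ is an equivalence for fpqc (hyper)covers. The paper does exactly this, citing \cite[Proposition 1.3.1.7]{lurie2018spectral} where you invoke the universal property of the localization \cite[Proposition 5.5.4.20]{lurie2009higher}; these play identical roles, and your further reduction to a fixed $n$ by commuting the two colimits is valid.

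The gap is that the codescent statement is never actually proved: your final paragraph ends with ``once the interaction of these colimits with $\pi$ is pinned down,'' which is precisely the assertion that needs proof, and the sketch preceding it does not work as written. \autoref{des} (and fpqc descent of log structures in general) concerns \emph{Cartesian} families, i.e.\ descent data whose transition maps are strict; but, as you yourself correctly observe, the transition maps of $[\bullet]\mapsto\spet(A^\bullet,p^n\Omega^\infty A^\bullet)$ are not strict (a local section $t$ of $\OO_{A^1}$ need not be, even locally, a unit multiple of a section pulled back from $A^0$, so the canonical map $g^*\mathscr M_{A^0,n}\to\mathscr M_{A^1,n}$ is not an equivalence), so ``descent of the log-structure sheaf'' is not the statement you need. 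To close the gap, unwind what a cone from this non-Cartesian diagram to a log stack $(Y,\mathscr N)$ is: fpqc descent of the underlying stacks, together with the adjunction between $g_\bullet^{-1}$ and pushforward of log structures, identifies such a cone with a map $f:\spet A\to Y$ plus a map $f^{-1}\mathscr N\to\lim_{\Delta}(g_\bullet)_*\mathscr M_{A^\bullet,n}$ over $\Omega^\infty\OO_A$. Hence codescent for fixed $n$ follows once one shows that the canonical map $\mathscr M_{A,n}\to\lim_{\Delta}(g_\bullet)_*\mathscr M_{A^\bullet,n}$ is an equivalence of log structures on $\spet A$; this is a genuine (if reasonable) claim, proved from fpqc descent of the structure sheaf together with descent for units and $GL_1$-torsors (\autoref{1884310}) --- the same ingredients as \autoref{des}, but applied to a family that is not a descent datum. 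This is exactly the step the paper itself compresses into ``$(-)^\sharp_0$ has fpqc codescent, which is obvious,'' so your proposal tracks the paper's proof faithfully but stops, with an honest flag, at the one point where real content is required.
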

\begin{proof}
    Taking the left Kan extension of $(-)_0^\sharp:\alg_{/\spf R}^{\rm cn,op}\rightarrow\llog\stk_{/\spf R}$ along the Yoneda inclusion $\alg_{/\spf R}^{\rm cn}\rightarrow\Fun(\alg_{/\spf R}^{\rm cn},\ani)$, we get a colimit-preserving functor $(-)_1^\sharp:\Fun(\alg_{/\spf R}^{\rm cn},\ani)\rightarrow\llog\stk_{/\spf R}$, and $(-)^\sharp$ is equivalent to the restriction of $(-)_1^\sharp$ on its subcategory of sheaf objects. Applying \cite[ Proposition 1.3.1.7]{lurie2018spectral}, we need to show that the functor $(-)_0^\sharp$ has  fpqc codescent, which is obvious.
\end{proof}
\begin{proof}[Proof of Theorem\autoref{FUNCROOTSTK}]
As $(-)^\sharp$ is colimit-preserving, it's right adjointable. We denote by $\gamma:\llog\stk_{/\spf R}\rightarrow\stk_{/\spf R}$ its right adjunction. Let $(X,\mathscr N)$ be a quasi-coherentspectral log stack lying over $\spf R$. We have the following equivalences
\begin{align*}
    \map_{\stk_{/\spf R}}(\spet A,\gamma(X,\mathscr  N))&\simeq\map_{\llog\stk_{/\spf R}}((\spet A)^\sharp,(X,\mathscr  N))\\
    &\simeq\varprojlim_n\map_{\llog\stk_{/\spf R}}(\spet (A,p^n\Omega^\infty A),(X,\mathscr N)).
    \end{align*}
The mapping space
$\lim_n\map_{\llog\stk_{/\spf R}}(\spet (A,p^n\Omega^\infty A),(X,\mathscr N))$ is equivalent to the anima consisting of the following diagrams in $\llog\stk_{/\spf R}$
$$\begin{tikzcd}
                &                                                                      & {(X,\mathscr N)}        &                                                                   &                                                                    \\
... \arrow[rru] & {\spet(A,p^n\Omega^\infty\mathcal O_A)} \arrow[ru, "f_n"'] \arrow[l] & ... \arrow[u] \arrow[l] & {\spet(A,p\Omega^\infty\mathcal O_A)} \arrow[lu, "f_1"] \arrow[l] & {\spet(A,\Omega^\infty\mathcal O_A)} \arrow[llu, "f_0"'] \arrow[l]
\end{tikzcd}$$
Here we use the notation given in Remark\autoref{ptwist}.
 This coincides with the anima $\sqrt[\infty]{(X,\mathscr N)}(A)$. In particular, if $\mathscr N\simeq GL_1(\mathcal O_X)$, then $\gamma(X,\mathscr N)\simeq X.$
We then have to show that there is a morphism $ \sqrt[\infty]{(X,\mathscr N)} \rightarrow\gamma(X,\mathscr N)$, which gives the above equivalence. Dually, we will construct a map $\sqrt[\infty]{(X,\mathscr N)}^\sharp \rightarrow X$. Unwinding the definition of infinite root  stacks, there is an equivalence 
$\sqrt[\infty]{(X,\mathscr N)}^\sharp\simeq \colim_{\spf C}{\spf C}^\sharp$, where $\spf C$ runs over all formal log-affine objects lying over $\sqrt[\infty]{(X,\mathscr N)}$. So there is an equivalence 
$$\map({\sqrt[\infty]{(X,\mathscr N)}},\gamma((X,\mathscr N)))\simeq\lim\map({{\spf C}},\gamma(X,\mathscr N))\simeq \lim\map({({\spf C})}^\sharp,(X,\mathscr N)).$$
Denote by $\eta:\sqrt[\infty]{(X,\mathscr N)}\rightarrow X$.
Then by the definition of the functor $(-)^\sharp$, a commutative diagram of log stacks as follows
$$\begin{tikzcd}
... & {({\sqrt[\infty]{(X,\mathscr N)}},p^2\Omega^\infty\mathcal O_{\sqrt[\infty]{(X,\mathscr N)}})^a} \arrow[l] \arrow[rd, "\eta_2"'] & {({\sqrt[\infty]{(X,\mathscr N)}},p\Omega^\infty\mathcal O_{\sqrt[\infty]{(X,\mathscr N)}})^a} \arrow[d, "\eta_1"] \arrow[l] & {({\sqrt[\infty]{(X,\mathscr N)}},\Omega^\infty\mathcal O_{\sqrt[\infty]{(X,\mathscr N)}})^a} \arrow[ld, "\eta"] \arrow[l] \\
    &                                                                                                                                  & {(X,\mathscr N)}                                                                                                             &                                                                                                                           
\end{tikzcd}$$
 defines a map $\sqrt[\infty]{(X,\mathscr N)}^\sharp\rightarrow X$.  Then the induced  map $\eta:{\sqrt[\infty]{(X,\mathscr N)}}\rightarrow {\sqrt[\infty]{(X,\mathscr N)}}$ gives such a map by the definition of infinite root  stacks.
\end{proof}

\subsection{Saturated descent}
Recall that a discrete monoid $M\in\mon^\heartsuit$ is called $p$-saturated if, for any $x\in M^{\rm gp}$, there exists a non-negative integer $n\in\Z_{\geq 0}$ such that $p^nx$ lies in the image of the group-completion map $M\rightarrow M^{\rm gp}$, then $x\in M$. In other words, the natural map $M\rightarrow M[1/p]\times_{M[1/p]^{\rm gp}}M^{\rm gp}$ is an isomorphism of monoids, where the fiber  product is formed in $\mon^\heartsuit$. In practice, this condition is required in almost all concrete applications in arithmetic and geometry. In this subsection, we give its topological generalization.
\begin{defn}\label{satdefn}
 let $M$ be an $\Einf$-monoid. Consider  the map $a_M:M[1/p]\rightarrow M[1/p]\times (M[1/p]^{\rm gp}/M^{\rm gp})$ induced from the diagonal map of $M$, and we regard it as the natural coaction of $M[1/p]^{\rm gp}/M^{\rm gp}$ on $M[1/p]$. Form the following cosimplicial diagram $C_M^\bullet$ induced from the coaction $a_M$:
 $$\begin{tikzcd}
M[1/p] \arrow[r, shift right] \arrow[r, shift left] & {M[1/p]\times(M[1/p]^{\rm gp}/M^{\rm gp})} \arrow[l] \arrow[r, shift left=2] \arrow[r, shift right=2] \arrow[r] & {M[1/p]\times(M[1/p]^{\rm gp}/M^{\rm gp})^2} \arrow[l, shift left] \arrow[l, shift right] \arrow[r, shift left=3] \arrow[r, shift left] \arrow[r, shift right] \arrow[r, shift right=3] & ... \arrow[l, shift left=2] \arrow[l, shift right=2] \arrow[l]
\end{tikzcd}$$
We say that $M$ is\textit{ topologically $p$-saturated} if the augmented cosimplicial diagram $M\rightarrow C^\bullet_M$ is equivalent to the
left Kan extension of the map $M\rightarrow M[1/p]$ along the inclusion $\Delta^{\leq 0}_+\rightarrow\Delta$. Equivalently, it is equivalent to say that:
\begin{enumerate}
    \item The augmented cosimplicial diagram is a limit diagram;
    \item The following diagram 
    $$\begin{tikzcd}
M \arrow[r] \arrow[d]                & {M[1/p]} \arrow[d, "a_M"]                  \\
{M[1/p]} \arrow[r, "{({\rm id},0)}"] & {M[1/p]\times(M[1/p]^{\rm gp}/M^{\rm gp})}
\end{tikzcd}$$
is a pushout square.
\end{enumerate}
\end{defn}
Our main result in this subsection is the following.
\begin{thm}[Saturated descent]\label{satdescent}
  Let $(A,M)$ be a $p$-complete $\Einf$-prelog ring, such that $M$ is topologically $p$-saturated, and let $K$ be a $p$-complete almost connective module over $A$. Then the map 
  $$K\rr \lim_{\bullet\in\Delta}(K\widehat{\otimes}_AA[C_M^\bullet]^\wedge_p)$$
  is an equivalence.
\end{thm}
\begin{rmk}[Saturated descent as descent along root stacks]
  Fix a $p$-complete $\Einf$-prelog ring $(A,M)$. Proposition\autoref{localroot} shows that the simplicial diagram $\spf A[C^\bullet_M]^\wedge_p$  gives rise to an atlas  of the infinite root stack $\sqrt[\infty]{(A,M)}$ in the fpqc topology. Fix a $p$-complete $A$-module $K$, the simplicial diagram 
  $K\widehat{\otimes}_AA[C^\bullet_M]^\wedge_p$ computes the cohomology ${\rm R}\Gamma(\sqrt[\infty]{(A,M)},\eta^* K)\simeq \eta_*\eta^* K$. Here, $\eta$ is the natural map $\sqrt[\infty]{(A,M)}\rightarrow \spf A$.
  So the equivalence in Theorem\autoref{satdescent} is equivalent to say that the pullback functor 
  $$\eta^*:\qcoh^{\rm acn}(\spf A)\rr \qcoh^{\rm acn}(\sqrt[\infty]{(A,M)})$$
  is fully faithful.
\end{rmk}
We will give the proof of Theorem\autoref{satdescent} in this section.
Before doing that, we have to do some preparation.
\begin{defn}
Let $f: R\rightarrow A$ be a map of connective $\Einf$-rings, and let $A^\bullet$ be its \v Cech nerve.   We say that:
\begin{enumerate}
    \item $f$ is \textit{convergent} if the natural map $R\rightarrow\lim_{\bullet\in\Delta}A^\bullet$ is an equivalence;
    \item $f$ is \textit{linearly convergent} if the natural functor 
    $\Mod_R^{\rm acn}\rightarrow \lim_{\bullet\in\Delta}\Mod^{\rm acn}_{A^\bullet}$ is an equivalence.  Here, we let $\Mod^{\rm acn}$ be the $\infty$-category of almost connective modules.
\end{enumerate}
\begin{exmp}\label{dsahuikmnbvfghjjkmnbg}
We have the following  basic examples of linearly convergent maps.
\begin{enumerate}
    \item 
     Descendable maps in the sense of \cite[section 3]{mathew2016galoisgroupstablehomotopy}, are linearly convergent.
    \item 
     Faithfully flat maps are linearly convergent.
     \item  The map $\Z_p\rightarrow \Z/p$ is \textit{$p$ completely linearly convergent}. Indeed, the base change functor $(-)\otimes\Z/p:\Mod^\wedge_{\Z_p}\rightarrow \Mod_{\Z/p}$ is conservative and preserves
     both colimits and limits because $\Z/p$ is perfect as a $\Z_p$-module. In other words, 
     it is both monadic and comonadic.
     Then  the linear convergence follows from the Barr-Beck-Lurie theorem.
     \end{enumerate}
\end{exmp}
\end{defn}
\begin{lem}\label{qwertyuiop}
The class of  linearly convergent maps defines a subcanonical and finitary Grothendieck topology on the $\infty$-category $\alg^{\rm cn,op}$. We call it the linearly convergent topology.
\end{lem}
\begin{proof}
We will apply \cite[Proposition A.3.2.1]{lurie2018spectral} to prove the assertion.\\
(1). The class of linearly convergent maps is stable under base change in $\algcn$.\\
(2). The class of linearly convergent maps is stable under composition. Let $R\rightarrow A\rightarrow B$ be a composition of linearly convergent maps. We want to show that the composition is also linearly convergent. Using \cite[Proposition 5.2.2.36]{lurie2017higher}, we can reduce to check for descendability of almost connective modules along the map $R\rightarrow B$. 
We first observe that the functor $\Mod^{\rm acn}_R\rightarrow\Mod_B^{\rm acn}$ is conservative. Then, we have to show that for an almost connective $R$-module $K$, it has descent along the map $R\rightarrow B$.
 We denote by
$$C^\bullet_{A/R},C^\bullet_{B/R},C^\bullet_{B/A}$$
    the {\v C}ech nerves of the functors $R\rightarrow A$, $\Mod^{\rm acn}_R\rightarrow B$ and $A\rightarrow B$ respectively. 
    Consider the map $A\rightarrow C^n_{A/R}$, and its pushout $C^{n}_{A/R}\rightarrow B\otimes_AC^n_{A/R}$
    along the map $A\rightarrow B$.
     We have the following pushout square
    $$\begin{tikzcd}
A \arrow[d] \arrow[r] & C^{n}_{A/R} \arrow[d] \\
B \arrow[r]           & B\otimes_AC^{n}_{A/R}
\end{tikzcd}$$
    Denote by $C^{\bullet,n}$ the {\v C}ech nerve of this map, and therefore we get a bi-cosimplicial object $\Delta\times\Delta\ni(n,m)\mapsto C^{n,m}$. Note that the diagonal of $C^{\bullet,\bullet}$ is equivalent to $C^\bullet_{B/R}$. Thus, we have the following equivalences 
    $$\lim_{\bullet\in\Delta}(M\otimes_RC^\bullet_{B/R})\simeq\lim_{(p,q)\in\Delta\times\Delta}(M\otimes_RC^{p,q})\simeq\lim_{p\in\Delta}\lim_{q\in\Delta}(M\otimes_RC^{q,p}).$$
    As the map $C^{n}_{A/R}\rightarrow B\otimes_AC^n_{A/R}$ is linearly convergent by (1), we know that 
$$\lim_{p\in\Delta}\lim_{q\in\Delta}(M\otimes_RC^{p,q})\simeq\lim_{p\in\Delta} M\otimes_RC^p_{A/R})\simeq M.$$
Then the result follows.\\
(3). The class of linearly convergent maps is stable under finite products. This is clear because the totalization and  $\Mod^{\rm acn}_{(-)}$ commute with finite products.\\
(4). The coproducts are universal in $\alg^{\rm cn,op}$. This is obvious.\\
Then there is a unique Grothendieck topology on $\alg^{\rm cn,op}$ given as follows: for any $R\in\algcn$, a sieve $\CC^0_{R}$ on $\spec R$ is a covering sieve if and only if it contains finitely many affine schemes $\spec A_i,i=1,2,...,n$, such that the map $R\rightarrow A_1\times A_2\times...\times A_n$ is linearly convergent.
\end{proof}
\begin{rmk}
    Similar to the fpqc topology, the linearly convergent topology is also admits arbitrary large covers, and the subcateogry $\shv_{\rm lcon}\subset\Fun(\algcn,\ani)$ spanned by sheaves in the linearly convergent topology might be not presentable. However, this issue disappears under some mild set-theoretic assumptions. For example, we fix two regular cardinal $\kappa<\kappa'$. We let $\algcn$ be the  $\infty$-category of $\kappa$-small connective $\Einf$-rings, and replace the target $\ani$ of  $\kappa$-small animae with $\widehat{\ani}$ the $\infty$-category of $\kappa'$-small animae. Just as before, we omit set theory issues.
\end{rmk}
Then we have the following naive result.
\begin{prop}\label{cxfnfdsdszxmncxdhsbxvdgxv}
The functor $\Fun(\algcn,\widehat{\ani})^{\rm op}\rightarrow \widehat\cat, X\mapsto\qcoh^{\rm acn}(X)$ factors through the localization $\Fun(\algcn,\widehat{\ani})\rightarrow \widehat\shv_{\rm lcon}$. The resulting functor 
$\widehat{\shv}_{\rm lcon}\rightarrow\widehat{\cat}$ is limit-preserving. Moreover, a map $R\rightarrow A$ of connective $\Einf$-rings is linearly convergent if and only if the corresponding affine scheme map $\spec A\rightarrow\spec R$ is surjective in $\widehat\shv_{\rm lcon}$
\end{prop}
\begin{proof}
    The same argument with \cite[Proposition 6.2.3.1]{lurie2018spectral} will give the first assertion. The second assertion is obvious.
\end{proof}
\begin{lem}\label{asdfghjkl}
Let $A$ be a connective $\Einf$-ring. Then
the $0$-th truncation map $A\rightarrow\pi_0 A$ is linearly convergent. In particular, we have that 
    for any $\Einf$-monoid $M$, the map $R[M]\rightarrow R[\pi_0M]$ is linearly convergent.
\end{lem}
\begin{proof}
    This is immediately deduced from Dundas and Rognes' theorem of cosimplicial descent  \cite[Theorem 1.2]{MR3873116}.  But we can give a direct argument as follows.\\
    We first show that the base change functor $\Mod_A^{\rm acn}\rightarrow\Mod_{\pi_0 A}^{\rm acn}$ is conservative. Fix an almost connective $A$-module $K$, and assume that $K\otimes_A\pi_0 A\simeq 0$.
    If $A$ is $n$-truncated for some $n$, the map $A\rightarrow\pi_0 A$ is a descendable map, and it is automatically conservative for the base change of modules by \cite[Proposition 3.19]{mathew2016galoisgroupstablehomotopy}.
    For a general $A$, we have that 
    $$K\otimes_A \pi_0 A\simeq M\otimes_A\tau_{\leq m}A\otimes_{\tau_{\leq m }A}\pi_0 A\simeq 0.$$
    It follows that the term $K\otimes_A\tau_{\leq m} A\simeq 0$. By the almost connectivity of $K$. We know that $K\simeq \varprojlim_m (K\otimes_A\tau_{\leq m} A)\simeq 0$.\\
    We then want to show that the natural map $\theta: K\rightarrow\lim_{\bullet\in\Delta}(K\otimes_A A^\bullet)$  is an equivalence.
    We have the following commutative diagram 
    $$\begin{tikzcd}
K \arrow[d, "\simeq"'] \arrow[r]                             & \lim_{\bullet\in\Delta}(K\otimes_A A^\bullet) \arrow[d]                          \\
\varprojlim_m(K\otimes_A \tau_{\leq m}A) \arrow[r, "\simeq"] & \varprojlim_m\lim_{\bullet\in \Delta}(K\otimes_A A^\bullet\otimes_A\tau_{\leq m}A)
\end{tikzcd}$$
   Here, the left vertical map is an equivalence because of the almost connectivity of $K$, and the lower horizontal map is also an equivalence because the diagram $K\otimes_A A^\bullet\otimes_A\tau_{\leq m}A$ is equivalent to the {\v C}ech nerve of the $\tau_{\leq m} A$-module $K\otimes_A\tau_{\leq m} A$ along the base change $\tau_{\leq m}A\rightarrow \tau_{\leq m}A\otimes_A \pi_0 A$, which is a descendable map. While, on the other hand, the right vertical map is also an equivalence because limits commute with limits. It follows that the upper horizontal map is an equivalence. The result follows.
\end{proof}
\begin{lem}
 Let $J\rightarrow\algcn_A,j\mapsto A_j$ be a filtered diagram of descendable algebras over $A$
 . Denote by $B$ the filtered colimit $\colim_{j\in J}A_j$.
 Assume that the base change $\Mod^{\rm acn}_A\rightarrow\Mod^{\rm acn}_B$ is conservative. 
 Then $A\rightarrow B$ is linearly convergent.
\end{lem}
\begin{proof}
 Denote by $A^\bullet_j$ the \v Cech nerve of the map $A\rightarrow A_j$ for $j\in J$, and denote by $B^\bullet$ the \v Cech nerve of the map $A\rightarrow B$. 
 Note that, as cosimplicial objects of $A$-algebras, the maps $A_j^\bullet\rightarrow B, j\in J$ induce an equivalence $\colim_{j\in J} A^\bullet_j\simeq B^\bullet$.
 We want to show the following fact: let $K$ be a module over $A$. Then, the natural map $K\rightarrow\lim_{\bullet \in\Delta}( B^\bullet\otimes_A K)$ is an equivalence. \\
 We first assume that $K$ is coconnective. We have the following maps:
 $$\lim_{\bullet \in\Delta}( B^\bullet\otimes_A K)\simeq \lim_{\bullet \in\Delta}( (\colim_{j\in J}A_j^\bullet)\otimes_A K)\longleftarrow\colim_{j\in J}\lim_{\bullet\in\Delta}(A^\bullet_j\otimes_AK)\simeq K.$$
 The second map is also an equivalence because the totalization of connective spectra commutes with the filtered colimit.\\
 We then assume that $K$ is arbitrary. We can write it as a limit $K\simeq\varprojlim_n\tau_{\leq m}K$ of almost coconnective objects. Then, we have the following equivalences
 $$\lim_{\bullet\in\Delta}(B^\bullet\otimes_AK)\simeq \lim_{\bullet\in\Delta}\lim_n(B^\bullet\otimes_A\tau_{\leq n}K)\simeq \lim_n\lim_{\bullet \in\Delta}(B^\bullet\otimes_A\tau_{\leq n}K)\simeq K.$$
 The result follows.
 \end{proof}
 \begin{cor}\label{descentalongfrobenius}
     Let $A$ be a $p$-complete connective  $\Einf$-ring. Assume that it admits a self map $F:A\rightarrow A$, which lifts the Frobenius ${\rm Fr}: \pi_0A/p\rightarrow\pi_0 A/p$. Then the induced map $F_\infty:A\rightarrow \colim^\wedge_F A$ is $p$-completely. linearly convergent. In particular, for any $\Einf$-monoid, the map $R[M]^\wedge_p\rightarrow R[M[1/p]]^\wedge_p$ is $p$-completely linearly convergent.
 \end{cor}
\begin{proof}
By Proposition\autoref{cxfnfdsdszxmncxdhsbxvdgxv}, we just need to show that $F_\infty$ is surjective in $\widehat{\shv}_{\rm lcon}$.
By Lemma\autoref{asdfghjkl}, we can assume that $A$ is discrete, i.e., a classical $p$-complete ring.
By Example\autoref{dsahuikmnbvfghjjkmnbg}(3), we can assume that $A$ is an $\mathbb Z/p$-algebra. 
In this case, $\colim^\wedge_F A$ is nothing but the inductive  perfection $A_{\rm perf}$ of $A$.
But we can also assume that $A$ is semi-perfect. Indeed, we let $A'$ be the ring obtained by formally joining $p^\infty$-roots of elements in $A$. Namely, we define
$$A':= A[X_a^{1/p^\infty};a\in A]/(X-a;a\in A).$$
We observe that $A\rightarrow A'$ is linearly convergent because it is a quasi-syntomic cover. The map $A_{\rm perf}\rightarrow A'_{\rm perf}$ is an isomorphism. Now, we assume that $A$ is semiperfect. In this case, the Frobenius self map $F:A\rightarrow A$ is descendable, as its fiber coincides with the kernel, which is nilpotent.\\
The only thing we need to show is that the base change along $A\rightarrow A_{\rm perf}$ is conservative for almost connective modules.
This is deduced from the following Lemma\autoref{xcvfmdsajskxmczdnsj}.
\end{proof}
\begin{lem}\label{xcvfmdsajskxmczdnsj}
    Let $R\rightarrow A$ be a universal homeomorphism of discrete rings. Then the base change functor $\Mod_R^{\rm acn}\rightarrow\Mod_A^{\rm acn}$ is conservative.
\end{lem}
\begin{proof}
    Let $K\in\Mod_R^{\rm acn}$. Assume that $K\otimes_RA\simeq 0$. Without loss of generality, we can assume that $K$ is connective. We observe that, for any prime ideal $\mathfrak p\in\spec R$, the tensor product $K\otimes_R k(\mathfrak p)\simeq 0$. Indeed, the base change $(K\otimes_Rk(\mathfrak p))\otimes_{k(\mathfrak p)}(A\otimes_Rk(\mathfrak p))\simeq (K\otimes_RA)\otimes_A(A\otimes_Rk(\mathfrak p))\simeq 0$. It turns out  that $K\otimes_Rk(\mathfrak p)\simeq 0$ because the composed map $k(\mathfrak p)\rightarrow A\otimes_Rk(\mathfrak p)\rightarrow \pi_0(A\otimes_Rk(\mathfrak p))_{\rm red}$ is faithfully flat.\\
    We then reduce to show that for an almost connective module $K$ over $R$, such that for any prime ideal $\mathfrak p\in\spec R$, the tensor product $K\otimes_Rk(\mathfrak p)\simeq 0$, then $K\simeq 0$. We want to show that $\tau_{\leq n}K\simeq 0$ for any $n\in\N$. Consider the exact sequence 
    $$\tau_{\geq n+1}K\rr K\rr \tau_{\leq n}K.$$
    After tensoring with $k(\mathfrak p)$, we have that $(\tau_{\leq n}K)\otimes_R k(\mathfrak p)\simeq (\tau_{\geq n+1}K)\otimes_Rk(\mathfrak p)[1]$, which is $(n+2)$-connective. If $n=0$, it's obvious that $\tau_{\leq 0}K=0$, because it vanishes at all prime ideals. It turns out that $K$ is $1$-connective. Then replace $K$ by $K[-1]$, we also have $0\simeq\tau_{\leq 0}(K[-1])\simeq(\tau_{\leq 1}K)[1]$. Passing to the induction, we know that $K$ is $\infty$-connective, i.e., it is vanishing.
\end{proof}
Now, we can prove  Theorem\autoref{satdescent}.
\begin{proof}[Proof of Theorem\autoref{satdescent}]
Unwinding the definition,  if $M$ is $p$-saturated, for an almost connective module $K$ over $A$, the term $\lim_{\bullet\in\Delta} K\otimes_A A[C^\bullet_M]$ is equivalent to $\lim_{\bullet\in \Delta} K\otimes_A A^\bullet$, where $A^\bullet$ is the \v Cech nerve of the map $A\rightarrow(A\otimes_{\mathbb S[M]}\mathbb S[M[1/p]])^\wedge$. Applying Lemma\autoref{descentalongfrobenius}, we get the following equivalences
$$\lim_{\bullet\in\Delta} K\otimes_A A[C^\bullet_M]\simeq \lim_{\bullet\in \Delta} K\otimes_A A^\bullet\simeq K.$$
We get the desired result.
\end{proof}

\begin{defn}
    An $\Einf$-log ring $(A,M)$ is called topologically $p$-saturated if it is a logification of  a topologically $p$-saturated $\Einf$-prelog ring.\\
    A quasi-coherent spectral log stack $X$ is called topologically $p$-saturated if, for any map $f:\spec A\rightarrow\underline X$, the pullback $f^*X$ is fpqc locally equivalent to $\spet(A,M)$ for some $(A,M)$, where $(A,M)$ is topologically $p$-saturated.
\end{defn}
\subsection{Cotangent complexes via infinite root  stacks}
In this section, we will prove that the $p$-completion of the log cotangent complex of a topologically $p$-saturated quasi-coherent
 spectral log stack over $\spec R$, can be recovered from its infinite root  stack. 
\begin{lem}\label{7878744}
  Fix a stack $S$.  Let $\CC\rightarrow\Fun(\Delta^1,\stk)_{/S}$ be a diagram of stacks lying over $S$. Assuming that for any $i\in\CC$, the relative stack $X_i/S$ admits a cotangent complex $\LL_{X_i/S}$. If all $\LL_{X_i/S}$ are uniformly bounded below, then the limit $X:=\lim_{i\in \CC} X_i\rightarrow S$ admits a cotangent complex, and the following natural map $$\colim_{i\in\CC} f_i^*\LL_{X_i/S}\rr\LL_{X/S}$$
  is an equivalence. Here $f_i:X\rightarrow X_i$ is the canonical map induced from the limit.
\end{lem}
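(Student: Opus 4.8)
The plan is to reduce the statement to the defining adjunction property of the cotangent complex, exploiting that each map $f\colon \spec A\to S$ lets us test everything on affine rings. First I would recall that by definition $X_i/S$ admits a cotangent complex $\LL_{X_i/S}\in\qcoh(X_i)$ means that for every connective $\Einf$-ring $A$, every point $x\in X_i(A)$, and every connective $A$-module $I$, the fiber $\mathfrak D_{X_i/S}(x,I)$ of the map $X_i(A\oplus I)\to X_i(A)\times_{S(A)}S(A\oplus I)$ is naturally equivalent to $\map_A(x^*\LL_{X_i/S},I)$. The candidate cotangent complex for $X$ is $\LL:=\colim_{i\in\CC}f_i^*\LL_{X_i/S}\in\qcoh(X)$, where $f_i\colon X\to X_i$ is the canonical projection. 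I would verify this object represents the deformation functor of $X$ over $S$, and the boundedness hypothesis is what guarantees the colimit behaves well and that the resulting object is again almost connective, hence a legitimate cotangent complex.

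The key computation is to unwind the relative deformation anima for $X=\lim_{i}X_i$. Since limits commute, one has $X(A\oplus I)\simeq \lim_i X_i(A\oplus I)$ and similarly $X(A)\simeq\lim_i X_i(A)$, so for a point $x\in X(A)$ with images $x_i\in X_i(A)$ I expect a natural equivalence
\begin{equation}
\mathfrak D_{X/S}(x,I)\simeq \lim_{i\in\CC}\mathfrak D_{X_i/S}(x_i,I).
\end{equation}
Here I must be careful: taking the limit over $\CC$ commutes with the formation of the fiber because $\mathfrak D$ is itself a fiber of a map of limit diagrams, and fibers are finite limits which commute with the cofiltered (or arbitrary) limit indexing $\CC$. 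Then applying the defining property of each $\LL_{X_i/S}$ gives $\mathfrak D_{X_i/S}(x_i,I)\simeq\map_A(x_i^*\LL_{X_i/S},I)$, and since $x_i^*\LL_{X_i/S}\simeq x^*f_i^*\LL_{X_i/S}$, the limit becomes
\begin{equation}
\lim_{i\in\CC}\map_A\bigl(x^*f_i^*\LL_{X_i/S},I\bigr)\simeq \map_A\Bigl(\colim_{i\in\CC}x^*f_i^*\LL_{X_i/S},\,I\Bigr)\simeq\map_A\bigl(x^*\LL,I\bigr),
\end{equation}
using that $\map_A(-,I)$ sends colimits to limits and that $x^*$ is a left adjoint preserving the colimit. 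This chain of equivalences is natural in $x$ and $I$, which is exactly what is needed to conclude that $\LL$ is the cotangent complex of $X/S$ and that the natural comparison map $\colim_i f_i^*\LL_{X_i/S}\to\LL_{X/S}$ is the asserted equivalence.

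The main obstacle I anticipate is ensuring that the candidate object $\colim_i f_i^*\LL_{X_i/S}$ is genuinely a valid cotangent complex, i.e.\ that it is almost connective, and this is precisely where the uniform boundedness hypothesis enters. Without a uniform lower bound on the connectivity of the $\LL_{X_i/S}$, the colimit could fail to be almost connective (or the interchange of limit and mapping space could be spoiled by unbounded homotopy), so I would spend care showing that a common integer $m$ with each $\LL_{X_i/S}$ being $(-m)$-connective forces $\colim_i f_i^*\LL_{X_i/S}$ to remain $(-m)$-connective, using that pullback $f_i^*$ is right $t$-exact and that colimits preserve connectivity bounds. A secondary technical point is justifying the commutation of the $\CC$-indexed limit with the fiber defining $\mathfrak D$; this is formal since fibers are finite limits, but I would state it explicitly and note the naturality of all identifications in $i$ so that the comparison map is the canonical one rather than merely an abstract equivalence. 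With those two points secured, the rest is a direct application of the universal property recorded in the definition of the cotangent complex earlier in the text.
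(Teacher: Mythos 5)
Your proof is correct and follows essentially the same route as the paper's: decompose the deformation space of the limit $X=\lim_i X_i$ as a limit of the deformation spaces of the $X_i$, apply the universal property of each $\LL_{X_i/S}$, pass the limit inside the mapping space to get a colimit of the pullbacks $x^*f_i^*\LL_{X_i/S}$, and invoke the uniform lower bound to conclude that $\colim_i f_i^*\LL_{X_i/S}$ is almost connective and hence a legitimate cotangent complex. The only cosmetic difference is that the paper first reduces to the case $S=\spec A$ affine (so the relative deformation functor becomes a plain fiber of $X(B\oplus I)\rightarrow X(B)$), whereas you carry the relative fiber product over $S$ through the whole argument.
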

\begin{proof}
  Without loss of generality, we might as well assume that $S=\spec A$, and thus we work with the $\infty$-category $\shv_{\et}(\algcn_A,\ani)\simeq\stk_{/\spec A}$.  Let $B$ be an $A$-algebra, and $I$ be a connective $B$-module. As each $X_i$ admits a cotangent complex $\LL_{X_i/A}$, we have that the fiber of the map $X(B\oplus I)\rightarrow X(B)$ over a point $x:\spec B\rightarrow X$ is equivalent to 
  \begin{align*}
  \lim\map_{B}(x^*f_i^*\LL_{X_i/A},I)&\simeq\map_B(\colim x^*f_i^*\LL_{X_i/A},I) \\
  &\simeq\map_B(x^*\colim f^*_i\LL_{X_i/A},I)
  \end{align*}
  Applying the lower bound assumption of $\{\LL_{X_i/S}\}_{i\in\CC}$, $\colim f^*_i\LL_{X_i/A}$ is almost connective. Therefore, it's a cotangent complex of $X/A$.
\end{proof}
We have the following corollary.
\begin{cor}
    The relative stack $\eta:\rtr\rightarrow\spf R$ admits a cotangent complex.
\end{cor}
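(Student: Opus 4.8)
The plan is to exhibit $\rtr$ as a cofiltered limit of copies of $\logfr$ and then invoke Lemma\autoref{7878744} directly. Concretely, I would set $S=\spf R$ and let $\CC=\N^{\rm op}$ index the tower
$$\cdots\rr\logfr\stackrel{p}\rr\logfr\stackrel{p}\rr\cdots\stackrel{p}\rr\logfr$$
whose limit is $\rtr$ by Definition\autoref{qazwsxedc}. Under this bookkeeping $X_n:=\logfr$ for every $n$, each transition map is multiplication by $p$ on the universal log structure, and $\rtr\simeq\varprojlim_n X_n$ with projections $f_n\colon\rtr\rightarrow\logfr$. Since all the $X_n$ coincide as relative stacks over $\spf R$, the only inputs I must supply are a relative cotangent complex for the single map $\logfr\rightarrow\spf R$ together with a lower bound that is then automatically uniform.

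First I would produce $\LL_{\logfr/\spf R}$ and check it is $(-1)$-connective. By Theorem\autoref{almostperfect} applied to $(\MCS,\MSL)=\spet R$ with trivial log structure and $\mu={\rm qcoh}$ (in its fpqc variant, as noted in the remark following that theorem, since we work with the fpqc topology throughout this section), the structure map $\R\llog_R^{\rm qcoh}\rightarrow\spec R$ admits a $(-1)$-connective cotangent complex $\LL_{\R\llog_R^{\rm qcoh}/\spec R}$. As $\logfr=\R\llog_R^{\rm qcoh}\times_{\spec R}\spf R$ is obtained by base change along $\spf R\rightarrow\spec R$, and relative cotangent complexes of prestacks are stable under base change, the map $\logfr\rightarrow\spf R$ admits a cotangent complex $\LL_{\logfr/\spf R}$ equivalent to the pullback of $\LL_{\R\llog_R^{\rm qcoh}/\spec R}$ along $\logfr\rightarrow\R\llog_R^{\rm qcoh}$, which remains $(-1)$-connective.

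Next, because every $X_n$ equals $\logfr$ over $\spf R$, the family $\{\LL_{X_n/\spf R}\}_n$ consists of copies of the single $(-1)$-connective object $\LL_{\logfr/\spf R}$ and is therefore uniformly bounded below. The two hypotheses of Lemma\autoref{7878744} hold for the diagram above, so the limit $\rtr\rightarrow\spf R$ admits a cotangent complex, together with the canonical equivalence
$$\colim_{n}f_n^*\LL_{\logfr/\spf R}\stackrel{\simeq}\rr\LL_{\rtr/\spf R}.$$
There is no genuine obstacle here beyond two routine checks: the base-change behaviour of the cotangent complex along $\spf R\rightarrow\spec R$, and the uniform lower bound required by Lemma\autoref{7878744}. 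Both are immediate once one notes that all the $X_n$ coincide, so that the substantive content—the existence and $(-1)$-connectivity of the cotangent complex on the moduli of quasi-coherent log structures—is entirely furnished by Theorem\autoref{almostperfect}.
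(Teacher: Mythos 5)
Your proposal is correct and is precisely the argument the paper intends: the corollary is stated immediately after Lemma\autoref{7878744} with no separate proof because it follows by applying that lemma to the defining tower of $\rtr$, with the existence and $(-1)$-connectivity of $\LL_{\logfr/\spf R}$ supplied by Theorem\autoref{almostperfect} (in its fpqc form) together with base change along $\spf R\rightarrow\spec R$. Your write-up simply makes these routine inputs explicit, so there is nothing to add.
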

\begin{lem}\label{7878755}
  Let $A$ be a $p$-complete $\Einf$-algebra over $R$, then the associated map  $f:\spf A\rightarrow\spf R$ admits a cotangent complex, which is equivalent to $(\LL_{A/R})^\wedge_p$, the $p$-completion of the cotangent complex $\LL_{A/R}$.  
\end{lem}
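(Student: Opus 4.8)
The plan is to compute the cotangent complex of the $p$-completion map $f \colon \spf A \to \spf R$ directly from the definition of a cotangent complex for a map of prestacks, using the standard formal-completion formalism. Recall that $\spf A = \colim_n \spec(A/\mathfrak{a}^n)$ for a finitely generated ideal $\mathfrak{a} = (p) \subset \pi_0 A$, and that as a functor on $\algcn$, the value $\spf A (B) \simeq \map(\spf A, \spec B)$ picks out maps $\spec B \to \spec A$ whose image lands in the formal neighborhood, equivalently the anima of $p$-complete-nilpotent sections. First I would fix a test pair: a connective $\Einf$-ring $B$, a connective $B$-module $I$, and a point $x \in \spf A (B)$ lying over the structure map to $\spf R$. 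By definition I must identify the fiber $\mathfrak{D}_{\spf A / \spf R}(x, I)$ of the natural map $\spf A (B \oplus I) \to \spf A (B) \times_{\spf R(B)} \spf R(B \oplus I)$ with $\map_B(x^* \mathcal{L}, I)$ for a suitable almost connective quasi-coherent complex $\mathcal{L}$.

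The key step is to compare this fiber with the corresponding fiber for the \emph{uncompleted} map $\spec A \to \spec R$, whose cotangent complex is the ordinary relative cotangent complex $\LL_{A/R}$ by Remark \autoref{tangentmodule}. The point is that square-zero extensions $B \oplus I$ of a ring $B$ receiving a $p$-complete-nilpotent map from $A$ are again such: if $x \colon \spec B \to \spec A$ factors through $\spf A$, then so does any lift $\spec(B \oplus I) \to \spec A$, since the obstruction to formal-nilpotence lives on $\pi_0$ and $\pi_0(B \oplus I) = \pi_0 B \oplus \pi_0 I$ has the same reduction modulo $p$-complete-nilpotents. Thus the deformation fiber for $\spf A / \spf R$ at $x$ agrees with the deformation fiber for $\spec A / \spec R$ at the underlying point of $x$, which is $\map_B(x^* \LL_{A/R}, I)$. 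Because $x$ itself factors through the formal completion, I would then argue that $x^* \LL_{A/R}$ only sees the $p$-complete information: writing $\spf A$ as the colimit over $A/p^n$ and using that $x$ is a $p$-complete-nilpotent point, the relevant pullback is computed against the $p$-completed cotangent complex, yielding $(\LL_{A/R})^\wedge_p \simeq \varprojlim_n \LL_{A/R}/p^n$. This establishes that $\mathcal{L} = (\LL_{A/R})^\wedge_p$ corepresents the deformation functor.

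The main obstacle will be handling the interaction between the formal colimit defining $\spf A$ and the $p$-completion of the module: I must check that the functor $I \mapsto \mathfrak{D}(x,I)$ is corepresented by the $p$-completed complex rather than by $\LL_{A/R}$ itself, i.e. that the limit $\varprojlim_n \LL_{A/R}/p^n$ is the correct object. Concretely, since $x$ lands in $\spf A$, the module $x^*\LL_{A/R}$ pairs with $I$ through its derived $p$-completion because $I$ may be taken $p$-complete in the relevant test cases and the mapping spectrum $\map_B(-, I)$ inverts the completion map $\LL_{A/R} \to (\LL_{A/R})^\wedge_p$ when the target is $p$-complete; this is where I would invoke that completion is a reflective localization and that $p$-complete-nilpotent points are insensitive to the non-complete part. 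I would also need to confirm almost connectivity of $(\LL_{A/R})^\wedge_p$, which follows since $\LL_{A/R}$ is connective (as $A$ is connective over $R$) and derived $p$-completion preserves almost connectivity. Finally, I would check compatibility of $\mathcal{L}$ with base change along $\spf R$ so that it genuinely defines a quasi-coherent cotangent complex on $\spf A$ in the sense of \cite[Definition 17.2.4.2]{lurie2018spectral}, completing the identification $\LL_{\spf A / \spf R} \simeq (\LL_{A/R})^\wedge_p$.
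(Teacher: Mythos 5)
Your proposal is correct, but it reaches the conclusion by a different route than the paper. The paper's proof is essentially two lines: since $A$ and $R$ are $p$-complete, one observes $\spf A\simeq\spec A\times_{\spec R}\spf R$, so the cotangent complex exists and equals the pullback of $\LL_{A/R}$ by the base-change formula for cotangent complexes; the description $\spf A\simeq\colim_n\spec A/p^n$ then identifies this pullback with $\varprojlim_n\LL_{A/R}/p^n$. You never invoke this fiber-product description; instead you verify the universal property by hand, and your key observation---that a square-zero extension $B\oplus I\to B$ does not change which points are formal, because $(b,i)\in\pi_0B\oplus\pi_0I$ is nilpotent if and only if $b$ is---is precisely the concrete content behind the paper's base-change step, and it correctly identifies the deformation fiber of $\spf A/\spf R$ at a formal point $x$ with $\map_B(x^*\LL_{A/R},I)$. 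One phrase in your third paragraph should be sharpened: the universal property quantifies over \emph{all} connective $B$-modules $I$, so you are not free to ``take $I$ to be $p$-complete''; rather, $I$ is \emph{automatically} $p$-complete, since $p$ is nilpotent in $\pi_0 B$ for any formal point, hence multiplication by $p^n$ is nullhomotopic on every $B$-module. Granting this, $\map_A(\LL_{A/R},I)\simeq\map_A((\LL_{A/R})^\wedge_p,I)$ because completion is a reflective localization; equivalently $x^*\LL_{A/R}\simeq x^*(\LL_{A/R})^\wedge_p$, since the fiber of $\LL_{A/R}\to(\LL_{A/R})^\wedge_p$ carries an invertible $p$-action and therefore vanishes after base change along $A\to B$. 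Your route buys a self-contained argument that unwinds the definition rather than citing base change for cotangent complexes of prestacks; the paper's buys brevity, and makes quasi-coherence and base-change compatibility of the answer automatic rather than a final check.
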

\begin{proof}
   We observe that $\spf A\simeq \spec A\times_{\spec R}\spf R $. It follows that 
   $\LL_{\spf A/\spf R}$ is equivalent to the pullback of $\LL_{A/R}$ along the map $\spf A\rightarrow\spec A$.
\end{proof}
\begin{lem}\label{71717}
    For any  $\Einf$-monoid $M$ having the form $M=N[1/p]$, the $\Einf$-algebra $R\rightarrow R[M]^\wedge_p$ is $p$-completely formally \'etale, i.e., $(\LL_{R[M]^\wedge_p/R})^\wedge_p$ is contractible. 
\end{lem}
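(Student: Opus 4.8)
The plan is to compute the $p$-completed cotangent complex $(\LL_{R[M]^\wedge_p/R})^\wedge_p$ by reducing the claim to a calculation on the underlying monoid, using the computation of cotangent complexes of monoid algebras established earlier. First I would recall from Lemma\autoref{Lstrict}(3) and Remark\autoref{tangentbunlemonoids} that for an $\Einf$-monoid $M$ the relative cotangent complex $\LL_{\mathbb S[M]/\mathbb S}$ is governed by the group completion, and more precisely that for a map of monoid algebras the cotangent complex is built from $M^{\rm gp}$. Base-changing to $R$, the key identity is $\LL_{R[M]/R}\simeq R[M]\otimes_{\mathbb S}M^{\rm gp}$ (the topological André--Quillen homology of a monoid algebra is the free module on the group completion, smashed appropriately). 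Thus the whole question reduces to understanding $M^{\rm gp}$ after $p$-completion when $M=N[1/p]$.

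The central observation is that for $M=N[1/p]$ the group completion $M^{\rm gp}=N[1/p]^{\rm gp}=N^{\rm gp}[1/p]$ is a spectrum (a group-like $\Einf$-monoid, hence a connective spectrum by May's recognition principle, Remark\autoref{mayrecognition}) on which multiplication by $p$ is invertible. Consequently $M^{\rm gp}$ is a $p$-divisible connective spectrum, so its $p$-completion $(M^{\rm gp})^\wedge_p\simeq\varprojlim_n M^{\rm gp}/p^n$ vanishes: each quotient $M^{\rm gp}/p^n$ is contractible because $p^n$ acts invertibly, forcing the cofiber to be zero. Therefore I would argue that $(\LL_{R[M]^\wedge_p/R})^\wedge_p$ is obtained by $p$-completing $R[M]\otimes_{\mathbb S}M^{\rm gp}$, which carries a $p$-divisible factor and hence becomes contractible. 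Concretely, the plan is: (i) identify $\LL_{R[M]/R}\simeq R[M]\otimes_{\mathbb S}M^{\rm gp}$ via the monoid-algebra cotangent complex formula; (ii) show $p$-completing commutes appropriately so that $(\LL_{R[M]^\wedge_p/R})^\wedge_p\simeq (R[M]\otimes_{\mathbb S}M^{\rm gp})^\wedge_p$, using that $R$ and $R[M]^\wedge_p$ are already $p$-complete and the cotangent complex of the $p$-completion map agrees with the $p$-completion of the cotangent complex as in Lemma\autoref{7878755}; (iii) conclude vanishing from $p$-divisibility of $M^{\rm gp}$.

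The step I expect to be the main obstacle is (ii), namely carefully justifying that $p$-completion of the cotangent complex is insensitive to replacing $R[M]$ by its $p$-completion $R[M]^\wedge_p$, and that $p$-completion interacts correctly with the tensor decomposition. The subtlety is that $M^{\rm gp}$ need not be bounded or finitely presented, so I must ensure the $p$-completion is derived and that the vanishing $(M^{\rm gp})^\wedge_p\simeq 0$ genuinely propagates through the relative tensor product $R[M]\otimes_{\mathbb S}(-)$ after completion. I would handle this by first proving the claim for free monoids and retracts thereof (where $M^{\rm gp}$ is a free spectrum and $p$-divisibility after inverting $p$ is transparent), then passing to general $M=N[1/p]$ by writing $N$ as a sifted colimit of finitely generated free monoids and using that both the cotangent complex functor and $p$-completion commute with the relevant colimits. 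Once the vanishing $(M^{\rm gp})^\wedge_p\simeq 0$ is secured, the contractibility of $(\LL_{R[M]^\wedge_p/R})^\wedge_p$ follows formally, establishing that $R\to R[M]^\wedge_p$ is $p$-completely formally \'etale.
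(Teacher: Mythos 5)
Your proposal has a genuine gap at its core: the ``key identity'' in step (i) is false. Lemma\autoref{Lstrict}(3) and Remark\autoref{tangentbunlemonoids} compute \emph{Gabber's logarithmic} cotangent complex, $\LL^G_{(A[M],M)/(A[N],N)}\simeq A[M]\otimes_{\mathbb S}(M^{\rm gp}/N^{\rm gp})$, not the ordinary (topological Andr\'e--Quillen) cotangent complex $\LL_{R[M]/R}$ that appears in the statement of the lemma. These two objects genuinely differ: their discrepancy is exactly the object $\mathscr D_{M/L}$ appearing in the proof of Theorem\autoref{gabbercotasfunctor}, and it is nonzero already in classical examples (for $M=\langle 2,3\rangle\subset\N$ one has $\Z[M]=\Z[t^2,t^3]$, whose ordinary cotangent complex has nontrivial $\pi_1$, while $M^{\rm gp}=\Z$ gives a free rank-one module). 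Since the lemma is precisely about the ordinary cotangent complex (it feeds into Proposition\autoref{719719}, where $\rtr\rightarrow\spf R$ must be formally \'etale as a map of ordinary stacks), the $p$-divisibility of $M^{\rm gp}$ by itself proves nothing here: it controls the log side, and your steps (ii)--(iii) then establish the vanishing of the wrong object. A secondary problem is your fallback plan for step (ii): $p$-completion does \emph{not} commute with sifted or filtered colimits, so ``writing $N$ as a sifted colimit of free monoids and commuting $p$-completion with the colimits'' fails as stated. The standard repair -- and the move the paper actually makes -- is to observe that a $p$-complete connective spectrum vanishes if and only if its reduction mod $p$ vanishes, and reduction mod $p$ \emph{does} commute with colimits.

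For comparison, the paper's proof is short and takes a different route entirely: base change along $\spf R\rightarrow\spf\mathbb S_p$ reduces to $R=\mathbb S_p$, a result of Antieau reduces to $M$ discrete, and then one checks $(\LL_{\mathbb S_p[M]^\wedge_p/\mathbb S_p})^\wedge_p/p\simeq\LL_{\mathbb F_p[M]/\mathbb F_p}\simeq 0$, the last vanishing because $\mathbb F_p[M]$ is a \emph{perfect} $\mathbb F_p$-algebra (multiplication by $p$ is bijective on $M=N[1/p]$, so Frobenius permutes the monomial basis). Your intuition that unique $p$-divisibility of $M$ forces the vanishing is correct, and there is a correct colimit-style argument in this spirit: write $\mathbb S[N[1/p]]=\colim(\mathbb S[N]\rightarrow\mathbb S[N]\rightarrow\cdots)$ along the $p$-th power maps, note that the induced transition maps on ordinary cotangent complexes are divisible by $p$, and conclude that $\LL/p\simeq 0$ after passing to the colimit. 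But to make that rigorous you must work mod $p$ rather than with $p$-completions, and you must compute the transition maps on the \emph{ordinary} cotangent complex -- neither of which your proposal does.
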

\begin{proof}
 As $\LL_{R[M]^\wedge_p/R}$ is connective, the contractibility can be detected by taking the  base change to $R\rightarrow\pi_0 R$. We then assume that $R$ is discrete.
 Moreover, we can assume that $R=\Z_p$.
 By \cite[Corollary 2.10]{antieau2024sphericalwittvectorsintegral}, we can also assume that $M$ is discrete. As $R[M[1/p]]^\wedge_p$ is a discrete $p$-complete ring, the contractibility of modules is detected by passing to the base change to the mod $p$ fiber. Because $\mathbb F_p[M[1/p]]$ is perfect, its cotangent complex vanishes.
\end{proof}
\begin{lem}\label{descentofcotangent}
    Let $R$ be an $\Einf$-ring or an animated ring, regard the topological (resp. algebraic ) cotangent complex as a functor $\LL^{(\Delta)}_{(-)/R}:\alg^{(\Delta)}_R\rightarrow\Mod_R$. Then, one has:
    \begin{enumerate}
        \item In the animated setting, $\LL^{\Delta}_{(-/R)}$ has descent in the fpqc topology;
        \item In the $\Einf$-setting,
        $\LL_{(-/R)}$ has hyperdescent in the fpqc topology.
    \end{enumerate}
\end{lem}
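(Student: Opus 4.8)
The plan is to verify the defining descent condition head-on. Every fpqc covering is generated by faithfully flat maps, and the functor $\LL_{-/R}$ (as well as both sides of any putative descent equivalence) commutes with the finite coproducts involved, so it suffices to fix a single faithfully flat $A\to A'$ in $\alg_R$ and show that, for $A\to A^\bullet$ its \v{C}ech nerve (resp. a hypercover), the canonical map $\LL_{A/R}\rr \lim_{\Delta}\LL_{A^\bullet/R}$ is an equivalence in $\Mod_R$. The same argument will apply verbatim in the animated setting, replacing $\LL$ by $\LL^{\Delta}$.

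The engine is the conormal (transitivity) fiber sequence of Theorem\autoref{cotan-der}, applied to $R\to A\to A^n$ functorially in $[n]\in\Delta$, together with base change $\LL_{A/R}\otimes_A A^n\simeq (A^n\to A)^*\LL_{A/R}$:
$$\LL_{A/R}\otimes_A A^\bullet\rr \LL_{A^\bullet/R}\rr \LL_{A^\bullet/A}.$$
Since restriction of scalars $\Mod_{A^n}\rightarrow\Mod_R$ preserves limits, I may totalize this in $\Mod_R$; totalization is a limit in a stable $\infty$-category and hence preserves fiber sequences. This reduces the claim to two facts: first, $\LL_{A/R}\simeq\lim_{\Delta}(\LL_{A/R}\otimes_A A^\bullet)$, which is exactly flat (hyper)descent for the quasi-coherent complex $\LL_{A/R}$ on $\spec A$, i.e. \cite[Proposition 6.2.3.1]{lurie2018spectral}, because $\LL_{A/R}\otimes_A A^\bullet$ is precisely the \v{C}ech (resp. hyper) diagram of its pullbacks; and second, the vanishing $\lim_{\Delta}\LL_{A^\bullet/A}\simeq 0$. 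Note that the second fact is the $R=A$ case of the lemma, so the transitivity step has localized all the content into the relative cotangent complex of the cover itself.

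The crux is therefore this vanishing, and it is where I expect the only real obstacle. I would prove it by base change along the conservative faithfully flat map $A\to A'$: after tensoring with $A'$ the augmented cosimplicial object $A\to A^\bullet$ acquires extra codegeneracies and becomes split, while the sum formula $\LL_{(A')^{\otimes_A(n+1)}/A}\simeq\bigoplus_i (d^i)^*\LL_{A'/A}$ exhibits $\LL_{A^\bullet/A}$ as the cosimplicial complex attached to a descent datum, whose totalization the splitting sends to $0$. The subtlety is that $-\otimes_A A'$ is a colimit and does not commute with $\lim_{\Delta}$ a priori. In the \emph{animated} case this is repaired by the uniform connectivity of $\{\LL^{\Delta}_{A^n/A}\}$: the Tot-tower stabilizes in each homotopy degree, so base change may be exchanged with the limit, and one obtains genuine \v{C}ech descent. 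In the \emph{$\Einf$} case $\LL_{A^n/A}$ need not be connective and the fpqc $\infty$-topos is not hypercomplete, so I instead work with hypercovers and invoke flat \emph{hyper}descent of quasi-coherent complexes, again combined with conservativity of faithfully flat base change, to conclude the vanishing and hence hyperdescent. (Alternatively, once the $\Einf$-statement is established, the animated case could be transported along the monadic comparison functor $\Theta_{\rm alg}$ of Lemma\autoref{ani-sptmon}, which preserves the relevant limits; but the direct argument above is self-contained.)
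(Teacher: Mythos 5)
Your overall strategy is the Bhatt-style reduction (and thus differs from the paper's $\Einf$-argument, which avoids the transitivity triangle entirely by writing $\LL_{A/R}\simeq\mathrm{cofib}(I_A\otimes_AI_A\to I_A)$ with $I_A=\mathrm{fib}(A\otimes_RA\to A)$ and deducing hyperdescent from that of the Amitsur-type functors $A\mapsto A^{\otimes_Rk}$ via cofinality of the diagonal $\Delta\to\Delta\times\Delta$). Your reduction itself is fine: transitivity plus flat (hyper)descent for modules does localize everything into the vanishing $\lim_{\Delta}\LL_{A^\bullet/A}\simeq 0$, and your observation that after base change along $A\to A'$ this totalization vanishes (the base-changed diagram is the image under $\LL_{(-)/A'}$ of a split augmented cosimplicial $A'$-algebra) is correct. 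The gap is exactly where you flag it, and your proposed repairs do not work. In the animated case, the claim that connectivity of the terms makes the $\mathrm{Tot}$-tower stabilize in each homotopy degree is false: for a cosimplicial diagram of connective modules the fiber of $\mathrm{Tot}^n\rr\mathrm{Tot}^{n-1}$ is an $n$-fold loop of a connective object, hence $(-n)$-connective, so a fixed homotopy degree receives potential contributions from \emph{every} stage $n$; degreewise stabilization requires a uniform \emph{upper} bound on the homotopy of the terms, not a lower one (this is why Amitsur complexes of non-flat maps, all of whose terms are discrete, can have unbounded coconnective totalizations). The standard repair, which is what makes Bhatt's proof work, is a Postnikov d\'evissage: since each $\LL^{\Delta}_{A^n/A}$ is connective, $\LL^{\Delta}_{A^\bullet/A}\simeq\lim_m\tau_{\leq m}\LL^{\Delta}_{A^\bullet/A}$ and $\lim_{\Delta}$ commutes with this limit; for the uniformly bounded diagrams $\tau_{\leq m}\LL^{\Delta}_{A^\bullet/A}$ the tower does stabilize degreewise, so $-\otimes_AA'$ (which is $t$-exact by flatness) commutes with the totalization, and conservativity of faithfully flat base change then kills each truncated totalization, hence the limit.

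In the $\Einf$-case your repair is circular: "flat hyperdescent of quasi-coherent complexes" applies only to cartesian diagrams, i.e.\ those of the form $M\otimes_AA^\bullet$, and $\LL_{A^\bullet/A}$ is precisely \emph{not} cartesian (its terms are sums $\bigoplus_i(d^i)^*\LL_{A'/A}$ with a varying number of summands) --- that non-cartesianness is the entire difficulty, so invoking quasi-coherent descent for it assumes what is to be proved. Moreover, for statement (2) you need \emph{hyper}descent, and your key mechanism (the sum formula plus splitting after base change) is special to \v{C}ech nerves $A^n=A'^{\otimes_A(n+1)}$: a general flat hypercover neither satisfies the sum formula nor splits after base change, so your argument does not reach hypercovers at all. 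This is exactly what the paper's proof of part (2) is designed to circumvent: by expressing $\LL_{A/R}$ through $I_A$, all the diagrams that actually get totalized are built from $A\mapsto A^{\otimes_Rk}$, whose hyperdescent follows for arbitrary flat hypercovers from module hyperdescent and the siftedness/diagonal argument, with no non-cartesian totalization ever appearing.
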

\begin{proof}
    Assertion $(1)$ is proved in \cite[Corollary 2.7]{bhatt2012derham}. We now assume that $R$ is an $\Einf$-ring. Recall that there is a concrete description of cotangent complexes in terms of indecomposables: the cotangent complex $\LL_{A/R}$ of $R\rightarrow A$ is canonically equivalent to ${\rm cofib}(I_A\otimes_A I_A\rightarrow I_A)$, where $I_A$ is the fiber of the multiplication map $A\otimes_RA\rightarrow A$. Let $A\rightarrow A^\bullet$ be a faithfully flat hypercover of $A$, and therefore we form an exact sequence of cosimplicial diagrams in $\Mod_R$:
    $$I_{A^\bullet}\rr A^\bullet\otimes_R A^\bullet\rr A^\bullet.$$
    Taking the limit, we have that $$\lim_{\Delta}(A^\bullet\otimes_R A^\bullet)\simeq\lim_{i,j\in\Delta\times\Delta}(A^i\otimes_RA^j)\simeq\lim_{j\in\Delta}\lim_{i\in \Delta} (A^i\otimes_RA^j),$$
    because $\Delta$ is sifted. Fix $j$, the functor $i\mapsto A^i\otimes_RA^j$ is an fpqc hypercover of $A\otimes_RA^j$. It follows that the limit of the above diagram is equivalent to $A\otimes_R A$. This shows that the functor $A\mapsto I_A$ has hyperdescent in the fpqc topology. Then, the hyperdescentness of the functor $A\mapsto I_A\otimes_AI_A$ can be deduced from the following exact sequence
    $$I_A\otimes_AI_A\rr A\otimes_RA\otimes_AI_A\simeq A\otimes_RI_A\rr I_A.$$
 We conclude that $\LL_{(-)/R}$ has fpqc hyperdescent.
\end{proof}
\begin{lem}\label{cotaslim}
    Let $X/S$ be a relative stack in the fpqc topology, with a cotangent complex $\LL_{X/S}$. Assume that $X$ satisfies one of the following conditions:
    \begin{enumerate}
        \item The diagonal $\Delta_{X/S}:X\rightarrow X\times_SX$
        is representable by a relative algebraic space;
        \item  $X$ can be written in the form of $X\simeq\colim_{j\in J}\spec R_j$, for which any $\spec R_j$ is formally \'etale over $X$, and $J$ is sifted.
    \end{enumerate}
        Then the following natural
map
$$\LL_{X/S}\stackrel{\simeq}\rr\varprojlim_{(A/B,s)}s'_*\LL_{A/B}$$
is an equivalence,   where the index runs over the $\infty$-category $\aff_{X/S}$ consisting of all     commutative diagrams
    $$\begin{tikzcd}
\spec A \arrow[r, "s'"] \arrow[d] & X \arrow[d] \\
\spec B \arrow[r, "s"]            & S          
\end{tikzcd}$$
Here the limit is formed in $\qcoh(X)$. 
\end{lem}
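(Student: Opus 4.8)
The plan is to prove the equivalence $\LL_{X/S}\stackrel{\simeq}\rr\varprojlim_{(A/B,s)}s'_*\LL_{A/B}$ by testing it against an arbitrary point and reducing to the universal property of the cotangent complex. First I would recall that, since the statement is about a quasi-coherent complex on $X$, and $\qcoh(X)\simeq\varprojlim_{\spec A\to X}\Mod_A$ by the descent property of quasi-coherent sheaves recalled in the excerpt (via \cite[Proposition 6.2.3.1]{lurie2018spectral}), it suffices to check the equivalence after pulling back along every $s':\spec A\rightarrow X$ sitting in a diagram as displayed. Thus I reduce to proving that for each such object $(A/B,s)$ the natural map $(s')^*\LL_{X/S}\rr\LL_{A/B}$ becomes, after assembling over the index category $\aff_{X/S}$, the comparison map in the statement. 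Concretely I would show that pulling the right-hand limit back along a fixed $s'$ and using that $\aff_{X/S}$ has the fixed object $(A/B,s)$ as an initial-type/cofinal piece in the relevant localized diagram identifies the fiber with $\LL_{A/B}$.

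Next I would run the two cases separately, since they control the shape of $\aff_{X/S}$ differently. In case (2), where $X\simeq\colim_{j\in J}\spec R_j$ with each $\spec R_j\rightarrow X$ formally \'etale and $J$ sifted, the key input is that formal \'etaleness gives $\LL_{\spec R_j/X}\simeq 0$, so the conormal fiber sequence forces $(\text{pr}_j)^*\LL_{X/S}\simeq\LL_{R_j/B}$ for the induced map to $\spec B=S$ (using the affine conormal sequence together with the base-change formula for cotangent complexes). Since $J$ is sifted and the $\spec R_j$ form a cover, the collection $\{(R_j/B)\}$ is cofinal in $\aff_{X/S}$, and the limit over the full index category collapses to the limit over this cofinal subdiagram; matching it with $\LL_{X/S}$ via $\qcoh$-descent then yields the equivalence. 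In case (1), where $\Delta_{X/S}$ is representable by a relative algebraic space, I would instead use that representability of the diagonal guarantees that for any two affine charts the fiber products $\spec A\times_X\spec A'$ are again (covered by) affine schemes, so $\aff_{X/S}$ is closed under the relevant fiber products and one can run a \v{C}ech/descent argument: the cotangent complex of $X/S$ is computed as the totalization of the cotangent complexes of a \v{C}ech nerve of an affine atlas, and this totalization is exactly the claimed limit.

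The main technical step common to both cases is verifying that the comparison map $\LL_{X/S}\rr\varprojlim s'_*\LL_{A/B}$ that I write down is genuinely the canonical one and that the pushforwards $s'_*$ interact correctly with the limit. For this I would appeal to the universal property of $\LL_{X/S}$ recorded in the definition of admitting a cotangent complex: for a point $x:\spec C\rightarrow X$ over $S$ and a connective module $I$, the space $\mathfrak{D}_{X/S}(x,I)$ is $\map_C(x^*\LL_{X/S},I)$, and on the other hand this deformation space can be computed chart-by-chart as a limit of the affine deformation spaces $\map_A(\LL_{A/B},I\otimes_C A)$ over the index category. Identifying these two descriptions of the same mapping space, functorially in $I$, pins down the comparison map and proves it is an equivalence by Yoneda.

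The hard part, I expect, will be the cofinality/closedness bookkeeping for the index category $\aff_{X/S}$: in case (2) one must check that the formally \'etale charts are cofinal among \emph{all} affine objects of $\aff_{X/S}$ (not merely that they cover $X$), which uses siftedness of $J$ together with the vanishing $\LL_{\spec R_j/X}\simeq 0$ to control the transition maps; and in case (1) one must justify that representability of the diagonal is exactly what is needed to guarantee the \v{C}ech nerve consists of affine (or affine-covered) objects so that the totalization converges and commutes with the pushforwards $s'_*$. A secondary subtlety is convergence of the limit in $\qcoh(X)$, but this is controlled by the uniform bounded-belowness available from the hypotheses (as in Lemma\autoref{7878744}), which guarantees the relevant $\varprojlim$ does not introduce $\lim^1$-type obstructions.
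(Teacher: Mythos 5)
Your opening reduction is fine as far as it goes: equivalences in $\qcoh(X)\simeq\varprojlim_{(A,s)}\Mod_A$ are detected by pullback along charts, and the comparison map is assembled from the conormal maps $(s')^*\LL_{X/S}\rr\LL_{A/B}$. But the step you call ``concrete'' --- identifying $(s')^*\bigl(\varprojlim_{(A'/B',t)}t'_*\LL_{A'/B'}\bigr)$ with $\LL_{A/B}$ by a cofinality argument --- is false, and the proof collapses there. The functor $(s')^*$ is a left adjoint and does not commute with this limit of pushforwards, and a limit in $\qcoh(X)$ does not ``evaluate'' at an object of the index category. Worse, if your identification held, then combined with your detection principle the lemma would assert $(s')^*\LL_{X/S}\simeq\LL_{A/B}$ for \emph{every} chart, i.e.\ $\LL_{A/X}\simeq 0$ for every affine chart, meaning every map from an affine to $X$ is formally \'etale. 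A sanity check makes this concrete: take $X=S=\spec R$; then $\LL_{X/S}\simeq 0$ and the lemma correctly asserts $\varprojlim_{(A,s)}s_*\LL_{A/R}\simeq 0$, yet the individual terms $\LL_{A/R}$ are nonzero, so the restriction of the limit to a chart cannot be the corresponding term. The entire content of the lemma is that the error terms $\LL_{A/X}$ in the conormal sequence, which are individually nonzero, vanish \emph{after} passing to the limit over all charts; your proposal never isolates, let alone proves, this vanishing.

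This is exactly how the paper argues: after reducing to $S=\spec B$ affine, it pushes forward the conormal sequence termwise and takes limits, getting a fiber sequence $\LL_{X/S}\rr\varprojlim_{(A,s)}s_*\LL_{A/B}\rr\varprojlim_{(A,s)}s_*\LL_{A/X}$, where the first limit is identified with $\LL_{X/S}$ by the purely formal fact $\mathcal F\simeq\varprojlim_{(A,s)}s_*s^*\mathcal F$ (a consequence of $\qcoh(X)\simeq\varprojlim\Mod_A$ and the $(s^*,s_*)$ adjunction); all the work then goes into showing $\varprojlim_{(A,s)}s_*\LL_{A/X}\simeq 0$. Under hypothesis (1) this is done not via a \v Cech nerve of an atlas --- note that a representable diagonal does not provide an atlas at all, so your case-(1) argument has no cover to start from, and cotangent complexes do not satisfy descent along arbitrary effective epimorphisms anyway --- but by forming the limit in the category $\Mod(X)$ of all $\OO_X$-modules and applying the coherator $Q_X$: sections over a chart $\spec C\rr X$ are computed by base change (this is where representability of $\Delta_{X/S}$ enters, making $\spec C\times_X\spec A$ geometric) together with the equivalence $\colim_{(A,s)}(\spec C\times_X\spec A)\simeq\spec C$, which forces the sections to be ${\rm R}\Gamma(\spec C,\LL_{\spec C/\spec C})\simeq 0$. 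Under hypothesis (2) your instinct (reduce to the formally \'etale charts, where $\LL_{R_j/X}\simeq 0$) is the right one, but the cofinality of $\{\spec R_j\}$ is not free from siftedness plus covering: an arbitrary chart $\spec B\rr X$ factors through some $\spec R_j$ only after a faithfully flat extension $B\rr B'$, so one must first invoke fpqc (hyper)descent of cotangent complexes (Lemma\autoref{descentofcotangent}) to replace $\LL_{B/X}$ by the totalization over the \v Cech conerve of $B\rr B'$ before any cofinality reduction applies. Finally, your closing Yoneda argument presupposes that $\mathfrak{D}_{X/S}(x,I)$ is the limit of the affine deformation spaces over the index category; that claim is unproven and is essentially a restatement of the lemma itself.
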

\begin{proof}
According to the base change formula for the cotangent complex functor, without loss of generality, one can assume that $S\simeq\spec B$ is affine. Denote by $\aff_{/X}$ the $\infty$-category of spectral affine schemes lying over $X$.
For any map $s:\spec A\rightarrow X$, consider the conormal exact sequence 
$$s_*s^*\LL_{X/B}\rr s_*\LL_{A/B}\rr s_*\LL_{A/X}.$$
 Passing to the limit, we get the following new exact sequence of quasi-coherent sheaves over $X$,
$$\LL_{X/R}\rr\varprojlim_{(A,s)}s_*\LL_{A/R}\rr\varprojlim_{(A,s)}s_*\LL_{A/X}.$$
We want to show that the last term $\varprojlim_{(A,s)}s_*\LL_{A/X}$ is vanishing. \\
Assume condition $(1)$.
Denote by $\Mod(X)$ the $\infty$-category  of $\OO_X$-modules, c.f. \cite[Notation 17.2.4.1]{lurie2018spectral}. The fully faithful embedding $i_X:\qcoh(X)\rightarrow\Mod(X)$ preserves small colimits and thus admits a right adjoint $Q_X:\Mod(X)\rightarrow\qcoh(X)$.
    We denote by $\mathcal{ L}:=\varprojlim^{\Mod(X)}_{(A,s)}s_*\LL_{A/X}\in\Mod(X)$. Note that the  $\OO_X$-module $\mathcal{L}$ is not quasi-coherent in general, but there is an equivalence    $$Q_X(\mathcal{L})\simeq\varprojlim_{(A,s)}s_*\LL_{A/X}.$$
We want to show that $\mathcal{L}\simeq 0$.  Let $C\in\aff_{/X}$, we have the following equivalences
  \begin{align*}
  \mathrm{R}\Gamma(\spec C,\mathcal{L})&\simeq\varprojlim_{(A,s)} \mathrm{R}\Gamma(\spec C,s_*\LL_{A/X})\\
  &\simeq\varprojlim_{(A,s)}  \mathrm{R}\Gamma(\spec C\times_X\spec A,\LL_{\spec C\times_X\spec A/\spec C})\\
  &\simeq{\rm R}\Gamma(\colim_{(A,s)}(\spec C\times_X\spec A),\LL_{-/C}).
  \end{align*}
 The second equivalence holds because the diagonal is representable by a relative algebraic space.
 Note that the natural map $\colim_{(A,s)}(\spec C\times_X\spec A)\rightarrow\spec C$ is an equivalence. Then we have
 \begin{align*}
     {\rm R}\Gamma(\colim_{(A,s)}(\spec C\times_X\spec A),\LL_{-/C})&\simeq   {\rm R}\Gamma(\spec C,\LL_{-/C})\simeq 0.
 \end{align*}
 Assume condition $(2)$. For any map $\spec B\rightarrow X$, there exists a faithfully flat map $\spec B\rightarrow\spec B'$, and some $j\in J$, such that the induced map $\spec B'\rightarrow X$ factors through $\spec R_j$. We therefore have an exact sequence of $B'$-modules:
$$\LL_{B/X}\otimes_BB'\rr\LL_{B'/X}\simeq\LL_{B'/R_j}\rr\LL_{B'/B}.$$
Applying Lemma \autoref{descentofcotangent}, the faithfully flat descent of cotangent complexes for $\Einf$-rings, the term $\LL_{B/X}$ is equivalent to the limit of the diagram $\LL_{B^\bullet/R_j}$, where $B^\bullet$ is the {\v C}ech nerve of $(B\rightarrow B')$. Let $\CC$ be the $\infty$-category of triples $(x:\spec C\rightarrow X,x':\spec C\rightarrow\spec R_i,i)$, where $x'$ is a lifting of $x$. Observe that the forgetful functor $\CC\rightarrow\mathbf{Aff}_{/X}$ is cofinal, and we can compute the limit $\varprojlim_{(B,s)} s_*\LL_{B/X}$ using $\CC$.  On the other hand, the subcategory of $\CC$ spanned by objects having the form $(\spec R_j\rightarrow\spec R_j,\spec R_j\rightarrow\spec R_j,j)$, which is equivalent to $J$ is also cofinal. Thus we have that 
$$\varprojlim_{(B,s)} s_*\LL_{B/X}\simeq\lim_{j\in J}s_*\LL_{R_j/X}\simeq0.$$
The proof is complete.

\end{proof}
As a corollary, we have the following result.
\begin{prop}\label{719719}
    The relative stack $\eta:\rtr\rightarrow\spf R$ is formally \'etale.
\end{prop}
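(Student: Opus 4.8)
The plan is to show that the cotangent complex $\LL_{\rtr/\spf R}$ vanishes, since formal étaleness of a map that already admits a cotangent complex (established in the preceding corollary) is equivalent to the vanishing of that cotangent complex together with formal smoothness; and in fact the vanishing of $\LL_{\rtr/\spf R}$ immediately gives that $\eta$ is formally étale once we know it is nilcomplete and infinitesimally cohesive, which will follow from writing $\rtr$ as a limit of the formally nice stacks $\R\llog_R^\wedge$. So the central task is the computation $\LL_{\rtr/\spf R}\simeq 0$.

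First I would use Lemma\autoref{7878744}: since $\rtr=\varprojlim_n\R\llog_R^\wedge$ under the transition maps $p$, and since each $\R\llog_R^\wedge\to\spf R$ admits a $(-1)$-connective (hence uniformly bounded below) cotangent complex by Theorem\autoref{almostperfect} restricted along the $p$-completion, the limit admits a cotangent complex and there is a canonical equivalence
$$\LL_{\rtr/\spf R}\simeq\colim_n f_n^*\LL_{\R\llog_R^\wedge/\spf R},$$
where $f_n:\rtr\to\R\llog_R^\wedge$ is the projection to the $n$-th term. The key point is then to understand the transition maps in this colimit. Each transition is induced by the multiplication-by-$p$ self-map $p:\R\llog_R^\wedge\to\R\llog_R^\wedge$, and I would identify the induced map on pulled-back cotangent complexes with the map coming from the $p$-th power operation on the universal log structure. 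Using Theorem\autoref{gabbercotasfunctor}(2) (the identification of the relative cotangent complex of $\R\llog$ with Gabber's cotangent complex) together with Lemma\autoref{Lstrict}(4) and the fundamental sequence, the local model for $\LL_{\R\llog_R^\wedge/\spf R}$ at a chart $\spec R[M]^\wedge_p$ is built out of the monoid part $A\otimes_{\mathbb S}(M^{\rm gp}/L^{\rm gp})$, and the multiplication-by-$p$ map acts on the group-completion $M^{\rm gp}$ as multiplication by $p$.

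The main obstacle, and the heart of the argument, will be showing that this colimit over the multiplication-by-$p$ transition maps vanishes after $p$-completion. I would reduce to the toric/affine case using the surjective cover by the $\spf R[M[1/p]]^\wedge_p$ established in \autoref{rootstkdefprop}, and then invoke Lemma\autoref{cotaslim}(2): since $\rtr$ is, locally, covered by the perfected toric formal schemes $\mathbb A_M^{\infty}=\spf R[M[1/p]]^\wedge_p$ which by Lemma\autoref{71717} are $p$-completely formally étale over $\spf R$ (because $M[1/p]$ has the form $N[1/p]$), the limit formula of Lemma\autoref{cotaslim} expresses $\LL_{\rtr/\spf R}$ as a limit of $\LL_{R[M[1/p]]^\wedge_p/R}$-type terms, each of which is contractible. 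Concretely, the colimit $\colim_n (M^{\rm gp}\xrightarrow{p} M^{\rm gp}\xrightarrow{p}\cdots)$ computes $M^{\rm gp}[1/p]$, and the resulting log cotangent contribution is exactly the $p$-completely étale contribution of the perfection, which Lemma\autoref{71717} kills.

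I would therefore assemble the proof as follows: reduce to the affine toric charts via the fpqc cover; apply Lemma\autoref{cotaslim}(2) with the cover by the $p$-completely formally étale schemes $\spf R[M[1/p]]^\wedge_p$ (checking the formal étaleness hypothesis through Lemma\autoref{71717}); conclude each term $\LL_{R[M[1/p]]^\wedge_p/R}^\wedge_p\simeq 0$, hence $\LL_{\rtr/\spf R}\simeq 0$; and finally note that $\rtr$, being a limit of relatively Artin stacks (which are nilcomplete and infinitesimally cohesive by the results of \autoref{ALREP}) along $p$-completion, inherits nilcompleteness and infinitesimal cohesivity, so that the vanishing of the cotangent complex upgrades to the statement that $\eta$ is formally étale. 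The one subtlety to handle carefully is the interaction between the two descriptions of $\LL_{\rtr/\spf R}$ (the sequential colimit from Lemma\autoref{7878744} and the affine limit from Lemma\autoref{cotaslim}); I expect the cleanest route is to bypass the sequential colimit entirely and argue directly through Lemma\autoref{cotaslim}(2), since the perfected toric charts furnish the needed formally étale presentation of $\rtr$ and make the vanishing transparent.
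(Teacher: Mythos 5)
Your proposal is correct and is essentially the paper's own proof: the paper simply applies Lemma\autoref{cotaslim} to the cover of $\rtr$ by the perfected toric charts $\spf R[M[1/p]]^\wedge_p$ and invokes Lemma\autoref{71717} to conclude that $\LL_{\rtr/\spf R}\simeq\varprojlim_{s}s_*\LL^\wedge_{R[M[1/p]]^\wedge_p/R}\simeq 0$, which is exactly your final assembled argument, including your (correct) decision to bypass the sequential-colimit description coming from Lemma\autoref{7878744}. Your closing verification of nilcompleteness and infinitesimal cohesivity, which the paper's definition of formal \'etaleness formally requires but whose check the paper leaves implicit, is a reasonable supplementary point rather than a divergence in approach.
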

\begin{proof}
    Using Lemma\autoref{71717} and \autoref{cotaslim}, the canonical map $$\theta:\LL_{\rtr/\spf R}\rr\lim_{s\in\mathbf{FMon}_R}s_*\LL^\wedge_{R[M[1/p]]_p^\wedge/R}\simeq 0$$
    is an equivalence.
\end{proof}

Now we can prove our main theorem.
\begin{thm}\label{123256456}
 Let $R$ be a $p$-complete $\Einf$-ring.   Let $X$ be a topologically $p$-saturated quasi-coherent spectral log Deligne-Mumford stack over $\spec R$, and let $\underline{X}^\wedge_p\rightarrow\logfr$ be the associated $p$-completion. 
    Then the  map 
    $$\phi:\LL_{X/R}^{\rm Log}\rr\eta_*\LL_{\sqrt[\infty]{X_p^\wedge}/\spf R}$$
     induced from the canonical map $\eta:\sqrt[\infty]{X^\wedge_p}\rightarrow\underline{X}^\wedge_p$
    becomes an equivalence after $p$-completion. In particular, the following map
    $$\phi:{\rm R}\Gamma(\underline{X},\LL^{{\rm Log}}_{{X}/R})^\wedge_p\rr{\rm R}\Gamma(\sqrt[\infty]{X},\LL_{\sqrt[\infty]{X^\wedge_p}/\spf R})$$
    is an equivalence.
\end{thm}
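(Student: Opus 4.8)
The plan is to reinterpret $\phi$ as a unit of adjunction and to reduce its invertibility to the saturation descent of Theorem~\ref{satdescent}. Write $\eta\colon\sqrt[\infty]{X^\wedge_p}\to\underline X^\wedge_p$ and $g\colon\sqrt[\infty]{X^\wedge_p}\to\rtr$ for the two projections of the defining Cartesian square $\sqrt[\infty]{X^\wedge_p}\simeq\underline X^\wedge_p\times_{\logfr}\rtr$, observing that $\eta$ is a map over $\logfr$. First I would identify the target of $\phi$. Running the conormal sequence for $\sqrt[\infty]{X^\wedge_p}\xrightarrow{g}\rtr\to\spf R$ and invoking Proposition~\ref{719719} (so that $\LL_{\rtr/\spf R}\simeq 0$) yields $\LL_{\sqrt[\infty]{X^\wedge_p}/\spf R}\simeq\LL_{\sqrt[\infty]{X^\wedge_p}/\rtr}$, and base change along the Cartesian square gives $\LL_{\sqrt[\infty]{X^\wedge_p}/\rtr}\simeq\eta^*\LL_{\underline X^\wedge_p/\logfr}$. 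Since $\LL^{\rm Log}_{X/R}=\LL_{\underline X/\R\llog^{\rm qcoh}_R}$ (Definition~\ref{DEF OF COTANGENT CPX}) and $\underline X^\wedge_p\to\logfr$ is the base change of $\underline X\to\R\llog^{\rm qcoh}_R$ along $\spf R\to\spec R$, a further base change identifies $\LL_{\underline X^\wedge_p/\logfr}$ with the restriction of $(\LL^{\rm Log}_{X/R})^\wedge_p$. Hence $\eta_*\LL_{\sqrt[\infty]{X^\wedge_p}/\spf R}\simeq\eta_*\eta^*(\LL^{\rm Log}_{X/R})^\wedge_p$, and, unwinding the construction, $\phi$ becomes (after $p$-completion) the unit $(\LL^{\rm Log}_{X/R})^\wedge_p\to\eta_*\eta^*(\LL^{\rm Log}_{X/R})^\wedge_p$ of the adjunction $\eta^*\dashv\eta_*$.

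It then suffices to prove this unit is an equivalence, a statement about quasi-coherent sheaves on $\underline X$ that may be checked fpqc-locally using the fpqc (hyper)descent of $\qcoh$ and of the cotangent complex (Lemma~\ref{descentofcotangent}), together with flat base change for $\eta_*$ along an affine cover. Because $X$ is topologically $p$-saturated, after such a localization I may assume $X\simeq\spet(A,M)$ with $(A,M)$ a $p$-complete topologically $p$-saturated $\Einf$-log ring over $R$ and $\underline X^\wedge_p\simeq\spf A$. Choosing a chart $R[M]\to A$, the strict map $\spf A\to\logfr$ factors through the toric scheme $V=\spf R[M]^\wedge_p$, and base change of root stacks gives $\sqrt[\infty]{\spet(A,M)}\simeq\spf A\times_V\sqrt[\infty]{V}$.

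For the local computation I would use Proposition~\ref{localroot}: $\sqrt[\infty]{V}\simeq V^\infty/\mu^M_{p^\infty}$ with $V^\infty=\spf R[M[1/p]]^\wedge_p$, whence $\sqrt[\infty]{\spet(A,M)}\simeq\spf A^\infty/\mu^M_{p^\infty}$ for $A^\infty=A\otimes^{\wedge}_{R[M]^\wedge_p}R[M[1/p]]^\wedge_p$, with $\pi^\infty\colon\spf A^\infty\to\sqrt[\infty]{\spet(A,M)}$ a flat cover. Descent along $\pi^\infty$, using the torsor identification $\spf A^\infty\times_{\sqrt[\infty]{\spet(A,M)}}\spf A^\infty\simeq\spf A^\infty\times\mu^M_{p^\infty}$ and $\mu^M_{p^\infty}\simeq\spf R[M[1/p]^{\rm gp}/M^{\rm gp}]^\wedge_p$, computes $\eta_*\eta^*\mathcal F$ for $p$-complete $\mathcal F\in\qcoh(\spf A)$ as the totalization of the cobar complex of the $\mu^M_{p^\infty}$-coaction, which is exactly $\lim_{\Delta}R[C^\bullet_M]^\wedge_p\otimes^{\wedge}_{R[M]^\wedge_p}\mathcal F$ in the notation of Definition~\ref{satdefn}. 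Taking $\mathcal F=(\LL^{\rm Log}_{(A,M)/R})^\wedge_p\simeq(\LL^G_{(A,M)/(R,GL_1(R))})^\wedge_p$, a $p$-complete connective (hence almost connective) $R[M]^\wedge_p$-module, Remark~\ref{satdesmod} shows that the unit $\mathcal F\to\lim_{\Delta}R[C^\bullet_M]^\wedge_p\otimes^{\wedge}_{R[M]^\wedge_p}\mathcal F\simeq\eta_*\eta^*\mathcal F$ is an equivalence. This establishes that $\phi$ is an equivalence after $p$-completion; the ``in particular'' statement then follows by applying ${\rm R}\Gamma(\underline X,-)$ and the identity ${\rm R}\Gamma(\underline X,\eta_*(-))\simeq{\rm R}\Gamma(\sqrt[\infty]{X^\wedge_p},-)$, commuting $p$-completion past $\Gamma$ by the finiteness of the situation.

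The hard part will be the local identification of $\eta_*\eta^*$ with the saturation cobar complex: one must set up descent along the $\mu^M_{p^\infty}$-quotient presentation of $\sqrt[\infty]{\spet(A,M)}$, verify the torsor/cobar identification, and carefully track $p$-completed tensor products and almost-connectivity so that Remark~\ref{satdesmod} applies verbatim. A secondary technical point is justifying the flat base change for $\eta_*$ used in the descent reduction, which is where the quasi-compactness and quasi-separatedness hypotheses on $X$ enter.
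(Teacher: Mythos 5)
Your proposal is correct and follows essentially the same route as the paper's proof: reduce to the log-affine case $\spet(A,M)$ via fpqc descent of cotangent complexes (Lemma~\ref{descentofcotangent}), use the formal \'etaleness of $\rtr$ (Proposition~\ref{719719}) to rewrite the target as $\eta_*\eta^*$ of the ($p$-completed) Gabber cotangent complex, present the root stack as a $\mu^M_{p^\infty}$-quotient of $\spf A^\infty$ (Proposition~\ref{localroot}) so that $\eta_*\eta^*$ becomes the saturation cobar limit over $C^\bullet_M$, and conclude by Theorem~\ref{satdescent} and Remark~\ref{satdesmod}. The only differences are presentational — you phrase $\phi$ explicitly as a unit of the adjunction $\eta^*\dashv\eta_*$ and spell out the torsor identification that the paper encodes in its Cartesian diagram of \v{C}ech nerves.
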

\begin{proof}
Without loss of generality, we might as well  assume that $X\simeq\spet(A,M)$ is log-affine. We only need to show that the following natural map 
$$\phi:(\LL^{G}_{(A,M)/R})^\wedge_p\rr\eta_*\LL_{\sqrt[\infty]{\spf(A,M)}/\spf R}$$ is an equivalence. This follows from Theorem\autoref{satdescent} and Proposition\autoref{719719}.
\end{proof}
\bibliographystyle{alpha}
\bibliography{ref}

\end{document}